\documentclass[11pt]{amsart}
\usepackage[T1]{fontenc}
\usepackage[utf8]{inputenc}
\usepackage{lmodern}
\usepackage{microtype}
\usepackage{amsmath,amssymb,mathtools,mathrsfs,bm}
\usepackage{booktabs,longtable,tabularx,array}
\usepackage{enumitem}
\usepackage[margin=1in]{geometry}
\usepackage{tikz-cd}
\usepackage[colorlinks=true,linkcolor=blue!60!black,citecolor=blue!60!black,urlcolor=blue!60!black]{hyperref}
\usepackage[nameinlink,capitalise,noabbrev]{cleveref}
\allowdisplaybreaks

\theoremstyle{plain}
\newtheorem{theorem}{Theorem}[section]
\newtheorem{proposition}[theorem]{Proposition}
\newtheorem{lemma}[theorem]{Lemma}
\newtheorem{corollary}[theorem]{Corollary}
\newtheorem{conjecture}[theorem]{Conjecture}

\newtheorem{inputtheorem}[theorem]{Geometric input}
\newtheorem*{thmA}{Theorem A}
\newtheorem*{thmB}{Theorem B}
\newtheorem*{thmC}{Theorem C}
\newtheorem*{thmD}{Theorem D}
\newtheorem*{thmE}{Theorem E}
\theoremstyle{definition}
\newtheorem{definition}[theorem]{Definition}
\newtheorem{construction}[theorem]{Construction}
\newtheorem{convention}[theorem]{Convention}
\newtheorem{example}[theorem]{Example}
\newtheorem{question}[theorem]{Question}
\theoremstyle{remark}
\newtheorem{remark}[theorem]{Remark}
\newtheorem{warning}[theorem]{Warning}

\crefname{theorem}{Theorem}{Theorems}
\crefname{proposition}{Proposition}{Propositions}
\crefname{lemma}{Lemma}{Lemmas}
\crefname{corollary}{Corollary}{Corollaries}
\crefname{hypothesis}{Hypothesis}{Hypotheses}
\crefname{inputtheorem}{Geometric input}{Geometric inputs}
\crefname{definition}{Definition}{Definitions}
\crefname{construction}{Construction}{Constructions}
\crefname{convention}{Convention}{Conventions}
\crefname{example}{Example}{Examples}
\crefname{question}{Question}{Questions}
\crefname{remark}{Remark}{Remarks}
\crefname{warning}{Warning}{Warnings}
\crefname{equation}{}{}
\crefname{section}{Section}{Sections}
\crefname{appendix}{Appendix}{Appendices}

\newcommand{\C}{\mathbb C}
\newcommand{\Q}{\mathbb Q}
\newcommand{\A}{\mathbb A}
\newcommand{\N}{\mathbb N}
\newcommand{\Z}{\mathbb Z}
\newcommand{\PP}{\mathbb P}
\newcommand{\cO}{\mathcal O}
\newcommand{\cP}{\mathcal P}
\newcommand{\cR}{\mathcal R}
\newcommand{\cE}{\mathcal E}
\newcommand{\cL}{\mathcal L}
\newcommand{\cK}{\mathcal K}
\newcommand{\cM}{\mathcal M}
\newcommand{\cH}{\mathcal H}
\newcommand{\cQ}{\mathcal Q}
\newcommand{\cS}{\mathcal S}

\newcommand{\cZ}{\mathcal Z}
\newcommand{\cI}{\mathcal I}
\newcommand{\cU}{\mathcal U}

\newcommand{\cT}{\mathcal T}
\newcommand{\cV}{\mathcal V}
\newcommand{\cW}{\mathcal W}
\newcommand{\cF}{\mathcal F}
\newcommand{\cG}{\mathcal G}
\newcommand{\cB}{\mathcal B}
\newcommand{\cN}{\mathcal N}
\newcommand{\cX}{\mathcal X}
\newcommand{\sP}{\mathscr P}
\newcommand{\sA}{\mathscr A}
\newcommand{\Hecke}{\mathscr H}
\newcommand{\scrD}{\mathscr D}
\newcommand{\mfS}{\mathfrak S}
\newcommand{\mfM}{\mathfrak M}
\newcommand{\mfN}{\mathfrak N}

\newcommand{\mfm}{\mathfrak m}
\newcommand{\mfa}{\mathfrak a}
\newcommand{\mfb}{\mathfrak b}
\newcommand{\frh}{\mathfrak h}
\newcommand{\Hilb}{\operatorname{Hilb}}
\newcommand{\Quot}{\operatorname{Quot}}
\newcommand{\Gr}{\operatorname{Gr}}
\newcommand{\Fl}{\operatorname{Fl}}
\newcommand{\Tot}{\operatorname{Tot}}
\newcommand{\SpecOp}{\operatorname{Spec}}

\newcommand{\ProjOp}{\operatorname{Proj}}
\newcommand{\SymOp}{\operatorname{Sym}}
\newcommand{\Tr}{\operatorname{Tr}}
\newcommand{\End}{\operatorname{End}}
\newcommand{\Hm}{\operatorname{Hom}}
\newcommand{\Hom}{\operatorname{Hom}}
\newcommand{\cHom}{\mathcal{H}\!om}
\newcommand{\cEnd}{\mathcal{E}\!nd}
\newcommand{\Ex}{\operatorname{Ext}}
\newcommand{\Ext}{\operatorname{Ext}}
\newcommand{\cExt}{\mathcal{E}\!xt}
\newcommand{\Tor}{\operatorname{Tor}}

\newcommand{\RcHom}{R\mathcal{H}\!om}
\newcommand{\RGamma}{R\Gamma}
\newcommand{\Ltensor}{\overset{\mathbf L}{\otimes}}

\newcommand{\soc}{\operatorname{soc}}
\newcommand{\rk}{\operatorname{rk}}
\newcommand{\wt}{\operatorname{wt}}
\newcommand{\Supp}{\operatorname{Supp}}
\newcommand{\codim}{\operatorname{codim}}
\newcommand{\length}{\operatorname{length}}
\newcommand{\Span}{\operatorname{Span}}
\newcommand{\Perf}{\operatorname{Perf}}
\newcommand{\Coh}{\operatorname{Coh}}
\newcommand{\id}{\mathrm{id}}
\newcommand{\eps}{\varepsilon}
\newcommand{\ev}{\varepsilon}
\newcommand{\grf}{\operatorname{gr}}
\newcommand{\gr}{\operatorname{gr}}
\newcommand{\ogr}{\operatorname{ogr}}
\newcommand{\chf}{\operatorname{ch}}
\newcommand{\ch}{\operatorname{ch}}
\newcommand{\triv}{\mathrm{triv}}
\newcommand{\Bal}{\operatorname{Bal}}
\newcommand{\Cat}{\operatorname{Cat}}
\newcommand{\qbinom}[2]{\genfrac{[}{]}{0pt}{}{#1}{#2}}
\newcommand{\Cone}{\operatorname{Cone}}
\newcommand{\Pic}{\operatorname{Pic}}

\newcommand{\im}{\operatorname{im}}
\newcommand{\coker}{\operatorname{coker}}
\newcommand{\Ann}{\operatorname{Ann}}
\newcommand{\grade}{\operatorname{grade}}
\newcommand{\htOp}{\operatorname{ht}}
\newcommand{\pd}{\operatorname{pd}}
\newcommand{\depth}{\operatorname{depth}}
\newcommand{\Refl}{\operatorname{Ref}}
\newcommand{\Hilbs}{\operatorname{Hilb}_{q,t}}
\newcommand{\BN}{\operatorname{BN}}
\newcommand{\ord}{\operatorname{ord}}

\newcommand{\schur}{\mathbb S}

\newcommand{\degh}{\deg_{\mathbf h}}
\newcommand{\degpol}{\deg_{\mathrm{pol}}}
\newcommand{\degt}{\deg_{\mathrm{tot}}}
\newcommand{\dinv}{\operatorname{dinv}}
\newcommand{\inv}{\operatorname{inv}}
\newcommand{\wtH}{\widetilde H}
\newcommand{\sK}{\mathscr K}

\title[Asymptotic $q,t$-Fuss--Catalan numbers for type $B$]{Asymptotic $q,t$-Fuss--Catalan numbers
  for type $B$}

\author{Alexei Oblomkov}
\address{
A.~Oblomkov\\
Department of Mathematics and Statistics\\
University of Massachusetts at Amherst\\
Lederle Graduate Research Tower\\
710 N. Pleasant Street\\
Amherst, MA 01003 USA
}
\email{oblomkov@math.umass.edu}

\begin{document}
\begin{abstract}
Let $W=W(B_n)$ act diagonally on $\frh\oplus\frh^*$, let
$S=\C[\frh\oplus\frh^*]$, let $J\subseteq S$ be the ideal
generated by the $W$-alternating polynomials and \(\mathfrak{m}_S\) is the maximal ideal of the origin.
For sufficiently large \(m\) we compute \(q,t\)-Fuss-Catalan polynomial $\mathrm{Cat}^{(m)}(B_n;q,t):=\Hilbs\left(\frac{J^m}{\mathfrak{m}_S J^m}\right)_{\!\det\text{-part}}\) and imply \(\mathrm{Cat}^{(m)}(B_n;1,1)=\binom{n(m+1)}{n}\).
For proofs,  we work with the $\Gamma$-equivariant Hilbert scheme
$Y_n=n\Gamma$-$\Hilb(\A^2)$, $\Gamma=\mu_2$  and  Haiman-type Koszul complex that defines
the punctual locus of \(Y_n\). Our formula for \(\mathrm{Cat}^{(m)}(B_n;q,t)\) is derived from
a localization computaion for the  Haiman-type Koszul complex.
\end{abstract}
\maketitle

\section{Introduction}\label{sec:intro}

\subsection{The type \(A\) vs \(B\)}

Let $W$ be a finite real reflection group acting on its reflection representation
$\frh$ and diagonally on $\frh\oplus\frh^*$, and let
\[
 S=\C[\frh\oplus\frh^*],\qquad A=S^W,\qquad \mfm=A_+ .
\]
Let $S^\eps=\{f\in S:\ w(f)=\det(w)f\}$ be the space of alternating polynomials and
$J=S\cdot S^\eps$ the ideal it generates.  Every alternating polynomial vanishes on
every reflection diagonal $\{\alpha_s=\alpha_s^\vee=0\}$, so
\begin{equation}\label{eq:JI-inclusion}
 J\ \subseteq\ I:=\bigcap_{s\in\Refl(W)}(\alpha_s,\alpha_s^\vee),
 \qquad\text{and}\qquad
 J^m\subseteq I^m\subseteq I^{(m)}:=\bigcap_{s\in\Refl(W)}(\alpha_s,\alpha_s^\vee)^m .
\end{equation}
For $W=S_n$ Haiman's polygraph theorem \cite{HaimanNfact,Haiman} implies  the remarkable
equalities $J^m=I^m=I^{(m)}$ for all $m$ as well as 
\[
 \mathrm{Cat}^{(m)}(S_n;q,t)
 =\chi_{\,\C^*_q\times\C^*_t}\bigl(\cO_{Z_n}(m)\bigr),
\]
where $Z_n\subset\Hilb^n(\C^2)$ is the punctual Hilbert scheme
\cite{HaimanCatalan,Haiman}.  Stump \cite{Stump} defined, for every finite reflection
group,
\begin{equation}\label{eq:stump-def}
 \mathrm{Cat}^{(m)}(W;q,t)
 :=\Hilbs\Bigl(\,e_\eps\, DR^{(m)}(W)\Bigr)
 =\Hilbs\Bigl(\frac{J^m}{\mathfrak{m}_S J^m}\Bigr)_{\!\det\text{-part}},
\end{equation}
the bigraded Hilbert series of the determinantal component of the generalized diagonal
coinvariants, equivalently
of the minimal generating space of $J^m$; and he
conjectured that its dimension is the Fuss--Catalan number of $W$ and that it matches
the graded character of the corresponding finite-dimensional spherical Cherednik module (see details below).

The Fuss--Catalan numbers in Stump's conjecture are the numbers
$\mathrm{Cat}^{(m)}(W)=\prod_{i=1}^{\ell}\frac{d_i+mh}{d_i}$, where $\ell$ is the rank of
$W$, $d_1,\dots,d_\ell$ are its degrees and $h$ is its Coxeter number, and they are the
subject of an extensive combinatorial theory, the Coxeter--Catalan combinatorics.  For
$m=1$ they count the noncrossing partitions of Reiner \cite{Reiner} and Bessis
\cite{Bessis}, the clusters of Fomin and Zelevinsky \cite{FominZelevinsky}, the
Coxeter-sortable elements of Reading \cite{Reading}, and the nonnesting partitions
and the dominant regions of the Catalan arrangement \cite{Athanasiadis04}.  For
general $m$ they count the $m$-divisible noncrossing partitions of Armstrong
\cite{Armstrong}, the facets of the generalized cluster complexes of Fomin and Reading
\cite{FominReading}, and the dominant regions of the extended Catalan arrangements
and the $m$-nonnesting partitions of Athanasiadis \cite{Athanasiadis04,Athanasiadis05}.
The recent memoir of Stump, Thomas and Williams \cite{Cataland} gives a uniform
framework for the noncrossing side of this theory, and the rational Catalan
combinatorics, in which the parameter $m+\frac1h$ is replaced by an arbitrary rational
number, is developed in \cite{ARW,GMV}.  The $q,t$-Catalan numbers of type $A$ were
introduced by Haiman \cite{HaimanConj} and by Garsia and Haiman \cite{GarsiaHaiman},
and the book of Haglund \cite{HaglundBook} is a reference for their combinatorics and
for the space of diagonal harmonics.  For $W=B_n$ one has
$\mathrm{Cat}^{(m)}(B_n)=\binom{n(m+1)}{n}$, the number of lattice paths with unit north
and east steps from $(0,0)$ to $(mn,n)$, and we discuss the type $B$ lattice path
models in \cref{sec:open-problems,sec:shuffle,sec:labelled}.

For groups other than $S_n$ the inclusions \eqref{eq:JI-inclusion} are strict in
general: Kivinen \cite[Ex.~B.2]{Kivinen} computes for $W(B_3)$ that $J\subsetneq I$
while $eJ=eI$ for the symmetrizing idempotent $e$.  This suggests that the correct
general statements are \emph{spherical}.  Kivinen \cite{Kivinen} attaches to
these data three ``Hilbert-scheme'' models,
\begin{equation}\label{eq:three-models}
 X_{\mathfrak g,\mathrm{sgn}}=\ProjOp\bigoplus_{m\ge0}(\Delta S^\eps)^m,\quad
 X_{\mathfrak g,\mathrm{diag}}=\ProjOp\bigoplus_{m\ge0}\bigl(e(\Delta I)\bigr)^m,\quad
 X_{\mathfrak g,\mathrm{symb}}=\ProjOp\bigoplus_{m\ge0}e\bigl(\Delta^mI^{(m)}\bigr),
\end{equation}
where $\Delta=\prod_{\alpha\in\Phi^+}\alpha$
(\cite[Defs.~3.1, 3.4, 3.6]{Kivinen}); the symbolic model is normal
\cite[Thm.~3.8]{Kivinen} and is the commutative degeneration of a $\Z$-algebra built
from shift bimodules of rational Cherednik algebras
\cite[Thm.~4.3, Cor.~4.5]{Kivinen}.  The three models are connected by canonical
birational morphisms
$X_{\mathrm{symb}}\to X_{\mathrm{diag}}\to X_{\mathrm{sgn}}$, and it is not clear in
general whether these are isomorphisms.

This paper solves the type-$B$ problem asymptotically in the Fuss parameter \(m\), with
explicit $q,t$-characters.

\subsection{Main results}\label{sec:main-results}

Fix $W=W(B_n)=\mu_2\wr S_n$, with $\Refl(W)$ consisting of the $n$ coordinate
reflections and the $2\binom n2$ signed transpositions; $h=2n$ is the Coxeter number,
$\deg\Delta=n^2$.  Write
$e_m=\frac1{|W|}\sum_w\det(w)^mw$, so $e_0=e$ and $e_1=e_-$.  The phrase ``$m\gg0$''
always means: for each fixed $n$ there is a (non-effective) $m_0(n)$ such that the
statement holds for all $m\ge m_0(n)$.

Our first result settles the comparison of the three models.

\begin{thmA}[Asymptotic spherical collapse; Theorem~\ref{thm:collapse}]
For every fixed $n$ and all $m\gg0$,
\[
 e_mJ^m\;=\;e_mI^m\;=\;e_mI^{(m)} .
\]
Consequently the three spherical models \eqref{eq:three-models} coincide for
$\mathfrak g$ of type $B_n/C_n$:
$X_{\mathfrak g,\mathrm{symb}}\cong X_{\mathfrak g,\mathrm{diag}}\cong
X_{\mathfrak g,\mathrm{sgn}}$.
\end{thmA}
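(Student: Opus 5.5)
The chain of inclusions $e_mJ^m\subseteq e_mI^m\subseteq e_mI^{(m)}$ is automatic from \eqref{eq:JI-inclusion}. So the content of the statement is the reverse inclusion $e_mI^{(m)}\subseteq e_mJ^m$ for $m\gg0$. I would prove it geometrically, comparing both sides with sections of a line bundle on a resolution. Concretely, I would use the $\Gamma$-equivariant Hilbert scheme $Y_n=n\Gamma\text{-}\Hilb(\A^2)$, $\Gamma=\mu_2$. It is a crepant resolution of $(\C^2)^n/W=\SpecOp A$ via $\pi\colon Y_n\to \SpecOp A$, and it carries the tautological bundle $\cP$ (of rank $2n$) together with its determinant line bundle $\cO(1)$, which is $\pi$-ample.

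The first step is to show $\Gamma(Y_n,\cO(m))\cong e_mI^{(m)}$ up to the twist by $\Delta^m$ (equivalently, $\cong e(\Delta^mI^{(m)})$). Both sides are reflexive $A$-modules of rank one. They agree over the open locus where at most one reflection hyperplane pair is hit, since there the local model is $A_1$ (for a coordinate reflection) or $\mu_2$ acting on $\C^2$ (for a signed transposition), and these cases can be checked by hand. The complement of this locus has codimension $\ge2$ in both spaces, so the identification extends, using normality of $Y_n$ and of $X_{\mathrm{symb}}$ \cite[Thm.~3.8]{Kivinen}. It follows that $\bigoplus_m e_mI^{(m)}$ is the section ring of $\cO(1)$.

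The second step is finite generation in high degree. Because $\cO(1)$ is $\pi$-ample, Serre vanishing together with Castelnuovo--Mumford regularity relative to $\SpecOp A$ give an $m_1$ such that the multiplication maps
\[
\Gamma(\cO(m_1))^{\otimes k}\to \Gamma(\cO(km_1))
\]
and $\Gamma(\cO(a))\otimes\Gamma(\cO(m_1))\to\Gamma(\cO(a+m_1))$ are surjective for $a\ge m_1$. In particular, for $m\gg0$ the module $e_mI^{(m)}$ is generated by products of a bounded number of lower-degree pieces.

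The third step, which is the main obstacle, is to produce those lower-degree pieces from $J$. I need to show that $e_mJ^m$ and $e_mI^{(m)}$ have the same image in $\Gamma(Y_n,\cO(m))$ for large $m$. Equivalently, the linear system $S^\eps\subset\Gamma(\cO(1))$ (up to the $\Delta$-twist) should be base-point free on $Y_n$, possibly after passing to a Veronese power. My first attempt would be to check base-point freeness at $\C^*\times\C^*$-fixed points of $Y_n$. These are indexed by bipartitions (or $2$-cores with quotients), and at each one I would exhibit an explicit alternating polynomial that does not vanish in the fibre of $\cO(1)$: a Vandermonde-type $\Gamma$-twisted determinant built from the monomial basis of the fixed ideal. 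This generalizes Haiman's argument that $\det$ of the monomial basis generates the fibre.

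Base-point freeness of $\cO(1)$, rather than of $\cO(k)$, is not guaranteed, and this gap is exactly why the statement is only asymptotic in $m$. It suffices to find, for each fixed point, some $k$ and some product of $k$ alternants that does not vanish there; then the section ring of $J$ and that of $I^{(m)}$ agree in all degrees $m\ge m_0(n)$. Once the three section rings agree in high degree, their Proj's coincide. This gives $X_{\mathrm{symb}}\cong X_{\mathrm{diag}}\cong X_{\mathrm{sgn}}$ and the equality $e_mJ^m=e_mI^m=e_mI^{(m)}$.
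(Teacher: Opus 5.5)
Your Step~3 has a genuine gap: base-point freeness of the alternants, or of products of $k$ of them, does not give $e_mJ^m=e_mI^{(m)}$ for $m\gg0$. Passing to $k$-fold products does not even change the base locus.

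In degree one there is nothing to prove. Every alternant vanishes on the reflection planes, so $e_1I^{(1)}=e_1J=S^\eps$, and $R_1:=\Delta S^\eps$ is the complete linear system $\Gamma(Q_n,\cO_{Q_n}(1))$ of the section ring $R=\bigoplus_m e(\Delta^mI^{(m)})$. Theorem~A says that $R$ agrees in high degree with its subring $A[R_1]=\bigoplus_m(\Delta S^\eps)^m$. Equivalently, the natural birational map $Q_n=X_{\mathrm{symb}}\to X_{\mathrm{sgn}}=\ProjOp A[R_1]$ is an isomorphism.

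If $R_1$ is base-point free, this map is only a finite birational morphism. That exhibits $Q_n$ as the normalization of $X_{\mathrm{sgn}}$. Equality additionally requires the alternants to separate points and tangent vectors of $Q_n$ over $S_n$ (equivalently, $X_{\mathrm{sgn}}$ normal), which is exactly the open comparison of the models. For example, $V=\langle s^3,st^2,t^3\rangle\subset H^0(\PP^1,\cO(3))$ is base-point free, yet $s^{3m-1}t\notin\C[V]_m$ for every $m$.

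So nonvanishing of one alternant at each fixed point cannot suffice, and the restriction $m\gg0$ does not come from base points in degree one. Haiman's type-$A$ argument uses more: the $\Delta_D$ are Plücker coordinates of a closed embedding of $\Hilb^n(\C^2)$ into a relative Grassmannian. The analogue you would have to prove is that the odd minors $\det\bigl(m_j(x_i,y_i)\bigr)_{i,j}$, taken over $n$-sets of odd monomials (these are alternants), embed $Q_n\cong\Quot^n_X(R_1)_{\mathrm{red}}$ over $S_n$. You would also need the Quot determinant bundle to be identified with the symbolic $\cO(1)$.

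Step~1 also uses the wrong line bundle. The sign class is $\det\cR_1=\rho^*\cO_{Q_n}(1)$ (\cref{thm:polarization}), not $\det\cP=\det\cR_0\otimes\det\cR_1$. By \cref{lem:taut-degrees}, $\det\cP$ has degree $2$ on $C_{\mathrm{lg}}$, whereas $\nu\cdot C_{\mathrm{lg}}=1$. Hence sections of $(\det\cP)^m$ vanish to order $2m$ along the long-root planes and are invariant, not alternating, under long reflections; in particular $S^\eps\not\subset\Gamma(Y_n,\det\cP)$. For $n\ge2$ the correct bundle is trivial on the wall fibres $F_J$, so it is not ample on $Y_n$, and the regularity argument must be run on $Q_n$. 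Your claim that both sides are reflexive is also false: for $n=1$, $m=2$ one gets $\mfm_X$. The extension step can nonetheless be carried out on the smooth $Y_n$.

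The paper avoids the embedding question altogether, by the following steps.
\begin{enumerate}
\item It shows $\dim e_mI^{(m)}/\mfm\,e_mI^{(m)}=\binom{n(m+1)}n$ for $m\gg0$, using Theorem~E, localization and Serre vanishing.
\item Consequently the shift-bimodule surjection $e_mI^{(m)}/\mfm\,e_mI^{(m)}\twoheadrightarrow\gr_FeL_{\frac1{2n}+m}(\triv)$ is an isomorphism, by the Berest--Etingof--Ginzburg dimension of the target.
\item Stump's surjection from $e_mJ^m/\mfm\,e_mJ^m$ onto the same target factors through this isomorphism.
\item Hence $e_mI^{(m)}=e_mJ^m+\mfm\,e_mI^{(m)}$, and graded Nakayama finishes the proof.
\end{enumerate}
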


The $q,t$-characters are expressed by an explicit localization polynomial.  Let
$\Bal(2n)$ denote the partitions $\lambda\vdash2n$ with empty $2$-core (equivalently,
with $n$ boxes of each checkerboard parity), and for $\lambda\in\Bal(2n)$ define, with
boxes $(a,b)\in\N^2$ in English notation and arm and leg lengths
$a_\lambda(s),l_\lambda(s)$,
\begin{equation}\label{eq:statistics}
\begin{gathered}
 B_\lambda=\sum_{(a,b)\in\lambda}q^at^b,\qquad
 \ell_\lambda=\prod_{\substack{(a,b)\in\lambda\\ a+b\ \mathrm{odd}}}q^at^b,\qquad
 \Pi^{(0)}_\lambda=\prod_{\substack{(a,b)\in\lambda\setminus\{(0,0)\}\\
 a+b\ \mathrm{even}}}\bigl(1-q^at^b\bigr),\\
 D_\lambda=\prod_{\substack{s\in\lambda\\ a_\lambda(s)+l_\lambda(s)\ \mathrm{odd}}}
 \bigl(1-q^{a_\lambda(s)+1}t^{-l_\lambda(s)}\bigr)
 \bigl(1-q^{-a_\lambda(s)}t^{l_\lambda(s)+1}\bigr),
\end{gathered}
\end{equation}
and set
\begin{equation}\label{eq:F-def}
\frac{ F_{n,m}(q,t)}{(1-q)(1-t)}=\sum_{\lambda\in\Bal(2n)}
 \frac{\ell_\lambda^{\,m}\,B_\lambda\Pi^{(0)}_\lambda}{D_\lambda},
 \quad
 P_{n,m}(q,t)=\tfrac12\Bigl(F_{n,m}(q,t)+(-1)^{nm}F_{n,m}(-q,-t)\Bigr).
\end{equation}

\begin{thmB}[$q,t$-Fuss--Catalan character; Theorem~\ref{thm:qtCat}]
For every fixed $n$ and all $m\gg0$,
\[
 \mathrm{Cat}^{(m)}(B_n;q,t)\;=\;P_{n,m}(q,t).
\]
\end{thmB}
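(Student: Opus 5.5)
The plan is to move from the commutative-algebra side to the geometry of $Y_n=n\Gamma$-$\Hilb(\A^2)$ and then compute by torus localization. The starting point is Theorem~A: for $m\gg0$ one has $e_mJ^m=e_mI^{(m)}$, and the Proj of $\bigoplus_m e(\Delta^m I^{(m)})$ is the normal symbolic model. I will identify this model with the $\Gamma$-equivariant Hilbert scheme $Y_n$, which is the $\mu_2$ Calogero--Moser/quiver variety resolving $(\C^2)^{2n}/W(B_n)$ in the appropriate chamber. Under this identification $e_m\Delta^m I^{(m)}$ becomes $H^0(Y_n,\cO(m))$, where $\cO(1)$ is the tautological determinant line bundle $\det\cT$.

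The second step is to express the minimal-generator space as an Euler characteristic. The space $J^m/\mfm_S J^m$, in its $\det$-isotypic part, equals $e_mJ^m\otimes_A A/\mfm$ after accounting for the $\det^m$ twist and parity. For $m\gg0$, Serre vanishing together with Grauert--Riemenschneider on the symplectic resolution $Y_n\to\frh\oplus\frh^*/W$ will show that the higher cohomology of $\cO(m)$ restricted to the punctual locus vanishes. Following Haiman, I will resolve the structure sheaf of the punctual locus $Z\subset Y_n$ (the fiber over $0$) by the Koszul complex built from the pullback of $\mfm$, that is, from the tautological-bundle invariants generating $A_+$. Tensoring with $\cO(m)$ and taking derived global sections then gives
\[
\Hilbs\bigl(e_mJ^m/\mfm e_mJ^m\bigr)=\chi_{\C^*_q\times\C^*_t}\bigl(Z,\cO(m)\bigr).
\]
The key input for this equality is that the Koszul complex is exact in high twist, i.e.\ that $\bigoplus_m H^0(\cO(m))$ is free over the polynomial part in the right sense. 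I expect this to be the main obstacle. I would first try to show that the complex is a resolution on the generic locus and then use the depth/Cohen--Macaulayness of the normal symbolic model, together with vanishing for large $m$, to kill the homology. A noneffective $m_0(n)$ is acceptable here.

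The third step is localization. The $T=\C^*_q\times\C^*_t$-fixed points of $Y_n$ are monomial ideals of colength $2n$ that are $\Gamma$-invariant with balanced character, which are exactly the $\lambda\in\Bal(2n)$. The relevant data at each fixed point are:
\begin{itemize}
\item the $\Gamma$-invariant part of the tangent space $\chi(\cO,\cO)-\chi(I,I)$, whose weights are the hooks with $a(s)+l(s)$ odd, matching $D_\lambda$ under the $\mu_2$ sign convention;
\item the fiber of $\cO(1)$, which is $\ell_\lambda$, the product over odd boxes since the relevant tautological summand is the sign isotypic one;
\item the Koszul factor, which contributes $\Pi^{(0)}_\lambda$ from the even, invariant generators, with the constant box removed and corrected by $B_\lambda/((1-q)(1-t))$ from the trivial factor.
\end{itemize}
Summing these contributions gives $F_{n,m}/((1-q)(1-t))$.

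Finally, the symmetrization $P_{n,m}=\tfrac12(F+(-1)^{nm}F(-q,-t))$ projects onto the $\det$-part. This accounts for the extra $-1\in W$ acting by $(q,t)\mapsto(-q,-t)$ with sign $\det(-1)^m=(-1)^{nm}$, which the $\Gamma$-Hilbert scheme does not see.
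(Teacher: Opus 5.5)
Your overall architecture is the paper's: localize a Haiman-type Koszul complex on $Y_n$, twisted by $\cL_n^m$, then project to the right parity. Your fixed-point data ($D_\lambda$ from the hooks with $a+l$ odd, $\ell_\lambda$, and $B_\lambda(1-q)(1-t)\Pi^{(0)}_\lambda$) are correct.

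\textbf{The Koszul complex is not a resolution.} The step you single out as the main obstacle cannot be carried out, because it is false. The complex $\cK_n=\cP_n\otimes\bigwedge^\bullet(\cT_n\oplus\C x\oplus\C y)$ has $n+1$ generators while $F_n$ has codimension $n$. Its homology is supported on $F_n$ itself, so ``exact generically plus depth'' can at best show that only $\cH^0=\cO_{F_n}$ and $\cH^{-1}=\cM_n$ survive; this is what the paper proves. Twisting by $\cL_n^m$ does not change exactness of a complex of sheaves.

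Your own formulas detect the failure. The element $-1\in W(B_n)$ is the central $\gamma\in T$, which acts trivially on $Y_n$ and by the single sign $(-1)^{nm}$ on $\cL_n^m\otimes\cO_{F_n}$. So if $\cK_n$ resolved $\cO_{F_n}$, its Euler characteristic $F_{n,m}$ would already be of pure parity and equal $P_{n,m}$. But $F_{n,m}(1,1)=\frac{1}{nm+1}\binom{n(m+1)}{n}\neq P_{n,m}(1,1)$.

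What actually happens is $P_{n,m}=\chi^T(\cL_n^m\otimes\cO_{F_n})-\chi^T(\cL_n^m\otimes\cM_n^+)$ and $N_{n,m}=\chi^T(\cL_n^m\otimes\cM_n^-)$. The parity projection is not there to impose $-1\in W$. It discards the odd excess. The even excess $\cM_n^+$, which is nonzero for every $n\ge2$, must then be shown invisible to the wall: $R\rho_*\cM_n^+=0$. That vanishing is the missing idea. The paper obtains it from the rank-one Hecke correspondence $Y_n\leftarrow\Hecke\to\mfM_\theta((n-1,n),(1,0))$. There $\cM_n^+\simeq Rp_*(\cL^{-1}\otimes q^*G)$ for a perfect $G$, with $Rq_*\cL^{-1}=0$ and $\rho\circ p$ factoring through $q$.

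\textbf{Two further steps are not formal.}

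First, the Proj of $\bigoplus_m e(\Delta^mI^{(m)})$ is the wall variety $Q_n$, not $Y_n$. The polarization is $\det\cR_1=\rho^*\cO_{Q_n}(1)$, not $\det\cT_n=\det\cR_0$, and it is trivial on the contracted fibres $\Gr(k,2k-1)$. So Serre vanishing does not apply to $\cL_n^m|_{F_n}$, and Grauert--Riemenschneider only gives $R\rho_*\cO_{Y_n}=\cO_{Q_n}$. To identify $\chi^T(F_n,\cL_n^m)$ with the Hilbert series of $e(\Delta^mI^{(m)})/\mfm\,e(\Delta^mI^{(m)})$ you need symbolic descent, $R\rho_*\cO_{F_n}\simeq\cO_{\cZ_n^{\mathrm{symb}}}$. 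This is not base change: $\cZ^{\mathrm{symb}}_2$ is not a local complete intersection in $Q_2$ and has infinite projective dimension there. The paper proves descent on formal neighbourhoods of the wall fibres, using the Bellamy--Craw normal form, Borel--Weil--Bott on $T^*\Gr$, a normalized-trace lemma, and again $R\rho_*\cM_n^+=0$ for concentration. Only after that does it apply Serre vanishing on $\cZ^{\mathrm{symb}}_n$ and base change along $\pi_Q$.

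Second, Stump's $\mathrm{Cat}^{(m)}$ is $\mathsf Q^{\mathrm{full}}_m(J^m)=e_m(J^m/\mfm_SJ^m)$, whereas the geometry computes $\mathsf Q^{\mathrm{inv}}_m(J^m)=e_mJ^m/\mfm\,e_mJ^m$. In general there is only a surjection $\mathsf Q^{\mathrm{inv}}_m\twoheadrightarrow\mathsf Q^{\mathrm{full}}_m$, and no twist or parity bookkeeping makes it an equality. For example, with $\mathcal J=S$ and $m=1$ the target $e_1\C$ is zero while $S^\eps/\mfm S^\eps\neq0$ by graded Nakayama. The paper closes this with a dimension sandwich. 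By Theorems~A and~C, $\dim\mathsf Q^{\mathrm{inv}}_m(J^m)=\binom{n(m+1)}{n}$. Stump's generalized Gordon surjection maps $\mathsf Q^{\mathrm{full}}_m(J^m)$ onto $\gr_F eL_{\frac1{2n}+m}(\triv)$, which has dimension $\binom{n(m+1)}{n}$ by Berest--Etingof--Ginzburg.
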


\begin{thmC}[Principal specialization; Theorem~\ref{thm:principal}]
For all $n\ge1$ and $m\ge0$, with $M=n(m+1)$ and $Q=q^2$,
\[
 F_{n,m}(q,q^{-1})
 =q^{-mn^2}\qbinom{M}{n}_{Q}
 -q^{1-mn^2}\,[m]_{Q^n}\,\qbinom{M}{n-1}_{Q},
 \qquad [m]_{Q^n}=\tfrac{1-Q^{nm}}{1-Q^n},
\]
the two summands having opposite exponent parities.  Hence
\[
 q^{mn^2}\,P_{n,m}(q,q^{-1})=\qbinom{n(m+1)}{n}_{q^2},
 \qquad
 P_{n,m}(1,1)=\binom{n(m+1)}{n},
\]
and the opposite-parity part $N_{n,m}:=P_{n,m}-F_{n,m}$ satisfies
$N_{n,m}(1,1)=m\binom{n(m+1)}{n-1}$ and
$F_{n,m}(1,1)=\frac1{nm+1}\binom{n(m+1)}{n}$.
In particular $\mathrm{Cat}^{(m)}(B_n;q,q^{-1})=q^{-mn^2}\qbinom{n(m+1)}{n}_{q^2}$ and
$\mathrm{Cat}^{(m)}(B_n;1,1)=\binom{n(m+1)}n$ for $m\gg0$.
\end{thmC}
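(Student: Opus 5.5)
The plan is to exploit the fact that at $t=q^{-1}$ almost every term of the localization sum \eqref{eq:F-def} vanishes. For a box $(a,b)$ with $a+b$ even, the factor $1-q^at^b$ of $\Pi^{(0)}_\lambda$ specializes to $1-q^{a-b}$. This is zero exactly on the diagonal boxes $a=b\ge1$. So every $\lambda$ containing $(1,1)$ contributes $0$, provided $D_\lambda$ has no pole there. That proviso holds: the factors of $D_\lambda$ specialize to $(1-q^{a_\lambda(s)+l_\lambda(s)+1})(1-q^{-a_\lambda(s)-l_\lambda(s)-1})$, which never vanish. The prefactor $(1-q)(1-t)$ becomes $-(1-q)^2q^{-1}$, also nonzero. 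Hence only hooks survive. A direct checkerboard count shows that every hook of size $2n$ has empty $2$-core: for a hook with row length $r$ and column length $c=2n+1-r$, the number of even boxes is $\lceil r/2\rceil+\lfloor (c-1)/2\rfloor=n$ in both parities of $r$. Therefore
\[
F_{n,m}(q,q^{-1})=-(1-q)^2q^{-1}\sum_{r=1}^{2n}T_r(q),
\]
where $T_r$ is the term attached to the hook $(r,1^{2n-r})$. Each $T_r$ is completely explicit. $B_\lambda$ is a geometric sum in $q^{\pm1}$, and $\ell_\lambda$ is a monomial $q^{e(r)}$ with $e(r)$ a quadratic in $r$. $\Pi^{(0)}$ is a product of $(1-q^{\pm 2k})$ over the even arm and leg boxes. $D_\lambda$ is a product over the hook boxes whose hook length $a+l+1$ is even, i.e.\ a product of $(1-q^{2k})(1-q^{-2k})$.

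Next, I will rewrite $\sum_r T_r$ as a sum of residues. I introduce a rational function $\Phi(z)$ in an auxiliary variable $z$ whose simple poles at $z=q^{2j}$ (or at a comparable lattice of points indexed by $r$) have residues exactly $T_r$. Such a $\Phi$ has the shape $z^{m n}\cdot(\text{ratio of } (z;Q)\text{-type Pochhammer products})$, with $Q=q^2$, and the $\ell_\lambda^m$ factor becomes the power $z^{mn}$ up to a monomial. By the residue theorem, $\sum_r T_r$ is minus the sum of the residues at $z=0$ and $z=\infty$. I expect each of these to produce one of the two claimed summands. The residue at $\infty$ (the polynomial part) should yield $q^{-mn^2}\qbinom{M}{n}_Q$. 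The residue at $0$ should yield the term $[m]_{Q^n}\qbinom{M}{n-1}_Q$, with the factor $[m]_{Q^n}$ arising as a truncated geometric series from $z^{mn}$ against a denominator $1-z^n$-type factor.

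If the residue bookkeeping proves unwieldy, the fallback is different. I would verify the identity for small $n$ directly and then prove it by induction on $n$ or $m$, using a $q$-Chu--Vandermonde or $q$-Pfaff--Saalsch\"utz summation applied to the hook sum. That summation is a terminating ${}_2\phi_1$ / ${}_3\phi_2$ in $Q$ once the parity-$r$ splitting is made. This hook-sum identity is the main obstacle: the residue computation must be matched exactly, including signs and the monomial normalization $q^{-mn^2}$ versus $q^{1-mn^2}$.

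The remaining assertions are formal consequences. Since $\qbinom{M}{n}_Q$ and $\qbinom{M}{n-1}_Q$ are polynomials in $Q=q^2$, the two summands have exponents of parities $mn^2$ and $mn^2+1$. The specialization $(q,t)\mapsto(-q,-t)$ preserves the line $t=q^{-1}$ and multiplies a monomial $q^k$ by $(-1)^k$. Since $mn^2\equiv mn \pmod 2$, the average $\tfrac12(F+(-1)^{nm}F(-q,-t))$ keeps precisely the first summand, which gives $q^{mn^2}P_{n,m}(q,q^{-1})=\qbinom{n(m+1)}{n}_{q^2}$. The same parity argument identifies $N_{n,m}=P_{n,m}-F_{n,m}$ on this line with $q^{1-mn^2}[m]_{Q^n}\qbinom{M}{n-1}_Q$, whose value at $q=1$ is $m\binom{M}{n-1}$; this value is well defined because $P_{n,m}$ and $F_{n,m}$ are Laurent polynomials, as the specialization of the displayed identity shows. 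Then $F_{n,m}(1,1)=\binom Mn-m\binom M{n-1}$. Using $\binom M{n-1}=\binom Mn\,\frac{n}{nm+1}$, this equals $\frac1{nm+1}\binom{M}{n}$. Finally, combining with Theorem~B gives the statements about $\mathrm{Cat}^{(m)}(B_n;q,q^{-1})$ and $\mathrm{Cat}^{(m)}(B_n;1,1)$ for $m\gg0$.
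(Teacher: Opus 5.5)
Your hook reduction (a non-hook contains $(1,1)$, whose factor $1-qt$ in $\Pi^{(0)}_\lambda$ vanishes at $t=q^{-1}$ while $D_\lambda(q,q^{-1})\ne0$, and every hook of size $2n$ is balanced) is correct and agrees with the paper. So is your derivation of the corollaries from the displayed formula. The genuine gap is the displayed formula itself. Its proof is the closed-form evaluation of the hook sum, and the proposal only says what it ``expects'' from residues at $0$ and $\infty$, without writing down $\Phi$. The fallback likewise names summation formulas without reducing the hook sum to one.

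The mechanism you predict also does not match the sum. On the line $t=q^{-1}$ one has $\ell_{(r,1^{2n-r})}=q^{2n\lfloor r/2\rfloor-n^2}$. So the $2n$ hooks split by the parity of $r$ into two families of $n$ terms over the same points $Q^j$, not $2n$ simple poles with $\ell_\lambda^{\,m}\mapsto z^{mn}$. Realized by Lagrange interpolation, the odd-$r$ family is
\[
 \frac{(1-q)q^{-mn^2}}{1-Q^n}\sum_{k=0}^{n-1}\mathrm{Res}_{z=Q^k}\frac{z^{M}}{\prod_{i=0}^{n-1}(z-Q^i)}
 =\frac{(1-q)q^{-mn^2}}{1-Q^n}\qbinom{M}{n-1}_Q ,
\]
with nothing coming from $z=0$. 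The even-$r$ family is $q^{2nm+1}$ times this. So the two summands of the theorem are not two separate residues.

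What you need is the explicit per-hook term and the summation. Take $\lambda=(r,1^b)$ with $b=2n-r$, $u=\lfloor(r-1)/2\rfloor$, $v=\lfloor b/2\rfloor$. The even hook lengths are $2n$ at the corner, and $2,\dots,2u$ and $2,\dots,2v$ along the arm and leg. One factor of each arm/leg pair in $D_\lambda$ cancels against $\Pi^{(0)}_\lambda$, and $B_\lambda(q,q^{-1})=q^{-b}\frac{1-q^{2n}}{1-q}$. The contribution of $\lambda$ to $F_{n,m}(q,q^{-1})$, prefactor $(1-q)(1-t)$ included, is
\[
 (-1)^u\,q^{m(2n\lfloor r/2\rfloor-n^2)}\,q^{-b+2n-1+u(u+1)}\,
 \frac{1-q}{(1-Q^n)(Q;Q)_u(Q;Q)_v}.
\]
For $r=2k+1$ this equals $\frac{1-q}{(Q;Q)_n}(-1)^kq^{-mn^2}Q^{\binom k2}(Q^{nm+2})^k\qbinom{n-1}{k}_Q$. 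For $r=2k+2$ it is exactly $q^{2nm+1}$ times the same quantity.

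The finite $q$-binomial theorem $\sum_k(-1)^kQ^{\binom k2}\qbinom{n-1}{k}_Qz^k=(z;Q)_{n-1}$, taken at $z=Q^{nm+2}$, then gives
\[
 F_{n,m}(q,q^{-1})=q^{-mn^2}\qbinom{M}{n}_Q\frac{(1-q)(1+q^{2nm+1})}{1-Q^{nm+1}} .
\]
The two stated summands appear only afterwards, via $\frac{(1-q)(1+q^{2nm+1})}{1-Q^{nm+1}}=1-q\frac{1-Q^{nm}}{1-Q^{nm+1}}$ and $\qbinom{M}{n}_Q\frac{1-Q^{nm}}{1-Q^{nm+1}}=[m]_{Q^n}\qbinom{M}{n-1}_Q$. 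This is the paper's proof.

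A smaller point concerns the values at $(1,1)$. The specialized formula only shows $F_{n,m}(q,q^{-1})\in\Z[q^{\pm1}]$. Reading $F_{n,m}(1,1)$ and $P_{n,m}(1,1)$ off the line uses that $F_{n,m}$ is a Laurent polynomial in $q,t$. That comes from $F_{n,m}=\chi^T(Y_n,\cL_n^{\otimes m}\otimes\cK_n)$ having homology supported on the projective $F_n$.
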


Let $H_c$ be the equal-parameter rational Cherednik algebra of $W(B_n)$,
$U_c=eH_ce$, and let $L_c(\triv)$ be the finite-dimensional simple at
$c=\frac1{2n}+m$.  Set
$\mfS^{\mathrm{symb}}_{n,m}=e_mI^{(m)}/\mfm\,e_mI^{(m)}$.

\begin{thmD}[Cherednik identification; Theorem~\ref{thm:cherednik}]
For every fixed $n$ and all
$m\gg0$ the canonical filtered shift-bimodule surjection is an isomorphism of
Gordon-bigraded vector spaces
\[
 \mfS^{\mathrm{symb}}_{n,m}\ \xrightarrow{\ \sim\ }\
 \grf_F\, eL_{\frac1{2n}+m}(\triv),
 \qquad\text{and}\qquad
 P_{n,m}(q,t)=\chf^{G}_{q,t}\,\grf_F\, eL_{\frac1{2n}+m}(\triv),
\]
where $F$ is the iterated tensor-product filtration of \cref{def:bigrading}.
\end{thmD}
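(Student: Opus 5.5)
The plan is to derive Theorem D from Theorems A and B together with the known structure of shift bimodules for the rational Cherednik algebra of $W(B_n)$ at $c=\frac1{2n}+m$. The argument has three steps: build the surjection, compare dimensions, and identify the characters.

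\textbf{Step 1: the surjection.} By Gordon--Stafford type results (Kivinen's $\Z$-algebra, \cite[Thm.~4.3, Cor.~4.5]{Kivinen}), the shift bimodule $eH_{c}e_{-}\cdots$ composed $m$ times has associated graded equal to $e\bigl(\Delta^mI^{(m)}\bigr)$, up to the twist identifying it with $e_mI^{(m)}$. The symbolic model is normal, which gives the needed Cohen--Macaulay/saturation input. Applying the iterated shift functors to $eL_{1/(2n)}(\triv)$ gives $eL_{1/(2n)+m}(\triv)$, with the iterated tensor-product filtration $F$. For $m=0$, $eL_{1/(2n)}(\triv)$ is one-dimensional and equals the trivial module $\C=A/\mfm$. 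So the filtered bimodule tensored with $\C$ has associated graded a quotient of $e_mI^{(m)}\otimes_A A/\mfm=\mfS^{\mathrm{symb}}_{n,m}$. This is the canonical surjection, and it preserves the Gordon bigrading, whose second grading comes from the Euler element and the filtration degree.

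\textbf{Step 2: dimension count.} On the Cherednik side, $\dim eL_c(\triv)$ at $c=\frac1h+m$ is the Fuss--Catalan number $\prod_i\frac{d_i+mh}{d_i}=\binom{n(m+1)}{n}$ (Berest--Etingof--Ginzburg, Gordon). On the other side, Theorem A gives $e_mI^{(m)}=e_mJ^m$ for $m\gg0$. Hence $\mfS^{\mathrm{symb}}_{n,m}=(J^m/\mfm_SJ^m)_{\det^m\text{-part}}$, and this needs some care. The minimal generators of $e_mJ^m$ as an $A$-module coincide with the $\det^m$-isotypic part of $J^m/\mfm_SJ^m$, because $e_m$ is exact on $W$-modules and $\mfm_S J^m$ has $e_m$-part equal to $\mfm\, e_mJ^m$. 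The latter equality also needs checking, using $e_m(S_+J^m)=A_+e_m J^m$ via Reynolds-type averaging. Granting it, Theorems B and C give $\dim\mfS^{\mathrm{symb}}_{n,m}=P_{n,m}(1,1)=\binom{n(m+1)}{n}$. A surjection between spaces of equal finite dimension is an isomorphism.

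\textbf{Step 3: characters.} Since the map is bigraded, the bigraded characters agree. This gives $\chf^G_{q,t}\grf_F eL=\Hilbs(\mfS^{\mathrm{symb}}_{n,m})=\mathrm{Cat}^{(m)}(B_n;q,t)=P_{n,m}(q,t)$ by Theorem B, after normalizing the Euler-grading shift. The check that $q^{mn^2}$ and the $(q,q^{-1})$ specialization match the Euler grading $q^{-mn^2}\qbinom{n(m+1)}{n}_{q^2}$ from Theorem C serves as a consistency check on the normalization.

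\textbf{Main obstacle.} The hardest step is Step 1. One must show that the iterated filtration on $eL_{1/(2n)+m}$ is compatible with the associated graded of the shift bimodules, so that $\grf$ of the tensor product receives a surjection from $\grf(\text{bimodule})\otimes\C$, and that the bigradings correspond exactly. I would first try the standard argument: tensor the filtered bimodule with the one-dimensional filtered module and use right exactness of $\grf\otimes$ on the filtered surjection. For exact agreement of the bigradings, I would fall back on the $\C^*$-equivariance of the Euler grading.
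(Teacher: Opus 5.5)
Your Step~1 agrees in substance with the paper's construction. The paper gets the surjection \eqref{eq:symbolic-to-Cherednik} from the Boixeda Alvarez--Losev shift bimodule on $Q_n$ together with \cref{prop:canonical-quotient} and \cref{lem:spherical-tensor}, and your target dimension is \cref{thm:typeB-dim}.

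The gap is in Steps~2 and~3, which are circular. In the paper, Theorem~A is proved \emph{from} Theorem~D. The proof of \cref{thm:collapse} takes the isomorphism $\mfS^{\mathrm{symb}}_{n,m}\xrightarrow{\sim}\grf_F\,eL_{\frac1{2n}+m}(\triv)$ of \cref{thm:cherednik}, composes it with Stump's surjection from $e_mJ^m/\mfm\,e_mJ^m$, and applies graded Nakayama. Theorem~B (\cref{thm:qtCat}) then uses Theorem~A to pass between $J^m$ and $I^{(m)}$. So neither $e_mJ^m=e_mI^{(m)}$ nor $\mathrm{Cat}^{(m)}(B_n;q,t)=P_{n,m}$ is available while proving~D. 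Theorem~A also cannot be reached from the Cherednik side some other way. Stump's surjection only shows that $e_mJ^m/\mfm\,e_mJ^m$ maps onto the target. It gives no upper bound on $\dim\mfS^{\mathrm{symb}}_{n,m}$, and that upper bound is exactly what the argument needs.

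The missing input is \cref{thm:character}, which computes the source directly from geometry and never mentions $J^m$:
\begin{itemize}
\item[(i)] $P_{n,m}=\chi^T(Y_n,\cL_n^{m}\otimes\cI_n)$, by localization, the two-term homology and the even-excess vanishing;
\item[(ii)] this equals $\chi^T(\cZ_n^{\mathrm{symb}},\cO(m))$, by symbolic descent;
\item[(iii)] for $m\ge m_0(n)$, Serre vanishing together with cohomology and base change identifies this with $q^{-mn^2}\Hilbs\widetilde\mfS^{\mathrm{symb}}_{n,m}$.
\end{itemize}
Hence $\chf_{q,t}\mfS^{\mathrm{symb}}_{n,m}=P_{n,m}$, whose value at $q=t=1$ is $\binom{n(m+1)}{n}$ by Theorem~C. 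Using this in Step~2 gives the equal-dimension argument, and using it in Step~3 gives the character identity, exactly as in the paper. Theorems~A and~B then become corollaries rather than inputs.

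The identity $e_m(\mfm_SJ^m)=\mfm\,e_mJ^m$ that you flagged does hold, for the reason you suggest. $J^m$ is generated by the $\det^m$-semi-invariants $a_1\cdots a_m$, and $e_m(fa_1\cdots a_m)=e(f)\,a_1\cdots a_m$. After the fix, however, it is not needed.
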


All of the above rests on the following geometric theorem, which occupies the bulk of
the paper.  Let $\Gamma=\mu_2\subset SL_2(\C)$ act on $\A^2$ by
$(x,y)\mapsto(-x,-y)$, let $Y_n=n\Gamma\text{-}\Hilb(\A^2)$ be the corresponding smooth
$2n$-dimensional Nakajima quiver variety, $\eta:\cU_n\to Y_n$ the universal
$\Gamma$-cluster, $\cP_n=\eta_*\cO_{\cU_n}=\cR_0\oplus\cR_1$ the rank-$2n$ cluster
algebra bundle, $\cT_n=\ker(\Tr_{\cR_0})$ the trace-zero even part, and
\[
 \cE_n=\cT_n\oplus\cO_q\oplus\cO_t,\qquad
 \cK_n^{-a}=\cP_n\otimes{\textstyle\bigwedge^a}\cE_n,\qquad
 \cI_n=\cP_n/J_n,
\]
the type-$B$ analogue of the Haiman--Garsia Koszul complex
(\cref{sec:haiman}); here $J_n$ is the ideal
generated by $\cT_n$ and the universal sections $x,y$.  Let $\rho:Y_n\to Q_n$ be the
projective symplectic resolution of the wall quiver variety
$Q_n\cong\Quot^n_X(R_1)_{\mathrm{red}}\cong X_{\mathfrak g,\mathrm{symb}}$
(\cref{sec:models}), $\cL_n=\det\cR_1=\rho^*\cO_{Q_n}(1)$, and
$\cZ_n^{\mathrm{symb}}=Q_n\times_{S_n}\{0\}$ the scheme-theoretic symbolic punctual
fiber, $S_n=(\frh\oplus\frh^*)/W$.

\begin{thmE}[Geometric theorem; Theorems~\ref{thm:two-term}, \ref{thm:dual-socle}, \ref{thm:vanishing}, \ref{thm:SD}]
\leavevmode
\begin{enumerate}[label=\textup{(\arabic*)}]
\item The complex $\cK_n^\bullet$ has exactly two nonzero homology sheaves,
$\cH^0(\cK_n^\bullet)=\cI_n$ and $\cM_n:=\cH^{-1}(\cK_n^\bullet)$; and
$\cI_n\cong\cO_{F_n}$, the structure sheaf of the scheme-theoretic punctual fiber.
\item There is a canonical $T$-equivariant isomorphism
$\cM_n\cong\cExt^n_{\cO_{Y_n}}(\cS_n,\cO_{Y_n})\otimes\det\cE_n$, where
$\cS_n=\cP_n^\vee/J_n\cP_n^\vee$ is the dual-socle sheaf.
\item $R\rho_*\cM_n^+=0$, where $\cM_n^\pm$ are the central-parity summands.
\item \textup{(}Symbolic descent\textup{)} For every $d\in\Z$,
\[
 R\rho_*\bigl(\cL_n^{\otimes d}\otimes\cI_n\bigr)\;\simeq\;
 \cO_{\cZ_n^{\mathrm{symb}}}(d).
\]
\end{enumerate}
\end{thmE}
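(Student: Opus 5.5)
The plan is to prove the four parts in the order listed, since each feeds the next. Throughout I use that $Y_n$ is a smooth symplectic $2n$-fold with a torus $T=\C^*_q\times\C^*_t$ action whose fixed points are indexed by $\Bal(2n)$. At a fixed point $\lambda$, the fiber $\cP_n|_\lambda$ is the span of the monomials $q^at^b$ with $(a,b)\in\lambda$, split by the parity of $a+b$. Most local statements will therefore be checked by reducing to a $T$-stable affine chart around each fixed point and then propagating along $T$-orbits, since every $T$-stable closed subset meets a neighborhood of the fixed locus.

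\emph{Part (1).} The complex $\cK_n^\bullet$ is the Koszul complex of the section $s=(\cT_n\hookrightarrow\cP_n,\,x,\,y)$ of $\cE_n^\vee\otimes\cP_n$, viewed as a complex of $\cP_n$-modules on the finite flat cover $\cU_n\to Y_n$. Note that $\rk\cE_n=n+1$, while the punctual fiber $F_n$ (the locus where $\cU_n$ is supported at $0$) has expected codimension $n+1$ inside $\cU_n$. Its actual codimension is $n$, because $\dim F_n=n$ in the $2n$-dimensional space $\cU_n$ by the Lagrangian property of the central fiber. So the section has exactly one excess dimension. Koszul homology vanishes below the depth of the ideal $J_n$, which gives $\cH^{-a}=0$ for $a\ge2$ provided $J_n$ contains a regular sequence of length $n$ locally; this comes from Cohen--Macaulayness of $\cU_n$. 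That leaves only $\cH^0=\cP_n/J_n$ and $\cH^{-1}$. To identify $\cH^0$ with $\cO_{F_n}$, I would check at each torus fixed point that $J_n$ cuts out exactly the set-theoretic fiber with its reduced-enough structure. I would do this by comparing the $T$-characters of $\cP_n/J_n$ and of the fiber's coordinate ring in the fixed-point chart.

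\emph{Part (2).} For a Koszul complex with one excess dimension, $\cH^{-1}$ is computed by the standard duality $\cH^{-1}(K(s))\cong\cExt^{r-1}(\cH^0(K(s^\vee)),\cO)\otimes\det\cE$. Concretely, I would dualize $\cK_n^\bullet$ using $\cHom_{\cO_Y}(\cP_n,\cO_Y)=\cP_n^\vee$ and the self-duality $K^\vee\cong K[r]\otimes\det\cE^\vee$. A spectral sequence argument then identifies $\cH^{-1}$ with $\cExt^n(\cP_n^\vee/J_n\cP_n^\vee,\cO)\otimes\det\cE_n$, which is exactly the form claimed. Canonicity and $T$-equivariance are automatic from the construction.

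\emph{Part (3).} I would split $\cM_n$ by the central $\mu_2$ parity and prove $R\rho_*\cM_n^+=0$ via Grothendieck duality for the proper map $\rho$. The dual-socle description from part (2) converts this into a statement about $R\rho_*$ of $\cS_n^+$ twisted by $\omega_{Y_n}\cong\cO$, and hence into the vanishing of the even part of the pushforward of the dual socle. I expect this to be the \textbf{main obstacle}. The first approach I would try is a fiberwise argument: show that $\cS_n^+$ restricted to the $\rho$-fibers is acyclic, using a Kawamata--Viehweg-type vanishing for the semismall map $\rho$. The fallback is an induction on $n$ via the stratification of $Q_n$ by partial diagonals, where the fibers are products of smaller $Y_k$.

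\emph{Part (4).} This follows from parts (1) and (3). Tensoring $\cK_n^\bullet$ with $\cL_n^d$ gives a resolution of $\cI_n(d)$ whose terms are $\cP_n\otimes\bigwedge^a\cE_n\otimes\cL_n^d$, modulo the $\cM_n$ term. The terms have known pushforwards, since $R\rho_*$ of the tautological bundles is the corresponding module on $Q_n$ by the Bridgeland--King--Reid/quiver-variety derived equivalence and rationality of $Q_n$. The obstruction $\cM_n$ contributes nothing: the relevant parity part has vanishing pushforward by part (3), and the other parity is removed by the $\det$-isotypic projection. Pushing forward therefore yields the Koszul complex on $Q_n$ defining $Q_n\times_{S_n}\{0\}$, twisted by $\cO(d)$. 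This gives $R\rho_*(\cL_n^d\otimes\cI_n)\simeq\cO_{\cZ^{\mathrm{symb}}_n}(d)$.
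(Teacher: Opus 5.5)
Your two-term argument in (1) is sound and is a legitimate variant of the paper's. You use grade sensitivity of the Koszul complex on the Cohen--Macaulay cover $\cU_n$, where the grade is at least $n$ because $\dim F_n\le n$. The paper instead uses a local-cohomology/Tor-amplitude lemma on the regular $Y_n$. The same grade argument also handles $\cK_n^T$, since $\cP_n^\vee$ is maximal Cohen--Macaulay on $\cU_n$, and your (2) needs this. With it, (2) is the paper's argument. The genuine gaps are in (3), in (4), and in the identification $\cI_n\cong\cO_{F_n}$.

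\emph{Part (3).} You give no mechanism, and the ones you suggest would not work.

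The reduction to $\cS_n^+$ is unjustified. Duality for the crepant $\rho$ does exchange $R\rho_*\cF=0$ with $R\rho_*\RcHom(\cF,\cO_{Y_n})=0$. But $\RcHom(\cM_n^+,\cO_{Y_n})$ is a shifted twist of $\cS_n^+$ only if $\cS_n^+$ is Cohen--Macaulay of codimension $n$. Part (2) identifies $\cM_n$ only with the \emph{lowest} Ext of $\cS_n$; the higher ones are tied to $\cN_n=\cH^{-1}(\cK_n^T)$, which you do not control.

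The vanishing tools you name do not reach the statement either:
\begin{itemize}
\item Kawamata--Viehweg-type vanishing concerns higher direct images of line bundles $\omega\otimes(\text{nef and big})$. The heart of (3) is $\rho_*\cM_n^+=0$ for a \emph{nonzero} torsion sheaf, e.g.\ $\cM_2^+\simeq i_*\cO_{\PP^2}(-1)$ on the contracted $\PP^2$.
\item Acyclicity of underived restrictions to fibers does not compute $R\rho_*$ of a sheaf that is not flat over $Q_n$.
\item Induction over the support stratification only reduces you to the locus over $0\in S_n$, where all the difficulty sits. The positive-dimensional $\rho$-fibers there are Grassmannians $\Gr(r+1,2r+1)$ at the fixed points, not products of smaller $Y_k$.
\end{itemize}

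The missing idea is the rank-one even Hecke correspondence $Y_n\xleftarrow{p}\Hecke\xrightarrow{q}Y'=\mfM_\theta((n-1,n),(1,0))$, with $\cL$ the universal even socle line. One proves $\cM_n^+\simeq Rp_*(\cL^{-1}\otimes q^*G)$ with $G$ perfect on $Y'$. This uses the Hilbert--Burch model of the even socle locus (whose canonical module is the even dual socle), a trace Koszul complex with a Koszul filtration on $\Hecke$, a Bott-formula bound $[-2,0]$ on Tor-amplitude, and concentration by codimension-$n$ support. Next, $Rq_*\cL^{-1}=0$, because $\Hecke$ is a regular-section zero scheme in a $\PP^{n+1}$-bundle over $Y'$ and all its Koszul twists lie in the negative acyclic window. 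Finally, $\rho p=hq$ gives $R\rho_*\cM_n^+\simeq Rh_*(G\Ltensor Rq_*\cL^{-1})=0$.

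\emph{Part (4) and $\cI_n\cong\cO_{F_n}$.} Your skeleton for (4) is the paper's: use (3) to replace $\cI_n$ by $\cK_n^+$, then push forward termwise. But both substantive steps are missing.

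First, $\rho_*$-acyclicity of the terms $\cR_i\otimes\bigwedge^a\cT_n$ does not follow from BKR-type equivalences. The paper proves it at each wall point from the Bellamy--Craw normal form plus Borel--Weil--Bott. That normal form gives $\widehat{\cR_0}\cong\cO^{\oplus(n-r)}\oplus\widehat{p^*\cQ_J}$ and $\widehat{\cR_1}$ trivial, on $T^*\Gr(r+1,2r+1)$ times a smooth core.

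Second, the pushed complex is not a Koszul complex on $Q_n$. $\rho_*\cR_0$ is not locally free at a wall point of even-socle rank $r\ge1$: its fiber has dimension $n+r+1$. And $\mfm_0\cO_{Q_n}$ is not generated by a regular sequence: at the $n=2$ wall point it needs three generators in height two, and $\cO_{\cZ_2^{\mathrm{symb}}}$ has infinite projective dimension there. So no Koszul complex on $Q_n$ resolves $\cO_{\cZ_n^{\mathrm{symb}}}$.

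What is actually needed:
\begin{enumerate}
\item Termwise acyclicity puts $R\rho_*\cK_n^+$ in $D^{\le0}$. Part (3) identifies it with $R\rho_*\cI_n\in D^{\ge0}$, so it is the cokernel of the first pushed differential.
\item Put $B=\rho_*\cP_n$ and $T=\rho_*\cT_n$. A second Bott vanishing, with one tautological subbundle factor, makes $B_0\otimes T\to\rho_*(\cR_0\otimes\cT_n)$ surjective after completion. Hence this cokernel is $B_0/(B_0T+xB_1+yB_1)$.
\item Once $B$ is shown to be generated by polynomials, a normalized-trace lemma identifies this quotient with $\cO_{Q_n}/\mfm_0\cO_{Q_n}$ through the unit map.
\end{enumerate}

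This trace mechanism is also what your (1) lacks. Comparing $T$-characters on charts produces no map between the two sheaves, and it presupposes knowing $\mfm_0\cO_{Y_n}$ on the chart. The claim is an equality of ideals, nilpotents included. The point is this: for even $f$ with $f(0)=0$ one has $\bar f\in(x,y)\cP_n$ and $\bar f-\tfrac1n\tau_f\cdot1\in\cT_n$. So $J_n$ contains the traces $\tau_f=\Tr_{\cR_0}(m_{\bar f})$, which by Haiman's trace identity are pulled-back polarized power sums generating $\mfm_0$. Add a Nakayama argument placing the ideal of clusters through the origin inside $\mfm_0\cO_{Y_n}$, and you get $\cP_n/J_n=\cO_{Y_n}/\mfm_0\cO_{Y_n}$.
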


A feature with no type-$A$ counterpart is that $\cK_n^\bullet$ is \emph{not} a
resolution: it has $n+1$ generators against expected codimension $n$, and the excess
sheaf $\cM_n$, in particular its even part $\cM_n^+$, is nonzero in every rank
$n\ge2$ (\cref{cor:trace-CM}).  Part (3) says that this excess is invisible to
the wall.  We prove it in \cref{sec:hecke} through a
rank-one Hecke correspondence $Y_n\xleftarrow{p}\Hecke\xrightarrow{q}
\mfM_\theta((n-1,n),(1,0))$; the Grassmannian wall fibers $F_J\cong\Gr(k,2k-1)$
and their Borel--Weil--Bott vanishing (\cref{sec:wall}) reappear as the
fixed-point shadow of this proof, but are not needed for it.  Part (4) is proved
in \cref{sec:SD} on the formal neighbourhood of each wall fiber, where, by the
Bellamy--Craw normal form, the even cluster bundle is a trivial summand plus the
tautological quotient bundle of a cotangent Grassmannian: Borel--Weil--Bott makes
the positive Haiman complex termwise acyclic for the pushforward and its first
differential computable, an elementary normalized-trace lemma identifies the
cokernel with the punctual quotient, and part (3) forces concentration in degree
zero.  



The paper is organized as follows.  \Cref{sec:models} fixes notation and identifies
the wall variety and its polarization.  \Cref{sec:haiman} introduces the Koszul
complex and proves parts (1) and (2) of Theorem~E.  \Cref{sec:wall} describes the
wall fibers, the local model at a wall point and the Brill--Noether thresholds.
\Cref{sec:hecke} proves part (3) and \cref{sec:SD} part (4).
\Cref{sec:localization,sec:cherednik,sec:collapse} deduce
Theorems C, B, D and A, in that order.  \Cref{sec:shuffle} formulates a lattice
path model for the polynomials $P_{n,m}$, a type-$B$ analogue of the shuffle
conjecture.  \Cref{sec:labelled} lifts this model to the level of symmetric
functions.  The paths of the rectangle acquire labels, each labelled path
contributes a fundamental quasisymmetric function, and we prove that the resulting
series $\mathcal H^+_{n,m}$ is symmetric and Schur positive, by identifying its
path-by-path pieces with LLT polynomials, and that its $s_n$-coefficient is the
polynomial $\widehat P_{n,m}$ of \cref{sec:shuffle}.  We conjecture that
$\mathcal H^+_{n,m}$ is the bigraded Frobenius character of the $G_n$-invariant
part of Gordon's module $L_{\frac1{2n}+m}(\triv)$, whose spherical part is the
subject of Theorem~D.  At $m=1$ we prove a schedule formula in the sense of Haglund
and Loehr, a sum over the permutations of $\{0,1,\dots,n\}$ of a monomial in $q,t$
times a product of $qt$-integers, whose two extreme sectors are the type-$A$
formula, and we conjecture that the corresponding $(n+1)^n$ monomials form a basis
of the module.

\subsection{Techniques}\label{sec:techniques}

Let us describe the techniques which enter the proofs of the main results.  The
geometric part of the paper rests on the theory of Nakajima quiver varieties for the
framed affine $A_1$ McKay quiver \cite{Nakajima,Nakajima98}.  The space $Y_n$ is the
quiver variety at a generic stability, the wall variety $Q_n$ is the quiver variety at
the wall stability, and $\rho$ is the variation of GIT morphism.  The identification of
$Q_n$ with the reduced Quot scheme of the odd McKay module is due to Craw, Gammelgaard,
Gyenge and Szendr\H{o}i \cite{CGGS,CGGSpunctual}, and its identification with the
symbolic model is due to Kivinen \cite{Kivinen}.  We add to this the identification of
the polarization of $Q_n$ with the sign component of the Procesi bundle of Losev
\cite{LosevProcesi} and of Boixeda Alvarez and Losev \cite{BoixedaLosev}
(\cref{thm:polarization}), which is proved by intersecting with the two extremal
curves of $Y_n$.

The proof of Theorem~E combines several tools.  Parts (1) and (2) follow from a local
cohomology argument, which shows that a perfect complex of Tor-amplitude $[-n-1,0]$
whose cohomology is supported in codimension $n$ has only two homology sheaves, and
from Koszul duality, which converts the second homology sheaf into an Ext sheaf of the
dual-socle sheaf.  The codimension bound is the semismallness of symplectic resolutions
\cite{Kaledin,KaledinPoisson}.  Part (3), the even-excess vanishing, is proved with the
rank-one Hecke correspondence of Gonz\'alez, Gorsky and Simental \cite{GGS}, which
parametrizes a cluster together with an even socle line.  Its two projections are cut
out by regular sections in projective bundles, and the proof uses Bott's formula for
twisted differential forms on projective space, the Hilbert--Burch and Buchsbaum--Rim
resolutions of the even socle locus, and Grothendieck duality for finite morphisms.
Part (4), symbolic descent, uses the \'etale-local normal form of Bellamy and Craw
\cite{BellamyCraw} at a wall point, which exhibits the formal neighbourhood of a wall
fiber as a cotangent Grassmannian times a smooth core, and the Borel--Weil--Bott
theorem on the Grassmannian, applied to positive tensor constructions of the
tautological quotient bundle.  An elementary normalized-trace lemma then computes the
pushforward of the punctual quotient, and the theorem on formal functions passes from
the formal model to $Q_n$.

The characters of Theorems~B and~C are computed with equivariant localization on
$Y_n$, the principal specialization reduces to the finite $q$-binomial theorem, and
Serre vanishing with cohomology and base change turns the Euler characteristic into a
Hilbert series for $m\gg0$.  On the representation-theoretic side we use the shift
bimodules of the rational Cherednik algebra of type $B_n$.  Kivinen's
associated-graded theorem \cite{Kivinen} identifies the associated graded of the
iterated shift bimodule with $e(\Delta^mI^{(m)})$, the Procesi bundles of Losev and of
Boixeda Alvarez and Losev give the surjection from the symbolic model onto the
finite-dimensional module $eL_{\frac1{2n}+m}(\triv)$, and the theorem of Berest,
Etingof and Ginzburg \cite{BEG} gives its dimension.  Theorem~D is then a dimension
count, and Theorem~A follows from Theorem~D, from Stump's generalized Gordon surjection
\cite{GordonDiag,Stump} and from graded Nakayama.


{\bf Acknowledgments:} The author would like to thank E. Gorsky, I. Losev, J. Haglund and O. Kivinen for useful discussions.
Claude-code was used to improving exposition, for reference search and writing python code for testing  conjectures in Sections~\ref{sec:shuffle} and~\ref{sec:labelled}.

\section{Notation and the spaces}\label{sec:models}

In this section we fix the notation which is used throughout the paper, and we collect
the facts about the spaces $Y_n$ and $Q_n$ which we need.  The two spaces are the
Nakajima quiver varieties of the framed affine $A_1$ McKay quiver at a generic
stability and at the wall stability, and the morphism $\rho:Y_n\to Q_n$ between them
is the variation of GIT morphism.  The results of Craw, Gammelgaard, Gyenge and
Szendr\H{o}i \cite{CGGS,CGGSpunctual} and of Kivinen \cite{Kivinen} identify $Q_n$ with
the reduced Quot scheme of the odd McKay module and with the symbolic model
$X_{\mathfrak g,\mathrm{symb}}$, and this is what connects the geometry of the paper
with the ideals $I^{(m)}$ of the introduction.

The main new statement of the section is the identification of the polarization,
\cref{thm:polarization}.  Boixeda Alvarez and Losev construct a partial resolution of
$(\frh\oplus\frh^*)/W$ on which a power of the sign component of the Procesi sheaf is
ample, and they remark that in type $B_n$ it is the wall quiver variety.  We prove
that the sign component is the determinant $\det\cR_1$ of the odd tautological bundle,
by pairing both line bundles with the two extremal curves of $Y_n$.  This is what
allows us to transport the Procesi package of Boixeda Alvarez and Losev to $Q_n$ in
\cref{sec:cherednik}.  In \cref{sec:two-quotients} we also compare the two
finite-dimensional quotients of $e_mJ^m$ which occur in the literature, Stump's
quotient and the quotient by the invariants, since Theorem~B is stated for the former
while the geometry of the paper computes the latter.

\subsection{McKay data}\label{sec:mckay}

Throughout, $\Gamma=\mu_2=\{\pm1\}\subset SL_2(\C)$ acts on $\A^2$ by
$(-1)\cdot(x,y)=(-x,-y)$, and
\[
 R=\C[x,y]=R_0\oplus R_1,\qquad
 R_0=\C[x,y]^\Gamma=\C[u,v,w]/(uv-w^2),\qquad
 R_1=\C[x,y]^{\mathrm{odd}}=R_0x+R_0y,
\]
with $u=x^2$, $v=y^2$, $w=xy$.  The quotient $X=\SpecOp R_0=\A^2/\Gamma$ is the $A_1$
singularity and $\mfm_X=(u,v,w)=R_1R_1$ is its maximal ideal at the origin.

We identify
$\C[x,y]=\SymOp(\frh\oplus\frh^*)$ for the rank-one situation and, for rank $n$,
$S=\C[\frh\oplus\frh^*]=\C[x_1,\dots,x_n,y_1,\dots,y_n]$ with
$W=W(B_n)=\mu_2\wr S_n$ acting by signed permutations, diagonally in $x$'s and $y$'s.
The torus $T=\C^*_q\times\C^*_t$ scales $(x,y)$ with weights $(q,t)$.  The central
element $\gamma=(-1,-1)\in T$ generates $\Gamma$, and for any $T$-equivariant sheaf
$\cF$ on a space with trivial $\gamma$-action we write $\cF=\cF^+\oplus\cF^-$ for the
$\gamma$-parity decomposition.  On local classes, parity is the parity of the total
$(q,t)$-degree.  We write $\Lambda=\C[x,y]\rtimes\Gamma$ for the skew group algebra,
and $S_0$, $S_1$ for its even and odd one-dimensional simple modules at the origin,
hence $x$ and $y$ act by zero on $S_0$ and $S_1$.

For type $B_n$ we normalize the reflection data such that, up to nonzero scalars,
\begin{equation}\label{eq:Bn-ideals}
 I^{(m)}=\bigcap_{i=1}^n(x_i,y_i)^m\cap
 \bigcap_{i<j}(x_i-x_j,\,y_i-y_j)^m\cap
 \bigcap_{i<j}(x_i+x_j,\,y_i+y_j)^m,
 \qquad
 \Delta=\prod_{i=1}^nx_i\prod_{i<j}(x_i^2-x_j^2),
\end{equation}
hence $\deg_q\Delta=n^2$.  We write $A=S^W$, $\mfm=A_+$,
$e_m=\frac{1}{|W|}\sum_w\det(w)^mw$, $e=e_0$, $e_-=e_1$.  Since $W$ is real,
$e(\Delta^mf)=\Delta^m\,e_m(f)$, hence the  multiplication by $\Delta^m$ identifies
\begin{equation}\label{eq:two-normalizations}
 \mfS^{\mathrm{symb}}_{n,m}:=\frac{e_mI^{(m)}}{\mfm\,e_mI^{(m)}}
 \ \xrightarrow[\ \cdot\Delta^m\ ]{\ \sim\ }\
 \widetilde\mfS^{\mathrm{symb}}_{n,m}:=\frac{e(\Delta^mI^{(m)})}{\mfm\,e(\Delta^mI^{(m)})},
\end{equation}
a bigrading shift by $q^{mn^2}$.

\subsection{The generic and wall quiver varieties}\label{sec:quiver}

Let $Y_n=n\Gamma\text{-}\Hilb(\A^2)$ be the $\Gamma$-equivariant Hilbert scheme of
clusters $Z\subset\A^2$ with $\C[x,y]/I_Z\cong\C[\Gamma]^{\oplus n}$ as
$\Gamma$-modules.  Equivalently, $Y_n$ is the Nakajima quiver variety
$\mfM_\theta((n,n),(1,0))$ for the framed affine $A_1$ McKay quiver at the
cyclic-chamber stability $\theta$ \cite{Kuznetsov,CGGSpunctual}.  It is smooth,
symplectic, of dimension $2n$ \cite{Nakajima98}, with universal
cluster $\eta:\cU_n\to Y_n$ finite flat of degree $2n$ and cluster algebra bundle
\[
 \cP_n=\eta_*\cO_{\cU_n}=\cR_0\oplus\cR_1,\qquad
 \rk\cR_0=\rk\cR_1=n,\qquad
 \cR_0=\cO_{Y_n}\!\cdot\!1\oplus\cT_n,\quad\cT_n=\ker(\Tr_{\cR_0}).
\]
The splitting uses $\Tr_{\cR_0}(1)=n$, and $\cT_n$ is a vector bundle of rank $n-1$.

Let $\theta_0=(0,1)$ be the wall character and $Q_n:=\mfM_{\theta_0}((n,n),(1,0))$
(with its reduced structure), and let
\[
 \rho:Y_n\longrightarrow Q_n
\]
be the VGIT morphism.  Both spaces map to $\SymOp^n(X)=(\A^2)^n/(\mu_2\wr S_n)$.  We
write $\pi_Q:Q_n\to S_n:=\SpecOp A$ for the support-cycle morphism, $\mfm_0$ for the
ideal of the origin, and
\[
 \cZ_n^{\mathrm{symb}}:=Q_n\times_{S_n}\SpecOp(A/\mfm_0),
 \qquad
 F_n:=Y_n\times_{S_n}\SpecOp(A/\mfm_0)
\]
for the scheme-theoretic central fibers downstairs and upstairs.  
Below we collect known facts about the spaces from above.

\begin{proposition}\label{prop:models-identified}
\leavevmode
\begin{enumerate}[label=\textup{(\alph*)}]
\item \textup{\cite[Thm.~1.1]{CGGS}} The reduced scheme $Q_n$ is isomorphic to the
reduced Quot scheme $\Quot^n_X(R_1)_{\mathrm{red}}$ of colength-$n$ quotients of the
odd McKay module.  It is irreducible and normal with symplectic, hence rational
Gorenstein, singularities, and it admits a projective symplectic resolution. 

\item \textup{\cite[Prop.~5.3]{CGGS}} The VGIT morphism $\rho$ is such a resolution,
that is, it is projective, symplectic and birational.
\item \textup{\cite[Prop.~A.1, Def.~3.6, Thm.~3.8]{Kivinen}} There is a torus-equivariant isomorphism
$Q_n\cong X_{\mathfrak g,\mathrm{symb}}$ for $\mathfrak g$ of type $B_n/C_n$,
identifying $\cO_{Q_n}(m)$ with the sheaf whose module of global sections is
$e(\Delta^mI^{(m)})$, $Q_n$ is normal and
\begin{equation}\label{eq:sections}
 \Gamma(Q_n,\cO_{Q_n}(m))\;=\;e\bigl(\Delta^mI^{(m)}\bigr),
 \qquad
 \pi_{Q*}\cO_{Q_n}(m)\ \text{is the sheaf associated to }e(\Delta^mI^{(m)}).
\end{equation}
\item The polarization pulls back to the determinant of the odd tautological bundle,
namely
\[\cL_n:=\rho^*\cO_{Q_n}(1)=\det\cR_1.\]
\end{enumerate}
\end{proposition}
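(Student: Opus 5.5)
Parts (a)--(c) are quoted from the literature, so the plan only has to check their hypotheses against the present normalizations and then prove (d). For (a) and (b) I will check that the stability $\theta$ defining $Y_n$ lies in the chamber adjacent to the wall $\theta_0=(0,1)$ used in \cite{CGGS}. This means the cyclic chamber, in which $Y_n$ is $n\Gamma$-$\Hilb$. The VGIT morphism $\rho$ is then the contraction \cite[Prop.~5.3]{CGGS}. For (c) I will match Kivinen's normalization of $\Delta$ and $I^{(m)}$ with \eqref{eq:Bn-ideals}. The scalars are harmless. The $T$-equivariance then follows because both identifications are built from the $T$-action on $\C[x,y]$.

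For (d) the approach is to compare line bundles on $Y_n$. Since $Y_n$ is a symplectic resolution of a conical affine variety, $\Pic(Y_n)\otimes\Q\cong H^2(Y_n,\Q)$, and by Nakajima/Kirwan surjectivity this is spanned by $c_1(\det\cR_0)$ and $c_1(\det\cR_1)$. The space $H_2(Y_n,\Q)$ is two-dimensional. So a line bundle is determined up to torsion, and hence exactly, since $\Pic(Y_n)$ is torsion free for such resolutions ($H^1(\cO)=0$ and $H^2(Y_n,\Z)$ is free). It therefore suffices to check that $\cL_n$ and $\det\cR_1$ have the same degree on two independent curve classes.

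The two curve classes are the following.
\begin{itemize}
\item $C_1$ is a $\rho$-contracted curve. Take a rank-one $(-2)$-curve on $\Gamma$-$\Hilb(\A^2)=T^*\PP^1$, placed in one factor with the other $n-1$ points generic.
\item $C_2$ is the curve where two points collide along a diagonal $x_i=x_j$, which is not contracted by $\rho$.
\end{itemize}
On $C_1$, the bundle $\rho^*\cO(1)$ has degree $0$. For $\det\cR_1$ the degree is read off from the rank-one computation: on $T^*\PP^1$ the odd tautological line bundle restricted to $\PP^1$ is $\cO$ up to twist by the choice of the wall. The wall $\theta_0=(0,1)$ is exactly the character pairing to zero with the contracted class. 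GIT gives this directly: $\rho^*\cO_{Q_n}(1)$ is the descent of the character $\chi_{\theta_0}$, which is $\det$ of the $\Gamma$-odd vertex space, that is $\det\cR_1$ up to sign convention. This is the cleanest route and makes (d) almost formal: the linearization defining $Q_n=\ProjOp$ of semi-invariants of weight $\theta_0^k$ pulls back to $\det(V_1)^{\pm1}$ on the quotient $Y_n$.

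So the plan is: (i) write $\cO_{Q_n}(1)$ as the line bundle associated to $\chi_{\theta_0}$; (ii) observe that the tautological bundle from vertex $1$ is $\cR_1$; (iii) fix the sign by checking positivity on $C_2$, where $\det\cR_1$ must be ample relative to the Hilbert--Chow map. The main obstacle is compatibility of this GIT polarization with the polarization in (c), namely Kivinen's $\cO(1)$ with sections $e(\Delta I^{(1)})$. The two could differ by a positive multiple $\cO(k)$. To rule this out I will compare weights. The $T$-weight of the generating section at the fixed point, for a suitable $\lambda\in\Bal(2n)$, is $\ell_\lambda$ (the product of odd box weights). This is exactly the fiber character of $\det\cR_1$, and its lowest degree should match $\deg\Delta=n^2$ together with $e(\Delta I)$. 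This checks degree $1$ on $C_2$ and shows $k=1$.
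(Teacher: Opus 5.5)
Your steps (i)--(ii) are the paper's proof of (d). The pullback $\rho^*\cO_{Q_n}(1)$ is the line bundle descended from the wall character $\theta_0=(0,1)$, which is the determinant of the odd vertex, so it is $\det\cR_1$. The paper also says this on the Quot side: $\cO(1)$ is the determinant of the tautological odd quotient. For (c), what the paper actually checks is the stability parameter, not the scalars in $\Delta$ and $I^{(m)}$. Kivinen's wall parameter is $(0,-2c)$, and one uses $\mfM_\theta\cong\mfM_{-\theta}$.

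The curve-class framework you lead with is wrong in its specifics. The $(-2)$-curve of $T^*\PP^1$ placed with $n-1$ generic free orbits is the paper's $C_{\mathrm{sh}}$, and it is \emph{not} contracted by $\rho$.
\begin{itemize}
\item For $n=1$, $\rho$ is an isomorphism onto $Q_1\cong T^*\PP^1$.
\item Along $C_{\mathrm{sh}}$ the odd part of the moving cluster is the tautological quotient of $\langle x,y\rangle$ on $\PP(\langle x,y\rangle)$. Hence $\det\cR_1\cdot C_{\mathrm{sh}}=1$ and $\det\cR_0\cdot C_{\mathrm{sh}}=0$ (\cref{lem:taut-degrees}), not ``$\cO$ up to twist''. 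You have exchanged the roles of the two vertices.
\item The curves contracted by $\rho$ lie in its positive-dimensional fibers, for example lines in the wall fibers $F_J\cong\Gr(k,2k-1)$. On these $\det\cR_1$ is constant and $\det\cR_0$ restricts to $\cO_{\mathrm{Pl}}(1)$, so their class is $[C_{\mathrm{lg}}]-[C_{\mathrm{sh}}]$.
\item For the same reason $\det\cR_1$ is not ample relative to the affinization (Hilbert--Chow) map. For the sign in (iii) you only need that $\rho^*\cO_{Q_n}(1)$ is nef and that $\det\cR_1\cdot C_{\mathrm{lg}}=+1$.
\end{itemize}
Even with the correct contracted class, curve degrees only show that $\cL_n$ is a positive multiple $\det\cR_1^{\otimes k}$, since $\det\cR_1$ is primitive. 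The multiple must come from the GIT/Quot description, which is why that description, not the curves, carries the proof.

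Your proposed weight check for $k=1$ does not work as stated. The weight of an equivariant line bundle at a single fixed point is defined only up to twisting the linearization by a character of $T$. So it cannot detect the exponent $k$, and it does not compute a degree on $C_{\mathrm{lg}}$. You would need either a ratio of weights at two fixed points, using sections known to be nonvanishing there, or an honest curve degree. The paper does not attempt this. It takes the normalization of $\cO_{Q_n}(1)$ from Kivinen's identification of the section ring with the GIT ring at the wall, which is part (c). The step is therefore unnecessary, but your version of it would not establish the claim.
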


\begin{proof}
(a)--(c) are the cited results.  Let us give more details for (c).  The two-vertex
cyclic quiver of \cite[Prop.~A.1]{Kivinen} with dimension vector $(n,n)$ at the wall
stability is the framed affine $A_1$ McKay quiver at $\theta_0$.  Kivinen's parameter
computation gives Gordon's stability $(0,-2c)$ in the equal-parameter case, and quiver
varieties are isomorphic under $\theta\mapsto-\theta$.  Hence we can use the
identification of the section rings in \cite[Thm.~4.3(2)]{Kivinen} and the normality
statement \cite[Thm.~3.8]{Kivinen}.  Let us also notice that the symbolic Rees algebra
need not be known to be finitely generated for \eqref{eq:sections}, since
$e(\Delta^mI^{(m)})$ is the space of sections of a unique line bundle on the normal
$Q_n$, as observed in \cite{Kivinen}.

For (d), note that on the Quot side $\cO(1)$ is the determinant line bundle of the
tautological quotient, whose pullback is $\det(\cR_1)$.  Equivalently, on the GIT side,
$\theta_0=(0,1)$ is the determinant character of the odd vertex.
\end{proof}


\subsection{The two punctual quotients}\label{sec:two-quotients}
Two finite-dimensional quotients of $e_mJ^m$ occur in the literature and we compare them below.
Recall the notation $\mathfrak m_S=S_+\subset S$ for the maximal ideal of the origin. For a $W$-representation $\chi$ let
$e_\chi$ be the corresponding idempotent, for example $e_m=e_{\det^m}$.

\begin{definition}\label{def:two-quotients}
For a $W$-stable ideal $\mathcal J\subseteq S$ set
\[
 \mathsf Q^{\mathrm{inv}}_m(\mathcal J)
 :=\frac{e_m\mathcal J}{\mathfrak m\cdot e_m\mathcal J},
 \qquad
 \mathsf Q^{\mathrm{full}}_m(\mathcal J)
 :=e_m\!\left(\frac{\mathcal J}{\mathfrak m_S\mathcal J}\right)
 =\frac{e_m\mathcal J}{e_m(\mathfrak m_S\mathcal J)}.
\]
\end{definition}

Stump's $\mathrm{Cat}^{(m)}$ is $\dim_{q,t}\mathsf Q^{\mathrm{full}}_m(J^m)$
\cite[Def.\ 3.8, Thm.\ 3.5]{Stump}, while the geometric arguments of the paper compute
$\mathsf Q^{\mathrm{inv}}_m$.  Since $\mathfrak m\cdot e_m\mathcal J\subseteq
e_m(\mathfrak m_S\mathcal J)$, there is a canonical bigraded surjection
\begin{equation}\label{eq:two-quotient-surj}
 \pi_m:\ \mathsf Q^{\mathrm{inv}}_m(\mathcal J)\twoheadrightarrow
 \mathsf Q^{\mathrm{full}}_m(\mathcal J).
\end{equation}

\begin{lemma}[Sandwich]\label{lem:sandwich}
Fix $n$, and let $m\gg0$ be in the range of \cref{thm:collapse} and
\cref{thm:character}.  Then $\pi_m$ is an isomorphism for
$\mathcal J=J^m$, and
\[
 \mathrm{Cat}^{(m)}(B_n;q,t)
 =\dim_{q,t}\mathsf Q^{\mathrm{full}}_m(J^m)
 =\dim_{q,t}\mathsf Q^{\mathrm{inv}}_m(J^m)
 =P_{n,m}(q,t).
\]
\end{lemma}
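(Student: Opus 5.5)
Since $\pi_m$ is a bigraded surjection between finite-dimensional spaces, it is an isomorphism exactly when the two bigraded dimensions agree. The plan is to bound $\dim_{q,t}\mathsf Q^{\mathrm{full}}_m(J^m)$ from below by the dimension of the finite-dimensional Cherednik module and to compute $\dim_{q,t}\mathsf Q^{\mathrm{inv}}_m(J^m)$ from above by geometry. The two bounds coincide.

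\emph{Step 1: the invariant quotient.} By \cref{thm:collapse}, for $m\gg0$ we have $e_mJ^m=e_mI^{(m)}$. Hence $\mathsf Q^{\mathrm{inv}}_m(J^m)=\mfS^{\mathrm{symb}}_{n,m}$. Via \eqref{eq:two-normalizations} this equals $\widetilde\mfS^{\mathrm{symb}}_{n,m}$ up to the shift $q^{mn^2}$. By \eqref{eq:sections}, the latter is the fiber at the origin of $\pi_{Q*}\cO_{Q_n}(m)$. Serre vanishing together with cohomology and base change identifies it with $\Gamma(\cO_{\cZ^{\mathrm{symb}}_n}(m))$. Symbolic descent (Theorem~E(4)) identifies this, in turn, with $\chi(Y_n,\cL_n^m\otimes\cI_n)$. \Cref{thm:character} (localization) then gives $\dim_{q,t}\mathsf Q^{\mathrm{inv}}_m(J^m)=P_{n,m}(q,t)$, and by Theorem~C $P_{n,m}(1,1)=\binom{n(m+1)}n$.

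\emph{Step 2: lower bound for the full quotient.} Stump's generalized Gordon surjection \cite{GordonDiag,Stump} expresses $e_m(J^m/\mfm_SJ^m)$, the $\det^m$-isotypic part of the generalized diagonal coinvariants, as surjecting onto (the associated graded of) $eL_{\frac1{2n}+m}(\triv)$. By \cite{BEG}, the latter has dimension $\prod_i\frac{d_i+mh}{d_i}=\binom{n(m+1)}n$. Therefore $\dim\mathsf Q^{\mathrm{full}}_m(J^m)\ge\binom{n(m+1)}n=\dim\mathsf Q^{\mathrm{inv}}_m(J^m)$. Combined with the surjectivity of $\pi_m$, this forces $\pi_m$ to be bijective. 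Since $\pi_m$ is bigraded, the two bigraded Hilbert series agree, which yields the stated chain of equalities, with the first one being \eqref{eq:stump-def}.

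\emph{Main obstacle.} The delicate point is Step 2: one must make sure that Stump's surjection really has source $\mathsf Q^{\mathrm{full}}_m$ rather than $\mathsf Q^{\mathrm{inv}}_m$, and that its target is the full spherical module $eL_c(\triv)$ of BEG dimension. If the surjection instead lands on a twisted sign-isotypic piece, I would apply the equivalence $e_m\leftrightarrow e$ given by multiplication by $\Delta^m$ and the shift functor to reduce to the spherical case. One must also avoid circularity with \cref{thm:collapse}, whose proof uses this same surjection together with graded Nakayama. The dimension comparison itself only requires the ungraded count, so Theorem~C at $q=t=1$ suffices.
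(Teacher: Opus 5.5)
Your argument is correct and is essentially the paper's proof. The upper bound $\dim\mathsf Q^{\mathrm{full}}_m(J^m)\le\binom{n(m+1)}n$ comes from the surjection $\pi_m$ together with the collapse $e_mJ^m=e_mI^{(m)}$ and \cref{thm:character}; the lower bound comes from Stump's surjection onto $\gr_F eL_{\frac1{2n}+m}(\triv)$, of dimension $\binom{n(m+1)}n$ (the paper cites \cref{thm:typeB-dim}, which is your BEG orbit count); and bigradedness of $\pi_m$ then transports the character $P_{n,m}$. Your circularity worry does not materialize, since the proof of \cref{thm:collapse} uses \cref{thm:cherednik} and Stump's surjection but not this lemma.
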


\begin{proof}
By \cref{thm:collapse}, $e_mJ^m=e_mI^{(m)}$, and by
\cref{thm:character}, we have
$\dim\mathsf Q^{\mathrm{inv}}_m(I^{(m)})=P_{n,m}(1,1)=\binom{n(m+1)}n$.  By
\eqref{eq:two-quotient-surj}, we have
$\dim\mathsf Q^{\mathrm{full}}_m(J^m)\le\binom{n(m+1)}n$.  On the other hand Stump's
surjection \cite[Thm.\ 4.5 and \S4.2.1]{Stump} (reproduced as
\cref{thm:stump-surjection} below) maps $\mathsf Q^{\mathrm{full}}_m(J^m)$, up to
the determinant twist, \emph{onto} $\gr_F eL_{\frac1{2n}+m}(\triv)$, whose dimension is
$\binom{n(m+1)}n$ by \cref{thm:typeB-dim}.  Hence
$\dim\mathsf Q^{\mathrm{full}}_m(J^m)\ge\binom{n(m+1)}n$, and therefore both dimensions
equal $\binom{n(m+1)}n$ and $\pi_m$ is an isomorphism of bigraded spaces.  The common
bigraded character is $P_{n,m}$ by \cref{thm:character}.
\end{proof}

\subsection{The polarization and the sign component of the Procesi bundle}
\label{sec:polarization}

 Boixeda Alvarez--Losev \cite[\S2]{BoixedaLosev} construct, for any finite Coxeter
group $W$, a partial resolution $X$ of $(\frh\oplus\frh^*)/W$, which is dominated by a
$\Q$-factorial terminalization $\widetilde X$, together with Procesi sheaves on both
spaces.  From their construction we see that on $X$ a power of the line bundle
$\cO^{\mathrm{reg}}(1):=\widetilde{\sP}^{\mathrm{reg}}\eps_-$ becomes ample
\cite[\S3.3]{BoixedaLosev}, where $\widetilde{\sP}^{\mathrm{reg}}\eps_-$ is the
sign-isotypic component of the Procesi sheaf $\widetilde\sP$ on $\widetilde X$,
normalized to the unit of the Namikawa--Weyl group.  In type $B_n$ they remark that
$X$ is the wall quiver variety at $\theta_0=(0,1)$ \cite[Rem.~2.2]{BoixedaLosev}.

We expand on their remark below.
Here $\widetilde X=Y_n$ and $\nu:=c_1(\widetilde{\sP}_n\eps_-)$, and the statement
to be proved is
\begin{equation}\label{eq:sharp-remark22}
 \nu=c_1(\det\cR_1)\qquad\text{in }N^1(Y_n/S_n).
\end{equation}

\begin{definition}[The two extremal curves]\label{def:two-curves}
Let $s$ be a reflection of $W(B_n)$ and $y\in S_n=\frh\oplus\frh^*/W$  a point of the codimension-two
symplectic leaf attached to its conjugacy class.  All formal slices to these
leaves are of type $A_1$ \cite[\S2.1]{BoixedaLosev}, hence the fibre of $Y_n\to S_n$ over $y$
is the exceptional curve of the minimal resolution of an $A_1$ surface germ,
a $\PP^1$.  Write
\begin{itemize}[leftmargin=1.6em]
\item $C_{\mathrm{sh}}$ for the fibre at the \emph{short} class $s_i:e_i\mapsto-e_i$,
along which $n-1$ distinct free $\Gamma$-orbits are frozen and one $\Gamma$-cluster
supported at the origin moves.
\item $C_{\mathrm{lg}}$ for the fibre at the \emph{long} class $(ij)$
\textup(needs $n\ge2$\textup), along which $n-2$ free orbits are frozen and two free
orbits collide.
\end{itemize}
\end{definition}

\begin{lemma}[The tautological degrees]\label{lem:taut-degrees}
\[
  \det\cR_0\cdot C_{\mathrm{sh}}=0,\quad
  \det\cR_1\cdot C_{\mathrm{sh}}=1,\quad
  \det\cR_0\cdot C_{\mathrm{lg}}=
  \det\cR_1\cdot C_{\mathrm{lg}}=1 .
\]
\end{lemma}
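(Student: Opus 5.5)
The plan is to reduce each extremal curve to a rank-one or rank-two local computation. Along the curve, the frozen free orbits contribute trivial summands to the tautological bundles, so only the moving part needs an explicit description.

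First, along either curve the universal cluster splits as a disjoint union of the frozen part and the moving part. Restricted to the curve, $\cP_n$ decomposes as a direct sum. The frozen free orbits are constant along the curve, so their summands of $\cR_0$ and $\cR_1$ are trivial bundles, each of rank one per orbit. Hence $\det\cR_i|_C$ equals the determinant of the moving summand.

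For $C_{\mathrm{sh}}$ the moving part is a $\Gamma$-cluster of length $2$ at the origin, that is, a point of $1\Gamma\text{-}\Hilb(\A^2)=T^*\PP^1$ lying on the zero section. Such a cluster is cut out by $(\alpha x+\beta y,\ \mfm^2)$, with $[\alpha:\beta]\in\PP^1$. The even part of its coordinate ring is $\C\cdot1$, which gives the trivial line bundle, so $\det\cR_0\cdot C_{\mathrm{sh}}=0$. The odd part is $\langle x,y\rangle/\langle\alpha x+\beta y\rangle$, the tautological quotient $\cO(1)$ of the trivial rank-two bundle. Its degree is $1$, which gives $\det\cR_1\cdot C_{\mathrm{sh}}=1$.

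For $C_{\mathrm{lg}}$ the moving part consists of two free orbits $\{\pm p_1\}$ and $\{\pm p_2\}$ colliding at a generic point $p\neq0$. Near $p$ the $\Gamma$-action identifies $\Gamma$-clusters with ordinary subschemes of a neighbourhood of $p$, and the relevant fibre is the punctual $\Hilb^2$ at $p$. This fibre is the $\PP^1$ of tangent directions, and the tautological bundle $\cO^{[2]}$ on it is $\cO\oplus\cO(-1)$ up to sign convention, with determinant degree $\pm1$.

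The main obstacle is translating this into $\cR_0$ and $\cR_1$. The length-$4$ cluster $Z\cup(-Z)$ has $\cO_{Z\cup -Z}=\cO_Z\oplus\cO_{-Z}$. Its $\Gamma$-invariants are identified with $\cO_Z$ via restriction, so $\cR_0\cong\cO^{[2]}|_C$. The anti-invariants are identified with $\cO_Z$ twisted by a $\Gamma$-odd function such as $x$, and $x$ is an invertible constant near $p$. Hence $\cR_1\cong\cR_0$ on $C_{\mathrm{lg}}$, and the two degrees coincide.

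It remains to fix the sign. With the convention that the punctual $\Hilb^2$ curve meets $\det\cO^{[2]}$ with degree $-1$, that is, with $-\frac12$ of the exceptional divisor restricted, we would get $-1$. I would therefore double-check the orientation convention against the lemma's claim of $+1$. The natural check is the short-curve computation, where the quotient description forces a positive degree. The claim $+1$ suggests the paper's $\cR_i$ are the duals, namely the tautological bundles $V_i$ of the quiver variety rather than $\cO_Z$. I would adopt whichever convention makes the short-curve degree $+1$ and verify that the same convention yields $+1$ on $C_{\mathrm{lg}}$. Both curves then follow from the same calculation: the ideal-side subbundle has degree $-1$, so the quotient has degree $+1$.
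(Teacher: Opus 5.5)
Your route is the paper's route. You split off the frozen orbits as constant summands. You compute $C_{\mathrm{sh}}$ through the tautological quotient $V/\cS$ on $\PP(V)$, $V=\langle x,y\rangle$. On $C_{\mathrm{lg}}$ you identify both $\cR_0$ and $\cR_1$ with $\cO^{[2]}$ on the punctual fibre of $\Hilb^2$ at $p\neq0$; your multiplication by the unit $x$ (or $y$ if $x(p)=0$) does the same job as the paper's trivialization $\pi^{-1}(U)\cong U\times\Gamma$. The paper also notes that these fibres are reduced $\PP^1$'s, which is what lets you read the intersection numbers as degrees of restrictions.

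The genuine defect is your treatment of the sign on $C_{\mathrm{lg}}$, which you leave open and state incorrectly. On the punctual fibre $C$ over $2[p]$ every point has $\mathfrak m_p^2\subset I\subset\mathfrak m_p$. This gives $0\to\mathfrak m_p/I\to\C[x,y]/I\to\C\to0$ with $\mathfrak m_p/I=V/\cS$, where now $V=\mathfrak m_p/\mathfrak m_p^2$ and $\cS=I/\mathfrak m_p^2\cong\cO_{\PP(V)}(-1)$. Hence $\cO^{[2]}|_C\cong\cO\oplus\cO(1)$, not $\cO\oplus\cO(-1)$. So $\det\cO^{[2]}\cdot C=+1$ by exactly the same computation as for $C_{\mathrm{sh}}$, which is how the paper argues.

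Your boundary-divisor recollection also has the sign backwards. The boundary divisor satisfies $E\cdot C=-2$, so $-\tfrac12E\cdot C=+1$. Positivity is forced anyway, since $\det\cO^{[n]}$ is relatively ample for the Hilbert--Chow map.

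No change of convention is needed. Your suggestion that the $\cR_i$ might be duals is wrong: the paper's $\cP_n=\eta_*\cO_{\cU_n}$ is the structure-sheaf convention, and dualizing would make $\det\cR_1\cdot C_{\mathrm{sh}}=-1$. With this one-line computation inserted, your proof is complete.
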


\begin{proof}
The fibre of $Y_n\to S_n$ over a point of either leaf is the exceptional curve of the
minimal resolution of an $A_1$ surface germ, hence a \emph{reduced} $\PP^1$, since the
minimal resolution of a surface rational double point has reduced exceptional
divisor, and for $A_1$ it is a single $(-2)$-curve.  Along either curve the supports
stay disjoint throughout. Indeed, on $C_{\mathrm{sh}}$ the $n-1$ frozen orbits sit at
fixed distinct points of $\C^2/\Gamma$ while the moving cluster stays over the
origin. Respectively,  on $C_{\mathrm{lg}}$ the $n-2$ frozen orbits stay away from the
colliding pair.

Hence along either curve the $\Gamma$-cluster $Z$ is a disjoint union of a frozen
part and a moving part, the sheaf $\cO_Z$ splits accordingly, and the frozen summands
are constant, of degree $0$.

On $C_{\mathrm{sh}}$ the moving part is a $\Gamma$-cluster $Z_0$ at the origin,
hence $\cO_{Z_0}\cong\C[\Gamma]=\rho_0\oplus\rho_1$ with each isotypic piece
one-dimensional.  Its even part is $\C\cdot1$, constant, of degree $0$, and its odd
part is $\langle x,y\rangle/(I\cap\langle x,y\rangle)$.  Writing $V=\langle
x,y\rangle$, the curve is $\PP(V)$ with $\cS:=I\cap V$ the tautological sub line bundle, hence the
odd part is the tautological quotient $V/\cS$ and
$\det(V/\cS)=\cO_{\PP^1}(1)$, of degree $+1$.

On $C_{\mathrm{lg}}$ the two colliding orbits are free, hence on a small ball
$U\subset\C^2/\Gamma$ around their common image the covering is trivial,
$\pi^{-1}(U)\cong U\times\Gamma$, and for the length-two subscheme $W_0\subset U$
determined by the cluster we get $\cO_{\widetilde W_0}\cong\cO_{W_0}\otimes\C[\Gamma]$.
Hence $\cR_0$ and $\cR_1$ both restrict to the tautological bundle
$\cO^{[2]}$ of $\Hilb^2(U)$.  Its determinant has degree $+1$ on the fibre.
Indeed, over a point of the diagonal the fibre is $\PP(V)$, $V=\mathfrak m/\mathfrak m^2$,
and $\cO^{[2]}|_{[I]}=\C[x,y]/I=\cO\oplus(\mathfrak m/I)=\cO\oplus(V/\cS)$, hence again
$\det=\cO_{\PP^1}(1)$.  Different trivializations of $\pi^{-1}(U)$ swap
$\rho_0\leftrightarrow\rho_1$, but both give $+1$, hence nothing depends on the
choice.
\end{proof}

\begin{lemma}[{The sign degrees, from \cite{BoixedaLosev}}]\label{lem:nu-degrees}
$\nu\cdot C_{\mathrm{sh}}=\nu\cdot C_{\mathrm{lg}}=1$.
\end{lemma}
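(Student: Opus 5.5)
The plan is to reduce the computation to the rank-one $A_1$ slices, where everything is explicit. By \cite[\S2.1]{BoixedaLosev} the Procesi sheaf $\widetilde{\sP}$ is compatible with restriction to formal slices. Near a point $y$ of the codimension-two leaf of a reflection $s$, the completion of $Y_n$ along the fibre is a product of a formal symplectic disc with the minimal resolution $\widetilde{\C^2/\mu_2}$ of the slice. The restriction of $\widetilde{\sP}$ is then induced from the Procesi bundle $\sP_s$ of the parabolic subgroup $W_s=\langle s\rangle\cong\Z/2$: as a $W$-module it is $\C W\otimes_{\C W_s}\sP_s$, tensored with the structure sheaf of the disc. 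Consequently $\widetilde{\sP}\eps_-$ restricted to the curve $C$ equals $\sP_s\eps_-$, the sign-isotypic part of the rank-two Procesi bundle of $\widetilde{\C^2/\Z_2}=T^*\PP^1$, computed with $\det|_{W_s}=\mathrm{sgn}$.

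The second step is the rank-one computation. For $W_s=\Z/2$ acting on $\C^2$ by $-1$, the Procesi bundle is the tautological bundle $\cO\oplus\cO(1)$ on $\Hilb^{\Gamma}$. Here the trivial summand carries the trivial character and the sign part is the tautological quotient of $\langle x,y\rangle$. This is precisely the computation of the odd part in \cref{lem:taut-degrees}, where $\det(V/\cS)=\cO_{\PP^1}(1)$. Hence $\nu\cdot C=1$ for both curves. The normalization "to the unit of the Namikawa--Weyl group" is what fixes the sign: the other choice would give the dual bundle, of degree $-1$. So I will check that Boixeda Alvarez--Losev's normalization on the $A_1$ slice selects the Haiman-type tautological bundle rather than its dual. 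This matches the ampleness statement \cite[\S3.3]{BoixedaLosev}: $\nu$ must be nef and positive on curves not contracted by $Y_n\to X$.

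The main obstacle is the long class. There the slice subgroup is generated by a signed transposition rather than by the central $\gamma$, so I must make sure the restriction of $\widetilde\sP$ to the slice really is the $\Z/2$ Procesi bundle with the same normalization. The unit normalization must not pick up a twist from the different conjugacy class. I would handle this by invoking that the Namikawa--Weyl group is a product over the two classes of leaves, $\Z/2\times\Z/2$. The unit element restricts to the unit on each $A_1$ slice, so the slice computation is uniform in the class. As a cross-check, \cref{lem:taut-degrees} shows that $\det\cR_1$ has degree $1$ on both curves. This is consistent with \eqref{eq:sharp-remark22} and with ampleness on $Q_n$, which contracts neither $C_{\mathrm{sh}}$ nor $C_{\mathrm{lg}}$ to a point of lower dimension.
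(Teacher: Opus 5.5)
Your route is the paper's. The paper's proof simply points to the proof of \cite[Lemma~2.5]{BoixedaLosev}. That argument runs for an arbitrary reflection $s$, so it covers both classes at once and the long class poses no special difficulty. It identifies $\widetilde\sP^{\wedge y}$ with $\Hom_{\C\{1,s\}}(\C W,\sP^{\wedge 0})$ for a rank-two Procesi bundle $\sP$ on $T^*\PP^1$ times a smooth factor, and notes that $\sP$ is $\cO\oplus\cO(1)$ or $\cO\oplus\cO(-1)$. It then uses \cite[\S4.2]{LosevProcesi} to conclude that the sign part is $\cO(1)$. Everything before that last step only gives $\nu\cdot C=\pm1$, so the whole content of the lemma is the sign, and that is exactly the step you leave as ``I will check''. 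Saying that the Namikawa--Weyl unit restricts to the unit on each slice is the right mechanism. It does not by itself tell you which of $\cO\oplus\cO(\pm1)$ is the unit-normalized bundle on $T^*\PP^1$. That identification is what \cite[\S4.2]{LosevProcesi} supplies, and it has to be cited or proved.

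Your two cross-checks cannot replace it. The last one is circular: that $Q_n$ contracts neither curve, and \eqref{eq:sharp-remark22}, are outputs of \cref{thm:polarization}, which is deduced from this lemma. The ampleness remark helps only through nefness, since $\nu\cdot C\ge0$ together with the $\pm1$ dichotomy would force $+1$. Positivity on curves not contracted by $Y_n\to X$ is of no use, because non-contraction of $C_{\mathrm{sh}}$ and $C_{\mathrm{lg}}$ is again what is at stake. You would need nefness of $\nu$ on $Y_n$ from a source that does not itself rest on this degree computation.

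If you want to avoid the slice normalization altogether, take $\widetilde\sP_n$ normalized as in \cref{thm:losev}. Then \cref{lem:exterior} with $L=V$ and $j=n$ applies; its local step is insensitive to the sign of $\cN$. Together with \eqref{eq:three-F} it gives $\det\cR_1=\bigwedge^n\mathsf F(V)\cong\mathsf F(\bigwedge^nV)=\mathsf F(\det)$, that is, $\widetilde\sP_n\eps_-\cong\det\cR_1$, and \cref{lem:taut-degrees} yields both degrees. The sign is then carried by the tautological, rather than dual, normalization $f\widetilde\sP_n\cong\cP_n$.
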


\begin{proof}
This is contained in the proof of \cite[Lemma~2.5]{BoixedaLosev}, run there for an
\emph{arbitrary} reflection $s$ and hence covering both classes at once.  At a
point $y$ of the leaf of $s$ they show, by the direct analogue of
\cite[Prop.~4.1]{LosevProcesi}, that
$\widetilde{\sP}^{\wedge y}=\Hom_{\C\{1,s\}}(\C W,\sP^{\wedge0})$ with $\sP$ the
Procesi bundle over $(\frh\oplus\frh^*)^s\times T^*\PP^1$. Then they argue that
the two possibilities for $\sP$ are $\cO\oplus\cO(1)$ and $\cO\oplus\cO(-1)$.
Using results from \cite[Section 4.2]{LosevProcesi} they conclude that the
restriction of the line bundle $\sP\eps_-$ to $X^{\wedge y}$ is $\cO(1)$.  That
is degree $+1$ on the slice $\PP^1$, which is $C_{\mathrm{sh}}$ or
$C_{\mathrm{lg}}$ according to the class of $s$.
\end{proof}

\begin{theorem}[The polarization]\label{thm:polarization}
For every $n\geq2$,
\[
  c_1\bigl(\widetilde{\sP}_n\eps_-\bigr)=c_1(\det\cR_1)
  \qquad\text{in }N^1(Y_n/S_n),
\]
an equality of classes, not merely of rays, hence
$\widetilde{\sP}_n\eps_-\cong\det\cR_1=\rho^*\cO_{Q_n}(1)$.  Consequently the
partial resolution $X$ of \cite{BoixedaLosev} is $Q_n=\mfM_{\theta_0}$,
$\bar\rho=\rho$, and $\cO_{Q_n}(1)$ is its VGIT ample generator.
\end{theorem}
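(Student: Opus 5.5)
The plan is to compare the two classes by intersection numbers, using \cref{lem:taut-degrees} and \cref{lem:nu-degrees}. It suffices to know that $N_1(Y_n/S_n)$, the space of curve classes contracted by $Y_n\to S_n$ modulo numerical equivalence, is spanned over $\Q$ by $[C_{\mathrm{sh}}]$ and $[C_{\mathrm{lg}}]$. Equivalently, $N^1(Y_n/S_n)$ is two-dimensional and the pairing with these two curves is nondegenerate.

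\textbf{Step 1: the relative Picard group.} For a conical symplectic resolution of $(\frh\oplus\frh^*)/W$, Namikawa's theory identifies $H^2(Y_n,\Q)=N^1(Y_n/S_n)_\Q$ with the space spanned by the codimension-two leaves. Here there is one leaf per conjugacy class of reflections, so the dimension is $2$ for $n\ge2$. Moreover, $H_2$ is spanned by the classes of the $\PP^1$-fibres over generic points of these leaves. To see this, restrict to the complement of codimension-$\ge 4$ strata, where $Y_n\to S_n$ is locally a family of $A_1$ resolutions. Alternatively, take the quiver-variety route: $H^2(Y_n)$ is spanned by the tautological classes $c_1(\det\cR_0)$ and $c_1(\det\cR_1)$ (Kirwan surjectivity, McGerty--Nevins). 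By \cref{lem:taut-degrees}, the matrix
\[\begin{pmatrix}0&1\\1&1\end{pmatrix}\]
pairing $(\det\cR_0,\det\cR_1)$ against $(C_{\mathrm{sh}},C_{\mathrm{lg}})$ has determinant $-1\neq0$. Hence the two curves are numerically independent and the pairing is perfect. I would present this second argument, since it uses only results already stated in the paper.

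\textbf{Step 2: equality of classes.} By \cref{lem:taut-degrees} and \cref{lem:nu-degrees}, $\nu$ and $c_1(\det\cR_1)$ both pair to $1$ with both $C_{\mathrm{sh}}$ and $C_{\mathrm{lg}}$. By the perfectness in Step 1, they are equal in $N^1(Y_n/S_n)$.

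\textbf{Step 3: from classes to line bundles.} $Y_n$ is a smooth symplectic resolution of an affine cone, so $H^1(\cO)=H^2(\cO)=0$ by Grauert--Riemenschneider. Hence $\Pic(Y_n)\cong H^2(Y_n,\Z)$, which is torsion-free for quiver varieties (their cohomology is pure and even). Also, numerical equivalence over $S_n$ agrees with homological equivalence because $H_2$ is spanned by contracted curves. Therefore $\widetilde{\sP}_n\eps_-\cong\det\cR_1=\rho^*\cO_{Q_n}(1)$, the last equality by \cref{prop:models-identified}(d).

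\textbf{Step 4: the consequences.} The partial resolution $X$ of Boixeda Alvarez--Losev is $\ProjOp$ of the section ring of a power of $\widetilde{\sP}\eps_-$ relative to $S_n$. By Step 3, this ring is the section ring of $\rho^*\cO_{Q_n}(1)$. Since $Q_n$ is normal and $\rho_*\cO_{Y_n}=\cO_{Q_n}$, this ring equals $\bigoplus_m\Gamma(Q_n,\cO_{Q_n}(m))$. Because $\cO_{Q_n}(1)$ is ample on $Q_n$, its $\ProjOp$ is $Q_n$, which gives $X=Q_n$ and $\bar\rho=\rho$.

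The main obstacle is Step 1, the claim that $N^1(Y_n/S_n)$ is exactly two-dimensional, spanned by the tautological determinants with no torsion. I would cite Kirwan surjectivity for Nakajima varieties to get spanning by $\det\cR_0$ and $\det\cR_1$; that these two classes are linearly independent is exactly what the nonzero determinant computed above shows.
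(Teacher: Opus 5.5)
Your proposal is correct and is the paper's argument. It uses the pairing matrix $\left(\begin{smallmatrix}0&1\\1&1\end{smallmatrix}\right)$ of $\det\cR_0,\det\cR_1$ against $C_{\mathrm{sh}},C_{\mathrm{lg}}$, solves for $\nu$ using \cref{lem:nu-degrees}, and upgrades numerical to linear equivalence via $H^1(\cO)=0$ and torsion-freeness of $\Pic(Y_n)$. The differences are minor: you get $\dim N^1(Y_n/S_n)=2$ from Kirwan surjectivity where the paper cites $b_2(Y_n)=2$, and you identify $X$ with $Q_n$ through $\ProjOp$ of section rings rather than by the classification of partial resolutions by faces of the ample cone. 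One correction: purity and evenness only control rational cohomology, so take torsion-freeness of $H^2(Y_n,\Z)$ instead from a Bia{\l}ynicki-Birula affine paving for a generic one-parameter subgroup of $T$.
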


\begin{proof}
By \cref{lem:taut-degrees} the matrix of $\{\det\cR_0,\det\cR_1\}$
against $\{C_{\mathrm{sh}},C_{\mathrm{lg}}\}$ is
$\left(\begin{smallmatrix}0&1\\1&1\end{smallmatrix}\right)$, of determinant
$-1$.  Hence the two curve classes are independent, and they span
$N_1(Y_n/S_n)_\Q$, which is two-dimensional because $b_2(Y_n)=2$.  Respectively,
$\det\cR_0,\det\cR_1$ form a basis of $N^1$, 
hence $\nu=a\,c_1(\det\cR_0)+b\,c_1(\det\cR_1)$ and pairing with the two
curves, \cref{lem:nu-degrees} gives $b=1$ and $a+b=1$, i.e.\ $(a,b)=(0,1)$.
Since numerical and linear equivalence agree on $Y_n$ \textup($H^1(\cO)=0$ and
$\Pic$ is torsion-free\textup), $\widetilde{\sP}_n\eps_-\cong\det\cR_1$.

The last sentence follows because $X$ is by construction the partial resolution on which
a power of $\widetilde\sP_n\eps_-$ is ample, and $\det\cR_1=\rho^*\cO_{Q_n}(1)$
with $\cO_{Q_n}(1)$ relatively ample on $Q_n$
(\cref{prop:models-identified}(d)).  To complete the argument, we use that partial resolutions between $Y_n$ and $S_n$
are classified by the faces of the ample cone of $Y_n$
\cite[proof of Prop.~2.1]{BoixedaLosev}, and the face is determined by the class.
\end{proof}

\begin{remark}\label{rem:polarization-controls}
Let us mention some special cases and discuss the situation in type \(A\).
  At $n=1$ only $C_{\mathrm{sh}}$ exists, $\det\cR_0=\cO$, and
$Y_1=T^*\PP^1$ with Procesi bundle the McKay tautological bundle
$\cR_0\oplus\cR_1$.  Then $\widetilde\sP_1\eps_-=\cR_1$ and
$\nu=c_1(\det\cR_1)$ directly.
For $\Gamma$ trivial we have $W=S_n$ and
$Y_n=\Hilb^n(\C^2)$, only $C_{\mathrm{lg}}$ exists, and the same
argument gives $\nu=c_1(\det\cR)$. Thus we recover Haiman's
$\sP\eps_-=\bigwedge^{\,n}\cB$.

\end{remark}

\section{The Haiman complex and the dual-socle identity}\label{sec:haiman}

In this section we introduce the type $B$ analogue $\cK_n$ of the Koszul complex
which Haiman used to resolve the punctual Hilbert scheme, and we prove parts (1) and
(2) of Theorem~E.  The complex is built from the trace-zero part $\cT_n$ of the even
cluster bundle together with the two universal coordinates $x,y$.  Its zeroth
homology is the structure sheaf of the scheme-theoretic punctual fiber $F_n$
(\cref{prop:H0}).  The new feature, compared with type $A$, is that $\cK_n$ has $n+1$
generators against the expected codimension $n$, hence it is not a resolution.

The main idea of the section is that this failure is very controlled.  The homology
of $\cK_n$ is supported on $F_n$, which has codimension at least $n$ by semismallness,
and a general local-cohomology argument (\cref{lem:codim-amplitude}) shows that only
two homology sheaves survive, namely $\cI_n=\cO_{F_n}$ and the excess sheaf $\cM_n$ in
degree $-1$.  Koszul duality then identifies $\cM_n$ with an Ext sheaf of the
dual-socle sheaf $\cS_n$, whose fibers are the duals of the socles of the cluster
algebras.  This dual-socle formula is what makes the excess computable in
\cref{sec:hecke}.

\subsection{Conventions}\label{sec:conventions}

We use cohomological indexing, $(A[r])^i=A^{i+r}$ and $\cH^i(A[r])=\cH^{i+r}(A)$,
hence a sheaf $F[1]$ sits in degree $-1$.  For a vector bundle $V$ on a scheme $X$ and
a map $s:V\to\cO_X$ the Koszul complex is
\[
 K_X(V,s)^{-j}=\Lambda^jV,\qquad d=\iota_s ,
\]
in degrees $[-\rk V,0]$. If the operators act on a locally free module $P$ rather
than on $\cO_X$ we write $K_P(V\to P)$ for the complex with terms $P\otimes\Lambda^jV$.
$\RcHom_X$ denotes \emph{internal} derived Hom.  For a perfect complex, Tor-amplitude
$[u,v]$ means that its derived tensor product with every quasicoherent module has
cohomology only in $[u,v]$. Equivalently, locally it admits a finite locally free
representative in those degrees.  This is stronger than having its own cohomology in
$[u,v]$.

\subsection{The complex}\label{sec:complex-def}

Recall from \cref{sec:quiver} the splitting $\cR_0=\cO_{Y_n}\cdot1\oplus\cT_n$,
$\cT_n=\ker(\Tr_{\cR_0})$, $\rk\cT_n=n-1$.  Set
\[
 \cE_n=\cT_n\oplus\cO_q\oplus\cO_t,\qquad\rk\cE_n=n+1,
\]
where $\cO_q$, $\cO_t$ are the trivial line bundles with the $T$-characters of the
coordinates $x,y$ (as ordinary bundles, $\cE_n=\cT_n\oplus\C x\oplus\C y$).  Let
$\nu:\cE_n\to\cP_n$ be the inclusion on $\cT_n$ together with the universal sections
$x,y\in\cP_n$, let $J_n\subseteq\cP_n$ be the ideal sheaf generated by $\im\nu$, and let
\[
 \cK_n=K_{\cP_n}(\cE_n\to\cP_n):\qquad
 \cK_n^{-a}=\cP_n\otimes{\textstyle\bigwedge^a}\cE_n,\quad 0\le a\le n+1,
\]
\[
 d(p\otimes e_1\wedge\cdots\wedge e_a)
 =\sum_i(-1)^{i-1}p\,\nu(e_i)\otimes e_1\wedge\cdots\widehat{e_i}\cdots\wedge e_a .
\]
This is the direct $A_1$-McKay analogue of the complex Haiman used to resolve the
punctual fiber in type $A$ \cite[Prop.~2.4, eq.~(34)]{Haiman}. The main difference
is that here $\rk\cE_n=n+1$ exceeds the expected codimension $n$ by one, and exactness
fails.  Define
\[
 \cI_n=\cH^0(\cK_n)=\cP_n/J_n,\qquad
 \cM_n=\cH^{-1}(\cK_n),\qquad
 \cM_n=\cM_n^+\oplus\cM_n^-,\qquad \cM_n^+=\cM_n^\Gamma .
\]
We shall also use the \emph{transposed} complex.  The dual $\cP_n^\vee$ is a
$\cP_n$-module through $(a\phi)(b)=\phi(ab)$, and
\[
 \cK_n^T=K_{\cP_n^\vee}(\cE_n\to\cP_n^\vee),\qquad
 \cS_n=\cH^0(\cK_n^T)=\cP_n^\vee/J_n\cP_n^\vee,\qquad
 \cN_n=\cH^{-1}(\cK_n^T).
\]
The sheaf $\cS_n$ is the \emph{dual-socle sheaf}.  When $n$ is fixed we drop the
subscript $n$ from all of these symbols.

\begin{remark}[Equivariance]\label{rem:equivariance}
The parity summands are selected in the $T$-equivariant Koszul complex. In particular, the generator $\gamma$ of \(\Gamma\)
acts trivially on $Y_n$, hence the parity decomposition is a decomposition of
complexes of $\cO_{Y_n}$-modules.  Where it is convenient we suppress the torus
linearizations and compute with ordinary line bundles.  In particular,
$\det\cE_n=\det\cT_n$ as ordinary line bundles. The two coordinate lines
contribute the constant character $qt$, whose central $\Gamma$-character is
trivial.  The vanishing $R\rho_*\cM_n^+=0$ for ordinary coherent sheaves implies
the equivariant statement, since forgetting equivariance is conservative.
\end{remark}

\subsection{Punctual support and the zeroth homology}\label{sec:H0}
In this subsection we collect the most basic facts about the Haiman complex \(\cK_\bullet\).
\begin{lemma}\label{lem:punctual-support}
Let $z\in Y$ be a closed point and $E=\cP_z$ the corresponding cluster algebra.
Then $E/J_zE\ne0$ if and only if $E$ is supported at the origin.  Consequently all
cohomology sheaves of $\cK$ and of $\cK^T$ are supported on the set $|F_n|$.
\end{lemma}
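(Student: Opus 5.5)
Fix a closed point $z$ and write $E=\cP_z=\C[x,y]/I_Z$, a $2n$-dimensional $\Gamma$-equivariant commutative algebra with $E^\Gamma=E_0$ of dimension $n$ and $J_z$ the ideal generated by the trace-zero elements $\cT_z\subset E_0$ together with $x,y$. The plan is to decompose $E$ along the support of $Z$ and compute the quotient summand by summand. Since $E$ is Artinian, $E=\prod_{p}E_p$ over the points $p$ of $\Supp Z$. Because $\Gamma$ permutes these points, I will group them into $\Gamma$-orbits. The possible orbits are a free orbit $\{p,-p\}$ with $p\ne0$, or the fixed point $0$. The even part then decomposes as $E_0=\prod_{\text{orbits }O}E_{O}^\Gamma$.

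For the direction ``$\Rightarrow$'', suppose $\Supp Z\neq\{0\}$. If some point $p\neq0$ occurs, then $x$ or $y$ is a unit in the local factor $E_p$, because $(x(p),y(p))\ne(0,0)$ and $x-x(p)$ is nilpotent there. Hence $J_zE\supseteq J_zE_p=E_p$, and likewise for $E_{-p}$. The remaining work is to kill the other factors as well. For this I use the idempotents. Let $\epsilon_O\in E_0$ be the idempotent of an orbit $O$. It is $\Gamma$-invariant, and $\Tr(\epsilon_O)=\dim E_O^\Gamma=:k_O$, where $0<k_O<n$ when there are at least two orbits. Then $\epsilon_O-\tfrac{k_O}{n}\cdot1\in\cT_z$. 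Multiplying by $\epsilon_{O'}$ for $O'\ne O$ gives $-\tfrac{k_O}{n}\epsilon_{O'}\in J_z$, so $\epsilon_{O'}\in J_z$ and every factor $E_{O'}$ is killed. If instead there is a single orbit, that orbit is free and already killed by $x,y$. So whenever the support is not the origin, every factor dies and $E/J_zE=0$. This argument needs only that $\Tr_{\cR_0}$ is the trace of multiplication on $E_0$, which is how the splitting $\cR_0=\cO\cdot1\oplus\cT_n$ was set up in \cref{sec:quiver}.

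For the direction ``$\Leftarrow$'', suppose $E$ is local at $0$. Then $x$, $y$ and every element of the maximal ideal of $E$ are nilpotent, and $E_0$ is local with nilpotent maximal ideal $\mfm_{E_0}$. A nilpotent element has trace $0$, while $\Tr(1)=n\neq0$. Hence $\cT_z=\mfm_{E_0}$ exactly, by a dimension count: both have codimension one in $E_0$. So $J_z$ is contained in the maximal ideal of the local ring $E$, and $E/J_zE\neq0$.

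For the consequence, $\cK$ is a complex of vector bundles, so its cohomology sheaves are coherent. Base change to the fibre at $z$ shows that if $z\notin|F_n|$, the fibre Koszul complex $E\otimes\bigwedge^\bullet\cE_z$ has $E/J_zE=0$, meaning the images of $\nu$ generate the unit ideal. By Nakayama, the sections $\nu(e_i)$ then generate $\cP$ in a neighbourhood of $z$. Over a commutative $\cO$-algebra $\cP$, a Koszul complex whose generators generate the unit ideal is contractible: if $\sum a_i\nu(e_i)=1$, then wedging with $\sum a_ie_i$, using $\cP$-linearity, gives a null-homotopy. Hence $\cK$ is exact near $z$.

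For $\cK^T$ the same homotopy applies, since $\cK^T$ is the Koszul complex of the $\cP$-module $\cP^\vee$ on the same elements. Finally, $|F_n|$ is the set of points whose cluster is supported at $0$, because $F_n$ is the fibre over the origin of $S_n$. The only mildly delicate step is the trace bookkeeping on the idempotents. I expect no serious obstacle there, but I will check carefully that the normalization $\Tr(1)=n$ uses only $\dim E_0=n$.
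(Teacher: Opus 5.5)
Your proof is correct and is essentially the paper's argument: both directions rest on the same trace computation, namely that an element of $E_0$ equal to $1$ on one support orbit and $0$ on the others has trace $\dim E_O^\Gamma>0$, while nilpotent elements have trace $0$, and the consequence is the same Nakayama-plus-contracting-homotopy argument, transposed for $\cK^T$. The only difference is packaging: you place the orbit idempotents inside $J_z$ directly, whereas the paper argues by contradiction at the origin, using that $f-\frac1n\Tr_{E_0}(f)\,1\in\cT_z$ has residue $-\frac1n\Tr_{E_0}(f)$ for $f\in(R_0)_+$, which is the polarized-power-sum formulation it reuses in \cref{prop:H0}.
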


\begin{proof}
A point in the support of $E/J_zE$ is killed by $x,y$, hence it is the origin.  For
every even polynomial $f$ vanishing at the origin, the element
$f-\frac1n\Tr_{E_0}(f)\,1$ belongs to the trace-zero space $\cT_z$, and its value in
the residue field at the origin is $-\frac1n\Tr_{E_0}(f)$.  Thus a nonzero quotient
forces all such traces to vanish.  If $E$ has a nonzero support orbit, choose an
invariant polynomial $f$ which is zero at the origin and at the other support
orbits and equals one at the chosen orbit.  Multiplication by $f$ on each local
factor has trace equal to its residue value times the dimension, the nilpotent
part contributing zero.  Hence the chosen free orbit contributes a strictly positive
integer to $\Tr_{E_0}(f)$, a contradiction.  Conversely, for a punctual algebra the
trace of multiplication by $a\in E_0$ is $n$ times its residue, hence $\cT_z$, $x$
and $y$ lie in the maximal ideal and the quotient is nonzero.

Away from this locus, $J=\cP$ locally, and an expression $1=\sum a_if_i$ gives the
standard contracting homotopy of the Koszul complex.  Transposing the commuting
multiplication operators gives the same contraction for $\cK^T$.
\end{proof}

\begin{lemma}[Power sums generate the symmetric tensors]\label{lem:powersums}
Let $C$ be a commutative $\C$-algebra with an augmentation $a:C\to\C$, and for $h\in C$
let $p(h)=\sum_{j=1}^nh^{(j)}\in(C^{\otimes n})^{S_n}$, where $h^{(j)}$ is $h$ in the
$j$-th factor and $1$ elsewhere.  Then $(C^{\otimes n})^{S_n}$ is generated as an
algebra by the $p(h)$, and the kernel of the augmentation $a^{\otimes n}$ is generated
as an ideal by the $p(h)$ with $a(h)=0$.
\end{lemma}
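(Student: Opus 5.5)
The lemma is a classical statement about symmetric tensors over a field of characteristic zero. I would prove the algebra-generation claim first and then deduce the ideal-theoretic claim from it.

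\emph{Generation.} Since $\C$ has characteristic zero, $(C^{\otimes n})^{S_n}$ is spanned by symmetrizations
\[
 \sigma(h_1\otimes\cdots\otimes h_n)=\sum_{w\in S_n}h_{w(1)}\otimes\cdots\otimes h_{w(n)}.
\]
Indeed, the Reynolds operator $\frac1{n!}\sum_w w$ is a projection of $C^{\otimes n}$ onto the invariants, and $C^{\otimes n}$ is spanned by pure tensors.

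Next, the symmetrized pure tensors are in turn spanned by the symmetric powers $h^{\otimes n}=(h\otimes\cdots\otimes h)$. This is polarization: expand $(\sum_i s_ih_i)^{\otimes n}$ in the scalars $s_i$ and extract the coefficient of $s_1\cdots s_n$, using that $\C$ is infinite (a Vandermonde argument suffices).

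It then remains to express $h^{\otimes n}=h^{(1)}h^{(2)}\cdots h^{(n)}$ through the power sums. Here I would use Newton's identities inside the commutative algebra $C^{\otimes n}$, where $h^{(1)},\dots,h^{(n)}$ are commuting elements. Their elementary symmetric function $e_n$ equals $h^{\otimes n}$, and Newton's identities express $e_n$ as a polynomial with rational coefficients in $p_k=\sum_j(h^{(j)})^k=p(h^k)$. Hence the $p(h)$ generate.

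\emph{The ideal.} Let $\mathfrak a$ be the ideal generated by the $p(h)$ with $a(h)=0$. Clearly $a^{\otimes n}(p(h))=n\,a(h)=0$, so $\mathfrak a\subseteq\ker a^{\otimes n}$. For the reverse inclusion, write $C=\C\cdot1\oplus C_+$ with $C_+=\ker a$. Every $h\in C$ is $a(h)1+h_+$, so
\[
 p(h)=n\,a(h)+p(h_+).
\]
Therefore the invariant algebra is generated by the constants together with the $p(h_+)$, $h_+\in C_+$. Consequently every element is $c+f$, where $c\in\C$ and $f$ is a polynomial without constant term in the $p(h_+)$, so $f\in\mathfrak a$. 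Applying $a^{\otimes n}$ gives $a^{\otimes n}(c+f)=c$. Hence any element of the kernel has $c=0$ and lies in $\mathfrak a$.

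No step is a serious obstacle. The only care needed is that the polarization and Newton steps use division by integers and the infiniteness of $\C$, both available here. Note that the ideal generated inside the invariant ring is what the statement requires, and that is exactly what the argument produces.
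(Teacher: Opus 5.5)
Your proof is correct. The ideal-theoretic half is the same as the paper's: write $h=a(h)1+h_+$, use $p(h)=n\,a(h)+p(h_+)$, and apply $a^{\otimes n}$ to an element $c+f$.

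For generation you take a different route. The paper never passes to diagonal tensors. It introduces the multilinear sums $D_s(h_1,\dots,h_s)=\sum h_1^{(j_1)}\cdots h_s^{(j_s)}$ over pairwise distinct indices and proves the recursion
\[
 p(h_1)D_{s-1}(h_2,\dots,h_s)=D_s(h_1,\dots,h_s)+\sum_{b=2}^sD_{s-1}(h_2,\dots,h_1h_b,\dots,h_s).
\]
By induction every $D_s$ is then a polynomial in power sums, and your $\sigma(h_1\otimes\cdots\otimes h_n)$ is exactly $D_n(h_1,\dots,h_n)$. That recursion is the polarized, multilinear form of Newton's identities.

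You instead polarize first, reducing to $h^{\otimes n}$. You then apply the classical Newton identities to the commuting elements $h^{(1)},\dots,h^{(n)}$, using $(h^{(j)})^k=(h^k)^{(j)}$ so that $p_k\mapsto p(h^k)$.

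What each buys: your argument is shorter because it quotes two standard facts. The paper's is self-contained and handles distinct tensor factors directly, at the cost of one small combinatorial identity. Both use characteristic zero at the same points, namely the Reynolds projection and the rational coefficients of Newton's identities.

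Your coefficient extraction does not need $\C$ to be infinite. Inclusion--exclusion gives $\sigma(h_1\otimes\cdots\otimes h_n)=\sum_{S\subseteq\{1,\dots,n\}}(-1)^{n-|S|}\bigl(\sum_{i\in S}h_i\bigr)^{\otimes n}$ with integer coefficients.
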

\begin{proof}
For $1\le s\le n$ put $D_s(h_1,\dots,h_s)=\sum h_1^{(j_1)}\cdots h_s^{(j_s)}$, the sum
over pairwise distinct $j_1,\dots,j_s$.  In the product $p(h_1)D_{s-1}(h_2,\dots,h_s)$
the position assigned to $h_1$ is either distinct from all others or agrees with
exactly one of them, hence
\[
 p(h_1)D_{s-1}(h_2,\dots,h_s)=D_s(h_1,\dots,h_s)+\sum_{b=2}^sD_{s-1}(h_2,\dots,h_1h_b,\dots,h_s),
\]
and induction on $s$ expresses every $D_s$ in power sums.  Every invariant tensor is a
linear combination of symmetrizations of pure tensors (average over $S_n$), and the
symmetrization of $h_1\otimes\cdots\otimes h_n$ is $D_n(h_1,\dots,h_n)/n!$.  This proves
generation.  Writing $h=a(h)1+h_+$ with $a(h_+)=0$ gives $p(h)=na(h)+p(h_+)$.  Hence modulo
the ideal generated by the $p(h_+)$ every generator is a scalar, the quotient is a
quotient of $\C$, and the augmentation shows it is $\C$.
\end{proof}

With $C=R_0$ and $a(f)=f(0)$ this describes $\cO(\SymOp^nX)=(R_0^{\otimes n})^{S_n}$ and
its maximal ideal $\mfm_0=(p(f):\ f\in R_0,\ f(0)=0)$ at $n[0]$.

\begin{proposition}\label{prop:H0}
There is a canonical $T$-equivariant isomorphism of $\cO_{Y_n}$-algebras
$\cI_n\cong\cO_{F_n}$, where $F_n=Y_n\times_{S_n}\{0\}$ is the scheme-theoretic
punctual fiber.  In particular $\mfm_0\,\cI_n=0$.
\end{proposition}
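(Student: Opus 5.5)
We will prove that $J_n$ and $\mfm_0\cP_n$ coincide as ideal sheaves in $\cP_n$. Granting this, the statement follows at once. The map $\eta:\cU_n\to Y_n$ is finite flat, and the sections of $\cP_n$ are the functions on the universal cluster. So $\cP_n/\mfm_0\cP_n=\eta_*\cO_{\cU_n\times_{S_n}\{0\}}$. On the other hand, $\cO_{F_n}=\cO_{Y_n}/\mfm_0\cO_{Y_n}$, where $\mfm_0$ acts on $\cO_{Y_n}$ through $Y_n\to S_n$. Since $\cO_{Y_n}\cdot1$ is a direct summand of $\cR_0\subseteq\cP_n$, the unit gives a map $\cO_{F_n}\to\cP_n/\mfm_0\cP_n$, and we must show it is an isomorphism. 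All constructions are $T$-equivariant, so the resulting isomorphism will be $T$-equivariant and canonical.

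\textbf{Step 1: $\mfm_0\cP_n\subseteq J_n$.} The algebra $A=S^W$ equals $(R_0^{\otimes n})^{S_n}=\cO(\SymOp^nX)$. By \cref{lem:powersums} with $C=R_0$, the ideal $\mfm_0$ is generated by the power sums $p(f)$ with $f\in R_0$ and $f(0)=0$. Through $Y_n\to S_n$ the function $p(f)$ acts on $\cO_{Y_n}$ as $\Tr_{\cR_0}(f)$, the trace of multiplication by $f$ on the even cluster algebra, because the support-cycle map is given by traces on $\cR_0$. Write $f=\sum a_{ij}x^iy^j$ with $i+j\ge2$ even. Then the image of $f$ in $\cP_n$ lies in $(x,y)\subseteq J_n$. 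Since $f-\tfrac1n\Tr(f)\cdot1\in\cT_n\subseteq J_n$, we get $\Tr(f)\cdot1\in J_n$, and therefore $p(f)\cP_n\subseteq J_n$.

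\textbf{Step 2: $J_n\subseteq\mfm_0\cP_n+(\text{unit part})$.} Clearly $x,y\in\mfm_0\cP_n$ fails in general, so instead we pass to the quotient $\bar\cP=\cP_n/\mfm_0\cP_n$. This is the algebra of functions on the punctual universal cluster, a family of $\Gamma$-clusters supported at the origin. For such a cluster, $x$ and $y$ are nilpotent, and every trace-zero element of $\bar\cP_0$ is nilpotent, since its residue is $\tfrac1n\Tr$, which is $0$. So the images of $x,y,\cT_n$ lie in the nilradical of $\bar\cP$, and $J_n\bar\cP$ is contained in the ideal $\mathfrak n$ of elements with zero residue. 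Fibrewise, $\bar\cP_z/\mathfrak n_z=\C$ with $\mathfrak n_z=(x,y)$. Hence $\bar\cP/J_n\bar\cP$ is a cyclic $\cO_{F_n}$-module generated by $1$, and $\cO_{F_n}\to\cP_n/J_n$ is surjective by Nakayama. The same map is also injective, because $\cO_{Y_n}\cdot1$ is a split summand of $\cP_n$ and $J_n$ meets it only in $\mfm_0\cO_{Y_n}$. To see this, apply the map $\tfrac1n\Tr_{\cR_0}$, which kills $\cT_n$. It sends $J_n\cap\cR_0$ into the ideal generated by the traces $\Tr(x\cdot g)$, $\Tr(y\cdot g)$ and $\Tr(t\cdot g)$ for $t\in\cT_n$ and $g\in\cP_n$. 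We must show these traces lie in $\mfm_0\cO_{Y_n}$.

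\textbf{Main obstacle.} The hard step is exactly this last containment: the trace of any element of $J_n\cap\cR_0$ must lie in $\mfm_0\cO_{Y_n}$. For $x\cdot g$ or $y\cdot g$ with $g$ odd, the product $xg$ is an even polynomial expression with no constant term. I will argue that $\Tr_{\cR_0}$ of such an element is a polarized power-sum type invariant lying in $A_+=\mfm_0$, using the polarized form of \cref{lem:powersums}. For $t\cdot g$ with $t$ trace-zero, I plan to write $g=\tfrac1n\Tr(g)+g'$. This gives $\Tr(tg)=\Tr(tg')$, and I will reduce to the previous case via the local generation of $\cR_0$ by monomials in $x,y$. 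I expect this reduction to carry the real content. Should it fail globally, the fallback is to check the containment on the open cover by $T$-fixed charts, where $\cP_n$ is free on monomials.
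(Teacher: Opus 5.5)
Your opening claim is false as stated, and should be deleted. $J_n\neq\mfm_0\cP_n$, since $J_n$ contains $x,y$. Likewise the unit $\cO_{F_n}\to\cP_n/\mfm_0\cP_n=\cP_n|_{F_n}$ is not an isomorphism, because its target is locally free of rank $2n$ over $\cO_{F_n}$. You implicitly drop this in Step~2. The statement your Steps 1--2 actually address is the right one: the unit $\cO_{Y_n}\to\cP_n/J_n$ is surjective with kernel exactly $\mfm_0\cO_{Y_n}$. Step~1 is correct. Surjectivity needs no Nakayama: $\cP_n$ is spanned over $\cO_{Y_n}$ by images of monomials, and every monomial of positive degree lies in $(x,y)\cP_n\subseteq J_n$.

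\textbf{The gap.} The decisive containment $\epsilon(J_n\cap\cR_0)\subseteq\mfm_0\cO_{Y_n}$, with $\epsilon=\tfrac1n\Tr_{\cR_0}$, is only a plan with a fallback; you did not prove it. It closes using two facts. The first is the $\cO_{Y_n}$-linearity of $\epsilon$. The second is $\epsilon(\bar f)=\tau_f/n\in\mfm_0\cO_{Y_n}$ for every even monomial $f$ of positive degree, which is Haiman's trace identity. Note that \cref{lem:powersums} is not the relevant input here; it is only needed for the reverse inclusion in your Step~1.

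First, $x\cR_1+y\cR_1$ is locally spanned by $\overline{xa}$ and $\overline{ya}$ with $a$ an odd monomial, so its $\epsilon$-image lies in $\mfm_0\cO_{Y_n}$. Second, $\cT_n$ is spanned by $g_f=\bar f-\epsilon(\bar f)1$ over even monomials $f$ of positive degree. Hence $\cR_0\cT_n$ is spanned by the products $\bar h g_f$ with $h$ an even monomial. For these, $\epsilon(\bar h g_f)=\epsilon(\overline{hf})-\epsilon(\bar h)\epsilon(\bar f)\in\mfm_0\cO_{Y_n}$.

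The odd part of $\cP_n/J_n$ vanishes because $\cR_1=x\cR_0+y\cR_0$. So $\epsilon$ descends to a left inverse of the surjective unit, and the proof is complete. No fixed-point charts are needed.

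\textbf{Comparison with the paper.} With this repair, your argument is the paper's second proof, namely \cref{lem:trace-quotient} applied upstairs as in \cref{rem:trace-quotient-upstairs}. It is not the paper's primary proof of the proposition. That proof writes $\cP_n/(x,y)\cP_n=\cO_{Y_n}/\mathfrak b$, where $\mathfrak b$ is the ideal of the divisor of clusters through the origin. It then shows $\mathfrak b\subseteq\mfm_0\cO_{Y_n}$ by a fibrewise Nakayama argument and identifies the remaining generators with traces. The primary route explains where the excess generator comes from (\cref{rem:generator-count}). Your retraction route instead gives an explicit inverse and controls the scheme structure without a Nakayama argument on a kernel, which the paper itself notes as its advantage.
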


\begin{proof}
\emph{Step 0.}  Since $\cU_n\subseteq Y_n\times\A^2$, we have
$\cP_n/(x,y)\cP_n=\eta_*\cO_{\cU^0_n}$ with
$\cU^0_n:=\cU_n\cap(Y_n\times\{0\})$.  This sheaf is $\cO_{Y_n}$-cyclic, generated by
the image of $1$, hence equals $\cO_{Y_n}/\mfb$ for the ideal
$\mfb=\Ann(\eta_*\cO_{\cU^0_n})$.  Note that $\mfb\ne0$, since fiberwise
$E/(x,y)E\ne0$ iff $0\in\Supp E$, which is a codimension-two condition.

\emph{Step 1: $\mfb\subseteq\mfm_0\cO_{Y_n}$.}  Equivalently
$\mfb\cdot\cO_{F_n}=0$, i.e.\ the natural surjection
$\cO_{F_n}\twoheadrightarrow(\cP_n/(x,y)\cP_n)|_{F_n}$ is injective.  For
$y\in F_n$ the fiber $E$ is the local Artinian algebra of a \emph{punctual}
cluster, with maximal ideal $(x,y)E$, hence
$(\cP_n|_{F_n}/(x,y))\otimes k(y)=E/(x,y)E=\C$.  Thus the displayed
surjection of coherent sheaves has kernel with vanishing fibers, and by Nakayama the
kernel is zero.

\emph{Step 2: the trace ideal.}  Let
$\sigma:\cT_n\to\cP_n/(x,y)\cP_n$ be the composite of the inclusion with the
projection, and therefore $\cI_n=\cO_{Y_n}/(\mfb+\widetilde{\im}\,\sigma)$ for any
lift $\widetilde{\im}\,\sigma\subseteq\cO_{Y_n}$ of its image.  For a
positive-degree invariant $f\in(R_0)_+$ let $\bar f\in\cR_0$ be its universal image
and $\tau_f:=\Tr_{\cR_0}(m_{\bar f})\in\cO_{Y_n}$.  Then
$\bar f^{\,0}:=\bar f-\tfrac1n\tau_f\cdot1\in\cT_n\subseteq J_n$, while
$\bar f\in(x,y)\cP_n\subseteq J_n$ (positive degree).  Subtracting,
$\sigma$ takes the value $-\tfrac1n\tau_f$ on $\bar f^{\,0}$, and every element of
$\cT_n$ is an $\cO_{Y_n}$-combination of such.  Hence
$\mfb+\widetilde{\im}\,\sigma=\mfb+(\tau_f:\ f\in(R_0)_+)$.  Write
$\mathfrak a$ for the latter ideal.

By the equivariant form of Haiman's trace identity
\cite[eq.~(28)]{Haiman} (take $\Gamma$-invariants along the fixed locus
$Y_n\subset\Hilb^{2n}(\A^2)$), the function $\tau_f$ is the pullback, under the
support-cycle morphism $Y_n\to\SymOp^n(X)$, of the polarized power sum attached to
$f$.  (The identity can also be checked directly.  On the dense open locus of $n$ disjoint
free $\Gamma$-orbits, $\cR_0$ is a sum of $n$ one-dimensional algebras and
$\Tr(m_{\bar f})$ is the sum of the values of $f$ at the orbits, and both sides are
regular functions on the reduced irreducible $Y_n$.)  The positive-degree polarized
power sums generate the maximal ideal $\mfm_0$ of $n[0]\in\SymOp^n(X)$
(\cref{lem:powersums}).  Hence
$(\tau_f)=\mfm_0\cO_{Y_n}$, and with Step 1,
$\mathfrak a=\mfb+(\tau_f)=\mfm_0\cO_{Y_n}$, hence $\cI_n=\cO_{F_n}$.
\end{proof}

\begin{remark}[Generator count]\label{rem:generator-count}
Step 1 is not a formality.  The map $\sigma$ has locally free source of rank
$n-1$, hence $\im\sigma$ is locally $(n-1)$-generated, whereas $\codim F_n=n$.
The ideal of $F_n$ can never be generated by $n-1$ elements, and the missing
generators come from $\mfb$, the ideal of the divisor of clusters through the
origin.  In the Koszul complex $\cK_n$ the ideal $\mfb$ is accounted for by the two
sections $x,y$ of $\cP_n$, hence $J_n$ has $n+1$ generators for a height-$n$ ideal.
This deviation by one is the source of $\cM_n\ne0$.  In type $A$ the
same count gives $n+1$ generators of height $n+1$, a complete intersection.
\end{remark}

The following fiberwise form of \cref{prop:H0} will be used in the unit
normalization of \cref{sec:SD}.

\begin{lemma}[The incidence quotient is the augmentation line]\label{lem:incidence-line}
Let $\xi\in F_n$ be a closed point, $A_\xi=\cP_n\otimes k(\xi)$ the corresponding
punctual cluster algebra, $\eps_\xi:A_\xi\twoheadrightarrow\C$ its residue-field
augmentation, and $J_{n,\xi}\subset A_\xi$ the specialization of $J_n$.  Then
\[
  J_{n,\xi}=\ker\eps_\xi=(x,y)A_\xi,
 \qquad\text{hence}\qquad
 \cI_n\otimes k(\xi)=A_\xi/J_{n,\xi}\cong\C\cdot[1],
\]
and the unit map $\C\to A_\xi/J_{n,\xi}$, $1\mapsto[1]$, is the identity under this
identification.
\end{lemma}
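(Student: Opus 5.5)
The argument is fiberwise. I will show the two inclusions $J_{n,\xi}\subseteq\ker\eps_\xi$ and $(x,y)A_\xi\subseteq J_{n,\xi}$, and then use that for a punctual cluster $\ker\eps_\xi=(x,y)A_\xi$.

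First, the structure of $A_\xi$. Since $\xi\in F_n$, the support of the cluster is the origin (\cref{lem:punctual-support}, or directly from $\mfm_0\cI_n=0$ in \cref{prop:H0}). So $A_\xi$ is a local Artinian $\C$-algebra, a quotient of $\C[x,y]$ supported at $0$. Its maximal ideal is the kernel of the residue map $\eps_\xi$, and it equals $(x,y)A_\xi$, because $A_\xi$ is generated as an algebra by the images of $x,y$. This gives $\ker\eps_\xi=(x,y)A_\xi$.

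Second, the inclusion $(x,y)A_\xi\subseteq J_{n,\xi}$. The specialization $J_{n,\xi}$ is the ideal of $A_\xi$ generated by the fiber $\cT_\xi$ of the trace-zero bundle together with $x,y$. Hence it contains $(x,y)A_\xi$ by definition.

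Third, the inclusion $J_{n,\xi}\subseteq\ker\eps_\xi$. It suffices to check that each generator lies in the maximal ideal. For $x$ and $y$ this is immediate. For $a\in\cT_\xi\subseteq(A_\xi)_0$, write $a=\eps_\xi(a)\cdot1+a_+$ with $a_+$ nilpotent. Multiplication by $a_+$ on $(A_\xi)_0$ is nilpotent, so
\[
 \Tr_{(A_\xi)_0}(a)=n\,\eps_\xi(a).
\]
Since $\Tr(a)=0$, we get $\eps_\xi(a)=0$. This is the converse half of the proof of \cref{lem:punctual-support}. One should check that $\cT_\xi$ really is the kernel of the trace on the fiber, since the trace map is a surjection of vector bundles. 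This holds because $\Tr_{\cR_0}(1)=n\neq0$ splits it.

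Combining the three steps gives $J_{n,\xi}=\ker\eps_\xi$. Then
\[
 \cI_n\otimes k(\xi)=(\cP_n/J_n)\otimes k(\xi)=A_\xi/J_{n,\xi},
\]
by right exactness of $\otimes k(\xi)$: the image of $J_n\otimes k(\xi)$ in $A_\xi$ is exactly the ideal generated by the specialized generators. The quotient $A_\xi/\ker\eps_\xi$ is $\C$ via $\eps_\xi$, and $[1]\mapsto\eps_\xi(1)=1$, so the unit map $\C\to A_\xi/J_{n,\xi}$ is the identity under this identification.

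No step is a serious obstacle. The only point needing care is that the fiber of $J_n$ at $\xi$ is the ideal generated by the fiber of $\im\nu$, which is exactly the right-exactness statement used above.
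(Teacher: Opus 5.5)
Your proof is correct and follows the paper's argument: $(x,y)A_\xi\subseteq J_{n,\xi}$ because $x,y$ are among the generators, and each trace-zero even generator $g$ lies in $\ker\eps_\xi$ because $\Tr_{(A_\xi)_0}(m_g)=n\,\eps_\xi(g)$ for a punctual cluster. The extra points you check are ones the paper leaves implicit: that $\ker\eps_\xi=(x,y)A_\xi$ for a cluster supported at the origin, that $\cT_\xi$ is the kernel of the fiberwise trace, and right exactness of $\otimes k(\xi)$.
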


\begin{proof}
The inclusion $(x,y)A_\xi\subseteq J_{n,\xi}$ holds because $x,y$ are among the
generators of $J_n$.  Conversely, let $g$ be a specialized trace-zero even
generator and write $g=\eps_\xi(g)1+g_{\mathrm{nil}}$ with
$g_{\mathrm{nil}}\in\ker\eps_\xi$, which is nilpotent.  Being even, $g_{\mathrm{nil}}$
preserves the even summand $(A_\xi)_0$ and is nilpotent there, hence has trace
zero, while the scalar term acts on the $n$-dimensional even summand as
$\eps_\xi(g)$ times the identity.  Therefore $\Tr_{(A_\xi)_0}(m_g)=n\,\eps_\xi(g)$.
The left side is zero because $g$ is trace-free, hence $\eps_\xi(g)=0$.  Thus every
trace-zero even generator lies in $\ker\eps_\xi=(x,y)A_\xi$.
\end{proof}

\subsection{The dimension of the punctual fiber}\label{sec:dimF}

\begin{lemma}\label{lem:dimF}
$\dim F_n\le n$.
\end{lemma}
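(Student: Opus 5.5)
The plan is to deduce the bound from the semismallness of the symplectic resolution $Y_n\to S_n$, in the form used by the paper (Kaledin). Since $\dim F_n$ only depends on the underlying set $|F_n|$, we may work with the reduced fiber $(F_n)_{\mathrm{red}}=\pi^{-1}(0)_{\mathrm{red}}$, where $\pi:Y_n\to S_n=(\frh\oplus\frh^*)/W$ is the composite of $\rho$ with $\pi_Q$ (equivalently the support-cycle map to $\SymOp^n(X)$, which agrees with $S_n$ as sets, and $F_n$ was defined via $A/\mfm_0$). First I would record that $\pi$ is a projective symplectic resolution: $Y_n$ is smooth symplectic of dimension $2n$ by \cite{Nakajima98}, $\pi$ is projective since it is the composite of the projective VGIT morphism $\rho$ (\cref{prop:models-identified}(b)) with the affinization of $Q_n$, and it is birational since over the open locus of $n$ distinct free $\Gamma$-orbits both sides parametrize the same data.

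Second, I would invoke Kaledin's theorem \cite{Kaledin,KaledinPoisson}: for a projective symplectic resolution $\pi:Y\to S$ of a symplectic singularity, $S$ has finitely many symplectic leaves $S_i$, and $\pi$ is semismall, namely for every $s\in S_i$ one has $2\dim\pi^{-1}(s)\le\codim S_i$, i.e.\ $\dim\pi^{-1}(s)\le\tfrac12\codim S_i$. Applying this to $s=0$, which lies in the zero-dimensional leaf $\{0\}$ of codimension $2n=\dim S_n$, gives $\dim\pi^{-1}(0)\le n$, which is the claim.

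As a cross-check, and as a fallback if one prefers not to rely on the general theorem, I would use the contracting $\C^*$-action: the diagonal scaling in $T$ contracts $S_n$ to $0$, so $F_n$ is the attracting set of the compact core; the attracting cells of the fixed points in a symplectic variety with the symplectic form of positive weight are isotropic, hence of dimension $\le\tfrac12\dim Y_n=n$, and $|F_n|$ is a finite union of such cells. The only step requiring care is confirming that the hypotheses of semismallness hold, in particular that $S_n$ (or $Q_n$) has symplectic singularities and $\pi$ is genuinely a resolution of the affine quotient rather than just of $Q_n$; this is supplied by \cref{prop:models-identified}(a),(b) together with the normality of $(\frh\oplus\frh^*)/W$, so I expect no real obstacle.
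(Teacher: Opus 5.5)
Your main argument is correct and is the paper's: both apply Kaledin's semismallness of the projective symplectic resolution $Y_n\to S_n=\SymOp^n(\A^2/\Gamma)$ at the origin. The paper phrases semismallness as $\dim(Y_n\times_{S_n}Y_n)\le\dim Y_n$ and uses $F_n\times F_n\subset Y_n\times_{S_n}Y_n$, which is equivalent to your fiber-over-the-point-leaf form.

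In the fallback you have the direction reversed. Since the diagonal scaling contracts $S_n$ as $t\to0$, every point of $Y_n$ has a limit at $t=0$, so $F_n$ is not the attracting set in that direction. Instead $F_n$ is the set of points with a limit as $t\to\infty$, that is, the Bia{\l}ynicki-Birula attracting sets for the inverse action, under which $\omega$ has negative weight. It is these sets, taken over the positive-dimensional fixed components rather than over fixed points, that are isotropic.
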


\begin{proof}
The affinization map $Y_n\to\SymOp^n(\A^2/\Gamma)$ is a projective symplectic
resolution, since $Y_n$ is smooth symplectic and the map is birational onto the symmetric
power \cite{Kuznetsov,CGGSpunctual}.  Symplectic resolutions are semismall
\cite[Lemma~2.11]{KaledinPoisson}, \cite[Thm.~1.9]{Kaledin}, and a proper map
$f:Y\to X$ is semismall exactly when $\dim(Y\times_XY)\le\dim Y$.  Since
$F_n\times F_n\subset Y_n\times_{\SymOp^n(\A^2/\Gamma)}Y_n$, we get
$2\dim F_n\le\dim Y_n=2n$.  (Alternatively, the central fiber of a quiver variety
over its affinization is Lagrangian by Nakajima \cite[Thm.~5.8]{Nakajima}, see
also \cite[\S\S3--4]{Gordon}.)  Only this inequality is needed, not reducedness or
Cohen--Macaulayness of the scheme $F_n$.
\end{proof}

\subsection{Homological lemmas}\label{sec:homological}

\begin{lemma}[Support versus Tor-amplitude]\label{lem:codim-amplitude}
Let $X$ be a regular Noetherian scheme, let $Z\subset X$ be closed of
codimension at least $c$, and let $A\in\Perf(X)$ have Tor-amplitude
$[-m,0]$.  If every $\cH^i(A)$ is supported on $Z$, then
$A\in D^{[c-m,0]}(X)$.  In particular, if $m=c$, the complex is a sheaf in degree zero.
\end{lemma}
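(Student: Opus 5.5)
The plan is to reduce to a statement about local rings and then use duality. Cohomology of a complex is local, so it suffices to check the claim at stalks. Fix a point $x\in X$ and set $R=\cO_{X,x}$, a regular local ring. Since $A$ has Tor-amplitude $[-m,0]$, after shrinking we may represent $A_x$ by a bounded complex $F^\bullet$ of finite free $R$-modules in degrees $[-m,0]$; over a regular local ring we can even take it minimal. The upper bound $\cH^i(A)=0$ for $i>0$ is then automatic, so the content of the lemma is the vanishing $\cH^i(A_x)=0$ for $i<c-m$.

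If $x\notin Z$, then all $\cH^i(A_x)$ vanish and there is nothing to prove. So assume $x\in Z$. Then $\dim R\ge c$ whenever some $\cH^i(A_x)\neq0$, since the support of the cohomology has codimension at least $c$ in $\SpecOp R$.

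The main step dualizes. Set $F^\vee=\Hom_R(F^\bullet,R)$, a free complex in degrees $[0,m]$. Every cohomology module of $F^\bullet$ is supported in codimension $\ge c$, and so every cohomology module of $F^\vee=\RcHom(F,R)$ is too. Consider the hypercohomology spectral sequence
\[
E_2^{p,q}=\Ext^p_R(\cH^{-q}(F),R)\Rightarrow \cH^{p+q}(F^\vee).
\]
A finitely generated module $N$ supported in codimension $\ge c$ satisfies $\Ext^p_R(N,R)=0$ for $p<c$, because $\grade N=\htOp\Ann N\ge c$ over the Cohen--Macaulay ring $R$. Hence $E_2^{p,q}$ vanishes unless $p\ge c$. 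Since $\cH^{-q}(F)\neq0$ only for $-q\ge -m$, that is $q\ge -m$, the dual $F^\vee$ has cohomology only in degrees $\ge c-m$.

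This lower bound on $F^\vee$ is not quite what we need, and the cleanest route applies the same argument in the reverse direction. The complex $F^\vee$ is perfect with Tor-amplitude $[0,m]$ and has cohomology supported in codimension $\ge c$. Applying the spectral sequence to $F=\RcHom(F^\vee,R)$ gives
\[
\Ext^p_R(\cH^{j}(F^\vee),R)\Rightarrow\cH^{p-j}(F).
\]
The nonzero terms have $p\ge c$ and $j\le m$, so $p-j\ge c-m$. Therefore $\cH^i(F)=0$ for $i<c-m$, which is exactly the claim. Only this second pass is actually needed; the first pass was used only to see that the cohomology of $F^\vee$ has the right support, and that follows directly because $\Ext$ of modules supported on $Z$ is supported on $Z$. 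The spectral sequence is bounded and hence converges.

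The special case $m=c$ gives $A\in D^{[0,0]}$, so $A$ is a sheaf in degree zero. The one point needing care is the grade inequality $\Ext^p_R(N,R)=0$ for $p<\codim\Supp N$. This follows from the standard equality $\grade N=\depth(\Ann N,R)$ and the Cohen--Macaulay property $\depth(I,R)=\htOp I$ for a regular local ring $R$.
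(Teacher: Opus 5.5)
Your proof is correct, but it takes a different route from the paper's.

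The paper's argument uses local cohomology. It applies $R\Gamma_I$, with $I$ a local ideal of $Z$, termwise to the free representative in degrees $[-m,0]$. It then uses $H^r_I(R)=0$ for $r<c$, which holds because depth equals height in the Cohen--Macaulay ring $R$. This gives a spectral sequence concentrated in total degrees $\ge c-m$. The support hypothesis enters exactly once, through $R\Gamma_I(A)\xrightarrow{\sim}A$.

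Your argument instead uses biduality $F\simeq R\Hom_R(F^\vee,R)$ for the finite free complex. You transport the support condition to the cohomology of $F^\vee$. You then apply the Ext characterization of grade, $\Ext^p_R(N,R)=0$ for $p<\htOp\Ann N$, to the modules $\cH^j(F^\vee)$ with $0\le j\le m$.

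Each route has its own advantage:
\begin{itemize}
\item The paper's version needs no information about the individual cohomology modules and no dualization, only the vanishing of $H^r_I$ on free modules.
\item Yours avoids local cohomology and the identification $R\Gamma_I A\simeq A$. It derives the lemma from the same low-Ext vanishing that the paper states separately as \cref{lem:low-ext} and uses, together with Koszul self-duality, in the proof of \cref{thm:dual-socle}. So the two lemmas of \cref{sec:homological} rest on a single input.
\end{itemize}

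As you note, your first pass, the lower bound $c-m$ for $F^\vee$, is not needed; only the support transfer is. That transfer can also be seen directly. For $\mathfrak p\notin Z$, $F_{\mathfrak p}$ is a bounded acyclic complex of free modules, hence contractible, and so is its dual.

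Neither argument uses regularity beyond the Cohen--Macaulay property of $X$.
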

\begin{proof}
The assertion is local on $X$.  Write $X=\SpecOp R$, choose an ideal $I$
defining $Z$, and represent $A$ by finite free modules in degrees
$-m,\ldots,0$.  Since $R$ is regular, it is Cohen--Macaulay, and the codimension
assumption gives $H_I^r(R)=0$ for $r<c$, and the same for each finite free term.
This is Grothendieck's local-cohomology characterization of depth,
$\depth_I(R)=\min\{r:H^r_I(R)\ne0\}$, together with $\depth_I(R)=\grade(I)=
\htOp(I)$ in a Cohen--Macaulay ring
\cite[Thm.~3.5.7, Cor.~2.1.4]{BrunsHerzog}, \cite[Tag 0AVZ]{Stacks}.
The hypercohomology spectral sequence of $R\Gamma_I$ applied to the bounded
complex of free modules $A$ has $E_1^{i,r}=H_I^r(A^i)$, and every nonzero entry
has $i\ge-m$ and $r\ge c$, hence the total degree is at least $c-m$.  As the
cohomology of $A$ is supported on $V(I)$, the natural map $R\Gamma_I(A)\to A$ is an
isomorphism ($A$ restricts to zero on $X\setminus Z$).  Thus $A\in D^{\ge c-m}$,
and the locally free representative gives $A\in D^{\le0}$.
\end{proof}

\begin{remark}
The hypothesis is Tor-amplitude, not ordinary cohomological amplitude.  A sheaf
supported in positive codimension has amplitude $[0,0]$ and arbitrary projective
dimension.  In our applications the amplitude comes from pushing down a
regular-section Koszul resolution.
\end{remark}

\begin{lemma}[Low Ext vanishing]\label{lem:low-ext}
If $X$ is regular and a coherent sheaf $N$ is supported in codimension at
least $c$, then $\cExt_X^i(N,\cO_X)=0$ for $i<c$.
\end{lemma}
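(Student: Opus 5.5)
The statement is local on $X$, and we reduce it to the local-cohomology characterization of depth already used in \cref{lem:codim-amplitude}. Work on an affine open $X=\SpecOp R$ with $R$ regular Noetherian, and let $N$ correspond to a finitely generated $R$-module $M$ supported on $V(I)$ with $\htOp I\ge c$. The sheaf $\cExt^i_X(N,\cO_X)$ is coherent, so it vanishes if and only if all its stalks vanish. Since Ext of finitely generated modules commutes with localization, it suffices to show $\Ext^i_{R_\mathfrak p}(M_\mathfrak p,R_\mathfrak p)=0$ for $i<c$ at every prime $\mathfrak p$.

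Fix $\mathfrak p$. If $\mathfrak p\notin\Supp M$, then $M_\mathfrak p=0$ and there is nothing to prove. Otherwise $\mathfrak p\supseteq\Ann M$, and $\Ann M$ has the same radical as a power of $I$ restricted to the support, so $\htOp\mathfrak p\ge\htOp(\Ann M)\ge c$. The ring $R_\mathfrak p$ is regular local, hence Cohen--Macaulay, so $\grade(\Ann M_\mathfrak p,R_\mathfrak p)=\htOp(\Ann M_\mathfrak p)\ge c$; here the codimension of $\Supp M_\mathfrak p$ in $\SpecOp R_\mathfrak p$ is still at least $c$ by catenarity of regular rings.

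The key input is then the Rees characterization of grade \cite[Thm.~1.2.5]{BrunsHerzog}: for a finitely generated module $M'$ over a Noetherian ring $R'$,
\[
 \grade(\Ann M',R')=\min\{i:\Ext^i_{R'}(M',R')\ne0\}.
\]
Applying this with $R'=R_\mathfrak p$ and $M'=M_\mathfrak p$ gives $\Ext^i_{R_\mathfrak p}(M_\mathfrak p,R_\mathfrak p)=0$ for all $i<c$, which is the claim.

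As an alternative that stays within the paper's own toolkit, one can apply \cref{lem:codim-amplitude} to $R\cHom(N,\cO_X)$ after a Grothendieck-duality shift. The Rees route is more direct, though. The only point needing care is the passage from the global statement "codimension at least $c$" to the local statement $\htOp(\Ann M_\mathfrak p)\ge c$ at each point of the support. This follows from the definition of codimension as the infimum of local heights together with the Cohen--Macaulay property $\grade=\htOp$, so no real obstacle is expected.
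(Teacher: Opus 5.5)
Your proof is correct and is essentially the paper's argument: both combine Rees' Ext characterization $\grade(\Ann N,R)=\min\{i:\Ext^i_R(N,R)\ne0\}$ with $\grade=\htOp$ in the Cohen--Macaulay ring $R$. The only differences are presentational. You work stalkwise rather than on an affine open, and the appeal to catenarity is unnecessary, since every prime $\mathfrak q\subseteq\mathfrak p$ containing $\Ann M$ already has $\htOp\mathfrak q\ge c$. Also, the Ext formula in the form you use (with $M$ rather than $R/I$ in the first slot) is Bruns--Herzog Prop.~1.2.10(e), not Thm.~1.2.5.
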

\begin{proof}
Locally $N$ is annihilated by an ideal of grade at least $c$.  The grade $\grade(\Ann N,R)$
is the first nonzero index of $\Ext_R^i(N,R)$ \cite[\S1.2]{BrunsHerzog}, and over
a Cohen--Macaulay ring grade equals height \cite[Cor.~2.1.4]{BrunsHerzog}, i.e.\
the codimension of the support.
\end{proof}

\begin{lemma}[Koszul duality, including shifts]\label{lem:koszul-duality}
Let $V$ have rank $r$, and let $K_X(V,s)$ be in degrees $[-r,0]$.  Then
\begin{equation}\label{eq:koszul-self-duality}
 \RcHom_X(K_X(V,s),\cO_X)\simeq K_X(V,s)[-r]\otimes(\det V)^{-1}.
\end{equation}
If the Koszul operators act on a locally free module $P$ rather than on
$\cO_X$, the right-hand complex has coefficient module $P^\vee$ and the
transposed operators.
\end{lemma}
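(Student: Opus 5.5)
The plan is to reduce to the standard self-duality of the exterior algebra and then to check that the differentials match under this identification, keeping track of signs and shifts.

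\emph{Termwise identification.} The question is local, so we may assume $V$ is free of rank $r$. Degree $j$ of $\RcHom_X(K_X(V,s),\cO_X)$ is $\cHom(K^{-j},\cO_X)=\Lambda^jV^\vee$. We use the perfect pairing $\Lambda^jV\otimes\Lambda^{r-j}V\to\Lambda^rV=\det V$ to identify
\[
\Lambda^jV^\vee\cong\Lambda^{r-j}V\otimes(\det V)^{-1}.
\]
Concretely, $\phi\in\Lambda^jV^\vee$ corresponds to the element $\omega$ with $\phi(\alpha)\,\mathrm{vol}=\alpha\wedge\omega$ for all $\alpha\in\Lambda^jV$, where $\mathrm{vol}$ is a local generator of $\det V$. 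Since $\Lambda^{r-j}V$ sits in degree $-(r-j)$ of $K_X(V,s)$, it sits in degree $j$ of $K_X(V,s)[-r]$. So the terms agree, and the identification is canonical up to the twist by $\det V$.

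\emph{Compatibility of differentials.} The dual complex has differential $d^\vee(\phi)=\pm\,\phi\circ\iota_s$. Since $\iota_s$ is a graded derivation of degree $-1$ on $\Lambda^\bullet V$, applying $\iota_s$ to $\alpha\wedge\omega\in\Lambda^{r+1}V=0$ gives
\[
0=\iota_s(\alpha\wedge\omega)=\iota_s(\alpha)\wedge\omega+(-1)^{\deg\alpha}\,\alpha\wedge\iota_s(\omega).
\]
Hence, under the pairing, precomposition with $\iota_s$ corresponds to $\pm\iota_s$ on the complementary factor, with sign $(-1)^{j}$ depending only on the degree. These degree-dependent signs can be absorbed by rescaling the $j$-th term by $\pm1$, for instance by $(-1)^{j(j-1)/2}$ or another suitable choice. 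The resulting isomorphism of complexes is the claimed quasi-isomorphism. It is an actual isomorphism because every term is locally free, so no derived correction is needed. Since the construction uses only the wedge product and $\det V$, it is independent of the choice of frame and therefore glues globally.

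\emph{Module version.} For coefficients $P$, the term $P\otimes\Lambda^jV$ dualizes to $P^\vee\otimes\Lambda^jV^\vee$. The differential now uses multiplication operators $m_{\nu(e)}$ on $P$, and these dualize to the transposes $m_{\nu(e)}^T$ on $P^\vee$. This is exactly the $\cP_n$-module structure $(a\phi)(b)=\phi(ab)$. The same computation, carried out factor by factor in $P\otimes\Lambda^\bullet V$, yields $K_{P^\vee}(V\to P^\vee)[-r]\otimes(\det V)^{-1}$. The derivation identity still applies because the operators $m_{\nu(e_i)}$ commute, and their transposes commute as well.

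The only genuine obstacle is sign bookkeeping: verifying that a single consistent choice of signs makes every square commute, including the sign convention for the differential of a dual complex. To settle it, we would fix the convention $d_{\cHom}(\phi)=-(-1)^{|\phi|}\phi\circ d$ and check the one-step identity above directly.
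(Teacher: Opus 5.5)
Your proposal is correct and follows the paper's argument. You identify the terms through the perfect exterior pairing $\Lambda^jV^\vee\cong\Lambda^{r-j}V\otimes(\det V)^{-1}$, show that transposed contraction becomes contraction on the complementary power up to a degree-dependent sign, absorb these signs by rescaling the degree-$j$ identification (the paper uses $(-1)^{\binom j2}$ together with the shift sign), and transpose the multiplication operators in the coefficient version. Your derivation identity makes explicit the sign that the paper only asserts, with two small points to fix: the element paired against $\omega\in\Lambda^{r-j}V$ in that step must lie in $\Lambda^{j+1}V$, so that the wedge lands in $\Lambda^{r+1}V=0$; and commutativity of the operators is needed only for the transposed complex to be a Koszul complex, not for the duality itself.
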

\begin{proof}
In dual cohomological degree $j$ the term is $(\Lambda^jV)^\vee$, which the perfect
exterior pairing identifies with $\Lambda^{r-j}V\otimes(\det V)^{-1}$, the
degree-$j$ term of the right-hand side.  Under this pairing the transpose of
contraction becomes contraction on the complementary exterior power up to sign.
Rescaling the degree-$j$ identification by $(-1)^{\binom j2}$, together with the
shift sign, makes the differentials agree.  With coefficients, the same
calculation transposes the multiplication operators.
\end{proof}

\begin{lemma}[Canonical modules are semidualizing]\label{lem:semidualizing}
Let $B$ be Cohen--Macaulay with a dualizing module $\omega_B$, and let $L$
be invertible.  For $\Sigma=\omega_B\otimes L$, $\RcHom_B(\Sigma,\Sigma)\simeq\cO_B$.
\end{lemma}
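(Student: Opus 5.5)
The plan is to reduce the statement to the corresponding fact for $\omega_B$ alone and then prove that by local duality. First, since $L$ is invertible, tensoring with $L$ and with $L^{-1}$ are inverse autoequivalences. Hence
\[
 \RcHom_B(\omega_B\otimes L,\omega_B\otimes L)\simeq\RcHom_B(\omega_B,\omega_B)\otimes L^{-1}\otimes L\simeq\RcHom_B(\omega_B,\omega_B).
\]
This reduces us to the case $L=\cO_B$. The question is local, so we may also assume $B$ is a local Cohen--Macaulay ring with canonical module $\omega_B$; the comparison map $\cO_B\to\RcHom_B(\Sigma,\Sigma)$, given by homotheties, is global and canonical. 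It therefore suffices to show that this map is a quasi-isomorphism stalkwise.

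For the local statement I would invoke the standard facts about canonical modules, for instance \cite[Thm.~3.3.4, Thm.~3.3.10]{BrunsHerzog}. The first is that the homothety map $B\to\Hom_B(\omega_B,\omega_B)$ is an isomorphism. The second is that $\Ext^i_B(\omega_B,\omega_B)=0$ for all $i>0$, since $\omega_B$ has finite injective dimension and is maximal Cohen--Macaulay. Together these say that the cohomology of $\RcHom_B(\omega_B,\omega_B)$ is concentrated in degree $0$, where it equals $B$ via the homothety map.

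If one wants to avoid quoting the Ext vanishing, there is an alternative. The dualizing complex is $D=\omega_B[\dim B]$, since $B$ is Cohen--Macaulay of pure dimension locally, and Grothendieck duality gives $\RcHom(D,D)\simeq\cO_B$. The shift cancels in $\RcHom(D,D)$, so this is the claim. For schemes whose components have different dimensions, work on each connected component, where the shift is locally constant and still cancels.

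The only point requiring care is the endomorphism identification $\Hom(\omega_B,\omega_B)=B$. This uses that $B$ is Cohen--Macaulay and that $\omega_B$ is its canonical module. It fails for arbitrary dualizing-type modules on non-CM rings, so I would state the locality and Cohen--Macaulay hypotheses explicitly at that step.
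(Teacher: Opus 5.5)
Your proposal is correct and follows the paper's proof essentially verbatim. The paper also removes $L$ by noting that twisting both arguments by an invertible sheaf does not change $\RcHom$. It then uses either $\RcHom(\omega_B^\bullet,\omega_B^\bullet)\simeq\cO_B$ for the dualizing complex (locally a shifted canonical module, with the shift cancelling), or, concretely, $\Hom(\omega,\omega)=R$ and $\Ext^{>0}(\omega,\omega)=0$ over a Cohen--Macaulay local ring \cite[Thm.~3.3.10]{BrunsHerzog}.
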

\begin{proof}
A defining property of a dualizing complex is
$\cO_B\simeq\RcHom_B(\omega_B^\bullet,\omega_B^\bullet)$, and on a Cohen--Macaulay
scheme the complex is locally a single canonical module with a shift, which
cancels in its self-Hom.  Concretely, for a Cohen--Macaulay local ring with
canonical module $\omega$ one has $\Hom(\omega,\omega)=R$ and
$\Ext^{i}(\omega,\omega)=0$ for $i>0$ \cite[Thm.~3.3.10]{BrunsHerzog}, and tensoring both
entries with $L$ does not change internal derived Hom.  See also
\cite[Tags 0A7A, 0DW6]{Stacks}.
\end{proof}

\subsection{Only two homology sheaves}\label{sec:two-term}

\begin{theorem}\label{thm:two-term}
$\cK_n,\cK_n^T\in D^{[-1,0]}(Y_n)$.  In other words, the only homology sheaves of $\cK_n$ are
$\cH^0=\cI_n$ and $\cH^{-1}=\cM_n$, and those of $\cK_n^T$ are $\cS_n$ and $\cN_n$.
\end{theorem}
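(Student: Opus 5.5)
The plan is to apply \cref{lem:codim-amplitude} directly, with $X=Y_n$, $Z=|F_n|$, $c=n$ and $m=n+1$. That lemma then gives $\cK_n\in D^{[-1,0]}(Y_n)$, and likewise for $\cK_n^T$. Three hypotheses need to be checked.

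First, $Y_n$ is smooth, hence regular Noetherian. This is recorded in \cref{sec:quiver}.

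Second, both complexes are perfect of Tor-amplitude $[-n-1,0]$. Their terms are $\cP_n\otimes\bigwedge^a\cE_n$ and $\cP_n^\vee\otimes\bigwedge^a\cE_n$ for $0\le a\le n+1$. These are locally free of finite rank, because $\cP_n=\eta_*\cO_{\cU_n}$ is locally free of rank $2n$ ($\eta$ is finite flat) and $\cE_n$ is locally free of rank $n+1$. So the complexes are themselves finite locally free representatives in degrees $[-n-1,0]$. This gives Tor-amplitude $[-n-1,0]$ in the strong sense required, as stressed in \cref{sec:conventions}.

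Third, all homology sheaves are supported on a closed set of codimension at least $n$. By \cref{lem:punctual-support}, every $\cH^i(\cK_n)$ and $\cH^i(\cK_n^T)$ is supported on $|F_n|$. The transposed case is covered there by the same contracting homotopy. By \cref{lem:dimF}, $\dim F_n\le n$, so $\codim_{Y_n}F_n\ge 2n-n=n$. Here we use that $Y_n$ is irreducible, or at least equidimensional, of dimension $2n$, so that the dimension bound converts into a codimension bound.

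Applying \cref{lem:codim-amplitude} yields $\cK_n,\cK_n^T\in D^{[n-(n+1),0]}=D^{[-1,0]}$. The identification of the surviving sheaves is then immediate from the definitions: $\cH^0(\cK_n)=\cP_n/J_n=\cI_n$ and $\cH^{-1}(\cK_n)=\cM_n$, and similarly $\cH^0(\cK_n^T)=\cS_n$ and $\cH^{-1}(\cK_n^T)=\cN_n$.

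There is essentially no obstacle; the only points needing care are small. One is the passage from the dimension bound to a codimension bound, which uses the equidimensionality of $Y_n$ just noted. The other is that support on the closed set $|F_n|$ suffices, since \cref{lem:codim-amplitude} only requires set-theoretic support. Reducedness or Cohen--Macaulayness of $F_n$ is not needed.
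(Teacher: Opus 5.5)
Your proposal is correct and is essentially the paper's own proof. The paper argues the same way: locally free terms in degrees $[-n-1,0]$ on the smooth $2n$-dimensional $Y_n$, cohomology supported on $|F_n|$ by \cref{lem:punctual-support}, codimension at least $n$ by \cref{lem:dimF}, and then \cref{lem:codim-amplitude} with $(m,c)=(n+1,n)$.
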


\begin{proof}
Both complexes have locally free terms in $[-n-1,0]$ on the smooth $2n$-dimensional
$Y_n$, and by \cref{lem:punctual-support} their cohomology is supported on
$F_n$, of codimension at least $n$ by \cref{lem:dimF}.  Apply
\cref{lem:codim-amplitude} with $(m,c)=(n+1,n)$.
\end{proof}

\subsection{The dual-socle formula}\label{sec:dual-socle}

\begin{theorem}[Dual-socle formula]\label{thm:dual-socle}
There is a canonical $T$-equivariant isomorphism
\begin{equation}\label{eq:dual-socle}
 \cM_n\;\simeq\;\cExt_{Y_n}^n(\cS_n,\cO_{Y_n})\otimes\det\cE_n .
\end{equation}
Its even part is
\begin{equation}\label{eq:even-dual-socle}
 \cM_n^+\;\simeq\;\cExt_{Y_n}^n(\cS_n^+,\cO_{Y_n})\otimes\det\cT_n ,
\end{equation}
where the last equality suppresses the constant coordinate character $qt$.
\end{theorem}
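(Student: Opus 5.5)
We dualize the Haiman complex with \cref{lem:koszul-duality} and compare the homology of the two sides using the two-term structure of \cref{thm:two-term}. With $r=\rk\cE_n=n+1$ and $P=\cP_n$, \cref{lem:koszul-duality} gives
\[
 \RcHom_{Y_n}(\cK_n,\cO_{Y_n})\simeq \cK_n^T[-n-1]\otimes(\det\cE_n)^{-1}.
\]
Here we use that $\cP_n^\vee$ carries the transposed multiplication operators, which is precisely the module structure in the definition of $\cK_n^T$. Applying $\RcHom(-,\cO)$ once more and using reflexivity of perfect complexes, we get the symmetric statement
\[
 \cK_n\simeq\RcHom(\cK_n^T,\cO)[-n-1]\otimes\det\cE_n .
\]
The plan is to compute the homology of the right-hand side in degree $-1$, using that $\cK_n^T$ has only $\cS_n$ in degree $0$ and $\cN_n$ in degree $-1$.

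Truncation gives an exact triangle $\cN_n[1]\to\cK_n^T\to\cS_n\to$. Applying $\RcHom(-,\cO)$ yields
\[
 \RcHom(\cS_n,\cO)\to\RcHom(\cK_n^T,\cO)\to\RcHom(\cN_n,\cO)[-1]\to .
\]
Both $\cS_n$ and $\cN_n$ are supported on $F_n$ by \cref{lem:punctual-support}, which has codimension $\ge n$ by \cref{lem:dimF}. So \cref{lem:low-ext} kills $\cExt^i(\cS_n,\cO)$ and $\cExt^i(\cN_n,\cO)$ for $i<n$.

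Taking $\cH^{n}$ of the middle term in the long exact sequence gives
\[
 0\to\cExt^{n-1}(\cN_n,\cO)=0\to\cExt^n(\cS_n,\cO)\to\cH^n\RcHom(\cK_n^T,\cO)\to\cExt^{n}(\cN_n,\cO)\ \text{(the term }\cH^{n}(\RcHom(\cN_n,\cO)[-1])=\cExt^{n-1}(\cN_n,\cO)=0).
\]
To see this, note that $\cH^n(\RcHom(\cN_n,\cO)[-1])=\cExt^{n-1}(\cN_n,\cO)=0$, and the preceding term $\cH^{n-1}(\RcHom(\cN_n,\cO)[-1])=\cExt^{n-2}(\cN_n,\cO)=0$. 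Hence $\cH^n\RcHom(\cK_n^T,\cO)\cong\cExt^n(\cS_n,\cO)$.

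Shifting by $[-n-1]$ moves degree $n$ to degree $-1$, since $\cH^{-1}(C[-n-1])=\cH^{n}(C)$. Tensoring with $\det\cE_n$ then gives
\[
 \cM_n=\cH^{-1}(\cK_n)\cong\cExt^n(\cS_n,\cO)\otimes\det\cE_n .
\]
Every map used is canonical, and since the whole construction is $T$-equivariant, so is this isomorphism.

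For the even part, the key observation is that $\gamma$ acts trivially on $Y_n$ (\cref{rem:equivariance}). So the $\gamma$-parity decomposition splits every complex and every triangle above into summands, and $\cExt$ is additive: $\cExt^n(\cS_n,\cO)=\cExt^n(\cS_n^+,\cO)\oplus\cExt^n(\cS_n^-,\cO)$. Two parity checks remain:
\begin{itemize}
 \item The duality $\cP_n\leftrightarrow\cP_n^\vee$ preserves parity, because $\gamma^{-1}=\gamma$.
 \item $\det\cE_n=\det\cT_n\otimes qt$ is $\gamma$-even, since $\cT_n\subset\cR_0$ is even and the character $qt$ is even.
\end{itemize}
Therefore the even summand of $\cM_n$ matches $\cExt^n(\cS_n^+,\cO)\otimes\det\cT_n$, with the character $qt$ suppressed.

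The main obstacle is the bookkeeping of the dual module structure and of the signs and shifts in \cref{lem:koszul-duality}. In particular, one must confirm that dualizing coefficients $\cP_n\otimes\Lambda^a\cE_n$ really gives the complex $\cK_n^T$ with coefficient module $\cP_n^\vee$, rather than some twisted variant. I would check this termwise: the transpose of multiplication by $\nu(e)$ on $\cP_n$ is exactly the action $(a\phi)(b)=\phi(ab)$.
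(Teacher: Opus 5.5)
Your proof is essentially the paper's: you dualize the truncation triangle $\cN_n[1]\to\cK_n^T\to\cS_n\to$, kill the flanking groups $\cExt^{n-2}(\cN_n,\cO)$ and $\cExt^{n-1}(\cN_n,\cO)$ by \cref{lem:low-ext}, identify $\cH^n\RcHom(\cK_n^T,\cO)$ with $\cM_n\otimes(\det\cE_n)^{-1}$ by \cref{lem:koszul-duality}, and take even parts the same way. The paper applies the Koszul duality lemma directly to $\cK_n^T$, whose coefficient module $\cP_n^\vee$ has dual $\cP_n$, and so avoids your double dualization.

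Two slips in your shift bookkeeping happen to cancel, and you should fix both:
\begin{itemize}
\item With the paper's convention $\cH^i(A[r])=\cH^{i+r}(A)$, dualizing $\RcHom(\cK_n,\cO)\simeq\cK_n^T[-n-1]\otimes(\det\cE_n)^{-1}$ gives $\cK_n\simeq\RcHom(\cK_n^T,\cO)[n+1]\otimes\det\cE_n$, not $[-n-1]$.
\item The identity you then need is $\cH^{-1}(C[n+1])=\cH^n(C)$. Your stated $\cH^{-1}(C[-n-1])$ is actually $\cH^{-n-2}(C)$.
\end{itemize}
Similarly, your first display of the long exact sequence mislabels the flanking terms. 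Your next sentence correctly identifies them as $\cExt^{n-2}$ and $\cExt^{n-1}$ of $\cN_n$.
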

\begin{proof}
Dualize the truncation triangle $\cN_n[1]\to\cK_n^T\to\cS_n\to\cN_n[2]$ of
\cref{thm:two-term}.  By \cref{lem:low-ext} (supports have codimension $\ge n$),
the two outer groups in
\[
 \cExt_Y^{n-2}(\cN_n,\cO_Y)\longrightarrow
 \cExt_Y^n(\cS_n,\cO_Y)\longrightarrow
 \cH^n\RcHom_Y(\cK_n^T,\cO_Y)\longrightarrow
 \cExt_Y^{n-1}(\cN_n,\cO_Y)
\]
vanish, hence the middle arrow is an isomorphism.  By \cref{lem:koszul-duality},
$\RcHom_Y(\cK_n^T,\cO_Y)\simeq\cK_n[-n-1]\otimes(\det\cE_n)^{-1}$, whose
degree-$n$ cohomology is $\cM_n\otimes(\det\cE_n)^{-1}$.  This proves
\eqref{eq:dual-socle}.  The determinant of the two coordinate lines is
$\Gamma$-even, as is $\det\cT_n$, and taking the even summand proves
\eqref{eq:even-dual-socle}.
\end{proof}

\begin{remark}[The fiber interpretation]\label{rem:fiber-interpretation}
At a punctual closed point $z$, the dual of $\cS_z\otimes k(z)$ is the socle of
$E=\cP_z$, since $(E^\vee/\mfm_EE^\vee)^\vee=\{a\in E:\mfm_Ea=0\}$.  This is a
statement about the cokernel of transposed multiplication, and the sheaf kernel of
multiplication on the ambient $Y$ is not a substitute for it.
\end{remark}

Since $\cI_n$ is $\gamma$-even, the parity split of the truncation triangle
$\cM_n[1]\to\cK_n\to\cI_n\to$ gives the canonical triangle
\begin{equation}\label{eq:positive-triangle}
 \cM_n^+[1]\longrightarrow\cK_n^+\longrightarrow\cI_n\xrightarrow{\ +1\ } ,
\end{equation}
and $\cK_n^-\simeq\cM_n^-[1]$.

\begin{remark}\label{rem:M-nonzero}
The bound of \cref{thm:two-term} is sharp, that is, $\cM_n^+\ne0$ for every $n\ge2$.  The
balanced complete intersection $S/(xy,\,x^2+y^{2n-2})$ has basis
$1,x,y,y^2,\dots,y^{2n-2}$, with $n$ even and $n$ odd basis elements, and its socle
is the even line spanned by $y^{2n-2}$.  \Cref{cor:trace-CM} shows that $\cM_n^+$
is nonzero at this point and is Cohen--Macaulay of pure dimension $n$.  Thus,
unlike type $A$, the incidence is never a complete intersection compatible with
the natural generators, and \cref{sec:wall,sec:hecke} are the management of the
excess $\cM_n^+$.  For $n=1$, $\cM_1^+=0$ (\cref{thm:vanishing}).
\end{remark}

Parts (1) and (2) of Theorem~E are \cref{prop:H0,thm:two-term,thm:dual-socle}.

\section{Wall fibers, length duality, and the local model}\label{sec:wall}

In this section we describe the geometry of the contraction $\rho:Y_n\to Q_n$ over
the torus-fixed points of the wall.  The fiber over a fixed point $z_J$ is a
Grassmannian $\Gr(k,V_J)$ (\cref{thm:fiber-grass}), and the length duality
$\dim V_J=2k-1$ is the combinatorial fact which makes these Grassmannians special.
We then compute the cluster bundle on such a fiber, the polystable representative
and the Ext-quiver of a wall point, and we deduce from the theorem of Bellamy and
Craw a local model of $\rho$ at a wall point, namely the Springer resolution
$T^*\Gr(r+1,2r+1)\to\mfN_{r,2r+1}$ times a smooth core.  The last subsection recalls
the Brill--Noether theory of Gonz\'alez, Gorsky and Simental for the socle strata
and derives the threshold $n\ge r(r+1)$ for the even socle rank $r$.

The main idea is that everything is governed by the even socle of the cluster
algebras.  Its rank jumps along the wall fibers, the tautological quotient bundle of
the Grassmannian is the varying socle, and the local model is determined by the
dimension $2r+1$ of the extension space.  The results of this section are used in
\cref{sec:SD}, where the positivity of the tautological quotient bundle drives the
symbolic descent argument.  The Grassmannian acyclicity of \cref{thm:BWB} is the
fixed-point shadow of the vanishing theorem of \cref{sec:hecke}, and it explains at
a single wall fiber why the excess sheaf is invisible to the wall.

\subsection{Hearts and Grassmannian fibers}\label{sec:hearts}

A $T$-fixed point $z_J\in Q_n^T$ corresponds to a monomial submodule
$J\subset R_1$ with $\dim_\C R_1/J=n$.  Its \emph{heart} is the order ideal
$H=H(J)=\{x^ay^b\notin J\}$ of $n$ odd lattice points.  Define
\[
 J'=R_1J\subseteq R_0,\qquad
 J''=(J:R_1)=\{f\in R_0:\ xf,yf\in J\}\subseteq R_0,\qquad
 V_J=J''/J' .
\]

\begin{theorem}[Grassmannian fibers and length duality]\label{thm:fiber-grass}
\leavevmode
\begin{enumerate}[label=\textup{(\alph*)}]
\item $\mfm_XV_J=0$, and consequently every $\C$-subspace of $V_J$ is an
$R_0$-submodule.
\item With $k=k_J:=\length(R_0/J')-n$, the reduced fiber is
\[
  F_J:=\rho^{-1}(z_J)\;\cong\;\Gr(k,V_J),
\]
where a point of the fiber is a balanced ideal $I=I^+\oplus J$ with even part
$J'\subseteq I^+\subseteq J''$, and $W=I^+/J'\subset V_J$ is an arbitrary
$k$-plane.
\item \textup{(}Length duality\textup{)}
$\length(R_0/J')+\length(R_0/J'')=2n+1$, hence
\[
 N:=\dim V_J=2k-1,
\]
and every positive-dimensional fixed fiber has $k\ge2$.
\end{enumerate}
\end{theorem}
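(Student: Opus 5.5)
For part (a) I would argue directly from the definitions. Since $\mfm_X=R_1R_1$, it suffices to check that $R_1R_1J''\subseteq J'$. By definition of the colon module, $f\in J''$ means $xf,yf\in J$, so $R_1f\subseteq J$ (because $R_1=R_0x+R_0y$ and $J$ is an $R_0$-module). Hence $R_1R_1f\subseteq R_1J=J'$. Thus $V_J$ is an $R_0/\mfm_X=\C$-module, and every $\C$-subspace of it is an $R_0$-submodule.

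For part (b) I would use the Quot-scheme description of \cref{prop:models-identified}(a). On $T$-fixed loci, $\rho$ sends a $\Gamma$-cluster ideal $I=I^+\oplus I^-\subset R$ to its odd part $I^-\subset R_1$. So the fiber over $z_J$ consists of the balanced ideals with $I^-=J$. I would write out the ideal conditions for $I=I^+\oplus J$:
\begin{itemize}
\item $R_1J\subseteq I^+$, i.e.\ $J'\subseteq I^+$;
\item $R_1I^+\subseteq J$, i.e.\ $I^+\subseteq J''$;
\item $I^+$ is an $R_0$-submodule.
\end{itemize}
By (a) the last condition is automatic once $J'\subseteq I^+\subseteq J''$. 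The balance condition $\dim R_0/I^+=n$ forces $\dim I^+/J'=k$. Stability at the cyclic chamber is automatic, since $R/I$ is generated by $1$.

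This gives a bijection on closed points between $\Gr(k,V_J)$ and $\rho^{-1}(z_J)$. To upgrade it to an isomorphism of reduced schemes, I would build the family of ideals $\pi^{-1}(\cW)\oplus J$ over the Grassmannian, with $\cW$ the tautological subbundle and $\pi:J''\to V_J$ the projection. This family gives a morphism $\Gr(k,V_J)\to Y_n$ landing in the fiber. The inverse on the reduced fiber is the map $I\mapsto I^+/J'$, which is a morphism because $I^+\cap J''$ varies as a subbundle of the finite-dimensional space $R_0/J'$ restricted to the fiber. Both composites are the identity on points, and both spaces are reduced with the Grassmannian smooth, so this suffices.

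Part (c) is the main obstacle, and I would attack it by monomial combinatorics. In lattice terms, an even point $p$ survives in $R_0/J'$ iff every existing lower neighbour $p-e_x$, $p-e_y$ lies in $H$; call this set $A(H)$. An even point $p$ survives in $R_0/J''$ iff some upper neighbour $p+e_x$ or $p+e_y$ lies in $H$; call this set $B(H)$. The claim to prove is $|A(H)|+|B(H)|=2|H|+1$.

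I would prove this by induction on $n$, adding one odd point $h$ at a time while keeping $H$ an order ideal. For the base case $H=\emptyset$, we have $A=\{(0,0)\}$ and $B=\emptyset$, so the sum is $1$. For the inductive step I would check locally that adding $h$ changes $|A|+|B|$ by exactly $2$. Only the four even neighbours $h\pm e_x$, $h\pm e_y$ can change status, and their status depends on whether $h-e_x+e_y$ and $h+e_x-e_y$ lie in $H$, which gives a short case analysis. (Alternatively, I would rotate to the partition picture, where both sets count outer and inner corners of the odd staircase, and use the standard relation $\#\text{outer}=\#\text{inner}+1$.) This case check is where I expect the real work.

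Finally, $N=\dim V_J=\length(R_0/J')-\length(R_0/J'')=(n+k)-(n+1-k)=2k-1$. A positive-dimensional $\Gr(k,2k-1)$ needs $0<k<2k-1$, which forces $k\ge2$.
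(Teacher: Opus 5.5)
Parts (a) and (b) follow the paper. Part (a) is the same inclusion $\mfm_XJ''=R_1(R_1J'')\subseteq R_1J=J'$. Part (b) is the same identification of the fiber with the balanced ideals $I^+\oplus J$, $J'\subseteq I^+\subseteq J''$, followed by the colength count.

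Your upgrade from a bijection of points to an isomorphism of reduced schemes goes beyond the paper's proof of this theorem and is essentially \cref{lem:fiber-scheme}. The clean justification for the inverse morphism is this: $I^+/(J'\otimes\cO)$ is the kernel of the surjection $(R_0/J')\otimes\cO\to\cR_0$ onto a rank-$n$ bundle, hence a rank-$k$ subbundle, and it lies in $V_J\otimes\cO$. That is what your ``$I^+\cap J''$'' sentence should say. Also, you must use the odd-quotient description of $\rho$ on all of $Y_n$, not only on $T$-fixed loci, because most points of $\Gr(k,V_J)$ are not $T$-fixed.

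For (c) you take a genuinely different route, and it works. The paper counts in one stroke. It writes $|C|=1+h_x+h_y+P$ and $|D|=2n-h_x-h_y-Q$, where $h_x,h_y$ count heart points on the axes, and $P$, $Q$ count even points whose two odd predecessors, respectively successors, lie in $H$. Translation by $(-1,-1)$ then gives $P=Q$.

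The case check you expect to be the real work in your induction on $|H|$ is immediate. Adding an odd $h=(a,b)$ can only move $h+e_x,h+e_y$ into $A$ and $h-e_x,h-e_y$ into $B$, and these pair off:
\begin{itemize}
\item $h+e_x$ enters $A$ iff $b=0$ or $h+e_x-e_y\in H$;
\item $h-e_y$ enters $B$ iff $b\ge1$ and $h+e_x-e_y\notin H$.
\end{itemize}
Exactly one of these two happens, and symmetrically for $h+e_y$, $h-e_x$ with $h-e_x+e_y$. So each step adds exactly $2$.

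Neither argument needs $H$ to be an order ideal to get $|A(H)|+|B(H)|=2|H|+1$; the identity holds for any finite set of odd points. The order-ideal hypothesis enters only when you identify $A(H)$ with the monomial basis of $R_0/J'$, and you should say so. An even monomial $e$ lies outside $R_1J$ iff whichever of $e/x$, $e/y$ exist lie in $H$, because $H$ is closed under division by even monomials.

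The paper's count is closed-form and non-inductive. Yours is local and explains the identity as a uniform contribution of $2$ per odd box. The partition-corner alternative you mention is unnecessary.
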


\begin{proof}
(a) $R_1J''\subseteq J$, hence $\mfm_XJ''=R_1(R_1J'')\subseteq R_1J=J'$.

(b) A cluster in the fiber has odd part exactly $J$, because the wall parameter
$\theta_0=(0,1)$ fixes the odd quotient $R_1\twoheadrightarrow R_1/J$, and $\rho$
remembers exactly this datum (\cref{prop:models-identified}(a)).  The even
part $I^+$ must satisfy $R_1I^+\subseteq J$ (ideal condition), i.e.\
$I^+\subseteq J''$, and $I^+\supseteq R_1J=J'$.  Conversely, by (a) any intermediate
subspace is an ideal.  The colength condition
$\length(R_0/I^+)=n$ becomes $\dim(I^+/J')=\length(R_0/J')-n=k$.

(c) Let $C$ and $D$ be the monomial bases of $R_0/J'$ and $R_0/J''$.  Directly from
the definitions,
\[
 C=\Bigl\{e\ \text{even}:\
 \bigl(a{=}0\ \text{or}\ e{-}(1,0)\in H\bigr)\ \text{and}\
 \bigl(b{=}0\ \text{or}\ e{-}(0,1)\in H\bigr)\Bigr\},
\]
\[
 D=\Bigl\{f\ \text{even}:\ f{+}(1,0)\in H\ \text{or}\ f{+}(0,1)\in H\Bigr\}.
\]
Let $h_x=|H\cap\{b=0\}|$, $h_y=|H\cap\{a=0\}|$, let $P$ be the number of even points
$(a,b)$, $a,b\ge1$, whose two odd predecessors $(a{-}1,b)$, $(a,b{-}1)$ both lie in
$H$, and let $Q$ be the number of even points whose two odd successors both lie in
$H$.  Every element of $C$ is the origin, or a boundary point $(a,0)$ with
$(a{-}1,0)\in H$ (there are $h_x$ of these), or $(0,b)$ with $(0,b{-}1)\in H$ ($h_y$
of them), or an interior point counted by $P$, hence
$|C|=1+h_x+h_y+P$.  For $D$, count the even points with odd successor in $H$ in the
$x$-direction.  These biject with $H\setminus\{a=0\}$, giving $n-h_y$, and similarly
there are $n-h_x$ in the $y$-direction.  The double-counted points are exactly those
counted by $Q$, hence $|D|=2n-h_x-h_y-Q$.  Translation by $(-1,-1)$ is a bijection
between $P$-configurations and $Q$-configurations, hence $P=Q$ and $|C|+|D|=2n+1$.  Then
$|D|=n-k+1$ and $N=|C|-|D|=2k-1$.  If $\dim F_J>0$ then $0<k<N$, i.e.\ $k\ge2$.
\end{proof}

\begin{remark}\label{rem:census}
Exact enumeration through rank $16$ shows that the positive-dimensional fixed
fibers occurring in $Q_n$ are precisely of the form $\Gr(m,2m-1)$, $m\ge2$, with
$\Gr(m,2m-1)$ appearing first at $n=m(m-1)$, where $\dim\Gr(m,2m-1)=m(m-1)=n$.  Thus
the first occurrence is an entire Lagrangian irreducible component of $F_n$ contracted
to a point.  See \cref{app:census} for the census through $n=12$.  The
thresholds $n\ge m(m-1)=r(r+1)$, $r=m-1$, are those of the Brill--Noether theory of
\cref{sec:BN}.
\end{remark}

\subsection{The cluster bundle, the socle, and the universal extension}
\label{sec:cluster-socle}

Fix a heart $J$ and write $F_J=\Gr(k,V)$, $V=V_J$, with tautological sequence
$0\to\cS\to V\otimes\cO\to\cQ_J\to0$ and $\cO_{\mathrm{Pl}}(1)=\det\cQ_J$ (quotient
Pl\"ucker convention).  Let
\[
 \mathsf M_J:=\C[x,y]/(J''\oplus J),
 \qquad
 A_W:=\C[x,y]/(I^+_W\oplus J)\quad (W\subset V,\ I^+_W=\text{preimage of }W),
\]
regarded as modules over $\Lambda=\C[x,y]\rtimes\Gamma$.

\begin{proposition}\label{prop:cluster-socle}
\leavevmode
\begin{enumerate}[label=\textup{(\alph*)}]
\item $\cP_n|_{F_J}\cong\bigl((R_0/J'')\oplus(R_1/J)\bigr)\otimes\cO_{F_J}\oplus\cQ_J$.
In particular $\cL_n|_{F_J}$ is the constant character
$\ell_J=\prod_{s\in H}w(s)$, and
$\det\cE_n|_{F_J}\cong\det\cT_n|_{F_J}\otimes qt\cong\cO_{\mathrm{Pl}}(1)\otimes\chi_J$
for a fixed character $\chi_J$.
\item $\soc_0(A_W)=J''/I^+_W=V/W$, and globally
$\soc_0\bigl(\cP_n|_{F_J}\bigr)\cong\cQ_J$.  \textup(This holds with no Gorenstein
hypothesis.  On the non-Gorenstein strata the even socle simply has rank $>1$.\textup)
\item There is a universal extension of families of $\Lambda$-modules
\begin{equation}\label{eq:universal-extension}
 0\longrightarrow S_0\otimes\cQ_J\longrightarrow \cP_n|_{F_J}
 \longrightarrow \mathsf M_J\otimes\cO_{F_J}\longrightarrow0 .
\end{equation}
\end{enumerate}
\end{proposition}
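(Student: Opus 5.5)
The plan is to work fiberwise over $F_J=\Gr(k,V)$ using the explicit description of \cref{thm:fiber-grass}(b), and then globalize by observing that every construction is functorial in the $k$-plane $W$. Over a point $W$, the cluster algebra is
\[
A_W=\C[x,y]/(I^+_W\oplus J),
\]
with even part $R_0/I^+_W$ and odd part $R_1/J$. The odd part $R_1/J$ is independent of $W$, which is exactly what \cref{thm:fiber-grass}(b) asserts: the odd quotient is frozen along the fiber. For the even part we use the short exact sequence
\[
0\to J''/I^+_W\to R_0/I^+_W\to R_0/J''\to0,
\]
in which $J''/I^+_W=V/W$. Globally the first term is the tautological quotient $\cQ_J$, and the last term is the constant space $R_0/J''$. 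Since $T$-equivariant sequences on the fiber split equivariantly after choosing monomial (weight) complements, we can pick the monomial basis of $R_0/J''$ from \cref{thm:fiber-grass}(c) as a constant complement. This gives the splitting of $\cR_0|_{F_J}$ as $\cO$-modules, and hence (a) as bundles. Note that the splitting is not multiplicative; it is an isomorphism of vector bundles only.

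The rest of (a) follows from this splitting. Since $\cR_1|_{F_J}=(R_1/J)\otimes\cO$ is constant, $\cL_n|_{F_J}=\det\cR_1$ is the constant character given by the product of the weights of the monomials in the heart $H$, namely $\ell_J$. Next, $\det\cR_0|_{F_J}=\det(R_0/J'')\otimes\det\cQ_J$, and $\cR_0=\cO\cdot1\oplus\cT_n$ gives $\det\cT_n=\det\cR_0$. Therefore $\det\cE_n=\det\cT_n\otimes qt=\cO_{\mathrm{Pl}}(1)\otimes\chi_J$, where $\chi_J$ collects $qt$ and the weights of $R_0/J''$.

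For (b), the key input is \cref{thm:fiber-grass}(a): $\mfm_XV=0$, and moreover $R_1J''\subseteq J$. So $J''/I^+_W$ is killed by both $x$ and $y$ inside $A_W$, giving $V/W\subseteq\soc_0(A_W)$. For the converse, take an even $f\in R_0$ with $xf,yf\in I^+_W\oplus J$. Being odd, $xf$ and $yf$ lie in $J$, so $f\in J''$ by definition of $J''=(J:R_1)$. Hence $\soc_0(A_W)=J''/I^+_W$ exactly, and no Gorenstein hypothesis enters. Globally the even socle is the subsheaf of $\cP_n|_{F_J}$ annihilated by $x,y$. This subsheaf contains the image of $\cQ_J$; since both have fibers $V/W$, it equals that image and is a subbundle.

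For (c), the sub $\cQ_J\otimes S_0$ is an ideal in $\cP_n|_{F_J}$ because it is killed by $x,y$, and it is $\Gamma$-even. The quotient of $A_W$ by it is $\C[x,y]/(J''\oplus J)=\mathsf M_J$, independent of $W$, so the quotient family is constant. Assembling these gives \eqref{eq:universal-extension}.

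The main obstacle I expect is making the global identifications honest. This means checking that the restriction of the universal cluster to the reduced fiber really is the family $W\mapsto A_W$, so that the even ideal is the preimage of the tautological subbundle $\cS$. I would get this from the modular description of $F_J$ in \cref{thm:fiber-grass}(b): the classifying map $F_J\to\Gr(k,V)$ is induced by the universal even ideal, whose quotient is then $\cQ_J$ by construction.
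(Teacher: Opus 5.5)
Your proposal is correct and follows the paper's proof essentially step for step: the fiberwise algebra $A_W$ with frozen odd part $R_1/J$, the splitting of $0\to J''/I^+_W\to R_0/I^+_W\to R_0/J''\to0$ via constant sections given by fixed monomials representing a basis of $R_0/J''$, the socle computation from $J''=(J:R_1)$, and the identification of the quotient by $V/W$ with the constant module $\mathsf M_J$. Your extra care about the global kernel (constant rank on the reduced $F_J$) and about the universal family on $F_J$ matches what the paper supplies via \cref{thm:fiber-grass}(b) and \cref{lem:fiber-scheme}; just note that the splitting comes from those constant polynomial sections, not from $T$-equivariance alone, since the tautological sequence on $\Gr(k,V)$ is equivariant and non-split.
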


\begin{proof}
At $W$, the even part splits $T$-equivariantly as
$R_0/I^+_W\cong R_0/J''\oplus J''/I^+_W$ (choose the monomial complement, the
splitting is canonical as a $T$-module and the extension is by the
$\mfm_X$-torsion piece), and $J''/I^+_W=V/W$, while the odd part is the constant $R_1/J$.
This gives (a), and the determinant statements follow since
$\det\cE_n=qt\cdot\det\cT_n$ and the only varying summand of $\cT_n|_{F_J}$ is
$\cQ_J$.  For (b), note that $f\in R_0/I^+_W$ is killed by $x$ and $y$ iff $f\in J''/I^+_W$.
For (c), the surjection $A_W\twoheadrightarrow\mathsf M_J$ has kernel
$J''/I^+_W$ concentrated in even degree and killed by $x,y$ (as $xJ'',yJ''\subseteq
J$), i.e.\ $S_0\otimes(V/W)$, and this globalizes.
\end{proof}

\subsection{The wall polystable and the Ext-quiver of a heart}
\label{sec:ext-quiver}

\begin{proposition}\label{prop:ext-quiver}
\leavevmode
\begin{enumerate}[label=\textup{(\alph*)}]
\item $\mathsf M_J$ is generated by $1$ and has no $\Lambda$-submodule isomorphic to
$S_0$.  The $\theta_0$-polystable representative of every point of $F_J$ is
\[
  \mathsf M_J\ \oplus\ S_0^{\oplus m},\qquad m=k-1 ,
\]
and no $S_1$-summand ever splits off.
\item There are natural $T$-equivariant isomorphisms
\[
 \Ex^1_\Lambda(S_0,\mathsf M_J)\cong V_J,\qquad
 \Ex^1_\Lambda(S_0,S_0)=0,\qquad
 \Ex^1_\Lambda(\mathsf M_J,S_0)\cong V_J^\vee\otimes(qt) .
\]
\item Under \textup{(b)}, the class of the extension \eqref{eq:universal-extension}
at $W$ is the quotient map $V_J\to V_J/W$, and globally the classifying map of
\eqref{eq:universal-extension} is the tautological quotient
$V_J\otimes\cO_{F_J}\twoheadrightarrow\cQ_J$.
\end{enumerate}
\end{proposition}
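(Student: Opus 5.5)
Part (a) splits into three steps. First, since $J\subset R_1$ and $J''\subset R_0$ with $R_1J''\subseteq J$, the module $\mathsf M_J=R_0/J''\oplus R_1/J$ is cyclic on $1$: the odd part is $R_1\cdot 1$ modulo $J$, and the even part is $R_0\cdot 1$ modulo $J''$.

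Second, a copy of $S_0$ inside $\mathsf M_J$ would be an even class $f\in R_0/J''$ with $xf,yf\in J$. This means $f\in(J:R_1)=J''$, so $f=0$. Hence $\mathsf M_J$ has no $S_0$ submodule.

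Third, the polystable representative. I will use the universal extension \eqref{eq:universal-extension}. Its subobject $S_0\otimes(V/W)$ has dimension $N-k=k-1$ by length duality (\cref{thm:fiber-grass}(c)), and its quotient is $\mathsf M_J$.
\begin{itemize}
\item At $\theta_0=(0,1)$ the character sees only the odd vertex. The $S_0$'s have $\theta_0$-slope zero, so they are $\theta_0$-stable subquotients, and $\mathsf M_J$, which carries the framing, has no destabilizing even-only quotient piece.
\item The associated graded of the Jordan--Hölder-type filtration is therefore $\mathsf M_J\oplus S_0^{\oplus(k-1)}$, provided $\mathsf M_J$ is itself $\theta_0$-stable as a framed module.
\item Stability of $\mathsf M_J$ follows from the first step: any proper framed submodule containing $1$ is everything, and any even-supported quotient would pull back to a sub of type $S_0$ in the dual sense.
\item The quotient side needs more care. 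A quotient $\mathsf M_J\twoheadrightarrow S_0$ that kills the framing would require $1\mapsto 0$, which is impossible since the module is generated by $1$. So the only relevant destabilizers are $S_0$-subs, which we excluded.
\item No $S_1$ splits off because the odd part $R_1/J$ is constant along the fiber and is fixed by the wall datum; an $S_1$ quotient would change $\theta_0$-slope.
\end{itemize}

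Part (b) is a computation of Ext groups over the skew group algebra $\Lambda$, using its Koszul resolution $0\to\Lambda\otimes\rho_{\det}\to\Lambda\otimes\C^2\to\Lambda\to0$, twisted by the simple module.
\begin{itemize}
\item $\Ex^1(S_0,S_0)$ is computed by $\Hom_\Gamma(\C^2\otimes\rho_0,\rho_0)=0$, since $\C^2$ is odd.
\item For $\Ex^1(S_0,\mathsf M_J)$, the resolution of $S_0$ gives the complex $\mathsf M_0\to(\mathsf M_1)^{2}\to\mathsf M_0\otimes qt$, where $\mathsf M_i$ are the isotypic parts. I then identify $\ker/\im$ in the middle with pairs $(a,b)$ of odd classes satisfying $ya=xb$, modulo $(xf,yf)$.
\item The explicit identification with $V_J=J''/J'$ sends $g\in J''$ to $(x\tilde g,y\tilde g)$, using lifts through $R_0/J''$. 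I expect this to be the main obstacle; it is cleanest to check it on monomials using the heart combinatorics from the proof of \cref{thm:fiber-grass}(c).
\item The third isomorphism follows from the second by the Calabi–Yau-2 Serre duality of $\Lambda$ on finite-dimensional modules, $\Ex^1(X,Y)\cong\Ex^1(Y,X)^\vee\otimes qt$.
\end{itemize}

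Part (c): under the identification in (b), the extension $A_W$ of $\mathsf M_J$ by $S_0\otimes V/W$ has class obtained by sending $v\in V_J=J''/J'$ to the boundary of its lift. In $A_W$ the even class of $g\in J''$ satisfies $xg,yg\in J$, so the connecting map is the projection $V_J\to V_J/W$. This is natural in $W$, so it globalizes to the tautological quotient $V_J\otimes\cO\to\cQ_J$.
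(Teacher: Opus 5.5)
Your part (a) is the paper's argument: there is no $S_0$-submodule because $(J:R_1)=J''$, and the Jordan--H\"older factors of $A_W$ are the stable framed module $\mathsf M_J$ and $k-1$ copies of $S_0$. Replace the remark about quotients ``pulling back to a sub of type $S_0$ in the dual sense'' by King's inequality. A subrepresentation $N$ not containing the framing has $\theta_0$-value $\dim N_1\ge0$, with equality exactly when $N$ is a sum of copies of $S_0$.

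The genuine gap is in (b). The identification you propose, $g\mapsto(x\tilde g,y\tilde g)$, is zero in cohomology. Pairs of the form $(xm,ym)$ are precisely the image of the first differential $\mathsf M_0\to\mathsf M_1^{\oplus2}$. Moreover, for $g\in J''$ the pair $(xg,yg)$ already vanishes in $(R_1/J)^{\oplus2}$, because $R_1J''\subseteq J$. Checking on monomials will not repair this. A count gives $\dim H^1=N$, from the Euler characteristic with $H^0=0$ and $H^2=R_0/(J''+\mfm_X)=\C$. It does not give the natural isomorphism that (c) depends on.

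The missing idea is that the isomorphism runs the other way. It is the connecting map of $0\to J''\oplus J\to\C[x,y]\to\mathsf M_J\to0$.
\begin{itemize}
\item \emph{Definition.} Take a cocycle $(b,c)$, i.e.\ $xc-yb=0$ in $R_0/J''$. Lift it to $(\tilde b,\tilde c)\in R_1^{\oplus2}$ and send it to $x\tilde c-y\tilde b\in J''$ modulo $J'=xJ+yJ$. Changing lifts, or adding coboundaries, alters this only by elements of $J'$.
\item \emph{Injectivity.} Suppose $x\tilde c-y\tilde b=xj_2-yj_1$ with $j_i\in J$. Exactness of the Koszul complex of $x,y$ on $\C[x,y]$ gives $(\tilde b-j_1,\tilde c-j_2)=(x\tilde m,y\tilde m)$ with $\tilde m$ even, so the class is a coboundary.
\item \emph{Surjectivity.} Since $n\ge1$ forces $1\notin J''$, we have $J''\subseteq\mfm_X=xR_1+yR_1$. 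So every $g\in J''$ has the form $x\tilde c-y\tilde b$, and its reduction is a cocycle.
\end{itemize}
This is what the paper compresses into ``the middle cohomology is the even part of $(J''\oplus J)/(x,y)(J''\oplus J)$''.

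Your (c) inherits the gap. The sentence ``the even class of $g$ satisfies $xg,yg\in J$'' only says that $g$ lands in $\soc_0(A_W)$; it computes nothing. With the corrected dictionary your route does work. Lift the cocycle attached to $g$ into $A_W$ and apply the differential; this returns $g\bmod I^+_W\in V/W$. Hence the Yoneda map $\Ex^1(S_0,\mathsf M_J)\to\Ex^2(S_0,S_0)\otimes V/W$ is the projection. You then need the 2CY pairing to convert this into the class in $\Ex^1(\mathsf M_J,S_0)\otimes V/W$. The paper reads the class off more directly from the presentation of $\mathsf M_J$. Restricting $\C[x,y]\twoheadrightarrow A_W$ to $J''$ gives $g\mapsto g\bmod I^+_W$, which factors through $V_J\to V_J/W$.
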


\begin{proof}
(a) If $\bar f\in(R_0/J'')$ satisfies $x\bar f=y\bar f=0$ then $f\in(J:R_1)=J''$, hence
$\bar f=0$, and there is no $S_0$-submodule.  For the polystable representative, we use
King stability at $\theta_0=(0,1)$ (framing normalized such that the framed module has
slope $0$).  A subrepresentation not containing the framing vector has
$\theta_0$-pairing equal to its odd dimension, which is $\ge0$ with equality iff the
subrepresentation is a sum of copies of $S_0$.  Dually, $S_1$ has $\theta_0$-degree
$1\neq0$ and can neither destabilize nor split off at the wall.  The largest
$S_0$-isotypic submodule of $A_W$ is $\soc_0(A_W)=V/W$ of dimension $k-1$
(\cref{prop:cluster-socle}(b)), and the quotient is the stable framed
factor $\mathsf M_J$.

(b) Apply $\Hm_\Lambda(-,\mathsf M_J)$ to the $\Gamma$-equivariant Koszul resolution
of $S_0$,
\begin{equation}\label{eq:S0-resolution}
 0\to P_0\otimes(qt)\xrightarrow{\ \binom{-y}{x}\ }
 (P_1\otimes q)\oplus(P_1\otimes t)\xrightarrow{\ (x\ \ y)\ }P_0\to S_0\to0,
\end{equation}
$P_i=\Lambda e_i$.  The resulting complex is
$(\mathsf M_J)_0\to(\mathsf M_J)_1^{\oplus2}\to(\mathsf M_J)_0$ with maps
$m\mapsto(xm,ym)$ and $(b,c)\mapsto-yb+xc$, whose middle cohomology is the even part
of $(J''\oplus J)/(x,y)(J''\oplus J)$, namely $J''/(xJ+yJ)=J''/J'=V_J$.  The first map
is injective by (a), hence $\Ex^0$ contributes nothing in the middle.  Taking
$\mathsf M_J=S_0$ gives zero middle term.  The last isomorphism is the
$2$-Calabi--Yau duality of $\Lambda$ ($\Gamma\subset SL_2$), whose Serre twist is the
symplectic character $qt$.

(c) The extension of $\mathsf M_J$ by $S_0\otimes(V/W)$ determined by $A_W$
corresponds, under the Koszul identification in (b), to the cocycle
$J''\ni g\mapsto g\bmod I^+_W\in V/W$, i.e.\ to the quotient map with kernel $W$,
and these globalize to the tautological quotient.
\end{proof}

\subsection{Grassmannian acyclicity}\label{sec:BWB}

The following vanishing is the fixed-point shadow of the excess vanishing of
\cref{sec:hecke}.  It is not used in the proof of the vanishing, but it explains the
mechanism at the level of a single wall fiber. 

\begin{theorem}\label{thm:BWB}
Let $F=\Gr(k,N)$ with quotient bundle $\cQ$, $\cO_{\mathrm{Pl}}(1)=\det\cQ$, and
suppose $N=2k-1$.  Then
\[
 \RGamma\bigl(F,\ \cQ\otimes\cO_{\mathrm{Pl}}(1-N)\bigr)=0
 \quad\Longleftrightarrow\quad k\ge2 .
\]
\end{theorem}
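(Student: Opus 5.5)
The plan is to compute $\RGamma$ directly by the Borel--Weil--Bott theorem on $\Gr(k,N)=GL_N/P$. Both $\cQ$ and $\cO_{\mathrm{Pl}}(1)=\det\cQ$ are irreducible homogeneous bundles, so $\cQ\otimes\cO_{\mathrm{Pl}}(1-N)$ is also irreducible homogeneous. Its cohomology is therefore either identically zero, or concentrated in a single degree and given by one irreducible $GL_N$-module. Which case occurs is decided by whether a certain weight $\lambda+\rho$ has a repeated entry.

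\emph{Step 1: the weight.} Write $r=N-k=k-1$ for the rank of $\cQ$. I will order the $GL_N$-weight with the $r$ quotient coordinates first and the $k$ sub coordinates last. The sign and ordering conventions here are where errors creep in, so I will fix them once by checking that $H^0(\cQ)=V^\vee$ or $V$, as appropriate. With this ordering, $\cQ\otimes(\det\cQ)^a$ corresponds to
\[
\lambda=(a+1,\underbrace{a,\dots,a}_{r-1},\underbrace{0,\dots,0}_{k}),\qquad a=1-N=2-2k .
\]

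\emph{Step 2: regularity test.} Take $\rho=(N,N-1,\dots,1)$; only differences of entries matter. Using $a+N=1$, the entries of $\lambda+\rho$ are as follows.
\begin{itemize}[leftmargin=1.6em]
\item Quotient block: $2$ in the first slot, then $0,-1,\dots,3-k$.
\item Sub block: $k,k-1,\dots,1$.
\end{itemize}
For $k\ge2$ the value $2$ occurs in both blocks. Hence $\lambda+\rho$ is singular, and Bott's theorem gives $H^i=0$ for all $i$. This proves the implication ``$\Leftarrow$''. The cancellation is exactly where length duality $N=2k-1$ from \cref{thm:fiber-grass}(c) enters: it forces $a+N=1$, which places the quotient-block entries in the range of the sub block.

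\emph{Step 3: the converse and the main obstacle.} The main obstacle is the degenerate case $k=1$, i.e.\ $N=1$. There $\Gr(1,1)$ is a point and $\cQ$ has rank $0$, so literally $\RGamma=0$ and ``$\Rightarrow$'' would fail. The proof therefore has to fix the intended meaning, and I expect the correct reading to be one of two things.
\begin{enumerate}[label=\textup{(\roman*)}]
\item Only positive-dimensional fibers are meant, where $k\ge2$ is automatic by \cref{thm:fiber-grass}(c). In that case the equivalence records that the vanishing holds in every case that occurs.
\item The formula is meant for general $N$, and one then shows that regularity of $\lambda+\rho$ holds off the locus $N=2k-1$, $k\ge2$.
\end{enumerate}
I would first recheck the weight conventions of Step 1 against $H^0(\cQ)$, and then state the converse in the form of reading (i). I would also add a remark that for the degenerate $k=1$ case the bundle is zero, so the vanishing there is trivial and carries no geometric content.
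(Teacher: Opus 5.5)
You prove the direction $k\ge2\Rightarrow$ vanishing exactly as the paper does. You use Bott's theorem with the quotient block first and the same weight $(2-N,1-N,\dots,1-N,0,\dots,0)$, and find a repeated entry in the $\rho$-shifted weight. Your $\rho=(N,\dots,1)$ is the paper's $\varrho$ shifted by one, so your repeated value is $2$ where the paper's is $1$.

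Your diagnosis of the converse is correct, and it is more accurate than the paper's proof. Under $N=2k-1$ the only value $k<2$ is $k=1$, $N=1$. Then $\Gr(1,1)$ is a point and $\cQ$ has rank $0$, so $\RGamma=0$ and the literal ``$\Rightarrow$'' fails. The paper asserts that for $k=1$ the shifted entries are distinct and one one-dimensional group survives. That holds only for $k=1$ and $N\ge2$:
\begin{itemize}
\item there $F=\PP^{N-1}$ and the bundle is $T_{\PP^{N-1}}(-N)$;
\item your $\lambda+\rho$ becomes $(2,0,-1,\dots,3-N,1)$, which is regular;
\item the only cohomology is $H^{N-2}\cong\C$.
\end{itemize}
The correct statement is therefore: for $1\le k\le N-1$, $\RGamma(\Gr(k,N),\cQ\otimes\cO_{\mathrm{Pl}}(1-N))=0$ if and only if $k\ge2$.

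Two points in your write-up need fixing.

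First, length duality does not enter where you say it does. The identity $a+N=1$ is just the definition $a=1-N$. The sub block of $\lambda+\rho$ is the tail $(k,\dots,1)$ of $\rho$ for every $N$, so it contains $2$ as soon as $k\ge2$. Hence the vanishing holds on every $\Gr(k,N)$ with $2\le k\le N-1$; the paper's appeal to $N-k=k-1$ is equally unnecessary. The only role of $N=2k-1$ is to force $k\ge2$ on positive-dimensional fibres, as in your reading (i).

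Second, your reading (ii) is false as formulated. Off the locus $\{N=2k-1,\ k\ge2\}$ the shifted weight is regular only when $k=1$. For example, on $\Gr(2,5)$ you get $\lambda+\rho=(2,0,-1,2,1)$, which is singular.

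Replace (ii) by the corrected statement above, with the one-line $k=1$, $N\ge2$ computation as its converse, and your argument is complete.
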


\begin{proof}
In the quotient-block convention, $\cQ\otimes\cO_{\mathrm{Pl}}(1-N)$ corresponds to
the $GL_N$-weight
$\mu=(2-N,\,\underbrace{1-N,\dots,1-N}_{N-k-1},\,\underbrace{0,\dots,0}_{k})$.
Adding $\varrho=(N-1,\dots,1,0)$,
\[
 \mu+\varrho=\bigl(1,\ -1,-2,\dots,1-(N-k),\ k-1,k-2,\dots,0\bigr).
\]
Since $N-k=k-1$, the final block is $(k-1,\dots,1,0)$ and contains the entry $1$
exactly when $k\ge2$, repeating the first entry.  A $\varrho$-shifted weight with a
repetition is singular, and Bott's theorem \cite{Bott} gives vanishing in all
degrees.  For $k=1$ the entries are pairwise distinct and exactly one cohomology
group (of dimension one) survives.
\end{proof}

\subsection{The local model at a wall point}\label{sec:local-model}

\begin{theorem}[{Bellamy--Craw \cite[Thm.~1.4; Thms.~3.2, 3.7; Prop.~3.8,
\S3.4]{BellamyCraw}}]\label{thm:BC}
Let $f:\mfM_\theta(\mathbf v,\mathbf w)\to\mfM_{\theta_0}(\mathbf v,\mathbf w)$ be
the VGIT morphism of framed McKay quiver varieties for $\Gamma\subset SL_2(\C)$, from
a generic chamber to a wall.  Fix a closed point $z$ with polystable representative
$y$ and local Ext-quiver datum $(\mathbf m,\mathbf n)$.  Then the following hold.
\begin{enumerate}[label=\textup{(\alph*)}]
\item \'etale locally over $z$, $f$ is the product of the morphism
$\mfM_\varrho(\mathbf m,\mathbf n)\to\mfM_0(\mathbf m,\mathbf n)$ of local quiver
varieties with the identity of a smooth symplectic factor;
\item the equivalence is realized before GIT, on a common affine
$H$-variety $U$ with \'etale maps to both moment-map fibers and Cartesian squares
before and after quotient.  At representation-space level the map is
$c\mapsto y+c$, hence the pullback of the global universal representation is the family
$y+c$ of representations of the \emph{original} framed preprojective algebra;
\item $p_\theta^*\cR_i(\mathbf v,\mathbf w)\cong\bigoplus_j
q_\varrho^*\cR_j(\mathbf m,\mathbf n)^{\oplus\beta^{(j)}_i}$, where $\beta^{(j)}$ run
over the dimension vectors of the stable summands of $y$.  The framing summand
$\cR_\infty$ is trivialized, removing the common line-bundle ambiguity of the
universal family.
\end{enumerate}
\end{theorem}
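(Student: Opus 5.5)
This is a citation of \cite{BellamyCraw}, so the plan is to recall the mechanism of their proof and check that each of (a)--(c) follows in our normalization. The engine is Luna's étale slice theorem applied to the reductive group $G_{\mathbf v}=\prod_iGL(v_i)$ acting on the moment-map fiber $\mu^{-1}(0)\subset\mathrm{Rep}(\overline{Q},\mathbf v,\mathbf w)$.

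First, I would identify the local data at the polystable point. The closed orbit of $y=\bigoplus_j y_j^{\oplus m_j}$ has stabilizer $H=\prod_j GL(m_j)$, where the $y_j$ are the stable summands. In our case, by \cref{prop:ext-quiver}(a), these are $\mathsf M_J$ and $S_0$ with multiplicity $k-1$. The normal slice to the orbit is the symplectic normal space, which decomposes as
\[
\bigoplus_{j,l}\Ex^1(y_j,y_l)\otimes\Hm(\C^{m_j},\C^{m_l}).
\]
This is the representation space of the Ext-quiver with dimension vector $\mathbf m$ and framing $\mathbf n$ coming from the $\mathsf M_J$-summand. The $2$-Calabi--Yau property of $\Lambda$ makes this space symplectic and makes it the doubled quiver. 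Luna's theorem, in its symplectic form (Crawley-Boevey's normal form for $\mu^{-1}(0)$), then gives an $H$-stable affine $U\ni y$. This $U$ carries an excisive étale map $G\times_H U\to\mu^{-1}(0)$ and an étale map $U\to\mu_H^{-1}(0)\times(\text{trivial symplectic summand})$, the latter being given by $c\mapsto y+c$ on the slice coordinates. This proves (b) at the level of affine quotients.

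Second, I would pass to the GIT quotients for $\theta$ and its restriction $\varrho$ to $H$. Here one checks that a point of $U$ is $\theta$-semistable for $G$ exactly when it is $\varrho$-semistable for $H$. This follows from King's criterion, because subrepresentations near $y$ are controlled by sub-dimension vectors of the summands. Since the squares before the quotient are Cartesian and étale, fpqc descent through the GIT quotient yields the Cartesian squares after the quotient. That gives (a), with the trivial summand of the slice as the smooth symplectic factor.

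Third, for (c), I would pull back the tautological bundles along $c\mapsto y+c$. The $G$-equivariant vector space $\C^{v_i}$ restricts to $H$ as $\bigoplus_j(\C^{m_j})^{\oplus\beta^{(j)}_i}$, which descends to the stated sum of local tautological bundles. Choosing the framing vertex as the one with trivialized line normalizes away the common character.

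I expect the main obstacle to be the compatibility of stability conditions on the slice, which is the second step. It requires showing that the $\theta$-destabilizing subobjects of $y+c$ are exactly the $\varrho$-destabilizing ones for $c$ small. I would argue this through the equivalence between the category of modules filtered by the $y_j$ and nilpotent-near representations of the Ext-quiver, as Bellamy--Craw do in their Section~3.
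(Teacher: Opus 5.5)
The paper does not prove \cref{thm:BC}. It quotes Bellamy--Craw and only checks their hypotheses in \cref{rem:BC-check}: the polystable representative $E_{\mathrm{fr}}\oplus S_0^{\oplus r}$, the datum $\mathbf n_0=2r+1$ without loops, the core dimension $2\ell$, and the agreement of BC's scheme-theoretic $\mfM_{\theta_0}$ with the reduced $Q_n$. Reconstructing their argument instead is legitimate. Your treatment of (c) is right: you restrict $\C^{v_i}$ to $H$ and form associated bundles, and this even keeps the trivial summands $\cO^{\oplus\beta^{(\infty)}_i}$ that \cref{rem:BC-check} notes are missing from BC's displayed formula.

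Your first step, however, is set in the wrong place. The orbit $G\cdot y$ is closed only in the $\theta_0$-semistable locus, not in $\mu^{-1}(0)$. A $\theta_0$-polystable point need not be a semisimple module. Here $E_{\mathrm{fr}}=\mathsf M_J$ contains its nonzero odd socle as a subrepresentation avoiding the framing, and that subrepresentation does not destabilize at $\theta_0=(0,1)$. So the closure of $G\cdot y$ in $\mu^{-1}(0)$ contains the orbit of its semisimplification, which has the same image in $\mu^{-1}(0)/\!\!/G$ and a larger stabilizer. Slicing there models $\mfM_\theta\to\mu^{-1}(0)/\!\!/G$, not $f$. You must apply Luna on the affine $G$-stable open $\mu^{-1}(0)_F=\{F\neq0\}$, for a $\theta_0$-semi-invariant $F$ with $F(y)\neq0$; there $G\cdot y$ is closed by King's theorem. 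The squares are then over $D_+(F)\subset\mfM_{\theta_0}$. Since $\theta_0\cdot\beta^{(j)}=0$ for the unframed summands, the local character is $\varrho=\theta|_H$.

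Once this is fixed, the step you call the main obstacle is formal, whereas your sketch of it is not algebraic. ``For $c$ small'' has no meaning in the \'etale topology, and a King-type comparison of subobjects near $y$ would at best give an analytic-local statement. The formal argument runs as follows.
\begin{itemize}
\item Luna's map $G\times_HU\to\mu^{-1}(0)_F$ is strongly \'etale, i.e.\ Cartesian over $U/\!\!/H\to D_+(F)$.
\item The $\theta^N$-semi-invariants of $G\times_HU$ are the $\varrho^N$-semi-invariants of $U$.
\item For reductive $G$, isotypic components commute with the base change $\C[\mu^{-1}(0)_F]^G\to\C[U]^H$.
\end{itemize}
Taking $\ProjOp$ then gives the Cartesian square after quotient and the matching of semistable loci at once. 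This is base change of graded semi-invariant algebras, not fpqc descent, and it applies equally to the second, $H$-equivariant strongly \'etale map.

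The nontrivial input is that second map, which you take as a black box and describe incorrectly. The map $c\mapsto y+c$ is $U\to\mu^{-1}(0)$ into the global representation space; that is why the global universal family pulls back to $y+c$, as your third step uses. It is not the map to $\mu_H^{-1}(0)\times(\text{core})$, which lives in the Ext-quiver space and is not a translation. For $c$ in an $H$-stable complement of $\mathfrak g\cdot y$ in $\ker d\mu_y$ one has $\mu(y+c)=\mu(c)$. Its $\mathfrak h^*$-component is $\mu_H(c)$, but its $\operatorname{ann}(\mathfrak h)$-components need not vanish on $\mu_H^{-1}(0)$. So the slice has to be enlarged by a complement that $d\mu_y$ maps onto $\operatorname{ann}(\mathfrak h)$, and those equations solved. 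That symplectic normal form, which is what BC's Theorem~3.2 supplies, is the real content of (a) and (b).
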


Fix a closed point $z\in\cZ_n^{\mathrm{symb}}$ of exact even-socle rank $r$.  By
\cref{prop:ext-quiver}(a) (the odd socle never splits off at this wall) the
polystable representative is
\begin{equation}\label{eq:polystable}
 E_z=E_{\mathrm{fr}}\oplus S_0^{\oplus r},
\end{equation}
where $E_{\mathrm{fr}}$ is the stable summand containing the framing.  The framing
rigidification kills its scalar automorphisms, while the repeated simple factor has
stabilizer $H=GL(U)$, $U=\C^r$.  Put $V:=\Ex^1(S_0,E_{\mathrm{fr}})$.  At a heart,
$E_{\mathrm{fr}}=\mathsf M_J$, $V=V_J$ and $\dim V=2r+1$ by
\cref{thm:fiber-grass}(c) (the dimension is constant along the stratum by the
\'etale-local triviality of the stratification \cite{BellamySchedler}).  The
$2$-Calabi--Yau pairing identifies $\Ex^1(E_{\mathrm{fr}},S_0)\cong V^\vee$ up to
the symplectic character, and $\Ex^1(S_0,S_0)=0$, that is, there are no loop coordinates at
the multiplicity vertex.  Applying \cref{thm:BC} at $z$, the local model is
\begin{equation}\label{eq:local-model}
 \bigl\{(A,B,u):\ AB=0\bigr\}/\!\!/ GL(U),\quad
 A\in\Hm(V,U),\ B\in\Hm(U,V),\ u\in U_z:=\Ex^1(E_{\mathrm{fr}},E_{\mathrm{fr}}),
\end{equation}
with the invariant-theoretic wall map $(A,B,u)\mapsto(X,u)$, $X:=BA\in\End(V)$.  The
moment-map equation gives $X^2=B(AB)A=0$ and $\rk X\le r$, hence the transverse wall
singularity is the orbit closure
\begin{equation}\label{eq:Nr}
 \mfN_{r,2r+1}:=\{X\in\End(V):\ X^2=0,\ \rk X\le r\}.
\end{equation}

\emph{The $A$-surjective chamber, the source of $\rho$.}  In the chamber defining
$Y_n$, stability requires $A$ to be surjective.  Let $K:=\ker A$, of dimension
$(2r+1)-r=r+1$.  Then $A$ identifies $U\cong V/K$, and $AB=0$ says $\im B\subset K$, hence
$B$ becomes a map $V/K\to K$, i.e.\ a cotangent vector
$T^*_K\Gr(r+1,V)=\Hm(V/K,K)$.  Hence
\begin{equation}\label{eq:Yplus}
 Y^{\mathrm{loc}}_+\cong T^*\Gr(r+1,V)\times(\text{core}),
 \qquad (K,B)\longmapsto BA .
\end{equation}
The fiber over $X=0$ is the zero section, because if $BA=0$ and $A$ is surjective then
$B=0$.  Hence $\rho^{-1}(z)\cong\Gr(r+1,V)$, which is the wall fiber $F_J$ with
$k=r+1$, its tautological quotient $V\otimes\cO\twoheadrightarrow\cQ$ of rank $r$
being the varying even socle (\cref{prop:cluster-socle}).  On the standard big cell
$V=W_0\oplus Q_0$, $\dim W_0=k$, $\dim Q_0=r$,
\begin{equation}\label{eq:bigcell}
 A_C=(C\ \ I_{Q_0}),\qquad B_{C,D}=\binom{D}{-CD},\qquad
 B_{C,D}A_C=\begin{pmatrix}DC&D\\-CDC&-CD\end{pmatrix},
\end{equation}
hence $BA=0$ if and only if $D=0$.  Thus the zero fiber of the local wall contraction
is cut out by the cotangent coordinates, independently of the Grassmannian
coordinates $C$.  Moreover the center of $H$ acts by
$\zeta\cdot(A,B,u)=(\zeta A,\zeta^{-1}B,u)$.  Hence every $H$-equivariant formal
coordinate change preserves $\{B=0\}$, and on $\{B=0\}$ transforms
$A\mapsto A\Psi(u)$ with $\Psi(u)$ invertible.  Therefore the induced point of the
Grassmannian and its tautological quotient bundle are canonically unchanged.

\emph{The $B$-injective chamber.}  For the opposite sign of the stability
character, stability requires $B$ to be injective.  With $L:=\im B$, of dimension
$r$, $A$ factors through $V/L\to L$, and the opposite quotient is
$T^*\Gr(r,V)$ with zero fiber $\Gr(r,V)$.  We obtain the two-sided local diagram
\begin{equation}\label{eq:two-chambers}
 T^*\Gr(r,V)\ \longrightarrow\ \mfN_{r,2r+1}\ \longleftarrow\ T^*\Gr(r+1,V).
\end{equation}
Only the right-hand map is the local model of the given $\rho$.  For $X$ of
maximal rank $r$, $L=\im X\subset K=\ker X$ because $X^2=0$.  The two resolutions
remember the two halves of the same flag $L\subset K\subset V$, and are canonically
dual through the $2$-Calabi--Yau pairing.  The left-hand Grassmannian
$\Gr(r,2r+1)$ is the one of the Brill--Noether recursion below.

\subsection{Brill--Noether loci and the thresholds}\label{sec:BN}

For a stable representation $D\in Y_n$ let $\sigma_i(D)=\dim\Hom(S_i,D)$ be the
even and odd socle dimensions, and
\[
 \BN_n(a,b):=\{D\in Y_n:\ \sigma_0(D)\ge a,\ \sigma_1(D)\ge b\},\qquad
 \BN^+_{n,r}:=\BN_n(r,0).
\]
For the framed affine $A_1$ quiver with dimension vector $(n,n)$ and framing
$(1,0)$ the theory of Gonz\'alez--Gorsky--Simental \cite[Thms.~1.3, 1.11]{GGS}
gives, whenever the locus is nonempty,
\begin{equation}\label{eq:BN-dim}
 \dim\BN_n(a,b)=2n-a(a+1)-b^2,
\end{equation}
that $\BN_n(a,b)$ is irreducible and Cohen--Macaulay, that the exact-socle stratum
$\{\sigma_0=a,\sigma_1=b\}$ is smooth and dense in it, and that it is resolved by a
split-parabolic correspondence.  In particular
$\dim\BN^+_{n,r}=2n-r(r+1)$, and the boundary on which the even socle jumps from
$r$ to at least $r+1$ has codimension
\begin{equation}\label{eq:BN-boundary}
 \dim\BN^+_{n,r}-\dim\BN^+_{n,r+1}=2r+2
\end{equation}
in $\BN^+_{n,r}$.  We use \eqref{eq:BN-dim} with $(a,b)=(1,0)$ and $(2,0)$ in
\cref{sec:hecke}.

The split-parabolic resolution of \cite{GGS} is built from a recursion, which we
make explicit.  For a dimension vector $d=(d_0,d_1)$ define the defect vector
\begin{equation}\label{eq:defect}
 k^0[d]=\bigl(2d_0-2d_1-1,\ 2d_1-2d_0\bigr),
\end{equation}
start with $d^{(0)}=(n,n)$, $k^{(0)}=(r,0)$, and set recursively
\begin{equation}\label{eq:recursion}
 d^{(j+1)}=d^{(j)}-k^{(j)},\qquad
 k^{(j+1)}=\max\bigl(k^{(j)}+k^0[d^{(j+1)}],\,0\bigr)
\end{equation}
(componentwise maximum).  In words, one removes the prescribed socle, computes the forced
socle of the quotient, and repeats.

\begin{proposition}[Explicit recursive sequence]\label{prop:BN-recursion}
The sequence \eqref{eq:recursion} is
\[
  (r,0),\ (0,2r),\ (2r-1,0),\ (0,2r-2),\ \dots,\ (0,2),\ (1,0),\ (0,0),
\]
that is, $k^{(2a+1)}=(0,2r-2a)$ for $0\le a\le r-1$ and $k^{(2a)}=(2r-2a+1,0)$ for
$1\le a\le r$, with the exceptional initial value $k^{(0)}=(r,0)$.  The total amount
removed at each vertex is $r(r+1)$, hence the terminal dimension vector is
$d^{(2r+1)}=(n-r(r+1),\,n-r(r+1))$.
\end{proposition}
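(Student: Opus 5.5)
The proof is a direct induction on the step index. The key observation is that the defect vector $k^0[d]=(2\delta-1,\,-2\delta)$ depends only on the balance $\delta(d):=d_0-d_1$. So I will track the pair $(\delta^{(j)},k^{(j)})$ rather than the full dimension vectors. From \eqref{eq:recursion} we have $\delta^{(j+1)}=\delta^{(j)}-k^{(j)}_0+k^{(j)}_1$, and $n$ never enters until the final bookkeeping.

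First I compute the first step by hand. We have $\delta^{(0)}=0$, so $d^{(1)}=(n-r,n)$ and $\delta^{(1)}=-r$. Then $k^0[d^{(1)}]=(-2r-1,2r)$, and the componentwise maximum gives $k^{(1)}=\max\bigl((-r-1,2r),0\bigr)=(0,2r)$. This explains why $k^{(0)}$ is exceptional: it does not fit the pattern $(2r-2a+1,0)$ at $a=0$.

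Then I prove by induction on $a\ge0$ the joint statement
\[
\delta^{(2a+1)}=a-r,\quad k^{(2a+1)}=(0,2r-2a),\quad \delta^{(2a+2)}=r-a,\quad k^{(2a+2)}=(2r-2a-1,0),
\]
valid for $0\le a\le r-1$. The inductive step consists of two one-line checks.

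The first check starts from $k^{(2a)}=(2r-2a+1,0)$ and $\delta^{(2a)}=r-a+1$. Then $\delta^{(2a+1)}=a-r$, and
\[
k^{(2a+1)}=\max\bigl((2r-2a+1+2(a-r)-1,\ 2(r-a)),0\bigr)=(0,2r-2a).
\]
The second check is the symmetric one: $\delta^{(2a+2)}=a-r+2r-2a=r-a$, and
\[
k^{(2a+2)}=\max\bigl((2(r-a)-1,\ 2r-2a-2(r-a)),0\bigr)=(2r-2a-1,0).
\]
The base case is $a=0$, which uses the hand computation above. In each step the maximum only clips a coordinate that is exactly zero or negative. The only point needing care is that no coordinate becomes negative before the clipping, which holds as long as $a\le r-1$.

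At $a=r-1$ the induction gives $k^{(2r)}=(1,0)$ and $\delta^{(2r)}=1$. Hence $\delta^{(2r+1)}=0$, and
\[
k^{(2r+1)}=\max\bigl((1+2\cdot0-1,\ 0),0\bigr)=(0,0),
\]
so the recursion stabilizes at zero.

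Finally I sum the removals at each vertex. At the even vertex the total is
\[
r+\sum_{a=1}^{r}(2r-2a+1)=r+r^2.
\]
At the odd vertex the total is
\[
\sum_{a=0}^{r-1}(2r-2a)=r(r+1).
\]
Therefore $d^{(2r+1)}=(n-r(r+1),\,n-r(r+1))$.

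I expect no real obstacle here. The only subtle point is getting the clipping by $\max(\cdot,0)$ right at the first step and at the terminal step. This is exactly why the statement singles out $k^{(0)}$ as the exceptional initial value.
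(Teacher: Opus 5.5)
Your proof is correct and follows essentially the paper's argument. Both compute $k^{(1)}$ by hand, then induct while tracking only the balance $d_0-d_1$, on which $k^0[d]$ depends, and finally sum the two arithmetic progressions. Your version is slightly more careful: it carries $\delta^{(2a)}$ explicitly in the inductive hypothesis and checks the terminal step $k^{(2r+1)}=(0,0)$ directly, which the paper's induction leaves implicit at the endpoint.
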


\begin{proof}
From $d^{(1)}=(n-r,n)$ we get $k^0[d^{(1)}]=(-2r-1,2r)$, hence
$k^{(1)}=\max((r,0)+(-2r-1,2r),0)=(0,2r)$.  Suppose $k^{(2a)}=(2r-2a+1,0)$ with
$1\le a\le r$.  One checks $d^{(2a+1)}_0-d^{(2a+1)}_1=a-r$, hence
$k^0[d^{(2a+1)}]=(-2(r-a)-1,2(r-a))$ and $k^{(2a+1)}=(0,2(r-a))$.  Subtracting reverses
the sign of the difference, $d^{(2a+2)}_0-d^{(2a+2)}_1=r-a$, hence
$k^0[d^{(2a+2)}]=(2(r-a)-1,-2(r-a))$ and $k^{(2a+2)}=(2(r-a)-1,0)$, the even-stage
formula with $a+1$.  The sums are $r+(2r-1)+\dots+1=r(r+1)$ and
$2r+(2r-2)+\dots+2=r(r+1)$.
\end{proof}

\begin{theorem}[Even-socle Brill--Noether recursion]\label{thm:BN}
$\BN^+_{n,r}\ne\emptyset$ if and only if $n\ge r(r+1)$.  When nonempty,
$\BN^+_{n,r}$ is birational to a $\Gr(r,2r+1)$-bundle over a dense open subset of
$Y_{n-r(r+1)}$.
\end{theorem}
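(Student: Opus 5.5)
The plan is to run the split-parabolic recursion of \cite{GGS} along the explicit sequence of \cref{prop:BN-recursion}. Each step peels off a socle and records the extension data in a Grassmannian of extension classes.

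\emph{Existence and the recursion.} Start with a generic point $D'\in Y_{n-r(r+1)}$, which has zero socle in both parities. Build the stages backwards, from $d^{(2r+1)}$ up to $d^{(0)}=(n,n)$. At stage $j$ the current module $D^{(j+1)}$ of dimension $d^{(j+1)}$ is extended by $S_i^{\oplus k^{(j)}_i}$, using a $k^{(j)}_i$-dimensional subspace of $\Ex^1_\Lambda(S_i,D^{(j+1)})$. By the $2$-Calabi--Yau Euler form of $\Lambda$,
\[
 \dim\Ex^1(S_i,D)=\hom(S_i,D)+\hom(D,S_i)-\langle S_i,D\rangle .
\]
The defect vector $k^0[d]$ of \eqref{eq:defect} is exactly minus this Euler pairing, corrected by the framing. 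So the recursion \eqref{eq:recursion} says that the forced socle of the quotient is the excess of $\dim\Ex^1$ over what stability and the prescribed socle require.

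\emph{The Grassmannian fibers.} By \cref{prop:BN-recursion}, every stage $j\ge1$ is forced: the required socle $k^{(j)}$ equals the whole extension space modulo the previous socle. Hence the choices at those stages are unique, at least generically. Only the first stage has freedom. There one needs an $r$-plane in $V=\Ex^1(S_0,D^{(1)})$, which has dimension $r+(r+1)=2r+1$; this matches $\dim V_J=2r+1$ in \cref{thm:fiber-grass}(c) and the local model \eqref{eq:local-model}. The choices at the first stage therefore form a $\Gr(r,2r+1)$-bundle. This is the $B$-injective Grassmannian of \eqref{eq:two-chambers}.

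\emph{Matching the two sides.} I then verify two things.
\begin{enumerate}[label=\textup{(\roman*)}]
\item For generic data the resulting module is cyclic, hence $\theta$-stable, and has even socle exactly $r$. Genericity of the extension ensures no extra socle appears.
\item Conversely, for $D$ in the dense exact-socle stratum, iterated removal of socles recovers the data uniquely.
\end{enumerate}
Together these give birationality. A dimension check supports it:
\[
 \dim\Gr(r,2r+1)+2\bigl(n-r(r+1)\bigr)=r(r+1)+2n-2r(r+1)=2n-r(r+1),
\]
which agrees with \eqref{eq:BN-dim}.

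\emph{The ``only if'' direction.} The recursion needs every $d^{(j)}$ to stay nonnegative. The terminal vector $d^{(2r+1)}=(n-r(r+1),\,n-r(r+1))$ is nonnegative exactly when $n\ge r(r+1)$. If $\BN^+_{n,r}$ is nonempty, socle peeling forces the sequence (\cite[Thm.~1.11]{GGS}), so the inequality holds. The nonemptiness criterion in \cite{GGS} then gives the converse.

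The hardest step is showing that every intermediate stage is genuinely forced and that cyclicity is preserved. This means controlling $\hom(D^{(j)},S_i)$ at each stage, where the maximum with $0$ in \eqref{eq:recursion} does its work. I would handle it by quoting the stratification and parabolic-resolution statements of \cite[Thms.~1.3, 1.11]{GGS}, and only check that their recursion data specialize to the sequence of \cref{prop:BN-recursion}.
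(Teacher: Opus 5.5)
Your proposal is correct and follows essentially the same route as the paper: both rest on the split-parabolic resolution of \cite[Thm.~1.11]{GGS}, read off the terminal variety $Y_{n-r(r+1)}$ from \cref{prop:BN-recursion}, identify the generic first fiber as $\Gr(k^{(0)}_0,-k^0_0[d^{(1)}])=\Gr(r,2r+1)$, and use $-k^0_v[d^{(j+1)}]=k^{(j)}_v$ to make every later Grassmannian a point. One small slip is harmless: an extension with subobject $S_i^{\oplus k}$ is classified by a $k$-plane in $\Ex^1(D^{(j+1)},S_i)$, which is the $2$-Calabi--Yau dual of the space you name, but since $\Gr(r,2r+1)\cong\Gr(r+1,2r+1)$ this does not affect the statement.
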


\begin{proof}
The split-parabolic variety resolving $\BN^+_{n,r}$ \cite[Thm.~1.11]{GGS} is
nonempty precisely when the recursion reaches a nonempty terminal quiver variety,
which by \cref{prop:BN-recursion} is $Y_{n-r(r+1)}$.  The generic fiber of the
first split-parabolic projection is $\Gr(k^{(0)}_0,-k^0_0[d^{(1)}])=\Gr(r,2r+1)$.  At
every later step the recurrence forces $-k^0_v[d^{(j+1)}]=k^{(j)}_v$ at the active
vertex, hence the generic Grassmannian is a point.
\end{proof}

\begin{corollary}[Triangular threshold]\label{cor:threshold}
An exact even-socle rank $r$ occurs on $Y_n$ only when $r(r+1)\le n$.  The first
possible values are $r=1,2,3,4$ for $n\ge2,6,12,20$.  In particular every
positive-dimensional wall fiber for $n\le5$ is a $\PP^2=\Gr(2,3)$, and $\Gr(3,5)$
first appears at $n=6$.
\end{corollary}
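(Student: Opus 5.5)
The corollary should follow directly from \cref{thm:BN} combined with the description of wall fibers in \cref{sec:hearts} and the local model of \cref{sec:local-model}. The plan has three steps.

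\emph{Step 1: the threshold.} By definition, a point of exact even-socle rank $r$ lies in $\BN^+_{n,r}=\BN_n(r,0)$. \cref{thm:BN} says this locus is nonempty only if $n\ge r(r+1)$, which gives the first sentence. The listed values are just the triangular numbers $r(r+1)=2,6,12,20$ for $r=1,2,3,4$.

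\emph{Step 2: identify each positive-dimensional wall fiber.} I will use the local model \eqref{eq:Yplus}. For a wall point $z$ whose polystable representative \eqref{eq:polystable} has $r$ copies of $S_0$, the fiber $\rho^{-1}(z)$ is the zero section $\Gr(r+1,V)$ with $\dim V=2r+1$. At a point $\xi$ of this fiber, the cluster $A_W$ has even socle $V/W$ of dimension exactly $r$. This is \cref{prop:cluster-socle}(b) at hearts, and \cref{prop:ext-quiver}(a) in general, since the $S_0$-multiplicity of the polystable representative equals the rank of the even socle. So $\xi\in\BN^+_{n,r}$, and Step 1 forces $r(r+1)\le n$.

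The fiber is positive-dimensional exactly when $r\ge1$, because $\Gr(1,1)$ is a point. It equals $\Gr(r+1,2r+1)\cong\Gr(r,2r+1)$, that is, $\Gr(m,2m-1)$ with $m=r+1$.

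\emph{Step 3: read off the small cases.} For $n\le5$ we have $r(r+1)\le5$, so $r=1$, and every positive-dimensional fiber is $\Gr(2,3)\cong\PP^2$. The fiber $\Gr(3,5)$ corresponds to $r=2$, which requires $n\ge6$.

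For the word ``first appears'' we also need existence at $n=6$. I would get this from the equivalence in \cref{thm:BN}: it gives $\BN^+_{6,2}\ne\emptyset$, and a point of exact rank $2$ maps to a wall point whose fiber is $\Gr(3,5)$ by Step 2. This existence can also be checked with the heart census of \cref{rem:census}.

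The main obstacle is the existence part of Step 3. \cref{thm:BN} gives nonemptiness of $\BN^+_{6,2}$, but a point there might have even socle rank at least $2$ rather than exactly $2$. Exact rank $3$ at $n=6$ is excluded because $3\cdot4>6$, so nonemptiness does give exact rank $2$, and the argument closes.
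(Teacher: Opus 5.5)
Your argument is correct and is essentially the paper's: the threshold comes from \cref{thm:BN}, the fibers are identified with $\Gr(r+1,2r+1)$ via the local model \eqref{eq:Yplus}, and the existence of $\Gr(3,5)$ at $n=6$ comes from the nonemptiness half of \cref{thm:BN} (or, as the paper records it, from the census of \cref{app:census}). The one loose citation is \cref{prop:ext-quiver}(a), which is stated only at hearts. At a general wall point, the equality of the $S_0$-multiplicity with the even-socle rank follows from the $\theta_0$-Jordan--H\"older filtration: the framed factor is on top by cyclicity, the remaining factors split because $\Ext^1_\Lambda(S_0,S_0)=0$, and $\Hom_\Lambda(S_0,E_{\mathrm{fr}})=0$. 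The paper uses this step tacitly in \cref{sec:local-model}.
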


\begin{lemma}[Trace preservation along the recursion]\label{lem:trace-preservation}
Let $0\to K\to D\to\widehat D\to0$ be one of the universal split-parabolic exact
sequences, where the successive subquotients of $K$ are vertex simples.  Then for
every positive-degree invariant polynomial $f$, $\Tr(f|D)=\Tr(f|\widehat D)$.
Consequently the recursion restricts to the punctual fibers, and reversing it over an
irreducible component of the punctual fiber of $Y_{n-r(r+1)}$ produces an
$n$-dimensional subvariety of $F_n\cap\BN^+_{n,r}$ birational to a
$\Gr(r,2r+1)$-bundle over that component.
\end{lemma}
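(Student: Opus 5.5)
The trace identity is local: additivity of trace in short exact sequences reduces it to the kernel $K$, and the punctual-fiber and dimension claims then follow from that identity together with \cref{thm:BN} and \cref{lem:dimF}.

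\emph{Step 1: the trace identity.} Fix a positive-degree invariant $f\in(R_0)_+$. Multiplication by $f$ is a $\Lambda$-module endomorphism of $D$, so it preserves $K$ and induces the corresponding endomorphisms of $K$ and $\widehat D$. The trace of an endomorphism of an exact sequence of finite-dimensional spaces is additive, applied separately to the even summands where $\Tr_{\cR_0}$ is taken. This gives
\[
\Tr(f|D_0)=\Tr(f|K_0)+\Tr(f|\widehat D_0).
\]
Next filter $K$ by its successive subquotients, which are vertex simples $S_0,S_1$. On each simple $x$ and $y$ act by zero, so $f$ acts by $f(0)=0$. Hence $f$ is nilpotent on $K$, and $\Tr(f|K_0)=0$. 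The same argument works in families over the split-parabolic variety, because the universal sequences are sequences of vector bundles with $\Lambda$-action. So the identity $\Tr(f|D)=\Tr(f|\widehat D)$ holds as an equality of regular functions.

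\emph{Step 2: restriction to punctual fibers.} By the trace identity of \cref{prop:H0} (Step 2 there), the functions $\tau_f$ with $f\in(R_0)_+$ generate $\mfm_0\cO$ on $Y_n$, and likewise on the smaller quiver variety. Step 1 says that $\tau_f$ pulled back from $Y_n$ agrees with $\tau_f$ pulled back from the terminal variety $Y_{n-r(r+1)}$, at each stage of \cref{prop:BN-recursion}. The stages are quiver varieties with unbalanced dimension vectors, and there $\Tr$ is taken on the even part, so the same argument applies. Therefore the preimage of $\mfm_0$ is the same ideal computed upstairs or downstairs, and the correspondence restricts to a correspondence between $F_n\cap\BN^+_{n,r}$ and $F_{n-r(r+1)}$.

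\emph{Step 3: the dimension count.} Let $C$ be an irreducible component of $F_{n-r(r+1)}$. By \cref{lem:dimF} and Lagrangianity (Nakajima), $\dim C=n-r(r+1)$. Reversing the recursion over a dense open subset of $C$ gives, by \cref{thm:BN}, a family birational to a $\Gr(r,2r+1)$-bundle over that subset. Since $\dim\Gr(r,2r+1)=r(r+1)$, the total dimension is $n$. The intermediate Grassmannian fibers are points, as shown in the proof of \cref{thm:BN}. The image lies in $F_n$ by Step 2, and it lies in $\BN^+_{n,r}$ because the first kernel is $S_0^{\oplus r}$, which sits in the socle.

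\emph{Main obstacle.} The hard part is making the reversal precise over the punctual locus rather than over a generic open set. \cref{thm:BN} only gives birationality over a dense open subset of $Y_{n-r(r+1)}$, and $C$ might a priori miss that subset. I would first argue that the generic extension over a generic point of $C$ is still stable with exact even socle $r$: the extension spaces have constant dimension along the stratum, by the Ext computation of \cref{prop:ext-quiver}(b) and the local triviality of \cite{BellamySchedler}. Then the fiber Grassmannian $\Gr(r,2r+1)$ persists over an open subset of $C$. Injectivity of the map to $Y_n$ on a dense subset, which gives birationality onto the image, should follow because the socle filtration is canonically recovered from $D$.
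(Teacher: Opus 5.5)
Your Step~1 is exactly the paper's proof, and the paper's proof consists of nothing more. The point is that $f$ is central in $\Lambda$ and acts by $f(0)=0$ on every vertex simple, so it is block upper triangular with zero diagonal block on $K$, and trace is additive. Your Step~2 is also fine. The paper then treats the punctual restriction and the dimension clause as immediate consequences of this identity and \cref{thm:BN}. You are right that \cref{thm:BN} only controls a dense open subset of $Y_{n-r(r+1)}$, which a punctual component may miss. However, your proposed repair does not work.

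\textbf{Why the Ext spaces are not constant.} Each reversed step extends $\widehat D$ by $S_v^{\oplus k}$ and has fibre $\Gr\bigl(k,\Ext^1(\widehat D,S_v)\bigr)$. Since a framing-generated $\widehat D$ has no nonzero maps to $S_v$, one gets $\dim\Ext^1(\widehat D,S_v)=-k^0_v[\widehat d]+\dim\Hom(S_v,\widehat D)$. This is the same count that gives the fibre $\PP^{s_0(E')+2}$ of $q$ in \cref{sec:geometry-input}. So these spaces jump with the socle, and the socle jumps along entire components of punctual fibers.

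\textbf{A concrete counterexample.} Take $r=1$ and $n=4$, so $n-r(r+1)=2$, and let $C=F_J\cong\Gr(2,3)$. This is a component of $F_2$ whose generic point has $\soc_0=\cQ_J$ of rank one, by \cref{prop:cluster-socle}. The reversed tower over $C$ has three steps.
\begin{enumerate}
\item The last reversed step, extension by $S_0$, has fibre $\PP(\Ext^1(\widehat D,S_0))\cong\PP^1$ rather than a point.
\item The middle step is $\Gr(2,2+s_1)$, which is nonempty.
\item The first step is $\Gr(1,3+s_0)$, of dimension at least two.
\end{enumerate}
So the tower over $C$ has dimension at least $2+1+0+2=5$. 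Its image, however, lies in $F_4\cap\BN^+_{4,1}$, which has dimension at most $4$ by \cref{lem:dimF}. The map is therefore not birational onto its image.

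\textbf{Why canonical recovery also fails.} Your recovery argument breaks for the same reason. If $v\in\soc_0(\widehat D)$ and $\tilde v$ is any lift to the extension, then $x\tilde v$ and $y\tilde v$ lie in the odd part. That odd part is unchanged, because the kernel is purely even. Hence $x\tilde v=y\tilde v=0$, so $\tilde v$ stays in the even socle. The extension then has even socle of dimension at least two, and the forward step, which removes a single line, is not determined by $D$.

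\textbf{Your citations do not apply.} \cref{prop:ext-quiver}(b) computes $\Ext^1(S_0,\mathsf M_J)$ at fixed wall points of $Q_n$. The local triviality of \cite{BellamySchedler} concerns the stratification of $Q_n$. Neither controls the socles of the intermediate modules of the recursion.

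\textbf{What your argument does prove.} It establishes the final clause for components $C$ over which the reversed tower contains a point where every intermediate module has exactly the forced socle. This is an open condition, by upper semicontinuity of $\dim\Hom(S_v,-)$. Under it the tower is a $\Gr(r,2r+1)$-bundle over a dense open subset of $C$, and the forced socle flag is recovered from $D$. For an arbitrary component this extra hypothesis is genuinely needed, and neither your proposal nor the paper's one-line proof supplies it.
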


\begin{proof}
Every positive-length path acts by zero on a vertex simple, hence relative to a
filtration adapted to $K$ the action of $f$ on $D$ is block upper triangular with
zero diagonal block on $K$.
\end{proof}

\begin{remark}[What the recursion does and does not prove]\label{rem:recursion-role}
The recursion identifies the complementary Grassmannian $\Gr(r,2r+1)$ of
\eqref{eq:two-chambers}, gives the threshold $r(r+1)\le n$, proves that the exact
rank stratum is smooth and dense with boundary of codimension $2r+2$, and shows that
the scalar trace functions are unchanged along it.  It is not an induction on $n$
proving symbolic descent, and it will not be used as one.  The descent argument of
\cref{sec:SD} is local at each wall point and uses only the Grassmannian structure
of the fiber.
\end{remark}

\section{The rank-one Hecke correspondence and the excess vanishing}\label{sec:hecke}

In this section we prove part (3) of Theorem~E, the vanishing $R\rho_*\cM_n^+=0$ of
the even part of the excess sheaf.  This is the central geometric result of the
paper.  The main idea is to realize $\cM_n^+$ as a pushforward from the rank-one
Hecke correspondence $\Hecke$, which parametrizes a cluster together with an even
socle line.  The correspondence has two projections, to $Y_n$ and to the smaller
quiver variety $Y'=\mfM_\theta((n-1,n),(1,0))$, and both projections are cut out by
regular sections inside projective bundles.  Bott's formula on the first projective
bundle bounds the Tor-amplitude of the pushforward of every term by $[-2,0]$, and
since the excess is supported in codimension $n$, the pushed-forward complex is
concentrated in a single degree.  On the other hand the inverse $\cL^{-1}$ of the universal socle line lies in the
negative acyclic window of the second projection, hence the pushforward to the wall
vanishes.

The identification of $\cM_n^+$ with the object on the Hecke correspondence is the
technical heart of the section.  It passes through a determinantal model of the even
socle locus, the canonical module of that locus, and the dual-socle formula of
\cref{sec:haiman}.  We conclude with the Cohen--Macaulay consequences and with the
two lowest ranks in which the argument can be followed by hand.

Throughout the section $n$ is fixed and we write $Y=Y_n$, $\cP=\cP_n=\cR_0\oplus\cR_1$,
$\cT=\cT_n$, $\cE=\cE_n$, $\cK=\cK_n$, $J=J_n$, $\cI=\cI_n$, $\cM=\cM_n$,
$\cS=\cS_n$.  The main result of the section is the following theorem.

\begin{theorem}[Even-excess vanishing]\label{thm:vanishing}
For every $n\ge1$,
\begin{equation}\label{eq:main-vanishing}
 R\rho_*\cM_n^+=0,
\end{equation}
i.e.\ $R^i\rho_*\cM_n^+=0$ for every $i\ge0$, and the same holds in the
$T$-equivariant derived category.
\end{theorem}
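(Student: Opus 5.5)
The plan is to follow the strategy announced at the start of the section: realize $\cM^+$, up to a line-bundle twist, as a pushforward from the rank-one Hecke correspondence
\[
 Y\xleftarrow{\ p\ }\Hecke\xrightarrow{\ q\ }Y'=\mfM_\theta((n-1,n),(1,0)),
\]
where $\Hecke$ parametrizes pairs (cluster $D$, even socle line $S_0\hookrightarrow D$). We then compute $R\rho_*$ through $q$. First we describe the two projections concretely.

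\emph{The map $p$.} We take $\Hecke\subset\PP(\cR_0^\vee)$, or the projectivization of the relevant even piece, cut out as the zero locus of the section given by multiplication by $x$ and $y$ into the odd part, twisted by $\cO(1)$. Its image is the even-socle locus $\BN^+_{n,1}$, which by \eqref{eq:BN-dim} has dimension $2n-2$, i.e. codimension $2$. Over the jump locus $\BN^+_{n,2}$, of codimension $6$, the fibers of $p$ are $\PP^1$'s.

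\emph{The map $q$.} Sending $(D,\ell)$ to the quotient $D/\ell$ exhibits $\Hecke$ as the projectivization $\PP(\Ex^1(S_0,D'))$ over $Y'$. This is a projective bundle cut out by a regular section, in the style of the Nakajima Hecke correspondences.

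\emph{Step 1: a Hecke model for $\cM^+$.} Using \eqref{eq:even-dual-socle}, $\cM^+\simeq\cExt^n(\cS^+,\cO_Y)\otimes\det\cT$, we identify $\cS^+$ with $p_*$ of the universal socle-line data. The reason is that the fibers of $\cS^+$ are duals of the even socles (\cref{rem:fiber-interpretation}), and on $\PP$ of the socle the tautological $\cO(1)$ pushes forward to the dual socle. To make this precise we write the even socle locus determinantally, as the degeneracy locus of $\cR_0\to\cR_1^{\oplus2}$. We then use the Hilbert--Burch and Buchsbaum--Rim resolutions to get a locally free resolution, and compute $\cExt^n$ by Grothendieck duality for $p$ (finite over the generic stratum, with the $\PP^1$-fibers controlled by Bott's formula). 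The expected outcome is
\[
 \cM^+\simeq Rp_*\bigl(\cL^{-1}\otimes\omega_{\Hecke/Y}\otimes(\text{pullback of a line bundle from }Y)\bigr)[\text{shift}],
\]
where $\cL$ is the universal socle line. The claim that this pushforward is a single sheaf will come from the Tor-amplitude bound: Bott's formula on the $\PP$-bundle gives amplitude $[-2,0]$ for the pushforward of each Koszul term. Since the support has codimension $n$, \cref{lem:codim-amplitude} forces concentration in one degree, matching \cref{thm:two-term}.

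\emph{Step 2: vanishing via $q$.} By \cref{thm:polarization}, $\rho^*$-ample classes come from $\det\cR_1$, which is unchanged under removing an even simple. Hence $\det\cR_1$ on $Y$ and on $Y'$ agree on $\Hecke$, and the wall map factors compatibly, $\rho\circ p=\rho'\circ q$ for a map $\rho':Y'\to Q_n$ (the odd quotient is preserved). It therefore suffices to prove $Rq_*(\cL^{-1}\otimes\cdots)=0$. On the $\PP^{N-1}$-fibers of $q$, where $N=\rk\Ex^1(S_0,D')$, the twisted sheaf restricts to $\cO(-j)$ with $1\le j\le N-1$, the negative acyclic window. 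Relative Bott vanishing then gives $Rq_*=0$, hence $R\rho_*\cM^+=0$. The equivariant statement follows from \cref{rem:equivariance}.

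The main obstacle is Step 1: matching exactly the twist and the $\omega$ so that the line bundle on the $q$-fibers lands strictly inside the window $[-(N-1),-1]$. In particular one must check that the $\det\cT$ twist and the relative canonical bundle $\cO(-N)\otimes\det$ combine correctly. A consistency check on this bookkeeping is the fixed-point shadow \cref{thm:BWB}, whose weight $\cQ\otimes\cO_{\mathrm{Pl}}(1-N)$ should be reproduced on each wall fiber. A second, less serious, point is verifying that the section cutting out $\Hecke\subset\PP$ is regular, which follows from the dimension count \eqref{eq:BN-dim} for $(a,b)=(1,0),(2,0)$.
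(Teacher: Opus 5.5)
\textbf{Step 1 has a genuine gap.} Your architecture matches the paper's: the Hecke correspondence, Tor-amplitude plus codimension to get a sheaf, a negative window along $q$, and the square $\rho p=hq$ through the odd quotient. However, Step~1 as formulated cannot produce $\cM_n^+$, and what is missing is the heart of the proof.

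You identify $\cS^+$ with a pushforward of socle-line data, that is, with the even part $\Sigma$ of $\cP_n^\vee/(x,y)\cP_n^\vee$. This sheaf is supported on the whole even-socle locus $B$, of dimension $2n-2$. But $J_n$ also contains $\cT_n$, so $\cS^+=i_*(\Sigma\otimes\cO_Z)$, where $Z\subset B$ is cut out by the augmentation on trace-zero elements. This is the punctual part, of dimension $n$; note that \cref{rem:fiber-interpretation} is a statement at punctual points only. Hence $\cM_n^+$ is supported on $|Z|$ (\cref{cor:trace-CM}). By contrast, $Rp_*$ of any line bundle on $\Hecke$ has a cohomology sheaf supported on all of $B$, because $p$ is birational onto $B$. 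So for $n\ge3$ your expected formula cannot hold; your picture is exactly the case $n=2$, where $\Hecke=B=\PP^2$.

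Your concentration step shows the same inconsistency. Amplitude $[-2,0]$ together with support of codimension $n$ gives $D^{[n-2,0]}$ by \cref{lem:codim-amplitude}. That is zero for $n\ge3$, yet $\cM_n^+\neq0$.

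\textbf{What the paper does instead.} It keeps the trace Koszul complex $C_B=K_B(\cT_n|_B,\ev_B)$ throughout.
\begin{itemize}
\item Duality on $B$, using the Hilbert--Burch canonical module, gives $\cM_n^+\simeq i_*(\scrD|_B\otimes\cH^{-1}(C_B))$.
\item On $\Hecke$ the lost socle line contributes $\ev'$ to the trace. So one needs the universal augmentation on $Y'$ and the modified trace $\Tr_{\cR'_0}+\ev'$.
\item These give $0\to\cL\otimes q^*C'[1]\to\bar p^*C_B\to q^*C'\to0$, where $C'$ is a Koszul complex of length $n-2$ on $Y'$.
\item Together with $R\bar p_*\cO_\Hecke=\cO_B$, which uses normality of $B$, this yields $\cM_n^+\simeq Rp_*(\cL^{-1}\otimes q^*(\scrD'\otimes C'))$.
\end{itemize}
The factor $q^*C'$ does two jobs. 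It raises the amplitude to $[-n,0]$, which matches codimension $n$. It also makes the extra factor a perfect complex pulled back from $Y'$, so the projection formula along $q$ applies. A line bundle pulled back from $Y$ would instead shift the power of $\cL$, for example $p^*\scrD=\cL^{-2}\otimes q^*\scrD'$.

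\textbf{Step 2 also has a gap.} The map $q$ is not a projective bundle. Its fibers are $\PP^{s_0(D')+2}$, which jump with the even socle of $D'$. So $q$ is not flat, and fiberwise Bott vanishing does not imply $Rq_*=0$.

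You need an ambient projective bundle of constant rank. The paper uses $\PP_{\mathrm{quot}}(\cV')$, with fibers $\PP^{n+1}$, in which $\Hecke$ is the zero scheme of a regular section of rank $n-1$. After twisting by $\cL^{-1}$, the Koszul terms are $\Lambda^\ell\cR'_0\otimes\cO(-1-\ell)$ for $0\le\ell\le n-1$. All of these lie in the acyclic range of $\PP^{n+1}$.

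\textbf{The $p$-side presentation needs correcting too.} On lines in $\cR_0$, the section $(-y,x)$ into $\cR_1^{\oplus2}$ is not regular: its rank is $2n$, but the codimension is only $n+1$. You must use that this section lands in $\cV=\ker(x,y,i)$, and cancel the unit line, to get $\psi:\cT_n\to\cG$ of ranks $n-1$ and $n$. Only then do the Koszul and Bott arguments apply.

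Finally, $n=1$ needs its own argument, since there the even dual-socle sheaf vanishes directly.
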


For $n\ge2$ the proof produces a more precise statement.  Put
\[
 Y'=\mfM_\theta((n-1,n),(1,0)),
 \qquad
 Y\xleftarrow{\ p\ }\Hecke\xrightarrow{\ q\ }Y',
\]
where $\Hecke$ classifies exact sequences
\begin{equation}\label{eq:universal-sequence}
 0\longrightarrow S_0\otimes\cL\longrightarrow p^*\cP\longrightarrow q^*\cP'\longrightarrow0
\end{equation}
of framed $\Lambda$-modules, and $\cL$ is the universal even socle line.  Define
\begin{equation}\label{eq:D-lines}
 \scrD=(\det\cR_1)^2(\det\cR_0)^{-2},\qquad
 \scrD'=(\det\cR'_1)^2(\det\cR'_0)^{-2}.
\end{equation}
There is a universal augmentation $\ev':\cP'\to\cO_{Y'}$ at the origin
(\cref{lem:smaller-augmentation}), and we use the modified trace and its kernel
\begin{equation}\label{eq:modified}
 \lambda'=\Tr_{\cR'_0}+\ev',\qquad
 \cT'=\ker\lambda',\qquad
 C'=K_{Y'}(\cT',\ev'|_{\cT'}).
\end{equation}
Note that $\lambda'(1)=(n-1)+1=n$ and $\rk\cT'=n-2$.

\begin{theorem}[Hecke realization]\label{thm:realization}
Let $n\ge2$.  After forgetting the auxiliary torus linearizations, there is an isomorphism
\begin{equation}\label{eq:main-realization}
 \cM_n^+\simeq Rp_*\bigl(\cL^{-1}\otimes q^*(\scrD'\otimes C')\bigr),
\end{equation}
and the object on the right is concentrated in degree zero.  Moreover
\begin{equation}\label{eq:q-window-intro}
 Rq_*\cL^{-1}=Rq_*\cL^{-2}=0.
\end{equation}
\end{theorem}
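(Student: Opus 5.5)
Here is the plan I would follow.

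First, the geometry of $\Hecke$. I would exhibit $p:\Hecke\to Y$ as the zero locus of a regular section inside the projective bundle $\PP(\cR_0^\vee)$ of even lines in $\cP$. A line $\ell\subset\cR_0$ is a socle line exactly when $x\ell=y\ell=0$, so the section is the map $\cL\to\cR_1\otimes(q\oplus t)$ given by multiplication by $(x,y)$. Its zero locus has codimension $2n$ in a space of dimension $3n-1$, matching $\dim\Hecke=n-1+\dim\BN^+_{n,1}$ corrected by the fiber, i.e.\ $\dim\Hecke=2n-1$. \Cref{eq:BN-dim} with $(a,b)=(1,0)$ gives $\dim\BN^+_{n,1}=2n-2$, and $(2,0)$ controls where the fibers jump. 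Dually, $q:\Hecke\to Y'$ is the zero locus in $\PP(\Ext^1(S_0,\cP'))$, the rank-$(2n-1)$... bundle of extensions computed as in \cref{prop:ext-quiver}(b) from the resolution \eqref{eq:S0-resolution}. Stability of the extension is open, and the complement is cut out by a regular section. With this, $\cL$ is $\cO(-1)$ on the $q$-projective bundle with fiber $\PP^{N-1}$, $N\ge3$. Then \eqref{eq:q-window-intro} follows from $R\Gamma(\PP^{N-1},\cO(j))=0$ for $-N<j<0$. The Koszul terms of the regular section only twist by further negative powers within the window, so I must check that the window length $N$ exceeds the number of Koszul twists. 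I would do this by computing $N$ from the Euler form: $N=-\chi(S_0,\cP')=2n-1$ on the Ext side minus the constant part.

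Second, the realization \eqref{eq:main-realization}. By \eqref{eq:even-dual-socle}, $\cM^+\simeq\cExt^n(\cS^+,\cO_Y)\otimes\det\cT$. The even dual socle $\cS^+$ is supported on $F_n\cap\BN^+_{n,1}$, and its dual fibers are the even socles (\cref{rem:fiber-interpretation}). Hence $\cS^+\simeq p_*$ of a line bundle on $p^{-1}(F_n)$, namely $\cL^{\vee}$ restricted to the locus where the complementary conditions hold. On $\Hecke$, the punctual condition is pulled back along $q$. By \cref{lem:trace-preservation}, traces of positive invariants on $\cP$ and $\cP'$ agree. The leftover augmentation condition at the origin is encoded by $\ev'$, which is why $\lambda'=\Tr+\ev'$ and the Koszul complex $C'$ of $\cT'$ appear. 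So $\cS^+\simeq p_*(\cL^{-1}\otimes q^*C')$ up to a line bundle. Grothendieck duality for $p$ then converts $\cExt^n(p_*(-),\cO_Y)$ into $p_*$ of the relative dualizing sheaf. For this I would use the finiteness of $p$ over the support; in general, the regular-section description gives $\omega_p$ as a determinant via adjunction. That determinant should come out as $\cL^{-2}\cdot$(the $\scrD$ factors), with $\cL^{-1}$ surviving after the $\cL$ from dualization cancels. I would match $\det\cT\otimes\omega_p$ with $q^*\scrD'\otimes\cL^{\pm}$ by comparing $c_1$ on both sides using $\det\cR_i=\det\cR_i'\otimes\cL^{\delta_{i0}}$. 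Concentration in degree zero comes from Bott's formula for twisted forms on the $p$-projective bundle. That formula bounds the Tor-amplitude of $Rp_*$ of each term by $[-2,0]$, and $C'$ has amplitude $n-2$, so the total is $[-n,0]$. The support has codimension $n$, so \cref{lem:codim-amplitude} gives a sheaf.

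Third, I expect the main obstacle to be this duality bookkeeping: verifying that $\cS^+$ really equals the pushforward with $\cL^{-1}$ scheme-theoretically, not just on fibers. On the non-Gorenstein strata the socle has rank $\ge2$ and $p$ has positive-dimensional fibers. There I would first establish the identity on the locus $\sigma_0=1$, which is open with complement of codimension $\ge2$ in the support by \eqref{eq:BN-boundary}. I would then extend using Cohen--Macaulayness of $\BN^+_{n,1}$ via its Hilbert--Burch/Buchsbaum--Rim resolution, which makes both sides reflexive (maximal CM) on a CM support.
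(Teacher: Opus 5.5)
There are two genuine gaps. The first is that the regular-section presentations you propose are not the right ones, and both the amplitude argument and the window argument depend on having them exactly right.

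On $\PP_{\mathrm{lines}}(\cR_0)$ the section $\cL\to\cR_1\otimes(\cO_q\oplus\cO_t)$ is not regular. Its expected zero locus has dimension $(3n-1)-2n=n-1$, while $\Hecke$ has dimension $2n-2$; your value $2n-1$ is also inconsistent with your own numbers. Three facts cut it down:
\begin{itemize}
\item by commutativity, $(-y,x)$ takes values in $\cV=\ker(x,y,i)$, which has rank $n+1$;
\item the unit is sent to a nowhere-vanishing section $u$;
\item socle lines are trace-free.
\end{itemize}
Cancelling the unit gives the regular presentation $\Hecke=Z(\cO(-1)\to\pi^*\cG)\subset\PP_{\mathrm{lines}}(\cT)$, with $\cG=\cV/\cO u$ of rank $n$ and fibres $\PP^{n-2}$. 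The Bott calculation needs the exact Koszul resolution of this particular section to give Tor-amplitude $[-2,0]$ for $Rp_*(\cL^e\otimes\Lambda^j\cW)$. With that presentation, your concentration step is the paper's: $n-2$ more degrees from $C'$, support in codimension $n$.

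On the other side, $\Ext^1(S_0,E')$ has dimension $s_0(E')+3$, which jumps. So it is not a bundle on $Y'$, and since $q$ is not flat you cannot argue fibrewise. Instead, $\Hecke$ sits in the genuine bundle $\PP_{\mathrm{quot}}(\cV')$, with $\cV'=\ker(x',y',i')$ of rank $n+2$, as the zero scheme of the rank-$(n-1)$ section $\cR'_0\to\cV'\to\cO(1)$. There $\cL=\cO(1)$, not $\cO(-1)$; with your sign, $\cL^{-1}=\cO(1)$ is not acyclic. The Koszul terms of $\cL^{-t}\otimes\cO_\Hecke$ have twists $-t-\ell$ with $0\le\ell\le n-1$ and $t\in\{1,2\}$. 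These lie in the acyclic range $[-(n+1),-1]$ on $\PP^{n+1}$; your $N=2n-1$ plays no role.

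The second, more serious gap is that the realization rests on $\cS^+\simeq Rp_*(\cL^{-1}\otimes q^*C')$, which you only motivate. Granting it, your duality step would close: in the correct presentation $\omega_p=\cL^{-1}\otimes p^*\scrD$ and $p^*\det\cT=\cL\otimes q^*\det\cT'$, so dualizing along $p$ gives $\cM_n^+\simeq\cH^0Rp_*(\cL\otimes p^*\scrD\otimes q^*C')$. But your proof of the input fails. You prove it on $\{\sigma_0=1\}$ and extend by Cohen--Macaulayness, and neither half of the extension holds:
\begin{itemize}
\item The support of $\cS^+$ is $|Z|=|B|\cap|F_n|$, of dimension $n$. \eqref{eq:BN-boundary} bounds the codimension of $\{\sigma_0\ge2\}$ in $B$, not in $Z$. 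For $n=6$ the wall fibre $\Gr(3,5)$ is an entire $n$-dimensional component of $Z$ on which the even socle has rank two throughout. So $\{\sigma_0=1\}$ misses a whole component, and no extension argument can reach it.
\item Cohen--Macaulayness on $Z$ is not available beforehand; in the paper it is a consequence (\cref{cor:trace-CM}).
\end{itemize}

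The missing idea is to avoid describing $\cS^+$ on $\Hecke$ at all and to compare through the determinantal scheme $B$:
\begin{itemize}
\item Hilbert--Burch makes $\Sigma=\coker\psi^\vee$ a twisted canonical module with $\cS^+=i_*\cH^0(\Sigma\otimes C_B)$.
\item Duality for the finite map $i$, together with Koszul self-duality, gives $\cM_n^+\simeq i_*(\scrD|_B\otimes\cH^{-1}(C_B))$.
\item The sequence $0\to\cL\to p^*\cT\to q^*\cT'\to0$ induces an exact filtration $0\to\cL\otimes q^*C'[1]\to\bar p^*C_B\to q^*C'\to0$. Combined with $R\bar p_*\cO_\Hecke=\cO_B$ (normality of $B$ plus Zariski's Main Theorem), this yields a triangle $A_1[1]\to i_*(\scrD|_B\otimes C_B)\to A_0$, where $A_e=Rp_*(p^*\scrD\otimes\cL^e\otimes q^*C')$.
\item Once both $A_e$ are shown to be sheaves by the amplitude-and-codimension argument, $A_1\simeq i_*(\scrD|_B\otimes\cH^{-1}(C_B))\simeq\cM_n^+$ is read off in degree $-1$, and $p^*\scrD=\cL^{-2}\otimes q^*\scrD'$ converts this into \eqref{eq:main-realization}.
\end{itemize}
Because $\Hecke$ parametrizes every even socle line, including lines inside socles of rank at least two, this handles all strata at once.
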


The order of the argument matters.  The statement that a complex is annihilated
by $R\rho_*$ does not imply that its zeroth cohomology is annihilated, and we prove
that the right-hand side of \eqref{eq:main-realization} is a sheaf \emph{before}
identifying it with $\cM_n^+$.  The argument proceeds in six steps.  We construct a
codimension-two determinantal model of the even socle locus, we identify the canonical
module of that locus by means of the dual-socle formula, we express the excess as the
first homology of a trace Koszul complex, we use the rank-one Hecke filtration to
obtain a uniform Tor-amplitude $[-2,0]$, we deduce concentration from the punctual
support in codimension $n$, and finally we use the negative projective window for the
second projection.

\subsection{Geometric inputs}\label{sec:geometry-input}

\begin{inputtheorem}\label{input:geometry}
In the cyclic chamber, the quiver varieties with dimension vectors $(n,n)$ and
$(n-1,n)$ are smooth schemes of cyclic commutative $\Gamma$-cluster algebras, of
dimensions $2n$ and $2n-4$.  The rank-one even Hecke scheme $\Hecke$ is smooth of
pure dimension $2n-2$.  Its first image is the even socle locus of codimension two,
which has a dense exact socle-rank-one stratum.  If nonempty, the locus of even socle
rank at least two has codimension six in $Y$.  Finally, $\dim F_n\le n$.
\end{inputtheorem}

Smoothness of $\mfM_\theta(v,w)$ for generic $\theta$ and the dimension formula
$\dim\mfM_\theta(v,w)=2w^Tv-v^TCv$ are Nakajima's \cite{Nakajima98}.  That the cyclic
chamber produces $n\Gamma$-Hilb, with the universal object a commutative cluster
algebra rather than merely a framed representation, is \cite{Kuznetsov,CGGSpunctual}
(\cref{app:ADHM} gives the algebraic content of this identification over an arbitrary
base).  For the correspondence and the Brill--Noether assertions we use
\cite[Thms.~1.3, 1.11, 2.23; Lemmas~3.7, 3.14]{GGS}, in the version arXiv:2601.22287v1
of 29 January 2026.  The stability convention there is generation by the framing,
which is ours, and there are no loops at the modified vertex, hence their
split-parabolic correspondence is the usual rank-one Nakajima correspondence.  The
last assertion is \cref{lem:dimF}.

Let us record the numerical checks.  With $C=\left(\begin{smallmatrix}2&-2\\-2&2\end{smallmatrix}\right)$
and $w=(1,0)$ we get $\dim Y=2n$ and $\dim Y'=2(n-1)-2=2n-4$.  For $v=(n,n)$ the vector
$k^0=Cv-w$ is $(-1,0)$, and for the modification $k=(1,0)$ the codimension is
$\sum_ik_i(k_i-k_i^0)=1\cdot(1+1)=2$, hence the correspondence has dimension $2n-2$.
For $v'=(n-1,n)$, $k^0=(-3,2)$, and the second projection has fiber
$\PP^{s_0(E')+2}$ where $s_0(E')=\dim\soc_0(E')$, which also follows from the bundle
presentation of \cref{sec:q-presentation}.

\begin{remark}[Nonemptiness]
For $n=2$ the smaller cluster is $S/(x,y)^2$, of dimension vector $(1,2)$.  For
$n>2$, adjoining $n-2$ distinct free $\Gamma$-orbits gives a point of $Y'$, and a
balanced length-four cluster with even socle together with free orbits gives a point
of $\Hecke$.  Thus the nonemptiness qualifications in the cited results hold.
\end{remark}

\begin{convention}[Projectivizations]\label{conv:proj}
For a vector bundle $V$, $\PP_{\mathrm{lines}}(V)$ carries $\cO(-1)\hookrightarrow V$ and
$\PP_{\mathrm{quot}}(V)$ carries $V\twoheadrightarrow\cO(1)$.  On $\Hecke$ the same line
satisfies
\begin{equation}\label{eq:L-conventions}
 \cL=\cO_{\PP_{\mathrm{lines}}(\cT)}(-1)|_{\Hecke}
  =\cO_{\PP_{\mathrm{quot}}(\cV')}(1)|_{\Hecke},
\end{equation}
and the two projective bundles have different bases and different moduli meanings.
\end{convention}

\subsection{The Hecke scheme and its two incidence presentations}\label{sec:hecke-scheme}

\subsubsection{The moduli functor}
For a test scheme $T_0$, a $T_0$-point of $\Hecke$ consists of a family of
balanced cyclic $S$-algebras $E$ (a family of points of $Y$) and an even line
subbundle $L\subset E_0$ annihilated by $x$ and $y$.  The quotient $E'=E/L$ is
locally free with ranks $(n-1,n)$ and cyclic with the induced framing, hence a
family of points of $Y'$.  Conversely, a family of exact sequences of this form gives
the same datum.  The two projections send the sequence to its larger and smaller
algebra, the universal sequence is \eqref{eq:universal-sequence}, and in particular
\begin{equation}\label{eq:vertex-exact-sequences}
 0\to\cL\to p^*\cR_0\to q^*\cR'_0\to0,
 \qquad p^*\cR_1\simeq q^*\cR'_1.
\end{equation}
The descriptions below are descriptions of this functor, not only of its closed
points.

\subsubsection{The smaller cluster has a universal augmentation}

\begin{lemma}\label{lem:smaller-augmentation}
Every closed point of $Y'$ contains the origin in its support.  Evaluation at the
origin descends to a morphism of universal algebras $\ev':\cP'\to\cO_{Y'}$,
compatible with arbitrary base change.
\end{lemma}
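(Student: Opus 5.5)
The plan is to argue first on closed points, by comparing the $\Gamma$-characters of the local factors of a cluster, and then to globalize using the cokernel of multiplication by $x,y$.

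\emph{Closed points.} Let $E'=\cP'\otimes k(z)$ for a closed point $z\in Y'$. By Geometric input~\ref{input:geometry}, $E'$ is a cyclic commutative $\Gamma$-equivariant quotient of $S=\C[x,y]$ with $\dim E'_0=n-1$ and $\dim E'_1=n$. As a finite-dimensional commutative algebra it splits into local factors indexed by the points of its support, and $\Gamma$ permutes these factors. Suppose the origin is not in the support. Then the support is a union of free $\Gamma$-orbits $\{p,-p\}$. The factor over such an orbit is $E'_{p}\oplus E'_{-p}$, with $\gamma$ exchanging the two summands. It is therefore isomorphic, as a $\Gamma$-module, to $E'_p\otimes\C[\Gamma]$, and so has equal even and odd dimensions. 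Summing over orbits gives $\dim E'_0=\dim E'_1$, which contradicts $(n-1,n)$. Hence the origin lies in the support.

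\emph{The residue line.} The factor $E'_{(0)}$ at the origin is a local $\Gamma$-algebra with residue field $\C$, and its maximal ideal is generated by the images of $x$ and $y$, since $E'$ is a quotient of $S$. On the factors over free orbits, $x$ and $y$ generate the unit ideal. Thus
\[
 E'/(x,y)E'\;=\;E'_{(0)}/(x,y)E'_{(0)}\;\cong\;\C,
\]
spanned by the class of $1$, and the quotient map is evaluation at the origin. In particular it is even.

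\emph{Globalization.} Define $\ev'$ as the projection $\cP'\to\coker\bigl((x,y):\cP'^{\oplus 2}\to\cP'\bigr)=:\cC$, where the map sends $(a,b)\mapsto xa+yb$. Formation of cokernels commutes with arbitrary base change, so every fiber of $\cC$ is the one-dimensional space above. Since $Y'$ is smooth, in particular reduced, a coherent sheaf with constant fiber dimension $1$ is a line bundle. The section $[1]$ of $\cC$ is nowhere vanishing, so $\cC\cong\cO_{Y'}$, and $\ev'$ becomes a surjection $\cP'\to\cO_{Y'}$.

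This $\ev'$ is a ring map. Indeed, $(x,y)\cP'$ is an ideal of $\cP'$, so $\cC$ is a quotient algebra of $\cP'$ that is generated by $1$ and is free of rank one; hence $\cC=\cO_{Y'}$ as algebras.

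For an arbitrary test scheme $T_0\to Y'$, the pulled-back cokernel is again the cokernel of $(x,y)$ on the pulled-back family and is still free on $[1]$. So $\ev'$ is compatible with arbitrary base change, as the statement requires.

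\emph{Main obstacle.} The only delicate point is the fiber computation. One has to know that the universal object is a genuine commutative cyclic quotient of $S$ on every fiber, as provided by Geometric input~\ref{input:geometry} and the cited results of Kuznetsov and Craw--Gammelgaard--Gyenge--Szendr\H{o}i. Otherwise neither the decomposition into local factors nor the generation of the maximal ideal by $x,y$ would be available. Granting this, constancy of the fiber dimension on the reduced base is all that is needed for the globalization step.
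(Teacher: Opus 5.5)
Your proof is correct and is essentially the paper's argument. On an affine chart $\SpecOp A$ with universal ideal $I'$, your cokernel $\cP'/(x,y)\cP'$ is exactly $A/\bigl(f(0,0):f\in I'\bigr)$, so ``constant fibre dimension one on a reduced base, trivialized by $[1]$'' is the same as the paper's ``each $f(0,0)$ vanishes at every closed point, hence is zero since $A$ is reduced, so evaluation $A[x,y]\to A$ factors through the quotient'' — the two differ only in packaging.
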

\begin{proof}
A $\Gamma$-stable finite subscheme supported away from the origin is a union of
free $\Gamma$-orbits with multiplicities, each of which has equal even and odd
dimensions.  The dimension vector $(n-1,n)$ is unbalanced, hence there is support at
the origin.  To pass from points to the universal family, work on an affine open
$\SpecOp A\subset Y'$ with universal ideal $I'\subset A[x,y]$.  For $f\in I'$ the
regular function $f(0,0)\in A$ vanishes at every closed point of $\SpecOp A$.  Since
$Y'$ is smooth, $A$ is reduced, and therefore $f(0,0)=0$.  Hence evaluation $A[x,y]\to A$ kills
$I'$ and factors through the universal quotient.  These local maps glue, and the
resulting universal map pulls back to any, including nonreduced, test scheme.
\end{proof}

\begin{warning}
Fiberwise containment of the origin is not by itself enough to produce such an
augmentation over an arbitrary nonreduced base.  The reducedness of the universal
moduli scheme is used first, and the universal map is then pulled back.
\end{warning}

\subsubsection{Trace additivity and the universal trace-bundle sequence}
Pull back $\ev'$ along $q$ and compose with the quotient in
\eqref{eq:universal-sequence}.  This gives an augmentation $\ev_\Hecke:p^*\cP\to\cO_\Hecke$.
Polynomial multiplication on $\cL$ is through evaluation at the origin, because
$x\cL=y\cL=0$.  Hence for a section $a$ of $p^*\cR_0$ with image $a'$ in $q^*\cR'_0$,
multiplication on the line $\cL$ is the scalar $\ev'(a')$.

\begin{proposition}\label{prop:trace-additivity}
There is an identity of linear functionals on $p^*\cR_0$,
\begin{equation}\label{eq:trace-additivity}
 \Tr_{p^*\cR_0}(a)=\Tr_{q^*\cR'_0}(a')+\ev'(a').
\end{equation}
In particular $\cL\subset p^*\cT$, and there is a canonical exact sequence of
vector bundles
\begin{equation}\label{eq:trace-bundle-sequence}
 0\longrightarrow\cL\longrightarrow p^*\cT\longrightarrow q^*\cT'\longrightarrow0,
 \qquad \cT'=\ker(\Tr_{\cR'_0}+\ev').
\end{equation}
\end{proposition}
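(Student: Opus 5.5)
The plan is to use the even part of the universal sequence \eqref{eq:vertex-exact-sequences}, $0\to\cL\to p^*\cR_0\to q^*\cR'_0\to0$, which is an exact sequence of locally free sheaves on $\Hecke$ stable under multiplication by any section $a$ of $p^*\cR_0$. Here $a$ acts on $q^*\cR'_0$ through its image $a'$. Since $x\cL=y\cL=0$, $a$ acts on $\cL$ through evaluation at the origin, which is the scalar $\ev'(a')$ (this is the observation recorded just before the proposition). Additivity of trace in short exact sequences of locally free modules then gives \eqref{eq:trace-additivity}. To prove additivity in families, I will check it locally, where both sequences split as $\cO_\Hecke$-modules. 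In a splitting, $m_a$ is block upper triangular with diagonal blocks $m_a|_\cL=\ev'(a')$ and $m_{a'}$ on $q^*\cR'_0$, and the trace is the sum of the diagonal blocks. The sequence is compatible with arbitrary base change (by \cref{lem:smaller-augmentation}), so the identity holds as an equality of maps $p^*\cR_0\to\cO_\Hecke$.

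Next I show $\cL\subset p^*\cT$. For a local section $\ell$ of $\cL\subset p^*\cR_0$, the image $\ell'$ in $q^*\cR'_0$ is zero, so \eqref{eq:trace-additivity} gives $\Tr_{p^*\cR_0}(\ell)=\Tr(0)+\ev'(0)=0$. Hence $\cL\subseteq\ker\Tr_{p^*\cR_0}=p^*\cT$; here I use that $\cT$ is the kernel of a surjective trace, so pullback commutes with taking the kernel. Because $\cL$ is a subbundle of $p^*\cR_0$ (the quotient is locally free), it is also a subbundle of $p^*\cT$.

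For the sequence \eqref{eq:trace-bundle-sequence}, I restrict the surjection $p^*\cR_0\to q^*\cR'_0$ to $p^*\cT$. By \eqref{eq:trace-additivity}, an element $a$ lies in $p^*\cT$ exactly when $a'$ lies in $\ker(\Tr_{\cR'_0}+\ev')$ pulled back, that is, $p^*\cT$ is the preimage of $q^*\cT'$. So the restricted map lands in $q^*\cT'$, and its kernel is $\cL\cap p^*\cT=\cL$. Surjectivity onto $q^*\cT'$ follows because the full map is surjective and $p^*\cT$ is the full preimage of $q^*\cT'$. It remains to see that $\cT'$ is a vector bundle of rank $n-2$. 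The functional $\lambda'=\Tr_{\cR'_0}+\ev'$ satisfies $\lambda'(1)=n\neq0$, so $\lambda'$ is split surjective and its kernel is locally free of rank $(n-1)-1=n-2$. As a check, the ranks match: $1+(n-2)=n-1=\rk\cT$.

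The only delicate point is doing this scheme-theoretically rather than pointwise. In particular, the scalar action on $\cL$ must be identified with $\ev'$ over a possibly nonreduced $\Hecke$. I will handle this by using the universal $\ev'$ of \cref{lem:smaller-augmentation} and the fact that $x,y$ kill $\cL$ universally. An even polynomial $a$ minus its constant term lies in $(x,y)$, so on $\cL$ the section $a$ acts by the image of its constant term. That image is exactly $\ev_\Hecke(a)=\ev'(a')$, which justifies the identification without any reducedness assumption on $\Hecke$.
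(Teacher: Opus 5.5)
Your proposal is correct and follows the paper's proof essentially step for step. The paper also uses the block-triangular form of $m_a$ in a local splitting of $0\to\cL\to p^*\cR_0\to q^*\cR'_0\to0$, with the scalar $\ev'(a')$ on $\cL$, then the vanishing of the trace on $\cL$, and then passes to kernels of the two split surjective functionals $\Tr_{\cR_0}$ and $\Tr_{\cR'_0}+\ev'$, both equal to $n$ on $1$; your full-preimage argument just unpacks that last step. The appeal to base change via \cref{lem:smaller-augmentation} in your first paragraph is unnecessary, since the block-triangular trace computation already holds over any base ring.
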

\begin{proof}
Multiplication by $a$ preserves the subbundle $\cL$ in
\eqref{eq:vertex-exact-sequences}, and in a local splitting its matrix is block
triangular with diagonal blocks $\ev'(a')$ and multiplication by $a'$ on
$q^*\cR'_0$.  Trace is additive on these blocks over any base ring, proving
\eqref{eq:trace-additivity}.  For $a\in\cL$ the image $a'$ is zero, hence
$\Tr(a)=0$ and $\cL\subset p^*\cT$.  Since $\Tr_{\cR_0}(1)=n$ and
$(\Tr_{\cR'_0}+\ev')(1)=(n-1)+1=n$, both functionals are split surjections, and taking
kernels in \eqref{eq:vertex-exact-sequences} gives
\eqref{eq:trace-bundle-sequence}.
\end{proof}

\begin{remark}[Why the unmodified trace is wrong]
The lost even line contributes its scalar action to the trace, hence the trace
inherited by the quotient is $\Tr_{\cR'_0}+\ev'$, not $\Tr_{\cR'_0}$, and the ranks alone
do not repair the discrepancy.  For example, let the smaller algebra be the disjoint
union of $S/(x,y)^2$ and a free two-point orbit.  Its even algebra has two idempotent
factors, on which the trace is $(1,1)$ whereas the augmentation is $(1,0)$, and adding an
even socle line at the origin changes the trace to $(2,1)$, exactly the modified
trace.
\end{remark}

Taking determinants in \eqref{eq:vertex-exact-sequences} gives
\begin{equation}\label{eq:det-Hecke}
 p^*\scrD=\cL^{-2}\otimes q^*\scrD'.
\end{equation}

\subsubsection{The presentation over the larger cluster}\label{sec:p-presentation}
On $Y$ put
\[
 \cX=\cR_1\oplus\cR_1\oplus\cO_Y,
 \qquad M_2=(x,y,i):\cX\to\cR_0,
 \qquad M_1=(-y,x,0):\cR_0\to\cX,
\]
$i$ the framing.  The relation $M_2M_1=0$ is commutativity.  Cyclicity implies that
$M_2$ is surjective, because every nonconstant even monomial is $x$ or $y$ times an odd
monomial, and the constant monomial is the framing.  Hence $\cV=\ker M_2$ is locally
free of rank $n+1$.  The section $u=M_1(1)$ is nowhere zero, because if it vanished in a
fiber, both $x$ and $y$ would vanish in a cyclic algebra of dimension $2n>1$.  Thus
$\cO_Yu$ is a line subbundle of $\cV$, and we set
\begin{equation}\label{eq:unit-cancelled-map}
 \cG=\cV/\cO_Yu,\qquad
 \psi:\cT\longrightarrow\cG,
 \qquad \rk\cT=a:=n-1,\quad\rk\cG=a+1=n,
\end{equation}
$\psi$ being induced by $M_1|_\cT$.

\begin{proposition}[Unit cancellation on the incidence functor]\label{prop:unit-cancellation}
The Hecke scheme is the zero scheme
\begin{equation}\label{eq:p-incidence}
 \Hecke=Z\!\left(\cO_{\PP_{\mathrm{lines}}(\cT)}(-1)\longrightarrow\pi^*\cG\right)
 \subset\PP_{\mathrm{lines},Y}(\cT),
\end{equation}
with tautological line $\cL$.  This is an equality of moduli schemes, compatible
with arbitrary base change.
\end{proposition}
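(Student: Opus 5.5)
We prove the equality of functors of points: for a test scheme $T_0$ we show that a $T_0$-point of $\Hecke$ (a family $E$ of points of $Y$ with an even line subbundle $L\subset E_0$ killed by $x,y$) is the same thing as a $T_0$-point $(E,L)$ of $\PP_{\mathrm{lines},Y}(\cT)$ at which the composite $L\hookrightarrow\cT\xrightarrow{\psi}\cG$ vanishes. Since $\cT$, $\cV$, $\cG$ and $\psi$ are built from locally free sheaves and maps that commute with base change, we may work over an arbitrary base. All identifications will be checked on $T_0$-points, without using reducedness.

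\emph{Step 1: socle lines lie in $\cT$.} Let $L\subset E_0$ be an even line subbundle with $xL=yL=0$. We must show $L\subset\cT$, i.e.\ $\Tr_{E_0}$ vanishes on $L$. Take a local generator $\ell$ of $L$. Multiplication by $\ell$ sends $E_0$ into $L$, because $\ell\cdot\mfm=0$ where $\mfm$ is spanned by positive-degree monomials. It also kills $L$, since $\ell^2\in\ell\cdot(\ell)$ and $\ell$ lies in the augmentation ideal. This last point follows from the trace argument of \cref{prop:trace-additivity} applied fiberwise, or directly: the map $E_0\to L$ is given by $a\mapsto \epsilon(a)\ell$ and $\ell\in(x,y)$-torsion forces $\epsilon(\ell)=0$ when $E$ is cyclic of rank $2n>1$. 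So $m_\ell$ is nilpotent of rank $\le1$ with square zero, hence traceless. Conversely, for a line $L\subset\cT$ on $\PP_{\mathrm{lines}}(\cT)$ we must characterize the condition $xL=yL=0$.

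\emph{Step 2: the vanishing of $\psi$ on $L$ is exactly socle annihilation.} $M_1(\ell)=(-y\ell,x\ell,0)\in\cX$ lies in $\cV=\ker M_2$ by commutativity. Hence $\ell$ is annihilated by $x,y$ if and only if $M_1(\ell)=0$ in $\cV$. The zero scheme of $L\to\cG$ instead imposes $M_1(\ell)\in\cO u=\cO M_1(1)$, i.e.\ $M_1(\ell-c)=0$ for some local function $c$, so $x(\ell-c)=y(\ell-c)=0$. This is the key unit-cancellation point: $\ell-c$ is then an even element killed by $x,y$. We must show $c=0$ (as a function on $T_0$, not merely pointwise), and that replacing by $\ell$ is legitimate. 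Multiplication by $\ell-c$ kills $\mfm E$. Since $E=\cO\cdot1\oplus\mfm E$ by cyclicity, it has image the line spanned by $\ell-c$ and square zero. Its trace on $E_0$ is therefore $0$. But $\Tr(\ell)=0$ because $\ell\in\cT$, so $\Tr(\ell-c)=-nc$, giving $nc=0$ and hence $c=0$ over $\C$. Thus the scheme-theoretic zero locus of $L\to\pi^*\cG$ is precisely the locus where $L$ is annihilated by $x,y$. Conversely, Step 1 shows every Hecke point lands in $\PP(\cT)$ and in this zero locus, with $u$-component zero.

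\emph{Step 3: assembling the equivalence and identifying the line.} The two constructions are mutually inverse on $T_0$-points and natural in $T_0$, so the subfunctors agree and $\Hecke$ equals the zero scheme \eqref{eq:p-incidence}. The tautological line is $L=\cL$, matching \eqref{eq:L-conventions}. The quotient $E/L$ is automatically a family in $Y'$ by the moduli description above.

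The main obstacle is Step 2 over nonreduced bases. There we must ensure that the mod-$u$ ambiguity is killed scheme-theoretically. This is where the trace normalization (dividing by $n$, characteristic zero) does the work, and where care is needed that the nilpotent square-zero operator argument holds over an arbitrary base ring rather than fiberwise.
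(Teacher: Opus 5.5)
The gap is exactly where you locate the main obstacle. In Step~2 you assert that multiplication by $g=\ell-c$ has square zero, and in Step~1 that a socle generator lies in the augmentation ideal. Neither is proved, and the first is equivalent to the conclusion you want.

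On the incidence locus, $g$ spans a line subbundle with $xg=yg=0$. Hence $m_g(a)=\phi(a)\,g$ for an $\cO_{T_0}$-algebra map $\phi:E\to\cO_{T_0}$ with $\phi(x)=\phi(y)=0$. It follows that $g^2=\phi(g)\,g$ and $\Tr_{E_0}(m_g)=\phi(g)$, so ``square zero'' and ``traceless'' are the same statement.

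The splitting $E=\cO\cdot1\oplus\mfm E$ does not help. Cyclicity alone does not give it; it holds here only because $\phi$ exists, with $\mfm E=\ker\phi$. And it says nothing about whether $g$ has a component along $1$.

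At a closed point $\phi(g)=0$ is true, but the reason is balancedness, not that $E$ is cyclic of rank $2n>1$. The component of the cluster at the origin has equal even and odd lengths, so it is not a reduced point. Over a nonreduced $T_0$ this fiberwise fact only shows that $\phi(g)$ is nilpotent.

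Cyclicity, rank and the torsion condition genuinely do not suffice. Take $A=\C[\epsilon]/(\epsilon^2)$ and the free algebra $E=A[x]/(x^2-\epsilon x)$. The element $g=x-\epsilon$ satisfies $xg=0$ and spans a direct summand, yet $g^2=-\epsilon x\neq0$ and $\Tr(m_g)=-\epsilon$.

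So a global input is needed, and your proposal uses none. The paper argues differently in two stages.

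First, it identifies lines in $\ker M_1$ with lines $\widetilde L\subset\cT$ killed by $\psi$, via projection and the graph $\ell\mapsto\ell-c(\ell)1$. This is a bijection over any ring and needs no trace input: the projection is a subbundle because $M_1(1)$ is nowhere zero.

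Second, $c=0$ is needed only to identify the tautological line with $\cL$, and it comes from \cref{prop:trace-additivity}. There the quotient $E/L$ is a $T_0$-point of $Y'$. The augmentation $\ev'$ exists universally on $Y'$ because $Y'$ is reduced and all its points contain the origin (\cref{lem:smaller-augmentation}). The block-triangular form then gives $\Tr(a)=\Tr(a')+\ev'(a')$, which is $0$ for $a\in L$.

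Alternatively, you can repair your own route with a super-trace argument:
\begin{itemize}
\item Since $gE_1=0$, $\phi(g)$ equals the super-trace $\Tr_{E_0}(m_g)-\Tr_{E_1}(m_g)$.
\item The super-trace of every even element vanishes at each closed point of $Y$: free orbits contribute equally to both parities, and the origin component is balanced.
\item Hence it vanishes identically on the reduced $Y$, and therefore on every family pulled back from $Y$.
\end{itemize}
Either way, the missing ingredient is reducedness of a moduli space, $Y'$ or $Y$.
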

\begin{proof}
In the uncancelled presentation, choosing a line in $\ker M_1\subset\cR_0$ is
precisely choosing an even line killed by the arrows.  Such a line is an ideal in the cyclic
algebra, and its quotient is the smaller cluster, hence this kernel incidence is the
Hecke functor.  Use the splitting $\cR_0=\cO\cdot1\oplus\cT$.  Let $L_0$ be a line in
$\ker M_1$ over a test scheme.  Its projection to $\cT$ is a line subbundle (it
cannot meet the unit line in any geometric fiber because $M_1(1)$ is nowhere zero)
and is annihilated by $\psi$.  Conversely, if $\widetilde L\subset\cT$ is a line
subbundle with $\psi|_{\widetilde L}=0$, then $M_1|_{\widetilde L}$ factors uniquely
through $\cO u$, say by $c:\widetilde L\to\cO$, and the graph
$\ell\mapsto\ell-c(\ell)1$ is a line subbundle of $\cR_0$ annihilated by $M_1$.
These constructions are inverse over arbitrary rings.  Under this isomorphism the
graph line is the universal kernel $\cL$.  By \cref{prop:trace-additivity} it has
trace zero, and since $\Tr(\ell-c(\ell)1)=-nc(\ell)$ for $\ell\in\cT$, one has $c=0$
on the incidence.  Thus the actual socle line is the tautological line of
\eqref{eq:p-incidence}.
\end{proof}

The ambient projective bundle has dimension $2n+(n-2)=3n-2$, whereas $\Hecke$ is
smooth of dimension $2n-2$, and the defining section has rank $n$.  Therefore it is a
regular section (\cref{app:regular-section}), the zero scheme carries its ordinary
scheme structure, and its Koszul resolution is exact.  Let $\overline\cW$ be the
universal quotient on $\PP_{\mathrm{lines}}(\cT)$ and $\cW=\overline\cW|_\Hecke$, thus
\begin{equation}\label{eq:W-sequence}
 0\to\cL\to p^*\cT\to\cW\to0,\qquad\cW\simeq q^*\cT'
\end{equation}
by \cref{prop:trace-additivity}.

\subsubsection{The presentation over the smaller cluster}\label{sec:q-presentation}
On $Y'$ define $\cX'=\cR'_1\oplus\cR'_1\oplus\cO_{Y'}$, $M'_2=(x',y',i'):\cX'\to\cR'_0$
and $M'_1=(-y',x',0):\cR'_0\to\cX'$.  Again $M'_2$ is surjective, its kernel $\cV'$
has rank $n+2$, and $M'_1$ factors through a map $\phi':\cR'_0\to\cV'$.

\begin{proposition}[Quotient incidence, with reconstruction]\label{prop:q-incidence}
There is a functorial isomorphism
\begin{equation}\label{eq:q-incidence}
 \Hecke\simeq\PP_{\mathrm{quot},Y'}(\coker\phi')\subset\PP_{\mathrm{quot},Y'}(\cV'),
\end{equation}
with universal quotient line $\cL$, and the closed immersion is cut out by the section
$\cR'_0\to\cV'\to\cO(1)$.
\end{proposition}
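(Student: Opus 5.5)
We construct mutually inverse natural transformations between the Hecke functor of \cref{sec:hecke-scheme} and the functor of rank-one quotients of $\coker\phi'$ over $Y'$, arguing over an arbitrary test scheme $T_0$, exactly as in \cref{prop:unit-cancellation}.

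\emph{From $\Hecke$ to the quotient incidence.} Given a $T_0$-point, i.e.\ an exact sequence $0\to S_0\otimes L\to E\to E'\to0$ with $E'$ a family of points of $Y'$, the algebra $E$ is an extension of $E'$ by the even simple twisted by $L$. By the Koszul resolution \eqref{eq:S0-resolution}, such extensions of framed $\Lambda$-modules (with the framing lifting to $E$, forced since $L$ is even and the framing vector generates) are classified by the complex $E'_0\xrightarrow{M_1'}(E'_1)^{\oplus2}\oplus\C\xrightarrow{M_2'}E'_0$ augmented by the framing; the relevant extension group is $\coker\bigl(\phi':\cR'_0\to\cV'\bigr)$ dualized. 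Concretely, $E_0=E'_0\oplus L$ as bundles, and the new structure maps $x,y:E_1=E'_1\to E_0$ and the framing $i:\cO\to E_0$ are the old ones plus components $E'_1\oplus E'_1\oplus\cO\to L$. Commutativity forces this component to kill the image of $M_2'$-relations; precisely, the component is a linear form $\xi:\cX'\to L$, and the relation $[x,y]=0$ on $E$ together with $xL=yL=0$ says $\xi$ restricted to $\cV'=\ker M_2'$ is what matters, up to modifying the splitting $E_0=E'_0\oplus L$ by a map $h:E'_0\to L$, which changes $\xi|_{\cV'}$ by $h\circ\phi'$-type coboundaries. Thus the class is a map $\coker\phi'\to L$, i.e.\ a $T_0$-point of $\PP_{\mathrm{quot}}(\cV')$ lying on the locus where $\cR'_0\to\cV'\to\cO(1)$ vanishes.

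\emph{Surjectivity and reconstruction.} The key point is that $E$ is again cyclic (i.e.\ in $Y$) exactly when the class $\coker\phi'\to L$ is surjective; if it were zero in some fiber, the extension would split and $L$ would not be in the image of the framing-generated submodule. Conversely, given a surjection $\coker\phi'\twoheadrightarrow L$, we build $E$ by the formulas above: define $E_0=E'_0\oplus L$, extend $x,y,i$ by the components of the composite $\cX'\supset\cV'\to L$, after first choosing a lift of $\cV'\to L$ to $\cX'\to L$ (possible locally since $\cV'\subset\cX'$ is a subbundle; different lifts differ by forms factoring through $M_2'$, i.e.\ by a change of splitting). Then check that $E$ is a commutative algebra, that $x,y$ kill $L$, and that cyclicity holds; these checks are local and base-change compatible. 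The two constructions are inverse because the splitting ambiguity on each side is matched by the coboundary ambiguity.

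\emph{Main obstacle.} The delicate step is showing the correct identification of the extension space with $(\coker\phi')^\vee$ \emph{functorially over nonreduced bases} including the framing term $\cO_{Y'}$ in $\cX'$, and showing the cyclicity/stability condition is precisely surjectivity onto $L$, so that the incidence is the full projective bundle of $\coker\phi'$ rather than an open part. I would handle this by working in the uncancelled presentation and using that $\coker\phi'$ is locally free of rank $s_0+3$ generically (consistent with the fibre $\PP^{s_0(E')+2}$ noted in \cref{sec:geometry-input}), reducing stability to a fiberwise statement via Nakayama, as in \cref{lem:smaller-augmentation}.
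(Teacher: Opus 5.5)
Your route is sound in outline but differs from the paper's. The paper never chooses a splitting. Given a line quotient $\cV'\twoheadrightarrow L$ killing $\im M'_1$, with kernel $U$, it sets $E_0:=\cX'/U$. The arrows $x_1,y_1$ and the framing are then the three components of the quotient map $M_2:\cX'\to\cX'/U$, and $M'_2$ induces $E_0\twoheadrightarrow E'_0$ with kernel $\cV'/U=L$. The even commutativity relation is literally $\im M'_1\subset U$, and cyclicity is automatic because $M_2$ is surjective by construction. Conversely, cyclicity of a Hecke point makes $M_2$ surjective, and one recovers $U=\ker M_2\subset\cV'$.

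You instead split $E_0=E'_0\oplus L$ locally and record an extension datum $\xi:\cX'\to L$, identifying the classes with $\Hom(\coker\phi',L)$. This buys a deformation-theoretic reading of the fibres of $q$ as projectivized extension spaces of dimension $s_0(E')+2$. The price is a check of independence of choices and a gluing step. The two constructions agree: $E'_0\oplus L$ with $M_2=(M'_2,\tilde\xi)$ surjective is canonically $\cX'/\ker(\tilde\xi|_{\cV'})$, which is the paper's formula.

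Two things in your write-up need fixing.

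First, the cocycle/coboundary bookkeeping in your first paragraph is inverted. Changing the splitting by $h:E'_0\to L$ replaces $\xi$ by $\xi-h\circ M'_2$. This does not change $\xi|_{\cV'}$ at all, since $\cV'=\ker M'_2$; and ``$h\circ\phi'$'' does not typecheck as a modification of a map $\cV'\to L$. What makes $\xi|_{\cV'}$ factor through $\coker\phi'$ is the even commutativity relation $M_2M'_1=0$, whose $L$-component is $\xi\circ M'_1=0$; you should state this explicitly. Your reconstruction paragraph describes the ambiguity correctly, so the argument is internally inconsistent as written. Relatedly, \eqref{eq:S0-resolution} resolves $S_0$ and so computes $\Ext(S_0,-)$, not extensions with $S_0$ as a submodule. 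You should work directly with the dual of $E'_0\xrightarrow{M'_1}\cX'\xrightarrow{M'_2}E'_0$.

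Second, your ``main obstacle'' is not one. $\PP_{\mathrm{quot}}(\coker\phi')$ is by definition the functor of invertible quotients of the coherent sheaf $\coker\phi'$. It is closed in $\PP_{\mathrm{quot}}(\cV')$, cut out by the vanishing of $\cR'_0\to\cV'\to\cO(1)$, and no local freeness of $\coker\phi'$ is needed. Since $E'$ is cyclic and $L$ is even, the submodule generated by the framing contains $E_1$, so cyclicity of $E$ is surjectivity of $M_2=(M'_2,\xi)$ onto $E'_0\oplus L$. Because $M'_2$ is surjective, this is equivalent to surjectivity of $\xi|_{\cV'}$ onto the line $L$, a fibrewise condition by Nakayama. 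No reducedness argument of the kind in \cref{lem:smaller-augmentation} enters.
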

\begin{proof}
A test-scheme point on the right is a line quotient of $\cV'$ killing $\im M'_1$.
If $U\subset\cV'$ is its kernel, then $\im M'_1\subset U\subset\ker M'_2\subset\cX'$
and $\cV'/U=L$.  Define $E_0=\cX'/U$ and $E_1=E'_1$.  The map $M'_2$ induces
$\pi_0:E_0\twoheadrightarrow E'_0$ with kernel $L$.  The incoming maps
$x_1,y_1:E'_1\to E_0$ are induced by the first and second inclusions into $\cX'$,
the framing by the third, and the outgoing maps are $x_0=x'_0\pi_0$, $y_0=y'_0\pi_0$.
The even commutativity relation is exactly $\im M'_1\subset U$, the odd one is
inherited from $E'$, and the line $L$ is killed by $x_0,y_0$.  As for cyclicity, the
submodule generated by the new framing surjects onto the cyclic module $E'$.  Because the
kernel is even, this submodule contains all of $E_1=E'_1$, and applying $x_1,y_1$
and adjoining the framing then generates all of $E_0=\cX'/U$.  This works over every
base, using surjectivity between the corresponding locally free modules.  For the
inverse, start with a Hecke sequence.  The map $M_2:E'_1\oplus E'_1\oplus\cO\to E_0$ is
surjective by cyclicity, its kernel $U$ lies in $\ker M'_2$ since $M'_2=\pi_0M_2$,
the outgoing maps kill the kernel line and therefore factor through $E'_0$, the even
moment-map equation gives $M_2M'_1=0$ hence $\im M'_1\subset U$, and
$(\ker M'_2)/U$ is canonically $L$.  The constructions are inverse and commute with
base change.
\end{proof}

The ambient projective bundle has dimension $(2n-4)+(n+1)=3n-3$, the defining
section has rank $n-1$, and its zero scheme is the smooth $(2n-2)$-dimensional
$\Hecke$, hence the section is regular.  The fiber over $E'$ is a projective space of
dimension $s_0(E')+2$, because $\ker\phi'(E')=\soc_0(E')$ and therefore
$\dim\coker\phi'(E')=(n+2)-(n-1-s_0(E'))=3+s_0(E')$.  Thus $q$ is not a projective
\emph{bundle}, it is an incidence inside one.

\subsection{The determinantal scheme and its canonical module}\label{sec:determinantal}

Let $B\subset Y$ have the scheme structure defined by the maximal minors of
$\psi:\cT\to\cG$, and $i:B\hookrightarrow Y$.  By \cref{prop:unit-cancellation} its
underlying set is the positive even-socle locus $\BN^+_{n,1}$, of codimension two by
\eqref{eq:BN-dim}.

\subsubsection{Determinants and Hilbert--Burch}
The two exact sequences defining $\cV$ and $\cG$ give
$\det\cG=\det\cV=(\det\cR_1)^2(\det\cR_0)^{-1}$ and $\det\cT=\det\cR_0$, hence
\begin{equation}\label{eq:D-as-determinant}
 \det\cG\otimes(\det\cT)^{-1}=\scrD.
\end{equation}
The maximal-minor map is naturally $\cG\otimes\scrD^{-1}\to\cO_Y$,
$g\otimes(\det\cT\otimes\det\cG^\vee)\mapsto\Lambda^a\psi\wedge g$, with image the
ideal of $B$.

\begin{proposition}\label{prop:HB}
The sequence
\begin{equation}\label{eq:HB}
 0\longrightarrow\cT\otimes\scrD^{-1}\xrightarrow{\ \psi\ }\cG\otimes\scrD^{-1}
 \longrightarrow\cO_Y\longrightarrow i_*\cO_B\longrightarrow0
\end{equation}
is exact.  In particular $B$ is Cohen--Macaulay, pure of codimension two, and reduced.
\end{proposition}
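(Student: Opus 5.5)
This is the Hilbert--Burch / Eagon--Northcott situation for a map $\psi:\cT\to\cG$ of ranks $a$ and $a+1$. The plan is to verify the depth-sensitivity hypothesis, i.e.\ that the ideal of maximal minors has the expected grade $2$, and then let the standard theorem give exactness and Cohen--Macaulayness. Reducedness will come from generic smoothness together with the absence of embedded components.

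First I check that \eqref{eq:HB} is a complex and identify its maps. The right-hand map is the maximal-minor map $\cG\otimes\scrD^{-1}\to\cO_Y$, $g\mapsto\Lambda^a\psi\wedge g$, with the determinant normalization of \eqref{eq:D-as-determinant}. The composite with $\psi$ vanishes because $\Lambda^a\psi\wedge\psi(t)$ is an $(a+1)$-fold wedge of images of the rank-$a$ bundle $\cT$. The image of the last map is by definition the ideal of $B$. So \eqref{eq:HB} is, locally, the Eagon--Northcott (here Hilbert--Burch) complex of an $(a+1)\times a$ matrix, twisted by $\scrD^{-1}$.

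Second, I use the grade criterion. By the Buchsbaum--Eisenbud acyclicity criterion (equivalently the Eagon--Northcott theorem) the complex is exact as soon as the ideal $I_a(\psi)$ of maximal minors has grade at least $2=(a+1)-a+1$. Since $Y$ is smooth, grade equals height, so I only need $\codim B\ge2$. Set-theoretically $B$ is the locus where $\psi$ fails to be injective on fibers. By \cref{prop:unit-cancellation} this is the image of $\Hecke$, that is, the positive even-socle locus $\BN^+_{n,1}$, which has codimension two by \eqref{eq:BN-dim} and \cref{input:geometry}. Exactness follows. The resolution has length $2$, so $\pd\cO_B=2=\codim B$ locally. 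Auslander--Buchsbaum on the regular $Y$ then gives $B$ Cohen--Macaulay and pure of codimension two, with no embedded components.

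Third, reducedness, which I expect to be the main obstacle. A Cohen--Macaulay scheme is reduced iff it is generically reduced (Serre's $R_0+S_1$), so it suffices to show $B$ is reduced at a general point of each component. \Cref{input:geometry} says $\BN^+_{n,1}$ is irreducible with a dense exact socle-rank-one stratum, and there $p:\Hecke\to B$ is bijective. At such a point $\psi$ has corank exactly one, so locally $B$ is the degeneracy locus of corank one of an $(a+1)\times a$ matrix. This locus is reduced at a point iff it has the expected tangent space, i.e.\ iff its Zariski tangent space there has dimension $2n-2$. I would compare with the incidence $\Hecke=Z(\cO(-1)\to\pi^*\cG)$, which is smooth of dimension $2n-2$ by the regular-section count following \cref{prop:unit-cancellation}. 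At a corank-one point the fiber of $\Hecke\to B$ is a single reduced point, so $p$ is a closed immersion near it onto the scheme $B$. The standard fact that the corank-one degeneracy scheme is isomorphic, over the corank-one locus, to the incidence variety resolving it then gives that $B$ is smooth, hence reduced, at general points. The delicate input is that the scheme structure defined by the minors agrees with the incidence there, i.e.\ the tangent-space comparison. If it fails I would fall back on checking that the minors generate a radical ideal at one explicit torus-fixed corank-one point, e.g.\ via \cref{rem:M-nonzero}'s cluster.
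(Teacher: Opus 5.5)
Your proposal is correct and follows essentially the same route as the paper. Exactness comes from grade two of the maximal-minor ideal, obtained from codimension two of $\BN^+_{n,1}$ on the regular $Y$, via Hilbert--Burch; perfection then gives Cohen--Macaulay of pure codimension two; and reducedness comes from generic reducedness through $\Hecke\to B$ on the dense exact-corank-one locus together with the absence of embedded primes.

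The ``standard fact'' you defer is exactly the paper's one-line check, so neither your fallback computation nor your circular-sounding ``closed immersion onto the scheme $B$'' step is needed. Near an exact-corank-one point, an invertible $(a-1)$-minor reduces $\psi$ to an identity block plus a single $2\times1$ column. The two entries of that column cut out both the maximal-minor ideal and the kernel incidence, so $\Hecke\to B$ is a local isomorphism of schemes there and $B$ inherits smoothness from $\Hecke$.
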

\begin{proof}
The grade of the maximal-minor ideal is two, since $Y$ is regular and its zero set
has codimension two, and Hilbert--Burch gives \eqref{eq:HB}
\cite[Thm.~20.15]{Eisenbud}, \cite[Thm.~1.4.17]{BrunsHerzog}.  An ideal of grade two
with a length-two free resolution is perfect, hence $B$ is Cohen--Macaulay of pure
codimension two \cite[Thm.~2.1.5, Cor.~2.1.4]{BrunsHerzog}.  On the dense
exact-corank-one locus the kernel-incidence map $\Hecke\to B$ is an isomorphism of
schemes.  Indeed, locally choose an invertible $(a-1)$-minor of $\psi$ and reduce the
matrix to an identity block and a remaining $2\times1$ column, then both the
determinantal ideal and the incidence are cut out by the two entries of that column.
This locus is smooth because $\Hecke$ is, hence $B$ is generically reduced.  A
Cohen--Macaulay scheme has no embedded primes \cite[\S2.1]{BrunsHerzog}, hence $B$ is
reduced.
\end{proof}

\subsubsection{Identifying the transpose cokernel}
The image of $M_1$ lies in $\cV$, and since $\cX^\vee\twoheadrightarrow\cV^\vee$ is
surjective, passing from $\cX$ to $\cV$ does not change the cokernel of the
transpose.  Cancelling the common unit line in domain and image, in local compatible
splittings the map has block form
$\left(\begin{smallmatrix}1&c\\0&\psi\end{smallmatrix}\right)$, and elementary
invertible row and column operations on its transpose identify its cokernel with
$\coker\psi^\vee$, and the zero framing component of $M_1$ does not affect that
cokernel either.  Therefore
\begin{equation}\label{eq:Sigma-cokernel}
 \coker\bigl((-y,x)^\vee:\cR_1^\vee\oplus\cR_1^\vee\to\cR_0^\vee\bigr)\simeq\coker\psi^\vee ,
\end{equation}
and the sheaf on the left is the even part of $\cP^\vee/(x,y)\cP^\vee$, the even
ambient dual-socle sheaf.  Dualizing \eqref{eq:HB},
\begin{equation}\label{eq:canonical-Sigma}
 \cExt_Y^2(i_*\cO_B,\cO_Y)\simeq\coker\psi^\vee\otimes\scrD ,
\end{equation}
and the right side is annihilated by the ideal of $B$.  We write
\begin{equation}\label{eq:Sigma-on-B}
 i_*\Sigma=\coker\psi^\vee ,
\end{equation}
hence $\omega_{B/Y}:=\cExt_Y^2(i_*\cO_B,\cO_Y)=i_*(\Sigma\otimes\scrD|_B)$.  Since
the absolute canonical module differs from this by the invertible $\omega_Y|_B$,
$\Sigma$ is a canonical module of $B$ up to an invertible twist.  In particular it is
maximal Cohen--Macaulay on $B$ and
\begin{equation}\label{eq:Sigma-self-Hom}
 \RcHom_B(\Sigma,\Sigma)\simeq\cO_B
\end{equation}
by \cref{lem:semidualizing}.

\begin{warning}[Not a regular-embedding argument]
For larger even socle rank the scheme $B$ is singular and need not be a local
complete intersection.  We use the canonical module supplied by \eqref{eq:HB} and
never write $i^!\cO_Y$ as a determinant of a normal bundle.  Instead,
$i^!\cO_Y\simeq(\Sigma\otimes\scrD|_B)[-2]$, which follows from the explicit resolution
and is valid on all of $B$.
\end{warning}

\begin{remark}[The uncancelled form]
Before cancelling the unit, $\coker\psi^\vee$ is the cokernel of
$\sigma^\vee:\cV^\vee\to\cR_0^\vee$ with $\rk\cV=n+1$, $\rk\cR_0=n$, and
\eqref{eq:HB} is the Buchsbaum--Rim complex of $\sigma^\vee$,
$0\to(\det\cV)^{-1}\otimes\det\cR_0\to\cV^\vee\to\cR_0^\vee\to\coker\sigma^\vee\to0$,
whose acyclicity criterion is again grade two of the maximal minors.  The two
presentations agree because $\det\cG=\det\cV$ and the unit column is split.
\end{remark}

\subsection{Trace complexes and the expression for the excess}\label{sec:traces}

\subsubsection{The augmentation on the determinantal scheme}
Every point of $B$ contains the origin, since it has a nonzero element annihilated by
$x$ and $y$.  Since $B$ is reduced by \cref{prop:HB}, the argument of
\cref{lem:smaller-augmentation} gives a universal augmentation
$\ev_B:\cP|_B\to\cO_B$.  Define
\begin{equation}\label{eq:CB-definition}
 s_B=\ev_B|_{\cT|_B},\qquad C_B=K_B(\cT|_B,s_B),\qquad Z=V(s_B)\subset B.
\end{equation}
For a positive-degree even polynomial $f$,
$\ev_B\bigl(f-\frac{\Tr(f)}n1\bigr)=-\frac{\Tr(f)}n$, and the projections of
even polynomials to $\cT$ generate it.  Hence the ideal of $Z$ is the ideal of all
positive-degree even traces restricted to $B$, and
\begin{equation}\label{eq:Z-support}
 |Z|=|B|\cap|F_n|,\qquad\dim Z\le n .
\end{equation}
This concerns support and the trace ideal on $B$, and it does not assert that the
punctual scheme $F_n$ is Cohen--Macaulay.

\subsubsection{The dual-socle quotient is a trace quotient}
Set $\boldsymbol\Sigma=\cP^\vee/(x,y)\cP^\vee$.  The $S$-action on this sheaf factors
through evaluation at the origin, and its even summand is $i_*\Sigma$ by
\eqref{eq:Sigma-cokernel}.  Hence multiplication by $t\in\cT$ on $i_*\Sigma$ is the
scalar $\ev_B(t)$, and
\begin{equation}\label{eq:S-trace-quotient}
 \cS^+=i_*(\Sigma\otimes\cO_Z)=i_*\cH^0(\Sigma\otimes C_B).
\end{equation}
This is an ordinary quotient statement, and the whole complex $\Sigma\otimes C_B$ is
retained when applying duality.  The scheme $B$ and the module $\Sigma$ are
Cohen--Macaulay of dimension $2n-2$.  For a maximal Cohen--Macaulay module $M$ over a
Cohen--Macaulay ring and an ideal $I$ we have $\grade(I,M)=\dim M-\dim(M/IM)$
\cite[Thm.~2.1.2]{BrunsHerzog}, hence by \eqref{eq:Z-support} the trace ideal has grade
at least $(2n-2)-n=n-2$ on each of $\cO_B$ and $\Sigma$.  Since $C_B$ has $n-1$
generators, Koszul grade-sensitivity \cite[Thm.~1.6.17]{BrunsHerzog} gives
\begin{equation}\label{eq:CB-two-homology}
 C_B,\ \Sigma\otimes C_B\in D^{[-1,0]}(B),
\end{equation}
with all cohomology sheaves supported on $Z$.

\begin{proposition}[The excess as first trace homology]\label{prop:M-trace}
There is an isomorphism of ordinary coherent sheaves
\begin{equation}\label{eq:M-trace}
 \cM_n^+\simeq i_*\bigl(\scrD|_B\otimes\cH^{-1}(C_B)\bigr).
\end{equation}
\end{proposition}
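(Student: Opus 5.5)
We start from the even dual-socle formula of \cref{thm:dual-socle}, $\cM_n^+\simeq\cExt^n_Y(\cS^+,\cO_Y)\otimes\det\cT$. Then we rewrite $\cS^+$ as a pushforward from $B$ using \eqref{eq:S-trace-quotient}. By \eqref{eq:CB-two-homology} the complex $\Sigma\otimes C_B$ lives in degrees $[-1,0]$, so there is a truncation triangle
\[
 \cH^{-1}(\Sigma\otimes C_B)[1]\to\Sigma\otimes C_B\to\Sigma\otimes\cO_Z\to .
\]
Both outer sheaves are supported on $Z$, which has dimension at most $n$, hence codimension at least $n$ in $Y$. Applying $\RcHom_Y(i_*(-),\cO_Y)$ and \cref{lem:low-ext}, exactly as in the proof of \cref{thm:dual-socle}, we get
\[
 \cExt^n_Y(\cS^+,\cO_Y)\simeq\cH^n\RcHom_Y\bigl(i_*(\Sigma\otimes C_B),\cO_Y\bigr).
\]
The neighbouring terms are $\cExt^{n-2},\cExt^{n-1}$ of the degree $-1$ sheaf, and these vanish.

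Next we compute the right-hand side by Grothendieck duality for the finite map $i$:
\[
 \RcHom_Y(i_*G,\cO_Y)\simeq i_*\RcHom_B(G,i^!\cO_Y),\qquad i^!\cO_Y\simeq(\Sigma\otimes\scrD|_B)[-2],
\]
which is the form recorded in the Warning after \eqref{eq:Sigma-self-Hom}. Since $C_B$ is a Koszul complex of the vector bundle $\cT|_B$, hence perfect, we have
\[
 \RcHom_B(\Sigma\otimes C_B,\Sigma)\simeq\RcHom_B(C_B,\cO_B)\otimes^L\RcHom_B(\Sigma,\Sigma)\simeq C_B^\vee ,
\]
using \eqref{eq:Sigma-self-Hom}. \Cref{lem:koszul-duality} with $r=n-1$ then gives $C_B^\vee\simeq C_B[-(n-1)]\otimes(\det\cT|_B)^{-1}$. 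Putting the pieces together,
\[
 \RcHom_Y(i_*(\Sigma\otimes C_B),\cO_Y)\simeq i_*\bigl(C_B\otimes\scrD|_B\otimes\det\cT^{-1}\bigr)[-n-1].
\]
Its $\cH^n$ is $i_*\bigl(\cH^{-1}(C_B)\otimes\scrD|_B\otimes\det\cT^{-1}\bigr)$. Tensoring with $\det\cT$ cancels the inverse determinant and yields \eqref{eq:M-trace}.

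The main obstacle is the self-Hom step. $\Sigma$ is only a canonical module up to twist on the possibly non-lci Cohen--Macaulay $B$, so we must justify
\[
 \RcHom_B(\Sigma\otimes C_B,\Sigma)\simeq\RcHom_B(C_B,\RcHom_B(\Sigma,\Sigma)).
\]
This is just tensor--Hom adjunction together with perfectness of $C_B$, so it should be fine. We also need to confirm that $i^!\cO_Y$ is correctly identified with the shifted twisted $\Sigma$ on all of $B$. This follows by dualizing the Hilbert--Burch resolution \eqref{eq:HB}: it gives $\cExt^j_Y(i_*\cO_B,\cO_Y)=0$ for $j\ne2$ and $\cExt^2$ as in \eqref{eq:canonical-Sigma}.

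A secondary point is bookkeeping of the torus characters and signs. Since the statement is only up to forgetting auxiliary linearizations, we will track just the ordinary line bundles $\det\cT$ and $\scrD$. We also check that the parity splitting is compatible, which holds because $\cS^+$ is the even summand and all maps used are $\gamma$-equivariant.
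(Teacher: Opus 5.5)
Your proposal is correct and is essentially the paper's own proof. You use the truncation triangle for $\Sigma\otimes C_B$ with \cref{lem:low-ext} to identify $\cExt^n_Y(\cS^+,\cO_Y)$, then Grothendieck duality for the finite map $i$ with $i^!\cO_Y\simeq(\Sigma\otimes\scrD|_B)[-2]$ from \eqref{eq:canonical-Sigma}, perfectness of $C_B$ together with \eqref{eq:Sigma-self-Hom}, Koszul self-duality in rank $n-1$ (total shift $[-n-1]$, so $\cH^n$ picks out $\cH^{-1}(C_B)$), and finally cancel $\det\cT$ against \eqref{eq:even-dual-socle}, exactly as the paper does.
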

\begin{proof}
Let $N_B=\cH^{-1}(\Sigma\otimes C_B)$ and $S_B=\cH^0(\Sigma\otimes C_B)$.  The
triangle $i_*N_B[1]\to i_*(\Sigma\otimes C_B)\to i_*S_B\to i_*N_B[2]$ has outer
sheaves supported in codimension $\ge n$ in $Y$, hence \cref{lem:low-ext}, exactly
as in the proof of \cref{thm:dual-socle}, yields
\begin{equation}\label{eq:trace-edge-Ext}
 \cExt_Y^n(\cS^+,\cO_Y)\simeq\cH^n\RcHom_Y\bigl(i_*(\Sigma\otimes C_B),\cO_Y\bigr),
\end{equation}
the obstruction terms $\cExt_Y^{n-2}(i_*N_B,\cO_Y)$ and $\cExt_Y^{n-1}(i_*N_B,\cO_Y)$
both vanishing.  Duality for the finite morphism $i$, in the form
$\RcHom_Y(i_*F,\cO_Y)\simeq i_*\RcHom_B(F,i^!\cO_Y)$ with
$i^!\cO_Y=\RcHom_Y(i_*\cO_B,\cO_Y)$ regarded on $B$
\cite[Ch.~III, \S6]{RD}, \cite[Tag 0A7A]{Stacks}, together with
\eqref{eq:canonical-Sigma} and \eqref{eq:Sigma-self-Hom}, gives
\begin{equation}\label{eq:dual-trace-complex}
 \RcHom_Y\bigl(i_*(\Sigma\otimes C_B),\cO_Y\bigr)
 \simeq i_*\RcHom_B\bigl(\Sigma\otimes C_B,\Sigma\otimes\scrD|_B\bigr)[-2]
 \simeq i_*\bigl(\scrD|_B\otimes C_B^\vee\bigr)[-2],
\end{equation}
the second step because $C_B$ is a bounded complex of vector bundles, hence perfect,
and $\RcHom(C\otimes X,W)\simeq C^\vee\otimes\RcHom(X,W)$ for perfect $C$.  By
Koszul self-duality in rank $n-1$ (\cref{lem:koszul-duality}),
$C_B^\vee\simeq C_B[-n+1]\otimes(\det\cT|_B)^{-1}$, hence the degree-$n$ cohomology
of \eqref{eq:dual-trace-complex} is
$i_*\bigl(\scrD|_B\otimes\cH^{-1}(C_B)\otimes(\det\cT|_B)^{-1}\bigr)$.
Multiplication by $\det\cT$ in \eqref{eq:even-dual-socle} cancels the last factor.
\end{proof}

\subsubsection{The smaller trace complex and its support}
Recall $\cT'$ and $C'$ from \eqref{eq:modified}, and set $Z'=V(\ev'|_{\cT'})\subset Y'$.
If $f$ is positive-degree and even then $\ev'(f)=0$, hence
$f-\frac{\Tr_{\cR'_0}(f)}n1\in\cT'$ with $\ev'$-value $-\frac{\Tr_{\cR'_0}(f)}n$.
Thus $Z'$ is defined by the positive-degree even traces on $Y'$, and at a closed
point vanishing of the section is equivalent to $\Tr_{E'_0}=(n-1)\ev'$, which by the
separating-polynomial argument of \cref{lem:punctual-support} holds exactly when the
smaller cluster is punctual.  We use only the resulting support statement, namely
\begin{equation}\label{eq:qC-support}
 \Supp(q^*C')\subset q^{-1}(|Z'|),\qquad p\bigl(q^{-1}(|Z'|)\bigr)\subset|F_n| ,
\end{equation}
the second inclusion because an extension of a module supported at the origin by
$S_0$ is again supported at the origin.  Here $q^*C'$ denotes the termwise
pullback of the locally free complex $C'$, and no flatness and no assertion that
$q^*C'$ has only zeroth cohomology is used.

\subsubsection{The exact Koszul filtration}\label{sec:koszul-filtration}
The two augmentations on $\Hecke$ agree, both evaluating the original polynomial
algebra at the origin.  By \cref{prop:trace-additivity} the section of $p^*(\cT|_B)$
vanishes on $\cL$ and descends to the section of $q^*\cT'$.  Let
$\bar p:\Hecke\to B$ be the factorization of $p$.

\begin{proposition}\label{prop:Koszul-filtration}
There is a short exact sequence of complexes of vector bundles on $\Hecke$,
\begin{equation}\label{eq:Koszul-filtration}
 0\longrightarrow\cL\otimes q^*C'[1]\longrightarrow\bar p^*C_B\longrightarrow q^*C'\longrightarrow0.
\end{equation}
\end{proposition}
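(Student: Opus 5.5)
This is the standard short exact sequence for a Koszul complex of a vector bundle with a sub-line bundle on which the section vanishes. Everything is built on the sequence \eqref{eq:W-sequence}, $0\to\cL\to \bar p^*(\cT|_B)\to q^*\cT'\to0$ on $\Hecke$, where $\bar p^*\cT|_B=p^*\cT$. I would proceed in four steps.

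\emph{Step 1: the section.} The section $\bar p^*s_B=\ev_\Hecke|_{p^*\cT}$ vanishes on $\cL$. This holds because $\cL$ is killed by the augmentation: it is the kernel of $p^*\cR_0\to q^*\cR'_0$, and $\ev_\Hecke$ factors through that map. So $\bar p^*s_B$ descends to $q^*(\ev'|_{\cT'})$, using \cref{prop:trace-additivity}.

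\emph{Step 2: filtering the exterior powers.} Write $E=p^*\cT$, $L=\cL$, $E''=q^*\cT'$. For each $j$ there is the exact sequence
\[
0\to L\otimes\Lambda^{j-1}E''\to\Lambda^jE\to\Lambda^jE''\to0 .
\]
The first map is $\ell\otimes\bar\omega\mapsto\ell\wedge\omega$ for any lift $\omega$; this is well defined because $\ell\wedge\ell'=0$ for $\ell,\ell'\in L$. The second map is the quotient. Exactness can be checked locally, where $E\cong L\oplus E''$, and there it is the usual decomposition $\Lambda^jE=\Lambda^jE''\oplus L\otimes\Lambda^{j-1}E''$. In cohomological indexing the left terms form $L\otimes q^*C'$ placed in degrees shifted by one, that is $L\otimes q^*C'[1]$. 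Note that $(C'[1])^{-j}=C'^{-j+1}=\Lambda^{j-1}\cT'$.

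\emph{Step 3: compatibility with the differentials.} Contraction $\iota_s$ commutes with the quotient map $\Lambda^jE\to\Lambda^jE''$, since $s$ descends. On the subobject we have
\[
\iota_s(\ell\wedge\omega)=s(\ell)\,\omega-\ell\wedge\iota_s\omega=-\ell\wedge\iota_s\omega ,
\]
because $s(\ell)=0$. So the subcomplex carries the differential $-\iota_{s''}$, which is exactly the sign convention for the shift $[1]$. All maps are morphisms of complexes, and exactness termwise gives \eqref{eq:Koszul-filtration}.

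\emph{Step 4: the expected obstacle.} The only delicate point is the identification of sections in Step 1, not the homological algebra. It uses that $\ev_B$ pulled back along $\bar p$ agrees with $q^*\ev'$ composed with the quotient. This agreement should be checked on the reduced scheme $\Hecke$, which is smooth, so it suffices to check it at closed points. There both maps are evaluation of the original polynomial at the origin.
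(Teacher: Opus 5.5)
Your proposal is correct and is essentially the paper's proof. Both use the canonical sequences $0\to\cL\otimes\Lambda^{j-1}q^*\cT'\to\Lambda^jp^*\cT\to\Lambda^jq^*\cT'\to0$ obtained from \eqref{eq:trace-bundle-sequence}, compatibility with contraction because the section kills $\cL$, and the sign $\iota_s(\ell\wedge\omega)=-\ell\wedge\iota_s\omega$ matching the shift $[1]$. Your Step 4 only spells out the agreement $\bar p^*\ev_B=\ev_\Hecke$, which the paper states in one line; it even holds without a closed-point check, since both maps send the image of every polynomial $f$ to $f(0,0)$ and these images generate $p^*\cP$.
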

\begin{proof}
For an exact sequence $0\to L\to V\to W\to0$ with $L$ a line there are canonical
exact sequences $0\to L\otimes\Lambda^{j-1}W\to\Lambda^jV\to\Lambda^jW\to0$.  If the
Koszul section kills $L$ these are compatible with the Koszul differentials, and
locally $\iota_s(\ell\wedge w)=-\ell\wedge\iota_s(w)$, hence the differential on the
kernel has the minus sign of the shift $[1]$.  Apply this to
\eqref{eq:trace-bundle-sequence}, where local splittings are used only to check the sign.
\end{proof}

\subsection{The uniform projective-space calculation}\label{sec:Bott}

\subsubsection{Cohomology of twisted differential forms}
Let $V$ be an $a$-dimensional vector space, $\pi:\PP_{\mathrm{lines}}(V)\to\SpecOp\C$,
$m=a-1$, with universal sequence $0\to L=\cO(-1)\to V\otimes\cO\to W\to0$.  Dualizing,
$W^\vee\simeq\Omega^1(1)$.

\begin{lemma}[Projective Bott calculation]\label{lem:projective-Bott}
For $0\le b\le m$ the only possible nonzero cohomology of $\Omega^b(k)$ on $\PP^m$ is given by the following table.
\begin{equation}\label{eq:Omega-table}
 \begin{array}{c|c}
 \text{condition}&\text{cohomological degree}\\ \hline
 k>b&0\\
 k=0&b\\
 k<b-m&m
 \end{array}
\end{equation}
All other cases are acyclic, and in the middle case the cohomology is one-dimensional.
\end{lemma}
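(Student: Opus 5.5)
The plan is to prove the table by induction on $b$, using the twisted exterior powers of the Euler sequence, and to obtain the last row from the first by Serre duality. Let $V$ have dimension $a=m+1$ and write $\Omega^b=\Omega^b_{\PP^m}$. Dualizing and twisting the Euler sequence gives $0\to\Omega^1(1)\to V^\vee\otimes\cO\to\cO(1)\to0$; this is the sequence $0\to W^\vee\to V^\vee\otimes\cO\to L^\vee\to0$ recorded before the lemma. Taking $b$-th exterior powers and twisting by $\cO(k-b)$ yields
\[
 0\to\Omega^b(k)\to\Lambda^bV^\vee\otimes\cO(k-b)\to\Omega^{b-1}(k)\to0 .
\]
Splicing these sequences together for all $b$ gives the twisted Koszul complex $\Lambda^{\bullet}V^\vee\otimes\cO(k-\bullet)$. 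I use Serre's computation of $H^*(\PP^m,\cO(j))$ as the input: $H^0=\SymOp^jV^\vee$ for $j\ge0$, $H^m\ne0$ only for $j\le-m-1$, and all intermediate groups vanish.

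\emph{Degree zero.} Global sections are left exact, so the maps $\Omega^b(k)\hookrightarrow\Lambda^bV^\vee\otimes\cO(k-b)$ identify $H^0(\Omega^b(k))$ with the kernel of the Koszul differential $\Lambda^bV^\vee\otimes\SymOp^{k-b}V^\vee\to\Lambda^{b-1}V^\vee\otimes\SymOp^{k-b+1}V^\vee$. This is the Koszul complex of the polynomial ring $\SymOp V^\vee$ in internal degree $k$, and it is exact in positive internal degree. Hence the kernel equals the image of $\Lambda^{b+1}V^\vee\otimes\SymOp^{k-b-1}V^\vee$. That image is nonzero if and only if $k-b-1\ge0$ and $b+1\le m+1$, which gives $H^0\ne0$ exactly when $k>b$ (recall $b\le m$). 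In particular $H^0(\Omega^b(b))=0$, and this case falls under no row of the table, as the lemma requires. For $k\le0$ the group is zero trivially, except when $b=k=0$, where $H^0(\cO)=\C$ is the middle row.

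\emph{Intermediate degrees.} Take the long exact sequence of the displayed short exact sequence. For $1\le q\le m-1$ the middle term has no $H^{q-1}$ or $H^q$ except possibly $H^0$ when $q=1$. So $H^q(\Omega^b(k))\cong H^{q-1}(\Omega^{b-1}(k))$ for $2\le q\le m-1$. For $q=1$, the group $H^1(\Omega^b(k))$ is the cokernel of $H^0(\Lambda^bV^\vee(k-b))\to H^0(\Omega^{b-1}(k))$, and this map is surjective for $k\ne0$ by the Koszul exactness just used. By induction on $b$ starting from $\Omega^0=\cO(k)$, the only surviving intermediate groups come from the chain $H^b(\Omega^b)\cong\dots\cong H^0(\cO)=\C$ at $k=0$. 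This proves the middle row and gives vanishing in degrees $0<q<m$ otherwise. Note that when $k=0$ the Koszul complex in internal degree $0$ is $\C$ in homological position $0$ only, which is what feeds the chain.

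\emph{Top degree.} Serre duality with $\omega=\Omega^m$ and the perfect pairing $\Omega^b\otimes\Omega^{m-b}\to\Omega^m$ give
\[
 H^m(\Omega^b(k))\cong H^0(\Omega^{m-b}(-k))^\vee .
\]
By the degree-zero case this is nonzero if and only if $-k>m-b$, i.e.\ $k<b-m$. This also covers $k=0$, $b=m$, where $H^m(\Omega^m)=\C$ is the middle row.

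The only delicate point, and the one I expect to be the main obstacle, is the boundary bookkeeping in the long exact sequences at $q=1$ and $q=m-1$. There I must check that the Koszul exactness argument, rather than mere vanishing of line-bundle cohomology, gives surjectivity on $H^0$ for $k\ne0$. The top degree can be left entirely to duality, so the cases $q=m-1,m$ never need to be handled inside the induction.
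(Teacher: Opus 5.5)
Your proof is correct and uses the same ingredients as the paper's: the twisted exterior Euler sequence, exactness of the positive-degree strand of the Koszul complex on $V^\vee$, the chain $H^b(\Omega^b)\cong\dots\cong H^0(\cO)$ at $k=0$, and Serre duality through $\Omega^b\otimes\Omega^{m-b}\to\Omega^m$. The paper organizes the argument by the sign of $k$ rather than by cohomological degree with an induction on $b$; for $k>0$ no term of the full resolution has higher cohomology, so the truncated Koszul strand computes everything at once, and for $k<0$ duality gives all degrees.

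One small slip: your degree-zero paragraph uses $H^0(\cO(j))=0$ for $j<0$, which fails on $\PP^0$. So the case $m=0$, which the paper explicitly includes and which occurs in the application when $a=1$, must be set aside first. There it is trivial, since every cohomology group sits in degree $0=m$.
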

\begin{proof}
This is Bott's formula \cite{Bott}, \cite[Ch.~I, \S1.1]{OSS}, and here is the elementary
derivation.  The exterior Euler sequence is
\begin{equation}\label{eq:exterior-Euler}
 0\to\Omega^b(k)\to\Lambda^bV^\vee\otimes\cO(k-b)\to\Lambda^{b-1}V^\vee\otimes\cO(k-b+1)
 \to\cdots\to\cO(k)\to0
\end{equation}
\cite[Tags 0FUK, 01XS]{Stacks}.  If $k>0$, every twist is $>-m-1$, hence no term has
higher cohomology, and the complex of global sections is the indicated truncation of
the degree-$k$ strand of the polynomial Koszul complex on $V^\vee$, which is exact
for $k>0$ ($dh+hd=k\,\mathrm{id}$ on total degree $k$ for $d=\sum z_i\iota_i$ and
$h=\sum e_i\wedge\partial_{z_i}$).  If $0<k\le b$ the full strand occurs and
$\Omega^b(k)$ is acyclic, and if $k>b$ only the leftmost kernel, $H^0(\Omega^b(k))$,
remains.  If $k=0$, all negative twists in \eqref{eq:exterior-Euler} lie in $[-m,-1]$
and are acyclic.  Only $H^0(\cO)$ remains, and the long exact sequences place it in
$H^b(\Omega^b)$.  If $k<0$, Serre duality with $\omega=\Omega^m=\cO(-m-1)$ and the
perfect pairing $\Omega^b\otimes\Omega^{m-b}\to\Omega^m$ give
$H^r(\Omega^b(k))^\vee\simeq H^{m-r}(\Omega^{m-b}(-k))$, and the positive-twist case
gives the third row.  The case $m=0$ is included.
\end{proof}

The same degree statements hold relatively for a projective bundle over any base,
by checking on trivializing opens, and every nonzero relative cohomology sheaf occurring
below is locally free and commutes with arbitrary base change.

\subsubsection{The two rows of Hecke bundles}
Retain $a=n-1$ and the presentation $\Hecke\subset\PP_{\mathrm{lines},Y}(\cT)$ of
\cref{prop:unit-cancellation}, with ambient projection $\pi$.  Its exact Koszul
resolution has terms $\Lambda^\ell\pi^*\cG^\vee\otimes\cO(-\ell)$, $0\le\ell\le a+1$,
in degrees $-\ell$.

\begin{proposition}[Uniform Tor-amplitude]\label{prop:two-row-amplitude}
For every $n\ge2$, $e\in\{0,1\}$ and $0\le j\le n-2$,
\begin{equation}\label{eq:two-row-amplitude}
 Rp_*\bigl(\cL^e\otimes\Lambda^j\cW\bigr)\in\Perf^{[-2,0]}(Y).
\end{equation}
This is a Tor-amplitude statement, not only an ordinary cohomology bound.
\end{proposition}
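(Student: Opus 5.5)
We compute $Rp_*$ by pushing forward from the ambient projective bundle $\pi:\PP:=\PP_{\mathrm{lines},Y}(\cT)\to Y$, of relative dimension $m=a-1=n-2$. By \cref{prop:unit-cancellation} and the regularity of the defining section, $\Hecke\subset\PP$ has the exact Koszul resolution with terms $\Lambda^\ell\pi^*\cG^\vee\otimes\cO(-\ell)$ in degree $-\ell$, $0\le\ell\le n$. On $\Hecke$ we have $\cL=\cO(-1)|_\Hecke$ and $\cW=\overline\cW|_\Hecke$, with $\overline\cW$ the universal quotient of rank $m$ on $\PP$. Hence $\cL^e\otimes\Lambda^j\cW$ is resolved on $\PP$ by the complex with terms
\[
 \Lambda^\ell\pi^*\cG^\vee\otimes\Lambda^j\overline\cW\otimes\cO(-e-\ell),\qquad 0\le\ell\le n,
\]
in degree $-\ell$. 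Since $\overline\cW\simeq T_{\PP/Y}(-1)$ relatively, we get $\Lambda^j\overline\cW\simeq\Omega^{m-j}_{\PP/Y}(m+1-j)$, using $\det\overline\cW=\cO(1)\otimes\det\cT$ up to the pulled-back line $\det\cT$, which is harmless for amplitude. The $\ell$-th term is therefore, up to tensoring with pulled-back bundles, $\Omega^{b}(k)$ with $b=m-j$ and $k=m+1-j-e-\ell=b+1-e-\ell$.

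We now apply the relative form of \cref{lem:projective-Bott}, term by term. Each $R\pi_*$ of a term is a locally free sheaf placed in a single degree $r\in\{0,b,m\}$ (or is zero), and it commutes with base change. A term in complex degree $-\ell$ then contributes in total degree $r-\ell$. We check the three rows.
\begin{itemize}
 \item Row $k>b$ means $\ell<1-e$, so $\ell=0$ and $e=0$. The contribution is in total degree $0$.
 \item Row $k=0$ means $\ell=b+1-e$. The contribution is in degree $b-\ell=e-1\in\{-1,0\}$.
 \item Row $k<b-m$ means $b+1-e-\ell<b-m$, i.e.\ $\ell>m+1-e$, so $\ell\ge m+2-e$. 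Since $\ell\le n=m+2$, we get $\ell-m\le2$, and the total degree $m-\ell\ge-2$.
\end{itemize}
All other terms have $R\pi_*=0$. Therefore $Rp_*(\cL^e\otimes\Lambda^j\cW)=R\pi_*(\text{resolution})$ is computed by a spectral sequence whose $E_1$-page consists of locally free sheaves on $Y$ in total degrees $[-2,0]$, all commuting with base change.

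To turn this into a Tor-amplitude statement, we do not simply read off cohomology. A filtered complex whose graded pieces are perfect of Tor-amplitude $[-2,0]$ (each being a locally free sheaf shifted into that range) is itself perfect of Tor-amplitude $[-2,0]$, since Tor-amplitude is preserved under extensions (cones). Equivalently, we can use that $R\pi_*$ commutes with $\otimes^{\mathbf L}$ of quasicoherent modules pulled from $Y$ (projection formula plus flat base change), so that the amplitude bound holds after any derived base change. This step, the passage from a termwise bound to a genuine Tor-amplitude bound, is the point that needs care.

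The main obstacle is the bookkeeping of twists. One must pin down precisely the identification $\Lambda^j\overline\cW\simeq\Omega^{m-j}(m+1-j)$ (together with the $\det\cT$ factor) and the rank $a+1=n$ of $\cG$, since a shift of one in $k$ would move the third row to total degree $-3$. I would check this first at $n=2$, where $m=0$ and the fiber is a point, and at $n=3$ directly, before running the general count.
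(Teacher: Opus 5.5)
Your proposal is correct and follows the paper's proof essentially step for step. Both arguments use the regular-section Koszul resolution of $\Hecke$ inside $\PP_{\mathrm{lines},Y}(\cT)$, the identification $\Lambda^j\overline\cW\simeq\Omega^{b}_\pi(b+1)\otimes\pi^*\det\cT$ with $b=a-1-j$, the three rows of the projective Bott table (giving total degrees $0$, $e-1$ and $\{-2,-1\}$), and the fact that Tor-amplitude $[-2,0]$ is preserved under the extensions coming from the pushed-forward stupid filtration.
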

\begin{proof}
The line and quotient bundle on $\Hecke$ extend to the ambient projective bundle,
and tensoring its regular-section Koszul resolution gives terms
\begin{equation}\label{eq:ambient-Koszul-term}
 \Lambda^\ell\pi^*\cG^\vee\otimes L^{\ell+e}\otimes\Lambda^j\overline\cW,
 \qquad\text{in degree }-\ell .
\end{equation}
Put $b=a-1-j$.  Exterior duality for the quotient bundle gives
\begin{equation}\label{eq:quotient-Omega}
 \Lambda^j\overline\cW\simeq\Lambda^{a-1-j}\overline\cW^\vee\otimes\det\overline\cW
 \simeq\Omega^b_\pi(b+1)\otimes\pi^*\det\cT,
\end{equation}
since $\overline\cW^\vee=\Omega^1_\pi(1)$ and $\det\overline\cW=\pi^*\det\cT\otimes L^{-1}$.
Thus the projective cohomology to be calculated is that of $\Omega^b_\pi(k)$ with
$k=b+1-\ell-e=a-j-\ell-e$, up to a vector bundle pulled back from $Y$.  Let us read
\eqref{eq:Omega-table} with $m=a-1$.  The first condition $k>b$ reads $\ell+e<1$,
forcing $(e,\ell)=(0,0)$ with relative degree $r=0$.  The second, $k=0$, reads
$\ell+e=a-j$, giving $\ell=a-j$ for $e=0$ and $\ell=a-j-1$ for $e=1$, with
$r=b=a-j-1$.  The third, $k<b-m$, reads $\ell+e\ge a+1$, giving $\ell=a+1$ for $e=0$
and $\ell\in\{a,a+1\}$ for $e=1$, with $r=a-1$.  The complete list is
\begin{equation}\label{eq:two-row-table}
 \begin{array}{c|c|c|c}
 e&\ell&\text{relative cohomology degree }r&r-\ell\\ \hline
 0&0&0&0\\
 0&a-j&a-j-1&-1\\
 0&a+1&a-1&-2\\ \hline
 1&a-j-1&a-j-1&0\\
 1&a&a-1&-1\\
 1&a+1&a-1&-2
 \end{array}
\end{equation}
For fixed $(e,j)$ the three values of $\ell$ are distinct, including the case $a=1$.
Each nonzero derived pushforward of a term \eqref{eq:ambient-Koszul-term}, with its
shift $-\ell$ included, is therefore a vector bundle in one of the degrees $-2,-1,0$
(locally free because relative cohomology of a homogeneous bundle on a projective
bundle concentrated in one degree is locally free and commutes with base change
\cite[Thm.~III.12.11]{Hartshorne}).  Push the finite stupid filtration of the
Koszul resolution through $R\pi_*$.  It expresses the required object as a finite
succession of extensions of these shifted vector bundles, and perfect complexes of
Tor-amplitude $[-2,0]$ are closed under extensions (tensor the triangle with any
quasicoherent module and use its cohomology sequence, see \cite[Tag 0DJC]{Stacks}).
Possible higher differentials in the associated spectral sequence change the maps in
a locally free representative but cannot enlarge the amplitude interval.
\end{proof}

For reference, the three graded pieces in the pushed filtration are, as
representations,
\begin{equation}\label{eq:two-row-bundles}
 \begin{array}{c|c|l}
 e&\text{degree}&\text{bundle on }Y\\ \hline
 0&0&\Lambda^j\cT\\
 0&-1&\Lambda^{a-j}\cG^\vee\otimes\det\cT\\
 0&-2&\det\cG^\vee\otimes\Lambda^{j+1}\cT\otimes\det\cT\\ \hline
 1&0&\Lambda^{a-j-1}\cG^\vee\otimes\det\cT\\
 1&-1&\Lambda^a\cG^\vee\otimes\Lambda^{j+1}\cT\otimes\det\cT\\
 1&-2&\det\cG^\vee\otimes\schur_{(2,1^j)}\cT\otimes\det\cT
 \end{array}
\end{equation}
(here $\schur_{(2,1^j)}$ is the hook Schur functor, and the top-cohomology factors follow from
$\omega_\pi=L^a\otimes\pi^*(\det\cT)^{-1}$ and relative Serre duality).  The table is
not a claim of a canonical splitting, only the degrees and local freeness are used.
The row $(e,j)=(0,0)$ has terms $\cT\otimes\scrD^{-1},\cG\otimes\scrD^{-1},\cO_Y$ in
degrees $-2,-1,0$, precisely the terms of the Hilbert--Burch resolution
\eqref{eq:HB}.  This is a consistency check, not used below.  The positions in
\eqref{eq:two-row-table} have been verified by an exact Bott-weight computation for
all $a\le50$.

\subsubsection{The structure sheaf of the determinantal scheme}\label{sec:p-structure}

\begin{lemma}\label{lem:B-normal}
The determinantal scheme $B$ is normal and irreducible.
\end{lemma}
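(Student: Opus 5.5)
We need to show that $B$ is normal and irreducible. We already know from \cref{prop:HB} that $B$ is Cohen--Macaulay, reduced and of pure codimension two. By Serre's criterion, normality then reduces to $R_1$: the singular locus of $B$ must have codimension at least two in $B$. The plan is therefore to (i) locate the smooth locus of $B$, (ii) bound the codimension of its complement, and (iii) deduce irreducibility from connectedness of the normal scheme. Irreducibility may also come directly from the input of Gonz\'alez--Gorsky--Simental.

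For (i) we use the proof of \cref{prop:HB}. On the exact-corank-one locus of $\psi$, the kernel-incidence map $\bar p:\Hecke\to B$ is an isomorphism of schemes. Since $\Hecke$ is smooth (\cref{input:geometry}), $B$ is smooth there. Via \cref{prop:unit-cancellation}, the corank of $\psi$ at a point is the even socle dimension $\sigma_0$. So the exact-corank-one locus is exactly the stratum $\{\sigma_0=1\}$ of $\BN^+_{n,1}$, and the complement of this open set in $B$ is $|\BN^+_{n,2}|$.

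For (ii) we use the Brill--Noether dimension formula \eqref{eq:BN-dim}:
\[
\dim B=\dim\BN^+_{n,1}=2n-2,\qquad \dim\BN^+_{n,2}\le 2n-6 .
\]
The second bound is the codimension-six clause of \cref{input:geometry}, and the locus is empty when $n<6$ by \cref{cor:threshold}. Equivalently, this is \eqref{eq:BN-boundary} with $r=1$, giving codimension $2r+2=4$ inside $B$. Hence $\operatorname{Sing}(B)$ has codimension at least $4\ge2$ in $B$, so $B$ is $R_1$, and indeed $R_3$. Combined with $S_2$, which follows from Cohen--Macaulayness, Serre's criterion makes $B$ normal.

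For irreducibility, we can cite that $\BN_n(1,0)$ is irreducible \cite[Thms.~1.3, 1.11]{GGS} as recorded in \cref{sec:BN}; since $B$ is reduced with this underlying set, $B$ is irreducible. As an independent argument, a normal Noetherian scheme is a disjoint union of its irreducible components. It therefore suffices to show that $B$ is connected. The map $B\to\SymOp^n(X)$ is $\C^*$-equivariant, with the contracting action of the $(q,t)$-scaling, and every point of $B$ flows to the closed, connected fiber of $B$ over the cone point. This fiber is contained in the connected central fiber $F_n$ (by Zariski's main theorem for the resolution $Y_n\to\SymOp^n(X)$, whose target is normal), so $B$ is connected.

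The main obstacle I expect is the scheme-theoretic identification of $B$ with the smooth $\Hecke$ on the exact-corank-one locus, together with matching the corank of $\psi$ to $\sigma_0$ at non-generic points. The first is handled in the local normal form argument of \cref{prop:HB}, reducing to an identity block plus a $2\times1$ column. The second requires checking that $\ker\psi$ at a closed point is the even socle modulo the unit. The connectedness argument above has a gap: the fiber of $B$ over the cone point is only known to lie in $F_n$, not to be connected itself. So I would rely primarily on the cited irreducibility of $\BN_n(1,0)$.
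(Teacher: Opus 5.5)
Your proposal is correct and follows the paper's proof essentially verbatim: Cohen--Macaulayness and reducedness from \cref{prop:HB}, smoothness on the exact rank-one even-socle stratum via the scheme isomorphism with $\Hecke$, codimension $6$ in $Y_n$ (hence $4$ in $B$) for the locus $\sigma_0\ge2$ by \eqref{eq:BN-dim} with $(a,b)=(2,0)$, Serre's criterion, and irreducibility cited from \cite[Thm.~1.3]{GGS}. The auxiliary connectedness argument is not needed, since the paper relies only on the citation, and you are right that as written it does not close: the fibre of $B$ over the cone point is not known to be connected.
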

\begin{proof}
$B$ is reduced and Cohen--Macaulay, and smooth wherever the even socle has rank
one (\cref{prop:HB}).  The complement is the locus with even socle rank at least
two.  If nonempty, \eqref{eq:BN-dim} with $(a,b)=(2,0)$ gives its codimension in $Y$
as $2\cdot3=6$, hence codimension four in $B$.  Thus $B$ is regular in codimension
one and satisfies $S_2$, and Serre's criterion \cite[Thm.~23.8]{Matsumura},
\cite[Tag 031S]{Stacks} gives normality.  Irreducibility is \cite[Thm.~1.3]{GGS}, and
normality component by component would suffice below.
\end{proof}

\begin{proposition}\label{prop:p-structure}
Writing $p=i\bar p$ with $\bar p:\Hecke\to B$,
\begin{equation}\label{eq:p-structure}
 Rp_*\cO_\Hecke\simeq i_*\cO_B,\qquad R\bar p_*\cO_\Hecke\simeq\cO_B .
\end{equation}
\end{proposition}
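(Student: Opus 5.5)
The plan is to compute $Rp_*\cO_\Hecke$ directly as the case $(e,j)=(0,0)$ of \cref{prop:two-row-amplitude}, and then identify the answer with the Hilbert--Burch resolution \eqref{eq:HB}.

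\emph{Step 1: compute the pushforward from the Koszul resolution.} Since $\Hecke\subset\PP_{\mathrm{lines},Y}(\cT)$ is the zero scheme of a regular section (\cref{prop:unit-cancellation}), $\cO_\Hecke$ is resolved by the Koszul complex with terms $\Lambda^\ell\pi^*\cG^\vee\otimes\cO(-\ell)$. By the row $e=0$, $j=0$ of \eqref{eq:two-row-table}, only three terms contribute after $R\pi_*$:
\begin{itemize}
\item $\ell=0$ gives $\cO_Y$ in degree $0$;
\item $\ell=a$ gives $\Lambda^a\cG^\vee\otimes\det\cT\cong\cG\otimes\det\cG^\vee\otimes\det\cT=\cG\otimes\scrD^{-1}$ in degree $-1$, using \eqref{eq:D-as-determinant};
\item $\ell=a+1$ gives $\cT\otimes\scrD^{-1}$ in degree $-2$.
\end{itemize}
Hence $Rp_*\cO_\Hecke$ is a perfect complex with a locally free representative
\[
\cT\otimes\scrD^{-1}\to\cG\otimes\scrD^{-1}\to\cO_Y
\]
in degrees $[-2,0]$. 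Its differentials are only determined up to the filtration ambiguity noted after \eqref{eq:two-row-bundles}.

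\emph{Step 2: show the cohomology is a sheaf in degree $0$.} All cohomology sheaves of $Rp_*\cO_\Hecke$ are supported on $p(\Hecke)=|B|$, which has codimension two. Applying \cref{lem:codim-amplitude} with $(m,c)=(2,2)$, the complex is concentrated in degree $0$. So $Rp_*\cO_\Hecke\simeq p_*\cO_\Hecke$, and this sheaf has projective dimension at most two with a resolution of the Hilbert--Burch shape.

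\emph{Step 3: identify $p_*\cO_\Hecke$ with $i_*\cO_B$.}
\begin{itemize}
\item The map $\bar p:\Hecke\to B$ is proper and birational. It is an isomorphism over the dense exact-corank-one locus by the proof of \cref{prop:HB}, and $\Hecke$ is irreducible since $B$ is (\cref{lem:B-normal}) and the fibers over the rank-$\ge2$ locus are small.
\item Its fibers are connected projective spaces $\PP(\soc_0)$.
\item $B$ is normal by \cref{lem:B-normal}, so Zariski's main theorem gives $\bar p_*\cO_\Hecke=\cO_B$.
\end{itemize}
Combining with Step 2 gives $Rp_*\cO_\Hecke\simeq i_*\cO_B$. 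Since $i$ is a closed immersion, $R^k\bar p_*\cO_\Hecke=0$ for $k>0$ follows from the vanishing of $R^kp_*$, which yields $R\bar p_*\cO_\Hecke\simeq\cO_B$.

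\emph{Main obstacle.} The delicate point is Step 3: the irreducibility of $\Hecke$ (equivalently, that $\bar p$ is birational on every component) and the use of normality. If irreducibility is awkward, an alternative is available. The Step 1 complex has rank $1-n+(n-1)=0$ in the alternating count, so $p_*\cO_\Hecke$ is a rank-zero sheaf of projective dimension $2$. It is generated by the image of $1$, so it is a cyclic quotient $\cO_Y/\mathfrak a$ with $\mathfrak a\supseteq I_B$ by the reducedness of $B$. By Hilbert--Burch, $\mathfrak a$ is the ideal of maximal minors of the degree-$(-1)$ map. Agreement with $I_B$ on the dense open set, together with $\cO_Y/\mathfrak a$ being Cohen--Macaulay without embedded components, then forces $\mathfrak a=I_B$.
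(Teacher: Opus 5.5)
Your proposal is correct and follows the paper's proof: the $(e,j)=(0,0)$ row of \cref{prop:two-row-amplitude} gives the upper bound on degrees, and Zariski's Main Theorem identifies $p_*\cO_\Hecke$ with $i_*\cO_B$, using that $B$ is normal and that $\bar p$ is birational by the dimension count over the strata of even socle rank $k\ge2$. The only difference is your Step~2, where you invoke \cref{lem:codim-amplitude} with $(m,c)=(2,2)$; the paper gets concentration more cheaply from the fact that $Rp_*$ of a sheaf automatically lies in $D^{\ge0}$, so no support argument is needed there.
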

\begin{proof}
\Cref{prop:two-row-amplitude} with $(e,j)=(0,0)$ places $Rp_*\cO_\Hecke$ in
cohomological degrees at most zero.  As the derived pushforward of a sheaf it is
also in degrees at least zero, hence all higher direct images vanish.  The map
$\bar p$ is projective, by the kernel incidence description, and birational.  Indeed,
it is an isomorphism over the dense exact-corank-one locus \cite[Thm.~1.11]{GGS}, and
every irreducible component of $\Hecke$ dominates a component of $B$ by a
dimension count.  Over the stratum $\{s_0=k\}\subset Y$, of codimension $k(k+1)$
by \eqref{eq:BN-dim}, the fiber of $\bar p$ is $\PP(\soc_0)=\PP^{k-1}$, hence the
preimage of that stratum has dimension $2n-k(k+1)+(k-1)=2n-k^2-1<2n-2=\dim\Hecke$
for $k\ge2$.  Since $B$ is normal, Zariski's Main Theorem
\cite[Cor.~III.11.4]{Hartshorne} gives $\bar p_*\cO_\Hecke=\cO_B$ (alternatively,
use Stein factorization \cite[Cor.~III.11.5]{Hartshorne}, whose finite birational
factor over a normal integral scheme is an isomorphism).  As $i_*$ is
exact and faithful, the assertion about $R\bar p_*$ follows.
\end{proof}

\subsection{The negative window for the second projection}\label{sec:q-window}

\begin{proposition}\label{prop:q-window}
For the projection $q:\Hecke\to Y'$,
\begin{equation}\label{eq:q-window}
 Rq_*\cL^{-1}=Rq_*\cL^{-2}=0 ,
\end{equation}
and consequently $Rq_*(\cL^{-t}\otimes q^*G)=0$ for every $G\in\Perf(Y')$ and $t=1,2$.
\end{proposition}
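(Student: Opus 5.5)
We resolve $\cO_\Hecke$ on the ambient projective bundle over $Y'$ and observe that, after twisting by $\cL^{-t}$, every term of the resolution lies in the acyclic negative window of the projective bundle. We use the quotient incidence of \cref{prop:q-incidence}. Write $\varpi:\PP:=\PP_{\mathrm{quot},Y'}(\cV')\to Y'$. Since $\rk\cV'=n+2$, this is a $\PP^{n+1}$-bundle. By \cref{prop:q-incidence}, $\Hecke\subset\PP$ is the zero scheme of the composite section $\varpi^*\cR'_0\to\varpi^*\cV'\to\cO_\PP(1)$, equivalently of a cosection $s:\varpi^*\cR'_0\otimes\cO_\PP(-1)\to\cO_\PP$. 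By \eqref{eq:L-conventions}, $\cL=\cO_\PP(1)|_\Hecke$ and $q=\varpi|_\Hecke$. The dimension count after \cref{prop:q-incidence} (ambient dimension $3n-3$, rank $n-1$, zero scheme smooth of dimension $2n-2$) shows that $s$ is a regular section. Hence the Koszul complex
\[
 0\to\Lambda^{n-1}\varpi^*\cR'_0\otimes\cO_\PP(-(n-1))\to\cdots\to\varpi^*\cR'_0\otimes\cO_\PP(-1)\to\cO_\PP\to\cO_\Hecke\to0
\]
is exact. We tensor it with $\cO_\PP(-t)$ for $t\in\{1,2\}$; this is a locally free resolution of $\cL^{-t}$ (extended by zero from $\Hecke$), with terms $\Lambda^\ell\varpi^*\cR'_0\otimes\cO_\PP(-\ell-t)$ for $0\le\ell\le n-1$.

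Each term has vanishing $R\varpi_*$. By the projection formula, $R\varpi_*$ of the $\ell$-th term is $\Lambda^\ell\cR'_0\otimes R\varpi_*\cO_\PP(-\ell-t)$. On a $\PP^{n+1}$-bundle, $R\varpi_*\cO(k)=0$ for $-(n+1)\le k\le-1$; this is the case $b=0$ of \cref{lem:projective-Bott}, or the standard computation, applied fiberwise and relatively on trivializing opens. The twists occurring are $k=-\ell-t$. For $t=1$ they range over $[-n,-1]$, and for $t=2$ over $[-(n+1),-2]$. Both ranges lie in the acyclic window, the second one only barely: at the endpoint $\ell=n-1$, $t=2$ we get $k=-(n+1)=-\dim(\text{fiber})$. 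We then push the finite stupid filtration of the resolution through $R\varpi_*$. Every graded piece goes to zero, so $Rq_*\cL^{-t}=R\varpi_*(\cL^{-t})=0$ for $t=1,2$.

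For the consequence, the projection formula for the proper map $q$ gives $Rq_*(\cL^{-t}\otimes q^*G)\simeq Rq_*\cL^{-t}\otimes G=0$ for perfect $G$. Here $q^*$ is the derived pullback, which coincides with the termwise pullback on a locally free representative.

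The one point needing care is the convention check: the universal socle line must be $\cO_\PP(+1)$ in the quotient convention, and not $\cO(-1)$. Otherwise the twists would be positive and nothing would vanish. We also need the rank of $\cV'$ to be exactly $n+2$ (from $\rk\cX'=2n+1$ and $\rk\cR'_0=n-1$), so that the extreme twist $-(n+1)$ still lies in the window. Both facts come from \cref{prop:q-incidence} and the rank computation of $\cV'$ preceding it. Note that $q$ itself is not a projective bundle. This causes no difficulty, because the computation takes place entirely on the ambient bundle $\PP$.
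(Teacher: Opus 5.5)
Your proof is correct and is essentially the paper's argument. You twist the regular-section Koszul resolution of $\cO_\Hecke$ inside the $\PP^{n+1}$-bundle $\PP_{\mathrm{quot},Y'}(\cV')$ by $\cO(-t)$, observe that all line degrees $-\ell-t$ lie in the acyclic window $[-(n+1),-1]$, and deduce the statement for perfect $G$ from the projection formula.
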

\begin{proof}
Use the regular-section realization of \cref{prop:q-incidence}, with
$\pi':\PP_{\mathrm{quot},Y'}(\cV')\to Y'$.  The Koszul resolution of $\cO_\Hecke$ has
terms $\Lambda^\ell\pi'^*\cR'_0\otimes\cO(-\ell)$, $0\le\ell\le n-1$, in degrees
$-\ell$.  After twisting by $\cL^{-t}$ the line degrees are $-t-\ell$, and since the
ambient bundle has fibers $\PP^{n+1}$, every term is $\pi'$-acyclic for $t=1,2$,
because $-(n+1)\le-t-\ell\le-1$ is the full negative acyclic range on $\PP^{n+1}$.  Pushing
the finite resolution gives zero.  The proof takes place on the ambient projective
bundle and does not infer vanishing from the ordinary fibers of the nonflat map $q$.
The projection formula for the perfect complex $G$ \cite[Tag 08EU]{Stacks} gives
$Rq_*(\cL^{-t}\otimes q^*G)\simeq(Rq_*\cL^{-t})\Ltensor G=0$.
\end{proof}

\subsection{Concentration and the sheaf-level Hecke realization}\label{sec:concentration}

Define, for $e=0,1$,
\begin{equation}\label{eq:Ae-definition}
 A_e=Rp_*\bigl(p^*\scrD\otimes\cL^e\otimes q^*C'\bigr).
\end{equation}

\begin{proposition}[Concentration of both rows]\label{prop:Ae-concentrated}
$A_e\in\Perf^{[-n,0]}(Y)$, every $\cH^i(A_e)$ is supported on $|F_n|$, and
$\cH^i(A_e)=0$ for $i\ne0$.  In other words, both $A_0$ and $A_1$ are coherent sheaves.
\end{proposition}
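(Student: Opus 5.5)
We prove the three assertions in the order amplitude, support, concentration, so that the final step is a direct application of \cref{lem:codim-amplitude}.

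\textbf{Amplitude.} The complex $C'=K_{Y'}(\cT',\ev'|_{\cT'})$ has terms $\Lambda^j\cT'$ in degrees $-j$, for $0\le j\le n-2$. By \eqref{eq:trace-bundle-sequence} and \eqref{eq:W-sequence} we have $q^*\Lambda^j\cT'\simeq\Lambda^j\cW$. Hence $p^*\scrD\otimes\cL^e\otimes q^*C'$ has a finite stupid filtration whose graded pieces are $p^*\scrD\otimes\cL^e\otimes\Lambda^j\cW[j]$. The projection formula gives
\[
 Rp_*\bigl(p^*\scrD\otimes\cL^e\otimes\Lambda^j\cW\bigr)\simeq\scrD\otimes Rp_*\bigl(\cL^e\otimes\Lambda^j\cW\bigr).
\]
By \cref{prop:two-row-amplitude} this lies in $\Perf^{[-2,0]}(Y)$. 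After the shift $[j]$ it lies in $\Perf^{[-2-j,-j]}$. Perfect complexes of a given Tor-amplitude are closed under extensions, so assembling the filtration gives $A_e\in\Perf^{[-n,0]}(Y)$. Indeed, $j\le n-2$ gives the lower bound $-2-(n-2)=-n$, and $j\ge0$ gives the upper bound $0$.

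\textbf{Support.} The cohomology sheaves of $q^*C'$ are supported on $q^{-1}(|Z'|)$. This holds because the Koszul complex $C'$ is locally contractible off $Z'$, and the contracting homotopy pulls back termwise. The restriction of $Rp_*$ of an object to an open $U\subset Y$ is computed on $p^{-1}(U)$. Taking $U=Y\setminus|F_n|$, \eqref{eq:qC-support} shows that $p^{-1}(U)$ misses $q^{-1}(|Z'|)$, so the complex being pushed forward is acyclic there. Hence every $\cH^i(A_e)$ is supported on $|F_n|$.

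\textbf{Concentration.} By \cref{lem:dimF}, $|F_n|$ has codimension at least $n$ in the regular $2n$-dimensional scheme $Y$. Apply \cref{lem:codim-amplitude} with $(m,c)=(n,n)$: since $A_e$ has Tor-amplitude $[-n,0]$ and cohomology supported in codimension at least $n$, it lies in $D^{[0,0]}$, so $A_e$ is a sheaf.

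The main obstacle is the amplitude step. Each row-$j$ term only sits in $[-2-j,-j]$, so the total amplitude is exactly $[-n,0]$, which is just enough for the codimension-$n$ argument. Two things must be checked carefully. First, the degree bookkeeping for $C'$ must match the range $0\le j\le n-2$ of \cref{prop:two-row-amplitude}. Second, $\scrD$ must enter only as a line bundle pulled back from $Y$, via \eqref{eq:det-Hecke} read as $p^*\scrD$, so that the projection formula applies without changing degrees. If the shifts were off by one, concentration would fail. I would therefore first verify the case $n=2$, where $C'=\cO$ and $A_e=\scrD\otimes Rp_*\cL^e$ lies in $[-2,0]$ with codimension $2$ support.
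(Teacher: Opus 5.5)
Your proposal is correct and follows the paper's proof essentially step for step. The paper also (i) filters $q^*C'$ by the pieces $\Lambda^j\cW[j]$, $0\le j\le n-2$, and uses \cref{prop:two-row-amplitude} with the projection formula for $\scrD$ to get Tor-amplitude $[-n,0]$; (ii) uses the contractibility of $C'$ off $Z'$ together with \eqref{eq:qC-support} to put the cohomology on $|F_n|$; and (iii) applies \cref{lem:codim-amplitude} with $m=c=n$. Your restriction of $Rp_*$ to $Y\setminus|F_n|$ just spells out the support step that the paper states in one line.
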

\begin{proof}
The degree-$(-j)$ term of $q^*C'$ is $\Lambda^jq^*\cT'=\Lambda^j\cW$, $0\le j\le n-2$.
By \cref{prop:two-row-amplitude} its contribution to \eqref{eq:Ae-definition}, with
its degree included, has Tor-amplitude $[-j-2,-j]$, and the finite filtration by the
terms of $C'$ and the projection formula for $\scrD$ give
$A_e\in\Perf^{[-(n-2)-2,0]}(Y)=\Perf^{[-n,0]}(Y)$.  This is a calculation with
finite filtered perfect complexes, and no commutation of cohomology and pushforward is
implicit.  The complex $q^*C'$ is contractible outside the zero set of its section,
whose image under the proper map $p$ is contained in $F_n$ by \eqref{eq:qC-support},
hence the cohomology of $A_e$ is supported on $F_n$.  Since $\dim Y=2n$ and
$\dim F_n\le n$, \cref{lem:codim-amplitude} with $m=c=n$ proves concentration in
degree zero.
\end{proof}

\begin{theorem}[Identification of the two rows]\label{thm:Ae-identification}
\begin{equation}\label{eq:Ae-identification}
 A_0\simeq i_*(\scrD|_B\otimes\cO_Z),\qquad
 A_1\simeq i_*(\scrD|_B\otimes\cH^{-1}(C_B))\simeq\cM_n^+ .
\end{equation}
\end{theorem}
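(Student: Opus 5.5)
The plan is to compute both $A_e$ by pushing the Koszul filtration \eqref{eq:Koszul-filtration} forward along $\bar p$ and comparing with $C_B$. First, by \eqref{eq:det-Hecke}, $p^*\scrD\otimes q^*C'\simeq \cL^{-2}\otimes q^*(\scrD'\otimes C')$; we will not need this form here, only that $p^*\scrD=\bar p^*(\scrD|_B)$. By the projection formula along $\bar p$ and \cref{prop:p-structure}, the complex $\bar p^*C_B$ satisfies
\[
R\bar p_*\bigl(\bar p^*(\scrD|_B\otimes C_B)\bigr)\simeq \scrD|_B\otimes C_B\otimes R\bar p_*\cO_\Hecke\simeq \scrD|_B\otimes C_B.
\]
Apply $R\bar p_*(\bar p^*\scrD|_B\otimes -)$ to \eqref{eq:Koszul-filtration}. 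The result is an exact triangle on $B$, which we push forward by $i_*$:
\[
A_1[1]\longrightarrow i_*(\scrD|_B\otimes C_B)\longrightarrow A_0\xrightarrow{+1}.
\]

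By \cref{prop:Ae-concentrated}, $A_0$ and $A_1$ are sheaves in degree $0$. By \eqref{eq:CB-two-homology}, $C_B$ has cohomology only in degrees $-1,0$. The long exact cohomology sequence of the triangle then splits. In degree $0$ it gives $\cH^0(i_*\scrD|_B\otimes C_B)\xrightarrow{\sim}A_0$, because $\cH^1(A_1[1])=\cH^2(A_1)=0$ and $\cH^{-1}(A_0)=0$ make this map both injective and surjective. In degree $-1$ it gives $A_1=\cH^{-1}(A_1[1])\xrightarrow{\sim}\cH^{-1}(i_*\scrD|_B\otimes C_B)$, with vanishing neighbours $\cH^{-2}(A_0)=0$ and $\cH^{-1}(A_0)=0$. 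Since $\cH^0(C_B)=\cO_B/(\text{image of }s_B)=\cO_Z$ by definition of $Z$, we get $A_0\simeq i_*(\scrD|_B\otimes\cO_Z)$ and $A_1\simeq i_*(\scrD|_B\otimes\cH^{-1}(C_B))$. The last isomorphism $A_1\simeq\cM_n^+$ is then exactly \cref{prop:M-trace}.

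The main obstacle is the first step: we must make sure the shift and line twist in \eqref{eq:Koszul-filtration} are placed correctly. The kernel is $\cL\otimes q^*C'[1]$, so after twisting by $p^*\scrD$ and pushing forward it becomes $A_1[1]$, with $e=1$ matching the $\cL$ factor. We must also make sure that $\bar p^*C_B$ is the termwise pullback, so that the projection formula for the perfect complex $C_B$ applies along the projective morphism $\bar p$ with $R\bar p_*\cO_\Hecke=\cO_B$. Both points need checking but seem routine. The remaining subtlety is that $i_*$ is exact, so cohomology sheaves commute with $i_*$; this is needed to read off the degree-wise statements on $Y$ from statements on $B$.
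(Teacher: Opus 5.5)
Your proposal is correct and is essentially the paper's proof verbatim: twist the Koszul filtration by $p^*\scrD$, push forward using the projection formula for the perfect complex $C_B$ and $R\bar p_*\cO_\Hecke\simeq\cO_B$ to obtain the triangle $A_1[1]\to i_*(\scrD|_B\otimes C_B)\to A_0\to A_1[2]$, then read off degrees $-1$ and $0$ using that both $A_e$ are sheaves. One small bookkeeping correction: injectivity of $\cH^0\bigl(i_*(\scrD|_B\otimes C_B)\bigr)\to A_0$ comes from $\cH^0(A_1[1])=\cH^1(A_1)=0$, not from $\cH^{-1}(A_0)=0$, and you do not need to appeal to \eqref{eq:CB-two-homology}, since the triangle itself forces the amplitude $[-1,0]$.
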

\begin{proof}
Tensor \eqref{eq:Koszul-filtration} with $p^*\scrD$ and apply $Rp_*$.  Its middle
term is
\[
 Rp_*\bigl(p^*\scrD\otimes\bar p^*C_B\bigr)
 \simeq i_*R\bar p_*\bigl(\bar p^*(\scrD|_B\otimes C_B)\bigr)
 \simeq i_*\bigl((\scrD|_B\otimes C_B)\Ltensor R\bar p_*\cO_\Hecke\bigr)
 \simeq i_*(\scrD|_B\otimes C_B),
\]
by \cref{prop:p-structure} and the projection formula \cite[Tag 08EU]{Stacks}.  The
projection formula applies because $C_B$ is a bounded complex of vector bundles on
$B$, hence perfect, and no flatness of $\bar p$ is required.  We obtain the
distinguished triangle
\begin{equation}\label{eq:Ae-triangle}
 A_1[1]\longrightarrow i_*(\scrD|_B\otimes C_B)\longrightarrow A_0\longrightarrow A_1[2].
\end{equation}
Both $A_e$ are sheaves by \cref{prop:Ae-concentrated}, and taking degrees $-1$ and $0$
gives $A_1\simeq i_*(\scrD|_B\otimes\cH^{-1}(C_B))$ and
$A_0\simeq i_*(\scrD|_B\otimes\cH^0(C_B))$, with $\cH^0(C_B)=\cO_Z$.  The
identification of the first sheaf with $\cM_n^+$ is \cref{prop:M-trace}.
\end{proof}

Substituting \eqref{eq:det-Hecke} into the definition of $A_1$ gives
\begin{equation}\label{eq:Hecke-realization-final}
 \cM_n^+\simeq Rp_*\bigl(p^*\scrD\otimes\cL\otimes q^*C'\bigr)
 \simeq Rp_*\bigl(\cL^{-1}\otimes q^*(\scrD'\otimes C')\bigr),
\end{equation}
which is the realization assertion of \cref{thm:realization}, and the negative window
\eqref{eq:q-window-intro} is \cref{prop:q-window}.  The amplitude-and-support
argument proves that the entire pushforward is a sheaf before it is identified with
$\cM_n^+$, and we never take the zeroth cohomology of an object already known to lie in a
null category.

\subsection{Proof of the vanishing theorem}\label{sec:final-proof}

\subsubsection{The common wall map}
The universal smaller algebra is a quotient of $S\otimes\cO_{Y'}$, and its odd part is an
$S_0$-linear quotient $S_1\otimes\cO_{Y'}\twoheadrightarrow\cR'_1$ of rank $n$, hence
defines a morphism to the Quot scheme, which factors through its reduction $Q_n$
since $Y'$ is reduced (\cref{prop:models-identified}(a)).  Denote it $h:Y'\to Q_n$.
On $\Hecke$ the odd quotients of the larger and the smaller cluster are the
\emph{same} quotient of $S_1$, not just isomorphic bundles.  Hence the square
\begin{equation}\label{eq:wall-square}
\begin{tikzcd}[column sep=large,row sep=large]
\Hecke \arrow[r,"q"] \arrow[d,"p"'] & Y' \arrow[d,"h"]\\
Y \arrow[r,"\rho"'] & Q_n
\end{tikzcd}
\end{equation}
commutes as a square of moduli morphisms.  It is not asserted to be Cartesian.

\subsubsection{Pushforward of the realized sheaf}
Put $G=\scrD'\otimes C'\in\Perf(Y')$.  By \eqref{eq:Hecke-realization-final},
\eqref{eq:wall-square} and \cref{prop:q-window},
\begin{equation}\label{eq:vanishing-computation}
 R\rho_*\cM_n^+
 \simeq R(\rho p)_*(\cL^{-1}\otimes q^*G)
 \simeq R(hq)_*(\cL^{-1}\otimes q^*G)
 \simeq Rh_*\bigl(G\Ltensor Rq_*\cL^{-1}\bigr)=0 .
\end{equation}
All compositions and projection formulas involve the indicated derived functors
\cite[Tag 08EU]{Stacks}, the line bundle $\cL$ is invertible, $G$ is perfect, and none
of $p,q,\rho$ is assumed flat.  This proves \cref{thm:vanishing} for $n\ge2$.  For
$n=1$, $\cR_0$ is the unit line and $(-y,x):\cR_0\to\cR_1^{\oplus2}$ is nowhere zero
(a cyclic balanced algebra has dimension two).  Its transpose is surjective, hence
the even dual-socle sheaf, and therefore $\cM_1^+$ by \cref{thm:dual-socle}, is zero.
\qed

\begin{remark}[Which map $\rho$]
The computation \eqref{eq:vanishing-computation} uses nothing about the target of
$\rho$ beyond $\rho p=hq$.  It proves $R\rho_*\cM_n^+=0$ for every proper
$\rho:Y\to Q$ that factors through the odd quotient
$S_1\otimes\cO_Y\twoheadrightarrow\cR_1$, in particular for the VGIT contraction
$Y_n=\mfM_\theta((n,n),(1,0))\to\mfM_{\theta_0}((n,n),(1,0))$, whose $Y'$-companion is
the analogous contraction of $\mfM_\theta((n-1,n),(1,0))$.
\end{remark}

\begin{remark}[What the proof does not use]\label{rem:hecke-inputs}
The trace Koszul complex $C_B$ is
carried along rather than replaced by its zeroth cohomology, hence only the
\emph{support} of $q^*C'$ enters, never its scheme structure, and no reducedness or
Cohen--Macaulayness of $F_n$ is used.  The Grassmannian acyclicity of
\cref{thm:BWB} reappears as the $(e,j)=(0,0)$ row of \eqref{eq:two-row-amplitude},
but the vanishing is a consequence of amplitude and codimension, not of a single
Borel--Weil--Bott group.  There are four places where ordinary and derived operations
must be kept separate.  First, $q^*C'$ is the termwise pullback, not $q^*\cO_{Z'}$.
Second, the projection formula for $\bar p$ uses only perfectness of $C_B$ and
$R\bar p_*\cO_\Hecke=\cO_B$.  Third, the two vanishings for $q$ come from a
regular-section resolution in a genuine projective bundle.  Finally, the last
computation uses only the equality of composed morphisms $\rho p=hq$.
\end{remark}

\subsubsection{Cohen--Macaulay consequences}
\begin{corollary}\label{cor:trace-CM}
The trace-defined scheme $Z\subset B$ is Cohen--Macaulay of pure dimension $n$, and
for $n\ge2$ the sheaf $\cM_n^+$ is nonzero, Cohen--Macaulay of pure dimension $n$,
with set-theoretic support $|Z|$.
\end{corollary}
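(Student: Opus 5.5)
The plan is to work on the normal Cohen--Macaulay scheme $B$, of dimension $2n-2$, with the trace Koszul complex $C_B=K_B(\cT|_B,s_B)$, which has $n-1$ generators. By \eqref{eq:Z-support} we have $\dim Z\le n$, so the trace ideal has grade at least $n-2$ on $\cO_B$, and \eqref{eq:CB-two-homology} places $C_B$ in $D^{[-1,0]}$.

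The first step is to show that $Z$ is Cohen--Macaulay of pure dimension $n$, which amounts to proving $\cH^{-1}(C_B)=0$. If it vanishes, $C_B$ resolves $\cO_Z$ by a Koszul complex on a regular sequence of length $n-1$ in the Cohen--Macaulay ring $\cO_B$. Hence $Z$ would be Cohen--Macaulay of pure dimension $(2n-2)-(n-1)=n-1$, which conflicts with the claimed $n$. So the plan must be different: $Z$ has dimension exactly $n$, the grade is exactly $n-2$, and $C_B$ has homology in both degrees $-1$ and $0$. I expect this to be the main obstacle.

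The route I would take uses the triangle \eqref{eq:Ae-triangle} together with the fact that $A_0$ and $A_1$ are sheaves of Tor-amplitude $[-n,0]$ supported in codimension $\ge n$ in the regular $2n$-dimensional $Y$. A perfect complex of amplitude $[-n,0]$ whose cohomology is a single sheaf supported in codimension $\ge n$ has projective dimension $\le n$. Auslander--Buchsbaum then gives depth at least $n$ at every point, while the dimension is at most $n$. Hence $A_0$ and $A_1$, where nonzero, are Cohen--Macaulay of pure dimension $n$.

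By \cref{thm:Ae-identification}, $A_0\simeq i_*(\scrD|_B\otimes\cO_Z)$, and twisting by a line bundle does not affect depth. This gives $Z$ Cohen--Macaulay of pure dimension $n$, because $\cO_Z\neq0$: the origin cluster in $B$ exists for $n\ge2$, e.g.\ the point of \cref{rem:M-nonzero}. Likewise $\cM_n^+\simeq A_1$ is Cohen--Macaulay of pure dimension $n$ whenever it is nonzero.

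For nonvanishing and support, I would compare Euler characteristics or ranks along $Z$. On each $n$-dimensional component of $Z$, the ideal of $Z$ in $B$ has height $n-2$ but is locally generated by $n-1$ elements. At a generic point of a component, localize $C_B$: the local ring $\cO_{B,\mathfrak p}$ is Cohen--Macaulay of dimension $n-2$, and $\cO_Z$ is Artinian there. The Koszul complex on $n-1$ elements generating an ideal of height $n-2$ in a local ring has $\cH^{-1}\neq0$, since the Koszul homology $H_{r-g}$ is nonzero when the grade is $g<r$. Here $r-g=1$. Hence $\cH^{-1}(C_B)_{\mathfrak p}\ne0$ at every generic point of $Z$.

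Because $\cH^{-1}(C_B)$ is supported inside $|Z|$ and is Cohen--Macaulay of pure dimension $n$, its support is a union of components of $Z$ containing all of them, so its set-theoretic support is $|Z|$. Twisting by $\scrD$ and applying $i_*$ gives the claims for $\cM_n^+$. The remaining points to verify are the nonemptiness of $Z$ and that every component of $Z$ really has dimension $n$. Both follow from the purity of $A_0$ once $\cO_Z\ne0$.
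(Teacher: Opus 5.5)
Your argument is correct and is essentially the paper's proof. It bounds the projective dimension of the sheaves $A_0\simeq i_*(\scrD|_B\otimes\cO_Z)$ and $A_1\simeq\cM_n^+$ by $n$ via \cref{prop:Ae-concentrated} and applies Auslander--Buchsbaum on the regular $2n$-dimensional $Y$; it gets nonemptiness of $Z$ from the complete-intersection point of \cref{rem:M-nonzero}, and $\cH^{-1}(C_B)\neq0$ from Koszul grade sensitivity with $n-1$ generators and grade $n-2$. The only differences are that you apply grade sensitivity at generic points of the components of $Z$ and then use closedness of the support, where the paper works at closed points, and that your second paragraph is a discarded false start that should simply be deleted.
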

\begin{proof}
By \cref{prop:Ae-concentrated,thm:Ae-identification}, both $i_*\cO_Z$ (up to an
invertible twist) and $\cM_n^+$ are sheaves with local projective dimension at most
$n$ over the regular ambient $Y$ and support of dimension at most $n$.  At a closed
point of the support the Auslander--Buchsbaum formula $\depth M+\pd M=\depth R$
\cite[Thm.~1.3.3]{BrunsHerzog} gives depth at least $2n-n=n$, hence equality holds.
Cohen--Macaulayness localizes and the same argument excludes components of smaller
dimension.  The scheme $Z$ is nonempty, since the complete intersection $S/(xy,x^2+y^{2n-2})$ of
\cref{rem:M-nonzero} is a point of it.  At a closed point of $Z$ the trace ideal in
the Cohen--Macaulay local ring of $B$ has grade $(2n-2)-n=n-2$, and its Koszul
complex has $n-1$ generators, hence grade sensitivity gives nonzero first homology.
By \cref{prop:M-trace}, $\cM_n^+$ is nonzero there, and its support equals $|Z|$.
\end{proof}

\begin{warning}
The corollary concerns the trace scheme $Z$ inside the first even Brill--Noether
locus.  It does not prove that the entire scheme-theoretic punctual fiber $F_n$ is
Cohen--Macaulay or reduced, and the vanishing theorem does not by itself identify
$R\rho_*\cI_n$ with the structure sheaf of the symbolic central fiber.
\end{warning}

\begin{corollary}[Strong excess vanishing and the perfect reduction]\label{cor:twists}
For any perfect complex $V$ on $Q_n$, $R\rho_*(\cM_n^+\Ltensor L\rho^*V)=0$.  In
particular, with $\cL_n=\rho^*\cO_{Q_n}(1)$,
\[
 R\rho_*(\cL_n^{\otimes d}\otimes\cM_n^+)=0\qquad(d\in\Z),
\]
and the positive-parity triangle \eqref{eq:positive-triangle} yields canonical
isomorphisms
\begin{equation}\label{eq:perfect-reduction}
 R\rho_*\bigl(\cL_n^{\otimes d}\otimes\cK_n^+\bigr)\xrightarrow{\ \sim\ }
 R\rho_*\bigl(\cL_n^{\otimes d}\otimes\cI_n\bigr)\qquad(d\in\Z).
\end{equation}
\end{corollary}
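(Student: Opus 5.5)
The plan has three steps: pass from $\cM_n^+$ to its Hecke realization, absorb the twist by $L\rho^*V$ on the smaller variety $Y'$, and then read off the perfect reduction from the positive-parity triangle.

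\emph{Step 1 (the main vanishing, $n\ge2$).} Start from the Hecke realization \eqref{eq:Hecke-realization-final}, $\cM_n^+\simeq Rp_*\bigl(\cL^{-1}\otimes q^*G\bigr)$ with $G=\scrD'\otimes C'\in\Perf(Y')$. Apply the projection formula for $p$ and the perfect complex $L\rho^*V$:
\[
 \cM_n^+\Ltensor L\rho^*V\simeq Rp_*\bigl(\cL^{-1}\otimes q^*G\Ltensor Lp^*L\rho^*V\bigr).
\]
The commutative square \eqref{eq:wall-square} gives $Lp^*L\rho^*V\simeq L(\rho p)^*V=L(hq)^*V\simeq Lq^*Lh^*V$. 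Hence the argument of $Rp_*$ is $\cL^{-1}\otimes q^*G'$ with $G'=G\Ltensor Lh^*V\in\Perf(Y')$. Then
\[
 R\rho_*\bigl(\cM_n^+\Ltensor L\rho^*V\bigr)\simeq R(hq)_*\bigl(\cL^{-1}\otimes q^*G'\bigr)\simeq Rh_*\bigl(Rq_*(\cL^{-1}\otimes q^*G')\bigr)=0
\]
by \cref{prop:q-window}, which already covers an arbitrary perfect $G'$. Only the equality of composite morphisms $\rho p=hq$ is used, not Cartesianness.

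\emph{Step 2 (the case $n=1$).} Here $\cM_1^+=0$ by \cref{thm:vanishing}, so there is nothing to prove. Taking $V=\cO_{Q_n}(d)$ gives $L\rho^*V=\cL_n^{\otimes d}$, a line bundle, so the derived tensor product is ordinary and $R\rho_*(\cL_n^{\otimes d}\otimes\cM_n^+)=0$ for all $d\in\Z$.

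\emph{Step 3 (the perfect reduction).} Tensor the positive-parity triangle \eqref{eq:positive-triangle} with the line bundle $\cL_n^{\otimes d}$; this is exact, so the result is again a triangle. Apply the triangulated functor $R\rho_*$. The first vertex becomes $R\rho_*(\cL_n^{\otimes d}\otimes\cM_n^+)[1]$, which is $0$ by Step 2. The map $R\rho_*(\cL_n^{\otimes d}\otimes\cK_n^+)\to R\rho_*(\cL_n^{\otimes d}\otimes\cI_n)$ is therefore an isomorphism, and it is canonical because it is induced by the canonical morphism $\cK_n^+\to\cI_n$.

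The only point needing care is the compatibility of derived pullbacks under the non-Cartesian square \eqref{eq:wall-square}. This is harmless: derived pullback is functorial for composites, $L(\rho p)^*\simeq Lp^*L\rho^*$, which is all that is needed, and no base change is invoked. The argument also never requires $\rho$ to be flat.
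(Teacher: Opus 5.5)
Your argument is correct, but it takes a longer route than the paper.

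\textbf{The paper's route.} The paper does not go back to the Hecke correspondence. Since $\rho$ is proper and $V$ is perfect, the projection formula on $Q_n$ gives
$R\rho_*(\cM_n^+\Ltensor L\rho^*V)\simeq R\rho_*\cM_n^+\Ltensor V$, and this vanishes by \cref{thm:vanishing}. The twisted triangle is then pushed forward exactly as in your Step~3.

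\textbf{Your route.} Your Step~1 instead reruns the proof of \cref{thm:vanishing} with $G$ replaced by $G\Ltensor Lh^*V$. It uses only $\rho p=hq$ and the perfect-coefficient clause of \cref{prop:q-window}. Each step is valid:
\begin{itemize}
\item the projection formula holds for the projective morphism $p$;
\item $Lp^*L\rho^*\simeq Lq^*Lh^*$;
\item the termwise pullback $q^*G$ agrees with $Lq^*G$, because $G$ is a bounded complex of vector bundles.
\end{itemize}

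\textbf{Comparison.} The paper's version treats the vanishing theorem as a black box, so it is a one-line consequence. It is uniform in $n$, with no separate $n=1$ case, and it applies to any object annihilated by $R\rho_*$. Your version shows that the twist can already be absorbed on $Y'$. That is a pleasant consistency check, but it yields nothing beyond what the projection formula on $Q_n$ already gives.
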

\begin{proof}
Use the projection formula and \cref{thm:vanishing}, and then push forward the twisted triangle
$\cM_n^+[1]\to\cK_n^+\to\cI_n\to$.
\end{proof}

Part (3) of Theorem~E is \cref{thm:vanishing}.  The isomorphism
\eqref{eq:perfect-reduction} replaces the incidence sheaf $\cI_n$, which can have
infinite projective dimension over the singular wall variety, by the pushforward of
the bounded complex of vector bundles $\cK_n^+$, whose terms will be shown in
\cref{sec:SD} to be acyclic for the pushforward over the completion at every wall
point.

\subsection{Rank two and rank six}\label{sec:low-rank}

At $n=2$ the smaller moduli space is the single algebra $S/(x,y)^2$, hence $Y'$ is a
point and $\Hecke=B\simeq\PP^2$.  The universal sequence has even part
$0\to\cL\to\cR_0|_B\to\cO_B\to0$ with $\cL=\cO_{\PP^2}(1)$, while $\cR_1|_B$ is
constant.  Hence $\scrD|_B=\cL^{-2}$ and $\cT|_B=\cL$.  Every cluster on $B$ is
punctual and the augmentation vanishes on $\cL$, hence $C_B\simeq\cO_B\oplus\cL[1]$ and
$C'=\cO_{Y'}$.  The realization formula gives
\[
 \cM_2^+\simeq i_*\cO_{\PP^2}(-1),\qquad R\rho_*\cM_2^+=R\Gamma(\PP^2,\cO(-1))=0 ,
\]
in agreement with a direct local computation of the module $\cM_2^+$ on charts.

At $n=6$, the first rank in which even socle rank two occurs
(\cref{cor:threshold}), we have $a=5$, $\rk\cG=6$, $\rk\cT'=4$.  The first ambient
space has fibers $\PP^4$ and the second $\PP^7$, the two regular sections have ranks
$6$ and $5$, the four smaller-trace exterior degrees extend the interval $[-2,0]$ to
$[-6,0]$, and punctual support has codimension at least six.  The $\Gr(3,5)$ wall
fiber and the rank-one strata meeting it are treated simultaneously, because the Hecke
scheme contains
all choices of one even socle line, including those inside a higher-dimensional even
socle, and no separate rank-two induction is needed.  The cancellation
$-(n-2)-2=-n$ is the reason the argument is uniform in $n$.

\section{Symbolic descent}\label{sec:SD}

In this section we prove Theorem~E(4):
\begin{equation}\label{eq:SD}
 R\rho_*\bigl(\cL_n^{\otimes d}\otimes\cI_n\bigr)\;\simeq\;
 \cO_{\cZ_n^{\mathrm{symb}}}(d),\qquad d\in\Z .
\end{equation}
The inputs are the two-term homology of \cref{thm:two-term}, the even-excess
vanishing of \cref{thm:vanishing}, and, at each wall point, the vector-bundle part
of the Bellamy--Craw normal form.  The latter says that on the completed
neighbourhood of a wall fiber the even cluster bundle is a trivial summand plus the
tautological quotient bundle of a cotangent Grassmannian, and the odd cluster bundle
is trivial.

The main idea is to compute the derived pushforward of the positive Haiman complex
$\cK_n^+$ instead of the derived pushforward of $\cO_{F_n}$.  The two agree by the
even-excess vanishing, and $\cK_n^+$ is a complex of vector bundles, to which
Borel--Weil--Bott applies.  Borel--Weil--Bott does two things.  It makes every term of
$\cK_n^+$ acyclic for the completed pushforward, hence the pushforward is computed
termwise.  A second vanishing, with one subbundle factor, makes multiplication of
pushed-forward sections surjective, and this identifies the image of the first
differential.  The pushed cluster algebra is then shown to be generated by the
original polynomials, and the elementary normalized-trace lemma of
\cref{sec:trace-quotient} computes its punctual quotient, nilpotents included.
Finally the even-excess vanishing forces the derived pushforward to be concentrated
in degree zero.  Nothing is assumed about a product decomposition of the cluster
algebra, about flatness of $\rho$, or about reducedness or Cohen--Macaulayness of the
punctual schemes.  The scalar trace functions retain their full dependence on the
cotangent and core coordinates throughout.

The section is organized as follows.  In \cref{sec:perfect-reduction,sec:SD-no-shortcut}
we define the canonical map $u_n$ and explain why the statement is not a formal
consequence of the vanishing theorems.  \Cref{sec:trace-quotient} contains the
normalized-trace lemma.  In \cref{sec:bott,sec:tensor} we collect the vanishing and
surjectivity results on the cotangent Grassmannian, and in
\cref{sec:bundle-model,sec:completion,sec:pushed} we transfer them to the formal
neighbourhood of a wall fiber.  The local computation and the proof of the theorem
are in \cref{sec:local-proof,sec:SD-proof}.  The case $n=2$ is worked out in closed
form in \cref{sec:n2-descent}.

\subsection{The canonical map and the perfect reduction}\label{sec:perfect-reduction}

Write $\mfa:=\mfm_0\cO_{Q_n}$, hence $\cZ_n^{\mathrm{symb}}=V(\mfa)$ and
$F_n=V(\mfa\cO_{Y_n})=Y_n\times_{Q_n}\cZ_n^{\mathrm{symb}}$.  These are ordinary
scheme-theoretic fibers, $\cO_{F_n}=\cO_{Y_n}/\mfa\cO_{Y_n}$, and
$\cI_n\cong\cO_{F_n}$ by \cref{prop:H0}.  Let
$j:\cZ_n^{\mathrm{symb}}\hookrightarrow Q_n$ and
$f=\rho|_{F_n}:F_n\to\cZ_n^{\mathrm{symb}}$, hence the square
\[
\begin{tikzcd}[column sep=large]
F_n\arrow[r,hook]\arrow[d,"f"'] & Y_n\arrow[d,"\rho"]\\
\cZ_n^{\mathrm{symb}}\arrow[r,hook,"j"'] & Q_n
\end{tikzcd}
\]
is Cartesian.  The adjunction unit $\cO_{\cZ}\to Rf_*\cO_{F_n}$ for the proper
morphism $f$, followed by the exact functor $j_*$, is a canonical $T$-equivariant
morphism
\begin{equation}\label{eq:u-n}
 u_n:\cO_{\cZ_n^{\mathrm{symb}}}\longrightarrow R\rho_*\cO_{F_n}=R\rho_*\cI_n .
\end{equation}
It exists as a unit for the actual morphism $f$, and nothing is inferred from
annihilation of cohomology.  Since $\cL_n=\rho^*\cO_{Q_n}(1)$, the projection formula
reduces \eqref{eq:SD} to the untwisted statement
\begin{equation}\label{eq:SD0}
 u_n:\ \cO_{\cZ_n^{\mathrm{symb}}}\xrightarrow{\ \sim\ }R\rho_*\cO_{F_n},
\end{equation}
which is what we shall prove.

\begin{proposition}[Perfect reduction]\label{prop:perfect-reduction}
The augmentation $\cK_n^+\to\cI_n$ of \eqref{eq:positive-triangle} induces a canonical
isomorphism $R\rho_*\cK_n^+\xrightarrow{\sim}R\rho_*\cI_n$.
\end{proposition}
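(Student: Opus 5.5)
The plan is to apply the exact functor $R\rho_*$ to the positive-parity triangle \eqref{eq:positive-triangle},
\[
 \cM_n^+[1]\longrightarrow\cK_n^+\longrightarrow\cI_n\xrightarrow{\ +1\ },
\]
and then invoke the even-excess vanishing. First we check that this triangle is genuinely distinguished and that its middle arrow is the augmentation. By \cref{thm:two-term}, $\cK_n$ lies in $D^{[-1,0]}(Y_n)$, so the standard truncation triangle $\tau_{\le-1}\cK_n\to\cK_n\to\tau_{\ge0}\cK_n$ reads $\cM_n[1]\to\cK_n\to\cI_n\to$. The second arrow is the canonical map $\cK_n^0=\cP_n\to\cP_n/J_n=\cI_n$. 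By \cref{rem:equivariance}, the $\gamma$-parity decomposition is a decomposition of complexes of $\cO_{Y_n}$-modules. Since $\cI_n$ is purely even by \cref{prop:H0}, taking the even summand of the truncation triangle yields the stated triangle, and its middle arrow is still the augmentation restricted to $\cK_n^+$.

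Next we apply $R\rho_*$, which preserves distinguished triangles. This gives
\[
 R\rho_*\cM_n^+[1]\to R\rho_*\cK_n^+\to R\rho_*\cI_n\to .
\]
The first term vanishes by \cref{thm:vanishing}, so the second arrow is an isomorphism in $D(Q_n)$. We can also do this $T$-equivariantly, because \cref{thm:vanishing} holds in the equivariant derived category, so the isomorphism is canonical and $T$-equivariant. Twisting by $\cL_n^{\otimes d}$ and using the projection formula (as in \cref{cor:twists}) recovers \eqref{eq:perfect-reduction}. The statement here is just the case $d=0$.

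There is no real obstacle once \cref{thm:vanishing} is available. The only point needing care is the first step: the map produced must be the augmentation itself and not some map agreeing with it only on cohomology. This follows because the truncation triangle is functorial and the degree-$0$ term of $\cK_n^+$ is $\cP_n^+=\cR_0$, which surjects onto $\cI_n$.
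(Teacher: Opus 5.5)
Your proof is correct and is the paper's argument. You apply $R\rho_*$ to the positive-parity triangle \eqref{eq:positive-triangle} and use $R\rho_*\cM_n^+=0$ from \cref{thm:vanishing}, which kills the cone of the augmentation. Your extra checks are consistent with how the paper sets up that triangle from \cref{thm:two-term} and \cref{prop:H0}: that the truncation map is the augmentation, and that the isomorphism is $T$-equivariant.
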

\begin{proof}
Apply $R\rho_*$ to \eqref{eq:positive-triangle}.  Its first and last terms vanish by
\cref{thm:vanishing}.
\end{proof}

The sheaf $\cM_n^+$ does not vanish on $Y_n$ (\cref{cor:trace-CM}), only its derived
image does.  The proposition does not by itself identify $R\rho_*\cK_n^+$ with a
sheaf, let alone with $\cO_{\cZ}$, and that is the task of the rest of the section.  Its
advantage is that $\cK_n^+$ is a bounded complex of vector bundles on the smooth
$Y_n$, whereas $\cO_{F_n}$ can have infinite projective dimension over the singular
wall variety (\cref{prop:lci-fails}).

\subsection{Descent is not Tor-independent base change}\label{sec:SD-no-shortcut}

Let $j:\cZ_n^{\mathrm{symb}}\hookrightarrow Q_n$.  Since $Q_n$ has rational
singularities (\cref{prop:models-identified}(a)), $R\rho_*\cO_{Y_n}=\cO_{Q_n}$, and
the projection formula \cite[Tag 08EU]{Stacks} gives
\begin{equation}\label{eq:projection-unconditional}
 R\rho_*L\rho^*\cO_{\cZ_n^{\mathrm{symb}}}
 \simeq R\rho_*\cO_{Y_n}\Ltensor_{\cO_{Q_n}}\cO_{\cZ_n^{\mathrm{symb}}}
 \simeq\cO_{\cZ_n^{\mathrm{symb}}} .
\end{equation}
Since $F_n=\rho^{-1}(\cZ_n^{\mathrm{symb}})$ scheme-theoretically (both are cut out
by $\mfm_0$), $\cH^0(L\rho^*\cO_{\cZ_n^{\mathrm{symb}}})=\cO_{F_n}=\cI_n$ by
\cref{prop:H0}.  Thus \eqref{eq:SD0} is exactly the assertion that $\cI_n$ may be
substituted for $L\rho^*\cO_{\cZ_n^{\mathrm{symb}}}$, and it would follow at once if
the higher Tor sheaves $\Tor_i^{\cO_{Q_n}}(\cO_{Y_n},\cO_{\cZ_n^{\mathrm{symb}}})$,
$i>0$, vanished, for instance if $\cZ_n^{\mathrm{symb}}$ were a local complete
intersection of codimension $n$ in the Cohen--Macaulay $Q_n$.  This is the case for
$n=1$, where $Q_1\cong T^*\PP^1$ is smooth and $\cZ_1^{\mathrm{symb}}$ is a divisor.
It fails already at $n=2$.

\begin{proposition}\label{prop:lci-fails}
Let $z_J\in\cZ_2^{\mathrm{symb}}$ be the heart lying under the wall fiber
$F_J\cong\PP^2$.  Then
\[
 \mu\bigl(\mfm_0\cO_{Q_2,z_J}\bigr)=3\;>\;2=\htOp\bigl(\mfm_0\cO_{Q_2,z_J}\bigr),
\]
hence $\cZ_2^{\mathrm{symb}}$ is not a local complete intersection in $Q_2$ at $z_J$.
Moreover $\pd_{\cO_{Q_2,z_J}}\cO_{\cZ_2^{\mathrm{symb}},z_J}=\infty$, although
$\cZ_2^{\mathrm{symb}}$ itself is reduced, irreducible and Cohen--Macaulay of
dimension $2$.
\end{proposition}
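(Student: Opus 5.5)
The plan is to compute everything in the Bellamy--Craw local model at $z_J$. For $n=2$, the heart $z_J$ has $k=2$, $N=3$ and even-socle rank $r=1$. By \eqref{eq:local-model} and \eqref{eq:Nr}, \'etale locally $Q_2$ at $z_J$ is $\mfN_{1,3}\times(\text{core})$, where
\[
\mfN_{1,3}=\{X\in\End(\C^3):X^2=0,\ \rk X\le1\}
\]
is the $4$-dimensional minimal nilpotent orbit closure in $\mathfrak{sl}_3$. The core has dimension $0$, since $\dim Q_2=4=\dim\mfN_{1,3}$. So $\widehat{\cO}_{Q_2,z_J}$ is the completion of $\C[\mfN_{1,3}]$ at $0$. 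This ring is Gorenstein and normal of dimension $4$, and its maximal ideal is minimally generated by the $8$ entries of a traceless $X$.

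Next I identify the ideal $\mfm_0\cO_{Q_2}$ in these coordinates. Since $\rho^{-1}(z_J)=\Gr(2,3)$ is the zero section and $\cZ_2^{\mathrm{symb}}$ has dimension $2$, the ideal is not the maximal ideal. Its zero locus is a $2$-dimensional Lagrangian germ in $\mfN_{1,3}$. By \cref{lem:dimF} and \cref{thm:fiber-grass}, the irreducible component of $F_2$ through $F_J$ is $F_J$ itself, so $\cZ_2^{\mathrm{symb}}$ near $z_J$ is the image of the punctual part.

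I would describe it concretely via the $B$-injective/$A$-surjective picture: $X=BA$ with $A:V\to U$ and $B:U\to V$, where $U=\C$. Then $X=b\otimes a$ with $a\in V^\vee$, $b\in V$, $a(b)=0$. The punctual condition, given by vanishing of the positive-degree traces $\tau_f$, should be linear conditions on the two factors. In the torus weights, $V_J$ is spanned by monomials of the forms $x^2$, $xy$, $y^2$ modulo $J'$ for the heart $H=\{x,y\}$, so $J''=(u,v,w)$ and $J'=\mfm_X^2$. I expect $\cZ$ to be $\{b\in\ell,\ a\in\ell'^{\perp}\}$ for suitable fixed lines or planes, or more plausibly $\{X:Xv_0=0,\ \im X\subset P\}$, cut out by three linear equations in the entries of $X$ restricted to $\mfN_{1,3}$.

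To get $\mu=3$, I compute the weights of the generators of $\mfm_0$, namely the polarized power sums $\tau_f$ for $f\in\{u,v,w\}$ in degree $2$ plus higher ones. I would show that the $T$-weight spaces of $\mfm/\mfm^2$, where $\mfm$ is the maximal ideal of $\C[\mfN_{1,3}]$ at $0$, containing the images of $\tau_u,\tau_v,\tau_w$ are three independent entries of $X$. I would also show that the higher $\tau_f$ lie in $(\tau_u,\tau_v,\tau_w)$. The latter follows because $\C[S_2]\to\cO_{Q_2}$ factors through the invariant-theory map and $\mfm_0$ is generated in degree $2$ up to products (\cref{lem:powersums} with $n=2$). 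This gives $\mu\le3$, and the independence in $\mfm/\mfm^2$ gives $\mu=3$. The height equals $\dim Q_2-\dim\cZ=4-2=2$, and that settles the first claim.

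For the infinite projective dimension, I argue by contradiction. Suppose the projective dimension of $R/I$ is finite, where $R=\widehat\cO_{Q_2,z_J}$ and $I=\mfm_0R$. Since $\cZ$ is Cohen--Macaulay of dimension $2$ and $R$ is Cohen--Macaulay of dimension $4$, Auslander--Buchsbaum gives $\pd R/I=2$. Then $I$ is a perfect ideal of grade $2$ with a finite free resolution $0\to R^{2}\to R^{3}\to R$. By Hilbert--Burch over the Noetherian local ring $R$, $I$ is the ideal of $2\times2$ minors of a $3\times2$ matrix with entries in $\mfm$, so $I\subseteq\mfm^2$. This contradicts the fact that the three generators are linear entries of $X$, hence nonzero in $\mfm/\mfm^2$.

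The main obstacle is the explicit identification of $\mfm_0$ in the local coordinates $X$: showing that the traces $\tau_u,\tau_v,\tau_w$ restrict to linearly independent linear forms on $\mfN_{1,3}$. I would first try a direct weight argument, since the trace functions have weights $q^2$, $t^2$, $qt$, matching three distinct weight spaces of $\mfm/\mfm^2$. This reduces the task to checking nonvanishing of each trace's linear part on the $\PP^2$ chart computed in \cref{sec:low-rank}.
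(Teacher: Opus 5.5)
Your Hilbert--Burch argument for $\pd=\infty$ is correct, and it improves on the paper, which detects $\pd=\infty$ only by computing a nonzero fifth syzygy. If $\pd R/I<\infty$, Auslander--Buchsbaum forces $\pd R/I=2$, so the minimal resolution is $0\to R^{\mu-1}\xrightarrow{M}R^{\mu}\to R$. Grade two makes the Hilbert--Burch multiplier a unit, hence $I=I_{\mu-1}(M)\subseteq\mfm^2$, which contradicts the existence of a trace with nonzero linear part. So the argument needs only $\mu\ge2$ and one such trace, not $\mu=3$.

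However, the argument takes as input that $\cZ_2^{\mathrm{symb}}$ is Cohen--Macaulay of dimension $2$ at $z_J$. You never prove this, nor reducedness or irreducibility, although all three are part of the statement. Your tentative local descriptions cannot supply them. A locus such as $\{X:Xv_0=0,\ \im X\subset P\}$ is cut out by more than three linear forms on $\mathfrak{sl}_3$. The actual germ is $\overline{\mathcal O}_{\min}\cap\SymOp^4\C^2$, the cone over the rational normal quartic, i.e.\ $\C^2/\mu_4$ (\cref{thm:n2-descent}(b)). Likewise, the claim that the only component of $F_2$ through $F_J$ is $F_J$ is false. Near $F_J$ one has $F_2=\PP^2\cup N^*_C$, and $\cZ_2^{\mathrm{symb}}$ near $z_J$ is the image of $N^*_C$; if your claim held, $\cZ_2^{\mathrm{symb}}$ would be just the point $z_J$ near $z_J$. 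The paper obtains these properties from an explicit Gr\"obner computation in the $8$-dimensional Quot chart.

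The upper bound $\mu\le3$ is where the argument actually breaks. \Cref{lem:powersums} says that $\mfm_0$ is generated by the $p(h)$ for \emph{all} $h\in R_0$ with $h(0)=0$, not only those of degree two. In $\C[S_2]$ the degree-four power sums are genuinely new generators. Write $p_{a,b}=x_1^ay_1^b+x_2^ay_2^b$. The degree-four part of $(p_{2,0},p_{1,1},p_{0,2})$ is spanned by products of two of these, whose restrictions to $y_1=y_2=0$ are multiples of $(x_1^2+x_2^2)^2$. But $p_{4,0}$ restricts to $x_1^4+x_2^4$, so it is not in that ideal. Hence $\tau_f\in(\tau_u,\tau_v,\tau_w)$ for $\deg f\ge4$ is a local fact at $z_J$ that needs local input. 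The paper gets it from its Gr\"obner computation. Alternatively, use the argument of \cref{thm:n2-descent}: modulo the three linear traces the coordinate ring of the chart becomes $\bigoplus_d\SymOp^{4d}\C^2$, and there is no nonzero $SL_2$-equivariant map $\SymOp^{2d}\C^2\to\SymOp^{4d}\C^2$ for $d\ge1$.

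Finally, the linear independence you identify as the main obstacle is left undone, and the reduction you propose would not work as stated. A function pulled back from $Q_2$ is constant on the fiber $\PP^2$, so its linear part at $z_J$ cannot be seen on a chart of $\PP^2$. It lives in the normal directions, for instance as a fiber-linear function on $T^*\PP^2$, i.e.\ an element of $H^0(\PP^2,T_{\PP^2})$. Or, as in the paper, you can use a chart of $Q_2$ at $z_J$ whose ideal consists of quadrics; there the traces are visibly the independent linear forms $u_1+w_{21}$, $w_{12}+v_2$, $w_{11}+w_{22}$. The weight reduction itself is fine: nonzero classes in distinct weight spaces of $\mfm/\mfm^2$ are independent.
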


\begin{proof}
Write $R_0=\C[u,v,w]/(uv-w^2)$, hence $R_1=R_0X+R_0Y$ with relations $vX=wY$ and
$uY=wX$.  The locus of $q\in\Quot^2_X(R_1)$ for which $(q(X),q(Y))$ is a basis of the
length-two quotient is open, contains $z_J$, and is rigidified by that basis.  A
point of it is a triple of pairwise commuting $2\times2$ matrices $U,V,W$, the
actions of $u,v,w$, with $UV=W^2$, $Ve_1=We_2$, $Ue_2=We_1$.  With $W=(w_{ij})$ the
two column conditions force
$U=\left(\begin{smallmatrix}u_1&w_{11}\\u_2&w_{21}\end{smallmatrix}\right)$,
$V=\left(\begin{smallmatrix}w_{12}&v_1\\w_{22}&v_2\end{smallmatrix}\right)$, hence the
chart is the closed subscheme of $\A^8$ cut out by the entries of $[U,V]$, $[U,W]$,
$[V,W]$ and $UV-W^2$, all homogeneous quadrics, and $z_J$ is the origin.  Hence
$I(Q_2)_{z_J}\subseteq\mfm_{z_J}^2$ and $\dim\mfm_{z_J}/\mfm_{z_J}^2=8$.  The support
morphism $Q_2\to\SymOp^2X$ is given by polarized power sums, hence
$\mfm_0\cO_{Q_2}$ contains $\Tr U=u_1+w_{21}$, $\Tr V=w_{12}+v_2$,
$\Tr W=w_{11}+w_{22}$, three linearly independent linear forms whose images in
$\mfm_{z_J}/\mfm_{z_J}^2$ remain independent since $I(Q_2)$ contains no linear form.
Thus $\mu(\mfm_0\cO_{Q_2,z_J})\ge3$, while its height is $\dim Q_2-\dim\cZ_2^{\mathrm{symb}}
=4-2=2$.  The remaining assertions are Gr\"obner computations in this chart.  The
chart is reduced, irreducible, Gorenstein of codimension four (Betti numbers
$1,9,16,9,1$ over $\C[\A^8]$), in agreement with the local model of
\cref{sec:local-model}, whose transverse singularity at $r=1$ is
$\mfN_{1,3}=\overline{\mathcal O}_{\min}\subset\mathfrak{sl}_3$.  The three linear
traces generate $\mfm_0\cO_{Q_2,z_J}$, hence $\mu=3$.  The ideal $I(\cZ_2^{\mathrm{symb}})$ has
projective dimension $6=\codim$ over $\C[\A^8]$, hence $\cZ_2^{\mathrm{symb}}$ is
Cohen--Macaulay, reduced and irreducible.  Finally, the minimal free resolution of
$\cO_{\cZ_2^{\mathrm{symb}},z_J}$ over $\cO_{Q_2,z_J}$ begins $1,3,6,18,66,246,918$.
Since $\cO_{Q_2,z_J}$ is Cohen--Macaulay of depth $4$, Auslander--Buchsbaum forces
$\pd\le4$ whenever it is finite, and a nonzero fifth syzygy proves $\pd=\infty$.
\end{proof}

The failure is one of depth over a singular base, not of $\cZ_n^{\mathrm{symb}}$
itself.  Note that $\mu=3=n+1$ exceeds the height by one, the same deviation as
$\rk\cE_n=n+1$ against $\codim F_n=n$ upstairs.  The higher-Tor defect must therefore
be controlled by additional structure, and the structure used below is the positivity of
the tautological quotient bundles on the wall fibers, together with the even-excess
vanishing.

\subsection{The normalized-trace quotient}\label{sec:trace-quotient}

In this subsection we prove an elementary lemma of commutative algebra.  It computes
the quotient of a parity-graded algebra by the ideal generated by the trace-zero
elements and by the odd elements times $x$ and $y$, and it identifies this quotient
with the quotient of the base ring by an ideal of traces.  In \cref{sec:local-proof}
the lemma is applied to the pushed-forward cluster algebra at a wall point, which is
not locally free, and this application dictates the generality of the statement.
Let us first fix the notation on $Y_n$.  For $n>0$ put
\begin{equation}\label{eq:normalized-trace}
 \epsilon:=\tfrac1n\Tr_{\cR_0}:\cR_0\longrightarrow\cO_{Y_n},\qquad
 \epsilon(1)=1,\qquad \cR_0=\cO_{Y_n}\cdot1\oplus\cT_n,\quad\cT_n=\ker\epsilon.
\end{equation}
For an even polynomial $f\in R_0=\C[x,y]^\Gamma$ we write $\bar f\in\cR_0$ for its
image in the universal cluster algebra, as in the proof of \cref{prop:H0}, and we put
$\tau_f:=\Tr_{\cR_0}(m_{\bar f})=n\,\epsilon(\bar f)\in\cO_{Y_n}$.
By \cref{lem:powersums} and Haiman's trace identity (proof of \cref{prop:H0}),
\begin{equation}\label{eq:trace-ideal}
 (\tau_f:\ f\in R_0,\ f(0)=0)=\mfm_0\cO_{Y_n}.
\end{equation}
Here $\Tr_{\cR_0}$ is the trace of an endomorphism of the rank-$n$ vector bundle
$\cR_0$.  We shall push $\epsilon$ forward, but we never take the trace of a module
which is not locally free.

In the lemma below and in \cref{sec:pushed,sec:local-proof} the letter $R$ denotes an
arbitrary base ring, which in the application is the completed local ring of $Q_n$
at a wall point.  It should not be confused with the polynomial ring $R=\C[x,y]$ of
\cref{sec:mckay}.  We write $x,y$ also for the images of the variables in $B_1$.

\begin{lemma}[Normalized-trace quotient]\label{lem:trace-quotient}
Let $R$ be a commutative $\C$-algebra and let $B=B_0\oplus B_1$ be a commutative
$\Z/2$-graded $R$-algebra.  Assume that the parity-preserving homomorphism of
$R$-algebras $R[x,y]\to B$, with $x,y$ odd, is surjective, and for a polynomial
$f\in R[x,y]$ write $\bar f\in B$ for its image.  Let $\epsilon:B_0\to R$ be an
$R$-linear map with $\epsilon(1)=1$ and put $T=\ker\epsilon$, hence
$B_0=R\cdot1\oplus T$.  Fix a nonzero integer $n$, and for an even polynomial
$f\in R[x,y]$ put $\tau_f:=n\,\epsilon(\bar f)\in R$.  Let
\[
 I=\bigl(\tau_f:\ f\in R[x,y]\ \text{even},\ f(0,0)=0\bigr)\subset R
\]
be the ideal generated by the elements $\tau_f$ for the even polynomials $f$ without
constant term.  Since $\epsilon$ is $R$-linear, $I$ is generated by the elements
$\tau_{x^ay^b}=n\,\epsilon(\overline{x^ay^b})$ with $a+b$ even and positive, and the
factor $n$ is immaterial for $I$.  Then the unit $R\to B_0$ induces an isomorphism of
$R$-algebras
\begin{equation}\label{eq:abstract-trace-quotient}
 R/I\;\xrightarrow{\ \sim\ }\;B_0/(B_0T+xB_1+yB_1),
\end{equation}
whose inverse is induced by $\epsilon$.  No multiplicativity of $\epsilon$, no local
freeness of $B$ and no reducedness of either ring is assumed.
\end{lemma}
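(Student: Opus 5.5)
Set $\mathfrak J:=B_0T+xB_1+yB_1\subseteq B_0$. The plan is to write down both maps, check that each is well defined, and check that they are mutually inverse. Two facts will be used repeatedly. First, $\mathfrak J$ is an ideal of $B_0$: $B_0T$ is an ideal, and $xB_1+yB_1$ is stable under multiplication by $B_0$ because $B_0B_1\subseteq B_1$. Second, every positive-degree even monomial $x^ay^b$ lies in $xB_1+yB_1$ once we substitute its image, since $x^ay^b=x\cdot(x^{a-1}y^b)$ with $x^{a-1}y^b$ odd (or the same with $y$). By surjectivity of $R[x,y]\to B$, these two facts give
\[
 B_0=R\cdot1+\sum_{a+b\ \mathrm{even}>0}R\,\overline{x^ay^b}\subseteq R\cdot 1+\mathfrak J .
\]
Hence the unit map $\iota:R/I\to B_0/\mathfrak J$ will be surjective once it is shown to be well defined.

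\emph{Well-definedness of $\iota$.} For even $f$ without constant term, $\bar f-\epsilon(\bar f)1\in T\subseteq\mathfrak J$ and $\bar f\in\mathfrak J$. Therefore $\epsilon(\bar f)\cdot1\in\mathfrak J$, so $\tau_f=n\epsilon(\bar f)$ maps to zero; here we use that $n$ is invertible in the $\C$-algebra $R$. Thus $I\cdot 1\subseteq\mathfrak J$, and $\iota$ is a well-defined $R$-algebra map.

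\emph{The inverse.} We need $\epsilon(\mathfrak J)\subseteq I$, so that $\bar\epsilon:B_0/\mathfrak J\to R/I$ is defined. This is the only step needing care, since $\epsilon$ is not multiplicative. By $R$-linearity it suffices to check spanning elements of $\mathfrak J$.

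For $xB_1+yB_1$: an element $x\,\overline g$ with $g$ odd is the image of the even polynomial $xg$, which has no constant term, so $\epsilon(x\overline g)=\tau_{xg}/n\in I$. The case of $y$ is the same.

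For $B_0T$: write $b\in B_0$ as $b=r\cdot1+j$ with $r\in R$ and $j\in\mathfrak J_{\mathrm{poly}}:=\sum R\,\overline{x^ay^b}$ ($a+b$ even, positive). For $t\in T$ we get $bt=rt+jt$. The term $rt$ has $\epsilon(rt)=r\epsilon(t)=0$. The term $jt$ is a combination of $\overline{x^ay^b}\,t$. Writing $t=\bar h$ for an even polynomial $h$ (possibly with constant term) gives $\overline{x^ay^b}\,t=\overline{x^ay^bh}$, where $x^ay^bh$ is even with no constant term, so its $\epsilon$-value lies in $I$.

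Consequently $\epsilon(\mathfrak J)\subseteq I$ and $\bar\epsilon$ is defined.

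\emph{Mutual inverses.} We have $\bar\epsilon\circ\iota=\mathrm{id}$ because $\epsilon(1)=1$. Since $\iota$ is surjective, this forces $\iota$ to be an isomorphism with inverse $\bar\epsilon$, and $\bar\epsilon$ is then automatically multiplicative.

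The one delicate point is the $B_0T$ case: multiplicativity of $\epsilon$ must be avoided, and this is done by reducing to products that are themselves images of polynomials without constant term. Surjectivity of $R[x,y]\to B$ is exactly what makes this reduction possible.
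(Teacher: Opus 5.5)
Your proof is correct and follows the paper's argument: show that the unit map to $B_0/(B_0T+xB_1+yB_1)$ is surjective and kills $I$, show that $\epsilon$ sends $B_0T+xB_1+yB_1$ into $I$, and conclude from $\bar\epsilon\circ\iota=\mathrm{id}$. The only variation is in the $B_0T$ case. The paper uses the generators $\bar g-\epsilon(\bar g)1$ of $T$ and evaluates $\epsilon$ on their products with $\bar f$ explicitly, whereas you split the $B_0$-factor as $r\cdot1+j$, which is a slight streamlining. Also, invertibility of $n$ is not needed for $\iota$ to be well defined, only for the inverse direction.
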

\begin{proof}
\emph{Step 1, generators of $T$.}  Since $R[x,y]\to B$ is surjective and preserves
parity, $B_0$ is generated as an $R$-module by the images $\bar f$ of the even
monomials $f$, including $f=1$, and $B_1$ by the images of the odd monomials.  For an
even monomial $f$ put $g_f:=\bar f-\epsilon(\bar f)1=\bar f-\frac{\tau_f}n1\in T$.
Then $g_1=0$, and applying $b\mapsto b-\epsilon(b)1$ to an expression of $b\in T$ in
the generators $\bar f$ shows that $T$ is generated over $R$ by the $g_f$ with $f$ an
even monomial of positive degree.

\emph{Step 2, the quotient is generated by $[1]$.}  Each of $B_0T$, $xB_1$ and $yB_1$
is stable under multiplication by $B_0$, hence $B_0T+xB_1+yB_1$ is an ideal of $B_0$.
Since $T$ is killed and $B_0=R\cdot1\oplus T$, every element of the quotient is an
$R$-multiple of $[1]$, that is, the unit map $R\to B_0/(B_0T+xB_1+yB_1)$ is
surjective.

\emph{Step 3, $I$ maps to zero.}  A positive-degree even monomial $f$ is $x$ or $y$
times an odd monomial, hence $\bar f\in xB_1+yB_1$.  Thus both $\bar f$ and $g_f$
vanish in the quotient, and therefore $\frac{\tau_f}{n}[1]=[\bar f]-[g_f]=0$.  The
unit factors through $R/I$.

\emph{Step 4, $\epsilon$ kills the relations modulo $I$.}  For an odd monomial $a$
the monomials $xa$ and $ya$ are even of positive degree, hence
$\epsilon(x\bar a)=\epsilon(\overline{xa})=\tau_{xa}/n\in I$ and likewise
$\epsilon(y\bar a)\in I$.  For an even monomial $f$ and a positive-degree even
monomial $g$, $R$-linearity gives
\[
 \epsilon(\bar f g_g)=\epsilon(\overline{fg})-\tfrac{\tau_g}n\epsilon(\bar f)
 =\tfrac{\tau_{fg}}n-\tfrac{\tau_f\tau_g}{n^2}\in I .
\]
Indeed, for $f=1$ the expression vanishes because $\tau_1=n\,\epsilon(1)=n$, and for
$f\ne1$ the monomial $fg$ has positive degree.  Since $B_0T$ is spanned over $R$ by
the products $\bar fg_g$, and $xB_1$, $yB_1$ are spanned by the elements
$x\bar a$, $y\bar a$, we conclude that $\epsilon(B_0T+xB_1+yB_1)\subset I$.  Hence
$\epsilon$ induces an $R$-linear map $\bar\epsilon$ from the quotient to $R/I$.

\emph{Step 5, conclusion.}  The composite
$R/I\to B_0/(B_0T+xB_1+yB_1)\xrightarrow{\bar\epsilon}R/I$ is
$r\mapsto\epsilon(r1)=r$.  A surjection with a left inverse is an isomorphism, and
the inverse of an isomorphism of algebras is an algebra map.
\end{proof}

\begin{remark}[How the lemma is used]\label{rem:trace-quotient-upstairs}
The lemma is used twice.  First, on affine opens of $Y_n$ we take $B=\cP_n$ and
$\epsilon$ the normalized trace of \eqref{eq:normalized-trace}, hence $T=\cT_n$ and
$I=\mfm_0\cO_{Y_n}$ by \eqref{eq:trace-ideal}.  The lemma gives
$\cI_n=\cP_n/(\cT_n,x,y)\cP_n\cong\cO_{Y_n}/\mfm_0\cO_{Y_n}$, which is a second proof
of \cref{prop:H0}.  This proof exhibits the inverse map and therefore controls the
scheme structure without a Nakayama argument on a kernel.

The essential application is in \cref{sec:local-proof}.  There $R=\widehat{\cO}_{Q_n,z_J}$
is the completed local ring of the wall variety at a fixed point $z_J$, the algebra
$B$ is the completion of the pushed-forward cluster algebra $\rho_*\cP_n$, and
$\epsilon$ is the pushforward of the normalized trace.  At a wall point with a
positive-dimensional fiber $B_0$ is not locally free over $R$, its fiber at $z_J$ has
dimension larger than $n$, and $\epsilon$ is not the trace of the $R$-module $B_0$.
This is why the lemma assumes neither multiplicativity of $\epsilon$ nor local
freeness of $B$.  The hypotheses of the lemma are verified in \cref{sec:pushed}.
The surjectivity of $R[x,y]\to B$ is \cref{prop:polynomial-generation}, the splitting
$B_0=R\cdot1\oplus T$ is \eqref{eq:B-splitting}, and the equality $I=\mfm_0R$ is
\eqref{eq:B-trace}, which comes from Haiman's trace identity \eqref{eq:trace-ideal}.
The conclusion of the lemma identifies the cokernel of the first differential of the
pushed-forward Haiman complex, which is $B_0/(B_0T+xB_1+yB_1)$ by
\eqref{eq:actual-cokernel}, with $R/\mfm_0R=\widehat{\cO}_{\cZ_n^{\mathrm{symb}},z_J}$,
and it shows that the identification is induced by the unit, that is, by the descent
map $u_n$.  The lemma asserts an equality of ordinary quotients, not of radicals, and
this is what makes the nilpotent structure of $\cZ_n^{\mathrm{symb}}$ visible in the
computation.
\end{remark}

\subsection{Two Grassmannian vanishing theorems}\label{sec:bott}

In this subsection $G=\Gr(k,V)$ with $\dim V=N=k+r$, $k,r\ge1$, parametrizes
$k$-dimensional subspaces, with tautological sequence
\begin{equation}\label{eq:tautological-sequence}
 0\longrightarrow\cU\longrightarrow V\otimes\cO_G\longrightarrow\cQ\longrightarrow0,
 \qquad\rk\cU=k,\quad\rk\cQ=r
\end{equation}
($\cU$ is the bundle written $\cS$ in \cref{sec:cluster-socle}).  Thus
$T_G=\cU^\vee\otimes\cQ$, and for $X=T^*G$ with projection $p:X\to G$,
\begin{equation}\label{eq:cotangent-algebra}
  p_*\cO_X=\SymOp(T_G)=\bigoplus_{d\ge0}\SymOp^d(\cU^\vee\otimes\cQ).
\end{equation}
We give fiber-polynomial functions degree $1$ (the \emph{auxiliary grading}, which
is unrelated to the $q,t$-bigrading and is used only on the model).  Let
$\mu:X\to\mfN$ be the Springer contraction onto the reduced square-zero orbit closure
and $A=\Gamma(X,\cO_X)=\bigoplus_{d\ge0}A_d$, $A_0=\C$, with $A_+=\bigoplus_{d>0}A_d$.

\emph{Borel--Weil--Bott convention.}  For a dominant weight $\beta\in\Z^k$ we
interpret $\schur^\beta E=(\det E)^{\beta_k}\otimes\schur^{\beta-\beta_k(1,\dots,1)}E$,
which allows a last entry $\beta_k=-1$.  For dominant $\alpha\in\Z^r$, $\beta\in\Z^k$
the homogeneous bundle $\schur^\alpha\cQ\otimes\schur^\beta\cU^\vee$ has, in the
quotient-first convention, Bott weight
\begin{equation}\label{eq:general-bott-weight}
 \gamma=(\alpha_1,\dots,\alpha_r,\,-\beta_k,\dots,-\beta_1)\in\Z^N .
\end{equation}
Put $\varrho=(N-1,\dots,1,0)$.  If $\gamma+\varrho$ has a repeated entry the bundle is
acyclic, and otherwise its cohomology is concentrated in the degree equal to the length
of the permutation sorting $\gamma+\varrho$, in particular in degree $0$ when $\gamma$
is dominant \cite{Bott}.  The signs are calibrated by $\cQ$ (weight
$(1,0,\dots,0)$, $H^0=V$) and $\cU$ (block boundary $0,1$, acyclic).

\begin{theorem}[Positive and subbundle-tensor vanishing]\label{thm:bott}
For every partition $\lambda$ with at most $r$ parts,
\begin{align}
 H^i\bigl(X,p^*\schur^\lambda\cQ\bigr)&=0 &&(i>0),\label{eq:positive-vanishing}\\
 H^i\bigl(X,p^*(\cU\otimes\schur^\lambda\cQ)\bigr)&=0 &&(i>0),\label{eq:subbundle-vanishing}
\end{align}
in each fiber-polynomial degree separately.
\end{theorem}
\begin{proof}
\emph{Step 1.}  Since $p$ is affine, for a bundle $\cF$ on $G$
\begin{equation}\label{eq:cohom-degree}
 H^i(X,p^*\cF)=\bigoplus_{d\ge0}H^i\bigl(G,\cF\otimes\SymOp^d(\cU^\vee\otimes\cQ)\bigr),
\end{equation}
cohomology commuting with the direct sum (compute with a finite affine \v Cech cover
of $G$).

\emph{Step 2: the positive case.}  By the Cauchy formula
\begin{equation}\label{eq:cauchy}
 \SymOp^d(\cU^\vee\otimes\cQ)=\bigoplus_{\nu\vdash d,\ \ell(\nu)\le\min(k,r)}
 \schur^\nu\cU^\vee\otimes\schur^\nu\cQ .
\end{equation}
Decomposing $\schur^\lambda\cQ\otimes\schur^\nu\cQ$ by the Littlewood--Richardson rule
into bundles $\schur^\alpha\cQ$, $\alpha$ a partition, each summand
$\schur^\alpha\cQ\otimes\schur^\nu\cU^\vee$ has Bott weight
$(\alpha_1,\dots,\alpha_r,-\nu_k,\dots,-\nu_1)$, which is dominant since
$\alpha_r\ge0\ge-\nu_k$.  This proves \eqref{eq:positive-vanishing}.

\emph{Step 3: the subbundle factor.}  Write $E=\cU^\vee$.  Since
$E^\vee\cong\Lambda^{k-1}E\otimes(\det E)^{-1}$, the Pieri rule for exterior powers
shows that the weights occurring in $E^\vee\otimes\schur^\nu E$ are exactly the
dominant weights
\begin{equation}\label{eq:dual-pieri}
  \beta=\nu-\epsilon_j\qquad(1\le j\le k),
\end{equation}
because tensoring with $\Lambda^{k-1}E$ adds a vertical strip in $k-1$ distinct rows and
$(\det E)^{-1}$ subtracts one from every row.  Every entry of $\beta$ is nonnegative
except possibly $\beta_k=-1$.

\emph{Step 4: the three cases.}  A summand of
$\cU\otimes\schur^\lambda\cQ\otimes\SymOp^d(\cU^\vee\otimes\cQ)$ has weight
\[
 \gamma=(\alpha_1,\dots,\alpha_r,-\beta_k,\dots,-\beta_1),
\]
with $\alpha$ a partition and $\beta$ as in \eqref{eq:dual-pieri}.  If $\beta_k\ge0$, $\gamma$ is dominant.  If
$\beta_k=-1$ and $\alpha_r\ge1$, then $\alpha_r\ge-\beta_k$ and $\gamma$ is again
dominant.  If $\beta_k=-1$ and $\alpha_r=0$, the entries of $\gamma+\varrho$ in
positions $r$ and $r+1$ are $0+(N-r)=k$ and $1+(N-r-1)=k$, hence the shifted weight is
singular and all cohomology of the summand vanishes.  No regular non-dominant weight
occurs, which proves \eqref{eq:subbundle-vanishing}.
\end{proof}

\begin{corollary}\label{cor:positive-tensors}
Both vanishings hold with $\schur^\lambda\cQ$ replaced by a finite direct sum of tensor
powers of $\cQ$ or by a direct summand of such a bundle.  Since the target of $\mu$
is affine, $R^i\mu_*$ of the corresponding pulled-back bundles vanishes for $i>0$.
\end{corollary}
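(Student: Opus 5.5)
The corollary has two parts: closure of the vanishings of \cref{thm:bott} under sums and summands, and the passage from $X$ to the affine base of $\mu$. I would treat them in that order.

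For the first part, let $\cF=\bigoplus_i\cQ^{\otimes a_i}$ be a finite sum of tensor powers of $\cQ$. Over $\C$, Schur--Weyl duality splits each $\cQ^{\otimes a}$ $GL(\cQ)$-equivariantly into a finite direct sum $\bigoplus_{\lambda\vdash a}(\schur^\lambda\cQ)^{\oplus f^\lambda}$. Since $\rk\cQ=r$, only partitions with $\ell(\lambda)\le r$ contribute. This splitting is by homogeneous bundles on $G$, so it survives pulling back along $p$ and tensoring with $p^*\cU$. Cohomology commutes with finite direct sums, so \eqref{eq:positive-vanishing} and \eqref{eq:subbundle-vanishing} give $H^i(X,p^*\cF)=0$ and $H^i(X,p^*(\cU\otimes\cF))=0$ for $i>0$. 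For a direct summand $\cF'$ of $\cF$, the groups $H^i(X,p^*\cF')$ and $H^i(X,p^*(\cU\otimes\cF'))$ are direct summands of the corresponding groups for $\cF$, so they vanish too. This works for any summand, equivariant or not, because $H^i$ is additive. As in \cref{thm:bott}, the statement holds in each auxiliary degree separately, since the decomposition \eqref{eq:cohom-degree} is compatible with all of these splittings.

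For the second part, the target $\mfN$ of $\mu$ is affine and $\mu$ is proper, hence separated. For any quasi-coherent $\cG$ on $X$, $R^i\mu_*\cG$ is therefore the quasi-coherent sheaf on $\mfN$ associated with the module $H^i(X,\cG)$ (Hartshorne III.8.5 / Stacks 01XK). Applying this to $\cG=p^*\cF$ and $\cG=p^*(\cU\otimes\cF)$ and using the first part gives $R^i\mu_*=0$ for $i>0$.

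I see no real obstacle here. The only point to check is that the Schur--Weyl decomposition is a splitting of vector bundles rather than merely a filtration, which holds in characteristic zero because it is given by idempotents in $\C[S_a]$ acting on $\cQ^{\otimes a}$. A second small point is that the grading-wise statement needs no finiteness: $p$ is affine, so each auxiliary degree is a coherent bundle on the projective $G$.
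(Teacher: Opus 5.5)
Your proposal is correct and is essentially the paper's own proof: in characteristic zero $\cQ^{\otimes a}$ splits into Schur bundles $\schur^\lambda\cQ$ with $\ell(\lambda)\le r$, cohomology is additive and respects split idempotents, and over the affine target $R^i\mu_*$ is the sheaf associated to $H^i(X,-)$. One caveat: the paper later applies this to summands of $p^*\cF$ on $X$ whose splitting maps mix auxiliary degrees; for those only the total vanishing makes sense, but it follows by the same additivity argument.
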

\begin{proof}
In characteristic zero tensor powers of $\cQ$ are finite sums of polynomial Schur
bundles, and cohomology is additive and respects split idempotents.
\end{proof}

The following variant, with one dual quotient factor, is used only in the proof of
\cref{prop:bundle-model}.

\begin{lemma}[Vanishing with one dual quotient factor]\label{lem:dual-quotient}
For every partition $\lambda$ with at most $r$ parts,
$H^i(X,p^*(\cQ^\vee\otimes\schur^\lambda\cQ))=0$ for $i>0$.  Consequently, for
$E_0=\cO_G^{\oplus m}\oplus\cQ$ with $m\ge0$,
\begin{equation}\label{eq:end-vanishing}
 H^i\bigl(G,\cEnd(E_0)\otimes\SymOp^dT_G\bigr)=0\qquad(i>0,\ d\ge0).
\end{equation}
\end{lemma}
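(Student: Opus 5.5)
The plan is to run the proof of \cref{thm:bott} again, with the subbundle factor $\cU$ replaced by the dual quotient factor $\cQ^\vee$. By \eqref{eq:cohom-degree} it is enough to show that
\[
H^i\bigl(G,\cQ^\vee\otimes\schur^\lambda\cQ\otimes\SymOp^d(\cU^\vee\otimes\cQ)\bigr)=0\qquad(i>0)
\]
for each $d\ge0$. First expand $\SymOp^d$ by the Cauchy formula \eqref{eq:cauchy} into summands $\schur^\nu\cU^\vee\otimes\schur^\nu\cQ$. Then expand $\schur^\lambda\cQ\otimes\schur^\nu\cQ$ by Littlewood--Richardson into $\schur^\alpha\cQ$ with $\alpha$ a partition. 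Finally use $\cQ^\vee\cong\Lambda^{r-1}\cQ\otimes(\det\cQ)^{-1}$ and the dual Pieri rule, exactly as in \eqref{eq:dual-pieri} but on the quotient side. The weights of $\cQ^\vee\otimes\schur^\alpha\cQ$ are then $\alpha-\epsilon_j$. All their entries are nonnegative, except possibly the last entry, which can equal $-1$ when $\alpha_r=0$.

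Each summand therefore has Bott weight $\gamma=(\alpha'_1,\dots,\alpha'_r,-\nu_k,\dots,-\nu_1)$, where $\alpha'=\alpha-\epsilon_j$. There are three cases.
\begin{enumerate}
\item If $\alpha'_r\ge0$, then $\gamma$ is dominant, because $\alpha'_r\ge0\ge-\nu_k$. Its cohomology sits in degree $0$.
\item If $\alpha'_r=-1$ and $\nu_k\ge1$, then $\alpha'_r\ge-\nu_k$, so $\gamma$ is again dominant.
\item If $\alpha'_r=-1$ and $\nu_k=0$, look at the entries of $\gamma+\varrho$ in positions $r$ and $r+1$. They are $-1+(N-r)=k-1$ and $0+(N-r-1)=k-1$. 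The shifted weight is singular, so the summand is acyclic.
\end{enumerate}
In every case there is no higher cohomology, which proves the first claim. This case analysis is the only step needing care; it is the mirror image of Step~4 in \cref{thm:bott}, and the main thing to check is that the index bookkeeping really produces the coincidence $k-1=k-1$.

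For \eqref{eq:end-vanishing}, expand the endomorphism bundle:
\[
\cEnd(E_0)=\cO^{m^2}\oplus(\cQ)^{\oplus m}\oplus(\cQ^\vee)^{\oplus m}\oplus\cQ^\vee\otimes\cQ .
\]
Each summand, tensored with $\SymOp^dT_G$, is one of the cases already handled. The summands $\cO$ and $\cQ$ are instances of \eqref{eq:positive-vanishing} with $\lambda=\emptyset$ and $\lambda=(1)$. The summands $\cQ^\vee$ and $\cQ^\vee\otimes\cQ$ are instances of the lemma just proved, with $\lambda=\emptyset$ and $\lambda=(1)$. Taking the degree-$d$ piece, as in \eqref{eq:cohom-degree}, gives the vanishing for every $i>0$ and $d\ge0$.
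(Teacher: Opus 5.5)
Your proposal is correct and follows the paper's own proof essentially step for step. Both arguments expand by Cauchy, then by Littlewood--Richardson, then apply the dual Pieri rule on the quotient block. Both then run the same three-case Bott analysis, whose key point is the coincidence $k-1=k-1$ in positions $r,r+1$ of $\gamma+\varrho$. Finally, both use the same four-summand decomposition of $\cEnd(E_0)$ to deduce the second claim.
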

\begin{proof}
By \eqref{eq:cohom-degree} and \eqref{eq:cauchy} it suffices to treat
$\cQ^\vee\otimes\schur^\lambda\cQ\otimes\schur^\nu\cQ\otimes\schur^\nu\cU^\vee$.
Decompose $\schur^\lambda\cQ\otimes\schur^\nu\cQ$ into $\schur^\mu\cQ$, $\mu$ a
partition, and then $\cQ^\vee\otimes\schur^\mu\cQ$ by \eqref{eq:dual-pieri} applied
to the quotient block.  The summands are $\schur^\alpha\cQ$ with $\alpha=\mu-\epsilon_j$
dominant, hence every entry of $\alpha$ is nonnegative except possibly $\alpha_r=-1$.
The Bott weight is $\gamma=(\alpha_1,\dots,\alpha_r,-\nu_k,\dots,-\nu_1)$.  If
$\alpha_r\ge0$, or if $\alpha_r=-1$ and $\nu_k\ge1$, it is dominant.  If $\alpha_r=-1$
and $\nu_k=0$, the entries of $\gamma+\varrho$ in positions $r,r+1$ are both
$k-1$, and the summand is acyclic.  For \eqref{eq:end-vanishing} write
$\cEnd(E_0)=\cO^{\oplus m^2}\oplus\cQ^{\oplus m}\oplus(\cQ^\vee)^{\oplus m}\oplus
(\cQ\otimes\cQ^\vee)$ and use \eqref{eq:positive-vanishing} with $\lambda=\varnothing,(1)$
for the first two summands and the vanishing just proved with
$\lambda=\varnothing,(1)$ for the last two.  Finally, $H^i(G,\cF\otimes\SymOp^dT_G)$ is the
degree-$d$ part of $H^i(X,p^*\cF)$ by \eqref{eq:cohom-degree}.
\end{proof}

\subsection{Tensor surjectivity and the degree-zero restriction}\label{sec:tensor}

For a bundle $\cF$ on $G$ put $M_\cF=\Gamma(X,p^*\cF)$, a finite $A$-module since
$\mu$ is proper.  There is a natural tensor-of-sections map
\begin{equation}\label{eq:tensor-map}
 m_{\cF,\cG}:M_\cF\otimes_AM_\cG\longrightarrow M_{\cF\otimes\cG},
\end{equation}
which in general need not be surjective.  It is not the multiplication of a cluster
algebra, the latter will be composed with it.

\begin{proposition}[Surjectivity with one quotient factor]\label{prop:one-factor}
For $\cF=\schur^\lambda\cQ$ the natural maps
\begin{equation}\label{eq:one-factor-surj}
 V\otimes_\C M_\cF\twoheadrightarrow M_{\cQ\otimes\cF},\qquad
 M_\cQ\otimes_AM_\cF\twoheadrightarrow M_{\cQ\otimes\cF}
\end{equation}
are surjective.
\end{proposition}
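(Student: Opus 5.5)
The plan is to derive both surjections from the tautological sequence \eqref{eq:tautological-sequence}, using the subbundle-tensor vanishing \eqref{eq:subbundle-vanishing} of \cref{thm:bott} to kill the obstruction.

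\emph{Step 1: the exact sequence.} Tensor \eqref{eq:tautological-sequence} with $\cF=\schur^\lambda\cQ$. Then pull it back along the affine, hence flat on fibers, projection $p:X=T^*G\to G$. Pullback of a sequence of vector bundles is exact, so we get
\[
 0\longrightarrow p^*(\cU\otimes\cF)\longrightarrow V\otimes_\C p^*\cF\longrightarrow p^*(\cQ\otimes\cF)\longrightarrow0
\]
on $X$.

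\emph{Step 2: the first surjection.} Take global sections on $X$. The long exact sequence shows that the cokernel of
\[
 V\otimes_\C M_\cF=\Gamma(X,V\otimes p^*\cF)\longrightarrow M_{\cQ\otimes\cF}
\]
injects into $H^1(X,p^*(\cU\otimes\schur^\lambda\cQ))$. This group vanishes by \eqref{eq:subbundle-vanishing}, degree by degree in the auxiliary grading, so each graded piece is surjective and hence so is the whole map. The map obtained this way is $v\otimes s\mapsto \bar v\cdot s$, where $\bar v$ is the image of $v$ in $\Gamma(G,\cQ)\subset M_\cQ$. It is therefore the natural map in \eqref{eq:one-factor-surj}.

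\emph{Step 3: the second surjection.} The map $V\to M_\cQ$ sends $v$ to the constant section. Its image lies in the fiber-degree-zero part $\Gamma(G,\cQ)\subset M_\cQ$, using \eqref{eq:cohom-degree} with $d=0$. The first map of \eqref{eq:one-factor-surj} factors as
\[
 V\otimes_\C M_\cF\longrightarrow M_\cQ\otimes_\C M_\cF\longrightarrow M_\cQ\otimes_AM_\cF\xrightarrow{\ m_{\cQ,\cF}\ }M_{\cQ\otimes\cF}.
\]
This holds because the tensor-of-sections map \eqref{eq:tensor-map} is induced by the bilinear pairing of sections, and the pairing restricts to the Step 2 map on constants. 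Since the composite is surjective, $m_{\cQ,\cF}$ is surjective.

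\emph{Main point to check.} The only substantive input is the vanishing of $H^1$ of $p^*(\cU\otimes\schur^\lambda\cQ)$. It is exactly \eqref{eq:subbundle-vanishing}, applicable because $\lambda$ has at most $r$ parts; the case $\lambda=\varnothing$ is included. What remains is bookkeeping: verifying that the connecting-map argument is compatible with the auxiliary grading, so that surjectivity holds degree by degree, and that the identification $V=\Gamma(G,\cQ)$ uses the calibration stated before \cref{thm:bott}. Both are immediate from \eqref{eq:cohom-degree}.
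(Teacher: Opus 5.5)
Your proof is correct and is essentially the paper's: tensor \eqref{eq:tautological-sequence} with $\cF$, pull back to $X$, note that the cokernel of $V\otimes M_\cF\to M_{\cQ\otimes\cF}$ injects into $H^1(X,p^*(\cU\otimes\cF))$, which vanishes by \eqref{eq:subbundle-vanishing}, and then factor this map through $M_\cQ\otimes_AM_\cF$ via the degree-zero sections $V\to M_\cQ$. One wording fix: the pulled-back sequence is exact because it is a locally split sequence of vector bundles (and $p$ is flat), not because $p$ is affine.
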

\begin{proof}
Tensor \eqref{eq:tautological-sequence} with $\cF$, pull back to $X$ (exact, as $\cF$ is
locally free and $p$ flat) and take sections.  This gives the exact sequence
$V\otimes M_\cF\to M_{\cQ\otimes\cF}\to H^1(X,p^*(\cU\otimes\cF))=0$, where the last
vanishing is \eqref{eq:subbundle-vanishing}.  The first map factors through the second via the
degree-zero sections $V\to M_\cQ$.
\end{proof}

\begin{proposition}[Tensor powers, sums and retracts]\label{prop:tensor-retracts}
The map \eqref{eq:tensor-map} is surjective for every pair of bundles on $X$ which
are direct summands of finite direct sums of the bundles $(p^*\cQ)^{\otimes a}$,
$a\ge0$, and the splitting idempotents need not be homogeneous for the auxiliary grading.
\end{proposition}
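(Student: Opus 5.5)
The plan is to reduce to the basic case $\cF=(p^*\cQ)^{\otimes a}$, $\cG=(p^*\cQ)^{\otimes b}$ by formal manipulations, and then settle that case by induction on $a$ using \cref{prop:one-factor}.

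\emph{Reduction to pure tensor powers.} The map \eqref{eq:tensor-map} is additive in each variable. For finite direct sums, $M_{\cF\oplus\cF'}=M_\cF\oplus M_{\cF'}$, and the map splits into a direct sum of the maps for the summands, so surjectivity for sums follows from surjectivity for each pair of summands. For retracts, let $\cF$ be a summand of $\widetilde\cF$ with idempotent $e$, and $\cG$ a summand of $\widetilde\cG$ with idempotent $f$. The idempotents $e,f$ and $e\otimes f$ act on the section modules $A$-linearly and compatibly with $m$. Their images are $M_\cF$, $M_\cG$ and $M_{\cF\otimes\cG}$, whether or not they are homogeneous for the auxiliary grading. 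The map $m_{\cF,\cG}$ is then the restriction of $m_{\widetilde\cF,\widetilde\cG}$ to the images of $e\otimes f$, and a surjection commuting with idempotents remains surjective on their images. So it suffices to treat $\cF=\cQ^{\otimes a}$ and $\cG=\cQ^{\otimes b}$. If $a=0$ or $b=0$ the map is $A\otimes_A M\to M$, which is an isomorphism.

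\emph{Induction on $a$.} Since $\cQ^{\otimes b}$ is a finite sum of Schur bundles $\schur^\lambda\cQ$, \cref{prop:one-factor} gives the surjection $M_\cQ\otimes_AM_{\cQ^{\otimes c}}\twoheadrightarrow M_{\cQ^{\otimes(c+1)}}$ for every $c\ge0$. In the factorization
\[
 M_\cQ\otimes_A M_{\cQ^{\otimes(a-1)}}\otimes_A M_{\cQ^{\otimes b}}
 \longrightarrow M_{\cQ^{\otimes a}}\otimes_A M_{\cQ^{\otimes b}}
 \longrightarrow M_{\cQ^{\otimes(a+b)}},
\]
associativity of the tensor-of-sections maps shows that the composite equals $m_{\cQ,\cQ^{\otimes(a+b-1)}}\circ(\id\otimes m_{\cQ^{\otimes(a-1)},\cQ^{\otimes b}})$. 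By induction on $a$ the inner map is surjective, and tensoring preserves this. The outer map is surjective by \cref{prop:one-factor}. Hence the composite is surjective, and therefore so is its second factor $m_{\cQ^{\otimes a},\cQ^{\otimes b}}$.

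\emph{Main obstacle.} The one step needing care is the base case, namely that \cref{prop:one-factor} applies to an arbitrary tensor power $\cQ^{\otimes c}$ rather than to a single $\schur^\lambda\cQ$. This is handled by additivity, via the Schur decomposition of $\cQ^{\otimes c}$ and \cref{cor:positive-tensors}. Everything else is bookkeeping of associativity and idempotents, with no further cohomology vanishing required.
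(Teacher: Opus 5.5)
Your proof is correct and follows essentially the paper's argument. You Schur-decompose $\cQ^{\otimes c}$ to get \cref{prop:one-factor} for tensor powers, iterate it to treat $\cQ^{\otimes a},\cQ^{\otimes b}$, and pass to sums and to possibly inhomogeneous retracts by naturality of \eqref{eq:tensor-map}. The only difference is cosmetic: the paper iterates the first surjection $V\otimes_\C M_\cF\twoheadrightarrow M_{\cQ\otimes\cF}$ to get $V^{\otimes a}\otimes_\C M_{\cQ^{\otimes b}}\twoheadrightarrow M_{\cQ^{\otimes(a+b)}}$, which factors through $M_{\cQ^{\otimes a}}\otimes_AM_{\cQ^{\otimes b}}$, whereas you induct with the second surjection, associativity and right exactness of $\otimes_A$.
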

\begin{proof}
\Cref{prop:one-factor} holds for $\cF=\cQ^{\otimes b}$ by its Schur decomposition.
Applying it $a$ times gives a surjection $V^{\otimes a}\otimes M_{\cQ^{\otimes b}}
\twoheadrightarrow M_{\cQ^{\otimes(a+b)}}$, which factors through
$M_{\cQ^{\otimes a}}\otimes_AM_{\cQ^{\otimes b}}$, hence the tensor map is surjective,
and direct sums preserve this.  If $\cF,\cG$ are retracts of $\cF',\cG'$ with inclusions
$i_F,i_G$ and retractions $p_F,p_G$, then for a section $s$ of $\cF\otimes\cG$ include
it into $\cF'\otimes\cG'$, lift to $\sum_au_a\otimes v_a$, and apply $p_F,p_G$.
Naturality of \eqref{eq:tensor-map} gives
$m(\sum_a p_Fu_a\otimes p_Gv_a)=(p_F\otimes p_G)(i_F\otimes i_G)s=s$.  The argument is
ungraded.
\end{proof}

The first map in \eqref{eq:one-factor-surj} with $\cF=\cO_G$ is a surjection
\begin{equation}\label{eq:MQ-generated}
 A\otimes_\C V\twoheadrightarrow M_\cQ
\end{equation}
by generators of degree zero.

\begin{lemma}[Restriction of the quotient bundle]\label{lem:MQ-restriction}
Restriction to the zero section induces an isomorphism
\begin{equation}\label{eq:MQ-restriction}
 M_\cQ/A_+M_\cQ\xrightarrow{\ \sim\ }H^0(G,\cQ)=V .
\end{equation}
\end{lemma}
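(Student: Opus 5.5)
We use the auxiliary grading and the decomposition \eqref{eq:cohom-degree}:
\[
 M_\cQ=\Gamma(X,p^*\cQ)=\bigoplus_{d\ge0}H^0\bigl(G,\cQ\otimes\SymOp^d(\cU^\vee\otimes\cQ)\bigr),
\]
with degree-zero part $H^0(G,\cQ)=V$. Restriction to the zero section is exactly the projection onto this degree-zero summand. It is therefore surjective, and it kills $A_+M_\cQ$, since $A_+$ consists of fiber functions of positive degree and these vanish on the zero section. This gives a surjection $M_\cQ/A_+M_\cQ\twoheadrightarrow V$. Injectivity is then equivalent to the inclusion
\[
 \bigoplus_{d>0}(M_\cQ)_d\subseteq A_+M_\cQ .
\]
That is, we must show that every section of positive fiber degree is an $A$-combination of sections, with coefficients of positive degree.

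The surjection \eqref{eq:MQ-generated} does exactly this. The map $A\otimes_\C V\twoheadrightarrow M_\cQ$ sends $a\otimes v$ to $a\cdot s_v$, where $s_v\in(M_\cQ)_0$ is the pullback of the global section $v\in H^0(G,\cQ)=V$. This map respects the auxiliary grading, because $V$ sits in degree zero. Its degree-$d$ piece is therefore a surjection $A_d\otimes V\twoheadrightarrow(M_\cQ)_d$. For $d>0$ the image lies in $A_+M_\cQ$, which gives the inclusion above. For $d=0$ it is the identity $A_0\otimes V=V\to H^0(G,\cQ)$, using $A_0=\C$; this holds because $G$ is projective and connected, so $H^0(G,\cO)=\C$.

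Combining the two paragraphs, $M_\cQ/A_+M_\cQ\cong(M_\cQ)_0=V$, and the identification is induced by restriction to the zero section. The only nontrivial ingredient is \cref{prop:one-factor} with $\cF=\cO_G$, whose surjectivity rests on the subbundle vanishing \eqref{eq:subbundle-vanishing} with $\lambda=\varnothing$, namely $H^1(X,p^*\cU)=0$. We therefore do not expect a real obstacle. The one point to check carefully is that both the map in \eqref{eq:MQ-generated} and the restriction map are homogeneous for the auxiliary grading. This holds because the tautological sequence \eqref{eq:tautological-sequence} is pulled back from $G$ and $\SymOp(T_G)$ is graded by fiber degree, so the identifications with graded pieces are canonical.
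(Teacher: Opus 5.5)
Your proof is correct and is the paper's argument: the degree decomposition \eqref{eq:cohom-degree}, the graded surjection \eqref{eq:MQ-generated} giving $(M_\cQ)_{>0}=A_+M_\cQ$, and restriction to the zero section as projection onto the degree-zero summand. The one point you leave implicit is that $V\to H^0(G,\cQ)$ is an isomorphism, which the paper obtains from Borel--Weil, or from \eqref{eq:tautological-sequence} together with the acyclicity of $\cU$.
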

\begin{proof}
$M_\cQ=\bigoplus_d(M_\cQ)_d$ with $(M_\cQ)_0=H^0(G,\cQ)$ by \eqref{eq:cohom-degree}.
The surjection \eqref{eq:MQ-generated} is graded with $V$ in degree $0$, hence
$(M_\cQ)_{>0}=A_+M_\cQ$ and $M_\cQ/A_+M_\cQ=(M_\cQ)_0$, and restriction to the zero
section kills positive fiber degree and is the identity in degree zero.  Finally
$V\to H^0(G,\cQ)$ is an isomorphism (Borel--Weil for $\cQ$, or
\eqref{eq:tautological-sequence} with $\cU$ acyclic).
\end{proof}

This is an ordinary residue-field quotient of a specified finite module, not a base
change assertion for the nonflat $\mu$, and nothing is claimed about
$\Tor_i^A(M_\cQ,\C)$ or about derived pullback of $\cO_{\mfN}$.

\subsection{The formal bundle model at a wall point}\label{sec:bundle-model}

Fix a $T$-fixed point $z_J\in Q_n^T$ with $k=k_J\ge2$ and put $r=k-1$, $V=V_J$,
$N=\dim V=2r+1$ (\cref{thm:fiber-grass}), $F_J=\rho^{-1}(z_J)$.  From now on
$G=\Gr(k,V)$, $X=T^*G$ and $\cU,\cQ$ refer to this Grassmannian, and
$\cQ_J=\cQ$.  Recall $J'=R_1J$, $J''=(J:R_1)$, $V_J=J''/J'$.

\begin{lemma}[The scheme-theoretic fiber]\label{lem:fiber-scheme}
$F_J\cong\Gr(k,V_J)$ as schemes, and on it there are exact sequences
\begin{align}
 0&\longrightarrow\cU_J\longrightarrow V_J\otimes\cO_{F_J}\longrightarrow\cQ_J\longrightarrow0,
 \label{eq:fiber-taut}\\
 0&\longrightarrow\cU_J\longrightarrow(R_0/J')\otimes\cO_{F_J}\longrightarrow\cR_0|_{F_J}\longrightarrow0,
 \label{eq:fiber-even-seq}
\end{align}
and $\cR_1|_{F_J}=(R_1/J)\otimes\cO_{F_J}$.
\end{lemma}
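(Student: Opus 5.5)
The plan has three steps: establish the isomorphism $F_J\cong\Gr(k,V_J)$ scheme-theoretically through its functor of points, then obtain both exact sequences by globalizing \cref{prop:cluster-socle}.

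\emph{Step 1: the functor of points.} Let $T_0$ be a test scheme and take a $T_0$-point of $F_J=\rho^{-1}(z_J)$. This is a family of balanced ideals $I\subset\cO_{T_0}[x,y]$ whose odd quotient is the constant quotient $R_1\twoheadrightarrow R_1/J$. The reason is the one used in \cref{thm:fiber-grass}(b): $\rho$ remembers exactly the odd quotient, and mapping scheme-theoretically to the reduced point $z_J$ fixes that quotient. It follows that $I^-=J\otimes\cO_{T_0}$. The ideal conditions $R_1I^+\subseteq I^-$ and $I^+\supseteq R_1I^-$ then sandwich the even part,
\[
J'\otimes\cO_{T_0}\subseteq I^+\subseteq J''\otimes\cO_{T_0}.
\]
Local freeness of the quotient $\cO_{T_0}[x,y]/I$, of rank $(n,n)$, makes $I^+/J'\otimes\cO_{T_0}$ a rank-$k$ subbundle of $V_J\otimes\cO_{T_0}$ with locally free cokernel. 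Conversely, any such subbundle defines an ideal, since $\mfm_XV_J=0$ by \cref{thm:fiber-grass}(a). Cyclicity is automatic because $1$ generates. The generic-chamber stability also holds: the odd part is the fixed cyclic quotient, and stability in the cyclic chamber is generation by the framing. These two constructions are mutually inverse and compatible with base change, so $F_J$ represents the Grassmannian functor. This gives $F_J\cong\Gr(k,V_J)$ as schemes, not merely after reduction.

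\emph{Step 2: the two sequences.} Sequence \eqref{eq:fiber-taut} is then the tautological sequence, with $\cU_J$ the universal subbundle $I^+/J'$. For \eqref{eq:fiber-even-seq}, the universal even algebra is
\[
\cR_0|_{F_J}=(R_0\otimes\cO)/I^+=\bigl((R_0/J')\otimes\cO\bigr)/\cU_J,
\]
which yields the second sequence. The odd statement $\cR_1|_{F_J}=(R_1/J)\otimes\cO$ follows directly from $I^-=J\otimes\cO$. This is consistent with \cref{prop:cluster-socle}(a), since $(R_0/J')/\cU_J$ is an extension of $R_0/J''$ by $\cQ_J$.

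\emph{Main obstacle.} The delicate point is the first claim in Step 1: that a $T_0$-point of the scheme-theoretic fiber (over possibly nonreduced $T_0$) has odd ideal exactly the constant $J\otimes\cO_{T_0}$. Constancy on closed points alone would give this only for reduced $T_0$. I would argue it through the Quot-scheme description in \cref{prop:models-identified}(a). The composite $T_0\to Q_n$ factors through $\SpecOp k(z_J)$, which means the pulled-back universal odd quotient on $Q_n$ is the base change of the quotient at $z_J$, namely $R_1/J$. Because $Q_n$ is the reduced Quot scheme, I would use the morphism $Y_n\to\Quot^n_X(R_1)$ (the unreduced scheme) together with the fact that $\rho$ factors through it. The fiber over the reduced point then corresponds to the constant quotient.

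If that step resists, the fallback is to show the isomorphism on closed points using \cref{thm:fiber-grass}(b), and then compare tangent spaces. $\Gr(k,V_J)$ is smooth of dimension $k(k-1)$, and one would show that the Zariski tangent space of $F_J$ at each point has dimension $\dim\Hom(W,V_J/W)$. This can be computed from the deformation complex, restricting to deformations that keep the odd quotient fixed, and it yields reducedness of $F_J$.
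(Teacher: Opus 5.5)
Your proposal is correct and matches the paper's proof. The paper also identifies the $T_0$-points of the fiber through the unreduced Quot scheme: the fiber over $z_J$ is the same whether computed there or in $Q_n$, so the odd part is $J\otimes\cO_{T_0}$. It then sandwiches $I^+$ between $J'\otimes\cO_{T_0}$ and $J''\otimes\cO_{T_0}$, uses $\mfm_XV_J=0$ for the converse, and reads both exact sequences off the universal subbundle, exactly as you do, so your tangent-space fallback is never needed.
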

\begin{proof}
$Q_n$ is the reduced Quot scheme of odd quotients (\cref{prop:models-identified}).
The odd quotient of the universal cluster is flat, hence $Y_n$ maps to the Quot scheme,
the map factors through $Q_n$, and the fiber over the closed point $z_J$ is the same
whether computed in the Quot scheme or in $Q_n$.  A $T_0$-point of the fiber is thus
a $T_0$-flat family of balanced clusters $I\subset R\otimes\cO_{T_0}$ whose odd part
is $J\otimes\cO_{T_0}$.  Its even part $I^+$ contains $R_1J\otimes\cO_{T_0}=J'\otimes\cO_{T_0}$,
and $I^+/J'\otimes\cO_{T_0}\subset(R_0/J')\otimes\cO_{T_0}$ is the kernel of a
surjection onto the locally free $\cR_0$ of rank $n$, hence a subbundle of rank
$k=\dim(R_0/J')-n$.  The condition $R_1I^+\subset J\otimes\cO_{T_0}$ says
$I^+\subset(J\otimes\cO_{T_0}:R_1)=J''\otimes\cO_{T_0}$, the colon commuting with
the flat extension of scalars from $\C$, hence the subbundle lies in $V_J\otimes\cO_{T_0}$.
Conversely every rank-$k$ subbundle of $V_J\otimes\cO_{T_0}$ gives an ideal, by
$\mfm_XV_J=0$, with flat balanced quotient.  These constructions are inverse and
functorial, including on nonreduced $T_0$, hence the fiber functor is the Grassmannian
functor.  The universal subbundle gives \eqref{eq:fiber-taut} and
\eqref{eq:fiber-even-seq}, and the odd quotient is constant by definition.
\end{proof}

By \eqref{eq:fiber-even-seq} and a vector-space splitting $R_0/J'=(R_0/J'')\oplus V_J$,
\begin{equation}\label{eq:fiber-bundles}
 \cR_0|_{F_J}\cong\cO^{\oplus(n-r)}\oplus\cQ_J,\qquad\cR_1|_{F_J}\cong\cO^{\oplus n},
\end{equation}
in agreement with \cref{prop:cluster-socle}(a) ($\dim R_0/J''=n-r$ by
\cref{thm:fiber-grass}(c)).

\emph{The formal normal form.}  By \cref{sec:local-model}, the local model of $\rho$
at $z_J$ is the Springer map $\mu:T^*\Gr(r+1,V)\to\mfN_{r,2r+1}$ times a smooth core
of dimension $2\ell$, $\ell=n-r(r+1)$.  We use the relative form of the Bellamy--Craw
normal form \cite[Thm.~3.2, eq.~(3.5), Thm.~3.7]{BellamyCraw} (\cref{thm:BC}(a),(b)).
Completing along the scheme-theoretic fibers, there is a commutative diagram of
formal schemes
\begin{equation}\label{eq:formal-diagram}
\begin{tikzcd}[column sep=large]
 (\widehat{Y_n})_{F_J}\arrow[r,"\sim"]\arrow[d,"\widehat\rho"'] &
 \widehat{(T^*G\times\A^{2\ell})}_{G\times\{0\}}\arrow[d,"\widehat{\mu\times\id}"]\\
 (\widehat{Q_n})_{z_J}\arrow[r,"\sim"] & \widehat{(\mfN_{r,2r+1}\times\A^{2\ell})}_{(0,0)} .
\end{tikzcd}
\end{equation}
It is obtained from the \'etale-local statement by completing at $z_J$, the \'etale
maps of \cref{thm:BC}(b) inducing isomorphisms of completions and the Cartesian
squares giving the top row.  Only the formal smoothness of the core is used.

\begin{remark}[The citation, checked against its hypotheses]\label{rem:BC-check}
Theorem~3.2 of \cite{BellamyCraw} is stated for the variation-of-GIT morphism
$f:\mfM_\theta(\mathbf v,\mathbf w)\to\mfM_{\theta_0}(\mathbf v,\mathbf w)$ with
$\theta_0$ in the boundary of the closure of the chamber of $\theta$, at a closed
point $x$ with $\theta_0$-polystable representative
$M_\infty\oplus M_0^{\oplus m_0}\oplus\dots\oplus M_k^{\oplus m_k}$.  Its
conclusion is the diagram of \cref{thm:BC}(a),(b), with Cartesian squares and
\'etale horizontal maps, and (3.5) of \emph{loc.\ cit.} is its completion.  In
our case $\theta$ is the cyclic chamber, $\theta_0=(0,1)$ lies on its wall, and at
$z_J$ the polystable representative is $E_{\mathrm{fr}}\oplus S_0^{\oplus r}$ by
\eqref{eq:polystable}, hence $k=0$, $M_\infty=E_{\mathrm{fr}}$ with
$\beta^{(\infty)}=(1,n-r,n)$ on the vertices $(\infty,0,1)$, $M_0=S_0$ with
$\beta^{(0)}=(0,1,0)$ and $m_0=r$.  For the framed doubled affine $A_1$ quiver
with one framing edge at the vertex $0$ the Cartan form is
$(\alpha,\alpha)=2(a^2+b^2+c^2)-2ab-4bc$ for $\alpha=(a,b,c)$, hence
\[
 \ell=p(\beta^{(\infty)})=1-\tfrac12(\beta^{(\infty)},\beta^{(\infty)})=n-r(r+1),\qquad
 \mathbf n_0=-(\beta^{(\infty)},\beta^{(0)})=2r+1,\qquad p(\beta^{(0)})=0.
\]
Thus the Ext-graph is one vertex without loops, framed by $2r+1$, the core has
dimension $2\ell$, and the local quiver varieties are
$\mfM_\varrho(r,2r+1)\to\mfM_0(r,2r+1)$, in agreement with
\cref{thm:fiber-grass}(c), \cref{cor:threshold} and \eqref{eq:local-model}.

Two points of the citation deserve comment.  First, \cite{BellamyCraw} takes
$\mfM_{\theta_0}(\mathbf v,\mathbf w)$ to be the GIT quotient of the
scheme-theoretic moment-map fiber, whereas $Q_n$ was given its reduced structure in
\cref{sec:quiver}, and the two agree.  Indeed, for the local model the dimension vector
$(1,r)$ of the deframed quiver (two vertices joined by $2r+1$ edges) lies in
Crawley-Boevey's set $\Sigma_0$ because $p(1,r)=r(r+1)>s(2r+1-s)=p(1,s)$ for
$0\le s<r$, hence the moment-map fiber and its quotient $\mfM_0(r,2r+1)$ carry their
reduced structure and the quotient is normal (\cite[Thm.~1.1]{CrawleyBoeveyNormality},
the reducedness of the natural scheme structures for dimension vectors in
$\Sigma_0$ is proved in \cite{CrawleyBoeveyMoment} and recalled in \S1 of
\cite{CrawleyBoeveyNormality}).  The
\'etale map $q$ of \cref{thm:BC}(b) then makes its source reduced, and the \'etale
map $p$ makes $\mfM_{\theta_0}(\mathbf v,\mathbf w)$ reduced, and in fact normal, at
every closed point (reducedness and normality ascend along flat local homomorphisms
with reduced, respectively normal, fibers and descend along all of them
\cite[Thm.~23.9 and its corollary]{Matsumura}).  This also reproves the normality of
$Q_n$ used in \cref{prop:models-identified}.  Second, the identification of the
fibers in \cite[Cor.~3.4]{BellamyCraw} is scheme-theoretic, since the fiber of an
\'etale map over a $\C$-point is a reduced point.  On the model the fiber over
$0$ is the zero section, because $BA=0$ with $A$ surjective forces $B=0$
(\cref{sec:local-model}), hence \eqref{eq:formal-diagram} carries $F_J$ to
$G\times\{0\}$, in agreement with \cref{lem:fiber-scheme}.  The tautological-bundle
statement \cite[Prop.~3.8]{BellamyCraw}, whose displayed formula omits the
trivial summands coming from $M_\infty$, is not used, since the underlying bundles are
determined below by their restriction to $F_J$.
\end{remark}

\begin{proposition}[Underlying tautological bundles]\label{prop:bundle-model}
On \eqref{eq:formal-diagram}, as vector bundles,
\begin{equation}\label{eq:bundle-model}
 \widehat{\cR_0}\cong\cO^{\oplus(n-r)}\oplus\widehat{p^*\cQ_J},\qquad
 \widehat{\cR_1}\cong\cO^{\oplus n},
\end{equation}
where $p:T^*G\times\A^{2\ell}\to G$ is the projection and the completion is along
$G\times\{0\}$.  The quotient bundle here is the positive $\cQ_J$, not $\cQ_J^\vee$,
and the formula includes the trivial summands contributed by the framed stable
factor.  No constancy of the multiplication of $\cP_n$ is asserted.
\end{proposition}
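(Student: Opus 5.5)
The plan is a deformation argument: extend the splitting \eqref{fiber-bundles} on $F_J$ to the whole formal neighbourhood, using the vanishing of $H^1$ of the endomorphism sheaf (\cref{lem:dual-quotient}). Write $\widehat X:=\widehat{(T^*G\times\A^{2\ell})}_{G\times\{0\}}$, with ideal sheaf $\mathcal J$ of $G\times\{0\}$ and thickenings $X_s=V(\mathcal J^{s+1})$. By \cref{lem:fiber-scheme} and \eqref{fiber-bundles}, the restriction of $\widehat{\cR_0}$ to $X_0=F_J\cong G$ is $E_0=\cO^{\oplus(n-r)}\oplus\cQ_J$, and the restriction of $\widehat{\cR_1}$ is $\cO^{\oplus n}$. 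Both $\cO^{\oplus(n-r)}\oplus\widehat{p^*\cQ_J}$ and $\widehat{\cR_0}$ are vector bundles on $\widehat X$ with the same restriction $E_0$. The task is therefore to show that a vector bundle on $\widehat X$ is determined by its restriction to $G$, at least when that restriction is $E_0$ or trivial.

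\emph{Step 1 (graded pieces).} The conormal sheaves satisfy $\mathcal J^s/\mathcal J^{s+1}\cong\SymOp^s(T_G\oplus\C^{2\ell})$ restricted to $G$. Here the cotangent fibers contribute $T_G=\cU^\vee\otimes\cQ$, and the core contributes trivial summands. So $\mathcal J^s/\mathcal J^{s+1}$ is a direct sum of bundles $\SymOp^dT_G\otimes(\text{trivial})$.

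\emph{Step 2 (inductive lifting of an isomorphism).} Suppose $\phi_s:\widehat{\cR_0}|_{X_s}\xrightarrow{\sim}(\cO^{\oplus(n-r)}\oplus p^*\cQ_J)|_{X_s}$ is given. Let $\mathcal H:=\cHom(\widehat{\cR_0},\cO^{\oplus(n-r)}\oplus p^*\cQ_J)$; it is locally free on $\widehat X$ with restriction $\cEnd(E_0)$ to $G$. Take the exact sequence $0\to\cEnd(E_0)\otimes\mathcal J^{s+1}/\mathcal J^{s+2}\to\mathcal H|_{X_{s+1}}\to\mathcal H|_{X_s}\to0$. By Step 1 and \eqref{end-vanishing}, its $H^1$ term vanishes in every fiber degree $d$, so $\phi_s$ lifts to a homomorphism $\phi_{s+1}$. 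By Nakayama, $\phi_{s+1}$ is an isomorphism because it is one modulo the nilpotent ideal. The lifts form a compatible system, and passing to the limit gives an isomorphism on $\widehat X$, since $\widehat X$ is the formal completion and coherent sheaves on it are inverse limits of their restrictions to the $X_s$. For $\widehat{\cR_1}$ the same argument needs only $H^1(G,\SymOp^dT_G)=0$, which is \eqref{positive-vanishing} with $\lambda=\varnothing$.

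\emph{Step 3 (the sign check).} We must confirm that the summand is $\cQ_J$ and not $\cQ_J^\vee$. This already holds on $F_J$ by \eqref{fiber-even-seq}, because $\cR_0|_{F_J}$ is a quotient of $(R_0/J')\otimes\cO$ with kernel $\cU_J$. Consistency with the tautological quotient of the local model, with $A$ surjective and $U\cong V/K$ as in \cref{sec:local-model}, is a remark and is not needed.

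The main obstacle is Step 2, specifically passing from the fiber-degree vanishing \eqref{end-vanishing} on $G$ to the vanishing of the obstruction group on each thickening. For this, $\mathcal J^{s+1}/\mathcal J^{s+2}$ must split as $\bigoplus\SymOp^dT_G\otimes(\text{trivial})$ on $G$ and not merely carry a filtration. This splitting holds because $T^*G\times\A^{2\ell}$ is the total space of the vector bundle $T^*G\oplus\C^{2\ell}$ over $G$, with the zero section as $G\times\{0\}$, so its conormal algebra is canonically graded. The extension is also compatible with the $T$-action, but no equivariance of the splitting is asserted, so the constructed isomorphism is only one of vector bundles, exactly as the statement claims.
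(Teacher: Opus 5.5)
Your proposal is correct and follows essentially the same route as the paper's proof. Both lift the isomorphism \eqref{eq:fiber-bundles} on $F_J$ through the thickenings $V(\mathcal J^{s+1})$. Both kill the obstruction in $H^1\bigl(G,\cEnd(E_0)\otimes\mathcal J^{s+1}/\mathcal J^{s+2}\bigr)$ by \eqref{eq:end-vanishing}, using the decomposition $\mathcal J^{s}/\mathcal J^{s+1}\cong\bigoplus_{d+e=s}\SymOp^dT_G\otimes\SymOp^e\C^{2\ell}$. For $\widehat{\cR_1}$ both use \eqref{eq:positive-vanishing} with $\lambda=\varnothing$. The only difference is presentational: you get the lift from surjectivity of $H^0$ in the restriction sequence for $\mathcal H$, where the paper uses a \v{C}ech cocycle, and these are equivalent.
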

\begin{proof}
Write $\widehat Y=(\widehat{Y_n})_{F_J}$, let $\cI$ be the ideal of $F_J$ and
$Y_m=V(\cI^{m+1})$ its thickenings.  The isomorphism of formal schemes in
\eqref{eq:formal-diagram} identifies $\cI/\cI^2$ with the conormal bundle of
$G\times\{0\}$ in $T^*G\times\A^{2\ell}$, and $F_J\subset Y_n$ is a regular embedding,
hence
\begin{equation}\label{eq:conormal-powers}
 \cI^m/\cI^{m+1}\cong\SymOp^m(\cI/\cI^2)\cong\bigoplus_{d+e=m}\SymOp^dT_G\otimes\SymOp^e\C^{2\ell}
 \qquad(m\ge1).
\end{equation}

\emph{Lifting principle.}  Let $\cE,\cE'$ be vector bundles on $\widehat Y$ and
$\phi_0:\cE|_{F_J}\xrightarrow{\sim}\cE'|_{F_J}$, and suppose
\begin{equation}\label{eq:lifting-vanishing}
 H^1\bigl(F_J,\cHom(\cE|_{F_J},\cE'|_{F_J})\otimes\cI^m/\cI^{m+1}\bigr)=0\qquad(m\ge1).
\end{equation}
Then $\phi_0$ lifts to an isomorphism $\cE\cong\cE'$ on $\widehat Y$.  Indeed, given an
isomorphism $\phi_m$ over $Y_m$, the inclusion $Y_m\subset Y_{m+1}$ is a square-zero
thickening with ideal $\cI^{m+1}/\cI^{m+2}$ and both sheaves are locally free, hence
$\phi_m$ lifts locally, two local lifts differ by a section of
$\cHom(\cE|_{F_J},\cE'|_{F_J})\otimes\cI^{m+1}/\cI^{m+2}$, and the differences on
overlaps form a \v Cech $1$-cocycle whose class is the obstruction to a global
lift, and it vanishes by \eqref{eq:lifting-vanishing}.  The global lift $\phi_{m+1}$ is an
isomorphism because $\phi_0$ is and the thickening is nilpotent, and the compatible
system $(\phi_m)$ is an isomorphism of locally free sheaves on the formal scheme.

\emph{Application.}  Take $\cE=\widehat{\cR_0}$ and $\cE'=\widehat{p^*E_0}$ with
$E_0=\cO^{\oplus(n-r)}\oplus\cQ_J$.  Their restrictions to $F_J$ agree by
\eqref{eq:fiber-bundles}, and \eqref{eq:lifting-vanishing} is
\eqref{eq:end-vanishing} tensored with the constant factors $\SymOp^e\C^{2\ell}$.
For $\widehat{\cR_1}$ take $E_0=\cO^{\oplus n}$ and use
\eqref{eq:positive-vanishing} with $\lambda=\varnothing$.
\end{proof}

\begin{remark}\label{rem:associated-bundle}
\Cref{thm:BC}(c) yields the same isomorphism directly through the \'etale slice.  The
pullback of $\cR_i$ to the slice is the bundle associated with the restriction to
$H=GL(U)$ of the vertex representation, which is $(E_{\mathrm{fr}})_0\oplus U$ and
$(E_{\mathrm{fr}})_1$ by \eqref{eq:polystable}, and the bundle associated with the
standard representation $U$ is the positive quotient $\cQ_J$ because $A:V\to U$ is
surjective in the cluster chamber.  The proof above shows that this clause is not
needed, since the isomorphism type of the underlying bundles is forced by their restriction
to the fiber and Borel--Weil--Bott.
\end{remark}

\begin{warning}[Bundles, not algebras]\label{warn:bundles-not-algebras}
The isomorphisms \eqref{eq:bundle-model} are isomorphisms of vector bundles.  The
multiplication of $\cP_n$, the trace-zero projector and the differential of $\cK_n$
are transported to bundle maps on the formal model and may depend on every formal
coordinate.  On the big cell of \eqref{eq:bigcell}, completion retains all powers of
the cotangent block $D$ and of the core variables, and a scalar trace $\tau_f$ remains
a nonconstant formal function of $(u,BA)$.  Nothing below replaces it by its value on
the zero section.
\end{warning}

\subsection{Passage to the completed neighbourhood}\label{sec:completion}

Let $A=\Gamma(T^*G,\cO)$ as in \cref{sec:bott}, $A'=A[u_1,\dots,u_{2\ell}]$,
$\mathfrak n=(A_+,u_1,\dots,u_{2\ell})$ and $R_{\mathrm{mod}}=\widehat{A'_{\mathfrak n}}$.
The maps $A\to A'\to A'_{\mathfrak n}\to R_{\mathrm{mod}}$ are flat, the last
faithfully so \cite[Tags 00MB, 00MC]{Stacks}.  For a model bundle $\cF_X=p^*\cF$,
flat base change gives
\begin{equation}\label{eq:flat-cohom}
 H^i\bigl(X\times_{\SpecOp A}\SpecOp R_{\mathrm{mod}},\ \cF_X\otimes_AR_{\mathrm{mod}}\bigr)
 \cong H^i(X,\cF_X)\otimes_AR_{\mathrm{mod}}
\end{equation}
\cite[Tag 02KH]{Stacks} (tensor a finite affine \v Cech complex with the flat
algebra).  For a proper scheme over a complete noetherian local ring and a coherent
sheaf, the theorem on formal functions \cite[Tags 02OC, 0A0H]{Stacks} identifies the
cohomology, which is finite and hence complete, with the cohomology of the completed
sheaf on the formal completion along the closed fiber.  Applied to the model over
$R_{\mathrm{mod}}$ and to $Y_n\times_{Q_n}\SpecOp\widehat{\cO}_{Q_n,z_J}$ over
$\widehat{\cO}_{Q_n,z_J}$, and combined with the isomorphism of formal schemes
\eqref{eq:formal-diagram}, this identifies the completed higher direct images
$(R^i\rho_*\cF)^\wedge_{z_J}$ of a bundle $\cF$ on $Y_n$ with the cohomology of the
corresponding completed model bundle, together with all natural maps between such
groups.

\begin{lemma}[The completed positive-bundle package]\label{lem:completed-package}
After adding the core and completing at the origin, the following remain valid.
\textup{(i)} Higher direct images of positive tensor constructions in $p^*\cQ$
vanish.  \textup{(ii)} The tensor-of-sections maps for those bundles are surjective.
\textup{(iii)} The restriction isomorphism \eqref{eq:MQ-restriction} holds with $A_+$
replaced by the maximal ideal of the completed model base.  Moreover, \textup{(i)} and
\textup{(ii)} hold for arbitrary formal direct summands of these completed bundles,
even when the splitting maps are not homogeneous.
\end{lemma}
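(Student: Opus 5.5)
The plan is to reduce all three statements to the uncompleted model over $A$ by flat base change \eqref{eq:flat-cohom}, and then pass to formal completions using the theorem on formal functions as recalled just before the lemma. Write $X'=X\times\A^{2\ell}$ with projection to $G$, so that $\Gamma(X',p^*\cF)=M_\cF\otimes_\C\C[u_1,\dots,u_{2\ell}]$ and $H^i(X',p^*\cF)=H^i(X,p^*\cF)\otimes_\C\C[u]$. Since the core factor is affine and enters only through a polynomial tensor factor, \cref{thm:bott} and \cref{cor:positive-tensors} give $H^i(X',p^*\cF)=0$ for $i>0$ and every positive tensor construction $\cF$ in $\cQ$. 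Tensoring with the flat $A'$-algebra $R_{\mathrm{mod}}$ and applying \eqref{eq:flat-cohom} then gives vanishing over $\SpecOp R_{\mathrm{mod}}$. The theorem on formal functions identifies this with the cohomology of the completed sheaf on the formal completion along $G\times\{0\}$, which proves (i).

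For (ii), surjectivity over $A$ is \cref{prop:tensor-retracts}, and it persists after $-\otimes_\C\C[u]$, because $M_{\cF}[u]\otimes_{A'}M_{\cG}[u]=(M_\cF\otimes_AM_\cG)[u]$. Next, $-\otimes_{A'}R_{\mathrm{mod}}$ is right exact and commutes with $\otimes$, via $(M\otimes_{A'}N)\otimes R_{\mathrm{mod}}=(M\otimes R_{\mathrm{mod}})\otimes_{R_{\mathrm{mod}}}(N\otimes R_{\mathrm{mod}})$. So the completed tensor map is the base change of a surjection, and is therefore surjective. For (iii), the module $M_\cQ[u]\otimes_{A'}R_{\mathrm{mod}}$ modulo $\widehat{\mathfrak n}$ equals $M_\cQ[u]/\mathfrak nM_\cQ[u]=M_\cQ/A_+M_\cQ=V$ by \cref{lem:MQ-restriction}. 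I also have to check that this quotient is realized by restriction to $G\times\{0\}$, which is naturality of the restriction map under base change.

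The main obstacle is the last sentence: formal direct summands whose idempotents are not homogeneous and need not come from $A'$. For (i), I would argue that a direct summand of a bundle with vanishing higher cohomology has vanishing higher cohomology, because cohomology on the formal scheme is additive and respects split idempotents; this uses nothing about gradings. For (ii), I would rerun the retract argument of \cref{prop:tensor-retracts} verbatim on the completed formal scheme, using only naturality of the tensor-of-sections map and the surjectivity already established for the ambient completed tensor powers. That argument was explicitly ungraded, so the remaining point to verify carefully is that the tensor map on formal sections is natural with respect to arbitrary formal bundle maps.
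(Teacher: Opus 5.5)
Your proposal is correct and takes essentially the same route as the paper: flat base change \eqref{eq:flat-cohom} combined with formal functions for (i), right-exactness of $-\otimes R_{\mathrm{mod}}$ together with associativity of tensor products for (ii), the residue-field computation through \cref{lem:MQ-restriction} for (iii), and the ungraded retract argument for formal direct summands. The paper also records one small point you left implicit: the section modules are finite over the complete ring $R_{\mathrm{mod}}$, so their ordinary tensor product is already complete and coincides with the completed one, and this is what identifies your base-changed surjection with the formal tensor-of-sections map.
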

\begin{proof}
(i) is \eqref{eq:flat-cohom} applied to \cref{thm:bott} and \cref{cor:positive-tensors},
transferred to completed bundles by formal functions.  (ii) A surjection of finite
$A$-modules remains surjective after $\otimes_AR_{\mathrm{mod}}$.  By associativity of
tensor products its source is the tensor product over $R_{\mathrm{mod}}$ of the two
base-changed section modules, and naturality of flat base change and formal
functions identifies the base-changed map with the formal tensor-of-sections map.
The modules are finite over the complete $R_{\mathrm{mod}}$, hence their ordinary
tensor product is complete and equals the completed one.  (iii)
$(M_\cQ\otimes_AR_{\mathrm{mod}})\otimes_{R_{\mathrm{mod}}}\C=M_\cQ\otimes_A\C\cong H^0(G,\cQ)$
by \cref{lem:MQ-restriction}.  The retract argument of \cref{prop:tensor-retracts} is
natural and applies to formal bundle maps.
\end{proof}

\subsection{The pushed cluster algebra at a wall point}\label{sec:pushed}

Keep $z_J$ with $r\ge1$ and put
\[
 R=\widehat{\cO}_{Q_n,z_J},\qquad B_i=(\rho_*\cR_i)^\wedge_{z_J},\qquad
 B=B_0\oplus B_1,\qquad T=(\rho_*\cT_n)^\wedge_{z_J},
\]
finite $R$-modules, and $B$ is a finite commutative parity-graded $R$-algebra whose
multiplication is induced by that of $\cP_n$.

\begin{proposition}[Termwise acyclicity]\label{prop:termwise}
$\bigl(R^i\rho_*(\cK_n^{-a})^+\bigr)^\wedge_{z_J}=0$ for $i>0$ and $0\le a\le n+1$.
\end{proposition}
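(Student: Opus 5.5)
The plan is to reduce the statement to the completed positive-bundle package of \cref{lem:completed-package}. First, by the discussion of \cref{sec:completion} (formal functions combined with the isomorphism of formal schemes \eqref{eq:formal-diagram}), the completed higher direct image $\bigl(R^i\rho_*(\cK_n^{-a})^+\bigr)^\wedge_{z_J}$ equals $H^i$ of the corresponding completed bundle on the model $T^*G\times\A^{2\ell}$ over $R_{\mathrm{mod}}$. It therefore suffices to show that $\bigl(\cP_n\otimes\Lambda^a\cE_n\bigr)^+$, transported to the model, is a formal direct summand of a finite direct sum of tensor powers of $\widehat{p^*\cQ}$, because \cref{lem:completed-package}(i) then gives the vanishing for $i>0$.

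Next I would identify the terms. We have $\cK_n^{-a}=\cP_n\otimes\Lambda^a\cE_n$ with $\cE_n=\cT_n\oplus\cO_q\oplus\cO_t$. The coordinate lines are trivial (odd under $\gamma$), so
\[
\Lambda^a\cE_n\cong\Lambda^a\cT_n\oplus(\Lambda^{a-1}\cT_n)^{\oplus2}\oplus\Lambda^{a-2}\cT_n .
\]
Here $\cT_n$ is even and the middle summand is odd. Taking even parts, $(\cK_n^{-a})^+$ is a direct sum of pieces of the form $\cR_0\otimes\Lambda^b\cT_n$ and $\cR_1\otimes\Lambda^{b}\cT_n$. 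As vector bundles, \cref{prop:bundle-model} gives $\widehat{\cR_0}\cong\cO^{\oplus(n-r)}\oplus\widehat{p^*\cQ_J}$ and $\widehat{\cR_1}\cong\cO^{\oplus n}$. Since $\cT_n$ is a direct summand of $\cR_0$ (via $\epsilon$), its completion is a formal direct summand of $\cO^{\oplus(n-r)}\oplus p^*\cQ$. It follows that $\Lambda^b\widehat{\cT_n}$ is a direct summand of $\Lambda^b(\cO^{\oplus(n-r)}\oplus p^*\cQ)=\bigoplus_c\Lambda^{b-c}\C^{n-r}\otimes\Lambda^cp^*\cQ$, and each $\Lambda^c\cQ$ is a summand of $\cQ^{\otimes c}$ in characteristic zero. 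Tensoring with $\widehat{\cR_0}$ or $\widehat{\cR_1}$ keeps us inside summands of sums of $\cQ^{\otimes m}$. No duals of $\cQ$ ever appear, and this is exactly where the positivity of $\cQ_J$ (rather than $\cQ_J^\vee$) from \cref{prop:bundle-model} is essential.

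The main obstacle is that the splitting $\widehat{\cR_0}=\widehat{\cO}\cdot1\oplus\widehat{\cT_n}$ is given by the normalized trace, which is a formal, non-homogeneous bundle map on the model and is not compatible with the isomorphism of \cref{prop:bundle-model}. So $\widehat{\cT_n}$ is only a formal direct summand, with non-graded idempotents. I will handle this with the clause of \cref{lem:completed-package} asserting that (i) holds for arbitrary formal direct summands: higher cohomology of a retract is a retract of higher cohomology, hence zero. I would also check the parity bookkeeping, namely that the trivial character $qt$ and the torus linearizations do not affect the underlying bundle, which is justified by \cref{rem:equivariance}. This argument covers all $0\le a\le n+1$ uniformly.
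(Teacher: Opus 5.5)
Your proposal is correct and follows the paper's own argument. Both use the same parity decomposition of $(\cK_n^{-a})^+$, the bundle model of \cref{prop:bundle-model}, the normalized-trace retraction exhibiting $\Lambda^b\cT_n$ as a non-homogeneous direct summand of a sum of positive tensor powers, and then \cref{lem:completed-package}(i) for formal direct summands.
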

\begin{proof}
The projector $\id-1\cdot\epsilon$ of \eqref{eq:normalized-trace} exhibits $\cT_n$ as a
retract of $\cR_0$, hence $\cT_n^{\otimes a}$ is a retract of $\cR_0^{\otimes a}$, and the
alternating idempotent, which commutes with it, cuts out $\Lambda^a\cT_n$ as a direct
summand of $\cR_0^{\otimes a}$.  The even terms of $\cK_n$ are
\begin{equation}\label{eq:parity-terms}
 (\cK_n^{-a})^+\cong\cR_0\otimes\Lambda^a\cT_n\ \oplus\ \cR_1\otimes\Lambda^{a-1}\cT_n\otimes(\cO_q\oplus\cO_t)
 \ \oplus\ \cR_0\otimes\Lambda^{a-2}\cT_n\otimes\cO_{qt},
\end{equation}
and by \eqref{eq:bundle-model} tensor powers of $\widehat{\cR_0}$ are sums of positive
tensor powers of $p^*\cQ_J$ and trivial bundles while $\widehat{\cR_1}$ is trivial,
and character lines are trivial as ordinary bundles.  Hence every completed term is a
retract of a finite sum of positive tensor constructions, and
\cref{lem:completed-package}(i) applies.  The trace projector may mix auxiliary
degrees, but the retract argument is ungraded.
\end{proof}

\begin{proposition}[The tensor map for the first differential]\label{prop:critical-tensor}
The natural map $B_0\otimes_RT\to\bigl(\rho_*(\cR_0\otimes\cT_n)\bigr)^\wedge_{z_J}$ is
surjective.
\end{proposition}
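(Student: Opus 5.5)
The plan is to reduce the statement to the completed tensor-surjectivity package, \cref{lem:completed-package}(ii), applied to $\widehat{\cR_0}$ and $\widehat{\cT_n}$ on the formal model \eqref{eq:formal-diagram}.

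First, by the theorem on formal functions, as in \cref{sec:completion}, the modules $B_0$, $T$ and $\bigl(\rho_*(\cR_0\otimes\cT_n)\bigr)^\wedge_{z_J}$ are the global sections of the completed bundles $\widehat{\cR_0}$, $\widehat{\cT_n}$ and $\widehat{\cR_0}\otimes\widehat{\cT_n}$ on the formal neighbourhood of $F_J$. The map in question is then the formal tensor-of-sections map $m_{\widehat{\cR_0},\widehat{\cT_n}}$, since both maps are induced by the pairing $\cR_0\otimes\cT_n\to\cR_0\otimes\cT_n$ on sections. Naturality of formal functions under tensor products of sections makes this identification compatible with the maps.

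Next, I place both bundles in the class covered by \cref{lem:completed-package}. By \cref{prop:bundle-model}, $\widehat{\cR_0}\cong\cO^{\oplus(n-r)}\oplus\widehat{p^*\cQ_J}$, which is a finite direct sum of the tensor powers $(p^*\cQ_J)^{\otimes0}$ and $(p^*\cQ_J)^{\otimes1}$. The bundle $\widehat{\cT_n}$ is the kernel of the normalized trace $\epsilon$ of \eqref{eq:normalized-trace}. Hence it is a direct summand of $\widehat{\cR_0}$, cut out by the idempotent $\id-1\cdot\epsilon$, exactly as in the proof of \cref{prop:termwise}. This idempotent is only a formal bundle map and need not respect the auxiliary grading, but \cref{lem:completed-package} explicitly allows nonhomogeneous splittings.

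Then the retract argument of \cref{prop:tensor-retracts}, in its completed form, gives surjectivity for the pair $(\widehat{\cR_0},\widehat{\cT_n})$. The steps are as follows. Include a section $s$ of $\widehat{\cR_0}\otimes\widehat{\cT_n}$ into $\widehat{\cR_0}\otimes\widehat{\cR_0}$. Lift the included section to $\sum_a u_a\otimes v_a$ using the surjectivity of $M_{\widehat{\cR_0}}\otimes_R M_{\widehat{\cR_0}}\to M_{\widehat{\cR_0}\otimes\widehat{\cR_0}}$; this surjectivity comes from \cref{prop:one-factor} and its tensor-power extension, after base change to $R_{\mathrm{mod}}$ and transport through \eqref{eq:formal-diagram}. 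Finally, apply $\id\otimes(\id-1\cdot\epsilon)$ to the lift.

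The remaining check is whether the source $B_0\otimes_R T$ equals the tensor product of the formal section modules. Both factors are finite over the complete ring $R$, so the ordinary and completed tensor products agree, as noted in the proof of \cref{lem:completed-package}(ii).

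The main obstacle I expect is the identification in the first step. It requires that the isomorphism of formal schemes \eqref{eq:formal-diagram} intertwines the base ring $R=\widehat{\cO}_{Q_n,z_J}$ with $R_{\mathrm{mod}}$. It also requires that the bundle isomorphism of \cref{prop:bundle-model}, which is not an algebra isomorphism, transports the trace projector to some formal idempotent. Only the existence of a formal splitting matters, not its form, so the model surjectivity should transfer without tracking the multiplication of $\cP_n$. This is the same point as in \cref{warn:bundles-not-algebras}.
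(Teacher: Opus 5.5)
Your proposal is correct and follows the paper's own proof. The paper likewise obtains surjectivity for the pair $(\widehat{\cR_0},\widehat{\cR_0})$ from the bundle model together with \cref{prop:tensor-retracts} and \cref{lem:completed-package}, and then applies the lift-and-retract argument with the trace idempotent $\id-1\cdot\epsilon$ on the second factor. The points you flag as the main obstacle, namely transport to the formal model and non-homogeneous splittings, are exactly what the paper delegates to \cref{lem:completed-package}.
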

\begin{proof}
The completed $\cR_0$ is $\cO^{n-r}\oplus p^*\cQ_J$, hence the tensor-of-sections map for
$\cR_0,\cR_0$ is surjective by \cref{prop:tensor-retracts,lem:completed-package}.  Since
$\cT_n$ is a retract of $\cR_0$ via the normalized trace, the lift-and-retract argument
replaces the second factor by $\cT_n$.
\end{proof}

\begin{lemma}[Residue-field restriction]\label{lem:taut-base-change}
For $i=0,1$ the restriction map $B_i/\mfm_RB_i\to H^0(F_J,\cR_i|_{F_J})$ is an
isomorphism.
\end{lemma}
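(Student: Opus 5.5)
I will reduce the statement to \cref{lem:MQ-restriction} and its completed version \cref{lem:completed-package}(iii), using the formal bundle model. By the theorem on formal functions (as set up in \cref{sec:completion}), $B_i$ is identified with the global sections of the completed bundle $\widehat{\cR_i}$ on the formal model over the complete base $R\cong R_{\mathrm{mod}}$ of \eqref{eq:formal-diagram}. The restriction map to $F_J$ corresponds to restriction to the zero section $G\times\{0\}$. By \cref{prop:bundle-model}, $\widehat{\cR_1}\cong\cO^{\oplus n}$ and $\widehat{\cR_0}\cong\cO^{\oplus(n-r)}\oplus\widehat{p^*\cQ_J}$. Both the source and the target of the restriction map are additive in the bundle, and the map commutes with direct sums. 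So it suffices to treat the two summand types $\cO$ and $p^*\cQ_J$ separately.

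For the trivial summand, the sections are $\Gamma(X,\cO)\otimes_AR_{\mathrm{mod}}=A\otimes_AR_{\mathrm{mod}}=R_{\mathrm{mod}}$. Here I use \eqref{eq:flat-cohom} together with $\mu_*\cO_X=\cO_{\mfN}$, which holds since $\mfN$ is normal and $A=\Gamma(X,\cO_X)$ is its coordinate ring. Modulo the maximal ideal this gives $\C=H^0(G,\cO)$, and the restriction map is the identity on constants. For the summand $p^*\cQ_J$, \cref{lem:completed-package}(iii) gives
\[
(M_\cQ\otimes_AR_{\mathrm{mod}})/\mfm\;\cong\;M_\cQ/A_+M_\cQ\;\cong\;H^0(G,\cQ)=V_J .
\]
I will check that this composite is the restriction to the zero section; that is the content of \cref{lem:MQ-restriction}. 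Summing the two cases, the completed restriction map $B_i/\mfm_RB_i\to H^0(F_J,\cR_i|_{F_J})$ is an isomorphism. This is consistent with \eqref{eq:fiber-bundles}, which gives $H^0(F_J,\cR_0|_{F_J})=\C^{n-r}\oplus V_J$.

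The main obstacle is compatibility: the restriction map in the statement must match the model's restriction map under all the identifications. There are three to track: the completion $\widehat{\cO}_{Q_n,z_J}\cong R_{\mathrm{mod}}$, the bundle isomorphism of \cref{prop:bundle-model}, and formal functions. The bundle isomorphism lifts the identity $\phi_0$ on $F_J$ by construction, so restricting to $F_J$ intertwines the two restriction maps. The isomorphism of formal schemes in \eqref{eq:formal-diagram} carries $F_J$ to $G\times\{0\}$ scheme-theoretically, by \cref{rem:BC-check} and \cref{lem:fiber-scheme}. Its base map sends the maximal ideal of $R$ to $\mfm$ of $R_{\mathrm{mod}}$, so $\mfm_RB_i$ corresponds to $\mfm\cdot(\text{model sections})$. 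Finally, I must make sure the quotient by $\mfm_R$ is taken on the module of sections and is not a base-change claim. Right exactness of $\otimes$ handles the passage from $A_+$ to $\mfm$ through the flat map $A\to R_{\mathrm{mod}}$.
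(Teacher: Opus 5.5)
Your proposal is correct and follows the paper's proof. You transport to the formal model via \cref{prop:bundle-model}, handle the trivial summands (where the map is $R/\mfm_R=\C\to H^0(F_J,\cO)$) and the summand $\widehat{p^*\cQ_J}$ (via \cref{lem:MQ-restriction} and \cref{lem:completed-package}(iii)) separately, and use functoriality of the identifications to recover the original restriction map; your extra compatibility bookkeeping just spells out the paper's terse last sentence, and the appeal to normality of $\mfN$ is unnecessary since $A$ is defined as $\Gamma(X,\cO_X)$.
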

\begin{proof}
Transport to the formal model by \cref{prop:bundle-model}.  For a trivial summand the
map is $R/\mfm_R=\C\to H^0(F_J,\cO)$, and for the summand $\widehat{p^*\cQ_J}$ it is the
base-changed map of \cref{lem:MQ-restriction,lem:completed-package}(iii).  The
identifications are functorial for the bundle isomorphisms, hence the result is the
original restriction map.
\end{proof}

\begin{proposition}[Polynomial generation after pushforward]\label{prop:polynomial-generation}
The evaluation maps $R\otimes_\C R_i\to B_i$ are surjective for $i=0,1$.  Equivalently,
$R[x,y]\to B$ is surjective, and finitely many polynomials generate each $B_i$ over $R$.
\end{proposition}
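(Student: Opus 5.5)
The approach is graded Nakayama over the complete local ring $R$, reduced to a statement on the closed fiber by \cref{lem:taut-base-change}. The $B_i$ are finite $R$-modules over a complete local ring, so by Nakayama it suffices to show that the composite
\[
R_i\longrightarrow B_i/\mfm_RB_i\xrightarrow{\ \sim\ }H^0(F_J,\cR_i|_{F_J})
\]
is surjective for $i=0,1$. The second arrow is the isomorphism of \cref{lem:taut-base-change}. The composite is simply the restriction of polynomials to the universal cluster on the wall fiber, so the question is entirely about global sections on $F_J=\Gr(k,V_J)$.

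For $i=1$, \cref{lem:fiber-scheme} gives $\cR_1|_{F_J}=(R_1/J)\otimes\cO_{F_J}$. Hence $H^0=R_1/J$, and the constant polynomial map $R_1\to R_1/J$ is surjective.

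For $i=0$, use the exact sequence \eqref{eq:fiber-even-seq},
\[
0\to\cU_J\to(R_0/J')\otimes\cO\to\cR_0|_{F_J}\to0 .
\]
Taking global sections and using that $\cU_J$ is acyclic on the Grassmannian (Bott, as in the calibration of \cref{sec:bott}), $H^0(\cU_J)=H^1(\cU_J)=0$. So $R_0/J'\to H^0(F_J,\cR_0|_{F_J})$ is an isomorphism. The map $R_0\to R_0/J'$ is surjective, and its composite with this isomorphism is exactly the restriction of even polynomials to the universal cluster, so the even case follows. Equivalently, in the splitting \eqref{eq:fiber-bundles}, the trivial summand is hit by $R_0/J''$ and the $\cQ_J$-summand by $V_J=H^0(\cQ_J)$.

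Nakayama then gives surjectivity of $R\otimes R_i\to B_i$. Since $R_i/J'$ (respectively $R_i/J$) is finite-dimensional, finitely many monomials suffice for each $i$. The statement about $R[x,y]\to B$ follows because $B=B_0\oplus B_1$ as $R$-modules and the map preserves parity.

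The main obstacle is compatibility of identifications. The isomorphism of \cref{lem:taut-base-change} is stated as the restriction map, and it must agree with evaluation of polynomials, i.e.\ the map $R\otimes R_i\to B_i$ is induced by the universal section $R_i\otimes\cO_{Y_n}\to\cR_i$ restricted to $F_J$. I would check this by noting that restriction to $F_J$ commutes with pushing forward the constant sections, so no formal-model transport is needed at this step.
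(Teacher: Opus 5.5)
Your proposal is correct and follows the paper's proof essentially step for step. You reduce via \cref{lem:taut-base-change} and Nakayama over the complete local ring $R$ to spanning on the closed fiber, obtain the odd part from the constancy of $\cR_1|_{F_J}$, and obtain the even part from \eqref{eq:fiber-even-seq} together with $H^0(F_J,\cU_J)=H^1(F_J,\cU_J)=0$. Two minor slips: ``$R_i/J'$'' should read $R_0/J'$, and the Nakayama lemma you need is the ordinary local one, not the graded one.
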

\begin{proof}
On the fiber, $H^0(F_J,\cR_1|_{F_J})=R_1/J$ is spanned by odd polynomials.  Taking
cohomology in \eqref{eq:fiber-even-seq} and using $H^0(F_J,\cU_J)=H^1(F_J,\cU_J)=0$
(the boundary-singular case $d=0$, $\lambda=\varnothing$ of \cref{thm:bott}, or
\eqref{eq:fiber-taut} with $H^0(\cQ_J)=V_J$) gives
$R_0/J'\xrightarrow{\sim}H^0(F_J,\cR_0|_{F_J})$, induced by the even polynomial sections.
Thus polynomial sections span the residue-field quotients of \cref{lem:taut-base-change}.
Choose monomial bases of $R_0/J'$ and $R_1/J$.  They define maps from finite free
$R$-modules to $B_0$, $B_1$ whose cokernels $C_i$ are finite with $C_i/\mfm_RC_i=0$, hence
$C_i=0$ by Nakayama.  Products are preserved because the multiplication of $B$ is
induced from $\cP_n$.
\end{proof}

At a positive-rank heart $\dim_\C(B_0/\mfm_RB_0)=\dim R_0/J'=n+r+1$ exceeds the generic
rank $n$.  Thus $B_0$ is not locally free, and no trace of its regular representation is
ever taken.  Instead, applying $\rho_*$ to \eqref{eq:normalized-trace}, using
$\rho_*\cO_{Y_n}=\cO_{Q_n}$ (\cref{prop:models-identified}(a)) and completing,
\begin{equation}\label{eq:B-splitting}
 B_0=R\cdot1\oplus T,\qquad\epsilon:B_0\to R,\quad\epsilon(1)=1,\quad T=\ker\epsilon,
\end{equation}
and for an even polynomial $f$
\begin{equation}\label{eq:B-trace}
 \epsilon(\bar f)=\tau_f/n,\qquad(\tau_f:\ f(0)=0)=\mfm_0R
\end{equation}
by \eqref{eq:trace-ideal}.  All hypotheses of \cref{lem:trace-quotient} therefore hold
for the actual finite $R$-algebra $B$, with the base ring $R$ of the lemma equal to the
complete local ring $\widehat{\cO}_{Q_n,z_J}$ and with $I=\mfm_0R$.

\subsection{The local computation}\label{sec:local-proof}

Let $(R\rho_*\cK_n^+)^\wedge_{z_J}$ denote derived pushforward followed by the flat
extension of scalars to $R$.  The hypercohomology spectral sequence
$E_1^{a,b}=(R^b\rho_*(\cK_n^a)^+)^\wedge_{z_J}\Rightarrow H^{a+b}$ of the bounded complex
has, by \cref{prop:termwise}, only the row $b=0$, whose differential is the pushforward
of the Haiman differential.  Equivalently, a bounded complex of $\rho_*$-acyclic
sheaves computes $R\rho_*$, and
\begin{equation}\label{eq:termwise-complex}
 (R\rho_*\cK_n^+)^\wedge_{z_J}\simeq\bigl(\rho_*\cK_n^{\bullet,+}\bigr)^\wedge_{z_J}\in D^{\le0}(R).
\end{equation}

\emph{Concentration.}  By \cref{prop:perfect-reduction}, $(R\rho_*\cK_n^+)^\wedge_{z_J}
\simeq(R\rho_*\cI_n)^\wedge_{z_J}\in D^{\ge0}(R)$, the derived pushforward of a sheaf
followed by a flat extension of scalars.  Hence
\begin{equation}\label{eq:intersection-t-structures}
  (R\rho_*\cK_n^+)^\wedge_{z_J}\in D^{\le0}(R)\cap D^{\ge0}(R)=\Coh(R),
\end{equation}
that is, the completed derived pushforward is a single module, namely the cokernel of the
first differential of \eqref{eq:termwise-complex}.  This is the one place where the
even-excess vanishing turns a cokernel computation into a computation of the entire
derived pushforward, and in particular it kills all higher direct images of $\cI_n$ at
$z_J$.

\emph{The first differential.}  By \eqref{eq:parity-terms},
$(\cK_n^0)^+=\cR_0$ and $(\cK_n^{-1})^+=\cR_0\otimes\cT_n\oplus\cR_1\otimes\cO_q\oplus\cR_1\otimes\cO_t$,
with differential $(b\otimes t,a,c)\mapsto bt+xa+yc$.  The pushforward of the first
summand maps to $B_0$ through the commutative diagram
\[
\begin{tikzcd}[column sep=huge]
 B_0\otimes_RT\arrow[r,two heads]\arrow[dr,"\mathrm{mult}"'] &
 \bigl(\rho_*(\cR_0\otimes\cT_n)\bigr)^\wedge_{z_J}\arrow[d,"\rho_*(\mathrm{mult})"]\\
 & B_0 ,
\end{tikzcd}
\]
whose top arrow is surjective by \cref{prop:critical-tensor}.  Hence its image is exactly
$B_0T$, and not a possibly larger submodule.  The two coordinate summands have images
$xB_1$ and $yB_1$.  Therefore
\begin{equation}\label{eq:actual-cokernel}
 H^0\bigl((R\rho_*\cK_n^+)^\wedge_{z_J}\bigr)=B_0/(B_0T+xB_1+yB_1).
\end{equation}
Acyclicity of the terms alone would not give this, the tensor surjectivity is what
identifies the image.

\emph{The quotient.}  By \cref{prop:polynomial-generation} the map $R[x,y]\to B$ is
surjective, and by \eqref{eq:B-splitting}--\eqref{eq:B-trace} the pushed-forward
normalized trace $\epsilon$ satisfies $\epsilon(1)=1$ and has trace ideal
$I=\mfm_0R$.  Hence \cref{lem:trace-quotient} applies to $B$ and gives
\begin{equation}\label{eq:local-quotient}
 R/\mfm_0R\xrightarrow{\ \sim\ }B_0/(B_0T+xB_1+yB_1),\qquad 1\longmapsto[1],
\end{equation}
with inverse induced by the normalized trace.  The kernel of the unit is exactly
$\mfm_0R$, not a saturation or radical.  Combining,
\begin{equation}\label{eq:completed-descent}
 (R\rho_*\cO_{F_n})^\wedge_{z_J}\cong R/\mfm_0R=\widehat{\cO}_{\cZ,z_J}.
\end{equation}

\emph{The map.}  The unit $\cO_{Y_n}\to\cP_n\to\cI_n$ sends $1$ to $[1]$, and so does
the augmentation $\cK_n^+\to\cI_n$ on the degree-zero representative $1\in\cR_0$.  By
naturality of the adjunction unit for the Cartesian square of
\cref{sec:perfect-reduction}, the map induced by $u_n$ on $H^0$ of the completions is
the map \eqref{eq:local-quotient}.  Both sides are concentrated in degree zero, hence
$(u_n)^\wedge_{z_J}$ is an isomorphism.  There is no undetermined scalar, the inverse
sends $[1]$ to $1$.

\emph{Rank zero.}  At a fixed point with $k_J=1$ the fiber is a single reduced point
(\cref{lem:fiber-scheme}), hence $\rho$ is quasi-finite over an open neighbourhood of
$z_J$.  Being proper and birational onto the normal $Q_n$, it is an isomorphism there
by Zariski's Main Theorem \cite[Cor.~III.11.4]{Hartshorne}.  Hence $F_n\to\cZ$ is an
isomorphism near $z_J$ and $u_n$ is the identity.  (Equivalently, the Bellamy--Craw
local contraction is an isomorphism when the local Ext-quiver has no multiplicity
vertex.)

\subsection{Proof of symbolic descent}\label{sec:SD-proof}

\begin{lemma}[Detection at the fixed hearts]\label{lem:reduction-fixed}
Let $D_n=\Cone\bigl(u_n:\cO_{\cZ_n^{\mathrm{symb}}}\to R\rho_*\cO_{F_n}\bigr)$.  If
$D_n\ne0$, its completion at some $T$-fixed closed point of $\cZ_n^{\mathrm{symb}}$ is
nonzero.  Consequently $u_n$ is an isomorphism as soon as its completion is an
isomorphism at every $z_J\in Q_n^T$.
\end{lemma}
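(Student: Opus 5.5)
The plan is to use the $T$-equivariance of $D_n$ together with the standard fact that a $T$-stable closed subset of a variety with a contracting (or at least "attracting") torus action meets the fixed locus.

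First, $D_n$ is a bounded complex with coherent cohomology on $Q_n$, supported on $\cZ_n^{\mathrm{symb}}$. Indeed, $\rho$ is proper, so $R\rho_*\cO_{F_n}$ has coherent cohomology, and the cone of a morphism of such objects does too. It is also $T$-equivariant, because $u_n$ is a $T$-equivariant morphism (\eqref{eq:u-n}). Hence the support $\Sigma=\bigcup_i\Supp\cH^i(D_n)$ is a closed $T$-stable subset of $\cZ_n^{\mathrm{symb}}$, and it is nonempty if $D_n\ne0$.

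Second, I show that $\Sigma$ contains a $T$-fixed point. The map $\pi_Q:Q_n\to S_n$ is projective, and $\cZ_n^{\mathrm{symb}}$ is its fiber over the origin, so $\Sigma$ is a nonempty $T$-stable closed subset of a projective scheme. Borel's fixed point theorem for the connected solvable group $T$ acting on the complete variety $\Sigma_{\mathrm{red}}$ then gives a point $z\in\Sigma^T$. Alternatively, take a one-parameter subgroup $\C^*\subset T$ contracting $S_n$ to the origin; limits exist in $\Sigma$ by properness over $S_n$, and a generic such subgroup has the same fixed points as $T$. The $T$-fixed closed points of $Q_n$ are the monomial hearts $z_J$ (\cref{sec:hearts}); since $Q_n^T$ is finite, a generic one-parameter subgroup has exactly these as its fixed points.

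Third, at $z=z_J\in\Sigma$ some $\cH^i(D_n)_{z}\ne0$. Completion is faithfully flat on finite modules over the noetherian local ring $\cO_{Q_n,z}$, so $(\cH^i(D_n))^\wedge_z\ne0$. Flatness of completion also commutes with taking cohomology, so $(D_n)^\wedge_z\ne0$. For the consequence, note that $(D_n)^\wedge_{z_J}=\Cone((u_n)^\wedge_{z_J})$, which vanishes whenever $(u_n)^\wedge_{z_J}$ is an isomorphism. If this holds at every $z_J$, the first part forces $D_n=0$, and therefore $u_n$ is an isomorphism.

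The main point needing care is the second step: I must make sure that the fixed points produced are closed points of $\cZ_n^{\mathrm{symb}}$ lying in $Q_n^T$ as enumerated by hearts. This only requires properness of $\cZ_n^{\mathrm{symb}}$ (from projectivity of $\pi_Q$) and Borel's theorem applied to the reduced support. Neither step depends on any scheme structure, so the nonreducedness of $\cZ_n^{\mathrm{symb}}$ is harmless.
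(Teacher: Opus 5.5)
Your proposal is correct and follows essentially the same route as the paper. Both arguments take the nonempty, closed, $T$-stable support of a nonzero cohomology sheaf of $D_n$ inside the projective scheme $\cZ_n^{\mathrm{symb}}$, find a $T$-fixed point in it, and conclude by faithful flatness of completion. The only difference is that you cite Borel's fixed point theorem, whereas the paper proves the needed fixed-point statement directly: it embeds equivariantly into projective space and takes the limit under a weight-separating one-parameter subgroup, which is the alternative you also sketch.
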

\begin{proof}
$D_n$ is a bounded $T$-equivariant complex with coherent cohomology supported on the
projective scheme $\cZ_n^{\mathrm{symb}}=\pi_Q^{-1}(0)$.  Let $W$ be the reduced
support of a nonzero cohomology sheaf, a nonempty closed $T$-stable subvariety.  It
contains a $T$-fixed point.  Indeed, embed $\cZ$ equivariantly in $\PP(V)$ by a power of the
$T$-linearized ample $\cO(1)$, decompose a lift of a point of $W$ into finitely many
weight vectors, and take the limit under a one-parameter subgroup separating those
weights.  The limit lies in $W$ and is fixed by $T$, being a projective point in a
single weight space.  At that point the stalk of the sheaf is nonzero, and completion
of a noetherian local ring is faithfully flat and commutes with cohomology
\cite[Tag 00MC]{Stacks}, hence the completed complex is nonzero.
\end{proof}

\begin{theorem}[Symbolic descent]\label{thm:SD}
For every $n\ge0$ the canonical map $u_n$ of \eqref{eq:u-n} is an isomorphism, and for
every $d\in\Z$
\[
 R\rho_*\bigl(\cL_n^{\otimes d}\otimes\cI_n\bigr)\;\simeq\;\cO_{\cZ_n^{\mathrm{symb}}}(d),
\]
canonically and $T$-equivariantly.
\end{theorem}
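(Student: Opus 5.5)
The proof assembles the local computation of \cref{sec:local-proof} with the detection lemma. First I reduce to the untwisted statement \eqref{eq:SD0}. Since $\cL_n=\rho^*\cO_{Q_n}(1)$ and $\cO_{Q_n}(1)$ is invertible, the projection formula gives
\[
R\rho_*(\cL_n^{\otimes d}\otimes\cI_n)\simeq R\rho_*\cI_n\otimes\cO_{Q_n}(d).
\]
Tensoring $u_n$ with $\cO_{Q_n}(d)$ then yields the twisted isomorphism, canonically and $T$-equivariantly, once $u_n$ is known to be an isomorphism. The small cases are immediate. For $n=0$ there is nothing to prove. For $n=1$, $Q_1$ is smooth, $\rho$ is an isomorphism, and the statement is tautological; alternatively, \cref{sec:SD-no-shortcut} gives it via Tor-independence.

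For general $n$, I apply \cref{lem:reduction-fixed}: it suffices to show that the completion of $u_n$ at every $T$-fixed point $z_J\in Q_n^T$ lying in $\cZ_n^{\mathrm{symb}}$ is an isomorphism. At fixed points outside $\cZ$ both sides vanish after completion, since $\cO_{F_n}$ is supported over $\cZ$. The fixed points split by $k_J$.

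\textbf{Case $k_J=1$.} The rank-zero argument of \cref{sec:local-proof} applies. The fiber is a reduced point, so by Zariski's Main Theorem $\rho$ is a local isomorphism near $z_J$. Hence $F_n\to\cZ$ is a local isomorphism there and $u_n$ is the identity.

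\textbf{Case $k_J\ge2$.} I chain the steps of \cref{sec:local-proof} in order:
\begin{enumerate}
\item By termwise acyclicity (\cref{prop:termwise}), the completed $R\rho_*\cK_n^+$ is computed by the complex of completed direct images, so it lies in $D^{\le0}$.
\item By perfect reduction (\cref{prop:perfect-reduction}), it is isomorphic to the completed $R\rho_*\cI_n$, which lies in $D^{\ge0}$. Together with step 1, it is a single module.
\item This module is the cokernel $B_0/(B_0T+xB_1+yB_1)$. The key input is the tensor surjectivity of \cref{prop:critical-tensor}, which identifies the image of the first differential exactly as $B_0T$.
\item \Cref{lem:trace-quotient}, whose hypotheses are verified by \cref{prop:polynomial-generation} and \eqref{eq:B-splitting}--\eqref{eq:B-trace}, identifies this cokernel with $R/\mfm_0R$, and the identification is induced by the unit map.
\end{enumerate}

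The main point needing care is that the isomorphism \eqref{eq:completed-descent} is really the completion of $u_n$, not just an abstract isomorphism. For this I compare units. The composite $\cO_{Y_n}\to\cK_n^{0,+}=\cR_0\to\cI_n$ sends $1$ to $[1]$. By naturality of the adjunction unit for the Cartesian square of \cref{sec:perfect-reduction}, $H^0$ of the completed $u_n$ is the map $R/\mfm_0R\to B_0/(\dots)$, $1\mapsto[1]$. Both sides are concentrated in degree zero, so this makes $\widehat{u_n}$ an isomorphism.

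Finally, \cref{lem:reduction-fixed} shows that the cone $D_n$ vanishes, so $u_n$ is an isomorphism. $T$-equivariance holds because $u_n$ is an adjunction unit and all the maps above are $T$-equivariant.
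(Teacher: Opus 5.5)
Your proposal is correct and follows the paper's proof essentially step for step. You reduce to $u_n$ by the projection formula, handle $k_J=1$ by Zariski's Main Theorem and $k_J\ge2$ by \eqref{eq:completed-descent} together with the unit identification, and conclude with \cref{lem:reduction-fixed}.

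One small correction concerns equivariance. The formal-model maps need not be $T$-equivariant, since the auxiliary grading of the model is unrelated to the torus. Justify equivariance as the paper does instead: $u_n$ is $T$-equivariant by construction, and forgetting equivariance is conservative.
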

\begin{proof}
For $n=0$ everything is a point.  For $n\ge1$, at every fixed point $z_J$ with
$k_J\ge2$ the completion of $u_n$ is an isomorphism by \eqref{eq:completed-descent}
and the identification of the map, and at every fixed point with $k_J=1$ by
the rank-zero argument.  Hence $D_n$ has zero completion at every fixed point, and
$D_n=0$ by \cref{lem:reduction-fixed}.  The local computations forget the torus
grading, since the auxiliary grading of the model need not be compatible with it.
This suffices, because $u_n$ is equivariant and forgetting equivariance is
conservative.  Finally $\cL_n^{\otimes d}=\rho^*\cO_{Q_n}(d)$ with $\cO_{Q_n}(d)$
invertible, hence the projection formula \cite[Tag 01E6]{Stacks} gives
$R\rho_*(\cL_n^{\otimes d}\otimes\cO_{F_n})\simeq\cO_{Q_n}(d)\otimes R\rho_*\cO_{F_n}
\simeq\cO_{Q_n}(d)\otimes\cO_{\cZ}=\cO_{\cZ}(d)$, for negative $d$ as well.
\end{proof}

This completes the proof of Theorem~E.

\begin{corollary}\label{cor:higher-direct-images}
$\rho_*\cO_{F_n}=\cO_{\cZ_n^{\mathrm{symb}}}$, $R^i\rho_*\cO_{F_n}=0$ for $i>0$, and
\[
 R\Gamma\bigl(F_n,\cL_n^{\otimes d}|_{F_n}\bigr)\cong R\Gamma\bigl(\cZ_n^{\mathrm{symb}},\cO(d)\bigr)
 \qquad(d\in\Z).
\]  In particular $H^{>0}(F_n,\cL_n^{\otimes d})=0$ for $d\gg0$ by Serre
vanishing on the projective $\cZ_n^{\mathrm{symb}}$.
\end{corollary}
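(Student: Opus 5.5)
The plan is to read off all three assertions from \cref{thm:SD} with $d=0$ and general $d$, followed by a Leray-type comparison of global sections. Since $\cI_n\cong\cO_{F_n}$ by \cref{prop:H0}, \cref{thm:SD} at $d=0$ gives $R\rho_*\cO_{F_n}\simeq\cO_{\cZ_n^{\mathrm{symb}}}$. The right-hand side is a sheaf in degree zero. Taking cohomology sheaves of this isomorphism therefore gives $\rho_*\cO_{F_n}=\cO_{\cZ_n^{\mathrm{symb}}}$ and $R^i\rho_*\cO_{F_n}=0$ for $i>0$.

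For the global statement, write $f=\rho|_{F_n}:F_n\to\cZ_n^{\mathrm{symb}}$ as in the Cartesian square of \cref{sec:perfect-reduction}. Since $j$ is a closed immersion, $j_*$ is exact and faithful, so $R\rho_*\cO_{F_n}=j_*Rf_*\cO_{F_n}$. The isomorphism above then says $Rf_*\cO_{F_n}\simeq\cO_{\cZ}$. Moreover $\cL_n|_{F_n}=f^*\cO_{\cZ}(1)$, because $\cL_n=\rho^*\cO_{Q_n}(1)$ and the square commutes. The projection formula for the invertible sheaf $\cO_\cZ(d)$ therefore gives
\[
Rf_*\bigl(\cL_n^{\otimes d}|_{F_n}\bigr)\simeq\cO_\cZ(d)\otimes Rf_*\cO_{F_n}\simeq\cO_\cZ(d).
\]
Applying $R\Gamma(\cZ,-)$ and using $R\Gamma(F_n,-)=R\Gamma(\cZ,-)\circ Rf_*$ yields the displayed isomorphism for every $d\in\Z$.

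Finally, $\cZ_n^{\mathrm{symb}}=\pi_Q^{-1}(0)$ is projective over a point, since $\pi_Q$ is projective with $\cO_{Q_n}(1)$ relatively ample. Serre vanishing therefore gives $H^{>0}(\cZ,\cO(d))=0$ for $d\gg0$, and the previous isomorphism transports this to $H^{>0}(F_n,\cL_n^{\otimes d})=0$.

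There is no real obstacle. The only point to check is that the theorem's isomorphism is the one induced by the unit $u_n$ through $j_*$. This is what lets us pass from $R\rho_*$ to $Rf_*$ on $\cZ$ rather than merely on $Q_n$; it follows from the construction of $u_n$ in \eqref{eq:u-n} together with the full faithfulness of $j_*$ on sheaves supported on $\cZ$ (here $Rf_*\cO_{F_n}$ is concentrated in degree zero).
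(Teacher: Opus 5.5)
Your argument is correct and is essentially the paper's: the corollary is read off from \cref{thm:SD} via $\cI_n\cong\cO_{F_n}$, the global statement is $R\Gamma$ of the twisted isomorphism, and Serre vanishing on the projective $\cZ_n^{\mathrm{symb}}$ gives the last claim. The paper's route is slightly shorter, since \cref{thm:SD} already supplies $R\rho_*(\cL_n^{\otimes d}\otimes\cI_n)\simeq\cO_{\cZ_n^{\mathrm{symb}}}(d)$ for every $d$ and one simply applies $R\Gamma(Q_n,-)$. That avoids your detour through $f$ and your closing caveat, which is harmless anyway: $j_*$ is fully faithful on quasi-coherent sheaves, so any $\cO_{Q_n}$-linear isomorphism between degree-zero objects pushed forward from $\cZ_n^{\mathrm{symb}}$ descends, whether or not it is the one induced by $u_n$.
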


The two statements of the corollary have different sources.  The trace quotient
identifies the degree-zero image, and the intersection of the two cohomological
bounds in \eqref{eq:intersection-t-structures} kills the other degrees.  The first
alone would not imply the second.

\begin{remark}[What the proof does not assume]\label{rem:SD-not-assumed}
The proof does not require $\rho$ to be flat, the punctual scheme
$\cZ_n^{\mathrm{symb}}$ to be a local complete intersection in $Q_n$ or its structure
sheaf to be perfect over $\cO_{Q_n}$ (both fail at $n=2$, \cref{prop:lci-fails}),
either punctual scheme to be reduced or Cohen--Macaulay, or ordinary restriction to a
wall fiber to agree with derived pullback.  The only product description used is the
formal description \eqref{eq:formal-diagram} of the ambient contraction with its
underlying bundles.  The multiplication, the trace-zero projector and the first
Haiman differential remain their actual maps, and the scalar traces remain in the
ordinary ideal with their full dependence on all formal coordinates.
\end{remark}

\subsection{The case $n=2$ in closed form}\label{sec:n2-descent}

As an illustration of \cref{thm:SD} in the first nontrivial rank, and as an
independent check, we describe the geometry of $\rho$ over the unique wall point of
$Q_2$ with positive-dimensional fiber completely.  Every ingredient of the general
proof is visible here in closed form.

\begin{theorem}[The geometry at $n=2$]\label{thm:n2-descent}
Let $z_J\in Q_2$ be the wall point with
$F_J\cong\PP^2$, hence $J=R_1\cap\mfm^3$ and $V:=V_J=\langle x^2,xy,y^2\rangle=\SymOp^2\C^2$,
and let $U\subset Q_2$ be the open set of quotients $J$ for which $x,y$ is a basis
of $R_1/J$.  Then the following hold.
\begin{enumerate}[label=\textup{(\alph*)}]
\item $\rho^{-1}(U)\cong T^*\PP^2$, $\PP^2=\Gr(2,V)$, equivariantly for $T$ and for
the group $GL_2$ acting on $\langle x,y\rangle$, and $\rho|_{\rho^{-1}(U)}$ is the
affinization map.  Thus $U\cong\overline{\mathcal O}_{\min}\subset\mathfrak{sl}(V)$ and
$\rho$ is the Springer map.
\item $\mfm_0\cO_{Q_2}|_U$ is generated by the three trace functions
$\Tr U,\Tr V,\Tr W$ of \cref{prop:lci-fails}, which under \textup{(a)} are the
components of the moment map of the $SL_2$-action on $T^*\PP^2$, and
$\cZ_2^{\mathrm{symb}}\cap U$ is the cone over the rational normal quartic,
$\cO(\cZ_2^{\mathrm{symb}}\cap U)=\bigoplus_{d\ge0}\SymOp^{4d}\C^2$, i.e.\ the
quotient singularity $\C^2/\mu_4$.
\item $F_2\cap\rho^{-1}(U)$ is a reduced local complete intersection, equal to
$\PP^2\cup N^*_C$, where $C\subset\PP^2$ is the conic of squares and
$N^*_C\cong\Tot\,\cO_{\PP^1}(-4)$ is its conormal bundle.
\end{enumerate}
Consequently $R\rho_*\cO_{F_2}\simeq\cO_{\cZ_2^{\mathrm{symb}}}$ via $u_2$, in
accordance with \cref{thm:SD}.
\end{theorem}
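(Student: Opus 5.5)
The plan is to work entirely in the explicit chart $U$ of \cref{prop:lci-fails}, where a point of $Q_2$ is a triple of commuting $2\times2$ matrices $(U,V,W)$ with $UV=W^2$ and the two column conditions.

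\emph{Part (a).} The basis $x,y$ of $R_1/J$ rigidifies the odd side, so a cluster over $U$ is determined by its even part. By \cref{lem:fiber-scheme} and \eqref{eq:fiber-bundles}, the even part is a quotient of $R_0/J'$ with kernel a $2$-plane $K\subset V=\langle x^2,xy,y^2\rangle$, deformed along cotangent directions. The explicit map I would write down is: the quotient of $V$ by $K$ gives $A:V\to U_0=V/K$, and the actions of $u,v,w$ on the odd quotient give $B:U_0\to V$ with $AB=0$. This reproduces \eqref{eq:Yplus} with $r=1$, so $\rho^{-1}(U)\cong T^*\Gr(2,V)=T^*\PP^2$. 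The rank-one square-zero matrix $X=BA\in\End(V)$ is then recorded by the entries of $(U,V,W)$ via the $GL_2$-equivariant identification $V=\SymOp^2\C^2$. Since $\mfN_{1,3}=\overline{\mathcal O}_{\min}$ is normal, $Q_2$ is normal, and both spaces are $4$-dimensional, the map $U\to\overline{\mathcal O}_{\min}$ is an isomorphism. I would check it is an isomorphism by comparing coordinate rings, using the Betti numbers $1,9,16,9,1$ of \cref{prop:lci-fails} as a cross-check.

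\emph{Part (b).} Under (a), the traces $\Tr U,\Tr V,\Tr W$ are linear functions of $X$. They are $SL_2$-equivariant and span a copy of $\mathfrak{sl}_2\subset\mathfrak{sl}(V)$. Hence they are the components of the moment map for $SL_2\subset SL(V)$ on $T^*\PP^2$, which is the composite of $\mu$ with the projection $\mathfrak{sl}(V)\to\mathfrak{sl}_2^*$. The proof of \cref{prop:lci-fails} already shows these three traces generate $\mfm_0$. So $\cZ\cap U$ is $\overline{\mathcal O}_{\min}(\mathfrak{sl}_3)\cap\mathfrak{sl}_2^\perp$, with $\mathfrak{sl}_3=\mathfrak{sl}_2\oplus\SymOp^4\C^2$. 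Since $\cZ$ is reduced and $2$-dimensional by \cref{prop:lci-fails}, it suffices to identify the points: they are rank-one $X=v\otimes\phi$ with $\phi(v)=0$ and $\pi_{\mathfrak{sl}_2}(X)=0$. I expect these to be exactly $X=\ell^2\otimes(\ell^\perp)^2$, a cone over the image of $\PP^1$ in $\PP(\SymOp^4\C^2)$, which is the rational normal quartic. Normality of the cone, or the Cohen--Macaulay statement already proved, then gives the coordinate ring $\bigoplus_d\SymOp^{4d}\C^2=\C[s,t]^{\mu_4}$.

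\emph{Part (c).} $F_2\cap\rho^{-1}(U)$ is the zero scheme of the three moment-map components on $T^*\PP^2$. This is three equations in a $4$-fold, but the zero locus has dimension $2$. So the lci claim must instead come from $\Phi^{-1}(0)$ being cut out by three equations with one relation, or from the fact that, near the zero section, one equation is redundant modulo the others. I would check this concretely in the big-cell coordinates \eqref{eq:bigcell}, where $C\in\C^2$ and $D\in\C^2$, writing $\Phi$ as three quadratic-in-$D$ polynomials.

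The components are then:
\begin{itemize}
\item the zero section $\PP^2$;
\item the locus over the conic $C$ of squares $K=\ell\cdot\langle x,y\rangle$, where the fiber covector annihilating $\mathfrak{sl}_2$-directions is the conormal line, which gives $N^*_C\cong\Tot\,\cO_{\PP^1}(-4)$ by a degree count.
\end{itemize}
Reducedness follows from generic smoothness of the two components plus the lci (hence CM) property. I expect the lci verification in (c) to be the main obstacle, and I would do it by a direct Gröbner-style computation in a chart.

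\emph{The final consequence.} $R\rho_*\cO_{\PP^2}=\C$ is supported at the cone point. $R\rho_*\cO_{N^*_C}=\bigoplus_d H^*(\PP^1,\cO(4d))=\bigoplus\SymOp^{4d}\C^2$ has no higher cohomology. The Mayer–Vietoris sequence for $\PP^2\cup N^*_C$ with intersection $C$ then gives $R\rho_*\cO_{F_2}=\cO_{\cZ}$, which is consistent with \cref{thm:SD}.
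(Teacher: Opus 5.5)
\textbf{Part (a) has a genuine gap.} Parts (b), (c) and the last step all compute on $T^*\PP^2$, so they inherit it.

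The model \eqref{eq:Yplus} is the Bellamy--Craw normal form, which is only \'etale-local at $z_J$. It does not identify all of $\rho^{-1}(U)$ with $T^*\Gr(2,V)$.

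Your explicit substitute is not yet a construction.
\begin{itemize}
\item The $2$-plane $K$ and the map $A$ make sense only along $F_J$. At a general point of $U$ (two free orbits) the even part is not a quotient of $R_0/J'$.
\item ``The actions of $u,v,w$ on the odd quotient'' are three $2\times2$ matrices, i.e.\ the eight chart coordinates, not a map $V/K\to K$.
\item Nothing shows $AB=0$, that $(K,B)$ is a morphism, or that it is an isomorphism.
\end{itemize}
The inference ``both normal and $4$-dimensional, hence $U\to\overline{\mathcal O}_{\min}$ is an isomorphism'' is invalid. You would need at least a finite birational map onto a normal target, and the map is not even defined. Deferring this to a coordinate-ring comparison leaves (a) open. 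Even a successful computation of $U\cong\overline{\mathcal O}_{\min}$ would still need a classification of symplectic resolutions of $\overline{\mathcal O}_{\min}$ to reach $T^*\PP^2$.

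\textbf{The missing idea is the contracting torus.}
\begin{itemize}
\item Under the central subgroup $(q,t)=(s,s)\subset GL_2$, all eight chart coordinates have weight $s^2$. So $\C[U]$ is positively graded with $\C[U]_0=\C$, and $U$ is a cone contracting to $z_J$.
\item By properness of $\rho$, every point of $\rho^{-1}(U)$ has a limit in $\rho^{-1}(z_J)=\PP^2$. This $\PP^2$ is exactly the fixed locus, since the subgroup acts on $V$ by a scalar.
\item By Bia\l ynicki-Birula, $\rho^{-1}(U)$ is an affine-space fibration over $\PP^2$. Both normal weights $q^2,t^2$ restrict to $s^2$, so the transition functions are linear and $\rho^{-1}(U)\cong N_{\PP^2/Y_2}$.
\item $N_{\PP^2/Y_2}\cong T^*\PP^2$ because $\PP^2$ is Lagrangian and the symplectic form is $SL_2$-invariant.
\item Then $U=\SpecOp\Gamma(\rho^{-1}(U),\cO)=\overline{\mathcal O}_{\min}$, because $\rho_*\cO_{Y_2}=\cO_{Q_2}$ and $U$ is affine.
\end{itemize}
In this order the identification of $U$ is an output, and no comparison of quadrics is needed. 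The same cone structure is what lets you extend the generation statement of \cref{prop:lci-fails}, which concerns the local ring at $z_J$, to all of $U$ in (b).

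\textbf{Granting (a), the rest is sound in outline, but two points need fixing.}

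In (c), the moment-map components are linear, not quadratic, in the fiber coordinates: they are the fiber-linear functions $\xi_E,\xi_F,\xi_H$. The obstacle you flag disappears once you note that these three fields generate the rank-two locally free sheaf $T_{\PP^2}(-\log C)$. They span off $C$ because that orbit is open, and they span at a point of $C$ because the Borel stabilizer acts on the normal line with nonzero weight. In local coordinates $(c,s)$ with $C=\{c=0\}$, the ideal is therefore $(p_s,cp_c)=(p_s,p_c)\cap(p_s,c)$. This gives the lci property, reducedness, the two components and the scheme-theoretic intersection $C$ at once, with no Gr\"obner computation.

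In the final step, it is not enough that $\Gamma(N^*_C,\cO)\cong\bigoplus_d\SymOp^{4d}\C^2$ abstractly. You must show that the pullback $\cO(\cZ_2^{\mathrm{symb}}\cap U)\to\Gamma(N^*_C,\cO)$ is an isomorphism. It is injective because $N^*_C\to\cZ_2^{\mathrm{symb}}\cap U$ is birational (the open $SL_2$-orbits have the same stabilizer), and equal graded dimensions then finish it. You must also check that the map $\C\oplus\cO_{\cZ}\to\C$ in the pushed-forward sequence is $(a,f)\mapsto a-f(z_J)$. Only then is the kernel $\cO_{\cZ}$, and the resulting isomorphism the descent map $u_2$.
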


\begin{proof}
By \cref{cor:threshold} and \cref{app:census} the only fixed point of $Q_2$ with
$k\ge2$ is $z_J$, and by \cref{lem:reduction-fixed} together with the rank-zero
case it suffices to prove \eqref{eq:SD0} on a neighbourhood of $z_J$.  The group $GL_2$ acting on $\langle x,y\rangle$ commutes with $\Gamma$,
hence acts on $Y_2$, $Q_2$, $S_2$ compatibly with $\rho$, $\pi_Q$, the polarized
power sums, and the torus $T\subset GL_2$.  The symplectic form of $Y_2$ transforms
by $\det$, hence it is $SL_2$-invariant.

\emph{Step 1: $U$ is the attracting chart of $z_J$.}  $U$ is open, $GL_2$-stable,
and contains $z_J$.  Its coordinate ring is generated by the entries of the
matrices of $u,v,w$ acting on $R_1/J$ in the basis $x,y$ (the chart of
\cref{prop:lci-fails}), and their $T$-weights are
$q^2,\ q^3t^{-1},\ t^2,\ t^3q^{-1},\ qt,\ t^2,\ q^2,\ qt$, all of weight $s^2$
under the central one-parameter subgroup $(q,t)=(s,s)$.  Hence $\C[U]$ is positively
graded by this subgroup with $\C[U]_0=\C$.  Thus every point of $U$ flows to $z_J$, and
every point of $Q_2$ flowing to $z_J$ lies in $U$, $U$ being open and $T$-stable.

\emph{Step 2: $\rho^{-1}(U)\cong T^*\PP^2$.}  Every point of $\rho^{-1}(U)$ has a
limit under the central subgroup, by properness of $\rho$, and the limit lies in
$\rho^{-1}(z_J)=\PP^2$.  The fixed locus of the central subgroup in $\rho^{-1}(U)$
is exactly $\PP^2$, since it maps to the unique fixed point $z_J$ of $U$, and the
subgroup acts trivially on $\Gr(2,V)$ because it acts on $V$ by the scalar $s^2$.
By the Bia{\l}ynicki-Birula decomposition \cite[Thm.~4.1]{BB} the smooth variety
$\rho^{-1}(U)$ is the attracting set of $\PP^2$, a Zariski locally trivial
fibration in affine spaces on which the subgroup acts with the weights of the
normal bundle.  At the three fixed points $(2,2)$, $(3,1)$, $(2,1,1)$ of $\PP^2$
the tangent weights of $Y_2$ (\cref{lem:localization}) are $q^{\pm1}t^{\mp1}$ along
$\PP^2$ and $q^2,t^2$ in the normal directions.  The normal weight is $s^2$
throughout, hence the transition functions of the fibration commute with a single
scaling and are linear, and $\rho^{-1}(U)\cong N_{\PP^2/Y_2}$ equivariantly.
Since $\PP^2$ is Lagrangian and the symplectic form is $SL_2$-invariant,
$N_{\PP^2/Y_2}\cong T^*\PP^2$ $SL_2$-equivariantly.  Finally
$U=\SpecOp\Gamma(\rho^{-1}(U),\cO)$, because $\rho_*\cO_{Y_2}=\cO_{Q_2}$ and $U$
is affine, and $\Gamma(T^*\PP^2,\cO)=\bigoplus_dH^0(\PP^2,\SymOp^dT_{\PP^2})$ is the
coordinate ring of the minimal nilpotent orbit closure
$\overline{\mathcal O}_{\min}\subset\mathfrak{sl}(V)$, which is normal with ideal
generated by the $2\times2$ minors \cite{KraftProcesi79}, $\rho$ being its
Springer resolution.  This is (a).

\emph{Step 3: the trace ideal.}  By \cref{prop:H0}, $\mfm_0\cO_{Q_2}|_U$ is
generated by the functions $\Tr(m_f\,|\,R_1/J)$, $f$ a $\Gamma$-invariant monomial
of degree $2d>0$.  These are homogeneous of degree $d$ in the matrix entries, and
$f\mapsto\Tr(m_f)$ is $GL_2$-equivariant.  Under (a), $\C[U]_1=H^0(\PP^2,T_{\PP^2})
=\mathfrak{sl}(V)\cong\SymOp^2\oplus\SymOp^4$ as an $SL_2$-module, and the traces of
$u,v,w$ span a nonzero submodule isomorphic to $\SymOp^2$ (nonzero since $\Tr U\ne0$),
hence exactly the image of $\mathfrak{sl}_2\to\mathfrak{sl}(V)$.  In other words, they are the
vector fields $\xi_E,\xi_F,\xi_H$ of the $SL_2$-action, viewed as fiber-linear
functions on $T^*\PP^2$, i.e.\ the components of the moment map of the
$SL_2$-action.  The ideal of $\overline{\mathcal O}_{\min}$ is generated by
quadrics spanning $\mathfrak{sl}(V)\oplus\C$, namely $X\mapsto\Tr(MX^2)$ for
$M\in\mathfrak{sl}(V)$ and $\Tr(X^2)$.  Restricted to the linear subspace
$\SymOp^4=\mathfrak{sl}_2^\perp$ they vanish on the cone $\{\ell^4\}$ over the
rational normal quartic (the $SL_2$-orbit closure of the highest root vector
$E_\theta\in\SymOp^4$), hence they lie in the six-dimensional space
$\SymOp^4\oplus\SymOp^0\subset\SymOp^2(\SymOp^4)^*$ of quadrics vanishing on that
cone.  Both isotypic components occur, the invariant one because the trace form is
nondegenerate on $\SymOp^4$, the other because $\Tr(E_\theta X^2)=2$ for
$X=E_\theta+E_{-\theta}+\mathrm{diag}(1,-2,1)\in\SymOp^4$.  Since the ideal of the
rational normal quartic is generated by quadrics,
\[
 \C[U]/(\xi_E,\xi_F,\xi_H)=\C[\SymOp^4]/(\text{quadrics of the cone})
 =\bigoplus_{d\ge0}\SymOp^{4d}\C^2 ,
\]
the coordinate ring of the cone over the quartic, which is the quotient
$\C^2/\mu_4$.  For $d\ge1$ every $SL_2$-equivariant map $\SymOp^{2d}\to\SymOp^{4d}$
is zero, hence all higher traces lie in $(\xi_E,\xi_F,\xi_H)$.  This is (b).

\emph{Step 4: the punctual fiber.}  Thus $F_2\cap\rho^{-1}(U)=V(\xi_E,\xi_F,\xi_H)
\subset T^*\PP^2$, with $\pi_*\cO_{F_2}=\SymOp^\bullet T_{\PP^2}/(\xi)$.  The
$SL_2$-orbits on $\PP^2=\PP(\SymOp^2)$ are the conic $C=\{[\ell^2]\}$ and its
complement.  The three fields generate the subsheaf $T_{\PP^2}(-\log C)\subset
T_{\PP^2}$ of fields tangent to $C$.  Indeed, they lie in it, and they generate its fibers,
off $C$ because the orbit is open, at $w\in C$ because the stabilizer $\mathfrak b$
of $w$ acts on the normal line $N_{C/\PP^2,w}$ with a nonzero weight.  In local
coordinates $(c,s)$ with $C=\{c=0\}$ and dual fiber coordinates $(p_c,p_s)$ this
ideal is $(p_s,\,cp_c)$, hence $F_2\cap\rho^{-1}(U)$ is the reduced local complete
intersection $\PP^2\cup N^*_C$, and $N^*_C=\Tot\,\cO_{\PP^1}(-4)$ since
$N_{C/\PP^2}=\cO_{\PP^2}(2)|_C$.  This is (c).

\emph{Step 5: conclusion.}  Since $(p_s,p_c)\cap(p_s,c)=(p_s,cp_c)$ there is an
exact sequence $0\to\cO_{F_2}\to\cO_{\PP^2}\oplus\cO_{N^*_C}\to\cO_C\to0$ on
$\rho^{-1}(U)$.  Now $R\rho_*\cO_{\PP^2}=R\Gamma(\PP^2,\cO)=\C_{z_J}$ and
$R\rho_*\cO_C=R\Gamma(\PP^1,\cO)=\C_{z_J}$.  The complement $N^*_C\setminus C$ is a
single $SL_2$-orbit, with stabilizer $U\rtimes\mu_4$ ($U$ the unipotent radical of
$\mathfrak b$, the torus acting on the conormal line by a character of order four),
mapping equivariantly onto the orbit $SL_2\cdot E_\theta$, whose stabilizer is the
same group.  Hence $\rho|_{N^*_C}$ is birational onto $\cZ_2^{\mathrm{symb}}\cap U$.
Moreover $\Gamma(N^*_C,\cO)=\bigoplus_dH^0(\PP^1,\cO(4d))=\bigoplus_d\SymOp^{4d}$, hence
the injective pullback $\cO(\cZ_2^{\mathrm{symb}}\cap U)\to\Gamma(N^*_C,\cO)$ is an
isomorphism degree by degree, and $H^1(N^*_C,\cO)=\bigoplus_dH^1(\PP^1,\cO(4d))=0$.
Therefore $R\rho_*\cO_{N^*_C}=\cO_{\cZ_2^{\mathrm{symb}}}$ on $U$.  In the pushed-forward
sequence the map $\C\oplus\cO_{\cZ}\to\C$ is $(a,f)\mapsto a-f(z_J)$, surjective
with kernel $\{(f(z_J),f)\}\cong\cO_{\cZ}$.  Therefore
$R\rho_*\cO_{F_2}\simeq\cO_{\cZ_2^{\mathrm{symb}}}$ on $U$, and the isomorphism is
the descent map $u_2$, which sends $f$ to $(f(z_J),f|_{N^*_C})$.  Twists follow by
the projection formula.
\end{proof}

\begin{remark}[Exact check]\label{rem:n2-check}
With $\cF_d:=(\SymOp^\bullet T_{\PP^2}/(\xi_E,\xi_F,\xi_H))_d$, a direct computation in
Macaulay2 gives $H^1(\PP^2,\cF_d)=H^2(\PP^2,\cF_d)=0$ and $h^0(\PP^2,\cF_d)=4d+1$ for
$0\le d\le9$, and the ideal of $\mathfrak{sl}_3$ generated by the $2\times2$
minors, the trace, and the three pairings with $\mathfrak{sl}_2$ is prime of
dimension two and degree four with Hilbert function $4d+1$, whereas a generic triple of
linear vector fields gives different numbers.  This confirms (b) and (c) and the
vanishing of $R^{>0}\rho_*\cO_{F_2}$ independently of Steps 3--5, and it is the
$n=2$ instance of \cref{cor:higher-direct-images}.
\end{remark}

\begin{remark}
The germ $(\cZ_2^{\mathrm{symb}},z_J)\cong\C^2/\mu_4$ is Cohen--Macaulay and not
Gorenstein, in agreement with \cref{prop:lci-fails}, and it is resolved by the
non-contracted component $N^*_C$ of $F_2$ while the contracted component $\PP^2$
contributes only the constants.  At a wall point of rank $r$ and dimension $n>r(r+1)$
the transverse core is positive-dimensional, the chart is no longer an attracting
cone, and the trace functions depend on the core coordinates as well as on the
cotangent block.  The general proof works on the formal neighbourhood instead, where
this dependence is retained (\cref{warn:bundles-not-algebras}).
\end{remark}

\section{The all-twists localization polynomial}\label{sec:localization}

In this section we compute the $q,t$-characters.  Equivariant localization on $Y_n$
expresses the Euler characteristic of $\cL_n^{\otimes m}\otimes\cK_n$ as the
fixed-point sum $F_{n,m}$ of \eqref{eq:F-def}, and the determinant-parity projection
$P_{n,m}$ is the Euler characteristic of $\cL_n^{\otimes m}\otimes\cI_n$, by the
two-term homology and the even-excess vanishing.  Symbolic descent identifies
$P_{n,m}$ with the Euler characteristic of $\cO(m)$ on $\cZ_n^{\mathrm{symb}}$, and
Serre vanishing turns it into the Hilbert series of $\mfS^{\mathrm{symb}}_{n,m}$ for
$m\gg0$ (\cref{thm:character}).

The main idea of the principal specialization $t=q^{-1}$ (\cref{thm:principal}) is
that only the hook partitions survive the specialization, because every partition
which is not a hook contains the box $(1,1)$, whose factor $1-qt$ vanishes.  The hook
contributions pair up, and the finite $q$-binomial theorem gives the closed form.
This proves Theorem~C and, together with \cref{thm:character}, the dimension count
$\binom{n(m+1)}{n}$ used in \cref{sec:cherednik,sec:collapse}.

\subsection{The fixed-point formula}

\begin{lemma}\label{lem:localization}
The $T$-fixed points of $Y_n$ are the monomial balanced ideals
$I_\lambda$, $\lambda\in\Bal(2n)$, the tangent weights at $I_\lambda$ are the
$2n$ hook weights
$q^{a(s)+1}t^{-l(s)},\ q^{-a(s)}t^{l(s)+1}$ over the boxes $s\in\lambda$ with
$a(s)+l(s)$ odd, and
\[
 [\cP_n]_\lambda=B_\lambda,\qquad
 [\cL_n]_\lambda=\ell_\lambda,\qquad
 \sum_{a\ge0}(-1)^a\bigl[{\textstyle\bigwedge^a}\cE_n\bigr]_\lambda
 =\prod_{w\ \in\ \wt(\cE_n|_\lambda)}(1-w)
 =(1-q)(1-t)\,\Pi^{(0)}_\lambda,
\]
in the notation \eqref{eq:statistics}.  Consequently, for every $m\in\Z$,
\begin{equation}\label{eq:F-localization}
 F_{n,m}(q,t):=\chi^T\bigl(Y_n,\cL_n^{\otimes m}\otimes\cK_n\bigr)
 =\sum_{\lambda\in\Bal(2n)}
 \frac{\ell_\lambda^{\,m}B_\lambda(1-q)(1-t)\Pi^{(0)}_\lambda}{D_\lambda},
\end{equation}
agreeing with \eqref{eq:F-def}, and the determinant-parity projection
$P_{n,m}$ of \eqref{eq:F-def} satisfies
\begin{equation}\label{eq:corrected-parity}
 P_{n,m}=\chi^T\bigl(Y_n,\cL_n^m\otimes\cI_n\bigr)
 -\chi^T\bigl(Y_n,\cL_n^m\otimes\cM_n^+\bigr)
 =\chi^T\bigl(Y_n,\cL_n^m\otimes\cI_n\bigr),
\end{equation}
the last equality by \cref{cor:twists}.
\end{lemma}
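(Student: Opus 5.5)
The plan is to prove the lemma in three stages: describe the fixed points and their tangent weights, compute the fixed-point characters of the relevant bundles, and then apply Atiyah--Bott localization, followed by parity projection for \eqref{eq:corrected-parity}.

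\emph{Fixed points and tangent weights.} Since $Y_n$ is the $\gamma$-fixed component of $\Hilb^{2n}(\A^2)$ with balanced $\Gamma$-character, its $T$-fixed points are the monomial ideals $I_\lambda$ with $\lambda\vdash 2n$ having equally many even and odd boxes, that is, $\lambda\in\Bal(2n)$. The tangent space to $\Hilb^{2n}$ at $I_\lambda$ has the Haiman/Ellingsrud--Str{\o}mme weights $q^{a(s)+1}t^{-l(s)}$ and $q^{-a(s)}t^{l(s)+1}$, $s\in\lambda$. The tangent space of the fixed component is the $\gamma$-invariant part, namely the weights of even total degree, i.e.\ those with $a(s)+l(s)$ odd. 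Counting them gives $2n$, consistent with $\dim Y_n=2n$ and with the empty $2$-core condition; I would cite the standard quiver-variety count for this.

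\emph{Characters at $I_\lambda$.} The fiber $\cP_n|_\lambda=\C[x,y]/I_\lambda$ has character $B_\lambda$, and $\cR_1$ is spanned by the odd boxes, so $\det\cR_1=\ell_\lambda$. By \cref{prop:models-identified}(d), this is also the character of $\cL_n$. For $\cE_n=\cT_n\oplus\cO_q\oplus\cO_t$, the even part $\cR_0$ has weights $q^at^b$ over the even boxes. The trace splitting removes the trivial weight of $1$, so $\cT_n$ has the weights of the even boxes other than $(0,0)$. This gives
\[
\lambda_{-1}(\cE_n)=(1-q)(1-t)\,\Pi^{(0)}_\lambda.
\]
Localization of the Euler characteristic of $\cL^m\otimes\cP\otimes\lambda_{-1}\cE$, with denominator $\prod(1-w^{-1})$, then yields \eqref{eq:F-localization}. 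One should check the dual convention in \eqref{eq:statistics}: the hook weights come in pairs $w,\, qt\,w^{-1}$, so the chosen convention agrees up to the symmetric pairing. I expect this sign and convention check to be the only fiddly point.

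\emph{Parity projection.} By \cref{thm:two-term}, $\chi(\cL^m\otimes\cK)=\chi(\cL^m\otimes\cI)-\chi(\cL^m\otimes\cM)$. The operator $f\mapsto\frac12\bigl(f(q,t)+(-1)^{nm}f(-q,-t)\bigr)$ extracts the $\gamma$-even part after the twist: $\gamma$ acts on $\ell_\lambda^m$ by $(-1)^{nm}$, since $\ell_\lambda$ is a product of $n$ odd monomials. Hence it kills $\cM^-$ and keeps $\cI$ (which is even) and $\cM^+$. This gives the first equality in \eqref{eq:corrected-parity}; the second follows from \cref{cor:twists}, taking Euler characteristics of $R\rho_*=0$ on the proper $Q_n$ over the affine base.
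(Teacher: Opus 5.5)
Your proof follows the paper's argument step for step. Both proofs identify the fixed points and the $\gamma$-invariant hook weights inside $\Hilb^{2n}(\A^2)$, read off the characters $B_\lambda$, $\ell_\lambda$ and $[\cE_n]_\lambda=(B^{(0)}_\lambda-1)+q+t$, and then localize. Both then apply the parity projector to $[\cK_n]=[\cI_n]-[\cM_n]$, using $\ell_\lambda(-q,-t)=(-1)^n\ell_\lambda$, the evenness of $\cI_n$, and \cref{cor:twists}.

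The one step that is wrong as written is the convention check you flagged. Let $w$ run over the hook weights $q^{a(s)+1}t^{-l(s)}$, $q^{-a(s)}t^{l(s)+1}$, which you called the tangent weights. The pairing $w\leftrightarrow qt\,w^{-1}$ does not make $\prod(1-w^{-1})$ agree with $\prod(1-w)$. Instead it gives exactly $\prod_w(1-w^{-1})=(qt)^{-n}D_\lambda$, since each pair multiplies to $qt$. So the denominator you wrote produces $(qt)^nF_{n,m}$, not $F_{n,m}$.

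The convention is forced by $[\cP_n]_\lambda=B_\lambda$, that is, by $T$ acting on functions with $x$ of weight $q$. In that convention the genuine tangent characters are the \emph{inverses} of the hook weights. For example, take $n=1$ at $I=(x^2,y)$. The invariant deformations $x^2\mapsto 1$ and $y\mapsto x$ in $\Hom(I,\C[x,y]/I)^\Gamma$ have characters $q^{-2}$ and $qt^{-1}$, while the hook weights there are $q^2$ and $q^{-1}t$. Hence $\prod_u(1-u^{-1})$ over the tangent characters $u$ equals $\prod_w(1-w)=D_\lambda$ over the hook weights, which is the denominator in \eqref{eq:F-localization}.

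A one-line sanity check: for $n=1$, $\sum_\lambda D_\lambda^{-1}=\frac{1+qt}{(1-q^2)(1-t^2)}$. This is the character of $\Gamma(Y_1,\cO)=\C[x,y]^\Gamma$, and $H^{>0}(Y_1,\cO)=0$. The other convention gives $qt$ times this. With the denominator corrected, the rest of your argument goes through exactly as in the paper.
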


\begin{proof}
The fixed points of $\Hilb^{2n}(\A^2)^T$ are monomial ideals, and the
$\Gamma$-fixed component $Y_n$ selects the balanced ones.  The tangent space of the
Hilbert scheme at $I_\lambda$ has the standard hook-weight basis, and the
$\Gamma$-invariant part consists of the weights of even total degree, i.e.\
$a+l$ odd.  The fiber of $\cP_n$ is spanned by the standard monomials
($B_\lambda$), the line bundle $\cL_n=\det\cR_1$ gives the product of odd-box weights, and
$[\cE_n]_\lambda=(B_\lambda^{\mathrm{even}}-1)+q+t$, hence the Koszul factor
$\sum_a(-1)^a[\bigwedge^a\cE_n]_\lambda=(1-q)(1-t)\Pi^{(0)}_\lambda$.  Equivariant
localization (Thomason) gives \eqref{eq:F-localization}.  The class
$[\cK_n]=[\cI_n]-[\cM_n]$ (\cref{thm:two-term}) splits under the
determinant-parity projector $\Pi^{(m)}f=\tfrac12(f(q,t)+(-1)^{nm}f(-q,-t))$ into
the even part $[\cI_n]-[\cM_n^+]$ (note $\ell_\lambda(-q,-t)=(-1)^{n}\ell_\lambda$
and $\cI_n$ is even), giving the first equality of
\eqref{eq:corrected-parity}, and the second is \cref{cor:twists}.
\end{proof}

\subsection{Hook reduction and the principal specialization}

\begin{theorem}\label{thm:principal}
Theorem C holds.  For all $n\ge1$, $m\ge0$, with $M=n(m+1)$, $Q=q^2$, we have
\begin{equation}\label{eq:F-principal}
 F_{n,m}(q,q^{-1})
 =q^{-mn^2}\qbinom{M}{n}_{Q}-q^{1-mn^2}[m]_{Q^n}\qbinom{M}{n-1}_{Q},
\end{equation}
with the two summands of opposite exponent parity.  Consequently
$P_{n,m}(q,q^{-1})=q^{-mn^2}\qbinom{M}{n}_{Q}$,
$N_{n,m}(q,q^{-1})=q^{1-mn^2}[m]_{q^{2n}}\qbinom{M}{n-1}_{q^2}$, and the scalar
values of Theorem C follow at $q\to1$.
\end{theorem}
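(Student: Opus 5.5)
We specialize the fixed-point sum \eqref{eq:F-localization} at $t=q^{-1}$ term by term; the proof is then a combinatorial evaluation.

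\textbf{Step 1 (hook reduction).} If $\lambda\in\Bal(2n)$ contains the box $(1,1)$, then $a+b=2$ is even, so $\Pi^{(0)}_\lambda$ contains the factor $1-qt$. This factor vanishes at $t=q^{-1}$. We must check that $D_\lambda$ does not vanish to the same order. Its factors are $1-q^{a+1}t^{-l}$ and $1-q^{-a}t^{l+1}$ with $a+l$ odd, and they specialize to $1-q^{a+l+1}$ and $1-q^{-(a+l+1)}$. Since $a+l+1$ is even and positive, no factor of $D_\lambda$ vanishes. Hence only hooks $\lambda=(k+1,1^{2n-k-1})$ survive, and balancedness (equal numbers of boxes of each checkerboard parity) forces the arm length $k$ to be odd or even according to a parity condition that we make explicit. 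For a hook, $B_\lambda=\sum_{i\le k}q^i+\sum_{1\le j\le 2n-1-k}t^j$. The product $\ell_\lambda$ runs over the odd boxes of the arm and of the leg, and $\Pi^{(0)}_\lambda$ runs over the even boxes of the arm and of the leg. The hook lengths along the arm and along the leg are consecutive integers, so $D_\lambda$ specializes to an explicit product of factors $(1-q^{2c})(1-q^{-2c})$ over a set of even hook lengths.

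\textbf{Step 2 (closed form per hook).} Write each surviving term as $\pm q^{e(k)}$ times a ratio of products of factors $(1-Q^{j})$, with $Q=q^2$. The numerator factors from $\Pi^{(0)}$ on the arm and on the leg give $(Q;Q)_{\lceil k/2\rceil}(Q;Q)_{\cdots}$. The denominator is the product of $(1-Q^c)(1-Q^{-c})$ over half the hook lengths, and this product is essentially $(Q;Q)_{n-1}$ times a reflected factor. The weight $\ell_\lambda^m$ contributes $q^{m(\text{odd arm sum}-\text{odd leg sum})}$. The factor $(1-q)(1-q^{-1})$ cancels against the prefactor of \eqref{eq:F-def}. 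The sum $B_\lambda$ at $t=q^{-1}$ is $\sum_{i=-(2n-1-k)}^{k}q^i$, and we split it into its even-exponent and odd-exponent parts. This split is the source of the two summands of opposite parity.

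\textbf{Step 3 (summation).} Reindex the hooks by $j=\lfloor k/2\rfloor\in\{0,\dots,n-1\}$. The even-parity part becomes $q^{-mn^2}\sum_j Q^{mj(\cdots)}\times(\text{Gaussian-type ratio})$. Identifying this sum with $\qbinom{M}{n}_Q$, $M=n(m+1)$, is the finite $q$-binomial (Cauchy) theorem $\prod_{i=0}^{n-1}(1+zQ^i)=\sum_j Q^{\binom j2}\qbinom nj_Q z^j$, used in a Vandermonde or partial-fraction form: the sum over $j$ is the partial-fraction expansion in $z=Q^{m n}$ of a rational function whose polynomial part is $\qbinom{M}{n}_Q$. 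The odd-parity part is handled the same way and yields $[m]_{Q^n}\qbinom{M}{n-1}_Q$, with the extra factor $q$ coming from the odd exponents of $B_\lambda$.

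We expect Step 3 to be the main obstacle: matching the hook sum to the $q$-binomial through Lagrange interpolation at the nodes $Q^{-j}$ and controlling the signs and the powers $Q^{\binom j2}$. To find the right identity we would first check $n=1,2$ by hand. Once \eqref{eq:F-principal} holds, the remaining assertions are formal. The two summands have opposite parity and $P_{n,m}$ is the projection \eqref{eq:F-def} onto parity $(-1)^{nm}$ (since $t=q^{-1}$ preserves total parity), so $P_{n,m}(q,q^{-1})$ is the first summand and $N_{n,m}$ is, up to sign convention, the second. Setting $q\to1$ gives $\binom{M}{n}$ and $m\binom{M}{n-1}$, and the difference is $\binom{M}{n}-m\binom{M}{n-1}=\binom{M}{n}\bigl(1-\tfrac{mn}{M-n+1}\bigr)=\tfrac1{nm+1}\binom Mn$.
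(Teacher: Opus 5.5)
Your route matches the paper's: hook reduction, explicit hook terms at $t=q^{-1}$, a finite $q$-binomial summation, then the parity projection and $q\to1$. Your non-vanishing check for $D_\lambda(q,q^{-1})$ and the closing scalar computation are fine. However, two assertions in Steps~1--2 are wrong.

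\emph{Balancedness does not restrict the arm parity.} For $\lambda=(a+1,1^b)$ with $a+b=2n-1$, the number of even-content boxes is $\lfloor a/2\rfloor+1+\lfloor b/2\rfloor=n$ for every $a$. So all $2n$ hooks contribute. For instance, at $n=1$ the term of $(2)$ alone is $q^{m+1}/(1+q)$, and you get a Laurent polynomial only after adding the term of $(1,1)$.

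\emph{The factor $(1-q)(1-t)$ does not cancel.} $F_{n,m}$ is $(1-q)(1-t)$ times the fixed-point sum. At $t=q^{-1}$ you have $(1-q)(1-q^{-1})B_\lambda=(1-q^{-1})q^{-b}(1-q^{2n})$. Dividing by the origin factor $(1-Q^n)(1-Q^{-n})$ of $D_\lambda$ leaves $q^{2n-1-b}(1-q)/(1-Q^n)$. This surviving $1-q$ is what separates the two parities. If you split $B_\lambda$ by exponent parity after discarding the prefactor, you compute the parity parts of $F/((1-q)(1-q^{-1}))$, not of $F$.

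The main gap is Step~3, which you leave open. The mechanism you sketch (partial fractions in $Q^{mn}$, interpolation at $Q^{-j}$) is unspecified, and the rank-$n$ product $\prod_{i=0}^{n-1}(1+zQ^i)$ you quote is not the one that appears. Set $u=\lfloor a/2\rfloor$ and $v=\lfloor b/2\rfloor$, so $u+v=n-1$. Cancel $\Pi^{(0)}_\lambda$ against $D_\lambda$ and use $\prod_{r\le u}(1-Q^{-r})=(-1)^uQ^{-u(u+1)/2}(Q;Q)_u$. Each hook term becomes $(-1)^u\ell_\lambda(q,q^{-1})^m\,q^{-b+2n-1+u(u+1)}\,\frac{1-q}{(Q;Q)_n}\qbinom{n-1}{u}_Q$.

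The missing idea is to pair the two hooks with the same $u=k$, that is, arms $2k$ and $2k+1$. In particular your index $j=\lfloor a/2\rfloor$ must carry two hooks, not one. The two hooks have the same sign and the same Gaussian coefficient. Their values $\ell_\lambda(q,q^{-1})$ are $q^{2nk-n^2}$ and $q^{2n(k+1)-n^2}$, and their monomials differ by exactly $q^{2nm+1}$. Hence
\[
 F_{n,m}(q,q^{-1})=\frac{(1-q)(1+q^{2nm+1})\,q^{-mn^2}}{(Q;Q)_n}\sum_{k=0}^{n-1}(-1)^kQ^{\binom k2}\qbinom{n-1}{k}_Q\bigl(Q^{nm+2}\bigr)^k .
\]
The finite $q$-binomial theorem in rank $n-1$ evaluates the sum as $(Q^{nm+2};Q)_{n-1}=(Q;Q)_M/(Q;Q)_{nm+1}$. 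Therefore $F_{n,m}(q,q^{-1})=q^{-mn^2}\qbinom Mn_Q\,\frac{(1-q)(1+q^{2nm+1})}{1-Q^{nm+1}}$.

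The two summands of \eqref{eq:F-principal} then follow from
\[
\frac{(1-q)(1+q^{2nm+1})}{1-Q^{nm+1}}=1-q\,\frac{1-Q^{nm}}{1-Q^{nm+1}}
\qquad\text{and}\qquad
\qbinom Mn_Q\frac{1-Q^{nm}}{1-Q^{nm+1}}=[m]_{Q^n}\qbinom M{n-1}_Q .
\]
Your parity-first variant also works once the pairing is in place. The paired factor $(1-q)(1+q^{2nm+1})$ has parity components $1-Q^{nm+1}$ and $-q(1-Q^{nm})$, and the same $q$-binomial sum finishes both.
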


\begin{proof}
\emph{Step 1: only hooks survive.}  If $\lambda$ is not a hook it contains the box
$(1,1)$, of even parity, contributing the factor $1-qt$ to
$\Pi^{(0)}_\lambda$, which vanishes at $t=q^{-1}$, while
$D_\lambda(q,q^{-1})=\prod(1-q^{\pm h})\neq0$ in $\Q(q)$.  Every hook of size $2n$
is balanced, and the hooks are
\[
 \lambda_j^+=(2j{+}1,1^{2n-2j-1}),\ 0\le j\le n-1;\qquad
 \lambda_j^-=(2j,1^{2n-2j}),\ 1\le j\le n .
\]

\emph{Step 2: a general hook.}  For $\lambda=(a+1,1^b)$, $a+b=2n-1$, put
$u=\lfloor a/2\rfloor$, $v=\lfloor b/2\rfloor$.  Then
\[
 B_\lambda(q,q^{-1})=q^{-b}\frac{1-q^{2n}}{1-q},\qquad
 \Pi^{(0)}_\lambda(q,q^{-1})=\prod_{r=1}^{u}(1-Q^r)\prod_{s=1}^{v}(1-Q^{-s}),
\]
and, since the even hook lengths are $2n$ (origin) and
$2,4,\dots,2u$ and $2,4,\dots,2v$ along the two arms,
\[
 D_\lambda(q,q^{-1})=(1-Q^n)(1-Q^{-n})
 \prod_{r=1}^{u}(1-Q^r)(1-Q^{-r})\prod_{s=1}^{v}(1-Q^s)(1-Q^{-s}).
\]
Using $\prod_{r\le u}(1-Q^{-r})=(-1)^uQ^{-u(u+1)/2}(Q;Q)_u$ and
$\frac{(1-q^{2n})(1-q^{-1})}{(1-Q^n)(1-Q^{-n})}=\frac{q^{2n-1}(1-q)}{1-Q^n}$, the
contribution is
\begin{equation}\label{eq:general-hook}
 C^{(m)}_{a,b}
 =(-1)^u\,\ell_\lambda(q,q^{-1})^m\,q^{-b+2n-1+u(u+1)}\,
 \frac{1-q}{(1-Q^n)(Q;Q)_u(Q;Q)_v}.
\end{equation}

\emph{Step 3: the two families.}  For both families the odd boxes have total
exponent $j^2-(n-j)^2$, hence
$\ell_{\lambda_j^\pm}(q,q^{-1})=q^{2nj-n^2}$.  Substituting $(a,b,u,v)$ and using
$\frac1{(1-Q^n)(Q;Q)_j(Q;Q)_{n-j-1}}=\frac1{(Q;Q)_n}\qbinom{n-1}{j}_Q$,
\begin{align*}
 C^{(m)}(\lambda_j^+)&=\frac{1-q}{(Q;Q)_n}(-1)^j
 q^{-mn^2+j^2+(2nm+3)j}\qbinom{n-1}{j}_Q,\\
 C^{(m)}(\lambda_j^-)&=\frac{1-q}{(Q;Q)_n}(-1)^{j+1}
 q^{-mn^2-1+j^2+(2nm+1)j}\qbinom{n-1}{j-1}_Q .
\end{align*}

\emph{Step 4: pairing and the $q$-binomial theorem.}  Pair $\lambda_k^+$ with
$\lambda_{k+1}^-$ ($0\le k\le n-1$).  Signs and Gaussian coefficients match, the
second exponent exceeding the first by $2nm+1$, and we obtain
\[
 F_{n,m}(q,q^{-1})=\frac{(1-q)(1+q^{2nm+1})q^{-mn^2}}{(Q;Q)_n}
 \sum_{k=0}^{n-1}(-1)^kq^{k^2+(2nm+3)k}\qbinom{n-1}{k}_Q .
\]
With $q^{k^2+(2nm+3)k}=Q^{\binom k2}(Q^{nm+2})^k$ and the finite $q$-binomial
theorem $\sum_k(-1)^kQ^{\binom k2}\qbinom{n-1}kz^k=(z;Q)_{n-1}$ at $z=Q^{nm+2}$,
\[
 F_{n,m}(q,q^{-1})
 =q^{-mn^2}(1-q)(1+q^{2nm+1})\,
 \frac{(Q;Q)_{n(m+1)}}{(Q;Q)_n(Q;Q)_{nm+1}}
 =q^{-mn^2}\qbinom{M}{n}_{Q}\frac{(1-q)(1+q^{2nm+1})}{1-Q^{nm+1}} .
\]

\emph{Step 5: parity separation.}  The identity
$\frac{(1-q)(1+q^{2nm+1})}{1-Q^{nm+1}}=1-q\,\frac{1-Q^{nm}}{1-Q^{nm+1}}$ and
$\qbinom{M}{n}_Q\frac{1-Q^{nm}}{1-Q^{nm+1}}=[m]_{Q^n}\qbinom{M}{n-1}_Q$ (from
$\qbinom{M}{n-1}_Q\big/\qbinom{M}{n}_Q=\frac{1-Q^n}{1-Q^{nm+1}}$) yield
\eqref{eq:F-principal}.  Exponents in the first summand are $\equiv nm$, in the
second $\equiv nm+1\pmod2$.  Under $t=q^{-1}$ total bidegree parity is preserved
($a+b\equiv a-b$), hence $\Pi^{(m)}$ selects the first summand.  Let $q\to1$ for the
scalar identities, and note that $m\binom{M}{n-1}=\frac{nm}{nm+1}\binom Mn$ gives
$F_{n,m}(1,1)=\frac1{nm+1}\binom Mn$.
\end{proof}

\begin{remark}[Independent verification]\label{rem:verification}
The contents of this section are elementary and finite, hence machine-checkable.
Implementing \eqref{eq:F-def} verbatim in exact rational arithmetic, we have
confirmed for all $n\le4$ and $0\le m\le3$ that the fixed-point sum
\eqref{eq:F-localization} simplifies to a Laurent polynomial, the full identity
\eqref{eq:F-principal}, the Gaussian specialization
of $P_{n,m}$, and the three scalar identities
$P_{n,m}(1,1)=\binom{n(m+1)}n$, $N_{n,m}(1,1)=m\binom{n(m+1)}{n-1}$,
$F_{n,m}(1,1)=\frac1{nm+1}\binom{n(m+1)}n$.  Likewise all statements of
\cref{ex:lowrank}, and every assertion of
\cref{app:census} \textup(length duality
$|C|+|D|=2n+1$ and $N=2k-1$ at \emph{every} heart, the fiber counts
$|\rho^{-1}(z_J)^T|=\binom Nk$, the census profile, and the thresholds
$n=m(m-1)$ for $\Gr(m,2m-1)$\textup) have been verified for all $n\le8$.
\end{remark}

\begin{example}\label{ex:lowrank}
For $n=2$ we have $P_{2,1}=[5]_{q,t}+qt$ and $N_{2,1}=qt[4]_{q,t}$.
For $n=3$ we have $P_{3,1}=[10]+qt[6]+qt[4]$ and $N_{3,1}=qt[9]+q^2t^2[5]+q^4t^4$.
For $n=4$ we have $P_{4,1}=[17]+qt[13]+qt[11]+q^2t^2[9]+qt[9]+q^3t^3[5]+q^2t^2[5]+q^4t^4$
and $N_{4,1}=qt[16]+q^2t^2[12]+q^2t^2[10]+q^3t^3[8]+q^4t^4[6]+q^4t^4[4]$.  Here
$[k]_{q,t}=\sum_{i+j=k-1}q^it^j$.  The positive
parts agree with Stump's tables of $\mathrm{Cat}^{(1)}(B_n;q,t)$
\cite[Cor.~3.18 and App.~A]{Stump}, and
$(P_{n,1}(1,1),F_{n,1}(1,1))=\bigl(\binom{2n}n,\mathrm{Cat}_n\bigr)$ for
$n=2,3,4$.
\end{example}


\begin{theorem}\label{thm:character}
For each $n$ there is $m_0(n)$ such that for all
$m\ge m_0(n)$ we have
$H^{i}(\cZ_n^{\mathrm{symb}},\cO(m))=0$ for $i>0$, the canonical map
$\widetilde\mfS^{\mathrm{symb}}_{n,m}\to H^0(\cZ_n^{\mathrm{symb}},\cO(m))$ is an
isomorphism, and
\[
 \Hilbs\widetilde\mfS^{\mathrm{symb}}_{n,m}=q^{mn^2}P_{n,m}(q,t),
 \qquad
 \chf_{q,t}\,\mfS^{\mathrm{symb}}_{n,m}=P_{n,m}(q,t).
\]
In particular $\dim\mfS^{\mathrm{symb}}_{n,m}=\binom{n(m+1)}n$ for
$m\ge m_0(n)$.
\end{theorem}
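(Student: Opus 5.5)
The proof chains symbolic descent, localization and Serre vanishing. First, by \cref{thm:SD} and \cref{cor:higher-direct-images}, $R\Gamma(Y_n,\cL_n^{\otimes m}\otimes\cI_n)\cong R\Gamma(\cZ_n^{\mathrm{symb}},\cO(m))$ $T$-equivariantly for every $m$. Since $\cZ_n^{\mathrm{symb}}$ is projective over the point $\{0\}\subset S_n$ and $\cO(1)$ is ample, Serre vanishing gives $m_0(n)$ with $H^{i}(\cZ_n^{\mathrm{symb}},\cO(m))=0$ for $i>0$ and $m\ge m_0(n)$. Then \cref{lem:localization}, eq.~\eqref{eq:corrected-parity}, yields $\chi^T(\cZ_n^{\mathrm{symb}},\cO(m))=P_{n,m}(q,t)$ up to the normalization of the torus character of $\cO_{Q_n}(1)$. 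So $\dim_{q,t}H^0(\cZ_n^{\mathrm{symb}},\cO(m))$ equals $P_{n,m}$, or $q^{mn^2}P_{n,m}$ in the $\Delta^m$-normalization. I would check that shift by comparing the character of $\cL_n=\det\cR_1$ at a fixed point with the weight of $\Delta^m$ in \eqref{eq:two-normalizations}.

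Second, I need to identify $H^0(\cZ,\cO(m))$ with $\widetilde\mfS^{\mathrm{symb}}_{n,m}=e(\Delta^mI^{(m)})/\mfm\,e(\Delta^mI^{(m)})$. By \cref{prop:models-identified}(c), $\Gamma(Q_n,\cO(m))=e(\Delta^mI^{(m)})$. Consider the short exact sequence $0\to\mfm_0\cO_{Q_n}(m)\to\cO_{Q_n}(m)\to\cO_{\cZ}(m)\to0$ on $Q_n$, which is projective over the affine $S_n$. The canonical map $\widetilde\mfS^{\mathrm{symb}}_{n,m}\to H^0(\cZ,\cO(m))$ is surjective once $H^1(Q_n,\mfm_0\cO_{Q_n}(m))=0$. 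It is injective once the image of $\Gamma(\mfm_0\cO(m))$ in $\Gamma(\cO(m))$ equals $\mfm\,\Gamma(\cO(m))$.

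For both facts I would choose generators $f_1,\dots,f_s$ of $\mfm$ and use the Koszul-type surjection $\cO_{Q_n}(m)^{\oplus s}\twoheadrightarrow\mfm_0\cO_{Q_n}(m)$. Relative Serre vanishing and Castelnuovo--Mumford regularity over the affine base $S_n$ then give, for $m\gg0$, surjectivity $\Gamma(\cO(m))^{\oplus s}\to\Gamma(\mfm_0\cO(m))$ and $H^1(\mfm_0\cO(m))=0$. Concretely, I resolve $\mfm_0\cO_{Q_n}$ by finite sums of $\cO_{Q_n}$ and twist by $m$: the kernel sheaf $K$ of $\cO^{\oplus s}\to\mfm_0\cO$ is coherent, so $H^1(K(m))=0$ for $m\gg0$. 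Graded Nakayama over $A$ is not even needed, because $\mfm\,\Gamma(\cO(m))$ is by definition the image of $\Gamma(\cO(m))^{\oplus s}$.

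Finally, transport the result along $\cdot\Delta^m$ in \eqref{eq:two-normalizations} to get $\chf_{q,t}\mfS^{\mathrm{symb}}_{n,m}=P_{n,m}$, and evaluate at $q=t=1$ using \cref{thm:principal} to get dimension $\binom{n(m+1)}n$. I expect the main obstacle to be the bookkeeping of the equivariant normalization, namely matching the $T$-linearization of $\cO_{Q_n}(1)$ induced from $\det\cR_1$ with the grading on $e(\Delta^mI^{(m)})$ so that exactly the shift $q^{mn^2}$ appears. I would first try to pin this down by checking the rank-one case $n=1$ and the degree of the generator $\Delta^m$ directly.
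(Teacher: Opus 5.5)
Your proposal follows the paper's proof: \cref{thm:SD} with \eqref{eq:corrected-parity} gives $P_{n,m}=\chi^T(\cZ_n^{\mathrm{symb}},\cO(m))$, Serre vanishing on the projective central fiber kills the higher cohomology, \eqref{eq:sections} identifies the sections with $e(\Delta^mI^{(m)})$, the shift $q^{mn^2}$ is \eqref{eq:two-normalizations} exactly as in the paper, and the dimension is \cref{thm:principal} at $q=t=1$. The only difference is the identification $\widetilde\mfS^{\mathrm{symb}}_{n,m}\cong H^0(\cZ_n^{\mathrm{symb}},\cO(m))$: the paper appeals to cohomology and base change at $0\in S_n$, which in its standard form needs flatness that the non-equidimensional $\pi_Q$ lacks, while your ideal-sheaf sequence together with Serre vanishing for $\mfm_0\cO_{Q_n}(m)$ and for the kernel of $\cO_{Q_n}(m)^{\oplus s}\twoheadrightarrow\mfm_0\cO_{Q_n}(m)$ gives the same conclusion for $m\gg0$ without any flatness.
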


\begin{proof}
By \eqref{eq:corrected-parity} and \cref{thm:SD},
$P_{n,m}=\chi^T(\cZ_n^{\mathrm{symb}},\cO(m))$ for every $m$.  The morphism
$\pi_Q$ is projective with $\cO(1)$ relatively ample
(\cref{prop:models-identified}).  Serre vanishing on the projective
central fiber gives the cohomology vanishing, and relative Serre vanishing
$R^{>0}\pi_{Q*}\cO(m)=0$ ($m\gg0$) with cohomology-and-base-change at
$0\in S_n$ identifies $\pi_{Q*}\cO(m)\otimes k(0)$, which is
$\widetilde\mfS^{\mathrm{symb}}_{n,m}$ by \eqref{eq:sections}, with
$H^0$ of the central fiber.  Hence
$P_{n,m}=\chf_TH^0(\cZ,\cO(m))=q^{-mn^2}\Hilbs\widetilde\mfS_{n,m}$, where the
normalization shift is \eqref{eq:two-normalizations} (the equivariant structure of
$\cO(1)$ normalizes the generator $\Delta$ to degree zero).  The dimension is
\cref{thm:principal} at $q=t=1$.  The bound $m_0(n)$ supplied by Serre vanishing is
not effective.
\end{proof}

\section{The Cherednik identification}\label{sec:cherednik}

In this section we prove Theorem~D.  The comparison between the symbolic model and
the spherical Cherednik module $eL_{\frac1{2n}+m}(\triv)$ is made through the shift
bimodules of the rational Cherednik algebra.  Kivinen's associated-graded theorem
identifies the associated graded of the iterated shift bimodule with
$e(\Delta^mI^{(m)})$, and the Procesi bundle theory of Losev and of Boixeda
Alvarez--Losev, transported to the wall variety $Q_n$ by \cref{thm:polarization},
produces a canonical bigraded surjection from $\mfS^{\mathrm{symb}}_{n,m}$ onto
$\gr_F\,eL_{\frac1{2n}+m}(\triv)$.

The main idea is a dimension count.  The target has dimension $\binom{n(m+1)}{n}$
by the theorem of Berest, Etingof and Ginzburg on the finite-dimensional
representations, which we make explicit in type $B_n$ in \cref{thm:typeB-dim}, and
the source has the same dimension for $m\gg0$ by \cref{thm:character}.  A surjection
between spaces of equal dimension is an isomorphism.  The first subsection fixes the
bigrading and checks that the parameters are spherical, and the remaining
subsections assemble the surjection.

\subsection{Recollections and the bigrading}\label{sec:cherednik-recollections}

Let $H_c=H_c(W(B_n))$ be the rational Cherednik algebra at constant parameter
$c$ (equal parameters on the two conjugacy classes of reflections), $U_c=eH_ce$.
Its Euler element $\mathbf h$ satisfies $[\mathbf h,x]=x$, $[\mathbf h,y]=-y$,
$[\mathbf h,w]=0$ for $x\in\frh^*$, $y\in\frh$, $w\in W$, giving the Euler grading
$\degh(x)=1$, $\degh(y)=-1$, $\degh(w)=0$.  Two filtrations occur.  The PBW
total-degree filtration has $\degt(x)=\degt(y)=1$, $\degt(w)=0$, with
$\gr^{\mathrm{tot}}H_c\cong S\#W$, and the order filtration has
$\ord(x)=\ord(w)=0$, $\ord(y)=1$, and is the ordinary order filtration after the
Dunkl embedding, with $\gr^{\mathrm{ord}}U_c\cong A$.  For a PBW monomial
$x^\alpha wy^\beta$ one has $\degt=|\alpha|+|\beta|$, $\degh=|\alpha|-|\beta|$,
$\ord=|\beta|$, hence
\begin{equation}\label{eq:keyrelation}
 \degt=\degh+2\ord,
 \qquad
 F^{\mathrm{ord}}_bH_c[\lambda]=F^{\mathrm{tot}}_{\lambda+2b}H_c[\lambda]
\end{equation}
on the Euler eigenspace of eigenvalue $\lambda$ (the PBW basis gives equality of the
two filtered subspaces).

\begin{definition}[Gordon--Stump bidegree]\label{def:bigrading}
Let $M$ be a filtered $H_c$-module whose filtration is stable under $\mathbf h$.  A
class in $\gr M$ has bidegree $(a,b)$ if it has filtration degree $a+b$ and Euler
degree $a-b$, equivalently $a=\frac{\degt+\degh}2$, $b=\frac{\degt-\degh}2$.  Under
the PBW isomorphism, $\deg x=(1,0)$, $\deg y=(0,1)$, $\deg w=(0,0)$, which is the
convention of \cite[\S4.2.2]{Stump}.  For $L^{(m)}:=L_{\frac1h+m}(\triv)$, the
filtration $F$ is the \emph{iterated tensor-product filtration}, which is defined as
follows.  Writing $H^{(j)}=H_{\frac1h+j}$, the shift isomorphisms give
\begin{equation}\label{eq:iteratedshift}
 eL^{(m)}\cong eH^{(m)}e_-\otimes_{eH^{(m-1)}e}eH^{(m-1)}e_-\otimes\cdots\otimes
 eH^{(1)}e_-\otimes_{eH^{(0)}e}\C ,
\end{equation}
and $F$ is defined recursively by $F_s(A\otimes_RB)=\sum_{i+j=s}F_iA\otimes_RF_jB$,
starting from the PBW filtrations on the shift factors and placing $\C$ in degree
zero.  The bidegree-$(a,b)$ piece of $\gr^G eL^{(m)}$ is
$\gr^F_{a+b}(eL^{(m)}[a-b])$.  This is the bigrading for which Stump proves the
surjection $DR^{(m)}(W)\otimes\eps\twoheadrightarrow\gr^GL^{(m)}$
\cite[Thm.~4.5]{Stump}.
\end{definition}

By \eqref{eq:keyrelation}, on a single filtered Cherednik factor the order and
total-degree filtrations determine each other in each Euler eigenspace.  This is the
bridge between Kivinen's associated-graded theorem, stated for the order filtration,
and the Gordon--Stump bidegree.  The bidegree obtained in this way is intrinsic.  By
\cite[Rem.~3.6]{BoixedaLosev} the shift bimodules carry a $(\C^\times)^2$-action, the
two factors scaling $\frh$ and $\frh^*$, and in the associated graded the Dunkl
operator $D_y$ becomes the coordinate $y$ \cite[proof of Thm.~4.3]{Kivinen}.  Hence the
symbol of $x^\alpha D^\beta\Delta^m$ is $x^\alpha y^\beta\Delta^m$, of bidegree
$(|\alpha|+mn^2,|\beta|)$, and the Gordon normalization places $\Delta^m$ in
bidegree $(0,0)$, which is the shift $q^{mn^2}$ of \eqref{eq:two-normalizations}.

Set $c_0=\frac1h=\frac1{2n}$.  The module $L_{c_0}(\triv)$ is one-dimensional
\cite[Thm.~1.4, Prop.~1.7]{BEG}, and $L_0:=eL_{c_0}(\triv)$ has $\gr L_0\cong A/\mfm$.

\begin{proposition}[Sphericity at the parameters used]\label{prop:sphericity}
For $c=m+\frac1h$ the parameter is $e$-spherical for $m\ge0$ and $e_-$-spherical for
$m>0$.
\end{proposition}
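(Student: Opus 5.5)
We will use the known sphericity criteria for rational Cherednik algebras. By results of Gordon--Stafford and Bezrukavnikov--Etingof (see also Losev's treatment), a parameter $c$ is aspherical exactly when $eL_c(\tau)=0$ for some irreducible $\tau$. Equivalently, some simple module in category $\mathcal O_c$ is killed by $e$. For real reflection groups the aspherical locus is contained in the set of parameters with $c\in(-1,0)$ modulo certain singular hyperplanes. More precisely, Dunkl--de Jeu--Opdam and Gordon--Stafford show that $H_c$ is $e$-spherical whenever $c\notin(-1,0)$, that is, for $c\ge0$. Since $c=m+\frac1{2n}>0$ for all $m\ge0$, the $e$-sphericity claim follows directly from this criterion.

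For the $e_-$ statement we will use the sign twist. There is a standard isomorphism $H_c\cong H_{-c}$ obtained by twisting $W$ by $\det$ and fixing $\frh,\frh^*$. It carries $e$ to $e_-$, so $e_-$-sphericity of $H_c$ is equivalent to $e$-sphericity of $H_{-c}$. Combining this with the shift isomorphism $e_-H_ce_-\cong eH_{c-1}e$, valid for real $W$ (Heckman--Opdam, Berest--Etingof--Ginzburg), reduces the claim to $e$-sphericity at $c-1=m-1+\frac1{2n}$. This parameter is nonnegative exactly when $m\ge1$, and then the previous paragraph applies.

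The main obstacle is bookkeeping. The shift isomorphism $U^-_c\cong U_{c-1}$ is an algebra isomorphism, so sphericity transfers only once we know that $e_-$-sphericity of $H_c$ is detected by $U^-_c$ alone, namely through $H_ce_-H_c=H_c$ holding iff every simple $U^-_c$-module lifts. This is Gordon--Stafford's Morita criterion, and we will cite it. We also have to make sure the equal-parameter $c$ for $B_n$ matches the conventions under which these isomorphisms are stated, including the sign of the parameter on each reflection class. As a fallback we will argue directly: $e_-L_c(\tau)\neq0$ for all $\tau$ when $c=m+\frac1{2n}$ with $m\ge1$. This holds because $L_c(\tau)$ contains the sign-isotypic vector $\Delta\cdot v$ in lowest-weight degree plus $n^2$, which survives unless it lies in the radical. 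For $c$ outside $(-1,0)$ that cannot happen, by the same Dunkl-operator argument that gives $e$-sphericity, applied after the sign twist.
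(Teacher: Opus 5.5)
\textbf{Gap in the $e_-$ half.} Your $e$-spherical half is fine as a citation argument, since $c=m+\frac1{2n}>0$ lies outside the aspherical locus. The $e_-$ half does not go through along the route you actually take.

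The step you call bookkeeping is wrong. For any algebra $A$ and idempotent $f$, every simple $fAf$-module has the form $fL$ for a simple $A$-module $L$. So ``every simple $U^-_c$-module lifts'' holds for every $c$, and it cannot be equivalent to $H_ce_-H_c=H_c$. Indeed the equivalence fails at $m=0$: $L_{1/h}(\triv)$ is one-dimensional and trivial, so $e_-L_{1/h}(\triv)=0$ and $H_{1/h}$ is $e_-$-aspherical. An abstract algebra isomorphism $e_-H_ce_-\cong eH_{c-1}e$ does not by itself transport sphericity. You would need an extra transfer argument, for instance that the isomorphism respects the triangular structure, so that category $\mathcal O$ and its number of simples match on both sides.

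The fallback is not an argument either. $\Delta\otimes\tau\cong\det\otimes\tau$ has a nonzero $\det$-isotypic part only for $\tau=\triv$. Even then, whether $\Delta$ lies in the radical is the whole question, and at $m=0$ it does.

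\textbf{The fix uses only what you already have.} Apply the criterion in the form you quoted, namely that $c'\notin(-1,0)$ implies $H_{c'}$ is $e$-spherical, rather than its restriction to $c'\ge0$. Your phrase ``that is, for $c\ge0$'' conflates the two, and it is exactly the $c'\le-1$ half that matters here.

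Every reflection of $W(B_n)$ has determinant $-1$. Hence $w\mapsto\det(w)w$, with $\frh,\frh^*$ fixed, gives $H_{-c}\cong H_c$ with equal parameters preserved and $e\mapsto e_-$. So $e_-$-sphericity of $H_c$ is $e$-sphericity of $H_{-c}$. For $m\ge1$ we have $-c=-m-\frac1{2n}\le-1-\frac1{2n}$, which lies outside $(-1,0)$. For $m=0$ we get $-c\in(-1,0)$, consistent with the failure noted above.

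The sign convention is the one you need. In rank one, a module killed by $e$ has $s=-1$ and hence $x=y=0$, which forces $c=-\frac12$ in the normalization where $L_{1/2}(\triv)$ is finite-dimensional. Do cite a source that states the full $(-1,0)$ bound for real reflection groups, not only the $c\ge0$ half.

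\textbf{Comparison with the paper.} The paper's proof is purely a citation: Boixeda Alvarez--Losev (Prop.~2.12, quoted from Gordon), and independently the Dunkl--Griffeth classification of aspherical parameters for $G(2,1,n)$ at $(c_0,d_0,d_1)=(c,c,-c)$. Your repaired argument replaces that type-specific classification with the general bound on the aspherical locus of a real reflection group together with the $\det$-twist. It is uniform in $W$ and explains why $e_-$-sphericity needs $m>0$. In exchange it relies on the $(-1,0)$ bound rather than on an explicit list of aspherical values.
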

\begin{proof}
\cite[Prop.~2.12]{BoixedaLosev}, quoted there from \cite[Lemma~4.5]{Gordon}.
Independently, for $W(B_n)=G(2,1,n)$ the classification of aspherical parameters
\cite[Thm.~3.4]{DunklGriffeth}, applied with $(c_0,d_0,d_1)=(c,c,-c)$, shows that
$\frac1{2n}+j$ avoids the aspherical locus for all $j\ge0$.  ``Constant'' is a real
hypothesis in type $B_n$, since a parameter is a pair
$(c_{\mathrm{short}},c_{\mathrm{long}})$, and we are on the diagonal.
\end{proof}

\subsection{Kivinen's associated-graded theorem}

Following Kivinen \cite[\S4]{Kivinen}, define the shift bimodules
$Q^{c+1}_c=eH_{c+1}e_-\Delta$ and their iterates
$B_{m0}=Q^{c_0+m}_{c_0+m-1}\cdots Q^{c_0+1}_{c_0}$, a
$(U_{c_0+m},U_{c_0})$-bimodule, with the order filtration.

\begin{theorem}[{Kivinen \cite[Thm.~4.3(2), Cor.~4.5(2)]{Kivinen}}]\label{thm:kivinen}
Let $c$ be constant and spherical.  Then, for $N_k=B_{k0}eH_c$,
$\ogr N_k=I^{(k)}\Delta^k$ as graded submodules of $S$, and $\gr B_c$ is the $\Z$-algebra of $\bigoplus_de(\Delta^dI^{(d)})$.  In particular
$\gr_FB_{m0}\cong e(\Delta^mI^{(m)})$.
\end{theorem}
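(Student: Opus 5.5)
This statement is a cited result of Kivinen, so the plan is to reconstruct its proof from the structure of shift bimodules rather than to produce something new. I would argue by induction on $k$, working with the order filtration throughout and using the Dunkl embedding. Under that embedding $eH_ce$ becomes a subalgebra of $\scrD(\frh^{\mathrm{reg}})^W$, and taking order symbols lands in $\C[\frh^{\mathrm{reg}}\times\frh^*]$. The base case $k=0$ is the PBW statement $\ogr(eH_c)=eS$. For the inductive step one has $N_{k+1}=Q^{c+k+1}_{c+k}N_k$. Since the symbol map is multiplicative on products of filtered pieces, the inclusion
\[
 \ogr N_{k+1}\supseteq\ogr(Q^{c+k+1}_{c+k})\cdot\ogr N_k
\]
holds. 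It therefore suffices to show two things: first, that the symbol of $Q^{c+1}_c=eH_{c+1}e_-\Delta$ is $e(S\cdot S^\eps)\Delta$, that is, $\Delta$ times the alternants; second, that the product of these symbol spaces generates $I^{(k)}\Delta^k$ after taking saturation.

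The key steps, in order, are as follows.

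First, establish the upper bound $\ogr N_k\subseteq I^{(k)}\Delta^k$. At a generic point of each reflection hyperplane the algebra $H_c$ localizes to the rank-one algebra of type $A_1$, tensored with a Weyl algebra; this is the Bezrukavnikov--Etingof completion. In rank one the shift operators can be written down explicitly, and their symbols vanish to order exactly $k$ along $\{\alpha_s=\alpha_s^\vee=0\}$ once the factor $\Delta^k$ is divided out. Because vanishing order is a codimension-two condition, checking it at generic points of each $(\alpha_s,\alpha_s^\vee)$ already gives membership in the symbolic power $I^{(k)}$.

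Second, prove the lower bound, namely that equality holds. Here I would use Gordon--Stafford/Kivinen-style $\Z$-algebra arguments. Sphericity of every intermediate parameter (\cref{prop:sphericity}) makes each $Q$ a Morita-type equivalence bimodule, so the $\Z$-algebra $B_c$ is Morita-equivalent to $U_c$. Its associated graded is then a $\Z$-algebra whose diagonal pieces are reflexive, rank-one modules over $\C[\frh\oplus\frh^*]^W$ on the normal variety $(\frh\oplus\frh^*)/W$. Both $\ogr N_k$ and $e(\Delta^kI^{(k)})$ are reflexive, and they agree off codimension two, that is, at generic points of the reflection strata, where the rank-one computation gives equality. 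Reflexive modules that agree in codimension one coincide, so the two are equal.

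Third, read off $\gr_FB_{m0}\cong e(\Delta^mI^{(m)})$ by applying $e$ on the right. The $\Z$-algebra statement then follows by multiplying the pieces together.

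The main obstacle is the reflexivity of $\ogr N_k$: associated graded modules need not be saturated. I would try to deduce it from the fact that $N_k$ is a Cohen--Macaulay-type module, being a progenerator under sphericity, together with a depth comparison. If that fails, I would instead verify $S_2$ directly by using the $(\C^\times)^2$-action and a Hilbert series comparison.
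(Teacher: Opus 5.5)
Your upper bound is the natural argument: a rank-one reduction at the generic point of each $\mathfrak p_s=(\alpha_s,\alpha_s^\vee)$. The product inclusion $J^k\Delta^k\subseteq\ogr N_k$ is also fine. The paper gives no proof of this statement and simply imports it from Kivinen.

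The genuine gap is the lower bound $I^{(k)}\Delta^k\subseteq\ogr N_k$. The reflexivity argument you propose for it cannot work, because the target is not reflexive.
\begin{itemize}
\item Each $\mathfrak p_s$ has height two, so $I^{(k)}=\bigcap_s\mathfrak p_s^k$ has only height-two associated primes. Hence $I^{(k)}_{\mathfrak q}=S_{\mathfrak q}$ for every height-one prime $\mathfrak q$.
\item Therefore the reflexive hull of $I^{(k)}\Delta^k$ is $\Delta^kS$. Likewise the reflexive hull of $e(\Delta^kI^{(k)})$ over $S^W$ is $e(\Delta^kS)$, which is strictly larger once $k\ge2$. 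Already for $n=1$, $k=2$ one has $e(x^2(x,y)^2)=x^2\mathfrak m_{S^W}$ against $x^2S^W$.
\item So ``reflexive modules that agree in codimension one coincide'' would produce $\Delta^kS$, contradicting your own upper bound.
\item By the same reasoning $\ogr N_k$ is never reflexive for $k\ge1$. It contains $J^k\Delta^k$, hence agrees with $\Delta^kS$ at every height-one prime, yet it lies in $I^{(k)}\Delta^k\neq\Delta^kS$.
\end{itemize}
The obstacle you flag at the end is therefore not a technical difficulty but a false statement. Neither a depth argument from the progenerator property nor a Hilbert series comparison can establish it.

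You have conflated two different saturations of $J^k$. The one that produces $I^{(k)}$ removes embedded components, i.e.\ keeps the $\mathfrak p_s$-primary components at the height-two primes. Reflexivity only sees height one. Everything the theorem asserts beyond the sandwich $J^k\Delta^k\subseteq\ogr N_k\subseteq I^{(k)}\Delta^k$ is a statement along the codimension-two strata $V(\mathfrak p_s)$. Your local computation only shows that the sandwich collapses after localizing at each $\mathfrak p_s$, where $J^k$ and $I^{(k)}$ already agree. Globalizing this is exactly where new input is required, since $J^k\neq I^{(k)}$ in general; Kivinen's $B_3$ example already has $J\subsetneq I$. A workable lower bound has to see these strata, for instance by one of the following:
\begin{itemize}
\item identifying $\gr B_{k0}$ with sections of a line bundle on a resolution in which each $V(\mathfrak p_s)$ becomes an exceptional divisor. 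Then $\mathfrak p_s$-adic order becomes a divisorial valuation and normality upstairs yields the symbolic power. This needs a cohomology vanishing so that $\gr$ commutes with global sections;
\item an independent computation of the bigraded Hilbert series of $N_k$;
\item explicitly lifting every element of $I^{(k)}\Delta^k$ to an operator in $N_k$.
\end{itemize}
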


Both hypotheses hold at our parameters $c=m+\frac1{2n}$ by \cref{prop:sphericity}.
In type $A$, where $I^{(m)}=J^m$, this recovers \cite[Prop.~6.5]{GS1}.

\subsection{The Procesi package on the wall}\label{sec:procesi}

The shift bimodules of Boixeda Alvarez--Losev live on a Procesi sheaf over the wall
variety, and we assemble the one we need from Losev's classification.

Write $W_n=W(B_n)$, $H=S\#W_n$ for the skew group algebra,
$e=\frac1{|W_n|}\sum_{w\in W_n}w$ for the spherical idempotent and
$f=\frac1{|W_{n-1}|}\sum_{w\in W_{n-1}}w$ for the parabolic averaging idempotent
of $W_{n-1}=\mu_2\wr S_{n-1}\subset W_n$.

\begin{convention}[Side of the idempotents]\label{conv:side}
$H$ acts on Procesi sheaves on the \emph{left}, so that taking invariants under a
subgroup is left multiplication by the averaging idempotent, that is, $e\sP=\sP^{W_n}$
and $f\sP=\sP^{W_{n-1}}$.  This is the convention of \cite{BoixedaLosev}, who write
$\eps\widetilde\sP\cong\cO_{\widetilde X}$.
\end{convention}

Losev \cite[Thm.~1.1]{LosevProcesi} classifies the normalized Procesi bundles
$\widetilde\sP$ on a smooth Hamiltonian-reduction resolution, that is, those with
$\End(\widetilde\sP)\cong H$, $\Ext^{>0}(\widetilde\sP,\widetilde\sP)=0$ and
$e\widetilde\sP\cong\cO$, and proves the following theorem.

\begin{theorem}[{Losev \cite[Thm.~1.2]{LosevProcesi}}]\label{thm:losev}
For every generic stability chamber there is a normalized Procesi bundle
$\widetilde\sP_n$ on $Y_n$ with $\widetilde\sP_n^{W_{n-1}}\cong\cT^\theta$, the
tautological bundle of the wreath-product quiver variety.
\end{theorem}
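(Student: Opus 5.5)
This statement is imported from \cite{LosevProcesi}, so the plan is to verify that our setting satisfies Losev's hypotheses and that his normalizations match ours; no new geometry is needed. The idea is to realize $Y_n$, and more generally every $\mfM_\theta((n,n),(1,0))$ with $\theta$ generic, as a smooth Hamiltonian-reduction resolution of $(\frh\oplus\frh^*)/W_n=\SymOp^n(\A^2/\Gamma)$. This is the wreath-product case $\Gamma_n=\Gamma\wr S_n$, $\Gamma=\mu_2$, of Losev's setting.

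\emph{Step 1: the hypotheses.} We identify $W(B_n)=\mu_2\wr S_n$ with $\Gamma_n$ acting on $(\A^2)^n$, so that $\frh\oplus\frh^*\cong(\A^2)^n$ as $\Gamma_n$-modules. By \cite{Kuznetsov,CGGSpunctual} and Nakajima's smoothness \cite{Nakajima98}, the generic-chamber quiver varieties are smooth conical symplectic resolutions obtained by Hamiltonian reduction. These are exactly the resolutions to which \cite[Thm.~1.1]{LosevProcesi} applies.

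\emph{Step 2: existence and the chamber.} In each generic chamber, Losev's classification provides normalized Procesi bundles, with $\End(\widetilde\sP)\cong S\#W_n$, $\Ext^{>0}(\widetilde\sP,\widetilde\sP)=0$ and $e\widetilde\sP\cong\cO$. Within this family, \cite[Thm.~1.2]{LosevProcesi} singles out the unique one whose $\Gamma_{n-1}$-invariants are the tautological bundle $\cT^\theta$ of that chamber. We will apply this with $W_{n-1}=\mu_2\wr S_{n-1}$ embedded as the stabilizer of the last coordinate pair.

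\emph{Step 3: conventions.} We will check the normalizations as follows.
\begin{enumerate}[label=(\roman*)]
\item Losev's Procesi bundles carry a left $\Gamma_n$-action. This matches \cref{conv:side}, so that $f\widetilde\sP=\widetilde\sP^{W_{n-1}}$.
\item The rank of $\widetilde\sP^{W_{n-1}}$ is $|W_n/W_{n-1}|=2n$. This is the rank of the tautological bundle (in the cyclic chamber, $\cP_n=\cR_0\oplus\cR_1$), which is a useful consistency check.
\item The tautological bundle here means the bundle attached to the vertex-sum with framing $w=(1,0)$, without a dual.
\end{enumerate}

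We expect item (iii), together with possible line-bundle twists, to be the main obstacle. Losev's normalization is fixed only up to tensoring by line bundles and up to the Namikawa--Weyl action. An incorrect sign or dual would give $\cT^{\theta\vee}$, or a twist by $\det$. To pin this down, we will restrict to the dense locus of $n$ distinct free orbits. There $\widetilde\sP$ is induced from the regular representation, and its $W_{n-1}$-invariants are the functions on the $2n$ points of the cluster, that is, the fiber of $\eta_*\cO_{\cU_n}$. We then extend this identification across codimension two using reflexivity on the smooth $Y_n$. The remaining ambiguity is a line bundle trivial on that open set, which must be trivial because the complement has codimension at least two.
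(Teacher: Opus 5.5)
Your Steps 1 and 2 are all the paper does. It quotes \cite[Thm.~1.2]{LosevProcesi} without proof, and its only addition is the remark that for the $\mu_2$ McKay quiver $\cT^\theta$ is the cluster bundle $\cP_n$, which gives \eqref{eq:parabolic-upstairs}. The problem is the verification you propose in Step~3, where you try to fix the normalization (tautological versus dual, or a twist) by restricting to the free locus $Y_n^\circ$. That argument fails, for two reasons.

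First, $Y_n\setminus Y_n^\circ$ does not have codimension $\ge2$. Over each of the two codimension-two leaves of $S_n$, the short and the long reflection classes, the fibres are the curves $C_{\mathrm{sh}}$ and $C_{\mathrm{lg}}$ of \cref{def:two-curves}. These sweep out exceptional prime divisors of $Y_n$ (compare Step~2 of the proof of \cref{lem:exterior}). So there is no Hartogs extension. For example, $\cO(E)$ for such a divisor $E$ is trivial on $Y_n^\circ$ but has degree $-2$ on the corresponding curve.

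Second, and more basically, the ambiguity you want to remove cannot be seen on the free locus. There the clusters are reduced, so the trace form $\Tr(ab)$ identifies $\cP_n$ with $\cP_n^\vee$. Already for $n=1$, the two normalized Procesi bundles $\cO\oplus\cO(1)\otimes\eps$ and $\cO\oplus\cO(-1)\otimes\eps$ on $T^*\PP^1$ agree off the zero section $E$, because the pullback of $\cO_{\PP^1}(2)$ is $\cO(-E)$. Note also that the condition $e\widetilde\sP\cong\cO$ already removes line-bundle twists. What remains is the finite Namikawa--Weyl ambiguity, and distinct normalized Procesi bundles differ only along the exceptional divisors, by modifications there rather than by a global line bundle.

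To pin down the normalization you have two options. One is to take Losev's theorem as stated and check that his $\cT^\theta$ is $\eta_*\cO_{\cU_n}$ with the framing at the even vertex and no dual. The other is to work in codimension one. Compare degrees on $C_{\mathrm{sh}}$ and $C_{\mathrm{lg}}$: $\det\cR_1$ has degree $+1$ on both by \cref{lem:taut-degrees}, and its dual has degree $-1$. Then use the $A_1$-slice description of \cite[Prop.~4.1]{LosevProcesi}, which determines whether $\cO(1)$ or $\cO(-1)$ occurs on each slice. This is exactly the information the paper uses in \cref{lem:nu-degrees} and \cref{thm:polarization}.
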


For the two-vertex McKay quiver of $\mu_2$ the tautological bundle is the cluster
bundle $\cP_n=\cR_0\oplus\cR_1$, hence in \cref{conv:side}
\begin{equation}\label{eq:parabolic-upstairs}
 f\widetilde\sP_n\cong\cP_n .
\end{equation}
This fixes the choice between the tautological and dual-tautological normalizations.

Boixeda Alvarez--Losev \cite[\S2.2]{BoixedaLosev} call a maximal Cohen--Macaulay
sheaf $\widetilde\sP$ on a symplectic variety $Z$ over $S_n=\SpecOp A$ a
\emph{Procesi sheaf} if (i) $\End(\widetilde\sP)\cong H$ as graded algebras, (ii)
$\End(\widetilde\sP)$ is maximal Cohen--Macaulay, (iii)
$H^i(Z,\End(\widetilde\sP))=0$ for $i>0$, and (iv)
$e\widetilde\sP\cong\cO_Z$, $\C^\times$-equivariantly.  All four can be taken
$(\C^\times)^2$-equivariant, which is what makes the package compatible with the
$q,t$-bigrading.  Their Lemma~2.3 states that for their partial resolution
$\bar\rho:\widetilde X\to X$ and a Procesi sheaf $\widetilde\sP$ on $\widetilde X$,
the pushforward $\bar\rho_*\widetilde\sP$ is a Procesi sheaf on $X$, maximal
Cohen--Macaulay, with the isomorphism
$\End(\widetilde\sP)\to\End(\bar\rho_*\widetilde\sP)$ produced in the proof.  By
\cref{thm:polarization}, in type $B_n$ their $X$ is $Q_n$ and $\bar\rho=\rho$.

\begin{theorem}[Procesi package on the type-$B_n$ wall]\label{thm:package}
Let $\widetilde\sP_n$ be the normalized Procesi bundle of \cref{thm:losev}, and set
\[
  \sP_n:=\rho_*\widetilde\sP_n,\qquad\sA_n:=\cEnd_{Q_n}(\sP_n).
\]
Then the following hold.
\begin{enumerate}[label=\textup{(P\arabic*)}]
\item\label{P1} $\sP_n$ and $\sA_n$ are $(\C^\times)^2$-equivariant maximal
Cohen--Macaulay sheaves, and $\Gamma(Q_n,\sA_n)\cong S\#W_n$ as bigraded
$A$-algebras, compatibly with the central map $A\to\Gamma(Q_n,\sA_n)$ induced by
$\pi_Q$;
\item\label{P2} $e\sP_n\cong\cO_{Q_n}$;
\item\label{P3} $f\sP_n\cong\rho_*\cP_n$;
\item\label{P4} $H^i(Q_n,\sA_n)=0$ for every $i>0$.
\end{enumerate}
Every idempotent summand of $\sP_n$ is maximal Cohen--Macaulay.
\end{theorem}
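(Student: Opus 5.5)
The plan is to obtain everything by pushing forward along $\rho$ the Procesi bundle of \cref{thm:losev}, using \cite[Lemma~2.3]{BoixedaLosev} together with \cref{thm:polarization}, which identifies their partial resolution $\bar\rho:\widetilde X\to X$ with $\rho:Y_n\to Q_n$. On $Y_n$ the normalized bundle $\widetilde\sP_n$ satisfies $\End(\widetilde\sP_n)\cong S\#W_n$, $\Ext^{>0}=0$ and $e\widetilde\sP_n\cong\cO_{Y_n}$. Losev's construction can be made $(\C^\times)^2$-equivariant, since the torus acts on the quiver data and the normalized Procesi bundle is unique up to twist by the classification. Hence $\widetilde\sP_n$ is a Procesi sheaf in the sense of \cite[\S2.2]{BoixedaLosev}. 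Lemma~2.3 of loc.\ cit.\ then says that $\sP_n=\rho_*\widetilde\sP_n$ is a maximal Cohen--Macaulay Procesi sheaf on $Q_n$, with $\End(\widetilde\sP_n)\xrightarrow{\sim}\End(\sP_n)$. This gives \ref{P1} once I check two further points. First, $\sA_n=\cEnd(\sP_n)$ is maximal Cohen--Macaulay; this is condition (ii) of a Procesi sheaf. Second, the algebra isomorphism is $A$-linear; this holds because both sides receive $A=\Gamma(Y_n,\cO)=\Gamma(Q_n,\cO)$ through the structure maps, using $\rho_*\cO_{Y_n}=\cO_{Q_n}$, which follows from rational singularities in \cref{prop:models-identified}(a).

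\ref{P2} and \ref{P3} follow because idempotents commute with $\rho_*$. Indeed $e\sP_n=\rho_*(e\widetilde\sP_n)=\rho_*\cO_{Y_n}=\cO_{Q_n}$, and $f\sP_n=\rho_*(f\widetilde\sP_n)=\rho_*\cP_n$ by \eqref{eq:parabolic-upstairs}. An idempotent summand of a maximal Cohen--Macaulay sheaf is again maximal Cohen--Macaulay, since depth is computed summandwise. This gives the last sentence of the theorem.

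The main obstacle is \ref{P4}. My first attempt is the following. $Q_n$ is projective over the affine $S_n$, so the claim is $R^{>0}\pi_{Q*}\sA_n=0$. I would compare $\sA_n$ with $\rho_*\cEnd(\widetilde\sP_n)$. The natural map $\rho_*\cEnd(\widetilde\sP_n)\to\cEnd(\rho_*\widetilde\sP_n)$ is an isomorphism on the open set over which $\rho$ is an isomorphism, whose complement has codimension $\ge2$. Both sheaves are reflexive on the normal $Q_n$: the source because it is a pushforward of a vector bundle under a birational proper map to a normal variety, the target because it is a $\cHom$ into a reflexive sheaf. Hence the map is an isomorphism. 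It then suffices to prove $R^{>0}\rho_*\cEnd(\widetilde\sP_n)=0$, after which the Leray spectral sequence gives $H^i(Q_n,\sA_n)=H^i(Y_n,\cEnd\widetilde\sP_n)=0$. For this vanishing I would use the grading: $\cEnd(\widetilde\sP_n)$ is a tilting-type bundle with $H^{>0}(Y_n,\cEnd)=0$, and $Y_n\to Q_n\to S_n$ is conical. If this direct route stalls, I would instead invoke condition (iii) of the Procesi-sheaf definition, which \cite[Lemma~2.3]{BoixedaLosev} asserts is preserved for $\bar\rho_*\widetilde\sP$; its proof establishes exactly the vanishing of higher direct images needed.
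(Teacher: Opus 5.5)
Your argument is essentially the paper's: \ref{P1}, \ref{P2} and \ref{P4} come from \cite[Lemma~2.3]{BoixedaLosev} once \cref{thm:polarization} identifies their $\bar\rho$ with $\rho$, \ref{P3} comes from the idempotent $f$ commuting with $\rho_*$ together with \eqref{eq:parabolic-upstairs}, and the last sentence comes from direct summands of maximal Cohen--Macaulay sheaves being maximal Cohen--Macaulay. For \ref{P4} you should rely on that citation rather than on your ``direct route'': $H^{>0}(Y_n,\cEnd\widetilde\sP_n)=0$ alone does not give $R^{>0}\rho_*\cEnd\widetilde\sP_n=0$ over the non-affine $Q_n$, since Leray only yields $H^1(Q_n,\rho_*\cEnd\widetilde\sP_n)\hookrightarrow H^1(Y_n,\cEnd\widetilde\sP_n)$; moreover the reflexivity of $\rho_*\cEnd\widetilde\sP_n$ needs $\rho$ to be small, not merely birational onto a normal target (it is small here, because the exceptional locus over a $\Gr(k,2k-1)$ stratum has codimension $k(k-1)\ge2$).
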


\begin{proof}
By \cref{thm:polarization}, $\rho:Y_n\to Q_n$ is the morphism
$\bar\rho:\widetilde X\to X$ of \cite{BoixedaLosev} after choosing the adjacent generic
chamber whose smooth quiver variety is $Y_n$.  Their Lemma~2.3 therefore applies to
$\widetilde\sP_n$ and gives \ref{P1}, \ref{P2}, \ref{P4}.  For \ref{P3}, note that $f$ is an
idempotent acting on $\widetilde\sP_n$, hence
$\widetilde\sP_n=f\widetilde\sP_n\oplus(1-f)\widetilde\sP_n$.  The direct-image functor
is additive, hence commutes with the image of the idempotent, and
\eqref{eq:parabolic-upstairs} gives $f\sP_n=f\rho_*\widetilde\sP_n\cong\rho_*(f\widetilde\sP_n)
\cong\rho_*\cP_n$.  No vanishing of higher direct images is used in this identity.
A direct summand of a maximal Cohen--Macaulay sheaf is maximal Cohen--Macaulay.
\end{proof}

\subsection{The shift bimodule and the canonical quotient}

Let $B_m$ be the classical Procesi shift bimodule of \cite[\S3]{BoixedaLosev},
\[
 B_m=\Gamma\bigl(Q_n^{\mathrm{reg}},(\sP_n^{\mathrm{reg}})^\vee\otimes\cO^{\mathrm{reg}}(m)
 \otimes\sP_n^{\mathrm{reg}}\bigr),
\]
an $(H,H)$-bimodule, which by \cite[Prop.~3.1]{BoixedaLosev} (applicable since
$\cO_{Q_n}(1)$ is relatively ample and $\codim_{Q_n}Q_n^{\mathrm{sing}}=2k(k-1)\ge4$
on the $\Gr(k,2k-1)$ strata, by semismallness) extends across the singular locus as
a maximal Cohen--Macaulay sheaf with no higher cohomology.  Set
$M_m:=eB_m$ and $P_m:=eB_me$.  By \ref{P2} both corners are computed by
$e\sP_n^{\mathrm{reg}}\cong\cO$, hence $P_m\cong\Gamma(Q_n^{\mathrm{reg}},\cO(m))$, and
\eqref{eq:sections} with normality gives
\begin{equation}\label{eq:Pm-symbolic}
 P_m\cong e\bigl(\Delta^mI^{(m)}\bigr),
\end{equation}
the $e$--$e$ corner of \cref{thm:kivinen}.  Boixeda Alvarez--Losev construct a
$\C[\hbar]$-flat Rees deformation $B_{\hbar,c_0+m\leftarrow c_0}$ of $B_m$ whose
$\hbar=1$ fiber $B_{c_0+m\leftarrow c_0}$ is an $(H_{c_0+m},H_{c_0})$-bimodule with
$\gr B_{c_0+m\leftarrow c_0}=B_m$.  The bimodule $B_{c_0+m\leftarrow c_0}$ is built from the shift bimodules $eH_{c+1}e_-$
\cite[\S3, Lemma~3.7]{BoixedaLosev}, which in the Dunkl realization are Kivinen's
$Q^{c+1}_c$ up to multiplication by $\Delta$.  Hence $eB_{c_0+m\leftarrow c_0}e$ is
$B_{m0}$ up to the Vandermonde normalization, and its order-associated graded is
$P_m$.

Let $\mathfrak n=S_+$ and $\C_{\triv}$ the one-dimensional $H$-module on which
$S$ acts through $S/\mathfrak n$ and $W$ trivially, hence $H/H\mathfrak n\cong\C W$.

\begin{definition}\label{def:two-specializations}
The \emph{central symbolic specialization} is
$\mfS^{\mathrm{cent}}_{n,m}:=P_m/\mfm_0P_m\cong\widetilde\mfS^{\mathrm{symb}}_{n,m}$,
and the \emph{augmentation specialization} is
$\mfS^{\mathrm{aug}}_{n,m}:=M_m\otimes_H\C_{\triv}$.
\end{definition}

\begin{proposition}[Canonical quotient]\label{prop:canonical-quotient}
There is a canonical bigraded surjection
\begin{equation}\label{eq:canonical-quotient}
 \pi_{n,m}:\mfS^{\mathrm{cent}}_{n,m}\twoheadrightarrow\mfS^{\mathrm{aug}}_{n,m},
 \qquad
 \mfS^{\mathrm{aug}}_{n,m}\cong\frac{P_m}{(M_m\mathfrak n)e},\qquad
 \ker\pi_{n,m}=\frac{(M_m\mathfrak n)e}{\mfm_0P_m}.
\end{equation}
\end{proposition}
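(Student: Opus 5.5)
The argument is pure algebra with the corners of the $(H,H)$-bimodule $B_m$. First I identify the augmentation specialization as a quotient of $P_m$. Since $H/H\mathfrak n\cong\C W$ and $\C_{\triv}=\C W\otimes_{\C W}\C_{\triv}$, right exactness of $\otimes_H$ gives
\[
 M_m\otimes_H\C_{\triv}\cong M_m/M_m\mathfrak n\otimes_{\C W}\C_{\triv}\cong (M_m/M_m\mathfrak n)\,e ,
\]
where the last step uses $N\otimes_{\C W}\C_{\triv}\cong Ne$ for any right $\C W$-module $N$ in characteristic zero. Since $e$ is idempotent and right multiplication by $e$ is exact on right $\C W$-modules, $(M_m/M_m\mathfrak n)e=M_me/(M_m\mathfrak n)e=P_m/(M_m\mathfrak n)e$. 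Here $(M_m\mathfrak n)e$ is a sub-$A$-module of $P_m=eB_me$, because $M_m\mathfrak n$ is a right $H$-submodule of $M_m=eB_m$. This gives the middle formula of \eqref{eq:canonical-quotient}.

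Second, I show $\mfm_0P_m\subseteq(M_m\mathfrak n)e$. Take $a\in\mfm_0=A_+$, which is central in $H=S\#W$, and $p=eb e\in P_m$. Then $ap=pa=eb\,e\,a=eb\,a\,e$, and $eba\in M_m\mathfrak n$ because $a\in S_+=\mathfrak n$. The bimodule structure of $B_m$ is over $Q_n$ (the $A$-action from $\pi_Q$), so the left and right actions of $A$ agree. Hence the identity map of $P_m$ descends to a surjection $\pi_{n,m}:P_m/\mfm_0P_m\twoheadrightarrow P_m/(M_m\mathfrak n)e$, and its kernel is $(M_m\mathfrak n)e/\mfm_0P_m$. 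The identification $P_m/\mfm_0P_m\cong\widetilde\mfS^{\mathrm{symb}}_{n,m}$ is immediate from \eqref{eq:Pm-symbolic}.

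Third, I check bigradedness and canonicity. $B_m$ is $(\C^\times)^2$-equivariant by \ref{P1} and the equivariance of $\cO(m)$. The idempotents $e$ and $\C_{\triv}$ have bidegree zero, and $\mathfrak n$, $\mfm_0$ are bigraded ideals, so every submodule above is bigraded. No choices are made, so the map is canonical.

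The main obstacle is the centrality point in the second step: left and right $A$-actions must coincide on $B_m$. This I would verify from the construction of $B_m$ as global sections of $(\sP_n^{\mathrm{reg}})^\vee\otimes\cO(m)\otimes\sP_n^{\mathrm{reg}}$, an $\cO_{Q_n}$-linear sheaf. Both $H$-actions are through $\End(\sP_n)\cong S\#W_n$, and by \ref{P1} this isomorphism is compatible with the central map $A\to\Gamma(Q_n,\sA_n)$. So both actions of $A$ are multiplication by functions pulled back along $\pi_Q$, and they agree.
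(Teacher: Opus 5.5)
Your proposal is correct and follows the paper's proof essentially verbatim. You use right exactness to get $M_m\otimes_H\C_{\triv}\cong(M_m/M_m\mathfrak n)\otimes_{\C W}\C_{\triv}\cong(M_m/M_m\mathfrak n)e\cong P_m/(M_m\mathfrak n)e$, and then the centrality of $\mfm_0\subseteq\mathfrak n$ to get $\mfm_0P_m\subseteq(M_m\mathfrak n)e$. Your check that the left and right actions of $A$ on $B_m$ agree, via $\cO_{Q_n}$-linearity and the compatibility in \ref{P1}, spells out a point the paper leaves implicit in the word ``central''.
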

\begin{proof}
$M_m\otimes_H\C_{\triv}\cong(M_m/M_m\mathfrak n)\otimes_{\C W}\C_{\triv}$, and for
every right $\C W$-module $N$ averaging gives $N\otimes_{\C W}\C_{\triv}\xrightarrow{\sim}Ne$,
$v\otimes1\mapsto ve$.  Since $M_m\mathfrak n$ is $W$-stable,
$(M_m/M_m\mathfrak n)e\cong P_m/(M_m\mathfrak n)e$.  Finally $\mfm_0\subset\mathfrak n$
is central, hence $\mfm_0P_m\subseteq(M_m\mathfrak n)e$.
\end{proof}

\begin{theorem}[{Boixeda Alvarez--Losev \cite[Thm.~1.2, Prop.~6.6, Prop.~2.11]{BoixedaLosev}}]
\label{thm:BAL-aug}
Let $\mathfrak g$ be simple with Weyl group $W$ and Coxeter number $h$, and $m\ge0$.
The tensor-product filtration gives a surjection
\begin{equation}\label{eq:BAL-surjection}
 B_m\otimes_H\C_{\triv}\twoheadrightarrow\gr_FL_{m+1/h}(\triv),
\end{equation}
and $L_{m+1/h}(\triv)$ is the unique finite-dimensional simple $H_{m+1/h}$-module, of
dimension $(mh+1)^{\dim\frh}$ and isomorphic as a $W$-module to
$\C[\Lambda_0/(mh+1)\Lambda_0]$, $\Lambda_0$ the coroot lattice.
\end{theorem}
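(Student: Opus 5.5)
This statement is quoted from Boixeda Alvarez--Losev, so the proof is to assemble the cited ingredients and check that their hypotheses hold at our parameters. The plan has three parts, taken in the order below.

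\emph{Uniqueness and dimension.} By Berest--Etingof--Ginzburg \cite[Thm.~1.2, Prop.~1.7]{BEG}, at $c=m+\frac1h$ the module $L_c(\triv)$ is finite dimensional. It carries the standard Euler-graded resolution by Verma-type modules $\Delta_c(\Lambda^i\frh^*)$. Its graded character is therefore $t^{-N}\bigl((1-t^{mh+1})/(1-t)\bigr)^{\dim\frh}$, with the appropriate $W$-twist. Setting $t=1$ gives dimension $(mh+1)^{\dim\frh}$. The $W$-module structure is the permutation module on $\Lambda_0/(mh+1)\Lambda_0$; I would cite this from \cite[Thm.~1.4]{BEG} rather than reprove it. For uniqueness of the finite-dimensional simple at these parameters I would use Boixeda Alvarez--Losev \cite[Prop.~6.6]{BoixedaLosev}. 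I would also note that Gordon's and Dunkl--Griffeth's classification gives the same conclusion for $W(B_n)$ on the equal-parameter line.

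\emph{The surjection.} Here I would use the shift-functor description. By \cref{prop:sphericity} every intermediate parameter $c_0+j$ is spherical, so each shift bimodule $eH_{c_0+j+1}e_-$ induces a Morita equivalence between the corresponding spherical subalgebras. Iterating these equivalences, as in \eqref{eq:iteratedshift}, identifies
\[
B_{c_0+m\leftarrow c_0}\otimes_{H_{c_0}}L_{c_0}(\triv)\cong L_{c_0+m}(\triv).
\]
Here $L_{c_0}(\triv)$ is the one-dimensional module, and under the Rees degeneration $\hbar\to0$ it becomes $\C_{\triv}$. Put the tensor-product filtration on the left side. Its associated graded is then a quotient of $\gr B\otimes_H\gr L_{c_0}=B_m\otimes_H\C_{\triv}$, by the usual fact that $\gr$ of a filtered tensor product is a quotient of the tensor product of the $\gr$'s. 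This gives the surjection \eqref{eq:BAL-surjection}.

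\emph{Main obstacle.} I expect the hardest step to be the identity $\gr B_{c_0+m\leftarrow c_0}=B_m$, that is, showing that the Rees deformation of the Procesi-sheaf bimodule is flat and specializes to the iterated shift bimodule. To handle it, I would use the higher-cohomology vanishing \ref{P4}, together with the maximal Cohen--Macaulay extension across $Q_n^{\mathrm{sing}}$ from \cite[Prop.~3.1]{BoixedaLosev}. These make global sections commute with the $\hbar$-specialization. This is exactly the content of \cite[Thm.~1.2]{BoixedaLosev}, which I would quote, after checking via \cref{thm:polarization} that their partial resolution is our $Q_n$.
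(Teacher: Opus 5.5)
Your proposal is correct and follows essentially the paper's route. The dimension and $W$-module structure come from \cite{BEG}. Sphericity makes the iterated shift bimodule a Morita equivalence, so that $B_{c_0+m\leftarrow c_0}\otimes_{H_{c_0}}L_{c_0}(\triv)\cong L_{c_0+m}(\triv)$ by uniqueness of the finite-dimensional simple. Flatness, $\gr B_{c_0+m\leftarrow c_0}=B_m$, then makes the associated graded of the filtered tensor product a quotient of $B_m\otimes_H\C_{\triv}$. The only slip is attributional: the paper takes flatness from the construction of the Rees bimodule in \cite[\S3]{BoixedaLosev}, whereas \cite[Thm.~1.2]{BoixedaLosev} is the stronger statement that \eqref{eq:BAL-surjection} is an isomorphism, which is not needed here.
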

\begin{proof}[Source]
The dimension and $W$-module structure are \cite[Prop.~2.11]{BoixedaLosev}, quoted
from \cite[Thm.~1.4, Prop.~1.7]{BEG}.  The surjection is the proof of
\cite[Prop.~6.6]{BoixedaLosev}.  Flatness gives $\gr B_{m+1/h\leftarrow1/h}=B_m$, hence
a surjection $B_m\otimes_H\C_{\triv}\twoheadrightarrow\gr(B_{m+1/h\leftarrow1/h}\otimes_{H_{1/h}}L_{1/h})$,
and $B_{m+1/h\leftarrow1/h}$ is a Morita equivalence bimodule (by induction on
$m$ using \cref{prop:sphericity} and
$e_-B_{\hbar,c+m+1\leftarrow c}\cong eB_{\hbar,c+m\leftarrow c}$
\cite[Lem.~3.7(2)]{BoixedaLosev}), hence
$B_{m+1/h\leftarrow1/h}\otimes_{H_{1/h}}L_{1/h}\cong L_{m+1/h}$ by uniqueness.  (Their
Theorem~1.2 upgrades \eqref{eq:BAL-surjection} to an isomorphism using affine Springer
fibers, but we do not need this, since the dimension count below forces it.)
\end{proof}

\begin{lemma}[Spherical tensor identity]\label{lem:spherical-tensor}
Let $c=\frac1h$ and $m\ge0$.  Then, compatibly with the tensor-product filtrations,
\[
 eL_{c+m}\cong\bigl(eB_{c+m\leftarrow c}e\bigr)\otimes_{U_c}\bigl(eL_c\bigr),\qquad\dim eL_c=1 .
\]
Consequently $F_b(eL_{c+m})$ is the image of $F_b(eB_{c+m\leftarrow c}e)$.
\end{lemma}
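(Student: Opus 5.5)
The plan is to start from the Morita statement already used in the proof of \cref{thm:BAL-aug} and then apply the spherical idempotent $e$ on both sides. There it was shown that $B_{c+m\leftarrow c}\otimes_{H_c}L_c\cong L_{c+m}$, by induction on $m$ using \cref{prop:sphericity} and $e_-B_{\hbar,c+m+1\leftarrow c}\cong eB_{\hbar,c+m\leftarrow c}$. Multiplying by $e$ on the left gives $eL_{c+m}\cong eB_{c+m\leftarrow c}\otimes_{H_c}L_c$.

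Next I rewrite the tensor over $H_c$ as a tensor over $U_c$. Since $c=\frac1h$ is $e$-spherical (\cref{prop:sphericity}), $H_ceH_c=H_c$. Hence $H_ce$ and $eH_c$ are mutually inverse Morita bimodules, and for any right $H_c$-module $N$ the map $Ne\otimes_{U_c}eL_c\to N\otimes_{H_c}L_c$ is an isomorphism. This is the standard identity $H_ce\otimes_{U_c}eH_c\cong H_c$, valid for spherical $c$. Applying it with $N=eB_{c+m\leftarrow c}$ gives the displayed isomorphism $eL_{c+m}\cong(eB_{c+m\leftarrow c}e)\otimes_{U_c}eL_c$. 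The dimension statement is immediate: $L_c(\triv)$ is one-dimensional \cite[Thm.~1.4]{BEG} with $W$ acting trivially, so $eL_c=L_c$ and $\dim eL_c=1$.

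For the filtrations, recall from \cref{def:bigrading} that the tensor-product filtration on $eL_{c+m}$ is defined through the iterated shift factors with $\C=eL_c$ placed in degree zero. Under the identification $eB_{c+m\leftarrow c}e\simeq B_{m0}$, which holds up to the Vandermonde normalization, the iterated product $eH^{(m)}e_-\otimes\cdots\otimes eH^{(1)}e_-$ is exactly $eB_{c+m\leftarrow c}e$. Its tensor filtration is the filtration inherited as a product of filtered bimodules, since the bimodule of Boixeda Alvarez--Losev is built iteratively from these same factors \cite[Lemma~3.7]{BoixedaLosev}. Because $eL_c=\C$ sits entirely in degree $0$, we get $F_s(P\otimes_{U_c}\C)=F_sP\otimes\C$, and so $F_b(eL_{c+m})$ is the image of $F_b(eB_{c+m\leftarrow c}e)$ under $p\mapsto p\otimes v_0$.

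I expect the main obstacle to be this filtration compatibility. The algebraic isomorphism is formal. However, one has to check that the filtration on $eB_{c+m\leftarrow c}e$ coming from the Rees construction, whose associated graded is $P_m$, agrees with the iterated tensor filtration of \eqref{eq:iteratedshift}. My first approach would be to prove this by induction on $m$, using at each step $e_-B_{c+m+1\leftarrow c}\cong eB_{c+m\leftarrow c}$ as filtered modules, which holds by $\hbar$-flatness, together with the fact that the tensor filtration on $A\otimes_RB$ is the image filtration.
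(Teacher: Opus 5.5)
Your proof of the isomorphism and of $\dim eL_c=1$ is the paper's argument. You take $B\otimes_{H_c}L_c\cong L_{c+m}$ from the Morita step in the proof of \cref{thm:BAL-aug}, apply $e$, and use $e$-sphericity of $c=\frac1h$ in the form $L_c\cong H_ce\otimes_{U_c}eL_c$; this is your identity $N\otimes_{H_c}L_c\cong Ne\otimes_{U_c}eL_c$ with $N=eB$. What is off is your last paragraph. The filtration compatibility is not an obstacle but a formality, and the comparison you propose there is neither part of the lemma nor reachable by the induction you sketch.

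The lemma only transports whatever filtration $B=B_{c+m\leftarrow c}$ carries in \cref{thm:BAL-aug}. Since $W$, hence $e$, has filtration degree $0$, and $L_c$ sits in degree $0$ with trivial $W$-action, $eF_b(B\otimes_{H_c}L_c)$ is the image of $eF_bB\otimes L_c$. The relation $x\otimes v=x\otimes ev=xe\otimes v$ makes this the image of $eF_bBe\otimes v=F_b(eBe)\otimes v$, which is exactly $F_b$ of $(eBe)\otimes_{U_c}eL_c$. This is the paper's ``all steps filtered, the filtration on $eL_c$ being trivial'', and it gives the final clause at once. If you instead equip $eBe$ with the image of the iterated tensor filtration of \eqref{eq:iteratedshift}, your third paragraph is already a complete proof.

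By contrast, equality of the Rees filtration on $eBe$ (whose associated graded is $P_m=e(\Delta^mI^{(m)})$) with the iterated tensor filtration is a much stronger statement. Your induction step needs the Morita map $eH_{c+m+1}e_-\otimes_{U_{c+m}}eB_{c+m\leftarrow c}\to eB_{c+m+1\leftarrow c}$ to be strict. The fact that ``the tensor filtration is the image filtration'' is only the definition of the source filtration; it does not give strictness. The associated graded of the tensor filtration maps into $P_m$ with image the span of products of the factors' symbols, which is $e(\Delta^mJ^m)$, as in the proof of \cref{thm:collapse}. So the equality you want would force $e_mJ^m=e_mI^{(m)}$ for every $m$, that is, Theorem~A in all twists. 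The paper reaches this only for $m\gg0$, via a dimension count; compare also \cref{rem:strictness}. Drop that paragraph rather than trying to prove it.
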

\begin{proof}
By the proof of \cite[Prop.~6.6]{BoixedaLosev}, $B\otimes_{H_c}L_c\cong L_{c+m}$
for $B=B_{c+m\leftarrow c}$.  The module $L_c$ is the trivial one-dimensional representation of
$W$ on which $\frh,\frh^*$ act by zero, hence $eL_c=L_c$, and $c$ is $e$-spherical
(\cref{prop:sphericity}), hence $L_c\cong H_ce\otimes_{U_c}eL_c$.  Therefore
$eL_{c+m}=e(B\otimes_{H_c}L_c)=(eB)\otimes_{H_c}(H_ce\otimes_{U_c}eL_c)=(eBe)\otimes_{U_c}(eL_c)$,
all steps filtered, the filtration on $eL_c$ being trivial.
\end{proof}

Taking the left spherical part of \eqref{eq:BAL-surjection} and composing with
\cref{prop:canonical-quotient} gives a canonical bigraded surjection
\begin{equation}\label{eq:symbolic-to-Cherednik}
 \Phi_{n,m}:\ \mfS^{\mathrm{cent}}_{n,m}=\frac{e(\Delta^mI^{(m)})}{\mfm_0\,e(\Delta^mI^{(m)})}
 \twoheadrightarrow\mfS^{\mathrm{aug}}_{n,m}\cong\gr_FeL_{m+1/h}(\triv),
\end{equation}
where the last isomorphism is \eqref{eq:BAL-surjection} restricted to spherical parts
and \cref{lem:spherical-tensor}, and bigradedness is \eqref{eq:keyrelation} together
with the intrinsic bigrading of the shift bimodule.  In the normalization
\eqref{eq:two-normalizations} this is a bigraded surjection
$\mfS^{\mathrm{symb}}_{n,m}\twoheadrightarrow\gr_FeL_{\frac1{2n}+m}(\triv)$.

\subsection{The type-\texorpdfstring{$B_n$}{Bn} dimension}

For $W(B_n)$, $h=2n$ and $N_m:=mh+1=2nm+1$ is odd.  The coroot lattice is
$\Lambda_0=\{a\in\Z^n:\sum a_i\ \text{even}\}$ (coroots $\pm e_i\pm e_j$ and $2e_i$),
and inclusion into $\Z^n$ induces a $W(B_n)$-equivariant isomorphism
$\Lambda_0/N_m\Lambda_0\xrightarrow{\sim}(\Z/N_m)^n$.  Indeed, the index $[\Z^n:\Lambda_0]=2$ is
invertible modulo the odd $N_m$, hence one of $b$, $b+N_me_1$ lies in $\Lambda_0$ for
every $b\in\Z^n$, and $N_mb\in\Lambda_0$ forces $\sum b_i$ even.  (Whether one reads
$\Lambda_0$ as the root or the coroot lattice, for $\mathfrak{so}_{2n+1}$ or
$\mathfrak{sp}_{2n}$, all readings give the same $W$-module because $N_m$ is odd.)

\begin{theorem}[Type-$B_n$ target dimension]\label{thm:typeB-dim}
For every $m\ge0$,
\[
 \dim\gr_FeL_{\frac1{2n}+m}(\triv)=\binom{n(m+1)}n
 =\prod_{i=1}^n\frac{2i+2nm}{2i},
\]
with an ungraded basis indexed by weakly increasing sequences
$0\le a_1\le\dots\le a_n\le nm$, i.e.\ by partitions in an $n\times nm$ rectangle.
\end{theorem}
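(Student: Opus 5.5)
Since $\gr_F$ preserves dimension, it suffices to compute $\dim eL_{\frac1{2n}+m}(\triv)$, which is the dimension of the $W(B_n)$-invariants in $L_{\frac1{2n}+m}(\triv)$. By \cref{thm:BAL-aug}, $L_{m+1/h}(\triv)\cong\C[\Lambda_0/N_m\Lambda_0]$ as a $W$-module, with $N_m=2nm+1$. The isomorphism $\Lambda_0/N_m\Lambda_0\cong(\Z/N_m)^n$ established just above is $W(B_n)$-equivariant. Hence $eL\cong\C[(\Z/N_m)^n]^{W(B_n)}$. Since $W$ acts on the basis of the permutation module by permuting group elements, the dimension of the invariants is the number of $W(B_n)$-orbits on $(\Z/N_m)^n$, where $W(B_n)$ acts by signed permutations.

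Next we count these orbits. Because $N_m$ is odd, each residue class mod $N_m$ has a unique representative in $\{-nm,\dots,nm\}$, and the sign $\mu_2$ acts by $a\mapsto -a$. Each $\mu_2$-orbit therefore has a unique representative in $\{0,1,\dots,nm\}$, since $(N_m-1)/2=nm$. The factor $\mu_2^n$ acts coordinatewise, so its orbits on $(\Z/N_m)^n$ are indexed by $\{0,\dots,nm\}^n$. The group $S_n$ then permutes coordinates, so the $W(B_n)$-orbits correspond to multisets of size $n$ from an $(nm+1)$-element set. Equivalently, they correspond to weakly increasing sequences $0\le a_1\le\dots\le a_n\le nm$. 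There are $\binom{nm+1+n-1}{n}=\binom{n(m+1)}{n}$ of these, and they give the stated basis of orbit sums. A weakly increasing sequence is the same as a partition fitting in an $n\times nm$ box, which matches the description in the statement.

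Finally, $\binom{n(m+1)}{n}=\prod_{i=1}^n\frac{nm+i}{i}=\prod_i\frac{2i+2nm}{2i}$, which is the Fuss--Catalan product for $B_n$ with degrees $2i$ and $h=2n$. The only nonformal step is the passage from $W$-module structure to the orbit count. This step is valid because the $W$-structure in \cref{thm:BAL-aug} is that of a permutation module, so no graded character information is needed.
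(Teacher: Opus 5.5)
Your proof is correct and is essentially the paper's argument. You identify $eL$ with the $W(B_n)$-invariants of the permutation module $\C[(\Z/N_m)^n]$ via \cref{thm:BAL-aug} and the equivariant isomorphism $\Lambda_0/N_m\Lambda_0\cong(\Z/N_m)^n$, then count orbits: $nm+1$ sign classes $\{0\},\{\pm1\},\dots,\{\pm nm\}$, followed by multisets of size $n$. The only difference is that you verify the product formula $\prod_i\frac{nm+i}{i}$ directly, where the paper cites Stump, and you state explicitly that $\gr_F$ preserves dimension.
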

\begin{proof}
By \cref{thm:BAL-aug} the spherical dimension is the number of $W(B_n)$-orbits on
$(\Z/N_m)^n$.  The sign orbits in one coordinate are $\{0\},\{\pm1\},\dots,\{\pm nm\}$,
hence there are $nm+1$ coordinate types, and a signed-permutation orbit is a multiset of
size $n$ chosen from these, which gives $\binom{(nm+1)+n-1}n=\binom{n(m+1)}n$.  Orbit sums give the
basis.  The product form is \cite[Thm.~4.3]{Stump}.
\end{proof}

\subsection{The asymptotic isomorphism}

\begin{theorem}\label{thm:cherednik}
Theorem D holds.  For $m\ge m_0(n)$, the canonical
surjection \eqref{eq:symbolic-to-Cherednik} is an isomorphism of bigraded vector
spaces for the iterated tensor-product filtration of \cref{def:bigrading},
\[
 \mfS^{\mathrm{symb}}_{n,m}\xrightarrow{\ \sim\ }\gr_F\,eL_{\frac1{2n}+m}(\triv),
 \qquad
 P_{n,m}(q,t)=\chf^G_{q,t}\gr_F\,eL_{\frac1{2n}+m}(\triv),
\]
and $(M_m\mathfrak n)e=\mfm_0P_m$, that is, taking the spherical corner commutes with
specialization at the origin in the Serre range.
\end{theorem}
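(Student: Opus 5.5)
The argument is a dimension count applied to the canonical surjection \eqref{eq:symbolic-to-Cherednik}. Fix $n$ and take $m\ge m_0(n)$, the Serre range of \cref{thm:character}. The source $\mfS^{\mathrm{cent}}_{n,m}\cong\widetilde\mfS^{\mathrm{symb}}_{n,m}$ is computed there. It has bigraded Hilbert series $q^{mn^2}P_{n,m}(q,t)$, and by \cref{thm:principal} at $q=t=1$ its dimension is $\binom{n(m+1)}n$. The target $\gr_F eL_{\frac1{2n}+m}(\triv)$ has dimension $\binom{n(m+1)}n$ by \cref{thm:typeB-dim}; this holds for every $m$ and uses \cref{thm:BAL-aug} together with the orbit count on $(\Z/N_m)^n$. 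The two finite-dimensional spaces therefore have equal dimension.

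The map $\Phi_{n,m}$ is the composite of $\pi_{n,m}$ from \cref{prop:canonical-quotient} with the spherical part of the Boixeda Alvarez--Losev surjection \eqref{eq:BAL-surjection}. Via \cref{lem:spherical-tensor}, it is a surjection respecting the Gordon--Stump bigrading of \cref{def:bigrading}. A surjection between spaces of equal finite dimension is injective, so $\Phi_{n,m}$ is a bigraded isomorphism. Both factors are surjective and the composite is injective, so each factor is itself an isomorphism.

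In particular $\ker\pi_{n,m}=(M_m\mathfrak n)e/\mfm_0P_m$ vanishes, which gives $(M_m\mathfrak n)e=\mfm_0P_m$. The spherical part of \eqref{eq:BAL-surjection} is then also an isomorphism, and this proves the remaining assertion of the theorem.

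For the character identity, transport the bigrading along the normalization \eqref{eq:two-normalizations}, which multiplies by $\Delta^m$ and shifts by $q^{mn^2}$. The Gordon normalization places $\Delta^m$ in bidegree $(0,0)$, so under $\mfS^{\mathrm{symb}}_{n,m}\cong\widetilde\mfS^{\mathrm{symb}}_{n,m}$ the character of the target equals $\chf_{q,t}\mfS^{\mathrm{symb}}_{n,m}=P_{n,m}(q,t)$, again by \cref{thm:character}.

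The main obstacle is confirming that $\Phi_{n,m}$ is genuinely bigraded and not merely filtered, because the isomorphism is used as an identity of $q,t$-characters. I would argue this as follows. By \eqref{eq:keyrelation}, on each Euler eigenspace the order filtration used in \cref{thm:kivinen} and the total-degree filtration used for the tensor-product filtration $F$ determine each other. The $(\C^\times)^2$-action on the shift bimodules then makes the symbol of $x^\alpha D^\beta\Delta^m$ equal to $x^\alpha y^\beta\Delta^m$, so bidegrees match on generators of $P_m$. I would check these two compatibilities factor by factor in the iterated tensor product \eqref{eq:iteratedshift}. Everything else in the proof is formal.
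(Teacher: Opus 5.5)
Your proposal is correct and is essentially the paper's proof. The common dimension $\binom{n(m+1)}{n}$ of source and target, from \cref{thm:character} and \cref{thm:typeB-dim}, forces the bigraded surjection $\Phi_{n,m}$ to be an isomorphism; the character identity is then \cref{thm:character}, and $(M_m\mathfrak n)e=\mfm_0P_m$ is the vanishing of $\ker\pi_{n,m}$. Your extra observation that both factors of $\Phi_{n,m}$ become isomorphisms, and your sketch of why the map respects the Gordon--Stump bigrading, only make explicit what the paper states in one line.
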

\begin{proof}
By \cref{thm:character} the source has dimension $\binom{n(m+1)}n$, and by
\cref{thm:typeB-dim} the target has the same dimension.  A surjection of
finite-dimensional spaces of equal dimension is an isomorphism, and it is bigraded
since all constructions preserve the tensor-product filtration and the Euler
grading.  The character identity is \cref{thm:character}, and the last assertion is
the vanishing of the kernel in
\eqref{eq:canonical-quotient}.
\end{proof}

\begin{remark}[The two filtrations at higher rungs]\label{rem:strictness}
At $m=1$ there is a single shift factor in \eqref{eq:iteratedshift}, hence the
tensor-product filtration coincides with the ambient order filtration of the
already-multiplied Dunkl operators, and the bidegree-$(a,b)$ piece of $\gr^GeL^{(1)}$
is $\cF_b(n^2+a-b)/\cF_{b-1}(n^2+a-b)$ with
$\cF_b(r)=\bigl(e\Span\{x^\alpha D^\beta\Delta:|\beta|\le b\}\bigr)_{\degpol=r}$.
The parity $\degt\equiv\degh\pmod2$ of \eqref{eq:keyrelation} is what makes the
quotient by $F_{b-1}$ the correct total-degree piece.  For $m\ge2$ multiplication of
shift factors is filtered but need not be strict ($\ord(PQ)\le\ord P+\ord Q$), hence the
ambient order of a product may be smaller than its tensor degree.  The intrinsic
bigrading is the tensor-product one of \cref{def:bigrading}, and a reconstruction from
the ambient order of $x^\alpha D^\beta\Delta^m$ is not part of the convention.  This
is visible already at $n=2$, $m=2$.
\end{remark}

\section{The spherical collapse and Stump's conjecture}\label{sec:collapse}

In this section we prove Theorems~A and~B.  The collapse $e_mJ^m=e_mI^m=e_mI^{(m)}$
for $m\gg0$ is deduced from Theorem~D by a Nakayama argument.  Stump's generalized
Gordon surjection, from the minimal generating space of $J^m$ onto
$\gr_FL_{\frac1h+m}(\triv)$, is compatible with the surjection of
\cref{sec:cherednik}, since both are induced by the same filtered multiplication of
shift bimodules.  Once the second surjection is known to be an isomorphism, the first
forces $e_mJ^m$ to generate $e_mI^{(m)}$ modulo $\mfm$, and graded Nakayama gives the
equality.  Theorem~B then follows by comparing the two punctual quotients of
\cref{sec:two-quotients}.  We close the section with a list of open problems.

\subsection{The collapse}

\begin{theorem}[{Stump \cite[Thm.~4.5 with \S4.2.1]{Stump}}]\label{thm:stump-surjection}
The filtered iterated-shift construction \eqref{eq:iteratedshift} induces a bigraded
surjection from the determinant component of the generalized diagonal coinvariants
(equivalently, by \cite[Thm.~3.5]{Stump}, from the minimal generating space of
$J^m$, up to the twist $\eps^{\otimes(1-m)}$) onto $\gr_FL_{\frac1h+m}(\triv)$.
Its spherical part is a surjection $\overline A_m\twoheadrightarrow T_m$, where
$\overline A_m=e_mJ^m/\mfm\,e_mJ^m$ and $T_m=\gr_FeL_{c_0+m}(\triv)$.
\end{theorem}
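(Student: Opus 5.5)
The statement is imported from \cite{Stump}, so the plan is to recall Stump's construction in the normalization of \cref{def:bigrading} and then extract its spherical corner. The only new content is the passage to the spherical part and the comparison of the two punctual quotients of \cref{def:two-quotients}.

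\emph{Step 1: Stump's surjection.} Start from \eqref{eq:iteratedshift} in its non-spherical form $L^{(m)}\cong H^{(m)}e_-\otimes\cdots\otimes H^{(1)}e_-\otimes_{eH^{(0)}e}\C$, with the tensor-product filtration. By PBW, each shift factor satisfies $\gr(H^{(j)}e_-)\cong(S\#W)e_-$, which as a left $S$-module is generated by the alternating elements. Hence the associated graded of an iterated tensor product is a quotient of the product of the associated gradeds of its factors. The image of $m$ alternating symbols lies in $J^m$, twisted by $\eps^{\otimes(1-m)}$ from the $m$ copies of $e_-$ and the final trivial factor. Since $\C$ sits in filtration degree zero, $S_+$ acts by zero on the target. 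This yields Stump's bigraded surjection $(J^m/\mfm_SJ^m)\otimes\eps^{\otimes(1-m)}\twoheadrightarrow\gr_FL_{\frac1h+m}(\triv)$. The identification of its source with $DR^{(m)}(W)\otimes\eps$ is \cite[Thm.~3.5]{Stump}. I would only check that the bigradings match via \eqref{eq:keyrelation} and the Gordon--Stump convention $\deg x=(1,0)$, $\deg y=(0,1)$.

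\emph{Step 2: the spherical part.} The surjection is $W$-equivariant, so applying the exact functor $e$ preserves surjectivity. On the target this gives $e\,\gr_FL=\gr_F eL_{c_0+m}(\triv)=T_m$, because $e$ is a filtration-degree-zero idempotent and taking the image of an idempotent commutes with $\gr$. On the source, $e$ applied to $(\cdot)\otimes\eps^{\otimes(1-m)}$ selects the $\det^{1-m}=\det^m$-isotypic part (note $\det^2=1$), i.e.\ $e_m$. This yields $\mathsf Q^{\mathrm{full}}_m(J^m)=e_mJ^m/e_m(\mfm_SJ^m)\twoheadrightarrow T_m$.

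\emph{Step 3: passing to $\overline A_m$.} Compose with the canonical surjection $\pi_m$ of \eqref{eq:two-quotient-surj}, namely $\overline A_m=\mathsf Q^{\mathrm{inv}}_m(J^m)\twoheadrightarrow\mathsf Q^{\mathrm{full}}_m(J^m)$, which exists because $\mfm\,e_mJ^m\subseteq e_m(\mfm_SJ^m)$. The composite is the asserted surjection $\overline A_m\twoheadrightarrow T_m$.

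I expect Step 1 to be the main obstacle, specifically the claim that the image of the product of associated gradeds is exactly controlled by $J^m$ with the correct twist and bidegree. This requires the filtration on each shift factor to be the PBW one and the multiplication maps to be filtered. I would take this verbatim from \cite[\S4.2.1]{Stump}, checking only that the iterated filtration there agrees with $F$ of \cref{def:bigrading}, which holds by construction.
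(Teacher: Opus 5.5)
Your plan is the one the paper follows: the statement is quoted from \cite{Stump}, and the only work is to take the spherical corner of Stump's surjection and identify its source with $\overline A_m$ or a quotient of it. This is how it is used in \cref{lem:sandwich}, \cref{thm:collapse} and \cref{thm:qtCat}. Steps 1 and 2, however, contain two concrete errors that happen to cancel.

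\textbf{Step 1 asserts a surjection that does not exist.} You write $(J^m/\mfm_SJ^m)\otimes\eps^{\otimes(1-m)}\twoheadrightarrow\gr_FL_{\frac1h+m}(\triv)$ on full modules. Take $n=1$, $m=1$. The alternating polynomials are the odd ones, so $J=(x_1,y_1)$ and $J/\mfm_SJ$ is $2$-dimensional. But $L_{3/2}(\triv)$ has dimension $(mh+1)^n=3$ by \cref{thm:BAL-aug}, so no twist of $J/\mfm_SJ$ surjects onto it.

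On full modules Stump's source is $DR^{(m)}(W)\otimes\eps$ (see \cref{def:bigrading}). \cite[Thm.~3.5]{Stump} matches it with the minimal generating space of $J^m$ only on determinant components. The reason is visible in your own setup. In the non-spherical form of \eqref{eq:iteratedshift} only the leftmost factor $H^{(m)}e_-$ is non-spherical; the intermediate factors must be the spherical corners $eH^{(j)}e_-$ for the tensor products over $eH^{(j)}e$ to make sense. Moreover $\gr(H^{(m)}e_-)=(S\#W)e_-$ is free of rank one over $S$ on $e_-$, so it is not generated by alternants. It is the spherical corner $e(S\#W)e_-\cong S^\eps$ that is spanned over $A$ by alternants. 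Hence the non-spherical source carries only $m-1$ alternant factors, and $m$ of them appear only after applying $e$.

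\textbf{Step 2 has an arithmetic slip.} $\det^{1-m}=\det^{m-1}$, which differs from $\det^m$ by $\det$. With your twist, $e$ would select the $\det^{m-1}$-isotypic part of $J^m/\mfm_SJ^m$, and that part is zero. Indeed, for $s\in S$ and alternants $a_i$ one has $e_{m-1}(s\,a_1\cdots a_m)=e_1(s)\,a_1\cdots a_m\in\mfm_SJ^m$, since $e_1(s)$ has no constant term. Taken literally, your argument gives a surjection from $0$ onto $T_m\neq0$. The correct $e_m$ comes out only because the $\eps$ you dropped in Step 1 compensates.

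The twist $\eps^{\otimes(1-m)}$ in the statement turns $\det^m$ into $\det$. It identifies $e_\eps DR^{(m)}(W)$ with $e_m(J^m/\mfm_SJ^m)$. The additional $\eps$ in $DR^{(m)}(W)\otimes\eps$ then sends this determinant component onto $e\,\gr_FL=T_m$.

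\textbf{A cleaner fix.} Stay spherical throughout. Then $\gr_F$ of \eqref{eq:iteratedshift} is a quotient of $S^\eps\otimes_A\cdots\otimes_AS^\eps\otimes_A\C$ with $m$ factors. What you actually import from \cite[\S4.2.1]{Stump} is that this surjection factors through the product map to $e_mJ^m\otimes_A\C=\overline A_m$, and no twist appears.

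Your Step 3 is harmless but unnecessary. The identity $e_m(fa_1\cdots a_m)=e(f)\,a_1\cdots a_m$, used in the proof of \cref{thm:collapse}, gives $e_m(\mfm_SJ^m)=\mfm\,e_mJ^m$. So $\pi_m$ of \eqref{eq:two-quotient-surj} is an isomorphism for $\mathcal J=J^m$, and the spherical source is exactly $\overline A_m$.
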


\begin{theorem}\label{thm:collapse}
Theorem A holds.  For $m\ge m_1(n)$ we have
$e_mJ^m=e_mI^m=e_mI^{(m)}$, and the three spherical models
\eqref{eq:three-models} coincide in type $B_n/C_n$.
\end{theorem}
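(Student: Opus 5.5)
The plan is a Nakayama argument driven by the isomorphism of \cref{thm:cherednik}. We have the chain of inclusions $e_mJ^m\subseteq e_mI^m\subseteq e_mI^{(m)}$ from \eqref{eq:JI-inclusion}. It therefore suffices to show $e_mJ^m=e_mI^{(m)}$ for $m\ge m_1(n)$, where $m_1(n)\ge m_0(n)$ is at least the bound of \cref{thm:character,thm:cherednik}. All three are finitely generated graded $A$-modules, because $I^{(m)}$ is a finitely generated $S$-module and $S$ is finite over $A$. By graded Nakayama, the equality follows once the composite $e_mJ^m\to e_mI^{(m)}\to\mfS^{\mathrm{symb}}_{n,m}=e_mI^{(m)}/\mfm\,e_mI^{(m)}$ is surjective.

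To prove this surjectivity I would compare the two surjections onto $T_m=\gr_FeL_{c_0+m}(\triv)$:
\begin{itemize}
\item Stump's map $\overline A_m=e_mJ^m/\mfm\,e_mJ^m\twoheadrightarrow T_m$ of \cref{thm:stump-surjection};
\item the map $\Phi_{n,m}:\mfS^{\mathrm{symb}}_{n,m}\xrightarrow{\sim}T_m$ of \eqref{eq:symbolic-to-Cherednik}, which is an isomorphism by \cref{thm:cherednik}.
\end{itemize}
The key step is that the natural map $\iota:\overline A_m\to\mfS^{\mathrm{symb}}_{n,m}$, induced by $e_mJ^m\subseteq e_mI^{(m)}$, satisfies $\Phi_{n,m}\circ\iota=$ Stump's surjection, after the normalization by $\Delta^m$ of \eqref{eq:two-normalizations}.

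To check this, I would observe that both maps come from the same iterated shift construction \eqref{eq:iteratedshift}. Stump's map sends a product of symbols of $\Delta$-twisted Dunkl elements in $eH^{(j)}e_-$ to the class of the corresponding tensor in $eL^{(m)}$. Kivinen's \cref{thm:kivinen} identifies $\gr_FB_{m0}\cong e(\Delta^mI^{(m)})$, and the map $\Phi_{n,m}$ is obtained by acting with $B_{m0}$ on $eL_{c_0}$ via \cref{lem:spherical-tensor}. Under $\gr$, the filtered multiplication of shift factors sends the symbol of a product of degree-one alternants into $\Delta^m J^m\subseteq\Delta^mI^{(m)}$. So on symbols the two constructions agree, and the diagram commutes. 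Then $\Phi\circ\iota$ is surjective, hence so is $\iota$, since $\Phi$ is bijective. Nakayama then gives $e_mJ^m=e_mI^{(m)}$.

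I expect this compatibility to be the main obstacle: one must verify that Stump's filtration and bigrading agree with the one used through Boixeda Alvarez--Losev and Kivinen. This includes the twist $\eps^{\otimes(1-m)}$ versus the idempotent $e_m$, and the fact that tensor-product filtrations need not be strict (\cref{rem:strictness}). I would handle it by working with the filtered bimodule $eB_{c_0+m\leftarrow c_0}e$ directly: both maps factor through $\gr_F$ of its image in $eL^{(m)}$, and $\gr$ of the inclusion of filtered pieces realizes $\iota$.

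Finally, the geometric consequence follows from the collapse. The symbolic and sign Rees algebras agree in degrees $d\ge m_1(n)$, since $e(\Delta^d I^{(d)})=\Delta^d e_d I^{(d)}=\Delta^d e_d J^d$, and similarly for $e(\Delta I)^d$. For the three graded algebras of \eqref{eq:three-models}, agreement in all sufficiently large degrees makes the Veronese subrings equal. Since $\ProjOp$ is unchanged by passing to a Veronese subring, the birational morphisms $X_{\mathrm{symb}}\to X_{\mathrm{diag}}\to X_{\mathrm{sgn}}$ are isomorphisms. One should also use that $(\Delta S^\eps)^d$ and $e(\Delta I)^d$ agree with $\Delta^d e_dJ^d$, respectively sit between $\Delta^d e_dJ^d$ and $\Delta^de_dI^{(d)}$, up to saturation in high degree.
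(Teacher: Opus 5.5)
Your proposal is correct and takes essentially the same route as the paper. You show that the triangle formed by $\overline A_m\to\mfS^{\mathrm{symb}}_{n,m}\xrightarrow{\sim}T_m$ and Stump's surjection commutes, justifying this, as the paper does, by both maps being $\gr$ of one filtered multiplication of shift bimodules acting on $eL_{c_0}$. You then apply graded Nakayama to $e_mI^{(m)}/e_mJ^m$, and finish with the sandwich $e(\Delta^mJ^m)\subseteq(e(\Delta I))^m\subseteq e(\Delta^mI^m)\subseteq e(\Delta^mI^{(m)})$ and the Veronese-invariance of relative $\ProjOp$. The only cosmetic point is that no saturation is needed at the end: for $d\ge m_1(n)$ these graded pieces are literally equal.
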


\begin{proof}
Set $A_m=e_mJ^m\subseteq D_m=e_mI^m\subseteq C_m=e_mI^{(m)}$, finite graded
$A$-modules ($S$ is finite over $A$), and $\overline A_m=A_m/\mfm A_m$,
$\overline C_m=C_m/\mfm C_m$, $T_m=\gr_FeL_{c_0+m}(\triv)$.

\emph{Two maps.}  The map $\phi^{\mathrm{symb}}_m:\overline C_m\twoheadrightarrow T_m$ is
\eqref{eq:symbolic-to-Cherednik}.  The map
$\phi^{J}_m:\overline A_m\twoheadrightarrow T_m$ is Stump's generalized Gordon
surjection, \cref{thm:stump-surjection}, that is, the same filtered iterated-shift
construction applied to the subalgebra generated by products of alternants.  Its
spherical part is $\overline A_m$, since products of $m$ alternating polynomials
span $e_mJ^m$ over $A$ ($e_m(fa_1\cdots a_m)=e(f)\,a_1\cdots a_m$).

\emph{Compatibility.}  The diagram
\[
\begin{tikzcd}[column sep=large]
\overline A_m\arrow[r,"\overline\iota_m"]\arrow[dr,"\phi^J_m"']
 &\overline C_m\arrow[d,"\phi^{\mathrm{symb}}_m"]\\
 &T_m
\end{tikzcd}
\]
commutes, because both maps are induced by the associated graded of one and the same
filtered multiplication
$Q^{c_0+m}_{c_0+m-1}\otimes\cdots\otimes Q^{c_0+1}_{c_0}\otimes eL_{c_0}\to
eL_{c_0+m}$.  Under \cref{thm:kivinen},
$A_m\subseteq C_m=\gr B_{m0}$ is the submodule generated by the products of the
distinguished alternating symbols, and $\phi^J_m$ is by construction the
restriction of $\phi^{\mathrm{symb}}_m$.

\emph{Nakayama.}  For $m\ge m_0(n)$, $\phi^{\mathrm{symb}}_m$ is an isomorphism
(\cref{thm:cherednik}).  Surjectivity of $\phi^J_m$ then forces
$\overline\iota_m$ surjective, i.e.\ $C_m=A_m+\mfm C_m$, and graded Nakayama on the
finite $A$-module $C_m/A_m$ gives $A_m=C_m$, hence also $=D_m$.  For the Proj
statement compare $r$-th Veronese subalgebras for any $r\ge m_1(n)$, noting
$(\Delta S^\eps)^m=e(\Delta^mJ^m)$ and the sandwich
$e(\Delta^mJ^m)\subseteq(e(\Delta I))^m\subseteq e(\Delta^mI^m)\subseteq
e(\Delta^mI^{(m)})$, all equal in the range, and relative Proj is insensitive to
Veronese.
\end{proof}

\begin{remark}
The collapse is a statement about the \emph{spherical} pieces only.  Kivinen's
type-$B_3$ computation \cite[Ex.~B.2]{Kivinen} shows $J\subsetneq I$ while $eJ=eI$,
and the isospectral models remain distinct.
\end{remark}

\subsection{Stump's quotient versus the invariant quotient, and the proof of Theorem B}

\begin{theorem}\label{thm:qtCat}
Theorem B holds, and moreover
$\pi_m:\mathsf Q^{\mathrm{inv}}_m(J^m)\twoheadrightarrow\mathsf Q^{\mathrm{full}}_m(J^m)$
of \eqref{eq:two-quotient-surj} is an isomorphism for $m\gg0$.  In particular Stump's
conjecture $\dim e_\eps DR^{(m)}(B_n)=\binom{n(m+1)}n$ holds for all $m\gg0$.
\end{theorem}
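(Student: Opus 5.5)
The plan is to assemble Theorem~B from results already proved, essentially by the route of \cref{lem:sandwich}. We fix $n$ and take $m\ge\max(m_0(n),m_1(n))$, so that \cref{thm:collapse}, \cref{thm:character} and \cref{thm:cherednik} all hold. The first step is to identify the invariant quotient. By \cref{thm:collapse} we have $e_mJ^m=e_mI^{(m)}$, hence
\[
 \mathsf Q^{\mathrm{inv}}_m(J^m)=e_mI^{(m)}/\mfm\,e_mI^{(m)}=\mfS^{\mathrm{symb}}_{n,m}
\]
as bigraded spaces. By \cref{thm:character} its bigraded character is $P_{n,m}(q,t)$, and its dimension is $\binom{n(m+1)}n$.

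The second step bounds the full quotient from both sides. The canonical surjection $\pi_m$ of \eqref{eq:two-quotient-surj} gives $\dim\mathsf Q^{\mathrm{full}}_m(J^m)\le\binom{n(m+1)}n$. For the reverse inequality I would use Stump's surjection, \cref{thm:stump-surjection}. It is defined on the determinant component of $DR^{(m)}$, which by \cite[Thm.~3.5]{Stump} is $\mathsf Q^{\mathrm{full}}_m(J^m)$ up to the twist $\eps^{\otimes(1-m)}$. Its spherical part maps onto $\gr_FeL_{\frac1{2n}+m}(\triv)$, and that space has dimension $\binom{n(m+1)}n$ by \cref{thm:typeB-dim}.

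The main obstacle is exactly this matching of isotypic components. One has to check that the determinant-isotypic part of $J^m/\mathfrak m_SJ^m$, after the twist by $\eps^{\otimes(1-m)}$, is the $e_m$-part. This holds because $\det^{1-m}\otimes\det=\det^{2-m}=\det^m$, as $\det^2$ is trivial. One also has to check that the spherical summand of the target is $e\,\gr_FL$, so that the relevant source summand is precisely $e_m(J^m/\mathfrak m_SJ^m)=\mathsf Q^{\mathrm{full}}_m(J^m)$. Granting this, a surjection onto a space of dimension $\binom{n(m+1)}n$ gives the lower bound.

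Once the two bounds agree, $\pi_m$ is a bigraded surjection between finite-dimensional spaces of equal dimension, so it is an isomorphism. Then
\[
 \mathrm{Cat}^{(m)}(B_n;q,t)=\dim_{q,t}\mathsf Q^{\mathrm{full}}_m(J^m)=\dim_{q,t}\mathsf Q^{\mathrm{inv}}_m(J^m)=P_{n,m}(q,t).
\]
For the bigrading, we keep the normalization in \eqref{eq:stump-def}, which does not multiply by $\Delta^m$, so no $q^{mn^2}$ shift enters, consistent with \cref{thm:character}. Stump's numerical conjecture for $m\gg0$ then follows from \cref{thm:principal} at $q=t=1$.
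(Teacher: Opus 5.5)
Your proposal is correct and follows the paper's proof essentially step for step. You use the collapse and the character theorem to identify $\mathsf Q^{\mathrm{inv}}_m(J^m)$ with $\mfS^{\mathrm{symb}}_{n,m}$, of character $P_{n,m}$ and dimension $\binom{n(m+1)}n$; the surjection $\pi_m$ gives the upper bound, and Stump's surjection onto a space of that dimension gives the lower bound. Your isotypic check $\det^{1-m}\otimes\det=\det^m$ simply spells out the paper's parenthetical remark about the twist $\eps^{\otimes(1-m)}$.
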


\begin{proof}
Let $m$ be in the ranges of \cref{thm:character,thm:collapse}.  Then
$\dim\mathsf Q^{\mathrm{inv}}_m(J^m)=\dim\mfS^{\mathrm{symb}}_{n,m}=\binom{n(m+1)}n$ by
the collapse and \cref{thm:character}, hence
$\dim\mathsf Q^{\mathrm{full}}_m(J^m)\le\binom{n(m+1)}n$.  On the other hand Stump's
surjection (\cref{thm:stump-surjection}, with target of dimension
$\binom{n(m+1)}n$ by \cref{thm:typeB-dim}) factors through
$\mathsf Q^{\mathrm{full}}_m(J^m)$, giving $\dim\mathsf Q^{\mathrm{full}}_m(J^m)\ge\binom{n(m+1)}n$.
Hence equality holds, $\pi_m$ is an isomorphism, and
\[
 \mathrm{Cat}^{(m)}(B_n;q,t)=\Hilbs\mathsf Q^{\mathrm{full}}_m(J^m)
 =\Hilbs\mathsf Q^{\mathrm{inv}}_m(J^m)=\chf_{q,t}\mfS^{\mathrm{symb}}_{n,m}
 =P_{n,m}(q,t)
\]
by \cref{thm:character}.  (The determinant twist $\eps^{\otimes(1-m)}$ in
\eqref{eq:stump-def} does not affect the bigrading.)
\end{proof}

\begin{remark}[Type $A$ versus type $B$]
The binomial $\binom{n(m+1)}n=\prod_{i=1}^n\frac{2i+2nm}{2i}$ is the Fuss--Catalan
number of $W(B_n)$.  For the literal type-$A_{n-1}$ diagonal ideal
$\bigcap_{i<j}(x_i-x_j,y_i-y_j)^m$ the corresponding count is
$\frac1{mn+1}\binom{(m+1)n}n$, and the coordinate-root and sum-diagonal factors of
\eqref{eq:Bn-ideals} are responsible for the difference.  Compare
$F_{n,m}(1,1)=\frac{1}{nm+1}\binom{n(m+1)}{n}$ in Theorem C, the \emph{virtual}
(unprojected) count, which is precisely the type-$A$ Fuss--Catalan number.
\end{remark}

\subsection{Open problems}\label{sec:open-problems}

Let us put the results of the paper in the context of the conjectures of Haiman on
diagonal coinvariants.  For $W=S_n$ Haiman \cite{HaimanConj} conjectured, and later
proved \cite{HaimanNfact,Haiman}, that the diagonal coinvariant ring
$DR(W)=S/\mfm S$ has dimension $(n+1)^{n-1}$ and that its determinantal part
$e_\eps DR(W)$ has dimension the Catalan number $\frac1{n+1}\binom{2n}n$, with
bigraded Hilbert series the $q,t$-Catalan number of \cite{HaimanConj,GarsiaHaiman}.

The generalized diagonal coinvariants $DR^{(m)}(W)$ of Garsia and Haiman have
dimension $(mn+1)^{n-1}$, and their determinantal part has dimension the
Fuss--Catalan number $\frac1{mn+1}\binom{(m+1)n}n$ and Hilbert series the
$q,t$-Fuss--Catalan number \cite{Haiman}.  For a general Weyl group $W$ of rank $\ell$
and Coxeter number $h$, Haiman \cite[\S7]{HaimanConj} computed the dimension of
$DR(W)$ for $B_4$, $B_5$ and $D_4$, found it different from $(h+1)^\ell$ (it is
$9^4+1$ for $B_4$). Haiman conjectured that $DR(W)$ has a natural quotient of dimension
$(h+1)^\ell$ \cite[Conj.~7.1.2]{HaimanConj} whose twist by the sign character is the
permutation representation of $W$ on $Q/(h+1)Q$, $Q$ the root lattice.

The number of
$W$-orbits on $Q/(h+1)Q$ is the Catalan number
$\Cat(W)=\prod_{i=1}^{\ell}\frac{h+1+e_i}{1+e_i}$ of $W$ \cite[\S7]{HaimanConj}, which
is $\binom{2n}n$ for $W(B_n)$ (compare \cref{thm:typeB-dim}), hence the conjecture
predicts that the determinantal part of the quotient has dimension $\Cat(W)$.  Gordon
\cite{GordonDiag} constructed the quotient as $\gr L_{1+\frac1h}(\triv)$, which is the
case $m=1$ of the surjection of \cref{thm:stump-surjection}.  Haiman's conjecture on
the determinantal part of the diagonal coinvariants \cite[Conj.~7.2.5]{HaimanCDM},
recorded for all $m$ as \cite[Conj.~4.7]{Stump}, is that the kernel of the surjection
$DR^{(m)}(W)\otimes\eps\twoheadrightarrow\gr L_{m+\frac1h}(\triv)$ contains no copy of
the trivial representation, that is, that the determinantal part of the generalized
diagonal coinvariants maps isomorphically onto $e\,\gr L_{m+\frac1h}(\triv)$.

For
$W=W(B_n)$ this is the statement that the spherical part of the surjection of
\cref{thm:stump-surjection} is an isomorphism, and \cref{thm:qtCat} proves it for all
$m\gg0$, together with Conjectures~3.4 and~3.9 of \cite{Stump} on the dimension and on
the principal specialization of $\mathrm{Cat}^{(m)}(W;q,t)$.  The dimension of the
full diagonal coinvariant ring of $W(B_n)$ exceeds $(2n+1)^n$ for every $n\ge4$, by a
quantity which grows at least quadratically in $n$ \cite{AjilaGriffeth}, hence it is
only the determinantal part which retains the Catalan behaviour.

The polynomials $P_{n,m}$ of Theorem~B contain more information than the
Fuss--Catalan numbers, and the questions below concern the low twists, the
effectivity of the bounds, and the combinatorics of $P_{n,m}$ and of the
opposite-parity part $N_{n,m}=P_{n,m}-F_{n,m}$ of Theorem~C.

The conjectures and theorems of this paper lead to a collection of natural questions which is list below.

\begin{question}(All twists).  Do Theorems B and D hold for every $m\ge1$?  By
\cref{thm:character} this reduces to
$H^{>0}(\cZ_n^{\mathrm{symb}},\cO(m))=0$ and base change in the finitely many low
degrees.  A Cohen--Macaulay property of the punctual section ring
$\bigoplus_m\widetilde\mfS_{n,m}$ with $a$-invariant $\le0$ would give all
$m\ge1$ at once.
\end{question}

\begin{question}(Effectivity.)  Give an effective bound for
$m_0(n)$ (a Castelnuovo--Mumford regularity bound for
$\cZ_n^{\mathrm{symb}}$).
\end{question}

\begin{question}(The first rung.)  At $m=1$,
\eqref{eq:symbolic-to-Cherednik} still provides the surjection
$\mfS^{\mathrm{symb}}_{n,1}\twoheadrightarrow\gr eL_{\frac1{2n}+1}(\triv)$, whose
target has dimension $\binom{2n}n=\Cat(B_n)$.  Its
bijectivity for all $n$, equivalently
$H^{>0}$-vanishing at $m=1$, would give the exact
$q,t$-Fuss--Catalan character at the classical rung, and it would prove Haiman's
conjecture \cite[Conj.~7.2.5]{HaimanCDM} on the determinantal part of the diagonal
coinvariants of $W(B_n)$.  It is verified for
$n\le3$ by exact computation of the contravariant form on $L_{\frac1{2n}+1}(\triv)$,
in the bigrading of \cref{rem:strictness}.
\end{question}

\begin{question}(Combinatorics of $P_{n,m}$ and $N_{n,m}$.)  All polynomials in
\cref{ex:lowrank} are positive linear combinations of the strings $(qt)^a[k]_{q,t}$.
Stump \cite[Conj.~3.14]{Stump} conjectured that $\mathrm{Cat}^{(m)}(W;q,1)$ is the
coheight generating function of the positive regions of the extended Shi arrangement
of $W$, and in type $B_n$ the specialization $\mathrm{Cat}^{(1)}(B_n;1,t)$ is expected
to be the area generating function of the shifted Dyck paths of length $n$, that is,
of the lattice paths with unit north and east steps which start on the anti-diagonal,
end at $(n,n)$ and stay above the diagonal \cite[Conj.~1.17]{Lu}.

Is there a pair of
statistics on lattice paths which realizes $P_{n,m}(q,t)$, and is there a geometric
or representation-theoretic meaning of the opposite-parity part $N_{n,m}$?  A
conjectural answer to the first question is given in \cref{sec:shuffle}.  For $m=1$
the polynomials $N_{n,1}$ have appeared, up to the factor $qt$, in the thesis of
Lingxi Lu \cite{Lu}.  That thesis attaches to the shifted Dyck paths of length $n$
with base $k$, which start on the anti-diagonal $y=1-k-x$, polynomials $B_{n,k}(q,t)$
written as positive combinations of the strings $(qt)^a[l]_{q,t}$ and normalized by
$B_{n,1}(q,t)=\mathrm{Cat}^{(1)}(B_n;q,t)$ and by the two specializations
$B_{n,k}(1,t)=\sum_\pi t^{\mathrm{area}(\pi)}$ and
$q^{n(n+k-1)}B_{n,k}(q,q^{-1})=\qbinom{2n+k-1}{n}_{q^2}$, and it computes them for
$n\le3$.  One has
\[
 N_{n,1}(q,t)=qt\,B_{n-1,3}(q,t)\qquad(2\le n\le4),
\]
compare \cref{ex:lowrank} with \cite[Ch.~3, Ch.~4 and App.~C]{Lu}, and for every $n$
the specializations $N_{n,1}(1,1)=\binom{2n}{n-1}$ and
$N_{n,1}(q,q^{-1})=q^{1-n^2}\qbinom{2n}{n-1}_{q^2}$ of Theorem~C agree with those of
$qt\,B_{n-1,3}(q,t)$.  It would be interesting to know whether the equality holds for
all $n$ and what the base of the paths means on $Y_n$.
\end{question}

\section{Shuffle conjecture in type $B$}\label{sec:shuffle}

In this section we formulate a combinatorial model for the polynomials $P_{n,m}$
of Theorem~B, which answers, conjecturally, the first question of
\cref{sec:open-problems}.  The model is a sum over the $\binom{(m+1)n}{n}$ lattice
paths of the $n\times mn$ rectangle, and it uses three statistics of such a path,
the higher diagonal inversion number $\dinv_m$ of Loehr \cite{Loehr}, in the form of
Lee, Li and Loehr \cite{LLL}, and two area statistics $A^{(m)}_\mp$ which measure the excursions of the path below and
above the line $y=x/m$.  We prove that the resulting polynomial $\widehat P_{n,m}$
shares with $P_{n,m}$ the positivity, the $q\leftrightarrow t$ symmetry and the two
specializations of Theorem~C, we prove the equality $P_{n,m}=\widehat P_{n,m}$ in
ranks one and two for every $m$, and we verify it by exact computation in a range
of small ranks.

The main idea behind the name of the section is the following.  Let us write the
localization polynomial as a pairing in the ring of two-colour symmetric functions.
The wreath Macdonald polynomials $\wtH_\lambda$, $\lambda\in\Bal(2n)$, introduced by
Haiman \cite{HaimanCDM} and constructed by Bezrukavnikov and Finkelberg \cite{BF},
are the bigraded Frobenius characters of the fibers of the normalized Procesi bundle
of \cref{thm:losev} at the fixed points $I_\lambda$, and the fixed-point data of
\cref{lem:localization} are pairings of $\wtH_\lambda$ with explicit symmetric
functions.  The Cauchy identity for the $\wtH_\lambda$ then converts the fixed-point
sum \eqref{eq:F-def} into the pairing
\[
 P_{n,m}=(-1)^{n-1}\Bigl\langle\nabla_1^{m}\nabla_0\,
 p_n\Bigl[\tfrac{(1+qt)X_0+(q+t)X_1}{(1-q^2)(1-t^2)}\Bigr],\
 h_{n-1}[X_0]\,p_1\bigl[(1+qt)X_0-(q+t)X_1\bigr]\Bigr\rangle ,
\]
where $\nabla_0,\nabla_1$ are the two-colour nabla operators
(\cref{prop:plethystic}).  In type $A$ the pairing $\langle\nabla^me_n,e_n\rangle$ is
the sum over the $m$-Dyck paths of the $n\times mn$ rectangle of
$q^{\dinv_m}t^{\mathrm{area}}$, and the pairing $(-1)^{n-1}\langle\nabla p_n,e_n\rangle$
is a sum over all lattice paths of the $n\times n$ square, by the shuffle theorem of
Carlsson and Mellit \cite{CarlssonMellit}, conjectured in \cite{HHLRU}, its rational
version by Mellit \cite{Mellit} and the square paths theorem of Sergel \cite{Sergel}.
\Cref{conj:shuffleB} is the type-$B$ analogue of these statements at the Catalan
level, with a two-colour nabla applied to a power sum on the left and a sum over all
paths of the rectangle on the right.  One of the inputs of the pairing formula, the
evaluation of $\wtH_\lambda$ at the colour-zero power sum, is not available in the
literature for two colours, and we prove it in \cref{thm:hooks} from an
exterior-compatibility property of the normalized Procesi bundle, which we deduce
from Losev's formal restriction theorem.

Throughout the section $m\ge1$, $D:=mn^2$, $[x]_+:=\max(x,0)$, and
$[k]_{q,t}=\sum_{i+j=k-1}q^it^j$ as in \cref{ex:lowrank}, hence $[k]_{q,t}$ has $k$
terms and $[k]_{q,t}(q,q^{-1})=q^{1-k}[k]_{q^2}$.

\subsection{Paths in the rectangle and three statistics}\label{sec:path-stats}

A lattice path $\pi$ from $(0,0)$ to $(mn,n)$ with unit north and east steps is
determined by the numbers $e_1\le e_2\le\dots\le e_n$ of east steps which precede
its first, second, \dots, $n$-th north step, $0\le e_i\le mn$.  Respectively, $\pi$
is determined by the partition
\[
 \alpha(\pi)=(e_n,e_{n-1},\dots,e_1)\subseteq(mn)^n ,
\]
the partition with $n$ rows and parts at most $mn$ which fills the region between
the path and the $y$-axis, and $|\alpha(\pi)|=e_1+\dots+e_n$.  We set
\begin{equation}\label{eq:heights}
 b_i:=mi-e_i\qquad(1\le i\le n),
\end{equation}
the height of the path above the line $y=x/m$ after its $i$-th north step, measured
in units of $1/m$.  Let us define the kernel
\begin{equation}\label{eq:kernel}
 s_m(d)=\begin{cases} m+d,&1-m\le d\le0,\\ m+1-d,&1\le d\le m,\\ 0,&\text{otherwise},
 \end{cases}
\end{equation}
and the three statistics
\begin{equation}\label{eq:pathstats}
\begin{gathered}
 \dinv_m(\pi)=\sum_{i<j}s_m(b_i-b_j),\\
 A^{(m)}_-(\pi)=\sum_{i=1}^n\bigl([m-b_i]_++[-b_i]_+\bigr),\qquad
 A^{(m)}_+(\pi)=\sum_{i=1}^n\bigl([b_i]_++[b_i-m]_+\bigr).
\end{gathered}
\end{equation}
For $m=1$ the kernel is $s_1(d)=1$ for $d\in\{0,1\}$ and $s_1(d)=0$
otherwise, thus $\dinv_1$ is the usual diagonal inversion number of the sequence
$(b_1,\dots,b_n)$, and $A^{(1)}_-=\sum_{b_i\le0}(1-2b_i)$,
$A^{(1)}_+=\sum_{b_i>0}(2b_i-1)$.  The kernel $s_m$ is the one of the higher
$q,t$-Catalan numbers \cite{Loehr}, \cite[\S1(b)]{LLL}, where it is applied to the
$m$-Dyck paths, that is to the paths with all $b_i\ge0$.  Here it is applied to all paths of the
rectangle, and the two area statistics count a step at height $b$ in the strip
$0<b<m$ on both sides.

\begin{definition}\label{def:Phat}
For $n,m\ge1$ let
\begin{equation}\label{eq:Phat-path}
 \widehat P_{n,m}(q,t)=\sum_{\pi}(qt)^{\dinv_m(\pi)}\,q^{A^{(m)}_-(\pi)}\,t^{A^{(m)}_+(\pi)},
\end{equation}
the sum over all $\binom{(m+1)n}{n}$ lattice paths from $(0,0)$ to $(mn,n)$.
\end{definition}

The statistics have a second form, on the partition $\alpha=\alpha(\pi)$.  For a
box $s=(i,j)$ of $\alpha$ in row $i$ and column $j$, both counted from $1$ in
English notation, let $a_\alpha(s)=\alpha_i-j$ and $l_\alpha(s)=\alpha'_j-i$ be its
arm and leg, and put
\begin{equation}\label{eq:Hc}
 H_m(\alpha)=\#\{s\in\alpha:\ 0\le a_\alpha(s)-m\,l_\alpha(s)\le m\},\qquad
 c_{m,n}(\alpha)=\sum_{i=1}^n\bigl[\,2\bigl(\alpha_i-m(n+1-i)\bigr)-1\,\bigr]_+ .
\end{equation}
The hook statistic $H_m$ is the statistic $c_m$ of \cite[\S1(a)]{LLL}, applied here to
all partitions of the rectangle.  It does not depend on $n$, and the boundary
correction $c_{m,n}$ vanishes exactly on the partitions inside the dilated staircase
$\alpha_i\le m(n+1-i)$.

\begin{theorem}[Hook--path identity]\label{thm:hookpath}
For every path $\pi$ with partition $\alpha$ and heights $b$ as in
\eqref{eq:heights},
\begin{equation}\label{eq:HcA}
 H_m(\alpha)+c_{m,n}(\alpha)=\dinv_m(\pi)+A^{(m)}_-(\pi),\quad
 D-2|\alpha|+H_m(\alpha)+c_{m,n}(\alpha)=\dinv_m(\pi)+A^{(m)}_+(\pi).
\end{equation}
Consequently
\begin{equation}\label{eq:Phat-part}
 \widehat P_{n,m}(q,t)=\sum_{\alpha\subseteq(mn)^n}
 q^{\,H_m(\alpha)+c_{m,n}(\alpha)}\;t^{\,D-2|\alpha|+H_m(\alpha)+c_{m,n}(\alpha)} .
\end{equation}
\end{theorem}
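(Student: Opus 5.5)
The plan is to prove the first identity in \eqref{eq:HcA} directly and derive the second from it by a one-line computation. Indeed, $[x]_+-[-x]_+=x$ gives, row by row,
\[
\bigl([b_i]_++[b_i-m]_+\bigr)-\bigl([m-b_i]_++[-b_i]_+\bigr)=b_i+(b_i-m)=2b_i-m .
\]
Summing over $i$ with $\sum_ib_i=m\binom{n+1}2-|\alpha|$ yields $A^{(m)}_+-A^{(m)}_-=mn^2-2|\alpha|=D-2|\alpha|$. Adding this to the first identity gives the second. Then \eqref{eq:Phat-part} follows by substituting both identities into \eqref{eq:Phat-path} and using that $\pi\mapsto\alpha(\pi)$ is a bijection onto partitions in $(mn)^n$.

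For the first identity I would first rewrite everything in one indexing. The rows of $\alpha$ from the top are $\alpha_i=e_{n+1-i}$, so $\alpha_i-m(n+1-i)=-b_{n+1-i}$. Hence
\[
c_{m,n}(\alpha)=\sum_i[-2b_i-1]_+,\qquad A^{(m)}_-=\sum_i\bigl([m-b_i]_++[-b_i]_+\bigr).
\]
Next I decompose $H_m(\alpha)$ row by row and, within row $i$, by leg value. Set $\alpha_{n+1}:=0$. The boxes of row $i$ with leg $l$ are exactly the columns $j\in(\alpha_{i+l+1},\alpha_{i+l}]$, and their arms run through an interval determined by $\alpha_i-\alpha_{i+l}$ and $\alpha_i-\alpha_{i+l+1}$. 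The condition $0\le a-ml\le m$ selects the intersection of that interval with $[ml,ml+m]$. Its cardinality is a piecewise linear function of these two differences. The key claim is that, summed over $l$, these counts telescope into $\sum_{k>i}s_m(\cdot)$ of the corresponding height differences. This should match $\dinv_m$ once the kernel \eqref{eq:kernel} is written as a difference of two clipped ramps. The last leg group $l=n-i$, whose lower endpoint is the fictitious row $\alpha_{n+1}=0$, produces a leftover single-row term in $b$. That leftover should equal $[m-b]_++[-b]_+-[-2b-1]_+$, which is the contribution of row $i$ to $A^{(m)}_--c_{m,n}$.

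As an independent route, and as a check on the signs, I would also run an induction on $|\alpha|$. The base case is $\alpha=\varnothing$, where $b_i=mi$. All height differences are then at most $-m$, outside the support of $s_m$, so $\dinv_m=A^{(m)}_-=H_m=c_{m,n}=0$. For the inductive step, adding one corner box in row $i$ lowers a single height $b$ by $1$. One then compares the change of $H_m$, which only affects boxes in row $i$ and in the column of the new box, with the change of $\sum_k s_m(b-b_k)$ and of the one-variable terms.

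The main obstacle is the telescoping identity for arbitrary, non-Dyck paths. When $b_i<0$, the windows $[ml,ml+m]$ overshoot the actual arm ranges in a way that does not occur inside the staircase, which is the setting of \cite{LLL}. The boundary bookkeeping that produces exactly $[-2b-1]_+$, with its odd offset, is where I expect errors. I would settle it by treating the two regimes $b\ge0$ and $b<0$ separately and checking the result against the small cases $m=1$, $n\le3$.
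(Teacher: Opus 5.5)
Your plan is the paper's proof: the paper also counts the hook boxes row by row grouped by leg, evaluates each group with the uniform count $\#\{j: A\le j\le B,\ C\le j\le D'\}=[B-C+1]_+-[B-D']_+-[A-C]_++[A-1-D']_+$, valid for all integers $A\le B+1$ and $C\le D'$ (so the case split on the sign of $b$ you anticipate is not needed), and pairs the interior endpoints into $[m+1-d]_+-[-d]_+-[1-d]_++[-m-d]_+=s_m(d)$, with $d$ the height of the lower row minus that of the current row. One correction: two endpoint terms remain unpaired, namely the constant $m+1$ from the leg-$0$ group and $-[b+1]_++[b-m]_+$ from the last group, and it is their sum $m+1-[b+1]_++[b-m]_+$ that equals $[m-b]_++[-b]_+-[-2b-1]_+$; your reduction of the second identity to the first and your passage to the partition form of $\widehat P_{n,m}$ coincide with the paper's.
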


\begin{proof}
Let us number the rows of $\alpha$ from the bottom, hence row $i$ has length
$L_i:=e_i=mi-b_i$ and $L_0:=0\le L_1\le\dots\le L_n$.  A box $(i,j)$ of row $i$ has
arm $L_i-j$, and its leg is the number of rows below row $i$ of length at least $j$,
which equals $i-1-k$ for $L_k<j\le L_{k+1}$, $0\le k\le i-1$.  For such a box the
hook condition $0\le\mathrm{arm}-m\,\mathrm{leg}\le m$ reads
\[
 0\le L_i-j-m(i-1-k)\le m\quad\Longleftrightarrow\quad mk-b_i\le j\le m(k+1)-b_i .
\]
For integers $A\le B+1$ and $C\le D'$ the number of integers $j$ with $A\le j\le B$
and $C\le j\le D'$ is $[B-C+1]_+-[B-D']_+-[A-C]_++[A-1-D']_+$, since
$\#\{j\in[C,D']:j\le X\}=[X-C+1]_+-[X-D']_+$ for every integer $X$, and one
subtracts the values at $X=B$ and $X=A-1$.  With $A=L_k+1$, $B=L_{k+1}$,
$C=mk-b_i$ and $D'=m(k+1)-b_i$ the number of hook boxes of row $i$ in the columns
$L_k<j\le L_{k+1}$ is therefore
\[
 [m+1+b_i-b_{k+1}]_+-[b_i-b_{k+1}]_+-[b_i-b_k+1]_++[b_i-b_k-m]_+ .
\]
We sum over $k=0,\dots,i-1$ and reindex the first two terms by $j=k+1\in[1,i]$ and
the last two by $j=k\in[0,i-1]$.  The indices $1\le j\le i-1$ occur in both groups
and contribute, with $d:=b_j-b_i$,
\[
 [m+1-d]_+-[-d]_+-[1-d]_++[-m-d]_+=s_m(d),
\]
as one checks on the four ranges $d\ge m+1$, $1\le d\le m$, $1-m\le d\le0$ and
$d\le-m$, where the left side equals $0$, $m+1-d$, $(m+1-d)+d-(1-d)=m+d$ and
$(m+1-d)+d-(1-d)+(-m-d)=0$.  The index $j=i$ of the first group contributes
$[m+1]_+-[0]_+=m+1$, and the index $j=0$ of the second group contributes
$-[b_i+1]_++[b_i-m]_+$, since $b_0=0$.  Hence the number of hook boxes in row $i$
is
\[
 g_m(b_i)+\sum_{j<i}s_m(b_j-b_i),\qquad
 g_m(b):=m+1-[b+1]_++[b-m]_+=\#\{r\in\{0,\dots,m\}:\ r>b\},
\]
and the sum over $i$ gives $H_m(\alpha)=\sum_ig_m(b_i)+\dinv_m(\pi)$.

The row of $\alpha$ with index $k=n+1-i$ from the top is row $i$ from the bottom,
and $\alpha_k-m(n+1-k)=L_i-mi=-b_i$, thus $c_{m,n}(\alpha)=\sum_i[-2b_i-1]_+$.  For
every integer $b$ we have $g_m(b)+[-2b-1]_+=[m-b]_++[-b]_+$, the three cases being
$b\ge m$, where both sides vanish, $0\le b<m$, where both sides equal $m-b$, and
$b<0$, where the left side is $(m+1)+(-2b-1)$ and the right side is $(m-b)+(-b)$.
The sum over $i$ gives the first identity of \eqref{eq:HcA}.  For the second
identity we use $[x]_+-[-x]_+=x$, which gives
$A^{(m)}_+(\pi)-A^{(m)}_-(\pi)=\sum_i\bigl(b_i+(b_i-m)\bigr)=2\sum_ib_i-mn$, and
$\sum_ib_i=m\binom{n+1}2-|\alpha|$, hence
$A^{(m)}_+-A^{(m)}_-=mn^2-2|\alpha|=D-2|\alpha|$.  Adding this to the first
identity gives the second, and substituting the two exponents into
\eqref{eq:Phat-path} gives \eqref{eq:Phat-part}.
\end{proof}

\begin{corollary}\label{cor:Phat-props}
\leavevmode
\begin{enumerate}[label=\textup{(\roman*)}]
\item $\widehat P_{n,m}\in\Z_{\ge0}[q,t]$, and every monomial of $\widehat P_{n,m}$
has total degree $\equiv mn\pmod2$.
\item $\widehat P_{n,m}(q,t)=\widehat P_{n,m}(t,q)$.  More precisely, the
complementation $\alpha\mapsto\alpha^c$, $\alpha^c_k=mn-\alpha_{n+1-k}$, exchanges
the two exponents in \eqref{eq:Phat-part}.
\item $\widehat P_{n,m}(1,1)=\binom{(m+1)n}{n}$ and
$q^{D}\,\widehat P_{n,m}(q,q^{-1})=\qbinom{(m+1)n}{n}_{q^2}$.
\end{enumerate}
\end{corollary}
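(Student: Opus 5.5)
We work throughout with the partition form \eqref{eq:Phat-part} from \cref{thm:hookpath}, writing the exponents as $E_q(\alpha)=H_m(\alpha)+c_{m,n}(\alpha)$ and $E_t(\alpha)=D-2|\alpha|+E_q(\alpha)$.

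For (i), nonnegativity is read off the path form \eqref{eq:Phat-path}. The statistics $\dinv_m$, $A^{(m)}_\pm$ are sums of values of $s_m$ and of $[\cdot]_+$, hence nonnegative integers, and the coefficients count paths. For parity, the total degree of a monomial is $E_q+E_t=D-2|\alpha|+2E_q\equiv D=mn^2\equiv mn\pmod 2$.

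For (ii), we show that complementation $\alpha\mapsto\alpha^c$ exchanges $E_q$ and $E_t$. Then the sum over the rectangle is symmetric in $q,t$, since complementation is an involution of the set of partitions in $(mn)^n$. On paths, $\alpha^c$ is the path rotated by $180^\circ$, and we compute in the heights \eqref{eq:heights}. Since $e^c_i=mn-e_{n+1-i}$, we get $b^c_i=m-b_{n+1-i}$. We then check, using the path form and \eqref{eq:HcA}, that $\dinv_m+A^{(m)}_-$ at $\alpha^c$ equals $\dinv_m+A^{(m)}_+$ at $\alpha$.

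First, $A_-^{(m)}(b^c)=\sum_i\bigl([b_{n+1-i}]_++[b_{n+1-i}-m]_+\bigr)=A^{(m)}_+(b)$, immediately from the substitution. Second, we need $\dinv_m(b^c)=\dinv_m(b)$. Here $b^c_i-b^c_j=b_{n+1-j}-b_{n+1-i}$, and the reversal $i\mapsto n+1-i$ preserves the pair ordering $i<j$ after swapping roles. Thus the pair $(i,j)$ in $b^c$ corresponds to the pair $(n+1-j,n+1-i)$ in $b$ with the same difference, and $\dinv_m$ is preserved term by term. This is the main check, and it is short. Applying the same argument to $\alpha^c$ (an involution) gives the other exchange.

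For (iii), setting $q=t=1$ counts paths, which gives $\binom{(m+1)n}{n}$. For the principal specialization, $q^{E_q}t^{E_t}$ at $t=q^{-1}$ is $q^{E_q-E_t}=q^{2|\alpha|-D}$. Hence
\[
q^D\widehat P_{n,m}(q,q^{-1})=\sum_{\alpha\subseteq(mn)^n}q^{2|\alpha|}=\qbinom{(m+1)n}{n}_{q^2}
\]
by the standard generating function for partitions in a box. The only subtle step is the term-by-term identification of $\dinv_m$ under reversal in (ii). No obstacle is expected there, because $s_m$ depends only on the difference $b_i-b_j$ for $i<j$, and the reversal preserves both the order of the pair and the difference.
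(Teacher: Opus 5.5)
Your proof is correct and follows the paper's argument: for (ii) you use the same height computation $b^c_i=m-b_{n+1-i}$ and the same pair bijection $(i,j)\mapsto(n+1-j,n+1-i)$, and for (iii) the same specialization $q^{2|\alpha|-D}$. The one difference is in the parity part of (i): you read $E_q+E_t\equiv D=mn^2\equiv mn$ off the partition form \eqref{eq:Phat-part}, whereas the paper uses $|b|+|m-b|\equiv m\pmod 2$ on the path form, and both arguments are valid.
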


\begin{proof}
(i) The three exponents in \eqref{eq:Phat-path} are nonnegative.  The total degree
of the term of $\pi$ is
$2\dinv_m+\sum_i\bigl(|b_i|+|m-b_i|\bigr)\equiv\sum_im\pmod2$, since
$[m-b]_++[-b]_++[b]_++[b-m]_+=|b|+|m-b|$ and $|b|+|m-b|\equiv m\pmod 2$.

(ii) Under complementation $b^c_i=mi-\alpha^c_{n+1-i}=mi-mn+\alpha_i=m-b_{n+1-i}$.
The bijection $(i,j)\mapsto(n+1-j,n+1-i)$ of the pairs $i<j$ satisfies
$b^c_i-b^c_j=b_{n+1-j}-b_{n+1-i}$, thus $\dinv_m$ is preserved, and
$[m-b^c_i]_++[-b^c_i]_+=[b_{n+1-i}]_++[b_{n+1-i}-m]_+$, thus
$A^{(m)}_-(\pi^c)=A^{(m)}_+(\pi)$ and $A^{(m)}_+(\pi^c)=A^{(m)}_-(\pi)$.

(iii) At $(1,1)$ every path has weight $1$.  At $(q,q^{-1})$ the term of $\pi$ is
$q^{A^{(m)}_--A^{(m)}_+}=q^{2|\alpha|-D}$ by the proof of \cref{thm:hookpath}, and
$\sum_{\alpha\subseteq(mn)^n}q^{2|\alpha|}=\qbinom{(m+1)n}{n}_{q^2}$.
\end{proof}


\subsection{The conjecture}\label{sec:conj-statement}

Let us first write the parity projection of \eqref{eq:F-def} as a fixed-point sum
of its own.  For $\lambda\in\Bal(2n)$ let
\begin{equation*}\label{eq:B01}
 B^{(i)}_\lambda=\sum_{\substack{(a,b)\in\lambda\\ a+b\equiv i\ (2)}}q^at^b\quad(i=0,1),
 \qquad
 u_{n,m}=\sum_{\lambda\in\Bal(2n)}\frac{\ell_\lambda^{\,m}\Pi^{(0)}_\lambda B^{(0)}_\lambda}{D_\lambda},
 \qquad
 v_{n,m}=\sum_{\lambda\in\Bal(2n)}\frac{\ell_\lambda^{\,m}\Pi^{(0)}_\lambda B^{(1)}_\lambda}{D_\lambda},
\end{equation*}
hence $B_\lambda=B^{(0)}_\lambda+B^{(1)}_\lambda$ and
$F_{n,m}=(1-q)(1-t)(u_{n,m}+v_{n,m})$.

\begin{lemma}\label{lem:parity-split}
For all $n,m\ge1$,
\begin{gather*}
 P_{n,m}=(1+qt)\,u_{n,m}-(q+t)\,v_{n,m},\qquad
 N_{n,m}=(q+t)\,u_{n,m}-(1+qt)\,v_{n,m},\\
 P_{n,m}+N_{n,m}=(1+q)(1+t)(u_{n,m}-v_{n,m}).
\end{gather*}
\end{lemma}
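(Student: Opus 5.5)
The plan is to compute $F_{n,m}(-q,-t)$ directly from the fixed-point formula, one summand at a time, and then substitute into the definition of $P_{n,m}$. The symmetry of $(q,t)\mapsto(-q,-t)$ on each piece of the summand, for $\lambda\in\Bal(2n)$, will do all the work.

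\emph{Step 1: parities of the pieces.} A box $(a,b)$ has weight $q^at^b$, which changes by the sign $(-1)^{a+b}$ under $(q,t)\mapsto(-q,-t)$. This gives the following.
\begin{itemize}
\item $B^{(0)}_\lambda$ is invariant and $B^{(1)}_\lambda$ changes sign.
\item $\Pi^{(0)}_\lambda$ involves only even boxes, so it is invariant.
\item $D_\lambda$ uses the hook weights $q^{a+1}t^{-l}$ and $q^{-a}t^{l+1}$ with $a+l$ odd, each of total degree $a-l+1$ or $l-a+1$, which is even. So $D_\lambda$ is invariant.
\item $\ell_\lambda$ is a product over the $n$ odd boxes, each contributing a sign, so $\ell_\lambda(-q,-t)=(-1)^n\ell_\lambda$, as already noted in the proof of \cref{lem:localization}.
\end{itemize}
Summing over $\lambda$, we get $u_{n,m}(-q,-t)=(-1)^{nm}u_{n,m}$ and $v_{n,m}(-q,-t)=-(-1)^{nm}v_{n,m}$.

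\emph{Step 2: substitution.} Since $(1-q)(1-t)\mapsto(1+q)(1+t)$, we have
\[
(-1)^{nm}F_{n,m}(-q,-t)=(1+q)(1+t)(u_{n,m}-v_{n,m}).
\]
Then
\[
2P_{n,m}=(1-q)(1-t)(u+v)+(1+q)(1+t)(u-v)=2(1+qt)u-2(q+t)v,
\]
using $(1-q)(1-t)+(1+q)(1+t)=2(1+qt)$ and $(1+q)(1+t)-(1-q)(1-t)=2(q+t)$. This is the first identity.

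\emph{Step 3: the remaining identities.} From $N_{n,m}=P_{n,m}-F_{n,m}$ and $F_{n,m}=(1-q)(1-t)(u+v)$, we get
\[
N_{n,m}=\bigl(1+qt-(1-q)(1-t)\bigr)u-\bigl(q+t+(1-q)(1-t)\bigr)v=(q+t)u-(1+qt)v.
\]
Adding the formulas for $P_{n,m}$ and $N_{n,m}$ gives $(1+q)(1+t)(u-v)$.

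The only point that needs care is the invariance of $D_\lambda$, which relies on the parity restriction $a+l$ odd. Everything else is bookkeeping.
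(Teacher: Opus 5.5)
Your proof is correct and is essentially the paper's own argument. Both track the behaviour of each factor under $(q,t)\mapsto(-q,-t)$: $\Pi^{(0)}_\lambda$, $D_\lambda$ and $B^{(0)}_\lambda$ are invariant, while $\ell_\lambda^{\,m}$ picks up $(-1)^{nm}$ and $B^{(1)}_\lambda$ changes sign. From this you get the parities of $u_{n,m}$ and $v_{n,m}$, substitute into the definition of $P_{n,m}$ using $(1-q)(1-t)\pm(1+q)(1+t)$, and obtain the other two identities from $N_{n,m}=P_{n,m}-F_{n,m}$.
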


\begin{proof}
Let $\sigma$ be the substitution $(q,t)\mapsto(-q,-t)$, hence
$\sigma(q^at^b)=(-1)^{a+b}q^at^b$.  Then $\sigma\Pi^{(0)}_\lambda=\Pi^{(0)}_\lambda$,
since every monomial of $\Pi^{(0)}_\lambda$ has even content, and
$\sigma D_\lambda=D_\lambda$, since for $a_\lambda(s)+l_\lambda(s)$ odd the
exponent sums $a+1-l$ and $-a+l+1$ of the two factors of $D_\lambda$ are even.
Moreover $\sigma\ell_\lambda^{\,m}=(-1)^{nm}\ell_\lambda^{\,m}$, because
$\ell_\lambda$ is a product of $n$ monomials of odd content, and
$\sigma B^{(0)}_\lambda=B^{(0)}_\lambda$, $\sigma B^{(1)}_\lambda=-B^{(1)}_\lambda$.
Hence $\sigma u_{n,m}=(-1)^{nm}u_{n,m}$, $\sigma v_{n,m}=(-1)^{nm+1}v_{n,m}$ and
$\sigma F_{n,m}=(-1)^{nm}(1+q)(1+t)(u_{n,m}-v_{n,m})$.  Substituting into
\eqref{eq:F-def} and using $(1-q)(1-t)+(1+q)(1+t)=2(1+qt)$ and
$(1-q)(1-t)-(1+q)(1+t)=-2(q+t)$ we obtain the formula for $P_{n,m}$.  The other
two follow from $N_{n,m}=P_{n,m}-F_{n,m}$.
\end{proof}

\begin{conjecture}[Type-$B$ shuffle conjecture at the Catalan level]\label{conj:shuffleB}
For all $n,m\ge1$ we have $P_{n,m}=\widehat P_{n,m}$, that is
\begin{equation}\label{eq:conj-B}
 \sum_{\lambda\in\Bal(2n)}
 \frac{\ell_\lambda^{\,m}\,\Pi^{(0)}_\lambda\bigl((1+qt)B^{(0)}_\lambda-(q+t)B^{(1)}_\lambda\bigr)}{D_\lambda}
 =\sum_{\pi:(0,0)\to(mn,n)}(qt)^{\dinv_m(\pi)}\,q^{A^{(m)}_-(\pi)}\,t^{A^{(m)}_+(\pi)} .
\end{equation}
\end{conjecture}

By \cref{lem:parity-split} the left side of \eqref{eq:conj-B} is $P_{n,m}$, and by
\cref{thm:hookpath} the right side is also
$\sum_{\alpha\subseteq(mn)^n}q^{H_m(\alpha)+c_{m,n}(\alpha)}t^{D-2|\alpha|+H_m(\alpha)+c_{m,n}(\alpha)}$.
Both sides of \eqref{eq:conj-B} are symmetric in $q$ and $t$, both have
$\binom{(m+1)n}{n}$ terms and both specialize at $t=q^{-1}$ to
$q^{-mn^2}\qbinom{(m+1)n}{n}_{q^2}$, by Theorem~C and \cref{cor:Phat-props}.  For
$m\gg0$ the conjecture is a statement about the $q,t$-Fuss--Catalan number of
Theorem~B, and for every $m$ it is a statement about the Euler characteristic
$\chi^T(\cZ_n^{\mathrm{symb}},\cO(m))$ of \cref{thm:character}.

The conjecture was tested numerically for \(m=1, n\le 13\), \(m=2, n\le 8\), \(m=3, n\le 7\). Theorem below offers another piece of evidence supporting the conjecture.

\begin{theorem}\label{thm:evidence}
\Cref{conj:shuffleB} holds in the following cases.
\begin{enumerate}[label=\textup{(\arabic*)}]
\item $n=1$ and every $m$, where $P_{1,m}=\widehat P_{1,m}=[m+1]_{q,t}$ and
$N_{1,m}=qt\,[m]_{q,t}$.
\item $n=2$ and every $m$, where
$P_{2,m}=\widehat P_{2,m}=\sum_{j=0}^m(qt)^j[4m-4j+1]_{q,t}$
\textup(\cref{prop:rank-two}\textup).
\item $n\le4$ and $m\le3$, and $(n,m)\in\{(5,1),(5,2)\}$, as an identity of polynomials,
by an exact computation in the sense of \cref{rem:verification}.
\end{enumerate}
Moreover both sides of \eqref{eq:conj-B} agree at the four points
$(q,t)=(2,3),(2,5),(3,5),(\frac73,-\frac52)$ for $(n,m)\in\{(6,1),(4,4)\}$.
\end{theorem}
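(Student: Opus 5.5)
Case (1), $n=1$, is handled by direct evaluation of both sides. $\Bal(2)$ consists of $\lambda=(2)$ and $\lambda=(1,1)$. In each case $\Pi^{(0)}_\lambda=1$, $B^{(0)}_\lambda=1$, and $B^{(1)}_\lambda$ equals $q$ or $t$. The weight $\ell_\lambda$ equals $q$ or $t$, and $D_\lambda$ is the product of the two tangent weights of $T^*\PP^1$, namely $(1-q^2)(1-tq^{-1})$ and its $q\leftrightarrow t$ mirror. Using \cref{lem:parity-split}, the left side of \eqref{eq:conj-B} is then
\[
\frac{q^m(1+qt-(q+t)q)}{(1-q^2)(1-t/q)}+\frac{t^m(1+qt-(q+t)t)}{(1-t^2)(1-q/t)} .
\]
The numerators reduce to $1-q^2$ and $1-t^2$, and a single divided difference gives $[m+1]_{q,t}$. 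The same computation applied to $N_{1,m}$ yields $qt[m]_{q,t}$. On the path side there are $m+1$ paths, and each has a single height $b_1=m-e_1\in[0,m]$. Since $\dinv_m=0$, the weight is $q^{m-b_1}t^{b_1}$, and summing gives $[m+1]_{q,t}$.

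Case (2), $n=2$, is the main content; I would state it as \cref{prop:rank-two} and prove it by computing both sides in closed form. $\Bal(4)$ has five members: the hooks $(4),(3,1),(2,1,1),(1^4)$ and the square $(2,2)$. For each one I would record $\ell_\lambda$, $B^{(0)}_\lambda$, $B^{(1)}_\lambda$, $\Pi^{(0)}_\lambda$ and the four tangent weights from \cref{lem:localization}. The left side then becomes a sum of five terms, each of the form $c_\lambda(q,t)\,\ell_\lambda^m$ with $m$-independent rational coefficients $c_\lambda$.

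For the right side I would sum $\sum_{j=0}^m(qt)^j[4m-4j+1]_{q,t}$ as a geometric series in both $(qt)$ and the monomial ratio $q/t$. This expresses it as a combination of $q^{4m}$, $t^{4m}$, $(qt)^{2m}$ and the corresponding mixed monomials with $m$-independent coefficients. The monomials should match the five weights $\ell_\lambda^m$; for example $\ell_{(4)}=q\cdot q^3=q^4$ and $\ell_{(2,2)}=qt$. Comparing coefficients of each exponential in $m$ then reduces the identity to a few rational-function identities, which can be checked by hand. A short sanity check is that the formula reproduces $P_{2,1}=[5]+qt$ from \cref{ex:lowrank}.

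For the path side at $n=2$, I would sum over partitions $\alpha\subseteq(2m)^2$ using the closed form \eqref{eq:Phat-part}. With $\alpha=(\alpha_1,\alpha_2)$, the statistic $H_m$ is explicit: it counts row-2 boxes with $0\le a\le m$, together with row-1 boxes above row-2 columns satisfying $0\le a-m\le m$ and row-1 boxes beyond them. I would split into the ranges of $\alpha_1-\alpha_2$ relative to $m$, sum each range, and group the terms by the value $j$ of the $qt$-power to recover the string decomposition.

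I expect the main obstacle to be bookkeeping: matching the five fixed-point exponentials against the path sum, especially the degenerate weight coincidences $\ell_{(3,1)}$ versus $\ell_{(2,1,1)}$ and $q^4$ versus $(qt)^2$. In those coincidences, poles of individual terms cancel only in pairs, so I would clear denominators by $D_\lambda$ before comparing.

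Case (3) and the final spot checks are not proofs in the conceptual sense. For them I would simply run the exact rational arithmetic described in \cref{rem:verification} on both sides of \eqref{eq:conj-B}, expanded as polynomials, for the listed $(n,m)$. For $(6,1)$ and $(4,4)$ I would evaluate both sides at the stated rational points.
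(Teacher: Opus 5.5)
Your proposal is correct. For (1), (3) and the rational-point checks it is what the paper does. In (1) you apply \cref{lem:parity-split} directly to $C_\lambda=(1+qt)B^{(0)}_\lambda-(q+t)B^{(1)}_\lambda$, whereas the paper computes $F_{1,m}=[m+1]_{q,t}-qt[m]_{q,t}$ and then projects; the two are equivalent.

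In rank two, your localization side (matching the coefficients of the exponentials in $m$) is the same partial-fraction computation as the paper's generating function \eqref{eq:rank-two-gf}. The paper's packaging has two advantages: the parity projection becomes ``keep the even part of the numerator'', and $N_{2,m}$ and the recursion $P_{2,m}=[4m+1]_{q,t}+qt\,P_{2,m-1}$ come out at once. Correct two slips before you carry it out:
\begin{enumerate}
\item The exponentials are $q^{4m}$, $t^{4m}$ and $(qt)^m$, not $(qt)^{2m}$.
\item There is no coincidence between $q^4$ and $(qt)^2$. The only coincidence is $\ell_{(3,1)}=\ell_{(2,2)}=\ell_{(2,1,1)}=qt$. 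The poles at $q=-t$ cancel between $(3,1)$ and $(2,1,1)$, but cancelling the double pole at $q=t$ also needs the $(2,2)$ term.
\end{enumerate}
For reference, both sides have coefficient $q^4/((q-t)(q^3-t))$ on $q^{4m}$ and $qt(1+q^2+qt+t^2)/((q^3-t)(t^3-q))$ on $(qt)^m$.

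On the path side you genuinely diverge from the paper. The paper works with the heights $(b_1,b_2)$ and shows that $\pi\mapsto(\dinv_m+A^{(m)}_-,\,\dinv_m+A^{(m)}_+)$ is a bijection onto the set $S_m$ in the proof of \cref{prop:rank-two}, using three families of paths. This exploits multiplicity-freeness and makes the strings $(qt)^j[4m-4j+1]_{q,t}$ visible. Your summation over $\alpha\subseteq(2m)^2$ via \eqref{eq:Phat-part} is more mechanical but works, provided you split by $\alpha_2$ versus $m$ and not only by $\alpha_1-\alpha_2$ versus $m$. Both the row-two count $\min(\alpha_2,m+1)$ and the boundary term $c_{m,2}(\alpha)=[2\alpha_2-2m-1]_+$ change at $\alpha_2=m$. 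With $u=\alpha_2$ and $v=\alpha_1-\alpha_2$, the regions are:
\begin{itemize}
\item $u,v\le m$, to be split again by the sign of $u+v-m$;
\item $v\ge m+1$;
\item $u\ge m+1$, which is the paper's family $b_1<0$.
\end{itemize}
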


\begin{proof}
(1) $\Bal(2)=\{(2),(1,1)\}$.  For $\lambda=(2)$ we have $\ell_\lambda=q$,
$\Pi^{(0)}_\lambda=1$, $B_\lambda=1+q$ and $D_\lambda=(1-q^2)(1-q^{-1}t)$, the box
$(0,0)$ being the only one with odd hook, hence its term in \eqref{eq:F-def} is
$q^{m+1}(1-t)/(q-t)$, and the term of $(1,1)$ is the mirror image
$t^{m+1}(1-q)/(t-q)$.  Therefore
\[
 F_{1,m}=\frac{q^{m+1}-t^{m+1}}{q-t}-qt\,\frac{q^m-t^m}{q-t}=[m+1]_{q,t}-qt\,[m]_{q,t},
\]
and the parity projection keeps the first summand, whose total degree is $m$.  On
the path side a path of the $1\times m$ rectangle is $e_1\in\{0,\dots,m\}$, with
$b_1=m-e_1\in[0,m]$, $\dinv_m=0$, $A^{(m)}_-=e_1$ and $A^{(m)}_+=m-e_1$.

(2) is \cref{prop:rank-two} below.

(3) The two sides were computed with the definitions \eqref{eq:F-def} and
\eqref{eq:Phat-path} implemented verbatim in exact rational arithmetic, the left
side as a rational function which simplifies to a Laurent polynomial, and compared
coefficient by coefficient; the path form and the partition form of
$\widehat P_{n,m}$ were computed separately and compared as well, for $n\le5$ and
$m\le4$.  The four-point check is the same computation with $q,t$ specialized
before summation.
\end{proof}

We record the polynomials of \cref{thm:evidence}(3) for $n=3$ and $n=4$, in the
notation of \cref{ex:lowrank} and with $w=qt$:
\begin{align*}
 P_{3,2}&=[19]+w([15]+[13])+w^2([11]+[9]+[7])+w^3[7]+w^4[3],\\
 N_{3,2}&=w[18]+w^2([14]+[12])+w^3([10]+[8])+w^4[6]+w^5[4],\\
 P_{3,3}&=[28]+w([24]+[22])+w^2([20]+[18]+[16])+w^3([16]+[14]+[12]+[10])\\
 &\quad+w^4([12]+[10])+w^5([8]+[6])+w^6[4],\\
 P_{4,2}&=[33]+w([29]+[27]+[25])+w^2([25]+[23]+2[21]+[19]+[17])\\
 &\quad+w^3([21]+[19]+2[17]+[15]+[13])+w^4([17]+[15]+2[13]+[11]+[9])\\
 &\quad+w^5([13]+[11]+2[9])+w^6([9]+[7]+2[5])+w^7[5]+w^8+w^9 .
\end{align*}
All of them are positive combinations of the strings $w^a[k]_{q,t}$, as are the
polynomials of \cref{ex:lowrank}, and among the polynomials $P_{n,m}$ with $n\le4$
and $m\le3$ a coefficient $2$ occurs first for $n=4$, $m=2$.


\subsection{Rank two}\label{sec:rank-two}

In rank two both sides of \eqref{eq:conj-B} can be computed in closed form for
every $m$.  The main idea for the localization side is to sum the geometric series
in $\ell_\lambda^{\,m}$ over the five balanced partitions of $4$, which produces a
rational generating function in an auxiliary variable $z$, and the main idea for the
path side is that $\widehat P_{2,m}$ is multiplicity free, so that it suffices to
identify the set of exponents.

\begin{proposition}\label{prop:rank-two}
For all $m\ge1$,
\[
 P_{2,m}=\widehat P_{2,m}=\sum_{j=0}^{m}(qt)^j[4m-4j+1]_{q,t},\qquad
 N_{2,m}=qt\,[4]_{q,t}\sum_{j=0}^{m-1}(qt)^j\,h_{m-1-j}(q^4,t^4),
\]
where $h_k(q^4,t^4)=\sum_{i=0}^kq^{4i}t^{4k-4i}$.  Equivalently
$P_{2,m}=[4m+1]_{q,t}+qt\,P_{2,m-1}$ with $P_{2,0}=1$.
\end{proposition}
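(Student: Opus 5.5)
Both sides of the identity can be computed in closed form for every $m$ and then compared.

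\emph{Localization side.} We start from \cref{lem:parity-split}, $P_{2,m}=(1+qt)u_{2,m}-(q+t)v_{2,m}$, and note that $\Bal(4)=\{(4),(3,1),(2,2),(2,1,1),(1,1,1,1)\}$. For each $\lambda$ we tabulate $\ell_\lambda$, $B^{(0)}_\lambda$, $B^{(1)}_\lambda$, $\Pi^{(0)}_\lambda$ and $D_\lambda$ from \eqref{eq:statistics}; the odd-hook boxes are found directly:
\begin{itemize}
\item $(4)$: $\ell=q^4$;
\item $(3,1)$: $\ell=q^2t$;
\item $(2,2)$: $\ell=qt$;
\item $(2,1,1)$: $\ell=qt^2$;
\item $(1^4)$: $\ell=t^4$.
\end{itemize}
We then form the generating function $\sum_{m\ge0}P_{2,m}z^m$. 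In it each $\lambda$ contributes a rational function with the single pole factor $1/(1-\ell_\lambda z)$, so the whole sum is a rational function of $z$ with poles at $z=q^{-4},(q^2t)^{-1},(qt)^{-1},(qt^2)^{-1},t^{-4}$. The claimed answer $\sum_j (qt)^j[4m-4j+1]_{q,t}$ has generating function
\[
 \frac{1}{1-qtz}\sum_{m}[4m+1]_{q,t}z^m
 =\frac{1}{1-qtz}\cdot\frac{q(1-t^4z)-t(1-q^4z)}{(q-t)(1-q^4z)(1-t^4z)}.
\]
Its poles are only at $q^{-4}$, $t^{-4}$ and $(qt)^{-1}$, so the main check is that the residues at $(q^2t)^{-1}$ and $(qt^2)^{-1}$ cancel in the projected combination $(1+qt)u-(q+t)v$. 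This cancellation reflects the fact that these two hooks pair off in the parity projection, as in Step~4 of \cref{thm:principal}. The remaining three residues are then matched directly. This rational-function bookkeeping is the main obstacle: the cancellation of the $(3,1)$ and $(2,1,1)$ poles is delicate, and I would verify it by clearing denominators and comparing numerators as polynomials in $z$. Once $P_{2,m}$ is known, $N_{2,m}$ follows from $N=P-F$, using the unprojected combination in \cref{lem:parity-split}; its generating function should simplify to
\[
 \frac{qt[4]_{q,t}}{(1-qtz)(1-q^4z)(1-t^4z)}\cdot\frac{(1-q^4z)(1-t^4z)}{1-(q^4+t^4)z+q^4t^4z^2}\,z
 =qt[4]_{q,t}\,\frac{z}{(1-qtz)(1-q^4z)(1-t^4z)}\cdot(\text{simplified}),
\]
whose coefficients are the stated $h_{m-1-j}(q^4,t^4)$ sums.

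\emph{Path side.} We use the partition form \eqref{eq:Phat-part} with $n=2$. Writing $\alpha=(\alpha_1,\alpha_2)$ with $2m\ge\alpha_1\ge\alpha_2\ge0$, we compute $H_m(\alpha)$ and $c_{2,m}$ explicitly; equivalently, in path form we have heights $b_1=m-e_1$, $b_2=2m-e_2$ and $\dinv_m=s_m(b_1-b_2)$. We then prove the recursion $\widehat P_{2,m}=[4m+1]_{q,t}+qt\,\widehat P_{2,m-1}$ via a bijection. The $4m+1$ paths with $s_m(b_1-b_2)=0$ and one extremal shape form the string $[4m+1]_{q,t}$: each such path has $q$-exponent $i$ and $t$-exponent $4m-i$, with each $i$ occurring exactly once. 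The remaining paths map to paths in the $2\times 2(m-1)$ rectangle, with the weight shifted by one factor of $qt$.

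\emph{Organization.} Since $\widehat P_{2,m}$ is multiplicity free, it suffices to identify its exponent set. So the cleanest route is to show that the exponent pairs $(H+c,\,D-2|\alpha|+H+c)$ are distinct and exhaust the set $\{(j+i,\,j+4m-4j-i)\}$. This follows from a case split on whether $b_1-b_2\le 0$ or $>0$.
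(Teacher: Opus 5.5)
Your strategy is the paper's. On the localization side you sum the geometric series in $\ell_\lambda^{\,m}$ over the five partitions of $\Bal(4)$, project to the right parity, and compare with the generating function of $\sum_j(qt)^j[4m-4j+1]_{q,t}$. On the path side you use multiplicity-freeness to reduce to identifying the exponent set. But the localization step rests on wrong data. In $(3,1)$, $(2,2)$ and $(2,1,1)$ the odd-content boxes are $(1,0)$ and $(0,1)$, so $\ell_{(3,1)}=\ell_{(2,2)}=\ell_{(2,1,1)}=qt$. The values $q^2t$ and $qt^2$ cannot occur: $\ell_\lambda$ is a product of $n=2$ monomials of odd content, so it has even total degree.

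Hence the generating function has only the poles $z=q^{-4},t^{-4},(qt)^{-1}$ from the start. The cancellation you single out as the main obstacle does not exist, and with your data it would fail. The residue at $(q^2t)^{-1}$ would be the single term $\Pi^{(0)}_{(3,1)}\bigl((1+qt)B^{(0)}_{(3,1)}-(q+t)B^{(1)}_{(3,1)}\bigr)/D_{(3,1)}=(1-q^2)(1-qt+q^3t-t^2)/D_{(3,1)}\neq0$, with nothing to cancel it. Step~4 of \cref{thm:principal} is also irrelevant here: it pairs hooks after setting $t=q^{-1}$, not poles in $z$.

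The real computation is adding the three $qt$-terms, whose denominators contain $(q-t)^2$, $q+t$, $q^3-t$ and $q-t^3$, to the terms of $(4)$ and $(1^4)$. This gives $\sum_{m\ge0}F_{2,m}z^m=\bigl(1+qt[3]_{q,t}z-qt[4]_{q,t}z\bigr)/\bigl((1-q^4z)(1-t^4z)(1-qtz)\bigr)$. The denominator is invariant under $(q,t)\mapsto(-q,-t)$, so the projection keeps the even part $1+qt[3]_{q,t}z$ of the numerator. The odd part gives $N_{2,m}$ directly; your displayed expression for it contains a factor identically equal to $1$.

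On the path side, your recursion starts from a false claim. The paths with $\dinv_m=s_m(b_1-b_2)=0$ are exactly the $2m+1$ paths with $e_1=e_2$ (so $b_2-b_1=m$), and they do not form the string $[4m+1]_{q,t}$. Already at $m=1$, the monomials $qt^3,q^2t^2,q^3t$ of $[5]_{q,t}$ come from $e=(0,1),(0,2),(1,2)$, each with $\dinv_1=1$.

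Your other route, identifying the exponent set, is the paper's, but splitting by the sign of $b_1-b_2$ is the wrong case split. $A^{(m)}_\pm$ change formula when $b_1$ or $b_2$ crosses $0$ or $m$. The split that works is $b_1<0$, then $b_2>m$, then $0\le b_1,b_2\le m$.

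In the first two cases $-m\le b_1-b_2\le-1$, so $\dinv_m=m+b_1-b_2$ and both exponents are linear in $(b_1,b_2)$. In the third case, write $a=\dinv_m+A^{(m)}_-$ and $b=\dinv_m+A^{(m)}_+$; then $a+b=2m+2s_m(d)$ with $d=b_1-b_2$, and $a-b=2m-2(b_1+b_2)$. Injectivity comes from the two solutions $d=s-m$ and $d=m+1-s$ of $s_m(d)=s$. They give $a-b\in\{2s,2s-4,\dots,-2s\}$ and $a-b\in\{2s-2,\dots,2-2s\}$ respectively, which interleave to hit each even value in $[-2s,2s]$ exactly once. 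The three images are then the parts of the target set with $b\le m-1$, with $a\le m-1$, and with $\min(a,b)\ge m$. This counting is the actual content of the path side, and your proposal does not supply it.
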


\begin{proof}
\emph{The localization side.}  The five balanced partitions of $4$ have
$\ell_{(4)}=q^4$, $\ell_{(1^4)}=t^4$ and $\ell_\lambda=qt$ for
$\lambda=(3,1),(2,2),(2,1,1)$, and their terms $(1-q)(1-t)B_\lambda\Pi^{(0)}_\lambda/D_\lambda$
in \eqref{eq:F-def} are
\begin{gather*}
 \frac{q^4(1-t)}{(q-t)(q^3-t)},\qquad
 -\frac{q^3t(1-q)(1-t)(1+q+q^2+t)}{(q-t)^2(q+t)(q^3-t)},\qquad
 -\frac{qt(1-qt)}{(q-t)^2},\\
 \frac{qt^3(1-q)(1-t)(1+t+t^2+q)}{(q-t)^2(q+t)(q-t^3)},\qquad
 \frac{t^4(1-q)}{(q-t)(q-t^3)} ,
\end{gather*}
in the order $(4),(3,1),(2,2),(2,1,1),(1^4)$.
Multiplying the term of $\lambda$ by $\ell_\lambda^{\,m}z^m$, summing over $m\ge0$
and adding the five rational functions we obtain
\begin{equation}\label{eq:rank-two-gf}
 \sum_{m\ge0}F_{2,m}\,z^m=\frac{1+qt\,[3]_{q,t}\,z-qt\,[4]_{q,t}\,z}{(1-q^4z)(1-t^4z)(1-qtz)} .
\end{equation}
The denominator is invariant under $(q,t)\mapsto(-q,-t)$, and in the numerator
$qt[3]_{q,t}$ has total degree $4$ while $qt[4]_{q,t}$ has total degree $5$.
Since $(-1)^{2m}=1$, the parity projection \eqref{eq:F-def} keeps the even part of
the numerator, hence
\[
 \sum_{m\ge0}P_{2,m}z^m=\frac{1+qt[3]_{q,t}z}{(1-q^4z)(1-t^4z)(1-qtz)},\qquad
 \sum_{m\ge0}N_{2,m}z^m=\frac{qt[4]_{q,t}z}{(1-q^4z)(1-t^4z)(1-qtz)} .
\]
From $[4k+1]_{q,t}=(q^{4k+1}-t^{4k+1})/(q-t)$ we get
\[
 \sum_{k\ge0}[4k+1]_{q,t}z^k=\frac1{q-t}\Bigl(\frac{q}{1-q^4z}-\frac{t}{1-t^4z}\Bigr)
 =\frac{1+qt[3]_{q,t}z}{(1-q^4z)(1-t^4z)},
\]
and the expansion of $1/(1-qtz)$ gives the formula for $P_{2,m}$.  The formula for
$N_{2,m}$ follows from $1/((1-q^4z)(1-t^4z))=\sum_kh_k(q^4,t^4)z^k$.

\emph{The path side.}  A path of the $2\times2m$ rectangle is a pair
$(b_1,b_2)$ with $-m\le b_1\le m$ and $0\le b_2\le m+b_1$, by \eqref{eq:heights}
and $0\le e_1\le e_2\le2m$.  Let us write $a(\pi)=\dinv_m+A^{(m)}_-$ and
$b(\pi)=\dinv_m+A^{(m)}_+$ for its two exponents, and let
\[
 S_m=\{(a,b)\in\Z_{\ge0}^2:\ a+b\le4m,\ a+b\equiv0\ (2),\ 3\min(a,b)+\max(a,b)\ge4m\}.
\]
We claim that $\pi\mapsto(a(\pi),b(\pi))$ is a bijection from the paths onto $S_m$.
Note that $s_m(d)=m+d$ for all $-m\le d\le0$.  There are three families of paths.

If $b_1<0$, then $b_2\le m-1$ and $-m\le b_1-b_2\le-1$, hence
$\dinv_m=m+b_1-b_2$, $A^{(m)}_-=2m-2b_1-b_2$, $A^{(m)}_+=b_2$, and
$(a,b)=(4m-b-2b_2,\,b)$ with $b:=m+b_1\in[0,m-1]$ and $b_2\in[0,b]$.  As $b_2$
varies, $a$ runs over $4m-b,4m-b-2,\dots,4m-3b$.  These are exactly the points of
$S_m$ with $b\le m-1$, since such a point has $a\ge b$ (otherwise
$3a+b\le4b<4m$), hence $\min(a,b)=b$ and the conditions read
$4m-3b\le a\le4m-b$ with $a\equiv b\pmod 2$.

If $b_1\ge0$ and $b_2>m$, then $b_1\ge1$, $-m\le b_1-b_2\le-1$, hence
$\dinv_m=m+b_1-b_2$, $A^{(m)}_-=m-b_1$, $A^{(m)}_+=b_1+2b_2-m$, and
$(a,b)=(2m-b_2,\,2b_1+b_2)$.  With $a=2m-b_2\in[0,m-1]$ and $b_1\in[m-a,m]$ the
exponent $b=2b_1+2m-a$ runs over $4m-3a,\dots,4m-a$ in steps of $2$.  These are
exactly the points of $S_m$ with $a\le m-1$, by the symmetry $a\leftrightarrow b$
of $S_m$.

If $0\le b_1\le m$ and $0\le b_2\le m$, then $A^{(m)}_-=2m-b_1-b_2$ and
$A^{(m)}_+=b_1+b_2$, hence $a+b=2m+2s_m(d)$ and $a-b=2m-2\sigma$ with
$d=b_1-b_2$ and $\sigma=b_1+b_2$.  For fixed $d$ the sum $\sigma$ runs over the
integers of the parity of $d$ in $[|d|,2m-|d|]$.  For $0\le s\le m$ the equation
$s_m(d)=s$ has the solutions $d=s-m$ and, when $s\ge1$, $d=m+1-s$.  The first
gives $a-b\in\{2s,2s-4,\dots,-2s\}$, the second $a-b\in\{2s-2,2s-6,\dots,2-2s\}$,
and together all $a-b\in\{-2s,-2s+2,\dots,2s\}$, each once.  Hence the third family
maps bijectively onto the points with $a+b=2m+2s$ and $a,b\ge m$, $0\le s\le m$,
which are exactly the points of $S_m$ with $\min(a,b)\ge m$.

The three families exhaust the paths, and the three sets of points exhaust $S_m$
and are disjoint, since a point with $a\le m-1$ and $b\le m-1$ has
$3\min(a,b)+\max(a,b)<4m$.  This proves the claim.  Finally $S_m$ is the disjoint
union over $0\le j\le m$ of the sets $\{a+b=4m-2j,\ \min(a,b)\ge j\}$, because for
$a+b=4m-2j$ the condition $3\min(a,b)+\max(a,b)\ge4m$ reads $\min(a,b)\ge j$, and
$\sum_{a+b=4m-2j,\ \min(a,b)\ge j}q^at^b=(qt)^j[4m-4j+1]_{q,t}$.
\end{proof}

\begin{remark}
By \cref{thm:n2-descent}(b) the germ of $\cZ_2^{\mathrm{symb}}$ at the wall point is
the cone over the rational normal quartic, that is $\C^2/\mu_4$.  The polynomial
$\sum_j(qt)^j[4m-4j+1]_{q,t}$ is the character of the monomials $x^ay^bz^c$ with
$a+b+4c=4m$ and weights $q^at^b(qt)^c$, that is of the degree-$4m$ part of the
graded ring $\C[x,y,z]$ with $\deg x=\deg y=1$ and $\deg z=4$, the section ring of
the weighted projective plane $\PP(1,1,4)$, which is the projective cone over the
rational normal quartic.  Thus \cref{prop:rank-two} identifies $P_{2,m}$ with the
character of $H^0(\PP(1,1,4),\cO(4m))$ for every $m\ge1$, in accordance with
\cref{thm:character}, which identifies $P_{2,m}$ with the character of
$H^0(\cZ_2^{\mathrm{symb}},\cO(m))$ for $m\gg0$.
\end{remark}

\subsection{The wreath Macdonald form}\label{sec:wreath-form}

In this subsection we rewrite the left side of \eqref{eq:conj-B} as a pairing of
two explicit two-colour symmetric functions, one of them obtained from a power sum by
the two-colour nabla operators.  We follow the survey of Orr and Shimozono
\cite{OS}, to which we refer for the definitions, and we use their results in the
identity chamber, which is the chamber of $Y_n$.

Let $\Lambda^{(2)}=\Lambda_{X_0}\otimes\Lambda_{X_1}$ be the ring of symmetric
functions in two alphabets over $K=\Q(q,t)$, with the tensor Schur functions
$s_{\mu^\bullet}=s_{\mu^{(0)}}[X_0]s_{\mu^{(1)}}[X_1]$ as an orthonormal basis for
the Hall pairing $\langle\,,\rangle$, hence
$\langle p_k[X_i],p_k[X_j]\rangle=k\delta_{ij}$.  For a $2\times2$ matrix $M$ over
$K$ and $f\in\Lambda^{(2)}$ we write $f[MX]$ for the image of $f$ under the
$K$-algebra endomorphism $p_k[X_i]\mapsto\sum_jM_{ij}(q^k,t^k)p_k[X_j]$, and for
$c\in K$ we write $p_k[cX_i]=c(q^k,t^k)p_k[X_i]$ as usual.  Let $\chi$ be the
permutation matrix of the colour swap, and let $\inv$ be the involution of
$\Lambda^{(2)}$ which inverts $q$ and $t$ in the coefficients.  Let us identify
$\Bal(2n)$ with the set of pairs of partitions of total size $n$ by the
$2$-quotient in the convention of \cite[\S3.1]{OS}, and let $\trianglerighteq$ be the
dominance order on $\Bal(2n)$.  The wreath Macdonald polynomial
$\wtH_\lambda\in\Lambda^{(2)}$, $\lambda\in\Bal(2n)$, is the unique element with
\begin{equation}\label{eq:wreath-def}
 \wtH_\lambda[(1-q\chi)X]\in\bigoplus_{\mu\trianglerighteq\lambda}Ks_{\mu^\bullet},\qquad
 \wtH_\lambda[(1-t^{-1}\chi)X]\in\bigoplus_{\mu\trianglelefteq\lambda}Ks_{\mu^\bullet},\qquad
 \langle\wtH_\lambda,h_n[X_0]\rangle=1,
\end{equation}
where $\mu^\bullet$ is the $2$-quotient of $\mu$ \cite[(3.7)--(3.9)]{OS}.  For
$\lambda\in\Bal(2n)$ put
\begin{equation}\label{eq:T01}
 T_0(\lambda)=\prod_{\substack{(a,b)\in\lambda\\ a+b\ \mathrm{even}}}q^at^b,\qquad
 T_1(\lambda)=\ell_\lambda,\qquad
 A=\frac1{(1-q^2)(1-t^2)}\begin{pmatrix}1+qt&q+t\\ q+t&1+qt\end{pmatrix},
\end{equation}
hence $A$ is the bigraded character of $\C[x,y]$ split by the $\Gamma$-parity,
$A^{-1}=(1-t\chi)(1-q\chi)$, and $\langle f,g\rangle_{q,t}:=\langle f,g[A^{-1}X]\rangle$
is a symmetric bilinear form, since $A$ is symmetric.

\begin{theorem}[{Bezrukavnikov--Finkelberg \cite{BF}, Orr--Shimozono \cite{OS}}]\label{thm:wreath-inputs}
Let $\lambda,\mu\in\Bal(2n)$.
\begin{enumerate}[label=\textup{(\alph*)}]
\item The $\wtH_\lambda$ exist, they form a basis of the degree-$n$ part of
$\Lambda^{(2)}$, and $\wtH_\lambda$ is the bigraded Frobenius character of the
fiber at $I_\lambda$ of the normalized Procesi bundle $\widetilde\sP_n$ of
\cref{thm:losev}, the trivial representation of $W$ corresponding to $h_n[X_0]$
\textup{\cite[\S3.3, Thm.~4.6 and (4.3)]{OS}}.
\item $\langle\wtH_\lambda,h_{n-1}[X_0]\,p_1[X_i]\rangle=B^{(i)}_\lambda$ for $i=0,1$
\textup{\cite[Prop.~4.9 and (3.9)]{OS}}.
\item $\langle\wtH_\lambda,e_n[X_i]\rangle=T_i(\lambda)$ for $i=0,1$
\textup{\cite[Thm.~4.14]{OS}}.
\item $\langle\wtH_\lambda,\inv\wtH_\mu\rangle_{q,t}=\delta_{\lambda\mu}D_\lambda$
\textup{\cite[Prop.~3.30, 3.31 and Thm.~3.32]{OS}}, with $D_\lambda$ as in
\eqref{eq:statistics}.
\end{enumerate}
\end{theorem}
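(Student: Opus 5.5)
This theorem is a compilation of results quoted from \cite{BF} and \cite{OS}, so the plan is to match conventions rather than to derive anything new. The check runs in the order (a), (b), (c), (d).

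For (a), I would first fix the dictionary between $\Bal(2n)$ and bipartitions of $n$ via the $2$-quotient in the convention of \cite[\S3.1]{OS}. Next I would check that our $Y_n$, with the cyclic chamber, is the identity chamber of \cite{OS}. Then existence, the basis property and uniqueness under \eqref{eq:wreath-def} are \cite[\S3.3]{OS}. The Frobenius-character statement is Bezrukavnikov--Finkelberg \cite{BF}, in the form \cite[Thm.~4.6, (4.3)]{OS}. To identify their Procesi bundle with the normalized $\widetilde\sP_n$ of \cref{thm:losev}, I would use two facts: uniqueness of normalized Procesi bundles in a chamber \cite{LosevProcesi}, and the normalization $e\widetilde\sP_n\cong\cO$. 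The latter matches $\langle\wtH_\lambda,h_n[X_0]\rangle=1$, because the $W$-invariant part of the fiber at $I_\lambda$ is one-dimensional in degree zero.

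For (b), I would use \eqref{eq:parabolic-upstairs}, $f\widetilde\sP_n\cong\cP_n$. Pairing with $h_{n-1}[X_0]\,p_1[X_i]$ extracts the $\rho_i$-isotypic part of the $W_{n-1}$-invariants, which is $\cR_i$. So the pairing equals the character of $\cR_i|_{I_\lambda}$, namely $B^{(i)}_\lambda$ by \cref{lem:localization}. This agrees with \cite[Prop.~4.9]{OS}.

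For (c), pairing with $e_n[X_i]$ extracts the fiber of a sign-type line bundle. For $i=1$ this is $\widetilde\sP_n\eps_-\cong\det\cR_1$ by \cref{thm:polarization}, whose fiber is $\ell_\lambda=T_1(\lambda)$. For $i=0$ I would use the analogous line bundle $\det\cR_0$, whose fiber is the even-box product $T_0(\lambda)$. Both statements are \cite[Thm.~4.14]{OS}.

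For (d), I would quote the norm formula \cite[Thm.~3.32]{OS} and check that its product of hook factors agrees with $D_\lambda$ in \eqref{eq:statistics}. In other words, the norm should equal the product $\prod_w(1-w)$ over the tangent weights of \cref{lem:localization}, up to the $\inv$ twist. This agrees with equivariant localization: $\langle\,,\rangle_{q,t}$ is the Euler pairing, and $\Ext^{>0}(\widetilde\sP,\widetilde\sP)=0$ together with $\End\widetilde\sP=H$ gives orthogonality.

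I expect the main obstacle to be the convention bookkeeping: the signs and inversions of $q,t$ in the plethysm $A^{-1}=(1-t\chi)(1-q\chi)$, and the choice between tautological and dual-tautological normalizations. I would settle these first on the rank-one case $n=1$ and on the $n=2$ tables of \cref{prop:rank-two}.
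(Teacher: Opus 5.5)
Your route is the paper's: the theorem is a compilation of \cite{BF,OS}, and the only work is the $r=2$ bookkeeping. That bookkeeping consists of the trivial $\mathrm{neg}$, the boxes of even hook length being those with $a_\lambda(s)+l_\lambda(s)$ odd, and the Pieri/normalization rewrite of the case $i=0$ of (b). However, the bridge you propose in (a) is false, and this changes the status of your derivations of (b) and (c).

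Normalized Procesi bundles on $Y_n$ are not unique. The dual $\widetilde\sP_n^\vee$ is again one: its $W$-invariants are $\cO^\vee=\cO$, $\End(\widetilde\sP_n^\vee)\cong H^{\mathrm{op}}\cong H$, and the $\Ext$-vanishing is unchanged. Its fibers also pair to $1$ with $h_n[X_0]$. But its $W_{n-1}$-invariants are $\cP_n^\vee\not\cong\cP_n$. This is why \cref{thm:losev} singles out $\widetilde\sP_n$ by the condition $\widetilde\sP_n^{W_{n-1}}\cong\cT^\theta$, which, as the paper says, fixes the choice between the tautological and dual-tautological normalizations. On fixed-point characters that condition is exactly (b), i.e.\ \cite[Prop.~4.9]{OS}; the dual bundle would give $B^{(i)}_\lambda(q^{-1},t^{-1})$. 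So you cannot both identify the bundles in (a) and then deduce (b) from \eqref{eq:parabolic-upstairs}. There are two consistent options. Either you take (b) from \cite{OS} and use it to make the identification. Or, as the paper does, you treat the identification with $\widetilde\sP_n$ as part of the quoted \cite[Thm.~4.6, (4.3)]{OS}. In the second case your isotypic computation is just the consistency check the paper records right after the theorem and again in \eqref{eq:three-F}.

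The same applies to (c). \Cref{thm:polarization} is an isomorphism of ordinary line bundles, an equality in $N^1(Y_n/S_n)$. It therefore fixes the fixed-point characters of $\widetilde\sP_n\eps_-$ only up to one global character of $T$. The identification of the $\mathrm{sgn}_{S_n}$-isotypic part with $\det\cR_0$ is also not automatic. It is the case $j=n$, $L=1\oplus U$ of \cref{lem:exterior}, which the paper proves later using (a). Hence (c) rests on \cite[Thm.~4.14]{OS}, as you eventually say.

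Your treatment of (d) agrees with the paper. No $\inv$ twist is needed there, since $D_\lambda$ is literally $\prod(1-w)$ over the tangent weights of \cref{lem:localization}. Finally, for checking conventions at $n=2$ the relevant data are the explicit $\wtH_\lambda$ of \cref{ex:wreath-n2}. \Cref{prop:rank-two} never involves the wreath polynomials.
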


In (b) we used the Pieri rule $h_{n-1}h_1=h_n+s_{(n-1,1)}$ and the normalization in
\eqref{eq:wreath-def} to rewrite the case $i=0$ of \cite[Prop.~4.9]{OS}.  In (d) the
permutation $\mathrm{neg}$ of \cite[\S3.6]{OS} is the identity for $r=2$, and the
product in \cite[Thm.~3.32]{OS} runs over the boxes with even hook length, that is
over the boxes with $a_\lambda(s)+l_\lambda(s)$ odd.  By (a) and (b), the
representation $1\oplus U\oplus V=\mathrm{Ind}_{W_{n-1}}^{W}\mathbf 1$ of
\cref{thm:losev}, where $U$ is the reflection representation of $S_n$ inflated to
$W$ and $V$ is the reflection representation of $W$, corresponds to
$h_{n-1}[X_0](p_1[X_0]+p_1[X_1])$, with $U$ corresponding to $s_{(n-1,1)}[X_0]$ and
$V$ to $h_{n-1}[X_0]p_1[X_1]$, in accordance with \eqref{eq:parabolic-upstairs} and
$[\cR_i]_\lambda=B^{(i)}_\lambda$.  Following \cite[\S4.5]{OS} we define the
two-colour nabla operators by
\begin{equation}\label{eq:nabla-def}
 \nabla_i\,\wtH_\lambda=T_i(\lambda)\,\wtH_\lambda\qquad(i=0,1),
\end{equation}
that is $\nabla_i$ multiplies $\wtH_\lambda$ by the product of the weights of the
$i$-th colour component of the cluster bundle at $I_\lambda$, and by (c) this
product is $\langle\wtH_\lambda,e_n[X_i]\rangle$.

The remaining input is the evaluation of $\wtH_\lambda$ at the colour-zero power
sum.  In type $A$ one has $\langle\wtH_\mu,s_{(n-k,1^k)}\rangle=e_k[B_\mu-1]$ for
all $k$, which is the coefficient extraction of the specialization
$\wtH_\mu[1-u]=\prod_{(a,b)\in\mu}(1-uq^at^b)$
\cite{GarsiaHaiman,HaimanCDM}, and the alternating sum over $k$ gives
$\langle\wtH_\mu,p_n\rangle=\Pi_\mu=\prod_{(a,b)\ne(0,0)}(1-q^at^b)$.  For $r\ge3$
colours the wreath analogue is proved by Romero and Wen by operator methods
\cite[Prop.~6.6, Cor.~6.7]{RomeroWen}, and these methods are not yet available for
two colours \cite[footnote~1]{RomeroWenTesler}.  We give a geometric proof, valid
for two colours, through an exterior-compatibility property of the normalized
Procesi bundle.

For a representation $L$ of $W$ let
\begin{equation}\label{eq:FL}
 \mathsf F(L):=\Hom_W\bigl(L,\widetilde\sP_n\bigr)=\bigl(L^*\otimes\widetilde\sP_n\bigr)^W ,
\end{equation}
a $T$-equivariant vector bundle on $Y_n$, the invariants of a finite group being a
direct summand.  Let $\C^n=1\oplus U$ be the permutation representation of $S_n$
inflated to $W$, with $U$ the reflection representation of $S_n$, and let $V$ be
the reflection representation of $W$, as after \cref{thm:wreath-inputs}.  By the
normalization $\widetilde\sP_n^W=\cO$, by \eqref{eq:parabolic-upstairs} and by the
decomposition $\mathrm{Ind}_{W_{n-1}}^W\mathbf 1=1\oplus U\oplus V$, in which the
residual sign change acts trivially on $1\oplus U$ and by $-1$ on $V$,
\begin{equation}\label{eq:three-F}
 \mathsf F(1)=\cO,\qquad \mathsf F(1\oplus U)=\cR_0,\qquad \mathsf F(V)=\cR_1,\qquad
 \mathsf F(U)\cong\cR_0/\cO\cong\cT_n ,
\end{equation}
where $\cT_n=\ker(\Tr_{\cR_0})$ is the trace-zero even part of \cref{sec:haiman}.
The fixed-point characters of these bundles at $I_\lambda$ are $1$,
$B^{(0)}_\lambda$, $B^{(1)}_\lambda$ and $B^{(0)}_\lambda-1$.

\begin{lemma}[Exterior compatibility]\label{lem:exterior}
For $L\in\{1\oplus U,\ U,\ V\}$ and every $j\ge0$ there is a $T$-equivariant
isomorphism
\begin{equation}\label{eq:exterior}
 \alpha_{j,L}:\ {\textstyle\bigwedge^j}\mathsf F(L)\xrightarrow{\ \sim\ }
 \mathsf F\bigl({\textstyle\bigwedge^j}L\bigr),
\end{equation}
exterior powers above the rank being zero.
\end{lemma}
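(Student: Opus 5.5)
I plan to build the map $\alpha_{j,L}$ from the algebra structure of $\widetilde\sP_n$ as a module over $S\#W$, and then prove it is an isomorphism by a codimension argument. The first step is to construct a canonical $T$-equivariant map $\bigwedge^j\mathsf F(L)\to\mathsf F(\bigwedge^jL)$. This is not obvious, because $\widetilde\sP_n$ is a module and not an algebra, so I will not try to multiply sections of $\widetilde\sP_n$ directly.

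On the open locus $Y_n^{\circ}\subset Y_n$ of $n$ pairwise distinct free $\Gamma$-orbits, the Procesi bundle is the regular representation bundle: $\widetilde\sP_n|_{Y^\circ}$ is the pushforward of $\cO$ from the $W$-cover of $Y^\circ$ inside $(\frh\oplus\frh^*)^{\mathrm{reg}}$. There, $\mathsf F(L)$ is the bundle of $W$-equivariant maps from the cover to $L^*$. For $L=1\oplus U$ and $L=V$ these are the bundles $\cR_0$ and $\cR_1$ of functions on the $n$ orbits. Functions on the cover form a sheaf of commutative algebras $\cC$ with $\widetilde\sP_n|_{Y^\circ}=\cC$, so wedge and multiplication give the natural map
\[
\textstyle\bigwedge^j\Hom_W(L,\cC)\to\Hom_W(\bigwedge^jL,\cC),\qquad \phi_1\wedge\dots\wedge\phi_j\mapsto\bigl(l_1\wedge\dots\wedge l_j\mapsto\det(\phi_a(l_b))\bigr).
\]
For a permutation-type $L$ this is an isomorphism fibrewise, by a direct check on the regular representation. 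The same formula makes sense over all of $Y_n$ once we know that $\widetilde\sP_n$ carries a $W$-equivariant commutative multiplication extending $\cC$. The Procesi bundle does carry one: it is the pullback of the isospectral structure via Losev's description $\widetilde\sP_n=$ the normalization of $\cO$ of the isospectral cover. If that multiplication is unavailable, I would instead define $\alpha$ only on $Y^\circ$ and extend it by reflexivity.

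The main step, and the expected obstacle, is extending the isomorphism across the complement of $Y^\circ$. Both sides are vector bundles on the smooth $Y_n$: the target because $\mathsf F$ of any representation is a direct summand of $L^*\otimes\widetilde\sP_n$. If the complement had codimension $\ge2$, a map between vector bundles that is an isomorphism off codimension two would extend to an isomorphism by Hartogs. But the complement contains divisors: the clusters meeting the origin, and the collisions along the long reflection hyperplanes. So I must check $\alpha$ at the generic points of these two divisors. Here I would use Losev's formal restriction theorem (the tool alluded to before the lemma). At a generic point of each divisor, the completion of $\widetilde\sP_n$ is induced, $\Hom_{\C\{1,s\}}(\C W,\sP^{\wedge 0})$ with $\sP$ the rank-two Procesi bundle on $T^*\PP^1$, as in the proof of \cref{lem:nu-degrees}. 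This reduces the claim to the rank-one case $n=1$ (respectively to $\Hilb^2$ tensored with $\C[\Gamma]$).

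In the rank-one case, $\bigwedge^j$ of the rank-one or rank-two bundles involved can be compared by hand using \cref{rem:polarization-controls}. For $W=\mu_2$ we have $\widetilde\sP_1=\cR_0\oplus\cR_1$, so $\mathsf F(V)=\cR_1$ is a line bundle and the statement is trivial for $j\le1$. Along the collision divisor the comparison is Haiman's classical one for $S_2$: $\bigwedge^2$ of the rank-two tautological bundle of $\Hilb^2$ is the sign component. With the local identification in hand, the determinant of $\alpha$ is a section of a line bundle that is nonvanishing at the generic points of all divisors, hence nowhere vanishing, and $\alpha$ is an isomorphism.

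Finally, $T$-equivariance follows because every construction is canonical. The case $L=U$ follows from $L=1\oplus U$ by splitting off the trivial summand $\mathsf F(1)=\cO$: one has $\bigwedge^j(\cO\oplus\cT)=\bigwedge^j\cT\oplus\bigwedge^{j-1}\cT$, compatibly with $\bigwedge^j(1\oplus U)=\bigwedge^jU\oplus\bigwedge^{j-1}U$, and the result follows by induction on $j$.
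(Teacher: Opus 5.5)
Your outline is the paper's: the alternating-product map $\det(\varphi_a(v_b))$ on the free locus $Y_n^\circ$, Hartogs across codimension two, and Losev's formal restriction at the generic points of the short- and long-reflection divisors. You should also say why these are the only prime divisors in $Y_n\setminus Y_n^\circ$; the paper gets this from semismallness.

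The global multiplication on $\widetilde\sP_n$ via an isospectral normalization is not something you can quote here. It is also not needed: even with it, all the content lies in the codimension-one check, and your fallback (define $\alpha$ on $Y_n^\circ$ and extend) is exactly the paper's route. Your $\det\alpha$ argument and the block-diagonal treatment of $L=U$ are fine once $\alpha$ is known to be regular near the divisors.

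The gap is the sentence ``this reduces the claim to the rank-one case''. Near a generic point of a reflection divisor, put $W_b=\langle s\rangle$, $\widetilde\sP'=\cO\oplus\cN\otimes\eps$ and $L|_{W_b}=T\oplus\eps^{\oplus m}$. Then $\mathsf F(L)^\wedge=T^*\otimes\cO\oplus\cN^{\oplus m}$ has rank $\dim L$, not one or two. In $\det(\varphi_a(v_b))$, every sign vector $v_b$ contributes a factor from $\cN$.

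If $m\ge2$, the map therefore uses the product $\cN\otimes\cN\to\cO$. This is the multiplication $\cR_1\otimes\cR_1\to\cR_0$ on $T^*\PP^1$, which vanishes along the exceptional curve (indeed $\cR_1^{\otimes2}\cong\cO(-E)$). So $\alpha$ degenerates there. For example, for $L=\bigwedge^2V$ with $n\ge3$, a short reflection has $m=n-1$, and $\alpha_{2,L}$ is not an isomorphism along the short-reflection divisor. As written, your local step never uses which $L$ you are dealing with, so it cannot be complete.

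The missing observation is that for $L\in\{1\oplus U,\,U,\,V\}$ the sign character of every reflection occurs at most once. Concretely, $m=0$ for $1\oplus U$ and $U$ at short reflections, and $m=1$ in all other cases. Hence each determinant contains at most one factor from $\cN$, and the local map is the identity on $\bigwedge^jT^*\otimes\cO\oplus\bigl(\bigwedge^{j-1}T^*\otimes\cN\bigr)^{\oplus m}$. This is the real content behind your appeals to $\mathsf F(V)=\cR_1$ at $n=1$ and to $\bigwedge^2$ of the tautological bundle on $\Hilb^2$.

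You also need to make the transport to the slice legitimate. Normalize Losev's isomorphism so that the unit section goes to $1$. Then note that on the free part an $S\#W$-automorphism of the module of functions on a free orbit which fixes $1$ is the identity. Only then does the global alternating-product map coincide with the local one.
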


\begin{proof}
Write $\pi=\pi_Q\circ\rho:Y_n\to S_n=(\frh\oplus\frh^*)/W$, $S=\C[\frh\oplus\frh^*]$
and $H=S\#W$, hence $\End(\widetilde\sP_n)=H$ by \cref{thm:losev}.

\emph{Step 1, the free locus.}  Let $(\frh\oplus\frh^*)^\circ$ be the open set of
points with trivial stabilizer, $S_n^\circ$ its image and
$Y_n^\circ=\pi^{-1}(S_n^\circ)$.  The quotient map
$\eta:(\frh\oplus\frh^*)^\circ\to S_n^\circ$ is a $W$-torsor, and $\pi$ is an
isomorphism over $S_n^\circ$, since it is proper and birational, $S_n^\circ$ is
smooth, and the fibre over $n$ distinct free $\Gamma$-orbits is the single cluster
supported on their union.  Let $1\in\widetilde\sP_n^W=\cO$ be the unit section.
The action of $S\subset H$ on $\widetilde\sP_n$ defines a $T$-equivariant morphism
\[
 \iota:\ \eta_*\cO_{(\frh\oplus\frh^*)^\circ}\longrightarrow\widetilde\sP_n|_{Y_n^\circ},
 \qquad f\longmapsto f\cdot1 ,
\]
which intertwines the actions of $H$, where $H$ acts on functions by multiplication
and by the $W$-action.  At a point $x\in S_n^\circ$ the fibre $H\otimes_{S^W}k(x)$
is the skew group algebra of the functions on the free orbit $\eta^{-1}(x)$, that
is the full matrix algebra of the $|W|$-dimensional space of these functions, and
the fibre of $\widetilde\sP_n$ is its unique simple module, the regular
representation of $W$ \cite[Thm.~4.6]{OS}.  In the module of functions the
$W$-invariants are the constants, thus $1_x$ corresponds to a nonzero constant and
$f\mapsto f\cdot1_x$ is injective between spaces of dimension $|W|$.  Hence $\iota$
is an isomorphism.  Consequently $\mathsf F(L)|_{Y_n^\circ}=\Hom_W(L,\eta_*\cO)$ is
the bundle associated with the torsor $\eta$ and the representation $L^*$, and the
alternating products of functions
\[
 \alpha_{j,L}(\varphi_1\wedge\dots\wedge\varphi_j)(v_1\wedge\dots\wedge v_j)
 =\det\bigl(\varphi_a(v_b)\bigr)_{a,b}
\]
define an isomorphism $\alpha_{j,L}$ over $Y_n^\circ$, which fibrewise is the
canonical map $\bigwedge^jL^*\to(\bigwedge^jL)^*$.  Since $Y_n^\circ$ is dense and
both sides of \eqref{eq:exterior} are locally free, $\alpha_{j,L}$ is a
$T$-equivariant rational map of vector bundles on $Y_n$, and it remains to prove
that it and its inverse are regular in codimension one.

\emph{Step 2, the prime divisors outside $Y_n^\circ$.}  The complement of
$(\frh\oplus\frh^*)^\circ$ is the union of the fixed loci of the nontrivial
elements of $W$, each of codimension at least two, hence $S_n\setminus S_n^\circ$
has codimension two and every prime divisor $D\subset Y_n\setminus Y_n^\circ$ is
contracted by $\pi$.  If $c$ is the codimension of $\pi(D)$, the generic fibre of
$D\to\pi(D)$ has dimension $c-1$, and the semismallness of the symplectic
resolution $\pi$ \cite[Lem.~2.11]{KaledinPoisson} gives $2(c-1)\le c$, thus $c=2$.
Hence a generic point $y\in D$ lies over $\eta(b)$ for a point
$b\in\frh\oplus\frh^*$ whose stabilizer $W_b=\langle s\rangle$ is generated by one
reflection, a short reflection $\eps_i$, where one $\Gamma$-orbit reaches the
origin, or a long one, conjugate to $(i\,j)$, where two free orbits collide; these
are the two leaves of \cref{def:two-curves}.

\emph{Step 3, formal restriction.}  Let $\widehat Y_b$ be the formal neighbourhood
of $\pi^{-1}(\eta(b))$ in $Y_n$.  It is identified with the formal neighbourhood of
the zero fibre in the symplectic resolution of the slice
$(\frh\oplus\frh^*)^\wedge_0/W_b$, the product of the minimal resolution
$T^*\PP^1$ of an $A_1$ singularity with a smooth factor.  By Losev's restriction
theorem \cite[Prop.~4.1]{LosevProcesi} there are a normalized Procesi bundle
$\widetilde\sP'$ of rank two on the slice resolution and an isomorphism
\[
 \Lambda:\ \widetilde\sP_n|_{\widehat Y_b}\xrightarrow{\ \sim\ }
 \Hom_{W_b}\bigl(\C W,\widetilde\sP'\bigr)^{\wedge}
\]
of modules over the completed skew group algebra $H^\wedge$, the latter being
identified with the centralizer algebra of the completed skew group algebra of the
slice by the construction of Bezrukavnikov and Etingof \cite[\S3.2]{BE}, in which
$W$ acts on the coinduced module by translation and functions act by
multiplication \cite[Thm.~3.2]{BE}.  In particular this identification takes the
completed module $S^\wedge$ of functions to $\Hom_{W_b}(\C W,S_0^\wedge)$, the
functions on the formal neighbourhood of the orbit $Wb$.  Taking $W$-invariants,
$\Lambda(1)$ is a nowhere vanishing section of $(\widetilde\sP')^{W_b}=\cO$, that
is a unit $u$ of the completed base ring, and we replace $\Lambda$ by
$u^{-1}\Lambda$, so that $\Lambda(1)=1$.  On the free part of $\widehat Y_b$ we now
have two isomorphisms of $H^\wedge$-modules from $\widetilde\sP_n$ onto the module
of functions on the torsor, the completion of $\iota^{-1}$, and $\Lambda$ followed
by the isomorphism of Step~1 for $\widetilde\sP'$ and the group $W_b$.  Both send
$1$ to the constant function $1$.  Over the free part the skew group algebra is the
full endomorphism algebra of the module of functions, thus an automorphism of this
module is the multiplication by an invertible function, and one which fixes $1$ is
the identity.  Hence the two isomorphisms coincide, and in the formal neighbourhood
of $y$ the map $\alpha_{j,L}$ is the alternating-product map $\alpha'_{j,L}$ of
Step~1 formed with $\widetilde\sP'$ and $L|_{W_b}$, under the identification
$\mathsf F(L)^\wedge=\Hom_{W_b}(L|_{W_b},\widetilde\sP'^{\,\wedge})$ of Frobenius
reciprocity.

\emph{Step 4, the local computation.}  Let $\eps$ be the sign character of
$W_b=\langle s\rangle$.  Since $\widetilde\sP'$ has rank two and
$(\widetilde\sP')^{W_b}=\cO$, we have $\widetilde\sP'=\cO\oplus\cN\otimes\eps$ for
a line bundle $\cN$, which on the free part is the module of $s$-anti-invariant
functions.  For the three representations $L|_{W_b}=T\oplus\eps^{\oplus m}$ with
$T$ trivial and $m\le1$: for a long reflection $m=1$ in all three cases, and for a
short reflection $m=0$ for $1\oplus U$ and $U$, on which the sign changes act
trivially, and $m=1$ for $V$.  Therefore
\[
 \Hom_{W_b}(L|_{W_b},\widetilde\sP')=T^*\otimes\cO\ \oplus\ \cN^{\oplus m},\qquad
 {\textstyle\bigwedge^j}(L|_{W_b})={\textstyle\bigwedge^j}T\ \oplus\
 \bigl({\textstyle\bigwedge^{j-1}}T\otimes\eps\bigr)^{\oplus m},
\]
and both $\bigwedge^j\Hom_{W_b}(L|_{W_b},\widetilde\sP')$ and
$\Hom_{W_b}(\bigwedge^jL|_{W_b},\widetilde\sP')$ are
$\bigwedge^jT^*\otimes\cO\oplus(\bigwedge^{j-1}T^*\otimes\cN)^{\oplus m}$.  An
exterior product of sections contains at most one section of $\cN$, hence the
determinants which define $\alpha'_{j,L}$ involve only products of invariant
functions and the $\cO$-module structure of $\cN$, never the product
$\cN\otimes\cN\to\cO$ of two anti-invariant functions.  In the two decompositions
$\alpha'_{j,L}$ is therefore the identity up to the normalization constants of the
exterior algebra, and it is regular with a regular inverse on the whole formal
neighbourhood.

\emph{Step 5, extension.}  Completion at $y$ is faithfully flat, thus
$\alpha_{j,L}$ and $\alpha_{j,L}^{-1}$ are regular at the generic point of every
prime divisor of $Y_n\setminus Y_n^\circ$.  They are then sections of locally free
sheaves on the smooth variety $Y_n$ defined outside a closed subset of codimension
at least two, hence they extend.  The two compositions are the identity on
$Y_n^\circ$, hence everywhere, and the extensions are $T$-equivariant by
uniqueness.
\end{proof}

\begin{theorem}[Exterior evaluations]\label{thm:hooks}
For every $\lambda\in\Bal(2n)$ and $0\le j\le n$,
\begin{equation}\label{eq:all-exterior}
 \bigl\langle\wtH_\lambda,\ h_{n-j}[X_0]\,e_j[X_0]\bigr\rangle=e_j\bigl[B^{(0)}_\lambda\bigr],\qquad
 \bigl\langle\wtH_\lambda,\ h_{n-j}[X_0]\,e_j[X_1]\bigr\rangle=e_j\bigl[B^{(1)}_\lambda\bigr],
\end{equation}
equivalently
\begin{equation}\label{eq:strong-Ev2}
 \wtH_\lambda[1-z,\,0]=\prod_{\substack{(a,b)\in\lambda\\ a+b\ \mathrm{even}}}(1-zq^at^b),\qquad
 \wtH_\lambda[1,\,-z]=\prod_{\substack{(a,b)\in\lambda\\ a+b\ \mathrm{odd}}}(1-zq^at^b).
\end{equation}
For $0\le k\le n-1$,
\begin{equation}\label{eq:hooks}
 \bigl\langle\wtH_\lambda,\ s_{(n-k,1^k)}[X_0]\bigr\rangle=e_k\bigl[B^{(0)}_\lambda-1\bigr],
\end{equation}
the $k$-th elementary symmetric function of the $n-1$ monomials $q^at^b$,
$(a,b)\in\lambda\setminus\{(0,0)\}$ of even content, and consequently
\begin{equation}\label{eq:Ev2}
 \bigl\langle\wtH_\lambda,\ p_n[X_0]\bigr\rangle=\Pi^{(0)}_\lambda .
\end{equation}
\end{theorem}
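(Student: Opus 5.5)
The plan is to read off \eqref{eq:all-exterior} from fibre characters of the bundles $\mathsf F(\bigwedge^jL)$, using \cref{lem:exterior}, and then to obtain \eqref{eq:hooks} and \eqref{eq:Ev2} by symmetric-function algebra.

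First, the pairing dictionary. By \cref{thm:wreath-inputs}(a), for a representation $L$ of $W$ with Frobenius character $\mathrm{ch}(L)\in\Lambda^{(2)}$, the $T$-character of $\mathsf F(L)=\Hom_W(L,\widetilde\sP_n)$ at $I_\lambda$ is $\langle\wtH_\lambda,\mathrm{ch}(L)\rangle$. This is the same normalization already used for $B^{(i)}_\lambda$ after \cref{thm:wreath-inputs}. One has to check that $L$ versus $L^*$ causes no twist. Here all representations of $W$ are real, hence self-dual, so this is harmless.

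Next, compute the relevant characters. Take $L=1\oplus U$, the permutation representation of $S_n$ inflated to $W$. Its exterior power $\bigwedge^j(1\oplus U)$ is $\mathrm{Ind}_{S_{n-j}\times S_j}^{S_n}(\mathbf 1\boxtimes\mathrm{sgn})$ inflated, with character $h_{n-j}[X_0]e_j[X_0]$. Similarly, $V$ is the coinduction from $W_{n-1}\times\mu_2$ of the sign of $\mu_2$. Hence $\bigwedge^jV$ is induced from $W_{n-j}\times W_j$ of $\mathbf 1\boxtimes(\text{sign of }S_j\text{ times the product of the coordinate signs})$, with character $h_{n-j}[X_0]e_j[X_1]$. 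This is the standard wreath Frobenius correspondence of \cite{OS}, in the same convention in which $V\leftrightarrow h_{n-1}[X_0]p_1[X_1]$.

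By \cref{lem:exterior}, $\mathsf F(\bigwedge^jL)\cong\bigwedge^j\mathsf F(L)$, and by \eqref{eq:three-F} $\mathsf F(1\oplus U)=\cR_0$ and $\mathsf F(V)=\cR_1$. Their fibres at $I_\lambda$ have characters $B^{(0)}_\lambda$ and $B^{(1)}_\lambda$, sums of monomials, so the fibre character of $\bigwedge^j$ is $e_j[B^{(i)}_\lambda]$. This proves \eqref{eq:all-exterior}. For \eqref{eq:strong-Ev2}, expand the specializations: $h_{n-j}[1]=1$ and $e_j[-z]$-type plethysms pick out exactly the pairings $\langle\wtH_\lambda,h_{n-j}[X_0]e_j[X_i]\rangle$ times $(-z)^j$. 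This is the Cauchy kernel with one alphabet specialized. The generating function is then $\sum_j(-z)^je_j[B^{(i)}_\lambda]$, the stated product.

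For \eqref{eq:hooks}, use $h_{n-j}e_j=s_{(n-j,1^j)}+s_{(n-j+1,1^{j-1})}$, so hooks are alternating partial sums: $s_{(n-k,1^k)}=\sum_{j\le k}(-1)^{k-j}h_{n-j}e_j$. Pairing gives $\sum_{j\le k}(-1)^{k-j}e_j[B^{(0)}_\lambda]=e_k[B^{(0)}_\lambda-1]$, since $B^{(0)}_\lambda$ contains the monomial $1$ (the box $(0,0)$) and $e[X+1]=e[X]\cdot(1+\cdot)$. Alternatively, one can apply \cref{lem:exterior} directly with $L=U$ and $\mathsf F(U)=\cT_n$, which has character $B^{(0)}_\lambda-1$. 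Finally, $p_n=\sum_k(-1)^ks_{(n-k,1^k)}$ gives $\prod(1-q^at^b)$ over the $n-1$ nonorigin even boxes, which is $\Pi^{(0)}_\lambda$.

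The main obstacle is the bookkeeping in the Frobenius dictionary. One must confirm that $\bigwedge^jV$ corresponds to $h_{n-j}[X_0]e_j[X_1]$, rather than, for instance, $h_{n-j}[X_0]h_j[X_1]$ in a sign-twisted convention. The plan is to check this against the case $j=1$, already fixed by \cref{thm:wreath-inputs}(b), and against $j=n$, where $\bigwedge^nV=\det$ must pair to $T_1(\lambda)=\ell_\lambda=e_n[B^{(1)}_\lambda]$ by \cref{thm:wreath-inputs}(c). This pins the convention. The analogous check for $j=n$ in colour $0$ is $e_n[B^{(0)}_\lambda]=T_0(\lambda)$.
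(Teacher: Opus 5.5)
Your proposal is correct and follows essentially the paper's proof: \cref{lem:exterior} together with \eqref{eq:three-F} and the wreath Frobenius characters $h_{n-j}[X_0]e_j[X_0]$ and $h_{n-j}[X_0]e_j[X_1]$ of the exterior powers gives \eqref{eq:all-exterior}, and the Cauchy kernel and the alternating hook expansion of $p_n$ then give the remaining identities. The only small difference is that you derive \eqref{eq:hooks} by Pieri telescoping from the colour-zero case of \eqref{eq:all-exterior}, whereas the paper applies \cref{lem:exterior} directly to $L=U$ with $\mathsf F(U)\cong\cT_n$; you note this alternative yourself, and the two routes are equivalent.
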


\begin{proof}
By \cref{thm:wreath-inputs}(a), $\wtH_\lambda$ is the Frobenius character of
$\widetilde\sP_n|_\lambda$, hence for every representation $L$ of $W$ the
fixed-point character of $\mathsf F(L)$ at $I_\lambda$ is
$\langle\wtH_\lambda,\ch L\rangle$, where $\ch L$ is the Frobenius character of
$L$.  A basis of $\bigwedge^j(1\oplus U)$ is indexed by the $j$-subsets of
$\{1,\dots,n\}$, which $S_n$ permutes with the sign of the induced permutation and
which the sign changes fix, hence
$\bigwedge^j(1\oplus U)=\mathrm{Ind}_{W_{n-j}\times W_j}^{W}(\mathbf 1\boxtimes\mathrm{sgn}_{S_j})$
and $\ch\bigwedge^j(1\oplus U)=h_{n-j}[X_0]e_j[X_0]$.  The same basis of
$\bigwedge^jV$ is permuted in the same way, and the sign change $\eps_i$ acts by
$-1$ exactly when $i$ lies in the subset, hence
$\ch\bigwedge^jV=h_{n-j}[X_0]e_j[X_1]$, because the character of $W_j$ in which
every $\eps_i$ acts by $-1$ and $S_j$ by the sign corresponds to $e_j[X_1]$.
Finally $\ch\bigwedge^kU=s_{(n-k,1^k)}[X_0]$ for $0\le k\le n-1$, the exterior
powers of the reflection representation of $S_n$ being the hook representations;
this is compatible with the Pieri rule $h_{n-j}e_j=s_{(n-j,1^j)}+s_{(n-j+1,1^{j-1})}$
and $\bigwedge^j(1\oplus U)=\bigwedge^jU\oplus\bigwedge^{j-1}U$.  Now
\cref{lem:exterior} and \eqref{eq:three-F} give \eqref{eq:all-exterior} and
\eqref{eq:hooks}, since the fixed-point characters of $\bigwedge^j\cR_0$,
$\bigwedge^j\cR_1$ and $\bigwedge^k\cT_n$ at $I_\lambda$ are $e_j[B^{(0)}_\lambda]$,
$e_j[B^{(1)}_\lambda]$ and $e_k[B^{(0)}_\lambda-1]$.  For \eqref{eq:strong-Ev2},
the degree-$n$ part of $\Omega[(1-z)X_0]$ is $\sum_j(-z)^jh_{n-j}[X_0]e_j[X_0]$ and
that of $\Omega[X_0-zX_1]$ is $\sum_j(-z)^jh_{n-j}[X_0]e_j[X_1]$, hence the two
specializations equal $\sum_j(-z)^je_j[B^{(i)}_\lambda]=\prod(1-zq^at^b)$ over the
boxes of content parity $i$.  For \eqref{eq:Ev2},
$p_n=\sum_{k=0}^{n-1}(-1)^ks_{(n-k,1^k)}$ and
$\sum_k(-1)^ke_k[B^{(0)}_\lambda-1]=\prod(1-q^at^b)$ over the $n-1$ monomials of
$B^{(0)}_\lambda-1$.
\end{proof}



Now we can prove the plethystic interpretation of the localization formula from the main result of the paper:

\begin{proposition}[Plethystic formula]\label{prop:plethystic}
Let $n\ge1$ and put
\begin{align}
 R_n&:=p_n\Bigl[\frac{(1+qt)X_0+(q+t)X_1}{(1-q^2)(1-t^2)}\Bigr]
 =\frac{(1+(qt)^n)\,p_n[X_0]+(q^n+t^n)\,p_n[X_1]}{(1-q^{2n})(1-t^{2n})},\label{eq:Rn}\\
 \sK^{(m)}_n&:=(-1)^{n-1}\nabla_1^{\,m}\nabla_0\,R_n,\qquad
 \Phi_n:=h_{n-1}[X_0]\;p_1\bigl[(1+qt)X_0-(q+t)X_1\bigr].\label{eq:KPhi}
\end{align}
Then for all $m\ge1$
\begin{equation}\label{eq:Kspectral}
\begin{gathered}
 \sK^{(m)}_n=\sum_{\lambda\in\Bal(2n)}\frac{\ell_\lambda^{\,m}\,\Pi^{(0)}_\lambda}{D_\lambda}\,\wtH_\lambda,\\
 u_{n,m}=\bigl\langle\sK^{(m)}_n,h_{n-1}[X_0]p_1[X_0]\bigr\rangle,\qquad
 v_{n,m}=\bigl\langle\sK^{(m)}_n,h_{n-1}[X_0]p_1[X_1]\bigr\rangle,
\end{gathered}
\end{equation}
and therefore
\begin{equation}\label{eq:P-pairing}
 P_{n,m}=\bigl\langle\sK^{(m)}_n,\Phi_n\bigr\rangle,\qquad
 F_{n,m}=(1-q)(1-t)\bigl\langle\sK^{(m)}_n,h_{n-1}[X_0]p_1[X_0+X_1]\bigr\rangle .
\end{equation}
\end{proposition}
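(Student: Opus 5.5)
The plan is to expand $R_n$ in the basis $\wtH_\lambda$ using the orthogonality of \cref{thm:wreath-inputs}(d), then apply the eigenvalue definition \eqref{eq:nabla-def} of the nabla operators, and finally pair with the test functions using \cref{thm:wreath-inputs}(b).

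\emph{Step 1: the plethystic form of $R_n$.} First check the second equality in \eqref{eq:Rn}. Under the plethystic convention $p_n[MX]$ with $M=A$, we have $p_n[X_0]\mapsto\sum_jA_{0j}(q^n,t^n)p_n[X_j]$, which gives exactly the displayed coefficients $(1+(qt)^n)/((1-q^{2n})(1-t^{2n}))$ and $(q^n+t^n)/((1-q^{2n})(1-t^{2n}))$. So $R_n=p_n[X_0][AX]$.

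\emph{Step 2: the Cauchy expansion.} Expand $R_n=\sum_\lambda c_\lambda\wtH_\lambda$; this is possible because the $\wtH_\lambda$ form a basis (\cref{thm:wreath-inputs}(a)). Pair with $\inv\wtH_\mu$ in the form $\langle\,,\rangle_{q,t}$. By (d) this gives $c_\mu D_\mu=\langle R_n,(\inv\wtH_\mu)[A^{-1}X]\rangle$. Since $R_n=p_n[X_0][AX]$ and $A$ is symmetric, the substitution by $A$ is self-adjoint for the Hall pairing, so the $A$ and $A^{-1}$ cancel. This leaves $c_\mu D_\mu=\langle p_n[X_0],\inv\wtH_\mu\rangle=\inv\,\Pi^{(0)}_\mu$ by \eqref{eq:Ev2}.

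The main obstacle is reconciling $\inv\Pi^{(0)}_\mu$, together with the normalization of the factor $\nabla_0$ and the sign $(-1)^{n-1}$, with the claimed coefficient $\ell^m\Pi^{(0)}/D$. For this we use $\inv\prod_{s}(1-q^at^b)=(-1)^{n-1}T_0(\mu)^{-1}\Pi^{(0)}_\mu$, where the product runs over the $n-1$ nonorigin even boxes, whose monomials multiply to $T_0(\mu)$ because the origin box has weight $1$. Hence $c_\mu=(-1)^{n-1}\Pi^{(0)}_\mu/(T_0(\mu)D_\mu)$.

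If the adjointness conventions produce $\inv$ on the other side instead, I would redo the step with $\inv$ applied to $R_n$, using $\inv A=q^2t^2A$-type identities. I would then check the result against the case $n=1$ computed in \cref{thm:evidence}(1) to fix the convention.

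\emph{Step 3: apply the operators.} Applying $(-1)^{n-1}\nabla_1^m\nabla_0$ multiplies $c_\lambda$ by $(-1)^{n-1}T_1(\lambda)^mT_0(\lambda)=(-1)^{n-1}\ell_\lambda^mT_0(\lambda)$. This cancels both the sign and $T_0$, and yields the first line of \eqref{eq:Kspectral}.

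\emph{Step 4: pair with the test functions.} Pairing with $h_{n-1}[X_0]p_1[X_i]$ and using (b) gives $u_{n,m}$ and $v_{n,m}$ by their definitions. Linearity together with \cref{lem:parity-split} then gives $P_{n,m}=(1+qt)u_{n,m}-(q+t)v_{n,m}=\langle\sK^{(m)}_n,\Phi_n\rangle$, because $p_1[(1+qt)X_0-(q+t)X_1]$ expands with scalar coefficients at $k=1$. In the same way, $F_{n,m}=(1-q)(1-t)(u_{n,m}+v_{n,m})$ gives the second identity of \eqref{eq:P-pairing}.
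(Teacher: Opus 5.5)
Your proposal is correct and follows the paper's proof essentially step for step: you expand $R_n$ in the dual basis $\{D_\lambda^{-1}\inv\wtH_\lambda\}$ supplied by \cref{thm:wreath-inputs}(d), cancel $A$ against $A^{-1}$ by self-adjointness of the symmetric substitution, evaluate $\langle p_n[X_0],\inv\wtH_\lambda\rangle=\inv\Pi^{(0)}_\lambda$ via \eqref{eq:Ev2} (this uses that $p_n[X_0]$ is $\inv$-fixed and the Hall pairing is defined over $\Q$), convert with $\inv\Pi^{(0)}_\lambda=(-1)^{n-1}T_0(\lambda)^{-1}\Pi^{(0)}_\lambda$, and finish with the nabla eigenvalues and \cref{thm:wreath-inputs}(b). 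Your contingency about $\inv$ landing on the other side is unnecessary, since the conventions work out exactly as in your main line.
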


\begin{proof}
\emph{Step 1, the expansion of $R_n$.}  By \cref{thm:wreath-inputs}(d) the basis
dual to $\{\wtH_\lambda\}$ for the form $\langle\,,\rangle_{q,t}$ is
$\{D_\lambda^{-1}\inv\wtH_\lambda\}$, hence every $f$ of degree $n$ satisfies
$f=\sum_\lambda\langle f,\inv\wtH_\lambda\rangle_{q,t}D_\lambda^{-1}\wtH_\lambda$.
The substitution $X\mapsto A^{-1}X$ is self-adjoint for the Hall pairing, since
$A^{-1}$ is symmetric, and $R_n=p_n[X_0][AX]$ by \eqref{eq:Rn}, since
$p_n[X_0][AX]=\sum_jA_{0j}(q^n,t^n)p_n[X_j]$.  Substitutions compose as matrix
products, therefore
\[
 \langle R_n,\inv\wtH_\lambda\rangle_{q,t}=\langle R_n[A^{-1}X],\inv\wtH_\lambda\rangle
 =\langle p_n[X_0],\inv\wtH_\lambda\rangle=\inv\langle p_n[X_0],\wtH_\lambda\rangle
 =\inv\Pi^{(0)}_\lambda ,
\]
where we used that $p_n[X_0]$ is $\inv$-invariant, that the Hall pairing is defined
over $\Q$, and \eqref{eq:Ev2}.  The product $\Pi^{(0)}_\lambda$ has $n-1$ factors
$1-q^at^b$, and $\inv(1-q^at^b)=-q^{-a}t^{-b}(1-q^at^b)$, hence
$\inv\Pi^{(0)}_\lambda=(-1)^{n-1}T_0(\lambda)^{-1}\Pi^{(0)}_\lambda$ and
\begin{equation}\label{eq:Rspectral}
 R_n=(-1)^{n-1}\sum_{\lambda\in\Bal(2n)}\frac{\Pi^{(0)}_\lambda}{T_0(\lambda)D_\lambda}\,\wtH_\lambda .
\end{equation}

\emph{Step 2, the operators.}  We apply $\nabla_1^{\,m}\nabla_0$ to
\eqref{eq:Rspectral} using \eqref{eq:nabla-def}.  The operator $\nabla_0$
contributes $T_0(\lambda)$, which cancels the denominator, and $\nabla_1^{\,m}$
contributes $\ell_\lambda^{\,m}$.  This is the first formula of
\eqref{eq:Kspectral}.

\emph{Step 3, the pairings.}  We pair the first formula of \eqref{eq:Kspectral}
with $h_{n-1}[X_0]p_1[X_i]$ and use \cref{thm:wreath-inputs}(b).  This gives the
formulas for $u_{n,m}$ and $v_{n,m}$, and \eqref{eq:P-pairing} follows from
\cref{lem:parity-split} by bilinearity, since
$\Phi_n=(1+qt)h_{n-1}[X_0]p_1[X_0]-(q+t)h_{n-1}[X_0]p_1[X_1]$.
\end{proof}

\begin{corollary}\label{cor:shuffle-form}
\Cref{conj:shuffleB} is equivalent to the identity
\begin{equation}\label{eq:shuffle-form}
\begin{aligned}
 (-1)^{n-1}\Bigl\langle\nabla_1^{\,m}\nabla_0\,
 p_n\Bigl[\tfrac{(1+qt)X_0+(q+t)X_1}{(1-q^2)(1-t^2)}\Bigr],\
 h_{n-1}[X_0]\,p_1\bigl[(1+qt)X_0-(q+t)X_1\bigr]\Bigr\rangle\qquad&\\
 =\sum_{\pi:(0,0)\to(mn,n)}(qt)^{\dinv_m(\pi)}q^{A^{(m)}_-(\pi)}t^{A^{(m)}_+(\pi)}&
\end{aligned}
\end{equation}
for all $m\ge1$.
\end{corollary}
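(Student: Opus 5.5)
This is a direct consequence of \cref{prop:plethystic}, and the plan is to rewrite the left side of \eqref{eq:shuffle-form} in the notation of that proposition. The first argument of the pairing is $(-1)^{n-1}\nabla_1^{\,m}\nabla_0 R_n=\sK^{(m)}_n$ by \eqref{eq:Rn} and \eqref{eq:KPhi}. The second argument is exactly $\Phi_n$. Hence the left side of \eqref{eq:shuffle-form} equals $\langle\sK^{(m)}_n,\Phi_n\rangle$, and this is $P_{n,m}$ by \eqref{eq:P-pairing}.

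The right side of \eqref{eq:shuffle-form} is $\widehat P_{n,m}$ by \cref{def:Phat}. So \eqref{eq:shuffle-form} for a given $m\ge1$ is literally the identity $P_{n,m}=\widehat P_{n,m}$, which is \cref{conj:shuffleB}. The equivalence therefore holds for each $(n,m)$ separately, and so also for all $m\ge1$ simultaneously.

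One point needs checking rather than just citing. \eqref{eq:conj-B} writes the left side as the localization sum with numerator $(1+qt)B^{(0)}_\lambda-(q+t)B^{(1)}_\lambda$, so we must confirm this is $P_{n,m}$ as defined by the parity projection \eqref{eq:F-def}. \cref{lem:parity-split} gives $P_{n,m}=(1+qt)u_{n,m}-(q+t)v_{n,m}$. Expanding $u_{n,m}$ and $v_{n,m}$ as localization sums recovers the left side of \eqref{eq:conj-B}.

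On the other side, \eqref{eq:Kspectral} shows that pairing $\sK^{(m)}_n$ against $h_{n-1}[X_0]p_1[X_0]$ and $h_{n-1}[X_0]p_1[X_1]$ produces exactly those same $u_{n,m}$ and $v_{n,m}$. So the pairing side and the localization side use the same quantities, and no hidden parity convention enters. The sign $(-1)^{n-1}$ and the normalization of $R_n$ are already absorbed in \cref{prop:plethystic}, so there is nothing further to verify.
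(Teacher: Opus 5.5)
Your proposal is correct and is essentially the paper's argument, which is left implicit after \cref{prop:plethystic}: by \eqref{eq:Rn}, \eqref{eq:KPhi} and \eqref{eq:P-pairing} the left side of \eqref{eq:shuffle-form} is $\langle\sK^{(m)}_n,\Phi_n\rangle=P_{n,m}$, and by \cref{def:Phat} the right side is $\widehat P_{n,m}$, so the identity is \cref{conj:shuffleB} for each $(n,m)$. Your extra check via \cref{lem:parity-split}, that the localization form \eqref{eq:conj-B} agrees with the parity-projection definition of $P_{n,m}$, is already contained in \eqref{eq:P-pairing}; it is redundant but harmless.
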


Let us compare \eqref{eq:shuffle-form} with type $A$.  By \cite{Haiman} the higher
$q,t$-Catalan number $\langle\nabla^me_n,e_n\rangle$ is the Hilbert series of the
determinantal part of $DR^{(m)}(S_n)$, and by the rational shuffle theorem of Mellit \cite{Mellit} it equals $\sum_\pi q^{\dinv_m(\pi)}t^{\mathrm{area}(\pi)}$ over the
$m$-Dyck paths of the $n\times mn$ rectangle, with the same kernel $s_m$ in
$\dinv_m$ \cite{Loehr,LLL,HaglundBook}.

For $m=1$ this is the Catalan case of the
shuffle theorem \cite{HHLRU,CarlssonMellit}.  The companion statement for
$(-1)^{n-1}\nabla p_n$ is the square paths theorem, conjectured by Loehr and
Warrington \cite{LoehrWarrington} and proved by Sergel \cite{Sergel}, in which the
paths of the square are allowed to go below the diagonal.

The left side of
\eqref{eq:shuffle-form} is the two-colour version of the pairing of
$(-1)^{n-1}\nabla^m p_n$ with $e_n$, and the right side is a sum over all paths
of the rectangle, with $A^{(m)}_-$ counting the excursions below the line
$y=x/m$ and $A^{(m)}_+$ those above.  The two ingredients which have no
counterpart in type $A$ are the parity projection, which is responsible for the test
function $\Phi_n$ in place of $e_n$, and the Koszul factor $\Pi^{(0)}_\lambda$ of
the trace-zero even bundle, which enters through \eqref{eq:Ev2}.

A labelled version of the conjecture and the matching of its statistics with those
of type $A$ are the subject of \cref{sec:labelled}.

\begin{example}\label{ex:wreath-n2}
For $n=2$ the wreath Macdonald polynomials, computed from \eqref{eq:wreath-def},
are
\begin{align*}
 \wtH_{(4)}&=s_2[X_0]+q^2s_{11}[X_0]+(q+q^3)\,s_1[X_0]s_1[X_1]+q^2s_2[X_1]+q^4s_{11}[X_1],\\
 \wtH_{(3,1)}&=s_2[X_0]+q^2s_{11}[X_0]+(q+t)\,s_1[X_0]s_1[X_1]+\tfrac tq\,s_2[X_1]+qt\,s_{11}[X_1],\\
 \wtH_{(2,2)}&=s_2[X_0]+qt\,s_{11}[X_0]+(q+t)\,s_1[X_0]s_1[X_1]+s_2[X_1]+qt\,s_{11}[X_1],\\
 \wtH_{(2,1,1)}&=s_2[X_0]+t^2s_{11}[X_0]+(q+t)\,s_1[X_0]s_1[X_1]+\tfrac qt\,s_2[X_1]+qt\,s_{11}[X_1],\\
 \wtH_{(1^4)}&=s_2[X_0]+t^2s_{11}[X_0]+(t+t^3)\,s_1[X_0]s_1[X_1]+t^2s_2[X_1]+t^4s_{11}[X_1].
\end{align*}
The coefficients of $s_{11}[X_0]$ are $B^{(0)}_\lambda-1$, the coefficients of
$s_1[X_0]s_1[X_1]$ are $B^{(1)}_\lambda$, the coefficients of $s_{11}[X_1]$ are
$\ell_\lambda$, and $\langle\wtH_\lambda,p_2[X_0]\rangle=1-(B^{(0)}_\lambda-1)$ is
$1-q^2,1-q^2,1-qt,1-t^2,1-t^2$, in accordance with \cref{thm:wreath-inputs} and
\cref{thm:hooks}.  The Laurent monomials $t/q$ and $q/t$ reflect the fact that the
fibers of the Procesi bundle are not quotients of $\C[x_1,x_2,y_1,y_2]$
\cite[Rem.~4.7]{OS}.
\end{example}

\subsection{Rank three}\label{sec:rank-three}

Let us write out both sides of \eqref{eq:conj-B} for $n=3$ and $m=1,2$.  The ten balanced partitions of $6$
are all partitions of $6$ except the $2$-core $(3,2,1)$.  Their fixed-point data are
listed in \cref{tab:fp3}, where $w_1,w_2,w_3$ are the tangent weights
$q^{a(s)+1}t^{-l(s)}$ of \cref{lem:localization} at the three boxes $s$ with
$a(s)+l(s)$ odd, the other three tangent weights being $qt/w_i$, hence
$D_\lambda=\prod_{i=1}^3(1-w_i)(1-qt/w_i)$.

\begin{table}[ht]
\centering\small
\begin{tabular}{@{}llllll@{}}
\toprule
$\lambda$ & $\ell_\lambda$ & $\Pi^{(0)}_\lambda$ & $B^{(0)}_\lambda$ & $B^{(1)}_\lambda$ & $w_1,w_2,w_3$\\
\midrule
$(6)$ & $q^{9}$ & $(1-q^2)(1-q^4)$ & $1+q^{2}+q^{4}$ & $q+q^{3}+q^{5}$ & $q^{6},\ q^{4},\ q^{2}$\\
$(5,1)$ & $q^{4} t$ & $(1-q^2)(1-q^4)$ & $1+q^{2}+q^{4}$ & $q+q^{3}+t$ & $q^{5}t^{-1},\ q^{4},\ q^{2}$\\
$(4,2)$ & $q^{4} t$ & $(1-q^2)(1-qt)$ & $1+q^{2}+qt$ & $q+q^{3}+t$ & $q^{3}t^{-1},\ q^{2},\ q^{2}$\\
$(4,1,1)$ & $q^{4} t$ & $(1-q^2)(1-t^2)$ & $1+q^{2}+t^{2}$ & $q+q^{3}+t$ & $q^{4}t^{-2},\ q^{2},\ qt^{-1}$\\
$(3,3)$ & $q^{3} t^{2}$ & $(1-q^2)(1-qt)$ & $1+q^{2}+qt$ & $q+t+q^{2}t$ & $q^{3}t^{-1},\ qt^{-1},\ q^{2}$\\
$(3,1,1,1)$ & $q t^{4}$ & $(1-q^2)(1-t^2)$ & $1+q^{2}+t^{2}$ & $q+t+t^{3}$ & $q^{3}t^{-3},\ q^{2},\ qt^{-1}$\\
$(2,2,2)$ & $q^{2} t^{3}$ & $(1-t^2)(1-qt)$ & $1+qt+t^{2}$ & $q+t+qt^{2}$ & $q^{2}t^{-2},\ qt^{-1},\ q^{2}$\\
$(2,2,1,1)$ & $q t^{4}$ & $(1-t^2)(1-qt)$ & $1+qt+t^{2}$ & $q+t+t^{3}$ & $qt^{-1},\ q^{2}t^{-2},\ qt^{-1}$\\
$(2,1^4)$ & $q t^{4}$ & $(1-t^2)(1-t^4)$ & $1+t^{2}+t^{4}$ & $q+t+t^{3}$ & $q^{2}t^{-4},\ qt^{-3},\ qt^{-1}$\\
$(1^6)$ & $t^{9}$ & $(1-t^2)(1-t^4)$ & $1+t^{2}+t^{4}$ & $t+t^{3}+t^{5}$ & $qt^{-5},\ qt^{-3},\ qt^{-1}$\\
\bottomrule
\end{tabular}
\caption{Fixed-point data of $Y_3$.}\label{tab:fp3}
\end{table}

Summing the ten terms of \eqref{eq:conj-B} with these data we obtain, with $w=qt$,
\begin{equation}\label{eq:P3}
\begin{aligned}
 P_{3,1}&=[10]+w[6]+w[4], & N_{3,1}&=w[9]+w^2[5]+w^4,\\
 P_{3,2}&=[19]+w([15]+[13])+w^2([11]+[9]+[7])+w^3[7]+w^4[3],\\
 N_{3,2}&=w[18]+w^2([14]+[12])+w^3([10]+[8])+w^4[6]+w^5[4],
\end{aligned}
\end{equation}
and $F_{3,m}=P_{3,m}-N_{3,m}$.  The polynomials $P_{3,1}$, $N_{3,1}$ are those of
\cref{ex:lowrank}, and $P_{3,1}(1,1)=20$, $P_{3,2}(1,1)=84$,
$q^9P_{3,1}(q,q^{-1})=\qbinom{6}{3}_{q^2}$, $q^{18}P_{3,2}(q,q^{-1})=\qbinom{9}{3}_{q^2}$,
in accordance with Theorem~C.

For $m=1$ the right side of \eqref{eq:conj-B} is the sum over the $20$ paths of the
$3\times3$ square, listed in \cref{tab:paths31} with their partitions
$\alpha=(e_3,e_2,e_1)$, their heights $b=(b_1,b_2,b_3)$, $b_i=i-e_i$, the three
statistics and the resulting monomial.  The sum of the last column is
$[10]+qt[6]+qt[4]=P_{3,1}$.

\begin{table}[ht]
\centering\footnotesize\setlength{\tabcolsep}{3.5pt}
\begin{tabular}{@{}lllrrrl@{\quad}lllrrrl@{}}
\toprule
$\alpha$ & $e$ & $b$ & $\dinv_1$ & $A^{(1)}_-$ & $A^{(1)}_+$ & term &
$\alpha$ & $e$ & $b$ & $\dinv_1$ & $A^{(1)}_-$ & $A^{(1)}_+$ & term\\
\midrule
$(0,0,0)$ & $(0,0,0)$ & $(1,2,3)$ & $0$ & $0$ & $9$ & $t^{9}$ &
$(1,1,1)$ & $(1,1,1)$ & $(0,1,2)$ & $0$ & $1$ & $4$ & $qt^{4}$\\
$(1,0,0)$ & $(0,0,1)$ & $(1,2,2)$ & $1$ & $0$ & $7$ & $qt^{8}$ &
$(2,1,1)$ & $(1,1,2)$ & $(0,1,1)$ & $1$ & $1$ & $2$ & $q^{2}t^{3}$\\
$(2,0,0)$ & $(0,0,2)$ & $(1,2,1)$ & $2$ & $0$ & $5$ & $q^{2}t^{7}$ &
$(3,1,1)$ & $(1,1,3)$ & $(0,1,0)$ & $2$ & $2$ & $1$ & $q^{4}t^{3}$\\
$(3,0,0)$ & $(0,0,3)$ & $(1,2,0)$ & $1$ & $1$ & $4$ & $q^{2}t^{5}$ &
$(2,2,1)$ & $(1,2,2)$ & $(0,0,1)$ & $1$ & $2$ & $1$ & $q^{3}t^{2}$\\
$(1,1,0)$ & $(0,1,1)$ & $(1,1,2)$ & $1$ & $0$ & $5$ & $qt^{6}$ &
$(3,2,1)$ & $(1,2,3)$ & $(0,0,0)$ & $3$ & $3$ & $0$ & $q^{6}t^{3}$\\
$(2,1,0)$ & $(0,1,2)$ & $(1,1,1)$ & $3$ & $0$ & $3$ & $q^{3}t^{6}$ &
$(3,3,1)$ & $(1,3,3)$ & $(0,-1,0)$ & $2$ & $5$ & $0$ & $q^{7}t^{2}$\\
$(3,1,0)$ & $(0,1,3)$ & $(1,1,0)$ & $3$ & $1$ & $2$ & $q^{4}t^{5}$ &
$(2,2,2)$ & $(2,2,2)$ & $(-1,0,1)$ & $0$ & $4$ & $1$ & $q^{4}t$\\
$(2,2,0)$ & $(0,2,2)$ & $(1,0,1)$ & $2$ & $1$ & $2$ & $q^{3}t^{4}$ &
$(3,2,2)$ & $(2,2,3)$ & $(-1,0,0)$ & $1$ & $5$ & $0$ & $q^{6}t$\\
$(3,2,0)$ & $(0,2,3)$ & $(1,0,0)$ & $3$ & $2$ & $1$ & $q^{5}t^{4}$ &
$(3,3,2)$ & $(2,3,3)$ & $(-1,-1,0)$ & $1$ & $7$ & $0$ & $q^{8}t$\\
$(3,3,0)$ & $(0,3,3)$ & $(1,-1,0)$ & $1$ & $4$ & $1$ & $q^{5}t^{2}$ &
$(3,3,3)$ & $(3,3,3)$ & $(-2,-1,0)$ & $0$ & $9$ & $0$ & $q^{9}$\\
\bottomrule
\end{tabular}
\caption{The $20$ paths of the $3\times3$ square, $m=1$.}\label{tab:paths31}
\end{table}

For $m=2$ the right side of \eqref{eq:conj-B} is a sum over the $84$ paths of the
$3\times6$ rectangle, with $b_i=2i-e_i$ and the kernel $s_2(-1)=1$, $s_2(0)=2$,
$s_2(1)=2$, $s_2(2)=1$.  \Cref{tab:paths32} lists, for each value $d$ of
$\dinv_2$, the number of paths and the sum of $q^{A^{(2)}_-}t^{A^{(2)}_+}$ over
them, hence $\widehat P_{3,2}=\sum_d(qt)^d\,r_d$ with $r_d$ the polynomial in the
last column.  The result is the polynomial $P_{3,2}$ of \eqref{eq:P3}.  For
instance the seven paths with $\dinv_2=0$ are the paths $e_1=e_2=e_3=e$ with three
consecutive north steps, and the seven paths with $\dinv_2=6$ are the paths with
$b_1\ge b_2\ge b_3\ge b_1-1$, whose three pairs contribute $s_2(0)=s_2(1)=2$ each.

\begin{table}[ht]
\centering\footnotesize
\begin{tabular}{@{}rrp{12.4cm}@{}}
\toprule
$d$ & paths & $r_d=\sum_{\dinv_2=d}q^{A^{(2)}_-}t^{A^{(2)}_+}$\\
\midrule
$0$ & $7$ & $t^{18} + qt^{13} + q^{2}t^{8} + q^{5}t^{5} + q^{8}t^{2} + q^{13}t + q^{18}$\\
$1$ & $14$ & $t^{16} + t^{14} + qt^{11} + qt^{9} + q^{2}t^{8} + q^{2}t^{6} + q^{3}t^{5} + q^{5}t^{3} + q^{6}t^{2} + q^{8}t^{2} + q^{9}t + q^{11}t + q^{14} + q^{16}$\\
$2$ & $21$ & $t^{14} + t^{12} + t^{10} + 2qt^{9} + qt^{7} + 2q^{2}t^{6} + q^{2}t^{4} + q^{3}t^{5} + q^{3}t^{3} + q^{4}t^{2} + q^{5}t^{3} + 2q^{6}t^{2} + q^{7}t + 2q^{9}t + q^{10} + q^{12} + q^{14}$\\
$3$ & $14$ & $t^{12} + t^{10} + 2qt^{7} + q^{2}t^{6} + q^{2}t^{4} + 2q^{3}t^{3} + q^{4}t^{2} + q^{6}t^{2} + 2q^{7}t + q^{10} + q^{12}$\\
$4$ & $14$ & $t^{10} + t^{8} + qt^{7} + qt^{5} + 2q^{2}t^{4} + 2q^{3}t^{3} + 2q^{4}t^{2} + q^{5}t + q^{7}t + q^{8} + q^{10}$\\
$5$ & $7$ & $t^{8} + qt^{5} + q^{2}t^{4} + q^{3}t^{3} + q^{4}t^{2} + q^{5}t + q^{8}$\\
$6$ & $7$ & $t^{6} + qt^{5} + q^{2}t^{4} + q^{3}t^{3} + q^{4}t^{2} + q^{5}t + q^{6}$\\
\bottomrule
\end{tabular}
\caption{The $84$ paths of the $3\times6$ rectangle, $m=2$, grouped by $\dinv_2$.}\label{tab:paths32}
\end{table}

\section{The labelled conjecture and a monomial basis}\label{sec:labelled}

In this section we lift \cref{conj:shuffleB} from an identity of polynomials to an
identity of symmetric functions.  In type $A$ the $q,t$-Fuss--Catalan identity
$\langle\nabla^me_n,e_n\rangle=\sum_{\pi}q^{\dinv_m}t^{\mathrm{area}}$ is the
$\langle\,\cdot\,,e_n\rangle$-coefficient of the shuffle theorem of Carlsson and
Mellit \cite{CarlssonMellit}, conjectured in \cite{HHLRU}.  There the paths acquire
labels, each labelled path contributes a fundamental quasisymmetric function, and the
numerical statement is recovered from the coefficient of one Schur function.  We
formulate the type-$B$ analogue of this passage.

The main idea is to let the labels interact with the kernel $s_m$ of
\eqref{eq:kernel}.  The support of $s_m$ splits into two halves, and an inversion of
the labels cancels one unit of $\dinv_m$ on one half, a non-inversion on the other.
The resulting labelled series $\mathcal H^+_{n,m}$ is a sum of fundamental
quasisymmetric functions, and nothing in its definition says that it is symmetric.

To prove that it is symmetric and Schur positive we follow the type-$A$ strategy of
Haglund, Haiman, Loehr, Remmel and Ulyanov \cite[Thm.~3.1.3 and \S5]{HHLRU}, see
also \cite[Ch.~6]{HaglundBook}.  The contribution of one path is identified with an
LLT polynomial of Lascoux, Leclerc and Thibon \cite{LLT97} (\cref{thm:llt}), the
symmetry of LLT polynomials is a theorem, and their Schur positivity, which for the
tuples we need rests on the parabolic Kazhdan--Lusztig polynomials
\cite[Prop.~5.3.1]{HHLRU}, \cite{LT,KT}, gives the Schur positivity of
$\mathcal H^+_{n,m}$ (\cref{prop:labelled-props}).

The type-$B$ features are that
the heights of a path take both signs and that the kernel $s_m$ is supported on $2m$
values, and the identification requires splitting every height into a content and a
colour.

At $m=1$ we prove a type-$C$ analogue of the schedule formula of Haglund and Loehr
\cite{HL}, \cite[Thm.~5.3]{HaglundBook}.  The series $\mathcal H^+_{n,1}$ is a sum
over the permutations of $\{0,1,\dots,n\}$ of a monomial in $q,t$ times a product
of $qt$-integers (\cref{thm:rooted-insertion}), and the two extreme sectors of the
sum are the type-$A$ formula itself (\cref{prop:typeA-sectors}).  In type $A$ the
schedule formula was the key to the monomial basis of the diagonal coinvariants found
by Carlsson and Oblomkov \cite[Thm.~B]{CO}, whose specialization $x_i\mapsto q$,
$y_i\mapsto t$ is the schedule formula \cite[\S4]{HagSergel}.  This is the origin
of the second conjecture below.

There are two conjectures.  The first, \cref{conj:labelled}, is a symmetric-function
identity whose $s_n$-coefficient is exactly \cref{conj:shuffleB}
(\cref{prop:sn-coefficient}).  Conjecturally it computes the bigraded Frobenius
character of the $G_n$-invariant part of Gordon's module, of which the spherical part
of \cref{thm:cherednik} is the $S_n$-invariant part.  The second,
\cref{conj:rooted-basis}, is a monomial basis for that module at $m=1$, indexed by
the permutations of $\{0,1,\dots,n\}$ and by a box of independent schedule choices
attached to each of them.  It is the type-$B$ analogue of the basis of Carlsson and
Oblomkov \cite[Thm.~B]{CO} for the type-$A$ diagonal coinvariants, and it implies the
Hilbert-series half of the first conjecture at $m=1$.

Neither conjecture implies the other, and neither is proved here.  What we prove is
the comparison with \cref{sec:shuffle} and the elementary properties of the labelled
model.  The evidence is collected in \cref{thm:labelled-evidence} and
\cref{rem:geometric-evidence}.

\subsection{Labelled paths}\label{sec:labelled-paths}

Let us fix the notation.  We keep $m\ge1$, $D=mn^2$, the heights \eqref{eq:heights}
and the statistics \eqref{eq:pathstats}.  For a set $S\subseteq\{1,\dots,n-1\}$ let
\[
 F_{n,S}(X)=\sum_{\substack{i_1\le\dots\le i_n\\ i_j<i_{j+1}\ \text{if}\ j\in S}}
 x_{i_1}\cdots x_{i_n}
\]
be Gessel's fundamental quasisymmetric function, and for $w\in S_n$ let
$\operatorname{ides}(w)=\{j:w^{-1}(j)>w^{-1}(j+1)\}$ be the inverse descent set of
$w$.  We use the expansion $s_\lambda=\sum_{Q}F_{n,\mathrm{Des}(Q)}$ of the Schur
functions, the sum over the standard Young tableaux $Q$ of shape $\lambda$
\cite[Prop.~2.4.1]{HHLRU}, and the linear independence of the $F_{n,S}$.

\begin{definition}\label{def:labelled}
A \emph{labelled path} of the $n\times mn$ rectangle is a pair $P=(e,\sigma)$ with
$0\le e_1\le\dots\le e_n\le mn$ and $\sigma\in S_n$, subject to
\[
 e_i=e_{i+1}\ \Longrightarrow\ \sigma_i<\sigma_{i+1}.
\]
We write $\mathsf{PF}^+_{n,m}$ for the set of labelled paths.  It has $(mn+1)^n$
elements.  The \emph{reading word} $w(P)$ lists the labels $\sigma_i$ in increasing
order of $(b_i,i)$.  The \emph{defect} and the \emph{labelled $\dinv$} are
\begin{equation}\label{eq:label-defect}
 \operatorname{def}_m(P)=\sum_{i<j}\Bigl(
 \mathbf 1_{1-m\le b_i-b_j\le0}\mathbf 1_{\sigma_i>\sigma_j}
 +\mathbf 1_{1\le b_i-b_j\le m}\mathbf 1_{\sigma_i<\sigma_j}\Bigr),
 \qquad
 \dinv^{\mathrm{lab}}_m=\dinv_m-\operatorname{def}_m ,
\end{equation}
and the \emph{labelled path series} is
\begin{equation}\label{eq:Hplus}
 \mathcal H^+_{n,m}(X;q,t)=\sum_{P\in\mathsf{PF}^+_{n,m}}
 q^{A^{(m)}_-(P)+\dinv^{\mathrm{lab}}_m(P)}\,
 t^{A^{(m)}_+(P)+\dinv^{\mathrm{lab}}_m(P)}\,F_{n,\operatorname{ides}(w(P))}(X).
\end{equation}
\end{definition}

The three path statistics of \eqref{eq:pathstats} depend only on $e$, and the labels
enter only through $\operatorname{def}_m$ and $\operatorname{ides}$.  The two summands
of \eqref{eq:label-defect} are the two halves of the support of the kernel $s_m$ of
\eqref{eq:kernel}.  Namely, a pair $i<j$ with $b_i-b_j\in[1-m,0]$ is an attack in
which the later north step is weakly higher, and it is cancelled by an inversion of
the labels, while a pair with $b_i-b_j\in[1,m]$ is an attack in which the earlier
north step is higher, and it is cancelled by a non-inversion.

Since $s_m(d)=[m+d]_+$
for $d\le0$ and $s_m(d)=[m+1-d]_+$ for $d\ge1$, we have $s_m(d)\ge1$ exactly on
$1-m\le d\le m$, and the two summands of \eqref{eq:label-defect} partition this
range.  Hence $\operatorname{def}_m\le\dinv_m$ pair by pair, and
$\dinv^{\mathrm{lab}}_m\ge0$.  At $m=1$ the kernel is an indicator function and
\eqref{eq:Hplus} simplifies to
\begin{gather}
 \mathcal H^+_{n,1}=\sum_P q^{A_-}t^{A_+}(qt)^{D(P)}F_{n,\operatorname{ides}(w(P))},
 \notag\\
 \label{eq:D-one}
 D(P)=\#\{i<j:b_i=b_j,\ \sigma_i<\sigma_j\}
 +\#\{i<j:b_i=b_j+1,\ \sigma_i>\sigma_j\}.
\end{gather}

\subsection{Worked examples}\label{sec:labelled-examples}

In this subsection we compute the statistics on three labelled paths, and we write
out the whole of $\mathcal H^+_{2,1}$.  We draw a labelled path as the lattice path
from $(0,0)$ to $(mn,n)$ whose $i$-th north step joins $(e_i,i-1)$ to $(e_i,i)$.  The
label $\sigma_i$ is circled in the cell immediately east of that step, the dashed
line is $y=x/m$, and the column of small numbers at the left lists the heights
$b_1,\dots,b_n$, read upwards.  Thus the number in the $i$-th row is the height $b_i$
of the $i$-th north step, which by \eqref{eq:heights} is the height of the path above
the line after that step, measured in units of $1/m$.  Unlike in type $A$ the path is
an arbitrary lattice path of the rectangle, hence the $b_i$ take both signs and the
line is crossed in both directions.  This is what the statistics have to accommodate.

\begin{example}\label{ex:pathA}
Let $n=4$, $m=1$, $e=(0,3,3,4)$ and $\sigma=(3,1,4,2)$.  The picture is
\[
\begin{tikzpicture}[scale=0.6,baseline=(current bounding box.center)]
  \draw[step=1,black!15,very thin] (0,0) grid (4,4);
  \draw[black!45,dashed] (0,0) -- (4,4);
  \draw[line width=1.1pt] (0,0) -- (0,1) -- (3,1) -- (3,2) -- (3,3) -- (4,3) -- (4,4);
  \node[draw,circle,inner sep=0.8pt,fill=white,font=\small] at (0.5,0.5) {$3$};
  \node[draw,circle,inner sep=0.8pt,fill=white,font=\small] at (3.5,1.5) {$1$};
  \node[draw,circle,inner sep=0.8pt,fill=white,font=\small] at (3.5,2.5) {$4$};
  \node[draw,circle,inner sep=0.8pt,fill=white,font=\small] at (4.5,3.5) {$2$};
  \node[font=\scriptsize,black!60,anchor=east] at (-0.30,0.5) {$1$};
  \node[font=\scriptsize,black!60,anchor=east] at (-0.30,1.5) {$-1$};
  \node[font=\scriptsize,black!60,anchor=east] at (-0.30,2.5) {$0$};
  \node[font=\scriptsize,black!60,anchor=east] at (-0.30,3.5) {$0$};
\end{tikzpicture}
\]
Here $b=(1,-1,0,0)$.  The only equality among the $e_i$ is $e_2=e_3$, and there
$\sigma_2=1<4=\sigma_3$.  Hence $P$ is admissible, the labels increase up the vertical
run.

\emph{Areas.}  The summands of $A^{(1)}_-$ are
$[1-b_i]_++[-b_i]_+=0,3,1,1$ and those of $A^{(1)}_+$ are
$[b_i]_++[b_i-1]_+=1,0,0,0$, hence $A^{(1)}_-=5$ and $A^{(1)}_+=1$.

\emph{Kernel and $\dinv$.}  At $m=1$ the kernel \eqref{eq:kernel} is the indicator
of $d\in\{0,1\}$, hence $\dinv_1$ counts the pairs $i<j$ with $b_i-b_j\in\{0,1\}$.
These are $(1,3)$ and $(1,4)$, with $d=1$, and $(3,4)$, with $d=0$.  Thus
$\dinv_1=3$.  The pairs $(1,2)$, $(2,3)$, $(2,4)$, with $d=2,-1,-1$, do not attack.

\emph{Defect.}  A pair with $d=0$ is counted by \eqref{eq:label-defect} when its
labels are in decreasing order, a pair with $d=1$ when they are in increasing
order.  Here $\sigma_1=3<4=\sigma_3$ (counted), $\sigma_1=3>2=\sigma_4$ (not
counted), and $\sigma_3=4>2=\sigma_4$ (counted).  Therefore $\operatorname{def}_1=2$
and $\dinv^{\mathrm{lab}}_1=1$.

\emph{Reading word.}  Sorting by increasing $(b_i,i)$ visits the steps in the order
$2,3,4,1$, hence $w(P)=1\,4\,2\,3$.  In this word $4$ precedes $3$ and no other
letter precedes its predecessor, hence $\operatorname{ides}(w(P))=\{3\}$.  The
contribution of $P$ to \eqref{eq:Hplus} is therefore $q^{6}t^{2}F_{4,\{3\}}$.
\end{example}

The next example has almost the same heights but $m=2$, which widens the attacking
window from two values of $d$ to $2m=4$ and lets the kernel exceed $1$.

\begin{example}\label{ex:pathB}
Let $n=3$, $m=2$, $e=(1,5,6)$ and $\sigma=(2,3,1)$.  The picture is
\[
\begin{tikzpicture}[scale=0.6,baseline=(current bounding box.center)]
  \draw[step=1,black!15,very thin] (0,0) grid (6,3);
  \draw[black!45,dashed] (0,0) -- (6,3);
  \draw[line width=1.1pt] (0,0) -- (1,0) -- (1,1) -- (5,1) -- (5,2) -- (6,2) -- (6,3);
  \node[draw,circle,inner sep=0.8pt,fill=white,font=\small] at (1.5,0.5) {$2$};
  \node[draw,circle,inner sep=0.8pt,fill=white,font=\small] at (5.5,1.5) {$3$};
  \node[draw,circle,inner sep=0.8pt,fill=white,font=\small] at (6.5,2.5) {$1$};
  \node[font=\scriptsize,black!60,anchor=east] at (-0.30,0.5) {$1$};
  \node[font=\scriptsize,black!60,anchor=east] at (-0.30,1.5) {$-1$};
  \node[font=\scriptsize,black!60,anchor=east] at (-0.30,2.5) {$0$};
\end{tikzpicture}
\]
Here $b=(1,-1,0)$.  Now $s_2(d)=2$ for $d\in\{0,1\}$, $s_2(d)=1$ for $d\in\{-1,2\}$
and $s_2(d)=0$ otherwise.  The three pairs have $d=2,1,-1$, hence all three attack
and $\dinv_2=1+2+1=4$.  At $m=1$ the pairs with $d=2$ and $d=-1$ would not have
attacked at all.

The pairs with $d=2$ and $d=1$ lie in the second range of
\eqref{eq:label-defect} and are counted when the labels increase, the pair with
$d=-1$ lies in the first range and is counted when they decrease.  Here
$\sigma_1=2<3=\sigma_2$ (counted), $\sigma_1=2>1=\sigma_3$ (not counted), and
$\sigma_2=3>1=\sigma_3$ (counted).  Therefore $\operatorname{def}_2=2$ and
$\dinv^{\mathrm{lab}}_2=2$.  Note that the pair $(1,3)$ contributes $2$ to
$\dinv_2$ and $0$ to $\operatorname{def}_2$.  The inequality
$\operatorname{def}_m\le\dinv_m$ holds pair by pair, but it is far from tight.

The
areas are $A^{(2)}_-=1+4+2=7$ and $A^{(2)}_+=1+0+0=1$, the reading word is
$w(P)=3\,1\,2$ with $\operatorname{ides}=\{2\}$, and $P$ contributes
$q^{9}t^{3}F_{3,\{2\}}$.
\end{example}

The complementation of \cref{lem:complementation} is visible in the picture.  It
turns the path by a half-turn about the centre of the rectangle and replaces each
label $c$ by $n+1-c$.  On \cref{ex:pathA} the two operations on the labels cancel.

\begin{example}\label{ex:pathC}
Let $P$ be as in \cref{ex:pathA}.  Then $e^c_i=4-e_{5-i}$ gives $e^c=(0,1,1,4)$ and
$\sigma^c_i=5-\sigma_{5-i}$ gives $\sigma^c=(3,1,4,2)=\sigma$.  The two pictures are
\[
\begin{tikzpicture}[scale=0.6,baseline=(current bounding box.center)]
  \draw[step=1,black!15,very thin] (0,0) grid (4,4);
  \draw[black!45,dashed] (0,0) -- (4,4);
  \draw[line width=1.1pt] (0,0) -- (0,1) -- (3,1) -- (3,2) -- (3,3) -- (4,3) -- (4,4);
  \node[draw,circle,inner sep=0.8pt,fill=white,font=\small] at (0.5,0.5) {$3$};
  \node[draw,circle,inner sep=0.8pt,fill=white,font=\small] at (3.5,1.5) {$1$};
  \node[draw,circle,inner sep=0.8pt,fill=white,font=\small] at (3.5,2.5) {$4$};
  \node[draw,circle,inner sep=0.8pt,fill=white,font=\small] at (4.5,3.5) {$2$};
  \node[font=\scriptsize,black!60,anchor=east] at (-0.30,0.5) {$1$};
  \node[font=\scriptsize,black!60,anchor=east] at (-0.30,1.5) {$-1$};
  \node[font=\scriptsize,black!60,anchor=east] at (-0.30,2.5) {$0$};
  \node[font=\scriptsize,black!60,anchor=east] at (-0.30,3.5) {$0$};
\end{tikzpicture}
\qquad\longmapsto\qquad
\begin{tikzpicture}[scale=0.6,baseline=(current bounding box.center)]
  \draw[step=1,black!15,very thin] (0,0) grid (4,4);
  \draw[black!45,dashed] (0,0) -- (4,4);
  \draw[line width=1.1pt] (0,0) -- (0,1) -- (1,1) -- (1,2) -- (1,3) -- (4,3) -- (4,4);
  \node[draw,circle,inner sep=0.8pt,fill=white,font=\small] at (0.5,0.5) {$3$};
  \node[draw,circle,inner sep=0.8pt,fill=white,font=\small] at (1.5,1.5) {$1$};
  \node[draw,circle,inner sep=0.8pt,fill=white,font=\small] at (1.5,2.5) {$4$};
  \node[draw,circle,inner sep=0.8pt,fill=white,font=\small] at (4.5,3.5) {$2$};
  \node[font=\scriptsize,black!60,anchor=east] at (-0.30,0.5) {$1$};
  \node[font=\scriptsize,black!60,anchor=east] at (-0.30,1.5) {$1$};
  \node[font=\scriptsize,black!60,anchor=east] at (-0.30,2.5) {$2$};
  \node[font=\scriptsize,black!60,anchor=east] at (-0.30,3.5) {$0$};
\end{tikzpicture}
\]
and $b^c=(1,1,2,0)=(1-b_4,1-b_3,1-b_2,1-b_1)$.  The attacking pairs are
transported by $(i,j)\mapsto(5-j,5-i)$.  The pair $(3,4)$ of $P$, of type $d=0$ and
counted, becomes the pair $(1,2)$ of $P^c$, again of type $d=0$ and again counted,
and the two pairs with $d=1$ become the two pairs $(1,4)$ and $(2,4)$ with $d=1$, of
which again exactly one is counted.  Thus $\dinv_1=3$ and $\operatorname{def}_1=2$
are unchanged, while $A^{(1)}_-$ and $A^{(1)}_+$ exchange the values $5$ and $1$
and the weight becomes $q^{2}t^{6}$.  The reading word is $w(P^c)=2\,3\,1\,4$ with
$\operatorname{ides}=\{1\}$, and indeed $1=n-3$.
\end{example}

Finally we write out the whole of $\mathcal H^+_{2,1}$, which the reader can compare
with \cref{ex:lowrank}.  The nine labelled paths of the $2\times2$ square, with
$F_{2,\varnothing}=s_2$ and $F_{2,\{1\}}=s_{11}$, are
\[\small
\begin{tabular}{@{}ccccccccc@{}}\toprule
$e$ & $\sigma$ & $b$ & $A^{(1)}_-$ & $A^{(1)}_+$ & $\dinv_1$ &
$\operatorname{def}_1$ & $w(P)$ & contribution\\\midrule
$00$ & $12$ & $1,2$ & $0$ & $4$ & $0$ & $0$ & $12$ & $t^{4}s_2$\\
$01$ & $12$ & $1,1$ & $0$ & $2$ & $1$ & $0$ & $12$ & $qt^{3}s_2$\\
$01$ & $21$ & $1,1$ & $0$ & $2$ & $1$ & $1$ & $21$ & $t^{2}s_{11}$\\
$02$ & $12$ & $1,0$ & $1$ & $1$ & $1$ & $1$ & $21$ & $qt\,s_{11}$\\
$02$ & $21$ & $1,0$ & $1$ & $1$ & $1$ & $0$ & $12$ & $q^{2}t^{2}s_2$\\
$11$ & $12$ & $0,1$ & $1$ & $1$ & $0$ & $0$ & $12$ & $qt\,s_2$\\
$12$ & $12$ & $0,0$ & $2$ & $0$ & $1$ & $0$ & $12$ & $q^{3}t\,s_2$\\
$12$ & $21$ & $0,0$ & $2$ & $0$ & $1$ & $1$ & $21$ & $q^{2}s_{11}$\\
$22$ & $12$ & $-1,0$ & $4$ & $0$ & $0$ & $0$ & $12$ & $q^{4}s_2$\\
\bottomrule
\end{tabular}
\]
Summing the last column we obtain
\[
 \mathcal H^+_{2,1}
 =\bigl(q^4+q^3t+q^2t^2+qt^3+t^4+qt\bigr)s_2+\bigl(q^2+qt+t^2\bigr)s_{11}.
\]
Its $s_2$-coefficient is $\widehat P_{2,1}$, in accordance with
\cref{lem:labelled-basic}\textup{(i)}, and setting $q=t=1$ gives
$6s_2+3s_{11}=h_2[3X]$, as in \cref{lem:labelled-basic}\textup{(iii)}.

\begin{lemma}\label{lem:labelled-basic}
Let $n,m\ge1$.
\begin{enumerate}[label=\textup{(\roman*)}]
\item Every lattice path $\pi$ of the rectangle carries exactly one labelling $P$
with $\operatorname{ides}(w(P))=\varnothing$, namely the one whose reading word is the
identity, and for it $\operatorname{def}_m(P)=0$.  Consequently
\[
 [s_n]\,\mathcal H^+_{n,m}=\widehat P_{n,m}.
\]
\item $\langle\mathcal H^+_{n,m},h_1^n\rangle
 =\sum_{P}q^{A^{(m)}_-+\dinv^{\mathrm{lab}}_m}t^{A^{(m)}_++\dinv^{\mathrm{lab}}_m}$,
the weight enumerator of all $(mn+1)^n$ labelled paths.
\item $\mathcal H^+_{n,m}(X;1,1)=h_n[(mn+1)X]$, the Frobenius character of the
permutation representation of $S_n$ on the functions $[n]\to\{0,1,\dots,mn\}$.
\end{enumerate}
\end{lemma}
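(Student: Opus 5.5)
Fix a lattice path $\pi$, i.e.\ a weakly increasing sequence $e$, and study its admissible labellings $\sigma$.

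\emph{Part (i).}
\begin{itemize}
\item The reading order is the total order on the north steps given by increasing $(b_i,i)$. Let $\tau$ be the bijection sending each step $i$ to its rank in this order.
\item The labelling $\sigma=\tau$ has reading word $1\,2\cdots n$, so $\operatorname{ides}(w(P))=\varnothing$. Conversely, $\operatorname{ides}(w)=\varnothing$ forces $w$ to be the identity, hence $\sigma=\tau$; this gives uniqueness.
\item Admissibility: if $e_i=e_{i+1}$ then $b_{i+1}=b_i+m>b_i$, so step $i+1$ is read after step $i$ and $\sigma_i<\sigma_{i+1}$.
\item Vanishing of the defect, for $i<j$:
 \begin{itemize}
 \item If $b_i\le b_j$, then step $i$ precedes step $j$ in reading order, so $\sigma_i<\sigma_j$. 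Hence no inversion occurs on the range $1-m\le b_i-b_j\le0$.
 \item If $b_i>b_j$, then $\sigma_i>\sigma_j$. Hence no non-inversion occurs on $1\le b_i-b_j\le m$.
 \end{itemize}
 So $\operatorname{def}_m(P)=0$, $\dinv^{\mathrm{lab}}_m=\dinv_m$, and the weight of $P$ equals the term of $\pi$ in \eqref{eq:Phat-path}.
\end{itemize}

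To extract the $s_n$-coefficient, expand $\mathcal H^+_{n,m}$ in Schur functions; this is possible once symmetry is known. Only the single standard tableau of shape $(n)$ has empty descent set, and $F_{n,\varnothing}$ appears in $s_\lambda$ only for $\lambda=(n)$. So $[s_n]\mathcal H^+_{n,m}$ equals the coefficient of $F_{n,\varnothing}$, which by the above is $\widehat P_{n,m}$.

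If symmetry is not yet available at this point, I would interpret $[s_n]$ as $\langle\,\cdot\,,e_n\rangle$-dual extraction via the $F_{n,\varnothing}$-coefficient in the fundamental expansion, and point ahead to \cref{prop:labelled-props}. This is the only delicate step.

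\emph{Part (ii).} Write $\langle\cdot,h_1^n\rangle$ as the pairing with $p_1^n$. Since $\langle F_{n,S},h_1^n\rangle=1$ for every $S$ (each $F_{n,S}$ contains the monomial $x_1x_2\cdots x_n$ with coefficient $1$), the formula follows by linearity.

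\emph{Part (iii).}
\begin{itemize}
\item At $q=t=1$, $\mathcal H^+_{n,m}=\sum_P F_{n,\operatorname{ides}(w(P))}$.
\item Build a bijection between $\mathsf{PF}^+_{n,m}$ and functions $f:[n]\to\{0,\dots,mn\}$. Set $f(\sigma_i)=e_i$. Conversely, sort the labels by $f$-value, breaking ties increasingly; admissibility is exactly this tie-breaking rule.
\item It remains to show that $\sum_P F_{n,\operatorname{ides}(w(P))}$ equals the Frobenius characteristic $h_n[(mn+1)X]$ of this permutation module. I would regroup by the fibres of $f$ (equivalently, by weak compositions), and show that for a fixed composition the sum of $F$'s over admissible labellings is the product $h_{\mu_0}h_{\mu_1}\cdots$. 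This uses the standard fact that $\sum F_{\operatorname{ides}(w)}$ over shuffles of increasing blocks equals a product of $h$'s, applied to the word $w(P)$ arranged by heights.
\item The main obstacle is to check that reading by $(b_i,i)$ rather than by $e$-value still yields a shuffle of increasing runs (one run per value of $e$). Within a fixed $e$-value the heights $b$ increase by $m$ with $i$ and the labels increase, so each run appears in increasing order in $w$; then the shuffle lemma applies.
\item Summing over compositions gives $h_n[(mn+1)X]$.
\end{itemize}
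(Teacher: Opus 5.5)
Your proof is correct and follows the paper's argument: the same unique labelling with identity reading word and zero defect, the same $F_{n,\varnothing}$-coefficient extraction, and the same per-path reduction to permutations increasing on the run blocks, giving $\prod_s h_{|R_s|}$ before summing over weak compositions. The $F_{n,\varnothing}$-extraction tacitly uses the symmetry proved later in \cref{prop:labelled-props}, exactly as the paper does; note that this coefficient is $\langle\,\cdot\,,h_n\rangle$, not $\langle\,\cdot\,,e_n\rangle$. The paper proves the block identity directly by a standardization bijection with functions weakly increasing on each block, whereas you quote the shuffle lemma. That lemma must be applied to $w^{-1}$, using $\operatorname{ides}(w)=\operatorname{Des}(w^{-1})$: $w^{-1}$ is a shuffle of the increasing words listing the position sets of the runs, and these sets need not be consecutive.
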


\begin{proof}
(i) Fix $\pi$, hence $e$ and $b$.  The reading order is the linear order on
$\{1,\dots,n\}$ given by $i\prec j$ iff $(b_i,i)<(b_j,j)$ lexicographically.  The
condition $\operatorname{ides}(w)=\varnothing$ holds iff $w^{-1}$ has no descent, that
is iff $w$ is the identity, that is iff $\sigma$ is increasing along $\prec$.  This
determines $\sigma$ uniquely, and this $\sigma$ is admissible.  Indeed, if
$e_i=e_{i+1}$ then $b_{i+1}=b_i+m>b_i$, hence $i\prec i+1$ and
$\sigma_i<\sigma_{i+1}$.

For this $\sigma$ and $i<j$ we have $\sigma_i>\sigma_j$ iff
$j\prec i$ iff $b_j<b_i$, since ties in $b$ are broken by the index.  In the first sum
of \eqref{eq:label-defect} the two conditions $b_i-b_j\le0$ and $b_i-b_j>0$ are
incompatible, and in the second sum the two conditions $b_i-b_j\ge1$ and
$b_i-b_j\le0$ are incompatible.  Hence $\operatorname{def}_m(P)=0$, and the weight of
$P$ is $(qt)^{\dinv_m(\pi)}q^{A^{(m)}_-(\pi)}t^{A^{(m)}_+(\pi)}$, the term of $\pi$ in
\eqref{eq:Phat-path}.

Finally, $s_\lambda=\sum_{Q}F_{n,\mathrm{Des}(Q)}$ over the
standard Young tableaux $Q$ of shape $\lambda$, and the only shape with a tableau of
empty descent set is the single row.  Therefore a symmetric $f=\sum_Sc_SF_{n,S}$ has
$c_\varnothing=[s_n]f$.

(ii) Each $F_{n,S}$ contains the monomial $x_1\cdots x_n$ exactly once, and
$\langle\,\cdot\,,h_1^n\rangle$ is the coefficient of $x_1\cdots x_n$.

(iii) Fix a path $e$, let $R_1,\dots,R_k$ be its maximal vertical runs, the maximal
blocks of consecutive indices $i$ with equal $e_i$, and let $C_s\subseteq\{1,\dots,n\}$
be the set of positions in the reading order occupied by the steps of $R_s$.  Along a
run the heights increase, hence the steps of $R_s$ appear in the reading order in the
order of their indices, and a labelling $\sigma$ is admissible iff its labels increase
along each run.  Therefore the words $w(e,\sigma)$, $\sigma$ admissible for $e$, are
exactly the permutations $u\in S_n$ which are increasing on each block $C_s$.

For a
permutation $u$ the function $F_{n,\operatorname{ides}(u)}$ is the sum of the
monomials $x_{v_1}\cdots x_{v_n}$ over the sequences $v_1\le\dots\le v_n$ with
$v_c<v_{c+1}$ whenever $u^{-1}(c)>u^{-1}(c+1)$.

We claim that the pairs $(u,v)$
correspond bijectively to the functions $g\colon\{1,\dots,n\}\to\{1,2,\dots\}$, by
$g(p)=v_{u(p)}$, and that under this correspondence $u$ is increasing on a block $C$
exactly when $g$ is weakly increasing on $C$.

Indeed, given $g$ let $u$ be its
standardization with ties broken from left to right, that is $u(p)<u(p')$ iff
$g(p)<g(p')$, or $g(p)=g(p')$ and $p<p'$, and let $v_c=g(u^{-1}(c))$.  Then $v$ is
weakly increasing, and if $u^{-1}(c)>u^{-1}(c+1)$ then $v_c\ne v_{c+1}$, since an
equality would be broken the other way, hence $v_c<v_{c+1}$.

Conversely, given
$(u,v)$ put $g(p)=v_{u(p)}$.  If $u(p)<u(p')$ then $g(p)\le g(p')$, and in case of
equality all $v_c$ with $u(p)\le c\le u(p')$ coincide, hence no $c$ in
$\{u(p),\dots,u(p')-1\}$ lies in $\operatorname{ides}(u)$, hence $p<p'$.  Thus $u$ is
the standardization of $g$, and the two constructions are inverse to each other.

Finally, if $g$ is weakly increasing on $C$ then its standardization is increasing on
$C$, and if $u$ is increasing on $C$ then $g=v\circ u$ is weakly increasing on $C$.
This proves the claim, and it gives
\[
 \sum_{\sigma}F_{n,\operatorname{ides}(w(e,\sigma))}(X)
 =\sum_{g}\ \prod_{p=1}^nx_{g(p)}=\prod_{s=1}^kh_{|R_s|}(X),
\]
the middle sum over the functions $g$ which are weakly increasing on each block
$C_s$.

A path $e$ is a multiset of $n$ values from $\{0,1,\dots,mn\}$, and the sizes
of its runs are the multiplicities $a_0,\dots,a_{mn}$ of the values.  Summing over
the paths we obtain
\[
 \mathcal H^+_{n,m}(X;1,1)=\sum_{a_0+\dots+a_{mn}=n}\ \prod_{j=0}^{mn}h_{a_j}(X)
 =h_n\bigl[(mn+1)X\bigr],
\]
by the expansion $h_n[(y_0+\dots+y_{mn})X]=\sum_a\prod_jy_j^{a_j}h_{a_j}[X]$ at
$y_j=1$.  The permutation representation of $S_n$ on the functions
$[n]\to\{0,\dots,mn\}$ is the direct sum over the multisets of the representations
induced from the trivial representation of the Young subgroups $\prod_jS_{a_j}$,
whose Frobenius character is the same product.
\end{proof}

Since $A^{(m)}_\pm$ and $\dinv_m$ depend only on $e$, the sum \eqref{eq:Hplus}
groups by path,
\begin{equation}\label{eq:path-grouping}
 \mathcal H^+_{n,m}(X;q,t)=\sum_{e}q^{A^{(m)}_-(e)}\,t^{A^{(m)}_+(e)}\,
 (qt)^{\dinv_m(e)}\;L_e\bigl(X;(qt)^{-1}\bigr),
\end{equation}
where $e$ runs over the $\binom{(m+1)n}{n}$ paths of the rectangle and
\begin{equation}\label{eq:Le}
 L_e(X;z)=\sum_{\sigma}z^{\operatorname{def}_m(e,\sigma)}\,
 F_{n,\operatorname{ides}(w(e,\sigma))}(X),
\end{equation}
the inner sum over the labellings admissible for $e$.  The polynomial $L_e$ is the
part of $\mathcal H^+_{n,m}$ contributed by the labels of one path.  It is a
quasisymmetric function of degree $n$ with coefficients in $\mathbb N[z]$, and nothing
in its definition says that it is symmetric.

The aim of the rest of this subsection is to identify $L_e$ with an LLT polynomial.
This is the type-$B$ counterpart of the LLT expansion of the type-$A$ shuffle formula
\cite[Thm.~3.1.3]{HHLRU}, \cite[Ch.~6]{HaglundBook}, and it is the tool by which the
two properties of a Frobenius character, symmetry and Schur positivity, are proved
for $\mathcal H^+_{n,m}$ in \cref{prop:labelled-props} below.  The main idea is to
split every height into its quotient and its remainder modulo $m$, $b_i=mq_i+r_i$.
The quotient $q_i$ becomes the content of a cell and the remainder $r_i$ orders the
components, and the $2m$ values of $b_i-b_j$ on which the kernel $s_m$ is supported
collapse to the two values of the attacking relation of LLT theory.

We use the LLT polynomials in the form of the definition of Blasiak, Haiman, Morse,
Pun and Seelinger \cite[Def.~4.1.1]{BHMPS}.  Let
$\boldsymbol\nu=(\nu^{(1)},\dots,\nu^{(k)})$ be a tuple of skew diagrams.  A cell
of $\boldsymbol\nu$ is a cell of one of the components, the components being kept
apart, hence $\boldsymbol\nu$ has $|\nu^{(1)}|+\dots+|\nu^{(k)}|$ cells.  Let $c(u)$
be the content of a cell $u$, that is its column index minus its row index, and for
$u\in\nu^{(s)}$ put
\[
 \tilde c(u)=c(u)+\varepsilon s,\qquad 0<\varepsilon<1/k,
\]
the \emph{adjusted content}.

The \emph{reading order} lists all the cells of
$\boldsymbol\nu$ by increasing $\tilde c$, that is by increasing content, the cells of
one content in the order of their components, and cells of equal content in one
component, which do not occur for the tuples of rows below, in any order.

Two cells
$u$ and $v$ of $\boldsymbol\nu$, in the same component or in different ones, form
an \emph{attacking pair} $(u,v)$ if $0<\tilde c(v)-\tilde c(u)<1$.  The pair is
ordered, $u$ being the cell which comes first in the reading order, and the condition
says that $v$ follows $u$ by less than one unit of content.  Explicitly, $(u,v)$ is
attacking if either $c(v)=c(u)$ and $v$ lies in a later component than $u$, or
$c(v)=c(u)+1$ and $v$ lies in an earlier component than $u$.  Two cells of the same
component never attack.

A \emph{semistandard filling} $T$ of $\boldsymbol\nu$ is a
semistandard filling of each component, and an attacking pair $(u,v)$ is an
\emph{inversion} of $T$ if the earlier cell carries the larger entry, $T(u)>T(v)$.
Writing $\operatorname{inv}(T)$ for the number of inversions of $T$ and $x^T$ for
the product of the $x_{T(u)}$ over the cells, the LLT polynomial is
$G_{\boldsymbol\nu}(X;z)=\sum_Tz^{\operatorname{inv}(T)}x^T$, summed over the
semistandard fillings, and it is a symmetric function \cite[\S4.1]{BHMPS}.

When every component is a single row, a semistandard filling is a weakly increasing
filling of each row, and a standard filling, one with the entries $1,\dots,n$, is a
filling which increases along each row.

\begin{construction}[The tuple of rows of a path]\label{def:nu-of-e}
Let $e$ be a path of the $n\times mn$ rectangle with heights $b_i=mi-e_i$.  The tuple
$\boldsymbol\nu(e)$ is built in three steps.
\begin{enumerate}
\item[(a)] Split every height as $b_i=mq_i+r_i$ with $q_i=\lfloor b_i/m\rfloor$ and
 $0\le r_i<m$.  We call $q_i$ the \emph{content} and $r_i$ the \emph{colour} of the
 $i$-th north step.  At $m=1$ every colour is $0$ and $q_i=b_i$.
\item[(b)] Cut the sequence of north steps into the maximal vertical runs
 $R_1,\dots,R_k$ of the path, the maximal blocks of consecutive indices $i$ with the
 same $e_i$.  Along a run $b$ increases by $m$ at each step, hence $q$ increases by
 $1$ and the colour is constant.  We write $r(R)$ for the colour of the run $R$.
\item[(c)] Replace the run $R=\{i_1<\dots<i_l\}$ by a single row of $l$ cells, the
 cell of the step $i$ being placed at content $q_i$, hence the row occupies the
 consecutive contents $q_{i_1},q_{i_1}+1,\dots,q_{i_1}+l-1$.  The row has as many
 cells as the run has north steps.  Order the rows by
 increasing colour, and the rows of one colour by increasing first index, that is in
 the order in which they occur along the path.
\end{enumerate}
The resulting ordered tuple of rows is $\boldsymbol\nu(e)=(\nu^{(1)},\dots,\nu^{(k)})$.
It has one component for each vertical segment of the path, that is $k$ is the number
of distinct values among $e_1,\dots,e_n$, the sizes $|\nu^{(1)}|,\dots,|\nu^{(k)}|$ are
the lengths of the segments, that is the multiplicities of these values, and they add
up to $n$.  Every cell is the cell of one north step $i$, and $c(i)=q_i$.
\end{construction}

Every component of $\boldsymbol\nu(e)$ is a row, hence a horizontal strip.  A
standard filling of $\boldsymbol\nu(e)$ assigns the entries $1,\dots,n$ to the north
steps so that they increase along each run, and by \cref{def:labelled} this is exactly
a labelling $\sigma$ admissible for $e$, the label $\sigma_i$ being placed in the cell
of the step $i$.  The tuple forgets the heights themselves, but it keeps what the
kernel and the defect see, namely the contents $q_i$ and the colours $r_i$, which
together decide whether two steps attack and in which order.

\begin{example}\label{ex:tuple-big}
Let $n=6$, $m=2$ and $e=(3,3,4,8,8,8)$, a path of the $6\times12$ rectangle with
three vertical segments $R_1=\{1,2\}$, $R_2=\{3\}$ and $R_3=\{4,5,6\}$, at the
abscissae $3$, $4$ and $8$,
\[
\begin{tikzpicture}[scale=0.45,baseline=(current bounding box.center)]
  \draw[step=1,black!15,very thin] (0,0) grid (12,6);
  \draw[black!45,dashed] (0,0) -- (12,6);
  \draw[line width=1.1pt] (0,0) -- (3,0) -- (3,2) -- (4,2) -- (4,3) -- (8,3) -- (8,6) -- (12,6);
  \node[font=\scriptsize,black!60,anchor=east] at (-0.30,0.5) {$-1$};
  \node[font=\scriptsize,black!60,anchor=east] at (-0.30,1.5) {$1$};
  \node[font=\scriptsize,black!60,anchor=east] at (-0.30,2.5) {$2$};
  \node[font=\scriptsize,black!60,anchor=east] at (-0.30,3.5) {$0$};
  \node[font=\scriptsize,black!60,anchor=east] at (-0.30,4.5) {$2$};
  \node[font=\scriptsize,black!60,anchor=east] at (-0.30,5.5) {$4$};
  \node[fill=white,inner sep=1pt,font=\scriptsize] at (3.7,1.0) {$R_1$};
  \node[fill=white,inner sep=1pt,font=\scriptsize] at (4.7,2.5) {$R_2$};
  \node[fill=white,inner sep=1pt,font=\scriptsize] at (8.7,4.5) {$R_3$};
\end{tikzpicture}
\]
The heights $b_i=2i-e_i$, listed at the left, and their quotients and remainders
modulo $2$ are
\[
\begin{tabular}{@{}lcccccc@{}}\toprule
$i$ & $1$ & $2$ & $3$ & $4$ & $5$ & $6$\\ \midrule
$e_i$ & $3$ & $3$ & $4$ & $8$ & $8$ & $8$\\
$b_i$ & $-1$ & $1$ & $2$ & $0$ & $2$ & $4$\\
$q_i$ & $-1$ & $0$ & $1$ & $0$ & $1$ & $2$\\
$r_i$ & $1$ & $1$ & $0$ & $0$ & $0$ & $0$\\ \bottomrule
\end{tabular}
\]
Along each segment the content rises by one and the colour is constant, as step (b)
says.

The segments $R_1$, $R_2$, $R_3$ have the colours $1$, $0$, $0$.  The two
segments of colour $0$ come first, in the order of their first indices, and the
segment of colour $1$ comes last.  Hence $\nu^{(1)}$ is the row of $R_2$, one cell at
content $1$, $\nu^{(2)}$ is the row of $R_3$, three cells at contents $0,1,2$, and
$\nu^{(3)}$ is the row of $R_1$, two cells at contents $-1,0$,
\[
\begin{tikzpicture}[scale=0.62,baseline=(current bounding box.center)]
  \draw[black!25,very thin] (-1.6,-0.35) -- (3.6,-0.35);
  \foreach \c in {-1,0,1,2}
    \node[font=\scriptsize,black!55] at (\c,-0.7) {$\c$};
  \foreach \c in {-1,0,1,2} \draw[black!25,very thin] (\c,-0.45) -- (\c,-0.25);
  \draw (1,2) rectangle (2,3);
  \node[font=\small] at (1.5,2.5) {$3$};
  \node[left,font=\scriptsize,black!60] at (-1.55,2.5) {$\nu^{(1)}$};
  \draw (0,1) rectangle (1,2); \draw (1,1) rectangle (2,2); \draw (2,1) rectangle (3,2);
  \node[font=\small] at (0.5,1.5) {$4$}; \node[font=\small] at (1.5,1.5) {$5$};
  \node[font=\small] at (2.5,1.5) {$6$};
  \node[left,font=\scriptsize,black!60] at (-1.55,1.5) {$\nu^{(2)}$};
  \draw (-1,0) rectangle (0,1); \draw (0,0) rectangle (1,1);
  \node[font=\small] at (-0.5,0.5) {$1$}; \node[font=\small] at (0.5,0.5) {$2$};
  \node[left,font=\scriptsize,black!60] at (-1.55,0.5) {$\nu^{(3)}$};
\end{tikzpicture}
\]
where each cell carries the index of its north step and the horizontal position of a
cell is its content.

The sizes of the components are $1,3,2$, the lengths of the
segments, and they add up to $n=6$.  A labelling $\sigma$ writes $\sigma_i$ in the
cell of the step $i$, and it is admissible exactly when every row increases from left
to right.

Note that the row of $R_3$ precedes the row of $R_1$ in the tuple although
$R_3$ comes later along the path, because the colour is compared first.
\end{example}

At $m=1$ all colours are $0$ and the rows stay in their order along the path.  We
return to the running example, in which the attacking pairs can also be seen.

\begin{example}\label{ex:tuple}
We take the labelled path of \cref{ex:pathA}, with $n=4$, $m=1$, $e=(0,3,3,4)$,
$\sigma=(3,1,4,2)$ and $b=(1,-1,0,0)$.  Since $m=1$ every $r_i$ is $0$ and
$q_i=b_i$.  The maximal vertical runs are $\{1\}$, $\{2,3\}$, $\{4\}$.  They all have
$r=0$, hence they are ordered by their first index.  Thus $\boldsymbol\nu(e)$ is the
triple consisting of one cell of content $1$, a row of two cells of contents $-1,0$,
and one cell of content $0$, carrying the labels $3$, $(1,4)$ and $2$,
\[
\begin{tikzpicture}[scale=0.62,baseline=(current bounding box.center)]
  \draw[black!25,very thin] (-1.6,-0.35) -- (2.6,-0.35);
  \foreach \c in {-1,0,1,2}
    \node[font=\scriptsize,black!55] at (\c,-0.7) {$\c$};
  \foreach \c in {-1,0,1,2} \draw[black!25,very thin] (\c,-0.45) -- (\c,-0.25);
  \draw (1,2) rectangle (2,3);
  \node[font=\small] at (1.5,2.5) {$3$};
  \node[left,font=\scriptsize,black!60] at (-1.15,2.5) {$\nu^{(1)}$};
  \draw (-1,1) rectangle (0,2); \draw (0,1) rectangle (1,2);
  \node[font=\small] at (-0.5,1.5) {$1$}; \node[font=\small] at (0.5,1.5) {$4$};
  \node[left,font=\scriptsize,black!60] at (-1.15,1.5) {$\nu^{(2)}$};
  \draw (0,0) rectangle (1,1);
  \node[font=\small] at (0.5,0.5) {$2$};
  \node[left,font=\scriptsize,black!60] at (-1.15,0.5) {$\nu^{(3)}$};
\end{tikzpicture}
\]
where the horizontal position of a cell is its content.  With $\varepsilon$ small the
adjusted contents are $\tilde c=1+\varepsilon$ for the label $3$, $-1+2\varepsilon$
and $2\varepsilon$ for the labels $1$ and $4$, and $3\varepsilon$ for the label
$2$.

An ordered pair attacks when the difference of the adjusted contents lies in
$(0,1)$, which here happens for three pairs, namely the labels $4$ and $2$, with
difference $\varepsilon$, the labels $4$ and $3$, with difference $1-\varepsilon$,
and the labels $2$ and $3$, with difference $1-2\varepsilon$.  In terms of the
north steps these are the pairs $(3,4)$, $(1,3)$ and $(1,4)$ of \cref{ex:pathA},
which are exactly the three pairs counted there by $\dinv_1$.  The pair $(2,3)$ of
the same component and the pairs $(1,2)$, $(2,4)$ do not attack, again as there.

Of
the three attacking pairs, the first two carry the larger entry first and are
inversions, the third does not.  Hence $\operatorname{inv}=2=\operatorname{def}_1(P)$,
as \cref{thm:llt} below asserts.
\end{example}

The next theorem says that the defect of an admissible labelling is the inversion
number of the corresponding standard filling, that the reading orders of the path and
of the tuple agree, and consequently that $L_e$ is the LLT polynomial of
$\boldsymbol\nu(e)$.

\begin{theorem}\label{thm:llt}
For all $n,m\ge1$ and every path $e$ of the $n\times mn$ rectangle, the polynomial
$L_e$ of \eqref{eq:Le} is the LLT polynomial of the tuple of rows $\boldsymbol\nu(e)$
of \cref{def:nu-of-e},
\[
 L_e(X;z)=G_{\boldsymbol\nu(e)}(X;z).
\]
In particular $L_e$ is a symmetric function, and by \eqref{eq:path-grouping}
$\mathcal H^+_{n,m}$ is a sum of LLT polynomials, evaluated at $z=(qt)^{-1}$, with
monomial coefficients.
\end{theorem}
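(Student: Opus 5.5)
The plan is to match, term by term, the sum \eqref{eq:Le} defining $L_e$ with the standard-filling expansion of the LLT polynomial. For a tuple of rows there is a standard fact, used in the type-$A$ argument of \cite[\S3]{HHLRU}: by standardization, $G_{\boldsymbol\nu}(X;z)=\sum_{T}z^{\operatorname{inv}(T)}F_{n,\operatorname{ides}(\mathrm{rw}(T))}(X)$. Here $T$ runs over the standard fillings and $\mathrm{rw}(T)$ is the word read in the reading order of $\boldsymbol\nu$. This holds because ties among equal entries are broken along the reading order, which is compatible with both row-weak-increase and the inversion count, since equal entries are never inversions. We already know from the construction that standard fillings of $\boldsymbol\nu(e)$ correspond bijectively to admissible labellings $\sigma$ of $e$, with $\sigma_i$ placed in the cell of step $i$. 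So it suffices to prove two things. First, the reading order of $\boldsymbol\nu(e)$ on cells agrees with the path reading order $(b_i,i)$, so that $\mathrm{rw}(T)=w(e,\sigma)$. Second, $\operatorname{inv}(T)=\operatorname{def}_m(e,\sigma)$.

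For the first claim, compare steps $i,j$ through $b=mq+r$. Ordering by $b$ is ordering by $q$ first and then by $r$. In the tuple, cells are ordered by content $q$ and then by component index, and components are ordered by colour first. So within equal $q$ the component order refines the colour order, and when $b_i=b_j$ we have equal $q$ and equal $r$. In that case both steps are rows of the same colour, ordered by first index, and two distinct steps with equal height lie in distinct runs, so the run order matches the index order $i<j$. Equal content inside one component cannot occur, so the orders coincide.

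For the second claim, classify the attacking pairs. A pair attacks in $\boldsymbol\nu(e)$ either when the contents are equal and $v$ lies in a later component, or when the content increases by $1$ and $v$ lies in an earlier component. Translate these into $d=b_i-b_j$ for $i<j$ by cases on the colours. Equal content with $r_i\le r_j$ (same colour, later run, or larger colour) gives $d\in[1-m,0]$. Equal content with $r_i>r_j$ gives $d\in[1,m-1]$. Content difference $1$ with the later cell in an earlier component, meaning strictly smaller colour or same colour and earlier run, covers the remaining values $d=m$ and part of $[1-m,-1]$, and symmetric cases cover the rest. Then check that the union of unordered attacking pairs is exactly $\{i<j: 1-m\le b_i-b_j\le m\}$. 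Two steps in one run have $d=\pm m$ in the wrong direction to be counted; I expect these to fall out as the cases of same component, which never attack, together with the admissibility constraint. In each case the earlier cell in reading order is the step with smaller $(b,\cdot)$. Hence "earlier cell carries larger label" becomes $\sigma_i>\sigma_j$ exactly when $b_i\le b_j$, that is $d\le0$, and $\sigma_i<\sigma_j$ when $d\ge1$. This reproduces the two summands of \eqref{eq:label-defect}.

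The main obstacle is the bookkeeping in the second claim. Every value $d$ in $[1-m,m]$ must be realized by exactly one of the two LLT attack types, and nothing outside that range may attack. Pairs at the same $d=\pm m$ in the same run must be excluded correctly. I would organize this with a small table indexed by $(q_j-q_i)\in\{-1,0,1\}$ and the sign of $r_j-r_i$. Symmetry of $L_e$ then follows from the symmetry of LLT polynomials \cite[\S4.1]{BHMPS}, and the statement about $\mathcal H^+_{n,m}$ follows from \eqref{eq:path-grouping}.
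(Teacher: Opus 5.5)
Your plan is correct and is essentially the paper's proof. Like the paper, you reduce $G_{\boldsymbol\nu(e)}$ by standardization to standard fillings and show that the tuple's reading order coincides with the order by $(b_i,i)$. You then match the attacking pairs with $1-m\le b_i-b_j\le m$ by a case table in $\Delta=q_i-q_j$ and $\rho=r_i-r_j$, using that for $i<j$ in different runs the component of $i$ precedes that of $j$ exactly when $r_i\le r_j$.

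Reading the inversion condition directly off the agreed reading order, instead of rechecking it in each case as the paper does, is a valid shortcut. One correction to your description of the table: the $|\Delta|=1$ attacks give all of $d\in[1,m]$ when $\rho\le0$ and all of $d\in[1-m,-1]$ when $\rho\ge1$, not only $d=m$. The bijection comes from the fact that for a given pair, with $\rho$ fixed, $d=m\Delta+\rho$ lies in $[1-m,m]$ exactly for the one or two values of $\Delta$ that attack; it does not come from covering $d$-values across different pairs.
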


\begin{proof}
The main points are in Steps $2$ and $3$, and we first describe them in words.  The
defect \eqref{eq:label-defect} sees a pair $i<j$ of north steps when $b_i-b_j$ lies
in $[1-m,m]$, a range of $2m$ integers, and it counts the pair when the labels form
an inversion, $\sigma_i>\sigma_j$, on the half $[1-m,0]$ and a non-inversion,
$\sigma_i<\sigma_j$, on the half $[1,m]$.  The inversion number of a filling of
$\boldsymbol\nu(e)$ sees a pair of cells when their adjusted contents differ by less
than one, which leaves only two possibilities for their contents, equal or differing
by one, and it counts the pair when the earlier cell in the reading order carries the
larger entry.

Step $2$ shows that the two descriptions agree.  Writing
$b_i-b_j=m\Delta+\rho$ with $\Delta$ the difference of the contents and $\rho$ the
difference of the colours, the range $[1-m,m]$ corresponds to $\Delta\in\{0,1\}$
when $\rho\le0$ and to $\Delta\in\{-1,0\}$ when $\rho\ge1$, where the sign of
$\rho$ is the order of the two components by Step $1$.  In each case the cell read
first is the cell of $i$ on the half $[1-m,0]$, where $b_i\le b_j$, and the cell of
$j$ on the half $[1,m]$, where $b_i>b_j$, so that an inversion of the labels on the
first half and a non-inversion on the second half both say that the earlier cell
carries the larger entry.

Step $3$ shows that the reading order of
$\boldsymbol\nu(e)$, by increasing adjusted content, is the reading order of the
path, by increasing $(b_i,i)$.  Hence the reading words of a labelling and of the
corresponding standard filling have the same inverse descent set, and the passage
from standard to semistandard fillings is the usual standardization.

Write $\pi(i)$
for the index of the run containing $i$.  Then the cell of the label $i$ has
$\tilde c(i)=q_i+\varepsilon\pi(i)$, and for $i<j$ we put
\[
 \Delta=q_i-q_j,\qquad \rho=r_i-r_j\in[1-m,m-1],\qquad b_i-b_j=m\Delta+\rho.
\]

\emph{Step $1$, the component order.}  Let $i<j$ lie in different runs.  If
$r_i<r_j$ then $\pi(i)<\pi(j)$ by the first coordinate of the ordering.  If
$r_i=r_j$ then the two runs are distinct blocks of consecutive indices, hence the one
containing $i$ lies entirely to the left of the one containing $j$ and has the
smaller minimum, whence again $\pi(i)<\pi(j)$.  If $r_i>r_j$ then $\pi(i)>\pi(j)$.
Thus for $i<j$ in different runs,
\begin{equation}\label{eq:pi-vs-r}
 \pi(i)<\pi(j)\iff r_i\le r_j\iff\rho\le0 .
\end{equation}

\emph{Step $2$, attacking pairs and inversions.}  Suppose first that $i<j$ lie in
the same run, say $j=i+s$ with $s\ge1$.  Then $q_j=q_i+s$ and $\pi(i)=\pi(j)$, hence
$\tilde c(i)-\tilde c(j)=-s\le-1$ and $\tilde c(j)-\tilde c(i)=s\ge1$, and neither
ordered pair is attacking.  On the other side $b_i-b_j=-ms\le-m<1-m$, hence the pair
lies in neither range of \eqref{eq:label-defect} and $\operatorname{def}_m$ ignores
it.  Both statistics ignore the pair.

Now let $i<j$ lie in different runs and put $\eta=\pi(j)-\pi(i)\ne0$, hence
\[
 \tilde c(j)-\tilde c(i)=-\Delta+\varepsilon\eta,\qquad |\varepsilon\eta|<1 .
\]
Assume first $\rho\le0$, hence $\eta>0$ by \eqref{eq:pi-vs-r}.  There are four
cases.  If $\Delta=0$ then $\tilde c(j)-\tilde c(i)=\varepsilon\eta\in(0,1)$, hence
$(i,j)$ is attacking and it is an inversion iff $\sigma_i>\sigma_j$.  On the other
side $b_i-b_j=\rho\in[1-m,0]$, and the pair is counted by $\operatorname{def}_m$ iff
$\sigma_i>\sigma_j$.  If $\Delta=1$ then
$\tilde c(i)-\tilde c(j)=1-\varepsilon\eta\in(0,1)$, hence $(j,i)$ is attacking and
it is an inversion iff $\sigma_j>\sigma_i$.  On the other side $b_i-b_j=m+\rho$ lies
in $[1,m]$ because $1-m\le\rho\le0$, and the pair is counted iff
$\sigma_i<\sigma_j$.

If $\Delta\ge2$ then $\tilde c(i)-\tilde c(j)
=\Delta-\varepsilon\eta>2-1=1$ and neither ordered pair is attacking, while
$b_i-b_j=m\Delta+\rho\ge2m+(1-m)=m+1$ lies outside both ranges.  If $\Delta\le-1$
then $\tilde c(j)-\tilde c(i)=-\Delta+\varepsilon\eta>1$ and neither ordered pair is
attacking, while $b_i-b_j=m\Delta+\rho\le-m+0=-m$ lies outside both ranges.

Assume now $\rho\ge1$, hence $\eta<0$.  If $\Delta=0$ then
$\tilde c(i)-\tilde c(j)=-\varepsilon\eta\in(0,1)$, hence $(j,i)$ is attacking and
it is an inversion iff $\sigma_j>\sigma_i$.  On the other side
$b_i-b_j=\rho\in[1,m-1]\subseteq[1,m]$, and the pair is counted iff
$\sigma_i<\sigma_j$.  If $\Delta=-1$ then
$\tilde c(j)-\tilde c(i)=1+\varepsilon\eta\in(0,1)$, hence $(i,j)$ is attacking and
it is an inversion iff $\sigma_i>\sigma_j$.  On the other side
$b_i-b_j=-m+\rho\in[1-m,-1]\subseteq[1-m,0]$, and the pair is counted iff
$\sigma_i>\sigma_j$.

If $\Delta\ge1$ then
$\tilde c(i)-\tilde c(j)=\Delta-\varepsilon\eta>1$, and $b_i-b_j=m\Delta+\rho\ge m+1$
lies outside both ranges.  If $\Delta\le-2$ then
$\tilde c(j)-\tilde c(i)=-\Delta+\varepsilon\eta>2-1=1$, and
$b_i-b_j=m\Delta+\rho\le-2m+(m-1)=-m-1$ lies outside both ranges.

In every case the ordered pair attacks exactly when $\operatorname{def}_m$ sees the
pair, and the inversion condition is exactly the condition counted by
$\operatorname{def}_m$.  Hence for a standard filling $S$ of $\boldsymbol\nu(e)$,
corresponding to the admissible labelling $\sigma$,
\begin{equation}\label{eq:inv-is-def}
 \operatorname{inv}(S)=\operatorname{def}_m(e,\sigma).
\end{equation}

\emph{Step $3$, the reading order and the quasisymmetric expansion.}  The reading
order of $\boldsymbol\nu(e)$ sorts the labels by increasing
$\tilde c(i)=q_i+\varepsilon\pi(i)$, that is by increasing $q_i$ with ties broken by
increasing $\pi(i)$.  Two labels with equal $q$ lie in different runs, since $q$ is
injective on a run, hence \eqref{eq:pi-vs-r} applies to them and breaks the tie by
increasing $(r_i,i)$.  The reading order is therefore the order of increasing
$(q_i,r_i,i)$, which is the order of increasing $(b_i,i)$, the reading order of the
path.  Thus $\operatorname{ides}(S)=\operatorname{ides}(w(e,\sigma))$.

Let $T$ be a semistandard filling of $\boldsymbol\nu(e)$ and let $S=\operatorname{std}(T)$
be its standardization, obtained by replacing the entries by $1,\dots,n$ preserving
their order and breaking ties by the reading order.  Along a row the reading order
increases, hence the weak increase of $T$ along rows becomes a strict increase in
$S$.  Thus $S$ is a standard filling, and every standard filling occurs.

If $(u,v)$
is attacking then $T(u)>T(v)$ iff $S(u)>S(v)$.  Indeed, the two inequalities agree
when $T(u)\ne T(v)$, and when $T(u)=T(v)$ both fail, the second because the tie is
broken by the reading order and $\tilde c(u)<\tilde c(v)$.  Thus $\operatorname{inv}$
is constant on the fibres of $\operatorname{std}$.

Finally, fix $S$ and write
$v_c$ for the entry of $T$ in the cell where $S$ takes the value $c$.  Then
$\operatorname{std}(T)=S$ if and only if $v_1\le v_2\le\dots\le v_n$ with
$v_c<v_{c+1}$ whenever the cell carrying $c+1$ precedes the cell carrying $c$ in the
reading order, that is whenever $c\in\operatorname{ides}(S)$.  Such a $T$ is
automatically semistandard, because $v$ is weakly increasing and $S$ is increasing
along each row.

Summing the monomials of one fibre gives
$F_{n,\operatorname{ides}(S)}$ by the definition of the fundamental quasisymmetric
function.  Therefore
\[
 G_{\boldsymbol\nu(e)}(X;z)=\sum_{S}z^{\operatorname{inv}(S)}F_{n,\operatorname{ides}(S)}(X)
 =\sum_{\sigma}z^{\operatorname{def}_m(e,\sigma)}F_{n,\operatorname{ides}(w(e,\sigma))}(X)
 =L_e(X;z),
\]
where the middle equality is \eqref{eq:inv-is-def} together with the identification
of the reading orders.
\end{proof}

Schur positivity of $G_{\boldsymbol\nu}$ for an arbitrary tuple of skew diagrams is
proved only in the unpublished manuscript of Grojnowski and Haiman
\cite[Cor.~6.9]{GH}.  We do not use it, because the tuples of \cref{def:nu-of-e} lie
in a smaller class for which the statement is published.

\begin{lemma}[Rotation]\label{lem:rotation}
Write $\nu^{(s)}[1]$ for $\nu^{(s)}$ translated so that every content increases by
$1$, and set
$\boldsymbol\nu^{\circlearrowright}
=\bigl(\nu^{(k)}[1],\nu^{(1)},\dots,\nu^{(k-1)}\bigr)$.  Then
$G_{\boldsymbol\nu^{\circlearrowright}}=G_{\boldsymbol\nu}$.  Translating every
component by the same amount also leaves $G_{\boldsymbol\nu}$ unchanged.
\end{lemma}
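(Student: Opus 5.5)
The plan is to construct explicit weight-preserving bijections between the semistandard fillings of $\boldsymbol\nu$ and those of $\boldsymbol\nu^{\circlearrowright}$, working directly with the definition $G_{\boldsymbol\nu}=\sum_T z^{\operatorname{inv}(T)}x^T$ recalled before \cref{def:nu-of-e}. The translation statement is settled first. Translating every component by the same integer shifts every adjusted content by that integer. This leaves unchanged the differences $\tilde c(v)-\tilde c(u)$, the reading order, and the attacking pairs. Semistandardness and the monomial $x^T$ do not involve contents at all, so the identity map on fillings preserves $\operatorname{inv}$ and the weight.

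For the rotation, I will use the identity map on fillings: a filling $T$ of $\boldsymbol\nu$ corresponds to the filling of $\boldsymbol\nu^{\circlearrowright}$ that puts the same entries in the same cells, with the cells of $\nu^{(k)}$ carried to $\nu^{(k)}[1]$. This clearly preserves semistandardness and $x^T$, so the task reduces to showing that the attacking pairs correspond, with the same cell read first in each. I will compare adjusted contents, using $\varepsilon$ for $\boldsymbol\nu$ and the same $\varepsilon$ for the new tuple; this is legitimate because $\varepsilon<1/k$ still holds.

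Take a cell $u$ in component $s<k$. Its adjusted content changes from $c(u)+\varepsilon s$ to $c(u)+\varepsilon(s+1)$. A cell $u$ in $\nu^{(k)}$ changes from $c(u)+\varepsilon k$ to $c(u)+1+\varepsilon$. So the new adjusted contents are the old ones increased by $\varepsilon$, except on component $k$, where the increase is $1+\varepsilon-\varepsilon k$. Pairs with both cells outside component $k$, or both inside it, therefore keep the same difference $\tilde c(v)-\tilde c(u)$.

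The main work is a pair $u\in\nu^{(k)}$, $v\in\nu^{(s)}$ with $s<k$. Its old difference is $c(v)-c(u)+\varepsilon(s-k)$ and its new one is $c(v)-c(u)-1+\varepsilon(s-1)$. Attacking means the difference lies in $(0,1)$ in one order or the other, so it depends only on the integer $c(v)-c(u)$ and the sign of the $\varepsilon$-term. I will run through the cases using the explicit description of attacking from the paper. In the old tuple, $(u,v)$ attacks in reading order exactly when $c(v)=c(u)+1$, since $v$ then lies in an earlier component at content one higher; and $(v,u)$ attacks exactly when $c(u)=c(v)$, since $u$ lies in a later component. In the new tuple, the rotated cell $u'$ has content $c(u)+1$ and sits in component $1$, which is earlier than $v$'s component $s+1$. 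Then $(u',v)$ attacks iff $c(v)=c(u)+1$, since the contents are equal and $v$ is later; and $(v,u')$ attacks iff $c(u)+1=c(v)+1$, since $u'$ is earlier with content one higher.

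In both tuples, then, the same unordered pairs attack and the first-read cell is the same ($u$ in the first case, $v$ in the second). Hence the inversions correspond, $\operatorname{inv}$ is preserved, and summing over fillings gives $G_{\boldsymbol\nu^{\circlearrowright}}=G_{\boldsymbol\nu}$. I expect the only delicate point to be this sign bookkeeping for mixed pairs, and I will check it against the explicit attacking criterion rather than the $\varepsilon$-inequalities alone.
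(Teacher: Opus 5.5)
Your proof is correct and follows the paper's argument: you use the identity map on fillings, note that all $\tilde c$-differences are unchanged except for mixed pairs with one cell in $\nu^{(k)}$, and check that for $u\in\nu^{(k)}$, $v\in\nu^{(s)}$ with $s<k$ the pair $(u,v)$ attacks iff $c(v)=c(u)+1$ and $(v,u)$ attacks iff $c(u)=c(v)$, both before and after the rotation. One small slip: the new difference $\tilde c'(v)-\tilde c'(u)$ is $c(v)-c(u)-1+\varepsilon s$, not $c(v)-c(u)-1+\varepsilon(s-1)$, which would vanish at $s=1$. You verify the cases with the explicit attacking criterion rather than this formula, so the slip does not affect the argument.
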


\begin{proof}
Both operations leave the shape of each component, hence the semistandard condition,
unchanged.  Therefore it suffices to show that they preserve the attacking pairs
together with the order of the two cells in each.  A common translation adds a
constant to every $\tilde c$, which changes no difference.

For the rotation write
$\tilde c'$ for the adjusted content afterwards.  If $u$ and $v$ both lie in
$\nu^{(k)}$, or neither of them does, then their contents and their component indices
are shifted by the same amounts, hence $\tilde c'(u)-\tilde c'(v)=\tilde c(u)-\tilde c(v)$.

Let
$u\in\nu^{(k)}$ and $v\in\nu^{(s)}$ with $s<k$, and put $\delta=c(u)-c(v)$.  Before
the rotation $\tilde c(u)-\tilde c(v)=\delta+\varepsilon(k-s)$ with
$0<\varepsilon(k-s)<1$, hence the ordered pair $(v,u)$ is attacking exactly when
$\delta=0$, and $(u,v)$ exactly when $\delta=-1$.  Afterwards $u$ carries the
component index $1$ and $v$ the index $s+1$, hence
$\tilde c'(u)-\tilde c'(v)=\delta+1-\varepsilon s$ with $0<\varepsilon s<1$, and
again $(v,u)$ is attacking exactly when $\delta=0$ and $(u,v)$ exactly when
$\delta=-1$.
\end{proof}

\begin{lemma}\label{lem:core-form}
For every path $e$ there is a partition $\mu$ such that a tuple obtained from
$\boldsymbol\nu(e)$ by rotations and a common translation is
$\operatorname{quot}_k(\mu)$ over the inner shape $\operatorname{core}_k(\mu)$, in the
normalization of \cite[\S5.1]{HHLRU}.
\end{lemma}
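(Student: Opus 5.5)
The plan is to reduce the lemma to the abacus description of $k$-cores and $k$-quotients, using \cref{lem:rotation} only to adjust one integer invariant.

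Every component of $\boldsymbol\nu(e)$ is a single row: a row of length $l_s$ whose first cell has content $a_s$. Such a row is the straight shape $(l_s)$ translated so that its corner cell has content $a_s$. So $\boldsymbol\nu(e)$ is determined by the lengths $(l_1,\dots,l_k)$ and the offsets $(a_1,\dots,a_k)\in\Z^k$. I would recall from \cite[\S5.1]{HHLRU} how the pair $\operatorname{core}_k(\mu)$, $\operatorname{quot}_k(\mu)$ looks in their normalization, via the $k$-runner abacus:
\begin{itemize}
\item the $k$-cores correspond bijectively to charge vectors $(d_1,\dots,d_k)\in\Z^k$ with $\sum_s d_s=0$;
\item a partition $\mu$ with given core corresponds to a $k$-tuple of partitions $(\mu^{(1)},\dots,\mu^{(k)})$;
\item the $s$-th skew component of $\mu/\operatorname{core}_k(\mu)$ is $\mu^{(s)}$, placed with all contents shifted by $d_s$, up to an $s$-independent constant fixed by the normalization;
\item the components are ordered by runner index, which is what the $\varepsilon s$ in $\tilde c$ encodes.
\end{itemize}
Consequently, a tuple of straight shapes with arbitrary integer content shifts $(d_1,\dots,d_k)$ is $\operatorname{quot}_k(\mu)$ over $\operatorname{core}_k(\mu)$ for some $\mu$ exactly when $\sum_s d_s$ equals the normalizing constant, namely $0$ after fixing conventions.

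With this in hand the argument is a counting of residues. Take $\mu^{(s)}=(l_s)$ and $d_s=a_s$.
\begin{itemize}
\item A common translation by $c$ changes $\sum_s a_s$ by $kc$, so it adjusts the sum only modulo $k$.
\item By \cref{lem:rotation}, one rotation moves the last component to the front and raises its contents by $1$. This keeps the multiset of shapes and changes the sum of offsets by exactly $1$.
\item Performing $j$ rotations, with $0\le j<k$ chosen so that $\sum_s a_s+j\equiv 0\pmod k$, followed by the appropriate common translation, makes the sum of offsets exactly $0$.
\end{itemize}
The resulting tuple is then $\operatorname{quot}_k(\mu)$ over $\operatorname{core}_k(\mu)$, where $\mu$ is the partition with charge vector equal to the new offsets and quotient $((l_{\tau(1)}),\dots,(l_{\tau(k)}))$, and $\tau$ is the cyclic reordering produced by the rotations. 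No special property of $e$ is needed beyond the fact that the components are straight rows. The ordering of the rows in \cref{def:nu-of-e} (colour, then first index) only determines the cyclic order, and rotations respect that order.

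The main obstacle is the precise matching with the normalization of \cite[\S5.1]{HHLRU}, in three respects:
\begin{itemize}
\item the direction in which runner index increases, relative to our convention $\tilde c=c+\varepsilon s$;
\item the exact additive constant in the content shift of each quotient component;
\item whether a core's quotient components start at the content of the runner's first bead or one step off.
\end{itemize}
I would first fix these by checking the case $k=2$ with $\mu=(2)$ and $\mu=(1,1)$ directly against the HHLRU picture. If their ordering is reversed relative to ours, I would apply the mirror form of \cref{lem:rotation}: move the first component to the end and lower its contents by $1$. This is proved the same way and again changes the sum by exactly $1$, so the residue argument is unaffected.
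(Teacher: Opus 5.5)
Your proposal is correct and follows essentially the same route as the paper: read the rows of $\boldsymbol\nu(e)$ as a $k$-quotient through the abacus, use that core charge vectors are exactly the integer vectors of sum $0$, and reach sum $0$ by \cref{lem:rotation} (each rotation raises the sum by $1$) together with a common translation (which changes it by a multiple of $k$). The normalization points you flag are settled in the paper simply by citing \cite[(71)]{HHLRU}: the adjusted content $k\,c(x)+s_i$ with $s_i=ka_i+i$ fixes both the runner order and the offsets.
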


\begin{proof}
By \cite[\S5.1]{HHLRU} a $k$-core $\nu$ is determined by its content sequence
$(s_0,\dots,s_{k-1})$, $s_i\equiv i\pmod k$, the contents of the $k$ ribbons which
can be added to $\nu$.  The partitions $\mu$ with $\operatorname{core}_k(\mu)=\nu$
correspond to the $k$-tuples of partitions $(\mu^{(0)},\dots,\mu^{(k-1)})$, and the
adjusted content of $x\in\mu^{(i)}$ is $\tilde c(x)=k\,c(x)+s_i$ \cite[(71)]{HHLRU}.
Writing $s_i=ka_i+i$, the $i$-th component of the resulting tuple is the partition
$\mu^{(i)}$ with its contents shifted by $a_i$.

In the abacus description of cores
the vectors $(a_0,\dots,a_{k-1})$ which arise from the $k$-cores are exactly the
integer vectors of sum $0$ \cite[Bijection~2]{GKS}.  The components of
$\boldsymbol\nu(e)$ are the one-row partitions $(l_1),\dots,(l_k)$ with contents
shifted by $d_1,\dots,d_k$, hence all that is needed is $\sum_sd_s=0$.  A common
translation changes that sum by a multiple of $k$ and one rotation raises it by $1$.
Thus at most $k-1$ rotations followed by a translation achieve it, and by
\cref{lem:rotation} neither move changes $G$.
\end{proof}

\begin{corollary}\label{cor:llt-positive}
$G_{\boldsymbol\nu(e)}(X;z)\in\bigoplus_\lambda\mathbb N[z]\,s_\lambda$ for every path
$e$.
\end{corollary}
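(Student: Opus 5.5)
The plan is to reduce everything to the positivity theorem of Haglund--Haiman--Loehr--Remmel--Ulyanov for LLT polynomials of tuples of the form $\operatorname{quot}_k(\mu)$ over $\operatorname{core}_k(\mu)$, i.e.\ \cite[Prop.~5.3.1]{HHLRU}. That result rests on the Lascoux--Leclerc--Thibon and Kashiwara--Tanisaki identification of the coefficients with parabolic Kazhdan--Lusztig polynomials \cite{LT,KT}, and it gives Schur positivity with coefficients in $\mathbb N[z]$.

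The steps, in order, are as follows.

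First, fix a path $e$ and consider the tuple of one-row components $\boldsymbol\nu(e)$ from \cref{def:nu-of-e}. By \cref{lem:core-form}, after finitely many rotations and one common translation this tuple becomes a tuple $\boldsymbol\nu'$ equal to $\operatorname{quot}_k(\mu)$ placed over $\operatorname{core}_k(\mu)$, in the normalization of \cite[\S5.1]{HHLRU}. By \cref{lem:rotation}, $G_{\boldsymbol\nu(e)}=G_{\boldsymbol\nu'}$.

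Second, check that our convention for $G$ matches the one in HHLRU. Our $G$ is taken in the convention of \cite[Def.~4.1.1]{BHMPS}, with the adjusted content $c(u)+\varepsilon s$ and inversions defined on attacking pairs. HHLRU instead use the adjusted content $k\,c(x)+s_i$ from \cite[(71)]{HHLRU}. The plan is to show that the map $\tilde c\mapsto k\tilde c$ (after rescaling $\varepsilon=1/k$ and using $s_i=ka_i+i$) carries attacking pairs to attacking pairs and preserves which cell is read first. Then the inversion statistics agree and the two polynomials coincide term by term; possibly this needs $z\mapsto z$ or a power shift. This is the step I expect to be the main obstacle, since it requires a careful match of conventions. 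In particular I have to confirm that HHLRU's positivity statement is for the same $q$-weight, and not for a normalized variant $q^{-\min}$ or an $\operatorname{inv}$ shifted by a constant. Even in the shifted case, multiplying by a monomial in $z$ preserves membership in $\mathbb N[z]$, so positivity still follows.

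Third, apply \cite[Prop.~5.3.1]{HHLRU} to $\operatorname{quot}_k(\mu)$. It shows that $G_{\boldsymbol\nu'}(X;z)$ lies in $\bigoplus_\lambda\mathbb N[z]\,s_\lambda$. By the first step the same holds for $G_{\boldsymbol\nu(e)}$, which proves the corollary. As a sanity check at $z=1$, the polynomial is $\prod_s h_{|\nu^{(s)}|}$, which is visibly Schur positive. This matches the computation in the proof of \cref{lem:labelled-basic}(iii).
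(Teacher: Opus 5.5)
Your outline is the paper's argument: \cref{lem:core-form} and \cref{lem:rotation} reduce $G_{\boldsymbol\nu(e)}$ to the LLT polynomial of $\operatorname{quot}_k(\mu)$ over $\operatorname{core}_k(\mu)$. Positivity then comes from \cite[Prop.~5.3.1]{HHLRU} together with Kashiwara--Tanisaki \cite{KT}.

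The one step you have misplaced is the convention match. \cite[Prop.~5.3.1]{HHLRU} is not directly a statement about the inversion-form LLT polynomial of the quotient. It concerns the spin generating function $G_{\mu/\nu}(z;u)=\sum_T u^{\operatorname{sp}(T)}z^T$ of the $k$-ribbon tableaux of $\mu/\nu$. Its Schur coefficients are, up to a power of $u$ and the substitution $u\mapsto u^2$, parabolic Kazhdan--Lusztig polynomials.

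The bridge to your $G_{\boldsymbol\nu'}$ is \cite[Cor.~5.2.4]{HHLRU}, namely $u^{c}G_{\mu/\nu}(z;u^{-1})=G_{\operatorname{quot}_k(\mu)}(z;u)$. It comes from $\operatorname{sp}=c-\operatorname{inv}\circ\operatorname{quot}_k$. So the two statistics are related by a reflection, not a shift. Your remark that multiplying by a monomial in $z$ preserves $\mathbb N[z]$-coefficients does not cover this case. You need one more (easy) observation: replacing the variable by its inverse and multiplying by $u^{c}$ only reverses the coefficient list of each Schur coefficient, and so preserves nonnegativity. With that sentence added, your proof is the paper's proof.

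The adjusted-content rescaling in your second step ($\tilde c\mapsto k\tilde c$, $s_i=ka_i+i$) is a legitimate check. It is essentially what \cref{lem:core-form} already encodes, since that lemma's output is stated in the normalization of \cite[\S5.1]{HHLRU}. It is not where the real conversion lies.
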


\begin{proof}
Let $\mu$ and $\nu=\operatorname{core}_k(\mu)$ be as in \cref{lem:core-form}.  We
recall the objects of \cite[\S5.1--5.2]{HHLRU}.  A $k$-ribbon is a connected skew
shape of $k$ cells which contains no $2\times2$ square, and a semistandard $k$-ribbon
tableau $T$ of $\mu/\nu$ is a tiling of $\mu/\nu$ by $k$-ribbons together with a
filling of the ribbons which is weakly increasing along the rows and the columns of
$\mu/\nu$ and whose level sets are horizontal ribbon strips with the tiling
prescribed there.  The quotient $\operatorname{quot}_k$ identifies these tableaux
with the semistandard fillings of $\operatorname{quot}_k(\mu)$.

The spin of a ribbon
is one less than its number of rows, hence a horizontal ribbon has spin $0$ and a
vertical one spin $k-1$, and the spin $\operatorname{sp}(T)$ of the tableau is half
the sum of the spins of its ribbons, normalized by subtracting the minimum over all
tableaux of the shape, hence a nonnegative integer.  It measures how far the tiling
is from the flattest tiling of the shape.  For instance the shape $(2,2)$ with $k=2$
has two tilings, by two horizontal dominoes, of spin $0$, and by two vertical
dominoes, of spin $1$.

Let $G_{\mu/\nu}(z;u)=\sum_Tu^{\operatorname{sp}(T)}z^T$ be
the spin generating function of the semistandard $k$-ribbon tableaux of $\mu/\nu$,
where $z^T$ records the entries.  By \cite[Cor.~5.2.4]{HHLRU} there is a constant
$c$ with $u^{c}G_{\mu/\nu}(z;u^{-1})=G_{\operatorname{quot}_k(\mu)}(z;u)$.  This is
the semistandard form, \cite[Lem.~5.2.3]{HHLRU}, of the identity
$\operatorname{sp}=c-\operatorname{inv}\circ\operatorname{quot}_k$ proved for
standard tableaux in \cite[Lem.~5.2.2]{HHLRU}.

The two statistics are
complementary, every unit of spin of the ribbon tableau being traded for an inversion
of the corresponding filling of the quotient, as in the example, where the tiling of
spin $0$ corresponds to the filling with one inversion and the tiling of spin $1$ to
the filling with none.

By \cite[Prop.~5.3.1]{HHLRU}, whose
hypothesis is exactly that $\nu$ be $\operatorname{core}_k(\mu)$ and which
\cref{lem:core-form} supplies, the Schur coefficients of $G_{\mu/\nu}$ are, up to a
power of the variable and the substitution $u\mapsto u^2$, the parabolic
Kazhdan--Lusztig polynomials $P^-_{\mu+\rho,\,\nu+k\lambda+\rho}$, and these have
non-negative coefficients \cite{KT}.  Inverting the variable permutes the degrees
within each Schur coefficient and hence preserves non-negativity.
\end{proof}

\begin{remark}\label{rem:positivity-scope}
The hypothesis of \cite[Prop.~5.3.1]{HHLRU} cannot be dropped.  The same paper
records the case of $\nu$ not a $k$-core as a conjecture, observing that the
Kazhdan--Lusztig expressions then acquire negative terms.  That case is
\cite[Cor.~6.9]{GH}, where the authors note that empty $\nu$ is
\cite[Thm.~4.1]{LT} and $\nu$ a $k$-core is \cite[Prop.~5.3.1]{HHLRU}.  Their
manuscript is unpublished, and nothing here depends on it.  The published case of
empty $\nu$ alone would not suffice, because it forces all the $a_i$ to be equal,
whereas the offsets $d_s$ of \cref{def:nu-of-e} are the values $q_{i_1}$ at the first
labels of the runs and are unequal for most paths.  At $(n,m)=(5,1)$ only $47$ of
the $252$ paths have all offsets equal, and at $(4,2)$ only $107$ of $495$.
\end{remark}

\begin{lemma}[Complementation]\label{lem:complementation}
For $P=(e,\sigma)\in\mathsf{PF}^+_{n,m}$ put
\[
 e^c_i=mn-e_{n+1-i},\qquad \sigma^c_i=n+1-\sigma_{n+1-i}\qquad(1\le i\le n).
\]
Then $P\mapsto P^c=(e^c,\sigma^c)$ is an involution of $\mathsf{PF}^+_{n,m}$ under
which
\[
 \dinv_m(e^c)=\dinv_m(e),\quad \operatorname{def}_m(P^c)=\operatorname{def}_m(P),
 \quad A^{(m)}_\mp(e^c)=A^{(m)}_\pm(e),
\]
and $\operatorname{ides}(w(P^c))=\{\,n-c:c\in\operatorname{ides}(w(P))\,\}$.
\end{lemma}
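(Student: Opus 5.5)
The plan is to compute the effect of the involution on heights first, and then read off each statistic from the height identity. From $b_i=mi-e_i$ one gets
\[
 b^c_i=mi-mn+e_{n+1-i}=m-b_{n+1-i},
\]
exactly as in the proof of \cref{cor:Phat-props}(ii). The involution property is immediate, since applying the maps twice returns $e$ and $\sigma$.

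Admissibility is preserved. If $e^c_i=e^c_{i+1}$, then $e_{n+1-i}=e_{n-i}$. Admissibility of $P$ gives $\sigma_{n-i}<\sigma_{n+1-i}$, which is equivalent to $\sigma^c_i<\sigma^c_{i+1}$.

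The unlabelled statistics are $\dinv_m$ and $A^{(m)}_\pm$, and \cref{cor:Phat-props}(ii) already proves that they transform as claimed.

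For the defect, use the bijection $(i,j)\mapsto(i',j')=(n+1-j,n+1-i)$ on pairs $i<j$. It satisfies
\[
 b^c_{i'}-b^c_{j'}=b_i-b_j .
\]
So the pair $(i',j')$ of $P^c$ lies in the same half of the support of $s_m$ as the pair $(i,j)$ of $P$. The labels satisfy $\sigma^c_{i'}=n+1-\sigma_j$ and $\sigma^c_{j'}=n+1-\sigma_i$. Hence $\sigma^c_{i'}>\sigma^c_{j'}$ if and only if $\sigma_i>\sigma_j$: the double reversal, of positions and of values, preserves the inversion relation. Each summand of \eqref{eq:label-defect} is therefore matched term by term, and $\operatorname{def}_m(P^c)=\operatorname{def}_m(P)$.

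The reading word is the step that needs care. The reading order of $P^c$ sorts indices by increasing $(b^c_{i'},i')=(m-b_{n+1-i'},i')$. Writing $i'=n+1-k$, this is the order of decreasing $(b_k,k)$, i.e.\ the exact reverse of the reading order of $P$. The potential obstacle is ties $b_k=b_l$, but reversing the index also reverses the tie-break, so the order is exactly reversed with no exceptions.

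It follows that $w(P^c)$ is obtained from $w(P)$ by reversing the word and replacing each letter $c$ by $n+1-c$. Writing $w=w(P)$ and $w'=w(P^c)$, the position of a letter in $w'$ is
\[
 w'^{-1}(c)=n+1-w^{-1}(n+1-c).
\]
Therefore $c\in\operatorname{ides}(w')$, i.e.\ $w'^{-1}(c)>w'^{-1}(c+1)$, if and only if $w^{-1}(n-c)>w^{-1}(n+1-c)$, i.e.\ $n-c\in\operatorname{ides}(w)$. This gives $\operatorname{ides}(w(P^c))=\{\,n-c:c\in\operatorname{ides}(w(P))\,\}$, and the check is consistent with \cref{ex:pathC}.
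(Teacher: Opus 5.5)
Your proof is correct and follows the paper's argument essentially line for line. The common steps are the height identity $b^c_i=m-b_{n+1-i}$, the bijection of pairs $(i,j)\mapsto(n+1-j,n+1-i)$ that transports both the kernel window and the inversion relation, admissibility via $e_{n-i}=e_{n+1-i}$, and the observation that $w(P^c)$ is the reversed, value-complemented word of $w(P)$. The only difference is that you cite \cref{cor:Phat-props}(ii) for $\dinv_m$ and $A^{(m)}_\pm$ instead of rederiving them, and the paper itself remarks that this part is that corollary.
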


\begin{proof}
Both formulas are visibly involutive.  From $b_i=mi-e_i$ we get
\[
 b^c_i=mi-e^c_i=mi-mn+e_{n+1-i}=mi-mn+m(n+1-i)-b_{n+1-i}=m-b_{n+1-i},
\]
hence for $i<j$, writing $i'=n+1-i$ and $j'=n+1-j$, we have $j'<i'$ and
\begin{equation}\label{eq:comp-diff}
 b^c_i-b^c_j=b_{j'}-b_{i'} .
\end{equation}
The map $(i,j)\mapsto(j',i')$ is a bijection of the pairs $i<j$.  Hence
\eqref{eq:comp-diff} gives $\dinv_m(e^c)=\dinv_m(e)$ at once, and
$[m-b^c_i]_++[-b^c_i]_+=[b_{i'}]_++[b_{i'}-m]_+$ gives $A^{(m)}_-(e^c)=A^{(m)}_+(e)$
and $A^{(m)}_+(e^c)=A^{(m)}_-(e)$, which is
\cref{cor:Phat-props}\textup{(ii)}.

For the labels,
$\sigma^c_i>\sigma^c_j$ means $n+1-\sigma_{i'}>n+1-\sigma_{j'}$, that is
$\sigma_{j'}>\sigma_{i'}$.  Thus the pair $(i,j)$ of $P^c$ is counted in the first
sum of \eqref{eq:label-defect} exactly when the pair $(j',i')$ of $P$ is, and
likewise for the second sum.  Hence
$\operatorname{def}_m(P^c)=\operatorname{def}_m(P)$.  The pair $P^c$ is admissible,
since $e^c_i=e^c_{i+1}$ means $e_{n-i}=e_{n+1-i}$, whence
$\sigma_{n-i}<\sigma_{n+1-i}$ and therefore $\sigma^c_i<\sigma^c_{i+1}$.

For the reading words, the order of increasing $(b^c_i,i)$ is, by
$b^c_i=m-b_{n+1-i}$, the order of decreasing $(b_{n+1-i},n+1-i)$.  Hence $w(P^c)$ is
the reverse of the word obtained from $w(P)$ by replacing each letter $v$ by
$n+1-v$.  Writing $\operatorname{pos}_w(v)$ for the position of $v$ in $w$, this says
$\operatorname{pos}_{w(P^c)}(v)=n+1-\operatorname{pos}_{w(P)}(n+1-v)$.  Hence
$c\in\operatorname{ides}(w(P^c))$, that is
$\operatorname{pos}_{w(P^c)}(c+1)<\operatorname{pos}_{w(P^c)}(c)$, holds if and only
if $\operatorname{pos}_{w(P)}(n-c)>\operatorname{pos}_{w(P)}(n+1-c)$, that is if and
only if $n-c\in\operatorname{ides}(w(P))$.
\end{proof}

\begin{proposition}\label{prop:labelled-props}
$\mathcal H^+_{n,m}$ is a symmetric function, it is Schur positive, it satisfies
$\mathcal H^+_{n,m}(X;q,t)=\mathcal H^+_{n,m}(X;t,q)$, and
\[
 q^{D}\,\mathcal H^+_{n,m}(X;q,q^{-1})=h_n\bigl[(1+q^2+\dots+q^{2mn})X\bigr].
\]
\end{proposition}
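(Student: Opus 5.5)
Each of the four assertions follows from the path grouping \eqref{eq:path-grouping} combined with \cref{thm:llt}, \cref{cor:llt-positive} and \cref{lem:complementation}.

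\emph{Symmetry and Schur positivity.} By \eqref{eq:path-grouping} and \cref{thm:llt},
\[
 \mathcal H^+_{n,m}=\sum_e q^{A^{(m)}_-(e)+\dinv_m(e)}t^{A^{(m)}_+(e)+\dinv_m(e)}\,G_{\boldsymbol\nu(e)}\bigl(X;(qt)^{-1}\bigr).
\]
Each $G_{\boldsymbol\nu(e)}$ is symmetric, so the sum is symmetric. By \cref{cor:llt-positive} each Schur coefficient of $G_{\boldsymbol\nu(e)}(X;z)$ lies in $\mathbb N[z]$. Its $z$-degree is at most the maximal defect, and $\operatorname{def}_m\le\dinv_m$, so after substituting $z=(qt)^{-1}$ and multiplying by $(qt)^{\dinv_m(e)}$ we get a polynomial with nonnegative coefficients. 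The exponents of $q$ and $t$ stay nonnegative because, term by term in the quasisymmetric expansion, they equal $A^{(m)}_\mp+\dinv^{\mathrm{lab}}_m\ge0$. Hence $\mathcal H^+_{n,m}$ is Schur positive.

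\emph{$q\leftrightarrow t$ symmetry.} By \cref{lem:complementation}, $P\mapsto P^c$ is an involution that keeps $\dinv_m$ and $\operatorname{def}_m$ and exchanges $A^{(m)}_-$ with $A^{(m)}_+$. It therefore carries the $(q,t)$-weight of $P$ to the $(t,q)$-weight of $P^c$, and sends $\operatorname{ides}$ to its reflection $c\mapsto n-c$. Applying the involution to \eqref{eq:Hplus} gives
\[
 \mathcal H^+_{n,m}(X;t,q)=\omega'\,\mathcal H^+_{n,m}(X;q,t),
\]
where $\omega'$ is the linear map $F_{n,S}\mapsto F_{n,n-S}$. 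On symmetric functions $\omega'$ is the identity. To see this, note that $s_\lambda=\sum_QF_{n,\mathrm{Des}(Q)}$, and the Schützenberger evacuation involution on standard Young tableaux of shape $\lambda$ sends $\mathrm{Des}(Q)$ to $n-\mathrm{Des}(Q)$. So $\omega'$ fixes every $s_\lambda$. Since $\mathcal H^+_{n,m}$ is already known to be symmetric, the symmetry follows. The only step needing care is citing this property of evacuation. As a fallback, one can check that the reflection $x_i\mapsto x_{N+1-i}$ in finitely many variables sends $F_{n,S}$ to $F_{n,n-S}$ and fixes symmetric polynomials.

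\emph{Principal specialization.} At $t=q^{-1}$ the factor $(qt)^{\dinv^{\mathrm{lab}}_m}$ equals $1$, so $P$ contributes $q^{A_--A_+}F_{n,\operatorname{ides}(w(P))}$. From the proof of \cref{thm:hookpath}, $A_--A_+=2|\alpha|-D$. Grouping by paths as in the proof of \cref{lem:labelled-basic}(iii), the labels of a path with run multiplicities $a_0,\dots,a_{mn}$ sum to $\prod_j h_{a_j}$, and such a path has $|\alpha|=\sum_j j\,a_j$. Hence
\[
 q^D\mathcal H^+_{n,m}(X;q,q^{-1})=\sum_a\prod_j q^{2ja_j}h_{a_j}=h_n\bigl[(1+q^2+\dots+q^{2mn})X\bigr].
\]
The main obstacle is the $q\leftrightarrow t$ step, specifically justifying that reflecting descent sets is trivial on symmetric functions; the other steps follow directly from the cited results.
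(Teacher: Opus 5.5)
Your proposal is correct and follows the paper's proof essentially step by step: LLT grouping with Schur positivity of each $G_{\boldsymbol\nu(e)}$ and the bound $\operatorname{def}_m\le\dinv_m$, then complementation plus descent-set reversal for $q\leftrightarrow t$, then the run-multiplicity count for the principal specialization. The only variation is that you justify $F_{n,S}\mapsto F_{n,S^{\mathrm{rev}}}$ fixing symmetric functions via evacuation, whereas the paper uses your fallback (reversing the variables in $N\ge n$ variables together with linear independence of the $F_{n,S}$); either argument is valid.
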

\begin{proof}
By \eqref{eq:path-grouping} and \cref{thm:llt},
\[
 \mathcal H^+_{n,m}(X;q,t)=\sum_{e}q^{A^{(m)}_-(e)}t^{A^{(m)}_+(e)}
 (qt)^{\dinv_m(e)}\,G_{\boldsymbol\nu(e)}\bigl(X;(qt)^{-1}\bigr).
\]
By \cref{cor:llt-positive} each $G_{\boldsymbol\nu(e)}$ is symmetric, with Schur
coefficients in $\mathbb N[z]$.  Its degree in $z$ is
$\max_\sigma\operatorname{def}_m(e,\sigma)$, at most $\dinv_m(e)$ because
$\operatorname{def}_m\le\dinv_m$ pair by pair.  Hence each summand
\[
 (qt)^{\dinv_m(e)}\,G_{\boldsymbol\nu(e)}\bigl(X;(qt)^{-1}\bigr)
\]
has coefficients in $\mathbb N[qt]$, and the sum is symmetric and Schur positive.

For the third assertion, we apply \cref{lem:complementation} and substitute
$P\mapsto P^c$ in \eqref{eq:Hplus}, which gives
\[
 \mathcal H^+_{n,m}(X;q,t)
 =\sum_{P}q^{A^{(m)}_+(P)+\dinv^{\mathrm{lab}}_m(P)}\,
 t^{A^{(m)}_-(P)+\dinv^{\mathrm{lab}}_m(P)}\,F_{n,\operatorname{ides}(w(P))^{\mathrm{rev}}}(X),
\]
where $S^{\mathrm{rev}}=\{n-c:c\in S\}$.  Its bidegree-$(a,b)$ part is therefore
obtained from the bidegree-$(a,b)$ part of $\mathcal H^+_{n,m}(X;t,q)$, which is the
same sum with $F_{n,\operatorname{ides}(w(P))}$ in place of
$F_{n,\operatorname{ides}(w(P))^{\mathrm{rev}}}$, by replacing every $F_{n,S}$ by
$F_{n,S^{\mathrm{rev}}}$.

That operation fixes every symmetric function.  Indeed,
let us truncate to $N\ge n$ variables and substitute $x_i\mapsto x_{N+1-i}$.  Writing
$k_j=N+1-i_{n+1-j}$ turns a weakly increasing index sequence $i_1\le\dots\le i_n$ into
a weakly increasing $k_1\le\dots\le k_n$, and the strictness required at $j\in S$ into
strictness at $n-j$.  Hence the substitution carries $F_{n,S}(x_1,\dots,x_N)$ to
$F_{n,S^{\mathrm{rev}}}(x_1,\dots,x_N)$.  A symmetric $f=\sum_Sc_SF_{n,S}$ is fixed by
the substitution, which is a permutation of the variables, hence
$\sum_Sc_SF_{n,S}=\sum_Sc_SF_{n,S^{\mathrm{rev}}}$ in $N$ variables.

For $N\ge n$ the
$F_{n,S}(x_1,\dots,x_N)$, $S\subseteq\{1,\dots,n-1\}$, are linearly independent.
Indeed, the coefficient of $x_1^{\alpha_1}x_2^{\alpha_2}\cdots$ in $F_{n,S}$ is $1$
when $S$ is contained in the set of partial sums of the composition $\alpha$ and $0$
otherwise, hence the matrix of coefficients on the monomials indexed by compositions
is unitriangular for inclusion.  Therefore $c_S=c_{S^{\mathrm{rev}}}$ for every $S$.
Each bidegree part of $\mathcal H^+_{n,m}(X;t,q)$ is symmetric by the first
assertion, hence the two agree.

For the last assertion, $\dinv^{\mathrm{lab}}_m$ occurs with the same exponent in
$q$ and in $t$, hence it cancels at $t=q^{-1}$ and the term of $P$ becomes
$q^{A^{(m)}_-(e)-A^{(m)}_+(e)}$.  Since
$[m-b]_+-[b-m]_+=m-b$ and $[-b]_+-[b]_+=-b$ for every $b$,
\[
 A^{(m)}_--A^{(m)}_+=\sum_{i=1}^n(m-2b_i)=mn-2m\tbinom{n+1}2+2\sum_ie_i
 =2\sum_ie_i-D .
\]
Hence
\[
 q^{D}\,\mathcal H^+_{n,m}(X;q,q^{-1})
 =\sum_Pq^{2\sum_ie_i}\,F_{n,\operatorname{ides}(w(P))}(X).
\]

For a fixed path $e$ the sum over the admissible labellings is
$\prod_sh_{|R_s|}(X)$, the product over the maximal vertical runs of $e$, by the
proof of \cref{lem:labelled-basic}\textup{(iii)}, and $\sum_ie_i=\sum_jja_j$, where
$a_j$ is the number of north steps of $e$ at abscissa $j$.  Therefore the sum equals
$\sum_{a}\prod_jq^{2ja_j}h_{a_j}(X)=h_n\bigl[(1+q^2+\dots+q^{2mn})X\bigr]$, by the
expansion $h_n[(y_0+\dots+y_{mn})X]=\sum_a\prod_jy_j^{a_j}h_{a_j}[X]$ at $y_j=q^{2j}$.
\end{proof}

The specialization at $t=q^{-1}$ refines \cref{thm:principal}.  Taking $[s_n]$ and
using \cref{lem:labelled-basic}(i) returns
$q^{D}\widehat P_{n,m}(q,q^{-1})=\qbinom{(m+1)n}{n}_{q^2}$ of
\cref{cor:Phat-props}(iii), because $[s_n]h_n[(1+q^2+\dots+q^{2mn})X]$ is the
generating function of the partitions in the $n\times mn$ rectangle by twice the
size.

\subsection{The conjecture}\label{sec:labelled-conj}

In this subsection we state the labelled conjecture and we relate it to its operator
form and to its representation-theoretic form.  The main idea is to promote each
fixed-point weight of \eqref{eq:conj-B} to the colour-zero pure part of the
corresponding wreath Macdonald polynomial, with the coefficients inverted.

Recall
$\ell_\lambda$, $\Pi^{(0)}_\lambda$, $D_\lambda$ from \eqref{eq:statistics},
$B^{(i)}_\lambda$ from \cref{sec:conj-statement}, the wreath Macdonald polynomials
$\wtH_\lambda$ of \eqref{eq:wreath-def} and the involution $\inv$ of
\cref{sec:wreath-form}.  Write $\wtH_\lambda[X,0]\in\Lambda_{X_0}$ for the
colour-zero pure part.  Put
\begin{equation}\label{eq:Clambda}
 C_\lambda=(1+qt)B^{(0)}_\lambda-(q+t)B^{(1)}_\lambda
 =\bigl\langle\wtH_\lambda,\Phi_n\bigr\rangle ,
\end{equation}
where the second equality holds by \cref{thm:wreath-inputs}(b) and \eqref{eq:KPhi}.
Thus $P_{n,m}=\sum_\lambda\ell^{\,m}_\lambda\Pi^{(0)}_\lambda C_\lambda D_\lambda^{-1}$
is the left side of \eqref{eq:conj-B}.

\begin{conjecture}[Type-$B$ shuffle conjecture, labelled form]\label{conj:labelled}
For all $n,m\ge1$,
\begin{equation}\label{eq:conj-labelled}
 \sum_{\lambda\in\Bal(2n)}
 \frac{\ell_\lambda^{\,m}\,\Pi^{(0)}_\lambda\,C_\lambda}{D_\lambda}\;
 \inv\bigl(\wtH_\lambda[X,0]\bigr)
 \;=\;\mathcal H^+_{n,m}(X;q,t).
\end{equation}
\end{conjecture}

Here $\inv$ is the involution of \cref{sec:wreath-form}, which inverts $q$ and $t$
in the coefficients and fixes the Schur functions.  Applied to the one-alphabet
symmetric function $\wtH_\lambda[X,0]=\sum_\mu c_\mu(q,t)s_\mu$ it gives
$\sum_\mu c_\mu(q^{-1},t^{-1})s_\mu$.  For instance \cref{ex:wreath-n2} gives
$\wtH_{(3,1)}[X,0]=s_2+q^2s_{11}$ and $\inv(\wtH_{(3,1)}[X,0])=s_2+q^{-2}s_{11}$.

The inversion is not a normalization.  It is what converts the colour-one pure part,
which is the one produced by the operator form below, into the colour-zero pure
part, which is the one normalized by \eqref{eq:wreath-def}.  The precise statement
is the following.

\begin{lemma}\label{lem:pure-symmetry}
For every $\lambda\in\Bal(2n)$,
\begin{equation}\label{eq:iota-sym}
 \inv\wtH_\lambda=\ell_\lambda^{-1}\,\Omega\,\wtH_\lambda,\qquad
 \Omega=\chi\,(\omega\otimes\omega),
\end{equation}
where $\chi$ exchanges the two alphabets, and consequently
\begin{equation}\label{eq:pure-symmetry}
 \wtH_\lambda[0,X]=\ell_\lambda\,\omega\inv\bigl(\wtH_\lambda[X,0]\bigr).
\end{equation}
\end{lemma}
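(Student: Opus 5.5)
The plan is to prove \eqref{eq:iota-sym} by the uniqueness characterization \eqref{eq:wreath-def}, and then read off \eqref{eq:pure-symmetry} by pairing with colour-one pure Schur functions.

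First I will check how the operators $\inv$ and $\Omega=\chi(\omega\otimes\omega)$ interact with the two plethystic substitutions in \eqref{eq:wreath-def}. Since $\omega$ acts on power sums by $p_k\mapsto(-1)^{k-1}p_k$ and $\chi$ swaps colours, one computes
\[
 (\Omega f)[(1-q\chi)X]=\Omega\bigl(f[(\chi-q^{\pm})X]\bigr)
\]
up to the sign bookkeeping $p_k[-X]$, and $\inv$ turns $1-q\chi$ into $1-q^{-1}\chi$. The intended identity is
\[
 (\inv\Omega g)[(1-q\chi)X]=\text{(unit)}\cdot\inv\Omega\bigl(g[(1-t^{-1}\chi)X]\bigr)\cdot\text{(swap of }q,t^{-1}\text{ roles)},
\]
and then one uses $\omega\otimes\omega$ reversing dominance on $2$-quotients, so $\mu\trianglerighteq\lambda$ becomes $\mu\trianglelefteq\lambda$ after transposition together with the colour swap. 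This matches the identification of $\Bal(2n)$ with bipartitions via the $2$-quotient in \cite[\S3.1]{OS}, since conjugating $\lambda$ conjugates and swaps the quotient components (up to the OS convention). The upshot is that $\inv\Omega\wtH_\lambda$ satisfies both triangularities of \eqref{eq:wreath-def} for the same $\lambda$, so by uniqueness it equals $c_\lambda\wtH_\lambda$ for a scalar $c_\lambda$. Rather than redo this I would cite the corresponding symmetry in Orr--Shimozono, who list $\omega$- and $\inv$-type symmetries of wreath Macdonald polynomials, and only adapt conventions.

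Second, I fix the scalar by pairing with $h_n[X_0]$. We have $\langle\inv\Omega\wtH_\lambda,h_n[X_0]\rangle=\inv\langle\wtH_\lambda,\Omega h_n[X_0]\rangle=\inv\langle\wtH_\lambda,e_n[X_1]\rangle=\inv T_1(\lambda)=\ell_\lambda^{-1}$ by \cref{thm:wreath-inputs}(c). Hence $c_\lambda=\ell_\lambda^{-1}$, which gives $\inv\Omega\wtH_\lambda=\ell_\lambda^{-1}\wtH_\lambda$. Since $\inv$ commutes with $\Omega$, this is equivalent to \eqref{eq:iota-sym}, with $\ell_\lambda^{-1}$ appearing after reapplying $\inv$: $\inv\wtH_\lambda=\inv(\ell_\lambda\,\inv\Omega\wtH_\lambda)$. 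I will need to keep careful track of whether the factor is $\ell_\lambda$ or $\ell_\lambda^{-1}$.

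Third, I take the colour-one pure part of \eqref{eq:iota-sym}. The pure part $f\mapsto f[0,X]$ intertwines $\Omega$ with $\omega$ applied to the colour-zero pure part, because $\chi$ moves $s_\mu[X_0]$ to $s_\mu[X_1]$. This gives \eqref{eq:pure-symmetry} after applying $\inv$ and rearranging. As a sanity check I will verify the result on \cref{ex:wreath-n2}: for $(3,1)$, $\ell=qt$ and $\wtH[0,X]=\tfrac tq s_2+qt\,s_{11}$, which equals $qt\,\omega(s_2+q^{-2}s_{11})$. The main obstacle is the first step, namely getting the OS conventions right (the sign of $t$ versus $t^{-1}$, the direction of dominance under conjugation, and the $2$-quotient ordering). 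I expect to verify these on the $n=2$ example before trusting the general argument.
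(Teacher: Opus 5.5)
Your proposal is correct and follows the paper's proof. The paper establishes the proportionality $\inv\Omega\wtH_\lambda=c\,\wtH_\lambda$ by composing Orr--Shimozono's inversion symmetry \cite[Prop.~3.23]{OS} with their rotation \cite[Thm.~3.26]{OS}, which at $r=2$ together return to the same $\lambda$. It then fixes $c=\ell_\lambda^{-1}$ by pairing with $h_n[X_0]$ and using $\langle\wtH_\lambda,e_n[X_1]\rangle=\ell_\lambda$, and finally reads off the pure parts, exactly as you do.

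Your heuristic for the first step is off, however. There is no dominance reversal and no exchange of the $q$- and $t^{-1}$-conditions. The operator $\Omega$ commutes with both substitutions, and on degree $n$ one has $g[(1-q^{-1}\chi)X]=(-q^{-1})^n\,\Omega\bigl(g[(1-q\chi)X]\bigr)$. Hence $(\inv\Omega g)[(1-q\chi)X]=(-q)^n\,\inv\bigl(g[(1-q\chi)X]\bigr)$, and likewise with $t^{-1}$. Since $\inv$ preserves each triangular span, uniqueness in \eqref{eq:wreath-def} gives the proportionality directly, with no Orr--Shimozono conventions to match.
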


\begin{proof}
Orr and Shimozono give two symmetries of the wreath Macdonald basis, each only up
to a scalar, which they write $\equiv$.  The inversion symmetry is
$\downarrow\wtH^w_{\mu^\bullet}\equiv\wtH^{w_0w}_{w_0\mu^\bullet}$
\cite[Prop.~3.23]{OS}, where
$\downarrow\,=\inv\circ\operatorname{neg}\circ\,\omega$ and $\omega$ is applied to
both alphabets, and the rotational symmetry is
$\chi\wtH^w_{\mu^\bullet}\equiv\wtH^{\chi w}_{\chi\mu^\bullet}$
\cite[Thm.~3.26]{OS}.  At $r=2$ the three permutations of $I=\Z/2\Z$ that occur
degenerate, namely $\operatorname{neg}$ is the identity, and $\chi$ and $w_0$ are
both the transposition.

Let us take $w=1$, which is the chamber attached to an empty
$2$-core.  The first symmetry gives $\inv\omega\wtH_\lambda\equiv\wtH^{w_0}_{w_0\mu^\bullet}$,
and the second, applied with $w=w_0$ to $\wtH^{w_0}_{w_0\mu^\bullet}$, gives
$\chi\inv\omega\wtH_\lambda\equiv\wtH^{\chi w_0}_{\chi w_0\mu^\bullet}=\wtH_\lambda$,
because $\chi w_0=1$.  Since $\omega$ commutes with the colour permutations and with
$\inv$, this says $\Omega\inv\wtH_\lambda=c\,\wtH_\lambda$ for a scalar $c$.

It remains to evaluate $c$.  Orr and Shimozono say of the symmetries of their
\S3.5 that ``the constant of proportionality is a Laurent monomial which may easily
be deduced using the normalization condition~(3.9)'' \cite[\S3.5]{OS}, and they carry
this out for the inversion symmetry.  Namely, \cite[Rem.~3.25]{OS} records that the
$c$ with $\downarrow\wtH^w_{\mu^\bullet}=c\,\wtH^{w_0w}_{w_0\mu^\bullet}$ is
$\langle s_n[X^{(0)}],\downarrow\wtH^w_{\mu^\bullet}\rangle
=\inv\langle e_n[X^{(0)}],\wtH^w_{\mu^\bullet}\rangle$.

The constant of
\cite[Thm.~3.26]{OS} is not written out there, and \eqref{eq:iota-sym} needs the
composite.  What follows is their computation applied to $\Omega\inv$ at $r=2$,
where it is the colour-one rather than the colour-zero monomial that appears.

We
pair both sides with $s_n[X_0]$ and use the normalization
$\langle\wtH_\lambda,s_n[X_0]\rangle=1$ of \eqref{eq:wreath-def}.  On the right this
gives $c$.  On the left, $\chi$ exchanges the alphabets, $\omega$ is self-adjoint for
the Hall pairing and carries $s_n[X_1]$ to $e_n[X_1]$, and $\inv$ acts on
coefficients only.  Hence
\[
 \bigl\langle\Omega\inv\wtH_\lambda,\;s_n[X_0]\bigr\rangle
 =\bigl\langle\inv\wtH_\lambda,\;e_n[X_1]\bigr\rangle
 =\inv\bigl\langle\wtH_\lambda,e_n[X_1]\bigr\rangle=\inv(\ell_\lambda)
 =\ell_\lambda^{-1},
\]
where the third equality is \cite[Thm.~4.14]{OS}, attributed there to Haiman, which
identifies $\langle\wtH_\lambda,e_n[X_i]\rangle$ with the product of the box weights
$q^at^b$ of $\lambda$ over the boxes of residue $i$.  At $r=2$ the residue is the
parity of $a+b$, hence the value at $i=1$ is $\ell_\lambda$.

Here residue $1$ is the
odd runner, which carries the alphabet $X_1$ in the convention of
\cref{sec:wreath-form} fixed by \cref{ex:wreath-n2}.  Orr and Shimozono weight $x$
by $t$ and $y$ by $q$, and \eqref{eq:iota-sym} was checked directly in the present
conventions (\cref{thm:labelled-evidence}(9)), hence the difference does not affect
it.

Since $\Omega$ is an involution, \eqref{eq:iota-sym} follows.  Taking the
colour-zero pure part of \eqref{eq:iota-sym} and using
$(\Omega f)[X,0]=\omega\bigl(f[0,X]\bigr)$ gives \eqref{eq:pure-symmetry}.
\end{proof}

Passing from $\omega\wtH_\lambda[0,X]$ to $\inv(\wtH_\lambda[X,0])$ absorbs
one factor $\ell_\lambda$.  This is why the exponent is $\ell_\lambda^{\,m}$ in
\eqref{eq:conj-labelled} and $\ell_\lambda^{\,m-1}$ in \eqref{eq:operator-form}
below.  Dropping $\inv$ from \eqref{eq:conj-labelled} leaves
\cref{prop:sn-coefficient} intact, since $[s_n]$ of the extra factor is $1$ either
way, but it destroys the conjecture in every other Schur coordinate.  At $n=2$, $m=1$
it gives $[s_{11}]=(q^2+qt+t^2)(q^4+qt+t^4)$ in place of the correct
$[s_{11}]\mathcal H^+_{2,1}=q^2+qt+t^2$.

\begin{remark}[A conjectural proportionality of the pure parts]\label{rem:pure-proportional}
Write $\ell^{(i)}_\lambda=\prod_{a+b\equiv i\ (2)}q^at^b$ for the product of the box
weights of residue $i$, hence $\ell^{(1)}_\lambda=\ell_\lambda$.  Computation
suggests that the two pure parts are proportional, namely that for every
$\lambda\in\Bal(2n)$
\begin{equation}\label{eq:pure-proportional}
 \wtH_\lambda[0,X]=\frac{\ell^{(1)}_\lambda}{\ell^{(0)}_\lambda}\,\wtH_\lambda[X,0].
\end{equation}
This was verified symbolically over $\Q(q,t)$ for $n\le4$ and at rational points for
every one of the $110$ diagrams with $n\le7$, but it is not proved.

It is not a
consequence of \cref{lem:pure-symmetry}, which combined with
\eqref{eq:pure-proportional} would say that the colour-zero pure part has the
type-$A$ self-duality $\inv(\wtH_\lambda[X,0])=\ell^{(0)\,-1}_\lambda\,\omega\,
\wtH_\lambda[X,0]$.

Granting \eqref{eq:pure-proportional},
\eqref{eq:conj-labelled} takes the $\inv$-free form
\[
 \sum_{\lambda\in\Bal(2n)}
 \frac{\ell_\lambda^{\,m}\,\Pi^{(0)}_\lambda\,C_\lambda}{D_\lambda\,\ell^{(0)}_\lambda}\;
 \omega\,\wtH_\lambda[X,0]\;=\;\mathcal H^+_{n,m}(X;q,t),
\]
which is a statement about the $G_n$-invariant part of the Procesi bundle alone.
Equivalently, with $\nabla_i$ the diagonal operators
$\nabla_i\wtH_\lambda=\ell^{(i)}_\lambda\wtH_\lambda$, the identity
\eqref{eq:pure-proportional} says that $f\mapsto f[0,X]$ agrees with
$f\mapsto(\nabla_1\nabla_0^{-1}f)[X,0]$ on all of $\Lambda^{(2)}_n$.
\end{remark}

The left side of \eqref{eq:conj-labelled} is the left side of \eqref{eq:conj-B} with
each fixed-point weight promoted to a symmetric function.  Namely, the scalar term of
$\lambda$ is multiplied by $\inv(\wtH_\lambda[X,0])$, whose $s_n$-coefficient is
$1$.  Hence we obtain the following.

\begin{proposition}\label{prop:sn-coefficient}
The $s_n$-coefficient of the left side of \eqref{eq:conj-labelled} is $P_{n,m}$, and
the $s_n$-coefficient of the right side is $\widehat P_{n,m}$.  Consequently
\cref{conj:labelled} implies \cref{conj:shuffleB} in the same rank and twist.
\end{proposition}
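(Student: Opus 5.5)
The plan is to compute the $s_n$-coefficient of each side of \eqref{eq:conj-labelled} separately. Both sides are sums of symmetric functions with coefficients in $\Q(q,t)$. Taking $[s_n]$ is $\Q(q,t)$-linear, so it can be applied term by term.

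\emph{Left side.} For each $\lambda\in\Bal(2n)$ I will show that $[s_n]\,\inv\bigl(\wtH_\lambda[X,0]\bigr)=1$. The involution $\inv$ acts only on coefficients and fixes each Schur function. It therefore suffices to know that $[s_n]\,\wtH_\lambda[X,0]=1$.

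The colour-zero pure part is obtained by setting $X_1=0$. This kills every tensor Schur function $s_{\mu^\bullet}$ with $\mu^{(1)}\ne\varnothing$ and leaves the $s_\mu[X_0]$ unchanged. Hence
\[
[s_n]\,\wtH_\lambda[X,0]=\langle\wtH_\lambda,s_n[X_0]\rangle=\langle\wtH_\lambda,h_n[X_0]\rangle=1
\]
by the normalization in \eqref{eq:wreath-def}. Applying $\inv$ to the scalar $1$ leaves it unchanged.

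The $s_n$-coefficient of the left side is therefore $\sum_\lambda\ell_\lambda^{\,m}\Pi^{(0)}_\lambda C_\lambda D_\lambda^{-1}$. By \eqref{eq:Clambda} and \cref{lem:parity-split} this equals $(1+qt)u_{n,m}-(q+t)v_{n,m}=P_{n,m}$.

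\emph{Right side.} Here I will invoke \cref{lem:labelled-basic}(i) directly, which gives $[s_n]\,\mathcal H^+_{n,m}=\widehat P_{n,m}$. That lemma rests on two facts. First, the expansion $s_\lambda=\sum_Q F_{n,\mathrm{Des}(Q)}$ shows that the coefficient of $F_{n,\varnothing}$ in a symmetric function is its $s_n$-coefficient. Second, each path carries exactly one labelling with empty inverse descent set, and that labelling has zero defect. The lemma uses that $\mathcal H^+_{n,m}$ is symmetric, which is \cref{prop:labelled-props}.

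If \cref{conj:labelled} holds, equating the $s_n$-coefficients of its two sides gives $P_{n,m}=\widehat P_{n,m}$, which is \cref{conj:shuffleB}.

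The only point needing care is the identification of $[s_n]$ of the colour-zero pure part with the Hall pairing against $h_n[X_0]=s_n[X_0]$. This follows because the tensor Schur functions are orthonormal, so I do not expect a real obstacle anywhere in the argument.
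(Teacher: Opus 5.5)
Your proposal is correct and follows the paper's proof essentially step for step. Both arguments use the normalization in \eqref{eq:wreath-def} to get $[s_n]\,\inv\bigl(\wtH_\lambda[X,0]\bigr)=\inv\langle\wtH_\lambda,h_n[X_0]\rangle=1$, reduce the left side to $(1+qt)u_{n,m}-(q+t)v_{n,m}=P_{n,m}$ via \eqref{eq:Clambda} and \cref{lem:parity-split}, and obtain the right side from \cref{lem:labelled-basic}(i). You also correctly note that \cref{lem:labelled-basic}(i) relies on the symmetry of $\mathcal H^+_{n,m}$ from \cref{prop:labelled-props}.
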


\begin{proof}
For a one-alphabet symmetric function $f$ of degree $n$ we have
$[s_n]f=\langle f,h_n\rangle$, and the third condition of \eqref{eq:wreath-def} gives
$[s_n]\wtH_\lambda[X,0]=\langle\wtH_\lambda,h_n[X_0]\rangle=1$.  The involution
$\inv$ acts on coefficients only, hence $[s_n]\inv(\wtH_\lambda[X,0])=\inv(1)=1$.
The first assertion is now \cref{lem:parity-split}, and the second is
\cref{lem:labelled-basic}(i).
\end{proof}

\begin{remark}[The operator form]\label{rem:operator-form}
Let $\operatorname{pr}_1f=f[0,X]$ be the projection to the colour-one pure part.  The
expected operator form of \cref{conj:labelled} is
\begin{equation}\label{eq:operator-form}
 \omega\,\operatorname{pr}_1\nabla_1^{\,m}e_n[X_1]\;=\;\mathcal H^+_{n,m}(X;q,t),
\end{equation}
the two-colour analogue of $\nabla^me_n=\sum_{\mathrm{PF}}q^{\dinv}t^{\mathrm{area}}F$.

The two left sides are related as follows.  Expanding $e_n[X_1]$ in the basis
$\{\wtH_\lambda\}$ by \cref{thm:wreath-inputs}(d) gives
$e_n[X_1]=\sum_\lambda E_\lambda\ell_\lambda^{-1}D_\lambda^{-1}\wtH_\lambda$ with
$E_\lambda=\wtH_\lambda[1+qt,-(q+t)]$, and
$\wtH_\lambda[0,X]=\ell_\lambda\,\omega\inv(\wtH_\lambda[X,0])$ by
\cref{lem:pure-symmetry}.  Hence the left side of \eqref{eq:operator-form}
is $\sum_\lambda\ell_\lambda^{\,m}E_\lambda D_\lambda^{-1}\inv(\wtH_\lambda[X,0])$.
The two left sides therefore differ by
\begin{equation}\label{eq:cancellation}
 \sum_{u}u^{\,m}\sum_{\lambda:\,\ell_\lambda=u}
 \frac{E_\lambda-\Pi^{(0)}_\lambda C_\lambda}{D_\lambda}\,\inv(\wtH_\lambda[X,0]),
\end{equation}
which vanishes for every $m$ if and only if each inner sum vanishes.

The inner sums
do not vanish termwise, since $E_\lambda\ne\Pi^{(0)}_\lambda C_\lambda$ for $6$ of
the $10$ diagrams at $n=3$ and for $15$ of the $20$ at $n=4$.  Their vanishing is a
separate conjecture, verified symbolically for $n\le6$ and at rational points for
$n=7,8$.  Under it, \eqref{eq:conj-labelled} and \eqref{eq:operator-form} are
equivalent.  Without it they are independent statements, and it is
\eqref{eq:conj-labelled}, not \eqref{eq:operator-form}, which refines
\cref{conj:shuffleB}.
\end{remark}

\begin{remark}[The representation-theoretic form]\label{rem:cherednik-form}
Let $L^{(m)}=L_{\frac1{2n}+m}(\triv)$ with the bigrading of \cref{def:bigrading} and
let
\[
 DH^+_{n,m}:=\bigl(\gr_FL^{(m)}\bigr)^{G_n},\qquad DH^+_n:=DH^+_{n,1},
\]
an $S_n$-module by the residual action, with $e\,DH^+_{n,m}=\gr_F\,eL^{(m)}$.  The
representation-theoretic form of the conjecture is
\begin{equation}\label{eq:cherednik-form}
 \operatorname{Frob}^{q,t}_{S_n}\bigl(DH^+_{n,m}\bigr)=\mathcal H^+_{n,m}(X;q,t).
\end{equation}
Applying $[s_n]$ to \eqref{eq:cherednik-form} gives
$\ch\gr_F\,eL^{(m)}=\widehat P_{n,m}$ by \cref{lem:labelled-basic}(i), which is
\cref{thm:cherednik} combined with \cref{conj:shuffleB}.  For $m\gg0$ the first half
is a theorem.

At $q=t=1$, \eqref{eq:cherednik-form} is a theorem for all $n,m$.
Indeed, $(L^{(m)}\otimes\eps)^{G_n}\cong\C[(\Z/(2mn+1))^n]^{G_n}$ is the permutation
module on the functions $[n]\to\{0,\dots,mn\}$ by \cite{GordonDiag}, of character
$h_n[(mn+1)X]$, which is \cref{lem:labelled-basic}(iii).
\end{remark}

For $n=2$ and $n=3$ at $m=1$ the right side of \eqref{eq:conj-labelled} is, in the
notation of \cref{ex:lowrank} and with $w=qt$,
\begin{align*}
 \mathcal H^+_{2,1}&=\bigl([5]+w\bigr)s_2+[3]\,s_{11},\\
 \mathcal H^+_{3,1}&=\bigl([10]+w[6]+w[4]\bigr)s_3
 +\bigl([8]+[6]+w[4]+w[2]\bigr)s_{21}+[4]\,s_{111},
\end{align*}
whose $s_n$-coefficients are the polynomials $P_{2,1}$ and $P_{3,1}$ of
\cref{ex:lowrank}, as \cref{prop:sn-coefficient} requires.

\subsection{Rooted schedules}\label{sec:rooted-schedules}

In this subsection we prove the schedule formula for $\mathcal H^+_{n,1}$.  At $m=1$
the labelled paths can be reorganized into boxes of independent choices, in the way
that Haglund and Loehr \cite{HL} reorganize the ordinary parking functions.  The
main idea is to build a labelled path by inserting its labels one at a time into a
word with two sentinels, the labels above the diagonal from the right and the labels
on or below it from the left.  The index set is $S_{\{0,1,\dots,n\}}$, and the role of
the extra symbol $0$ is to mark the line $y=x$, which the paths of the square cross in
both directions.

\begin{definition}\label{def:rooted}
Let $\tau$ be a permutation of $\{0,1,\dots,n\}$ with maximal increasing runs
$R_1|\dots|R_s$, and let $0\in R_r$.  Since $0$ is the smallest symbol it begins its
run.  For an ordinary label $c\in R_j$ put $h_\tau(c)=r-j$ and $B_h=\{c:h_\tau(c)=h\}$.
The \emph{schedule bounds} are
\begin{equation}\label{eq:schedule}
 s_\tau(c)=\begin{cases}
 \#\{d\in B_h:d>c\}+\#\{d\in B_{h-1}:d<c\},&h\ge2,\\
 1+\#\{d\in B_1:d>c\},&h=1,\\
 1+\#\{d\in B_0:d<c\}+\#\{d\in B_1:d>c\},&h=0,\\
 \#\{d\in B_h:d<c\}+\#\{d\in B_{h+1}:d>c\},&h\le-1,
 \end{cases}\qquad(c\in B_h),
\end{equation}
all of them positive, and the \emph{schedule box} is
$\mathcal B_\tau=\prod_{c=1}^n\{0,1,\dots,s_\tau(c)-1\}$.  Put
\[
 a_-(\tau)=\sum_{h_\tau(c)\le0}\bigl(1-2h_\tau(c)\bigr),\qquad
 a_+(\tau)=\sum_{h_\tau(c)\ge1}\bigl(2h_\tau(c)-1\bigr),
\]
and for $\mathbf k\in\mathcal B_\tau$ define the \emph{rooted monomial}
\begin{equation}\label{eq:rooted-monomial}
 M_{\tau,\mathbf k}
 =\prod_{h_\tau(c)\ge1}x_c^{k_c}y_c^{2h_\tau(c)-1+k_c}
 \prod_{h_\tau(c)\le0}x_c^{1-2h_\tau(c)+k_c}y_c^{k_c}
 =g^B_\tau\prod_{c=1}^n(x_cy_c)^{k_c},
\end{equation}
of bidegree $(a_-(\tau)+|\mathbf k|,\,a_+(\tau)+|\mathbf k|)$, where
$g^B_\tau=\prod_{h\ge1}y_c^{2h-1}\prod_{h\le0}x_c^{1-2h}$.
\end{definition}

Every variable pair of $M_{\tau,\mathbf k}$ has odd total exponent, and the exponents
$(a_c,b_c)$ recover $h_\tau(c)=(b_c-a_c+1)/2$ and $k_c=\min(a_c,b_c)$, hence they
recover $\tau$.  Thus the $M_{\tau,\mathbf k}$ are pairwise distinct monomials, and
they span a space of dimension $(n+1)^n$.  The baseline $g^B_\tau$ is an odd
root-centred Garsia--Stanton monomial, in the sense that $h_\tau(c)=d_\tau(c)-d_\tau(0)$,
where $d_\tau(c)$ is the number of descents of $\tau$ at or after the position of $c$.

\begin{theorem}[Rooted insertion]\label{thm:rooted-insertion}
There is a bijection
$\bigsqcup_{\tau}\mathcal B_\tau\to\mathsf{PF}^+_{n,1}$, given by a two-sentinel
insertion algorithm, under which
\[
 A_-(P)=a_-(\tau),\qquad A_+(P)=a_+(\tau),\qquad D(P)=|\mathbf k|,
\]
and under which $\operatorname{ides}(w(P))=\mathcal D_\tau(\mathbf k)$, where
$c\in\mathcal D_\tau(\mathbf k)$ iff $h_\tau(c+1)<h_\tau(c)$, or
$h_\tau(c+1)=h_\tau(c)\ge1$ and $k_c\le k_{c+1}$, or $h_\tau(c+1)=h_\tau(c)\le0$ and
$k_c\ge k_{c+1}$.  Consequently
\begin{equation}\label{eq:rooted-forms}
 \mathcal H^+_{n,1}=\sum_{\tau\in S_{\{0,\dots,n\}}}q^{a_-(\tau)}t^{a_+(\tau)}
 \sum_{\mathbf k\in\mathcal B_\tau}(qt)^{|\mathbf k|}F_{n,\mathcal D_\tau(\mathbf k)}(X),
 \qquad
 \bigl\langle\mathcal H^+_{n,1},h_1^n\bigr\rangle
 =\sum_\tau q^{a_-(\tau)}t^{a_+(\tau)}\prod_{c=1}^n[s_\tau(c)]_{qt},
\end{equation}
with $[r]_{qt}=1+qt+\dots+(qt)^{r-1}$.
\end{theorem}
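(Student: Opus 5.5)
The plan is to mimic the Haglund--Loehr insertion for ordinary parking functions, with the root symbol $0$ marking the diagonal $y=x$. Given $\tau$ and $\mathbf k\in\mathcal B_\tau$, we build a labelled path $P=(e,\sigma)$ by processing the ordinary labels in the order dictated by their heights. The height of label $c$ will be $b=h_\tau(c)$ shifted so that labels with $h_\tau(c)\ge1$ sit at positive heights $b_i=h_\tau(c)$ and labels with $h_\tau(c)\le0$ at heights $b_i=h_\tau(c)$. At $m=1$ we have $A_-=\sum_{b_i\le0}(1-2b_i)$ and $A_+=\sum_{b_i>0}(2b_i-1)$, so the identities $A_\pm(P)=a_\pm(\tau)$ follow immediately once we show that the multiset of heights, labelled by $c$, is exactly $\{h_\tau(c)\}$. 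The first step is therefore to set up this assignment and check that the resulting height sequence is realizable by a unique path once the order of north steps is fixed. The path is recovered from the sequence $b_1,\dots,b_n$ subject to $b_{i+1}\le b_i+1$ with $b_1\le1$, since $e_{i+1}\ge e_i$.

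The second step is the insertion itself, which handles the two sides of the diagonal separately. Labels with $h\ge1$ are inserted level by level in decreasing label order, from the right sentinel. Labels with $h\le0$ are inserted from the left sentinel, in increasing label order. When inserting $c$ into the current word of north steps, the admissible slots are precisely those that keep $b_{i+1}\le b_i+1$ and the run condition ($e_i=e_{i+1}\Rightarrow\sigma_i<\sigma_{i+1}$). The count of such slots should be exactly $s_\tau(c)$ in \eqref{eq:schedule}:
\begin{itemize}
\item for $h\ge2$, $c$ may be placed immediately after any larger label at level $h$ or any smaller label at level $h-1$;
\item the cases $h=1$ and $h=0$ each acquire an extra ``$1+$'' coming from the sentinel, i.e.\ from the root $0$, which is what realizes a crossing of the diagonal.
\end{itemize}
The choice $k_c$ selects the slot. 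By the usual Haglund--Loehr argument, each already-placed label that $c$ is inserted past contributes one attacking pair counted by \eqref{eq:D-one} (an equal-height pair with $\sigma_i<\sigma_j$, or a pair with $b_i=b_j+1$ and $\sigma_i>\sigma_j$), and later insertions do not change this count. Hence $D(P)=|\mathbf k|$. Conversely, deleting labels in reverse order recovers $(\tau,\mathbf k)$: the heights give $h_\tau$, hence the run structure of $\tau$ relative to $0$, and the positions give the $k_c$. This proves bijectivity. As a cardinality check, $\sum_\tau\prod_c s_\tau(c)=(n+1)^n$.

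The third step is the descent statement. The reading word orders labels by increasing $(b_i,i)$, so $c$ precedes $c+1$ when $h_\tau(c)<h_\tau(c+1)$. At equal heights, the relative position is decided by the insertion slots. On the right side, larger labels are inserted first from the right, so $c+1$ precedes $c$ iff $k_c\le k_{c+1}$. On the left side the order is mirrored, giving the condition $k_c\ge k_{c+1}$. This yields $\mathcal D_\tau(\mathbf k)$. Summing over $\mathbf k$ gives \eqref{eq:rooted-forms}, and the second identity there follows from \cref{lem:labelled-basic}(ii) together with $\prod_c\sum_{k_c<s_\tau(c)}(qt)^{k_c}$.

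The main obstacle is the slot count at levels $h=0,1$. There the two sentinels interact: a label can start a new excursion across $y=x$, and one must verify that the admissible positions are exactly those enumerated by the ``$1+$'' terms with no double counting. I would first check this exhaustively against the nine paths of $\mathcal H^+_{2,1}$ tabulated above and the $n=3$ table, then write the general invariant as ``the word of north steps at heights $\ge h$ is a concatenation of blocks separated by level-$(h-1)$ labels.''
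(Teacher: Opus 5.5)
Your plan is the paper's own argument: two sentinels, positive labels level by level in decreasing label order, then nonpositive labels in increasing label order, slots indexed by $k_c$, and deletion as the inverse. However, the step you leave open is the one the whole proof rests on. Checking $n\le3$ will not settle it, and no global invariant of the word is needed.

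The missing idea is that the order of the levels, which you must fix explicitly as $1,2,\dots,H$ and then $0,-1,\dots,L$, makes one half of the local rule automatic. When $c\in B_h$ with $h\ge1$ is inserted, every entry present has height in $\{0,\dots,h\}$. So the new adjacency to the right of $c$ can never fail, and only the left neighbour is constrained: it must have height $h$, or height $h-1$ and a label smaller than $c$. The admissible slots are therefore those immediately to the right of an element of the anchor set:
\begin{itemize}
\item $\mathcal A(c)=\{d\in B_h:d>c\}\cup\{d\in B_{h-1}:d<c\}$ for $h\ge2$;
\item $\mathcal A(c)=\{d\in B_1:d>c\}\cup\{0\}$ for $h=1$, where the left sentinel replaces $B_0$ precisely because $B_0$ has not been inserted yet.
\end{itemize}
Symmetrically, for $h\le0$ every present height is $\ge h$ and only the right neighbour is constrained. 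At $h=0$ one gets $\mathcal A(c)=\{d\in B_0:d<c\}\cup\{d\in B_1:d>c\}\cup\{n+1\}$. So the unconditional extra slot is the one just before the right sentinel $(n+1,1)$, not a slot next to the root. The cross term $\#\{d\in B_1:d>c\}$ of the schedule bound appears only because all positive labels were inserted first.

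You also use, without stating it, that $\mathcal A(c)$ depends only on $(\tau,c)$ and not on earlier choices; only its left-to-right order does. This is exactly what makes the set of choices the product box $\mathcal B_\tau$ and yields $\prod_c[s_\tau(c)]_{qt}$.

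Three further steps are asserted rather than proved.

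\emph{Surjectivity.} ``The heights give $h_\tau$'' requires that, for an arbitrary labelled path, the classes $B_h$ are exactly the maximal increasing runs of $B_H\mid\cdots\mid B_1\mid 0,B_0\mid B_{-1}\mid\cdots\mid B_L$. The occupied levels are consecutive because the sentinel word starts at height $0$, ends at height $1$, and rises by at most one per step. Every junction is a descent because at the first rise from $h-1$ to $h$ the local rule puts the smaller label first, giving $\max B_h>\min B_{h-1}$ for $h\ge2$, with the mirror inequality below the root. Without this, two adjacent blocks could merge into one run and the recovered $h_\tau$ would be wrong.

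\emph{The identity $D=|\mathbf k|$.} It is not every label passed by $c$ that contributes a pair, only the anchors lying to the right of $c$ for $h\ge1$ (to the left for $h\le0$). A label of height $h-1$ larger than $c$, or of height $\le h-2$, contributes nothing. A sentinel anchor (at $h=1$ or $h=0$) is never on that side.

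\emph{The descent rule.} ``Larger labels are inserted first'' does not produce the comparison. You need three facts: the two labels are consecutive in the insertion order; $\mathcal A(c)=\mathcal A(c+1)\sqcup\{c+1\}$ for $h\ge1$ (mirror: $\mathcal A(c+1)=\mathcal A(c)\sqcup\{c\}$ for $h\le0$); and hence the earlier-inserted label sits, among the anchors of the later one, at the index it chose for itself ($k_{c+1}$, respectively $k_c$). Only then does the position comparison reduce to $k_{c+1}\ge k_c$, respectively $k_c\ge k_{c+1}$.
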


\begin{remark}[Relation to the type-$A$ schedule formula]\label{rem:typeA-schedules}
\Cref{thm:rooted-insertion} is a type-$C$ form of a formula of Haglund and Loehr.
In type $A$ one takes $\tau\in S_n$ with descents at $i_1<\dots<i_k$, divides it into
the maximal increasing runs $R_1|\dots|R_{k+1}$, lets $\operatorname{cars}(\tau)$ be
the set of parking functions whose cars in the rows of length $k+1-j$ are the
elements of $R_j$, and for $\tau_i\in R_j$ sets
\begin{equation}\label{eq:haglund-w}
 w_i(\tau)=\#\{d\in R_j:d>\tau_i\}+\#\{d\in R_{j+1}:d<\tau_i\},
\end{equation}
where, in Haglund's words, ``it will prove convenient to call element $0$ the $k+2$nd
run of $\tau$'' \cite[p.~79]{HaglundBook}.  Then
\begin{equation}\label{eq:haglund-53}
 \sum_{P\in\operatorname{cars}(\tau)}q^{\dinv(P)}t^{\operatorname{area}(P)}
 =t^{\operatorname{maj}(\tau)}\prod_{i=1}^n[w_i(\tau)]_q,
 \qquad
 \sum_{P\in\mathcal P_n}q^{\dinv(P)}t^{\operatorname{area}(P)}
 =\sum_{\tau\in S_n}t^{\operatorname{maj}(\tau)}\prod_{i=1}^n[w_i(\tau)]_q,
\end{equation}
which are \cite[Thm.~5.3 and Cor.~5.3.1]{HaglundBook}.

The first of these is due to
Haglund and Loehr \cite{HL}.  It is Theorem~$1$ of the Melbourne~2002 preliminary
version of their paper, cited as such in \cite[(56)]{HHLRU}, where it is restated as
$H(\sigma;q,t)=t^{\operatorname{comaj}(\sigma)}\prod_{i=2}^n[v(\sigma,i)+\chi(i\le r_1)]_q$
with a different bookkeeping of the runs.

The word \emph{schedule} is later.  It was
introduced by Garsia and Hicks while Hicks's thesis \cite{Hicks} was being written
\cite[Historical Remark~3.2]{HagSergel}.  Both identities in \eqref{eq:haglund-53}
became theorems with the shuffle theorem \cite{CarlssonMellit}, and a second proof is
the Carlsson--Oblomkov basis \cite{CO} specialized at $x_i\mapsto q$, $y_i\mapsto t$
\cite[\S4]{HagSergel}.

Three features of \eqref{eq:schedule} are visible already in \eqref{eq:haglund-w},
and one is not.
\begin{enumerate}
\item[(i)] \emph{The bounds.}  By construction $B_h=R_{r-h}$, hence $B_{h-1}$ is the
 run immediately to the right of $B_h$, and the first line of \eqref{eq:schedule} is
 \eqref{eq:haglund-w} verbatim.
\item[(ii)] \emph{The root is a sentinel.}  Haglund's convention that element $0$ is
 the $(k+2)$nd run contributes to a car $c$ in the last run the single count
 $\#\{d\in\{0\}:d<c\}=1$.  That is the $+1$ in the second line of
 \eqref{eq:schedule}.  The left sentinel $(0,0)$ of the insertion below is Haglund's
 element $0$.  In \cite[\S3.1]{HagSergel} the same device is described as appending
 a $0$ to $\tau$, where ``the appended $0$ accounts for the extra point of insertion
 in the special case $k=0$''.
\item[(iii)] \emph{The insertion.}  The proof of \eqref{eq:haglund-53} in
 \cite[pp.~80--81]{HaglundBook} builds $\operatorname{cars}(\tau)$ from right to left
 and identifies $w_i(\tau)$ with ``the number of ways to insert a row containing car
 $\tau_i$ into a partial parking function containing cars $\tau_{i+1},\dots,\tau_n$,
 in rows of the appropriate length, and still obtain a partial parking function'',
 the $w_i(\tau)$ choices raising $\dinv$ by $0,1,\dots,w_i(\tau)-1$.  Steps~$2$
 and~$5$ of the proof below are that argument carried out with a second sentinel.
\item[(iv)] \emph{What is not there.}  A type-$A$ path stays on one side of $y=x$,
 hence all its heights have the same sign and the insertion has one direction.  A
 path in $\mathsf{PF}^+_{n,1}$ crosses the diagonal.  The index set is
 $S_{\{0,\dots,n\}}$ rather than $S_n$ with a $0$ appended, the insertion proceeds
 from the right for $h\ge1$ and from the left for $h\le0$, the third line of
 \eqref{eq:schedule} counts in both directions at once and has no type-$A$
 counterpart, and a unit of $D$ raises both degrees, hence the bracket is
 $[\,\cdot\,]_{qt}$ and the prefactor is a monomial in two variables instead of
 $t^{\operatorname{maj}(\tau)}$.
\end{enumerate}
Chapter~5 of \cite{HaglundBook} (pp.~77--90) carries the type-$A$ story in full.
The $\dinv$ statistic is in Chapter~3 (pp.~48--52), the shuffle conjecture and the
LLT expansion used in \cref{thm:llt} above are in Chapter~6 (pp.~91--99), and the
$m$-parameter of \cref{rem:m-one} below is in Chapter~6 (pp.~107--112).
\end{remark}

\begin{proof}
The main points are in Steps $2$, $5$ and $6$.  They are the count of admissible
slots, which must come out equal to \eqref{eq:schedule} and must not depend on the
earlier choices, the identification of $D$ with $|\mathbf k|$, and the comparison of
the anchor sets of two labels of equal height, which is what produces the descent
rule.  The other three steps are bookkeeping.

Let us write a labelled path $P=(e,\sigma)$ as the word
\[
 W(P)=(\sigma_0,b_0)(\sigma_1,b_1)\cdots(\sigma_n,b_n)(\sigma_{n+1},b_{n+1}),
 \qquad (\sigma_0,b_0)=(0,0),\quad (\sigma_{n+1},b_{n+1})=(n+1,1),
\]
each entry being a pair (label, height), with $b_i=i-e_i$ for $1\le i\le n$.  The
two extreme entries are the \emph{sentinels}, and they are the first and the last
entry of every word considered below.  We say that a word of pairs satisfies the
\emph{local rule} if for every two consecutive entries $(u,\beta)$, $(v,\gamma)$ we
have
\[
 \gamma\le\beta+1,\qquad\text{and}\qquad \gamma=\beta+1\ \Longrightarrow\ u<v.
\]

\emph{Step $0$, the local rule is the definition.}  For $1\le i<n$ the inequality
$b_{i+1}\le b_i+1$ is $e_i\le e_{i+1}$, and $b_{i+1}=b_i+1$ is $e_i=e_{i+1}$, in
which case the local rule demands $\sigma_i<\sigma_{i+1}$.  These are exactly the
conditions of \cref{def:labelled} for the inner pairs.  At the left sentinel,
$b_1\le b_0+1=1$ is $e_1\ge0$, and if $b_1=1$ the local rule demands
$\sigma_0=0<\sigma_1$, which holds because $0$ is smaller than every label.  At the
right sentinel, $b_{n+1}=1\le b_n+1$ is $e_n\le n$, and if $b_n=0$ the local rule
demands $\sigma_n<n+1=\sigma_{n+1}$, which holds because $n+1$ is larger than every
label.  Thus $P\in\mathsf{PF}^+_{n,1}$ if and only if $W(P)$ satisfies the local
rule at all $n+1$ consecutive pairs.

\emph{Step $1$, the heights are a rooted permutation.}  Let $P$ be a labelled path
and let $B_h=\{c:h(c)=h\}$ be the set of ordinary labels of height $h$, where
$h(\sigma_i)=b_i$.  If $B_h\ne\varnothing$ for some $h\ge2$ then
$B_{h-1}\ne\varnothing$.  Indeed, the word starts at height $b_0=0$ and the heights
increase by at most one from one entry to the next, hence some entry strictly
between the left sentinel and an entry of height $h$ has height $h-1$.  It is not the
left sentinel, and it is not the right sentinel either, that being the last entry of
the word.  Moreover, at the first pair at which the height rises from $h-1$ to $h$
the local rule gives $d<c$ with $d\in B_{h-1}$ and $c\in B_h$, hence
\begin{equation}\label{eq:block-descent}
 \max B_h>\min B_{h-1}\qquad(h\ge2).
\end{equation}

Symmetrically, if $B_h\ne\varnothing$ for some $h\le-1$ then $B_{h+1}\ne\varnothing$.
Indeed, the word ends at height $b_{n+1}=1$ and the heights increase by at most one
from one entry to the next, hence some entry strictly to the right of an entry of
height $h$ has height $h+1\le0$.  That entry is not the right sentinel, whose height
is $1$, and it is not the left sentinel, which is the first entry of the word and
lies to the right of nothing.  At the first pair at which the height rises from $h$
to $h+1$ the local rule gives
\begin{equation}\label{eq:block-descent-neg}
 \max B_{h+1}>\min B_h\qquad(h\le-1).
\end{equation}

Let $H=\max\{h:B_h\ne\varnothing\}$, let $L=\min\{h:B_h\ne\varnothing\}$ if
$B_h\ne\varnothing$ for some $h\le 0$ and $L=1$ otherwise, and set
\[
 \tau=\bigl(B_H\mid B_{H-1}\mid\dots\mid B_1\mid 0,B_0\mid B_{-1}\mid\dots\mid B_L\bigr),
\]
each block written in increasing order.

Each block is increasing, and each junction
is a descent.  Namely, between $B_h$ and $B_{h-1}$ for $h\ge2$ this is
\eqref{eq:block-descent}, between $B_1$ and the block $0,B_0$ it holds because
$\max B_1>0$, between $0,B_0$ and $B_{-1}$ it is \eqref{eq:block-descent-neg} with
$h=-1$, whose left side is $\max B_0$, and between $B_h$ and $B_{h-1}$ for $h\le-1$
it is \eqref{eq:block-descent-neg} applied with $h-1$ in place of $h$.  The last two
junctions occur only when $B_0\ne\varnothing$, and if $B_0=\varnothing$ then
$B_{-1}=\varnothing$ as well, hence no junction is left unaccounted for.

Therefore
the blocks are the maximal increasing runs of $\tau$.  The run containing $0$ is the
$(H+1)$-st, and $c\in B_h$ lies in the $(H+1-h)$-th, hence $h_\tau(c)=h$ in the sense
of \cref{def:rooted}.  Conversely the blocks of any $\tau$ satisfy
\eqref{eq:block-descent} and \eqref{eq:block-descent-neg}, because its runs are
maximal.  Thus $P\mapsto\tau$ is the map recording the heights, and the possible
height functions are exactly the rooted permutations.

\emph{Step $2$, the insertion and the count of slots.}  Fix $\tau$.  Starting from
the word consisting of the two sentinels, we insert the ordinary labels in the
following order.  First the labels of height $1$, then those of height $2$, and so on
up to $H$, with the labels of one height inserted in decreasing order.  Then the
labels of height $0$, then those of height $-1$, and so on down to $L$, with the
labels of one height inserted in increasing order.  A label $c$ of height $h\ge1$ is
inserted immediately to the right of an entry, a label of height $h\le0$ immediately
to the left of an entry.  The sentinels are never displaced, hence the chosen entry is
not the right sentinel in the first case and not the left sentinel in the second.

Let $c\in B_h$ with $h\ge1$, and suppose that the labels of $B_{h'}$ for $h'<h$ and
the labels of $B_h$ greater than $c$ have been inserted, and no others.  All entries
then present have heights in $\{0,1,\dots,h\}$, the value $0$ occurring only at the
left sentinel and the value $1$ also at the right sentinel.

Inserting $c$ between
two consecutive entries $(u,\beta)$ and $(v,\gamma)$ destroys the pair $(u,v)$ and
creates the pairs $(u,c)$ and $(c,v)$, and the local rule at all other pairs is
unaffected.  For $(c,v)$ the rule requires $\gamma\le h+1$, which holds because
$\gamma\le h$, and the implication is vacuous because $\gamma=h+1$ never occurs.  For
$(u,c)$ the rule requires $h\le\beta+1$, that is $\beta\ge h-1$, hence
$\beta\in\{h-1,h\}$, and when $\beta=h-1$ it requires $u<c$.

Therefore the
admissible positions are exactly those immediately to the right of an entry $u$ with
either $\beta=h$, or $\beta=h-1$ and $u<c$, the right sentinel excluded.  The entries
with $\beta=h$ are the labels of $B_h$ greater than $c$, together with the right
sentinel when $h=1$, which is excluded.  The entries with $\beta=h-1$ are the labels
of $B_{h-1}$, all of which are present, when $h\ge2$, together with the right sentinel
when $h=2$, whose label $n+1$ is not smaller than $c$, and when $h=1$ they reduce to
the left sentinel, whose label $0$ is smaller than $c$, since $B_0$ has not yet been
inserted.  The number of admissible positions is therefore
\begin{gather*}
 \#\{d\in B_h:d>c\}+\#\{d\in B_{h-1}:d<c\}\quad(h\ge2),\\
 1+\#\{d\in B_1:d>c\}\quad(h=1),
\end{gather*}
which are the first two lines of \eqref{eq:schedule}.

Let now $c\in B_h$ with $h\le0$, and suppose that the labels of $B_{h'}$ for $h'>h$
and the labels of $B_h$ smaller than $c$ have been inserted, and no others.  All
entries present have heights in $\{h,h+1,\dots,H\}\cup\{0,1\}$, and in particular
all are $\ge h$.

For the pair $(u,c)$ created by inserting $c$ immediately to the
left of $(v,\gamma)$, the rule requires $h\le\beta+1$, which holds because
$\beta\ge h$, and the implication is vacuous because $\beta=h-1$ never occurs.  For
the pair $(c,v)$ the rule requires $\gamma\le h+1$, hence $\gamma\in\{h,h+1\}$, and
when $\gamma=h+1$ it requires $c<v$.

Therefore the admissible positions are exactly
those immediately to the left of an entry $v$ with either $\gamma=h$, or $\gamma=h+1$
and $v>c$, the left sentinel excluded.  The entries with $\gamma=h$ are the labels of
$B_h$ smaller than $c$, together with the left sentinel when $h=0$, which is
excluded.  The entries with $\gamma=h+1$ are the labels of $B_{h+1}$, all of which
are present, together with the right sentinel when $h=0$, whose label $n+1$ is
larger than $c$.  The left sentinel is not among them, since its height is $0=h+1$
only for $h=-1$, and then its label $0$ is not larger than $c$.  The number of
admissible positions is therefore
\begin{gather*}
 1+\#\{d\in B_0:d<c\}+\#\{d\in B_1:d>c\}\quad(h=0),\\
 \#\{d\in B_h:d<c\}+\#\{d\in B_{h+1}:d>c\}\quad(h\le-1),
\end{gather*}
which are the last two lines of \eqref{eq:schedule}.

In all four cases the set of admissible entries, which we call the set
$\mathcal A(c)$ of \emph{anchors} of $c$, is given uniformly by
\begin{equation}\label{eq:anchors}
 \mathcal A(c)=\begin{cases}
 \{d\in B_h:d>c\}\ \cup\ \{d\in\widehat B_{h-1}:d<c\},&h\ge1,\\
 \{d\in B_h:d<c\}\ \cup\ \{d\in\widehat B_{h+1}:d>c\},&h\le0,
 \end{cases}
\end{equation}
where $\widehat B_{h-1}=B_{h-1}$ for $h\ge2$ and $\widehat B_0=\{0\}$ is the left
sentinel for $h=1$, and where $\widehat B_{h+1}=B_{h+1}$ for $h\le-1$ and
$\widehat B_1=B_1\cup\{n+1\}$ adjoins the right sentinel for $h=0$.

Since the labels
of a word are distinct we read $\mathcal A(c)$ as a set of labels.  It depends only
on $\tau$ and $c$, not on the positions chosen for the earlier insertions.  Only the
left-to-right order of its members in the current word depends on those choices.  We
number the anchors of $c$ from right to left by $0,1,2,\dots$ when $h\ge1$ and from
left to right when $h\le0$, and we let $k_c$ be the number of the anchor used.

This
identifies the possible insertion sequences for the given $\tau$ with
$\mathcal B_\tau$, and each bound $s_\tau(c)$ is positive.  Indeed, for $h\in\{0,1\}$
the summand $1$ is present.  For $h\ge2$ the first term is nonzero unless
$c=\max B_h$, and for that $c$ the second term is nonzero by \eqref{eq:block-descent}.
For $h\le-1$ the first term is nonzero unless $c=\min B_h$, and for that $c$ the
second term is nonzero by \eqref{eq:block-descent-neg}.  By Step $0$ the word
obtained at the end is $W(P)$ for a labelled path $P$, whose heights are those of
$\tau$ by construction.

\emph{Step $3$, the inverse.}  Let $P$ be a labelled path and let $\tau$ be its
rooted permutation from Step $1$.  We delete the ordinary labels in the reverse
order.  First the labels of height $L$ in decreasing order, then those of height
$L+1$, and so on up to height $0$.  Then the labels of height $H$ in increasing
order, then those of height $H-1$, and so on down to height $1$.

We check that each
deletion is the inverse of an admissible insertion.  Let $c\in B_h$ with $h\ge1$ be
the label to be deleted, hence the labels present are those of $B_{h'}$ with $h'<h$
and those of $B_h$ greater than or equal to $c$, and let $(u,\beta)$ and $(v,\gamma)$
be the entries adjacent to $c$.  The local rule at $(u,c)$ gives $\beta\ge h-1$, and
$\beta\le h$ because all present heights are at most $h$.  If $\beta=h-1$ the rule
gives $u<c$.  Thus $u$ is an anchor of $c$, and $c$ was inserted immediately to its
right.

Deleting $c$ restores the pair $(u,v)$, which satisfies the local rule.
Indeed $\gamma\le h\le\beta+1$, and if $\gamma=\beta+1$ then $\gamma=h$ and
$\beta=h-1$, whence $u<c$ from the rule at $(u,c)$ and $c<v$ because $v$ is either
the right sentinel or a label of $B_h$ present at this moment, hence $u<v$.
Counting the anchors of $c$ lying to its right recovers $k_c$.

For $c\in B_h$ with
$h\le0$ the argument is the mirror image.  The present heights are all at least $h$,
and the local rule at $(c,v)$ gives $\gamma\le h+1$ and $\gamma\ge h$, hence $v$ is
an anchor and $c$ was inserted immediately to its left.  The restored pair $(u,v)$
satisfies $\gamma\le h+1\le\beta+1$, and if $\gamma=\beta+1$ then $\beta=h$ and
$\gamma=h+1$, whence $c<v$ from the rule at $(c,v)$ and $u<c$ because $u$ is either
the left sentinel or a label of $B_h$ present at this moment, hence $u<v$.  Counting
the anchors of $c$ lying to its left recovers $k_c$.  Deletion is therefore inverse
to insertion, and the map of Step $2$ is a bijection.

\emph{Step $4$, the two area statistics.}  At $m=1$ the kernel $[1-b]_++[-b]_+$
equals $1-2b$ for $b\le0$ and $0$ for $b\ge1$, and $[b]_++[b-1]_+$ equals $0$ for
$b\le0$ and $2b-1$ for $b\ge1$.  Summing over the ordinary labels and using
$h(\sigma_i)=b_i$ gives $A_-(P)=\sum_{h(c)\le0}(1-2h(c))=a_-(\tau)$ and
$A_+(P)=\sum_{h(c)\ge1}(2h(c)-1)=a_+(\tau)$.

\emph{Step $5$, $D=|\mathbf k|$.}  Recall from \cref{def:labelled} that at $m=1$
\[
 D(P)=\#\{i<j:\ b_i=b_j,\ \sigma_i<\sigma_j\}
 +\#\{i<j:\ b_i=b_j+1,\ \sigma_i>\sigma_j\},
\]
a sum over the unordered pairs of ordinary labels which depends only on their
heights, their labels and their left-to-right order.  An insertion does not change
the relative order, the heights or the labels of the entries already present, hence
it does not change the contribution of any pair already present.  It suffices to show
that inserting $c$ at the anchor number $k_c$ creates exactly $k_c$ new
contributions.

Let $h=h(c)\ge1$ and let $d$ be an ordinary label already present, hence
$h(d)\in\{1,\dots,h\}$ and, if $h(d)=h$, then $d>c$.  If $h(d)\le h-2$ the pair
$\{c,d\}$ contributes to neither sum, since the two heights are neither equal nor
consecutive.  If $h(d)=h$, hence $d>c$, the pair contributes to the first sum if and
only if $c$ precedes $d$, because the earlier of the two must carry the smaller
label, and it never contributes to the second sum, the heights being equal.  If
$h(d)=h-1$ the pair contributes to the second sum if and only if $c$ precedes $d$ and
$c>d$, since the earlier of the two must have the larger height, which forces it to
be $c$, and it never contributes to the first sum.

Thus a new contribution arises
exactly from the labels $d$ lying to the right of the insertion point with
$d\in B_h$, $d>c$, or with $d\in B_{h-1}$, $d<c$, that is, exactly from the anchors
of $c$ lying to the right of the insertion point.

The two counts agree even though
$D$ counts only ordinary labels while $\mathcal A(c)$ may contain a sentinel.  The
only sentinel it can contain is the left one, at $h=1$, and that is the first entry
of the word, hence never to the right of the insertion point.  Since $c$ is inserted
immediately to the right of the anchor number $k_c$ and the anchors are numbered from
the right, there are exactly $k_c$ of them.

Let now $h=h(c)\le0$ and let $d$ be an ordinary label already present, hence
$h(d)\ge h$ and, if $h(d)=h$, then $d<c$.  If $h(d)\ge h+2$ the pair contributes to
neither sum.  If $h(d)=h$, hence $d<c$, the pair contributes to the first sum if and
only if $d$ precedes $c$, the earlier of the two carrying the smaller label.  If
$h(d)=h+1$ the pair contributes to the second sum if and only if $d$ precedes $c$ and
$d>c$, the earlier of the two having the larger height.

Thus a new contribution
arises exactly from the anchors of $c$ lying to the left of the insertion point.
Again the only sentinel that $\mathcal A(c)$ can contain is the right one, at $h=0$,
and that is the last entry of the word, hence never to the left of the insertion
point.  Since $c$ is inserted immediately to the left of the anchor number $k_c$ and
the anchors are numbered from the left, there are exactly $k_c$ of them.  Summing
over the insertions, $D(P)=\sum_ck_c=|\mathbf k|$.

\emph{Step $6$, the inverse descents.}  The reading word lists the ordinary labels
by increasing $(b_i,i)$, that is by increasing height and, within one height, from
left to right.  Hence $c\in\operatorname{ides}(w(P))$, which means that $c+1$
precedes $c$ in the reading word, holds if and only if $h(c+1)<h(c)$, or
$h(c+1)=h(c)$ and $c+1$ lies to the left of $c$ in $W(P)$.  It remains to treat the
case $h(c+1)=h(c)=h$.

We isolate the mechanism as a separate observation.  Let $V$ be a word, let $X$ be a
set of entries of $V$, and let $V'$ be obtained from $V$ by inserting a new entry $e$
immediately to the right of the member of $X$ whose right-to-left index in $V$ is
$j$.  Then in $V'$ the members of $X$ retain their relative order, and the
right-to-left index of $e$ inside $X\cup\{e\}$ is again $j$.  Indeed, nothing
separates $e$ from its anchor, hence the members of $X$ lying to the right of $e$ in
$V'$ are exactly those lying strictly to the right of the anchor in $V$, namely the
$j$ members of index $0,\dots,j-1$.  Reading $V'$ from the right one meets those $j$
first, then $e$, then the rest of $X$.  The mirror statement holds with ``right'' and
``left'' exchanged throughout.  We refer to this as the \emph{index lemma}.

Let $h\ge1$.  Since the labels of $B_h$ are inserted in decreasing order and
$c,c+1\in B_h$ are consecutive integers, $c+1$ is the immediate predecessor of $c$ in
the insertion order, hence no label at all is inserted between them.  We claim that
\begin{equation}\label{eq:anchor-step}
 \mathcal A(c)=\mathcal A(c+1)\sqcup\{c+1\}.
\end{equation}

Indeed, by \eqref{eq:anchors} the first constituent of $\mathcal A(c)$ is
$\{d\in B_h:d>c\}=\{d\in B_h:d>c+1\}\cup\{c+1\}$, the union being disjoint because
$c+1\in B_h$.  The second constituents agree,
$\{d\in\widehat B_{h-1}:d<c\}=\{d\in\widehat B_{h-1}:d<c+1\}$, because the two sets
can differ only in the element $c$, and $c\notin\widehat B_{h-1}$.  Namely, for
$h\ge2$ the height function is single-valued and $c\in B_h$, while for $h=1$ one has
$\widehat B_0=\{0\}$ and $c\ge1$.  Finally $c+1\notin\mathcal A(c+1)$, because the
first constituent of $\mathcal A(c+1)$ has all its elements strictly greater than
$c+1$, and $c+1\notin\widehat B_{h-1}$ by the argument just given with $c+1$ in place
of $c$.  This proves \eqref{eq:anchor-step}.

We apply the index lemma with $V$ the word just before $c+1$ is inserted,
$X=\mathcal A(c+1)$, $e=c+1$ and $j=k_{c+1}$.  Since no label is inserted between
$c+1$ and $c$, the word $V'$ it produces is exactly the word just before $c$ is
inserted, and by \eqref{eq:anchor-step} the set $X\cup\{e\}$ is exactly
$\mathcal A(c)$.  Hence at that moment the right-to-left index of the entry $c+1$
inside $\mathcal A(c)$ is $k_{c+1}$.

Write $\mathcal A(c)=\{b_0,b_1,\dots,b_r\}$ in
that right-to-left order, hence $b_{k_{c+1}}=c+1$.  The label $c$ is inserted
immediately to the right of $b_{k_c}$, hence it lies to the right of
$b_{k_c},b_{k_c+1},\dots,b_r$ and to the left of $b_{k_c-1},\dots,b_0$, and no later
insertion changes the relative order of the entries already present.  Therefore $c+1$
lies to the left of $c$ in $W(P)$ if and only if $k_{c+1}\ge k_c$, which is the
second clause of the rule.

Let $h\le0$.  Now the labels of $B_h$ are inserted in increasing order, hence $c$ is
the immediate predecessor of $c+1$ in the insertion order and no label is inserted
between them.  This time
\begin{equation}\label{eq:anchor-step-neg}
 \mathcal A(c+1)=\mathcal A(c)\sqcup\{c\},
\end{equation}
by the mirrored computation.  Namely, $\{d\in B_h:d<c+1\}=\{d\in B_h:d<c\}\cup\{c\}$
disjointly because $c\in B_h$, while
$\{d\in\widehat B_{h+1}:d>c+1\}=\{d\in\widehat B_{h+1}:d>c\}$ because the two sets
can differ only in the element $c+1$, and $c+1\notin\widehat B_{h+1}$.  Indeed, for
$h\le-1$ the height function is single-valued and $c+1\in B_h$, while for $h=0$ one
has $\widehat B_1=B_1\cup\{n+1\}$, and $c+1\notin B_1$ for the same reason and
$c+1\le n<n+1$.

Also $c\notin\mathcal A(c)$, both constituents of $\mathcal A(c)$
being defined by inequalities strict at $c$.

By the mirrored index lemma, applied
with $V$ the word just before $c$ is inserted, $X=\mathcal A(c)$, $e=c$ and $j=k_c$,
the left-to-right index of the entry $c$ inside $\mathcal A(c+1)$, in the word just
before $c+1$ is inserted, is $k_c$.

Writing
$\mathcal A(c+1)=\{b'_0,\dots,b'_{r'}\}$ in left-to-right order, hence
$b'_{k_c}=c$, the label $c+1$ is inserted immediately to the left of $b'_{k_{c+1}}$,
hence it lies to the left of $b'_{k_{c+1}},\dots,b'_{r'}$ and to the right of
$b'_{k_{c+1}-1},\dots,b'_0$.  Therefore $c+1$ lies to the left of $c$ if and only if
$k_c\ge k_{c+1}$, which is the third clause of the rule.

In the remaining case $h(c+1)>h(c)$ the label $c+1$ follows $c$ in the reading word,
hence $c\notin\operatorname{ides}(w(P))$, in agreement with the rule.

Steps $2$ and $3$ together give the bijection.  Step $2$ produces from each
$(\tau,\mathbf k)$ a labelled path whose rooted permutation is $\tau$, Step $3$
produces from each labelled path a pair $(\tau,\mathbf k)$, and each construction
undoes the other insertion by insertion.  The first formula of
\eqref{eq:rooted-forms} now follows by summing \eqref{eq:Hplus} over the fibres of
the bijection, using Steps $4$, $5$ and $6$ for the three statistics and the descent
set.  The second formula follows from the first by
\cref{lem:labelled-basic}\textup{(ii)}, which identifies
$\langle\mathcal H^+_{n,1},h_1^n\rangle$ with the plain weight enumerator, together
with the independence of the choices $k_c$, each contributing $[s_\tau(c)]_{qt}$.
\end{proof}

\begin{example}\label{ex:rooted}
Let $n=4$ and $\tau=3\,0\,2\,4\,1$, with maximal increasing runs
$3\mid 0,2,4\mid 1$.  The run containing $0$ is the second one, hence by
\cref{def:rooted} $h_\tau(3)=1$, $h_\tau(2)=h_\tau(4)=0$ and $h_\tau(1)=-1$.  Thus
$B_1=\{3\}$, $B_0=\{2,4\}$, $B_{-1}=\{1\}$, and $a_-(\tau)=3+1+1=5$,
$a_+(\tau)=1$.  The bounds \eqref{eq:schedule} are
\begin{align*}
 s_\tau(3)&=1+\#\{d\in B_1:d>3\}=1,\\
 s_\tau(2)&=1+\#\{d\in B_0:d<2\}+\#\{d\in B_1:d>2\}=2,\\
 s_\tau(4)&=1+\#\{d\in B_0:d<4\}+\#\{d\in B_1:d>4\}=2,\\
 s_\tau(1)&=\phantom{1+{}}\#\{d\in B_{-1}:d<1\}+\#\{d\in B_0:d>1\}=2,
\end{align*}
hence $\mathcal B_\tau$ has $1\cdot2\cdot2\cdot2=8$ elements.  We take
$\mathbf k=(k_1,k_2,k_3,k_4)=(0,1,0,0)$ and run the insertion of
\cref{thm:rooted-insertion}.  The labels enter in the order $3,2,4,1$, that is first
height $1$, then height $0$ in increasing order of the label, then height $-1$.

The insertion is easiest to follow with the word drawn as a skyline.  The $j$-th
entry is a box at horizontal position $j$, at height equal to its own height, the
two sentinels are shaded, and the dashed line separates the entries of height
$\ge1$ from those of height $\le0$, that is the part of the path strictly above the
line $y=x$ from the part on or below it.

Below each picture the carets mark the
anchors $\mathcal A(c)$ of the label about to be inserted, numbered in the direction
in which \cref{thm:rooted-insertion} counts them, from the right for a label of
height $\ge1$ and from the left for a label of height $\le0$, and the arrow marks the
slot chosen by $k_c$.  We start from the two sentinels,
\[
\begin{tikzpicture}[scale=0.55,baseline=(current bounding box.center)]
  \draw[black!35,dashed] (-0.3,1.00) -- (2.30,1.00);
  \draw[fill=black!8,draw=black!55] (0.05,0.08) rectangle (0.95,0.92);
  \node[font=\small] at (0.50,0.50) {$0$};
  \draw[fill=black!8,draw=black!55] (1.05,1.08) rectangle (1.95,1.92);
  \node[font=\small] at (1.50,1.50) {$5$};
\end{tikzpicture}
\]
The label $3$, of height $1$, has the single anchor $0$, hence $k_3=0$ and it goes
immediately to its right,
\[
\begin{tikzpicture}[scale=0.55,baseline=(current bounding box.center)]
  \draw[black!35,dashed] (-0.3,1.00) -- (2.30,1.00);
  \draw[fill=black!8,draw=black!55] (0.05,0.08) rectangle (0.95,0.92);
  \node[font=\small] at (0.50,0.50) {$0$};
  \draw[fill=black!8,draw=black!55] (1.05,1.08) rectangle (1.95,1.92);
  \node[font=\small] at (1.50,1.50) {$5$};
  \node[font=\scriptsize,black!65] at (0.50,-0.22) {$\scriptstyle\wedge$};
  \node[font=\scriptsize,black!65] at (0.50,-0.62) {$0$};
  \draw[-latex,black!70,thick] (1.00,2.55) -- (1.00,2.08);
\end{tikzpicture}
\]
The label $2$, of height $0$, has the two anchors $3$ and $5$ counted from the
left, and $k_2=1$ puts it immediately left of $5$,
\[
\begin{tikzpicture}[scale=0.55,baseline=(current bounding box.center)]
  \draw[black!35,dashed] (-0.3,1.00) -- (3.30,1.00);
  \draw[fill=black!8,draw=black!55] (0.05,0.08) rectangle (0.95,0.92);
  \node[font=\small] at (0.50,0.50) {$0$};
  \draw[draw] (1.05,1.08) rectangle (1.95,1.92);
  \node[font=\small] at (1.50,1.50) {$3$};
  \draw[fill=black!8,draw=black!55] (2.05,1.08) rectangle (2.95,1.92);
  \node[font=\small] at (2.50,1.50) {$5$};
  \node[font=\scriptsize,black!65] at (1.50,-0.22) {$\scriptstyle\wedge$};
  \node[font=\scriptsize,black!65] at (1.50,-0.62) {$0$};
  \node[font=\scriptsize,black!65] at (2.50,-0.22) {$\scriptstyle\wedge$};
  \node[font=\scriptsize,black!65] at (2.50,-0.62) {$1$};
  \draw[-latex,black!70,thick] (2.00,2.55) -- (2.00,2.08);
\end{tikzpicture}
\]
The label $4$, also of height $0$, has the anchors $2$ and $5$, and $k_4=0$ puts it
immediately left of $2$,
\[
\begin{tikzpicture}[scale=0.55,baseline=(current bounding box.center)]
  \draw[black!35,dashed] (-0.3,1.00) -- (4.30,1.00);
  \draw[fill=black!8,draw=black!55] (0.05,0.08) rectangle (0.95,0.92);
  \node[font=\small] at (0.50,0.50) {$0$};
  \draw[draw] (1.05,1.08) rectangle (1.95,1.92);
  \node[font=\small] at (1.50,1.50) {$3$};
  \draw[draw] (2.05,0.08) rectangle (2.95,0.92);
  \node[font=\small] at (2.50,0.50) {$2$};
  \draw[fill=black!8,draw=black!55] (3.05,1.08) rectangle (3.95,1.92);
  \node[font=\small] at (3.50,1.50) {$5$};
  \node[font=\scriptsize,black!65] at (2.50,-0.22) {$\scriptstyle\wedge$};
  \node[font=\scriptsize,black!65] at (2.50,-0.62) {$0$};
  \node[font=\scriptsize,black!65] at (3.50,-0.22) {$\scriptstyle\wedge$};
  \node[font=\scriptsize,black!65] at (3.50,-0.62) {$1$};
  \draw[-latex,black!70,thick] (2.00,2.55) -- (2.00,2.08);
\end{tikzpicture}
\]
Finally the label $1$, of height $-1$, has the anchors $4$ and $2$, and $k_1=0$ puts
it immediately left of $4$,
\[
\begin{tikzpicture}[scale=0.55,baseline=(current bounding box.center)]
  \draw[black!35,dashed] (-0.3,1.00) -- (5.30,1.00);
  \draw[fill=black!8,draw=black!55] (0.05,0.08) rectangle (0.95,0.92);
  \node[font=\small] at (0.50,0.50) {$0$};
  \draw[draw] (1.05,1.08) rectangle (1.95,1.92);
  \node[font=\small] at (1.50,1.50) {$3$};
  \draw[draw] (2.05,0.08) rectangle (2.95,0.92);
  \node[font=\small] at (2.50,0.50) {$4$};
  \draw[draw] (3.05,0.08) rectangle (3.95,0.92);
  \node[font=\small] at (3.50,0.50) {$2$};
  \draw[fill=black!8,draw=black!55] (4.05,1.08) rectangle (4.95,1.92);
  \node[font=\small] at (4.50,1.50) {$5$};
  \node[font=\scriptsize,black!65] at (2.50,-0.22) {$\scriptstyle\wedge$};
  \node[font=\scriptsize,black!65] at (2.50,-0.62) {$0$};
  \node[font=\scriptsize,black!65] at (3.50,-0.22) {$\scriptstyle\wedge$};
  \node[font=\scriptsize,black!65] at (3.50,-0.62) {$1$};
  \draw[-latex,black!70,thick] (2.00,2.55) -- (2.00,2.08);
\end{tikzpicture}
\]
Deleting the sentinels from the completed word leaves $\sigma=(3,1,4,2)$ with
heights $b=(1,-1,0,0)$, and the skyline is the sequence of heights of the labelled
path of \cref{ex:pathA} read from bottom to top,
\[
\begin{tikzpicture}[scale=0.55,baseline=(current bounding box.center)]
  \draw[black!35,dashed] (-0.3,1.00) -- (6.30,1.00);
  \draw[fill=black!8,draw=black!55] (0.05,0.08) rectangle (0.95,0.92);
  \node[font=\small] at (0.50,0.50) {$0$};
  \draw[draw] (1.05,1.08) rectangle (1.95,1.92);
  \node[font=\small] at (1.50,1.50) {$3$};
  \draw[draw] (2.05,-0.92) rectangle (2.95,-0.08);
  \node[font=\small] at (2.50,-0.50) {$1$};
  \draw[draw] (3.05,0.08) rectangle (3.95,0.92);
  \node[font=\small] at (3.50,0.50) {$4$};
  \draw[draw] (4.05,0.08) rectangle (4.95,0.92);
  \node[font=\small] at (4.50,0.50) {$2$};
  \draw[fill=black!8,draw=black!55] (5.05,1.08) rectangle (5.95,1.92);
  \node[font=\small] at (5.50,1.50) {$5$};
\end{tikzpicture}
\qquad\longleftrightarrow\qquad
\begin{tikzpicture}[scale=0.55,baseline=(current bounding box.center)]
  \draw[step=1,black!15,very thin] (0,0) grid (4,4);
  \draw[black!45,dashed] (0,0) -- (4,4);
  \draw[line width=1.1pt] (0,0) -- (0,1) -- (3,1) -- (3,2) -- (3,3) -- (4,3) -- (4,4);
  \node[draw,circle,inner sep=0.8pt,fill=white,font=\small] at (0.5,0.5) {$3$};
  \node[draw,circle,inner sep=0.8pt,fill=white,font=\small] at (3.5,1.5) {$1$};
  \node[draw,circle,inner sep=0.8pt,fill=white,font=\small] at (3.5,2.5) {$4$};
  \node[draw,circle,inner sep=0.8pt,fill=white,font=\small] at (4.5,3.5) {$2$};
\end{tikzpicture}
\]

The four assertions of \cref{thm:rooted-insertion} can be read off against that
example.  Namely $a_-(\tau)=5=A^{(1)}_-$, $a_+(\tau)=1=A^{(1)}_+$,
$|\mathbf k|=1=D(P)$, and $\mathcal D_\tau(\mathbf k)=\{3\}=\operatorname{ides}(w(P))$,
the last because $h_\tau(4)=0<1=h_\tau(3)$ while the pairs $(1,2)$ and $(2,3)$ have
$h_\tau(2)>h_\tau(1)$ and $h_\tau(3)>h_\tau(2)$.
\end{example}

The two extreme sectors of \cref{thm:rooted-insertion} are the type-$A$ formula of
\cref{rem:typeA-schedules}, and the dictionary is exact.

\begin{proposition}\label{prop:typeA-sectors}
Let $n\ge1$.
\begin{enumerate}
\item Let $\tau\in S_{\{0,\dots,n\}}$ have $\tau_{n+1}=0$, and let $\tau'\in S_n$ be
 $\tau$ with the $0$ deleted.  Then $h_\tau(c)\ge1$ for every $c$, and
 \[
  s_\tau(\tau'_i)=w_i(\tau')\quad(1\le i\le n),\qquad a_-(\tau)=0,\qquad
  a_+(\tau)=2\operatorname{maj}(\tau')+n,
 \]
 with $w_i$ as in \eqref{eq:haglund-w}.  Conversely $a_-(\tau)=0$ forces
 $\tau_{n+1}=0$.
\item The bijection of \cref{thm:rooted-insertion} carries
 $\bigsqcup_{\tau_{n+1}=0}\mathcal B_\tau$ onto
 $\mathcal P_n:=\{P\in\mathsf{PF}^+_{n,1}:b_i(P)\ge1\text{ for all }i\}$, which is the
 set of type-$A$ parking functions.  On $\mathcal P_n$ one has $D=\dinv$,
 $A^{(1)}_-=0$ and $A^{(1)}_+=2\operatorname{area}+n$, and $\operatorname{ides}(w(P))$
 is the complement in $\{1,\dots,n-1\}$ of the inverse descent set of Haglund's
 reading word $\operatorname{read}(P)$.
\item Hence
 \[
  \sum_{\substack{\tau\in S_{\{0,\dots,n\}}\\ \tau_{n+1}=0}}
  q^{a_-(\tau)}t^{a_+(\tau)}\prod_{c=1}^n[s_\tau(c)]_{qt}
  =t^n\Bigl(\sum_{P\in\mathcal P_n}
  q^{\dinv(P)}t^{\operatorname{area}(P)}\Bigr)\Bigl|_{q\mapsto qt,\;t\mapsto t^2},
 \]
 and the sector $\tau_1=0$ contributes the image of this under $q\leftrightarrow t$.
\end{enumerate}
\end{proposition}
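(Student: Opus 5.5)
Part (1) is a direct reading of \cref{def:rooted}.  If $\tau_{n+1}=0$, then $0$ forms the last run $R_{s}=\{0\}$, so $r=s$ and every ordinary label $c\in R_j$ has $h_\tau(c)=s-j\ge1$.  In particular $a_-(\tau)=0$.  Conversely, if $0$ is not last, then $0$ is followed in its run by some larger label, or the next run is nonempty; in either case some $c$ has $h_\tau(c)\le0$, and then $a_-(\tau)\ge1$.

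For the bounds, the runs of $\tau'$ are the runs $R_1,\dots,R_{s-1}$ of $\tau$.  Take $c=\tau'_i\in R_j$ with $j\le s-2$.  The first line of \eqref{eq:schedule}, with $B_h=R_j$ and $B_{h-1}=R_{j+1}$, is \eqref{eq:haglund-w} verbatim.  For $j=s-1$ we have $h=1$, and the second line of \eqref{eq:schedule} gives $1+\#\{d\in R_{s-1}:d>c\}$.  This equals $w_i$ under Haglund's convention that $\{0\}$ is the next run, as noted in \cref{rem:typeA-schedules}(ii).

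For $a_+$, note that $2h-1$ summed over labels gives $2\sum_c h_\tau(c)-n$.  The quantity $\sum_c h_\tau(c)$ equals $\sum_{\text{descents }i}i$ shifted by the run count, namely $\sum_c(\#\text{descents of }\tau'\text{ at or after }c)+n$.  This is $\operatorname{maj}(\tau')+n$.  I will check this normalization carefully on small cases, since the prefactor $t^n$ in (3) fixes it.

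For (2), restrict the bijection of \cref{thm:rooted-insertion}.  By Step 1 of its proof, $\tau_{n+1}=0$ holds exactly when all heights satisfy $b_i\ge1$.  A path with $b_i\ge1$ stays weakly above the diagonal shifted by one unit.  With the standard shift $b_i-1$ identified with Haglund's row lengths, this set is exactly the set of parking functions with admissible labels.

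For the statistics, the $m=1$ formula for $A_+$ gives $\sum(2b_i-1)=2\sum(b_i-1)+n=2\operatorname{area}+n$, and $A_-=0$.  The two sums in \eqref{eq:D-one} are Haglund's two $\dinv$ conditions after the shift, so $D=\dinv$.  For the descent sets, our reading word orders labels by increasing height and, within a height, left to right.  Haglund's $\operatorname{read}$ orders them by decreasing row length and, within a diagonal, right to left (top to bottom).  Our word is therefore the reverse of his.  Under reversal, $c$ is an inverse descent of one word iff it is not an inverse descent of the other, which gives the complement.

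For (3), substitute the results of (1) into the second formula of \eqref{eq:rooted-forms}, restricted to $\tau_{n+1}=0$.  This gives $\sum_{\tau'}t^{2\operatorname{maj}(\tau')+n}\prod[w_i(\tau')]_{qt}$.  By \eqref{eq:haglund-53} it equals $t^n$ times $\sum q^{\dinv}t^{\operatorname{area}}$ after the substitution $q\mapsto qt$, $t\mapsto t^2$.  Alternatively, the same follows directly from (2).

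For the sector $\tau_1=0$, apply the complementation of \cref{lem:complementation}.  It sends $b_i\mapsto 1-b_{n+1-i}$, swaps $A_\pm$, preserves $D$, and maps $\{b\ge1\}$ onto $\{b\le0\}$.  That image is the sector where every label has $h\le0$, which is exactly $\tau_1=0$ by Step 1.

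The main obstacle is pinning down the exact dictionary between $b_i\ge1$ and Haglund's area/$\dinv$ conventions, including the off-by-one in the row lengths and the maj normalization.
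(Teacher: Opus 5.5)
Your proposal is correct and follows the paper's proof essentially step by step: $0$ as a singleton last run, Haglund's convention that $\{0\}$ is the extra run, the shift $a_i=b_i-1$, reversal of the reading word, and complementation for the sector $\tau_1=0$. The one difference is in (3), where your primary route imports Haglund's schedule formula \eqref{eq:haglund-53}, while the paper derives (3) from the bijection of \cref{thm:rooted-insertion} together with (2) and so in effect reproves that formula; your stated alternative ``directly from (2)'' is exactly the paper's route. Two wording slips should be fixed. First, $\sum_c h_\tau(c)$ is $\operatorname{maj}(\tau')$ shifted by $n$, not by the run count. Second, $b_i\ge1$ means the path stays weakly above $y=x$ itself, not above a shifted diagonal.
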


\begin{proof}
(1)  If $\tau_{n+1}=0$ then $\tau_n>0$, hence $0$ is a maximal run by itself, $s=r$
and $R_s=\{0\}$.  Deleting $0$ leaves the runs $R_1|\dots|R_{s-1}$ of $\tau'$, and
$h_\tau(c)=s-j\ge1$ for $c\in R_j$, $j\le s-1$.  For $h\ge2$ we have $B_h=R_{s-h}$
and $B_{h-1}=R_{s-h+1}$, the run of $\tau'$ immediately to the right of $B_h$, hence
the first line of \eqref{eq:schedule} is \eqref{eq:haglund-w}.

For $h=1$ we have
$B_1=R_{s-1}$, the last run of $\tau'$, whose successor in Haglund's convention is
the $(k+2)$nd run $\{0\}$.  Since $0<c$ for every $c$, his second count equals $1$,
which is the constant in the second line of \eqref{eq:schedule}.  There is no $c$
with $h_\tau(c)\le0$, hence $a_-(\tau)=0$ and
$a_+(\tau)=\sum_c(2h_\tau(c)-1)=2\sum_ch_\tau(c)-n$.  For $c\in R_j$ the number
$h_\tau(c)-1=(s-1)-j$ is exactly the length of the row of $c$ in
$\operatorname{cars}(\tau')$, whose total is $\operatorname{maj}(\tau')$
\cite[p.~81]{HaglundBook}.  Hence $a_+(\tau)=2(\operatorname{maj}(\tau')+n)-n$.

Conversely $a_-(\tau)=0$ means $B_h=\emptyset$ for every $h\le0$, that is
$R_j=\emptyset$ for $j>r$ and $R_r=\{0\}$.  Runs are nonempty, hence $r=s$ and
$\tau_{n+1}=0$.

(2)  The statistic $A^{(1)}_-(P)=\sum_i([1-b_i]_++[-b_i]_+)$ vanishes iff $b_i\ge1$
for all $i$, and $A^{(1)}_-(P)=a_-(\tau)$, hence the sector is carried onto the set
of $P$ with all $b_i\ge1$.  Such a $P$ is a path from $(0,0)$ to $(n,n)$ weakly above
$y=x$ together with a labelling increasing up the columns, that is a type-$A$ parking
function, and its $i$-th row has length $a_i=b_i-1$.  Haglund defines
\[
 \dinv(P)=\#\{i<j:a_i=a_j,\ \operatorname{occ}(i)<\operatorname{occ}(j)\}
 +\#\{i<j:a_i=a_j+1,\ \operatorname{occ}(i)>\operatorname{occ}(j)\}
\]
\cite[pp.~78--79]{HaglundBook}, the rows being indexed so that a larger index means
a higher row, and his gloss is ``pairs of rows of $P$ of the same length, with the
row above containing the larger car, or which differ by one in length, with the
longer row below the shorter, and the longer row containing the larger car''.

Substituting $a_i=b_i-1$ and $\operatorname{occ}(i)=\sigma_i$ turns this into
\eqref{eq:D-one}, term for term.  Next,
$A^{(1)}_+(P)=\sum_i(b_i+[b_i-1]_+)=\sum_i(2b_i-1)=2\sum_i(b_i-1)+n$ and
$\operatorname{area}(P)=\sum_i(b_i-1)$.

Finally, $\operatorname{read}(P)$ is
obtained by reading the cars ``along diagonals in a SE direction, starting with the
diagonal farthest from the line $y=x$, then working inwards''
\cite[p.~79]{HaglundBook}, that is by decreasing $(b_i,i)$, whereas $w(P)$ reads by
increasing $(b_i,i)$.  The two words are reverses of each other.  For any word $u$
and any $c$, exactly one of ``$c+1$ lies left of $c$ in $u$'' and ``$c+1$ lies left
of $c$ in the reverse of $u$'' holds, hence the two inverse descent sets are
complementary in $\{1,\dots,n-1\}$.

(3)  We substitute (1) into the second identity of \eqref{eq:rooted-forms} and use
(2) with $\langle F_{n,S},h_1^n\rangle=1$.  For the last sentence, the map which
reverses $\tau$ and then replaces each nonzero $c$ by $n+1-c$ is an involution of
$S_{\{0,\dots,n\}}$ exchanging the two sectors, and by \cref{lem:complementation} the
corresponding involution $P\mapsto P^c$ of $\mathsf{PF}^+_{n,1}$ fixes $D$ and
exchanges $A^{(1)}_-$ with $A^{(1)}_+$.
\end{proof}

Because \cref{thm:rooted-insertion} tracks $\operatorname{ides}$ and not only the
bidegree, the same computation refines \eqref{eq:haglund-53} to the quasisymmetric
level.  For $\tau\in S_n$ with runs $R_1|\dots|R_{k+1}$ write
$a_\tau(c)=k+1-j$ for $c\in R_j$, the length of the row of $c$ in
$\operatorname{cars}(\tau)$, and $w_\tau(c)=w_i(\tau)$ for $\tau_i=c$.

\begin{corollary}\label{cor:typeA-schedule}
For every $n\ge1$,
\begin{equation}\label{eq:typeA-quasisym}
 \sum_{P\in\mathcal P_n}q^{\dinv(P)}t^{\operatorname{area}(P)}
 F_{n,\operatorname{Des}(\operatorname{read}(P)^{-1})}(X)
 =\sum_{\tau\in S_n}t^{\operatorname{maj}(\tau)}
 \sum_{\mathbf k\in\mathcal B^A_\tau}q^{|\mathbf k|}F_{n,E_\tau(\mathbf k)}(X),
\end{equation}
where $\mathcal B^A_\tau=\prod_{c=1}^n\{0,1,\dots,w_\tau(c)-1\}$ and
$c\in E_\tau(\mathbf k)$ iff $a_\tau(c+1)>a_\tau(c)$, or $a_\tau(c+1)=a_\tau(c)$ and
$k_c>k_{c+1}$.  By the shuffle theorem \cite{CarlssonMellit} both sides equal
$\nabla e_n$.
\end{corollary}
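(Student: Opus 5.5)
The plan is to deduce \eqref{eq:typeA-quasisym} from the first formula of \eqref{eq:rooted-forms}, restricted to the sector $\tau_{n+1}=0$ and read through the dictionary of \cref{prop:typeA-sectors}. The bijection of \cref{thm:rooted-insertion} carries $\bigsqcup_{\tau_{n+1}=0}\mathcal B_\tau$ onto $\mathcal P_n$ (\cref{prop:typeA-sectors}(2)). On this set one has $D=\dinv$, $A^{(1)}_+=2\operatorname{area}+n$ and $A^{(1)}_-=0$. So, before any substitution, the identity
\[
 \sum_{P\in\mathcal P_n}(qt)^{\dinv(P)}t^{2\operatorname{area}(P)+n}F_{n,\operatorname{ides}(w(P))}
 =\sum_{\tau_{n+1}=0}t^{a_+(\tau)}\sum_{\mathbf k\in\mathcal B_\tau}(qt)^{|\mathbf k|}F_{n,\mathcal D_\tau(\mathbf k)}
\]
holds term by term along the bijection. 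Since the bijection is defined label by label and the statistics match exactly, I would instead work with the refined statement. It says that, fibre by fibre over $\tau$, the multiset of pairs $(D(P),\operatorname{ides}(w(P)))$ equals the multiset of pairs $(|\mathbf k|,\mathcal D_\tau(\mathbf k))$. From there I substitute back to the type-$A$ variables directly.

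The first step is to translate the index data with $\tau'=\tau$ with $0$ deleted. By \cref{prop:typeA-sectors}(1), $h_\tau(c)=a_{\tau'}(c)+1\ge1$, the bounds are $s_\tau(c)=w_{\tau'}(c)$, so $\mathcal B_\tau=\mathcal B^A_{\tau'}$, and $\sum_c a_{\tau'}(c)=\operatorname{maj}(\tau')$. For the descent set I use \cref{prop:typeA-sectors}(2): $\operatorname{ides}(w(P))$ is the complement in $\{1,\dots,n-1\}$ of $\operatorname{Des}(\operatorname{read}(P)^{-1})$. Since all heights are $\ge1$, only the first two clauses of $\mathcal D_\tau(\mathbf k)$ occur. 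Hence $c\in\mathcal D_\tau(\mathbf k)$ iff $a(c+1)<a(c)$, or $a(c+1)=a(c)$ and $k_c\le k_{c+1}$. Taking the complement gives $a(c+1)>a(c)$, or $a(c+1)=a(c)$ and $k_c>k_{c+1}$, which is exactly $E_{\tau'}(\mathbf k)$. Finally, the labelled path contributes $q^{\dinv}t^{\operatorname{area}}$, and the schedule side contributes $q^{|\mathbf k|}t^{\operatorname{maj}(\tau')}$. This is because $\operatorname{area}(P)=\sum_i(b_i-1)=\sum_c a_{\tau'}(c)$ fibrewise, and the heights are fixed by $\tau$.

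The second step is to handle the complementation of descent sets. The two sides use the complementary sets, but this requires no symmetric-function argument. The fibrewise bijection matches $\operatorname{Des}(\operatorname{read}(P)^{-1})$ with $E_{\tau'}(\mathbf k)$ directly, being the complement of $\operatorname{ides}(w(P))=\mathcal D_\tau(\mathbf k)$ on both sides. Summing over $\tau'\in S_n$, which runs bijectively over the sector $\tau_{n+1}=0$, gives \eqref{eq:typeA-quasisym}. The closing sentence, that both sides equal $\nabla e_n$, would be quoted from \cite{CarlssonMellit}. The left side is the HHLRU combinatorial side, whose descent convention I have to check against \cite{HHLRU}.

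The main obstacle I expect is this descent convention. I need to confirm that the statistic in \eqref{eq:typeA-quasisym} is the one in the shuffle theorem, and that the reversal between $w(P)$ and $\operatorname{read}(P)$ turns $\operatorname{ides}$ into the complement rather than into the reversed set $\{n-c\}$. The latter happens only for complementation of labels, as in \cref{lem:complementation}. I would check this on the $n=2$ table of \cref{sec:labelled-examples}.
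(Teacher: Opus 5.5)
Your proposal is correct and is essentially the paper's proof: restrict the first identity of \eqref{eq:rooted-forms} to the sector $\tau_{n+1}=0$, translate the data through \cref{prop:typeA-sectors}, observe that the third clause of $\mathcal D_\tau(\mathbf k)$ is vacuous there, and complement both descent sets to obtain $E_{\tau'}(\mathbf k)$. Your fibrewise bookkeeping makes explicit a step the paper leaves implicit, namely undoing the substitution $q\mapsto qt$, $t\mapsto t^2$ and the factor $t^n$. You do this by matching $(D,\operatorname{ides}(w(P)))$ with $(|\mathbf k|,\mathcal D_\tau(\mathbf k))$ along the bijection and using that $\operatorname{area}(P)=\operatorname{maj}(\tau')$ is constant on each fibre.
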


\begin{proof}
We apply \cref{prop:typeA-sectors} to the first identity of \eqref{eq:rooted-forms}
restricted to the sector $\tau_{n+1}=0$, and we complement both descent sets.  The
substitution $h_\tau(c)=a_{\tau'}(c)+1$ turns the three clauses defining
$\mathcal D_\tau(\mathbf k)$ in \cref{thm:rooted-insertion} into two, the third being
vacuous on this sector, and complementing gives $E_\tau$.
\end{proof}

We did not find \eqref{eq:typeA-quasisym} in the literature.  Every schedule
identity in \cite{HaglundBook,HL,HagSergel} is a statement about
$q^{\dinv}t^{\operatorname{area}}$ alone, and a search for a quasisymmetric or
Gessel-$F$ refinement of \cite[Thm.~5.3]{HaglundBook} returned only the
Hilbert-series form.  The descent rule $E_\tau(\mathbf k)$ is what the monomial
$M_{\tau,\mathbf k}$ contributes, hence \eqref{eq:typeA-quasisym} is the
quasisymmetric shadow of the statement that the Carlsson--Oblomkov basis \cite{CO}
carries the $S_n$-character and not only the bigraded dimension.  The type-$C$
statement is \cref{conj:rooted-basis}.

The intermediate sectors are governed by the third line of \eqref{eq:schedule}, which
couples the labels on the two sides of the root.  They all carry the same
$\bigl(D,\operatorname{ides}\bigr)$ distribution, by the circular-street argument
which Haglund uses to count parking functions \cite[p.~77]{HaglundBook}.

\begin{proposition}\label{prop:chung-feller}
Identify $\mathsf{PF}^+_{n,1}$ with the set of functions
$f\colon\{1,\dots,n\}\to\{0,1,\dots,n\}$ by sorting the pairs $(f(c),c)$
lexicographically into $(e_i,\sigma_i)$, and let
$\varsigma(f)(c)=f(c)+1 \bmod (n+1)$.  Then $\varsigma$ fixes $w(P)$ and $D(P)$, and
each $\varsigma$-orbit has $n+1$ elements, one in each sector
$\mathcal S_p=\{P:\#\{i:b_i\ge1\}=p\}$, $0\le p\le n$.  Consequently
\[
 \sum_{P\in\mathcal S_p}(qt)^{D(P)}F_{n,\operatorname{ides}(w(P))}
 =\sum_{\tau\in S_n}\ \sum_{\mathbf k\in\mathcal B^A_\tau}(qt)^{|\mathbf k|}
 F_{n,\{1,\dots,n-1\}\setminus E_\tau(\mathbf k)}(X)
\]
for every $p$, independently of $p$, and by \cref{cor:typeA-schedule} this is
$\bigl(\omega\nabla e_n\bigr)\bigl|_{t=1,\ q\mapsto qt}$.
\end{proposition}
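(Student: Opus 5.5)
The plan is to make the cyclic shift explicit on the sorted data $(e_i,\sigma_i)$ and check everything pair by pair. Let $f\in\mathsf{PF}^+_{n,1}$ and let $a=\#f^{-1}(n)$ be the number of labels in the top vertical run. Under $\varsigma$ the $n-a$ labels with $f(c)<n$ keep their relative order. Their index rises by $a$ and their abscissa by $1$, so each of their heights changes by $a-1$. The $a$ wrapped labels move from positions $n-a+1,\dots,n$ with $e=n$ to positions $1,\dots,a$ with $e=0$, keeping their internal order, and each of their heights changes by $+a$.

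\emph{Step 1: orbits.} The orbit structure comes from $\sum_cf(c)$: it changes by $(n-a)-na\equiv -1\pmod{n+1}$ under $\varsigma$. Hence no $f$ is fixed by $\varsigma^j$ for $0<j<n+1$, and every orbit has exactly $n+1$ elements.

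\emph{Step 2: invariance of $w(P)$ and $D(P)$.} This is the main obstacle. Relative to the non-wrapped labels, every wrapped label is raised by exactly one unit of height and moved from the end to the front in index. Write $b$ for a wrapped label and $b'$ for a non-wrapped one, before the shift.
\begin{itemize}
\item The old order puts the wrapped label first iff $b<b'$, or $b=b'$ and its index is smaller. The index comparison is automatically false before the shift and true after it.
\item After the shift the wrapped height becomes $b+1$ against $b'$. The old condition ``$b<b'$'' (equivalently ``$b\le b'-1$'') becomes ``$b+1\le b'$'', which with the index now favouring the wrapped label is exactly the new condition.
\item Pairs inside one class are untouched.
\end{itemize}
So the order by $(b_i,i)$ on labels, and hence $w(P)$, is unchanged.

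For $D$ I would do the same four-case check on a mixed pair. The condition ``$b_i=b_j$ with the earlier carrying the smaller label'' before the shift becomes ``$b_i=b_j+1$ with the earlier carrying the larger label'' after it, with the roles of earlier and later exchanged. Conversely, a mixed pair with $b_i=b_j+1$ turns into a pair with equal heights. The pairs differing by one in the other direction become differences of $2$ or $0$, and here I must check that neither side counts them. I expect the bookkeeping of which element is earlier to be the delicate point. I would present it as the statement that \eqref{eq:D-one} depends only on the cyclic reading order together with the heights modulo this rotation.

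\emph{Step 3: sectors.} I would show that $p(\varsigma f)\equiv p(f)+1\pmod{n+1}$, or at least that $\varsigma$ permutes the values of $p$ cyclically. The count $p=\#\{i:b_i\ge1\}$ is the number of north steps strictly above $y=x$, so this is the classical cycle lemma (Chung--Feller) argument on the circular street of \cite[p.~77]{HaglundBook}. Concretely, I would track how the rise $a-1$ or $a$ of each height moves labels across the threshold $b=1$. If a direct count is messy, a fallback suffices: orbits have size $n+1$ and $p$ takes $n+1$ values, so it is enough to show that $p$ is injective on each orbit. That would follow from a potential function, for instance $\sum_cf(c)$ modulo $n+1$ combined with an explicit formula $p=\sum_cf(c)-\text{const}$ along the rotation.

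\emph{Step 4: the consequence.} By Steps 1--3, $\varsigma^{p'-p}$ gives a bijection $\mathcal S_p\to\mathcal S_{p'}$ preserving $D$ and $\operatorname{ides}(w(P))$. So all the sums agree with the one for $p=n$, which is $\mathcal P_n$. On $\mathcal P_n$, \cref{prop:typeA-sectors}(2) gives $D=\dinv$, and $\operatorname{ides}(w(P))$ is the complement of $\operatorname{Des}(\operatorname{read}(P)^{-1})$. Applying \cref{cor:typeA-schedule} at $t=1$ with $q\mapsto qt$, and complementing the descent sets, yields the displayed formula. Since complementing every descent set in the fundamental expansion of a symmetric function implements $\omega$, together with \cite{CarlssonMellit} this identifies the sum with $(\omega\nabla e_n)|_{t=1,\,q\mapsto qt}$.
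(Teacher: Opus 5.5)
Your Steps 1 and 2 are fine. The invariant $\sum_c f(c)\bmod(n+1)$ drops by $1$ under $\varsigma$, and this gives freeness of the orbits more directly than the paper, which gets freeness only as a by-product of the sector count. Your pairwise checks for $w(P)$ and $D(P)$ are the paper's. There is one slip: a mixed pair whose earlier-minus-later height difference is $d$ becomes a pair with difference $1-d$, so $d=-1$ goes to $2$, never to $0$.

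The gap is Step 3, which carries the real content of the proposition, and both of your concrete mechanisms are false. Since $p$ depends only on the multiset of values of $f$, it suffices to track value multisets.
\begin{itemize}
\item The increment $p(\varsigma f)-p(f)$ is not constant modulo $n+1$. For $n=2$ the orbit $\{0,0\}\to\{1,1\}\to\{2,2\}$ has $p=2,1,0$, but the orbit $\{0,1\}\to\{1,2\}\to\{0,2\}$ has $p=2,0,1$.
\item For $n=3$ the orbit $\{0,0,1\}\to\{1,1,2\}\to\{2,2,3\}\to\{0,3,3\}$ has $p=3,2,0,1$, so the increment is not even constant within one orbit.
\item Along that same orbit $\sum_cf(c)\equiv1,0,3,2\pmod 4$, so $p$ is not an affine function of this potential along the rotation, and your fallback fails too.
\end{itemize}
For the same reason Step 4 is wrong as written: no fixed power $\varsigma^{p'-p}$ carries $\mathcal S_p$ onto $\mathcal S_{p'}$. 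The correct bijection sends $P$ to the unique element of its orbit lying in $\mathcal S_{p'}$, and it requires the sector statement first.

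What is missing is the cycle-lemma computation itself.
\begin{itemize}
\item Put $m_j=\#f^{-1}(j)$ and $W(k)=\sum_{l<k}(m_l-1)$, so that $W(0)=0$ and $W(n+1)=-1$, and extend by $W(k+n+1)=W(k)-1$.
\item Since $e_i\le i-1$ iff $m_0+\dots+m_{i-1}\ge i$, one has $p(f)=\#\{k\in\{1,\dots,n+1\}:W(k)\ge0\}$.
\item The rotation $\varsigma^{-k}$ replaces the walk by $i\mapsto W(k+i)-W(k)$. Hence, with $i$ ranging over $\{0,\dots,n\}$, $p(\varsigma^{-k}f)=\#\{i>k:W(i)\ge W(k)\}+\#\{i<k:W(i)>W(k)\}$.
\item This is the number of elements above $k$ in the total order on $\{0,\dots,n\}$ given by comparing $W$ and breaking ties by the index. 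As $k$ runs through $\{0,\dots,n\}$, this rank takes every value in $\{0,\dots,n\}$ exactly once, which is the sector statement (and freeness again).
\end{itemize}
The circular-street argument you cite only shows that each orbit contains exactly one element of $\mathcal S_n$. It is the tie-broken rank that refines this to every sector.
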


\begin{proof}
Write $a=\#f^{-1}(n)$, hence $e_{n-a+1}=\dots=e_n=n$ and
$\sigma_{n-a+1}<\dots<\sigma_n$.  Applying $\varsigma$ moves these $a$ cars to the
value $0$ and adds $1$ to the remaining $n-a$ values, hence
\[
 e(\varsigma f)=(\underbrace{0,\dots,0}_{a},e_1+1,\dots,e_{n-a}+1),\qquad
 \sigma(\varsigma f)=(\sigma_{n-a+1},\dots,\sigma_n,\sigma_1,\dots,\sigma_{n-a}),
\]
which is again a labelled path.  Its heights are $b'_i=i=b_{n-a+i}+a$ for $i\le a$
and $b'_i=b_{i-a}+a-1$ for $i>a$.  Thus every height rises by $a-1$, and the $a$
cars that wrapped around rise by one more.

A pair of cars both inside the wrapped
block, or both outside it, therefore keeps its height difference and its relative
order, and it contributes the same to \eqref{eq:D-one}.  For a pair consisting of a
wrapped car, with old index $i'>n-a$, and an unwrapped one, with old index
$j'\le n-a$, the order reverses and the difference increases by one,
$b'_{i'}-b'_{j'}=b_{i'}-b_{j'}+1$ with $i'$ now preceding $j'$.

The old pair
$(j',i')$ is counted by the first clause of \eqref{eq:D-one} iff $b_{j'}=b_{i'}$ and
$\sigma_{j'}<\sigma_{i'}$, which is exactly the condition for the new pair
$(i',j')$ to be counted by the second clause, namely $b'_{i'}=b'_{j'}+1$ and
$\sigma_{i'}>\sigma_{j'}$, and symmetrically the second clause for $(j',i')$ becomes
the first for $(i',j')$.  Thus $D$ is preserved pair by pair.

The reading word is
unchanged.  Two wrapped cars, or two unwrapped ones, keep their relative position in
the order by increasing $(b_i,i)$.

For a wrapped car with new index $i\le a$ and an
unwrapped one with new index $l+a$, the comparison is between $(i-a,n-a+i)$ and
$(b_l,l)$ before the shift and between $(i,i)$ and $(b_l+a-1,l+a)$ after it.  Before
the shift the wrapped car precedes the other iff $i-a<b_l$, the tie $i-a=b_l$ going
the other way because $l\le n-a<n-a+i$.  After the shift it precedes iff
$i<b_l+a-1$, or $i=b_l+a-1$, the tie now going its way because $i\le a<l+a$.  That
is, it precedes iff $i\le b_l+a-1$, which is again $i-a<b_l$.

For the sectors, let $m_j=\#f^{-1}(j)$ and $W(k)=\sum_{l<k}(m_l-1)$ for
$0\le k\le n+1$, hence $W(0)=0$ and $W(n+1)=-1$, and extend $W$ to $\Z$ by
$W(k+n+1)=W(k)-1$.  Since $e_i\le i-1$ iff at least $i$ of the values of $f$ are
$\le i-1$, that is iff $m_0+\dots+m_{i-1}\ge i$, that is iff $W(i)\ge0$, we have
\[
 p(f)=\#\{i:b_i\ge1\}=\#\{i\in\{1,\dots,n\}:W(i)\ge0\}=\#\{k\in\{1,\dots,n+1\}:W(k)\ge0\},
\]
the last step because $W(n+1)=-1$.

Now $\varsigma$ rotates $(m_0,\dots,m_n)$, and
the translates of $f$ are the functions whose $W$-walks are $i\mapsto W(k+i)-W(k)$
for $0\le k\le n$, the translate $\varsigma^{-k}f$ corresponding to $k$.  Hence
\begin{align*}
 p(\varsigma^{-k}f)&=\#\{i\in\{k+1,\dots,k+n+1\}:W(i)\ge W(k)\}\\
 &=\#\{i>k:W(i)\ge W(k)\}+\#\{i<k:W(i)>W(k)\},
\end{align*}
where in the second expression $i$ runs over $\{0,\dots,n\}$.  The terms with
$i\le k$ have been reduced modulo $n+1$, which lowers $W$ by one, and the term $i=k$
contributes $W(k+n+1)=W(k)-1<W(k)$, that is nothing.

The right-hand side is the
number of indices strictly greater than $k$ in the total order on $\{0,\dots,n\}$ in
which $i\prec k$ iff $W(i)<W(k)$, or $W(i)=W(k)$ and $i<k$.  As $k$ runs over
$\{0,\dots,n\}$ that number runs over $\{0,\dots,n\}$ bijectively, which is the
assertion about the orbits.  In particular all orbits are free.

For the displayed identity, we apply what has just been proved to the sector $p=n$,
which is $\mathcal P_n$, and we use \cref{prop:typeA-sectors}(2) and
\cref{cor:typeA-schedule} at $t=1$, the reading word $w(P)$ being the reverse of
$\operatorname{read}(P)$.  The identification with $\omega\nabla e_n$ uses
$\omega F_{n,S}=F_{n,\{1,\dots,n-1\}\setminus S}$ for the $F$-expansion of a
symmetric function, which follows from $s_\lambda=\sum_QF_{n,\operatorname{Des}(Q)}$
over the standard Young tableaux of shape $\lambda$, already used in the proof of
\cref{lem:labelled-basic}, together with $\omega s_\lambda=s_{\lambda'}$ and
$\operatorname{Des}(Q^{\mathsf t})=\{1,\dots,n-1\}\setminus\operatorname{Des}(Q)$.
\end{proof}

\begin{remark}[Why $m=1$]\label{rem:m-one}
\Cref{thm:rooted-insertion} is stated at $m=1$ only, and the obstruction at $m\ge2$
is the one which Loehr and Remmel record.  Indexing by a permutation works in type
$A$ because the word $S_n|S_{n-1}|\dots|S_1|S_0$ formed from the diagonals,
increasing inside each block, has two properties.  Namely, $S_j=\varnothing$ forces
$S_l=\varnothing$ for all $l>j$, and the largest element of $S_j$ exceeds the
smallest element of $S_{j-1}$ whenever both are nonempty.  Together these say that
the descents of the word fall exactly at the bars, hence the bars are recoverable
from the permutation.

Loehr and Remmel observe that ``unfortunately, the two
properties above are no longer guaranteed in the case where $k>0$ or $m>1$''
\cite[p.~32]{LoehrRemmel}, they replace the permutation by an arbitrary function
$f\colon\{1,\dots,n\}\to\{0,\dots,k+m(n-1)\}$, and they arrive at a formula
\cite[Thm.~55]{LoehrRemmel} whose summand
$q^{\operatorname{maj}(f)}t^{x_0(f)}\prod_jt^{x_j(f)}[\operatorname{count}(f,j)]_t$
carries correction exponents $x_j(f)\le0$ and is not a monomial times a product of
brackets.

Their two properties are exactly the two halves of Step~$1$ above.  The implication
$B_h\ne\varnothing\Rightarrow B_{h-1}\ne\varnothing$ for $h\ge2$ and its mirror say
together that $\{b_i\}\cup\{0\}$ is an interval of consecutive integers, and
\eqref{eq:block-descent}, \eqref{eq:block-descent-neg} say that the junctions are
descents.  Both are proved from $b_{i+1}\le b_i+1$, that is from $m=1$.

At $m\ge2$
only $b_{i+1}\le b_i+m$ is available and the first property already fails.  The
smallest example is $(n,m)=(2,2)$ with $e=(0,0)$, whose heights are $b=(2,4)$, and
no permutation of $\{0,1,2\}$ has nonempty runs at the levels $2$ and $4$ and nothing
between.

For the $m$-analogue in type $A$ see
\cite[Ch.~6, pp.~107--112]{HaglundBook}, in particular Conjectures~6.30 and~6.34
there, and Haiman's observation $\dinv_m(P)=\dinv_1(P^{(m)})$
\cite[(6.70)]{HaglundBook}, which realizes $\dinv_m$ by magnifying each north step
$m$ times.  The magnified object is not an arbitrary path, hence it does not
transport the schedule formula.

A second point of contact is \cite[Open Problem~5.4]{HaglundBook}, which asks for a
proof, ideally bijective, that
$\sum_{P\in\mathcal P_n}q^{\dinv(P)}t^{\operatorname{area}(P)}$ is symmetric in
$q,t$.  Haglund notes in \cite[Remark~5.5]{HaglundBook} that, by
\eqref{eq:haglund-53}, setting either variable to $0$ already reduces the symmetry
to the fact that $\operatorname{inv}$ and $\operatorname{maj}$ are both Mahonian,
hence a bijective proof may have to generalize Foata's map.

In type $C$ the
corresponding symmetry does have an involutive proof, \cref{lem:complementation},
since $P\mapsto P^c$ fixes $\dinv^{\mathrm{lab}}_m$ and exchanges $A^{(m)}_\mp$.
This says nothing about the type-$A$ problem, because the involution does not
preserve the sectors.  It exchanges $\mathcal S_p$ with $\mathcal S_{n-p}$, and no
single sector is symmetric, since at $n=2$ the sector $p=0$ contributes
$q^4+q^3t+q^2$.  The complement of a path in the square is again a path in the
square, while the complement of a Dyck path is not a Dyck path, and that is the whole
of the difference.
\end{remark}

\subsection{The basis conjecture}\label{sec:basis-conjecture}

In this subsection we formulate the basis conjecture.  The main idea is that the
rooted monomials of \cref{def:rooted} are the type-$C$ counterpart of the monomials
of Carlsson and Oblomkov, with the root in the place of the diagonal.

Let
$\mathcal P_n=\Q[x_1,\dots,x_n,y_1,\dots,y_n]$ with $W_n=W(B_n)$ acting
diagonally, $\mathfrak b_n=\langle(\mathcal P_n^{W_n})_+\rangle$,
$DR(B_n)=\mathcal P_n/\mathfrak b_n$, and let $DR(B_n)^\chi$ be the isotypic
component of the character $\chi(\eps)=\prod_i\eps_i$ of $G_n$, which is spanned by
the classes of the monomials with all $a_i+b_i$ odd.  Stump's surjection
\cref{thm:stump-surjection} restricts on this component to a bigraded surjection
\begin{equation}\label{eq:stump-chi}
 DR(B_n)^\chi\twoheadrightarrow DH^+_n=\bigl(\gr_FL^{(1)}\bigr)^{G_n},
 \qquad x^\alpha y^\beta\longmapsto\bigl[x^\alpha D^\beta\bar\Delta\bigr],
\end{equation}
where $D_i$ are the Dunkl operators and $\bar\Delta$ is the image of the type-$B$
Vandermonde.  By \eqref{eq:rooted-monomial} every $M_{\tau,\mathbf k}$ lies in the
odd component.

\begin{conjecture}[Rooted basis]\label{conj:rooted-basis}
For every $n\ge1$ the $(n+1)^n$ classes
$\bigl[x^\alpha D^\beta\bar\Delta\bigr]$, where $x^\alpha y^\beta$ runs over the
rooted monomials \eqref{eq:rooted-monomial}, form a basis of $DH^+_n$, the class of
$M_{\tau,\mathbf k}$ lying in bidegree
$(a_-(\tau)+|\mathbf k|,\,a_+(\tau)+|\mathbf k|)$.
\end{conjecture}

\begin{conjecture}[Rooted subbasis]\label{conj:subbasis}
For every $n\ge1$ the classes of the $M_{\tau,\mathbf k}$ in $DR(B_n)^\chi$ are
linearly independent, that is
$\Span\{M_{\tau,\mathbf k}\}\cap\mathfrak b_n=0$.
\end{conjecture}

\Cref{conj:rooted-basis} implies \cref{conj:subbasis}, since a relation among the
classes in $DR(B_n)^\chi$ maps to a relation in $DH^+_n$ under \eqref{eq:stump-chi}.
The converse fails to be automatic, and the two differ already at $n=4$.  The
dimension of $DR(B_4)^\chi$ is $626=5^4+1$, in accordance with the excess
$\dim DR(B_4)=9^4+1$ of \cite{HaimanConj} and \cite{AjilaGriffeth}, hence the rooted
classes cannot span there, while they do span $DH^+_4$.

\begin{proposition}\label{prop:basis-consequences}
\Cref{conj:rooted-basis} implies that the bigraded Hilbert series of $DH^+_n$ is the
weight enumerator $\langle\mathcal H^+_{n,1},h_1^n\rangle$ of \eqref{eq:rooted-forms},
which is the Hilbert series predicted by \eqref{eq:cherednik-form}.  If in addition
the filtration of $DH^+_n$ by the rooted index is compatible with the residual
$S_n$-action in the sense that the associated graded pieces have Frobenius character
$F_{n,\mathcal D_\tau(\mathbf k)}$, then \eqref{eq:cherednik-form} holds at $m=1$.
\end{proposition}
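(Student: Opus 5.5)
The first assertion is a counting argument. Assuming \cref{conj:rooted-basis}, the classes $[x^\alpha D^\beta\bar\Delta]$ attached to the rooted monomials form a homogeneous basis of $DH^+_n$, and the class of $M_{\tau,\mathbf k}$ sits in bidegree $(a_-(\tau)+|\mathbf k|,\,a_+(\tau)+|\mathbf k|)$. Hence
\[
\Hilbs DH^+_n=\sum_{\tau}\sum_{\mathbf k\in\mathcal B_\tau}q^{a_-(\tau)+|\mathbf k|}t^{a_+(\tau)+|\mathbf k|}.
\]
Since the coordinates of $\mathbf k$ range independently over $\{0,\dots,s_\tau(c)-1\}$, the inner sum factors as $q^{a_-(\tau)}t^{a_+(\tau)}\prod_c[s_\tau(c)]_{qt}$. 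This is exactly the second formula of \eqref{eq:rooted-forms}, so it equals $\langle\mathcal H^+_{n,1},h_1^n\rangle$ by \cref{thm:rooted-insertion}. To see that this is the Hilbert series predicted by \eqref{eq:cherednik-form}, note that pairing a bigraded Frobenius character with $h_1^n$ extracts the bigraded dimension. This is because the coefficient of $x_1\cdots x_n$ in $\operatorname{Frob}$ equals the graded dimension, and \cref{lem:labelled-basic}(ii) states this for $\mathcal H^+_{n,1}$. The only point to check is that the bidegree convention of \cref{def:bigrading} agrees with the one assigned to $[x^\alpha D^\beta\bar\Delta]$ under \eqref{eq:stump-chi}, with $\bar\Delta$ normalized to degree $(0,0)$. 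This holds by the discussion following \cref{def:bigrading}, namely Stump's convention $\deg x=(1,0)$, $\deg y=(0,1)$.

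For the second assertion, I would use the filtration of $DH^+_n$ indexed by the rooted data $(\tau,\mathbf k)$. Its associated graded is a direct sum of bigraded pieces, one for each $(\tau,\mathbf k)$. Taking the bigraded Frobenius character is additive on filtrations by $S_n$-stable subspaces, since characters are additive in short exact sequences. Under the stated compatibility, each piece contributes $q^{a_-(\tau)+|\mathbf k|}t^{a_+(\tau)+|\mathbf k|}F_{n,\mathcal D_\tau(\mathbf k)}$. Summing these and comparing with the first formula of \eqref{eq:rooted-forms} gives $\operatorname{Frob}^{q,t}_{S_n}(DH^+_n)=\mathcal H^+_{n,1}$, which is \eqref{eq:cherednik-form} at $m=1$.

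The main obstacle is interpretive. Literally, a piece of an $S_n$-filtration has a genuine character, so its Frobenius character is symmetric, whereas an individual $F_{n,S}$ is only quasisymmetric. The hypothesis therefore has to be read as saying that the sum of the pieces, grouped into $S_n$-stable unions, has quasisymmetric expansion $\sum F_{n,\mathcal D_\tau(\mathbf k)}$. The natural way to make this precise is a filtration compatible with a chain of subgroups (a Gelfand--Tsetlin type filtration) or with an $F$-expansion of characters. I would state the hypothesis in that additive form, and then the conclusion is immediate from linear independence of the $F_{n,S}$ together with \cref{thm:rooted-insertion}.
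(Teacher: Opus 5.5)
Your proposal is correct and follows the paper's argument. The first assertion is the bidegree assignment of \cref{conj:rooted-basis} summed through the factorization into $\prod_c[s_\tau(c)]_{qt}$, which is the second form of \eqref{eq:rooted-forms} from \cref{thm:rooted-insertion}, and the second assertion is the first form of \eqref{eq:rooted-forms}; your remark that the compatibility hypothesis must be read additively (a single $F_{n,S}$ is not the character of an $S_n$-module) is a fair gloss on the paper's loose phrasing and does not affect the argument.
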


\begin{proof}
The first assertion is \cref{thm:rooted-insertion} together with the bidegree in
\eqref{eq:rooted-monomial}.  The second is the first form of
\eqref{eq:rooted-forms}.
\end{proof}

For $n\le4$ the Frobenius character of $DH^+_n$ was computed directly
(\cref{thm:labelled-evidence}(8)), hence the conclusion of the second assertion holds
there without any knowledge of the filtration.  The compatibility of the rooted
filtration with the $S_n$-action remains open in every rank.

\begin{remark}[Relation to the type-$A$ basis, and what a proof would need]\label{rem:CO}
Carlsson and Oblomkov \cite[Thm.~B]{CO} prove that the monomials
$N_{\pi,\mathbf k}=g^A_\pi(\mathbf v)\prod_cu_c^{k_c}$, $\pi\in S_n$ and
$0\le k_c<s^A_\pi(c)$, form a basis of the type-$A$ diagonal coinvariants
$\Q[\mathbf u,\mathbf v]/\langle(\Q[\mathbf u,\mathbf v]^{S_n})_+\rangle$, where
$g^A_\pi$ is the Garsia--Stanton descent monomial and $s^A_\pi$ the type-$A$ schedule.
The two extreme rooted sectors are that basis, before any quotient, namely
\[
 M_{(\pi,0),\mathbf k}=(y_1\cdots y_n)\,
 N_{\pi,\mathbf k}\bigl(\mathbf u=\mathbf x\mathbf y,\ \mathbf v=\mathbf y^2\bigr),
 \qquad
 M_{(0,\pi),\mathbf k}=(x_1\cdots x_n)\,\mathcal R\bigl[
 N_{\pi^*,\mathbf k^*}(\mathbf u=\mathbf x\mathbf y,\ \mathbf v=\mathbf x^2)\bigr],
\]
with $\mathcal R$ the reversal $c\mapsto n+1-c$ of the variable labels and
$\pi^*,\mathbf k^*$ its effect on the index.

The substitutions descend to
$S_n$-equivariant linear maps $DR^A_n\to DR(B_n)^\chi$.  The step degree is $(1,1)$
here and $(1,0)$ there, which is why $qt$ and not $q$ appears in
\eqref{eq:rooted-forms}.  The one-body boundary module
$M_1/(xy)M_1=x\C[x^2]\oplus y\C[y^2]$ has one free direction on each side of the
root.

The proof in \cite[\S7.3]{CO} is not a count of schedules.  It filters the quotient
by the descent order, identifies each associated graded piece with a principal ideal
in the coinvariant algebra through an affine Springer fibre, and proves the required
non-vanishing of fixed-point coefficients by a Hessenberg evaluation.

That mechanism
does not transfer unchanged, because the span of the rooted classes in
$DR(B_4)^\chi$ is not a module over $\Q[z_1,\dots,z_n]$, $z_c=x_cy_c$.  Explicitly,
for $\tau=(1,2,0,3,4)$ and $\mathbf k=0$ one has $M_{\tau,0}=y_1y_2x_3x_4$, and in
bidegree $(3,3)$, where the odd component has dimension $18$ and the rooted classes
number $17$, the product $z_3M_{\tau,0}=x_3^2x_4y_1y_2y_3$ is the one class outside
their span.

A proof of \cref{conj:rooted-basis} therefore requires either a
different filtration of $DH^+_n$ or an independently defined quotient in which the
rooted classes are visibly a basis.
\end{remark}

\subsection{Evidence}\label{sec:labelled-evidence}

In this subsection we record the computational evidence for the statements of this
section, in the sense of \cref{rem:verification}.  Every finite assertion was checked
by an implementation of the definitions in exact arithmetic, independently of the
proofs.

\begin{theorem}\label{thm:labelled-evidence}
The following hold, in the sense of \cref{rem:verification}.
\begin{enumerate}[label=\textup{(\arabic*)}]
\item \Cref{lem:labelled-basic} and \cref{prop:sn-coefficient} hold for all $n,m$.
\item \Cref{prop:labelled-props} holds for all $n,m$.  Its two harder assertions
were also checked directly for $(n,m)$ with $n\le5$, $m\le3$ and $mn\le8$, together
with $[s_n]\mathcal H^+_{n,m}=\widehat P_{n,m}$ and
$\mathcal H^+_{n,m}(X;1,1)=h_n[(mn+1)X]$.  Every clause of
\cref{lem:complementation} was verified for every labelled path at $(n,m)$ with
$n\le5$, $m\le3$ and $mn\le9$, as was the invariance of the multiset
$\{\operatorname{Des}(Q):Q\in\mathrm{SYT}(\lambda)\}$ under
$S\mapsto S^{\mathrm{rev}}$ for all $\lambda\vdash n\le7$.
\item \Cref{thm:llt} was checked independently of its proof, by expanding both
sides of $L_e=G_{\boldsymbol\nu(e)}$ into monomials in $n$ variables and comparing
coefficient by coefficient, for every path $e$ of the rectangle at
$(n,m)\in\{(2,m)_{m\le4},(3,m)_{m\le3},(4,m)_{m\le3},(5,m)_{m\le2}\}$.  The
comparison was run in the two mirror conventions for the inversion rule and with
$c=\pm q_i$.  Only the normalization of \cref{def:nu-of-e} and its mirror image
reproduce $L_e$, and they do so for every path.  Separately, each $z$-coefficient of
each $L_e$ is symmetric and Schur positive in the same range, as
\cref{prop:labelled-props} requires.
\item \Cref{lem:rotation} was checked on monomial expansions for every path at
$(n,m)$ with $n\le4$, $m\le3$ and $mn\le6$, and \cref{lem:core-form} for every
path at $(n,m)$ with $n\le5$, $m\le3$ and $mn\le9$, by constructing the
abacus of $\mu$ from the charges and recovering $\operatorname{core}_k(\mu)$ and
$\operatorname{quot}_k(\mu)$ from it.

The shape $\mu/\nu$ itself was then tested
against ribbons.  For every path at $(n,m)$ with $n\le4$, $m\le3$ and $mn\le6$ the
standard $k$-ribbon tableaux of $\mu/\nu$ were enumerated directly on the shapes,
without using the quotient, and in every case the ribbon additions agree with the
bead moves $p\mapsto p+k$, each ribbon's height equals $1$ plus the number of beads
strictly between $p$ and $p+k$, the content of each ribbon equals the adjusted
content $k\,c(x)+s_i$ of the corresponding cell of $\operatorname{quot}_k(\mu)$, the
number of such tableaux is the multinomial $\binom{n}{l_1,\dots,l_k}$, and
$\operatorname{sp}(T)+\operatorname{inv}(\operatorname{quot}_k(T))$ is constant, as
\cite[Lem.~5.2.2]{HHLRU} requires.

Summed over paths this produced
$625$ standard ribbon tableaux at $(4,1)$ and $343$ at $(3,2)$, in each case
$(mn+1)^n$ in total.

The constraint $\sum_ia_i=0$ used in \cref{lem:core-form} was
itself checked in both directions, without imposing it.  The charge vector of the
$k$-core of every partition of size at most $16$ has sum $0$ for $k=2,3,4$, and
conversely every integer vector of sum $0$ with entries in $[-2,2]$ is the charge
vector of a $k$-core for $k\le5$.  The rotation is not cosmetic.  The offsets
already satisfy $\sum_sd_s\equiv0\pmod k$ for only $93$ of the $252$ paths at
$(n,m)=(5,1)$ and $183$ of the $495$ at $(4,2)$.
\item The identity \eqref{eq:rooted-forms}, including the descent rule of
\cref{thm:rooted-insertion}, was verified coefficient by coefficient in the
fundamental basis for $n\le5$.

Independently, every step of the proof of
\cref{thm:rooted-insertion} was checked mechanically for $n\le6$.  For each of the
$(n+1)^n$ pairs $(\tau,\mathbf k)$, the number of admissible slots at each insertion
equals the corresponding value of \eqref{eq:schedule}, the resulting word satisfies
the local rule and has the heights prescribed by $\tau$, its statistics are
$a_-(\tau)$, $a_+(\tau)$ and $|\mathbf k|$, its inverse descent set is
$\mathcal D_\tau(\mathbf k)$, the deletion procedure of Step $3$ returns
$(\tau,\mathbf k)$, and the $(n+1)^n$ words produced are pairwise distinct and exhaust
$\mathsf{PF}^+_{n,1}$.

The intermediate assertions of Step $6$ were checked in the
same range and not merely its conclusion.  For each of the $197{,}860$ pairs $(c,c+1)$
of equal height arising at $n=6$, the identity \eqref{eq:anchor-step} or
\eqref{eq:anchor-step-neg} holds, the two labels are consecutive in the insertion
order, and the index supplied by the index lemma is the one claimed.
\item The comparisons with type $A$ were checked mechanically in the same way, for
$n\le6$.  These are the identities $s_\tau(\tau'_i)=w_i(\tau')$, $a_-(\tau)=0$ and
$a_+(\tau)=2\operatorname{maj}(\tau')+n$ of \cref{prop:typeA-sectors}(1) on the sector
$\tau_{n+1}=0$, and their mirrors on $\tau_1=0$, the identities $D=\dinv$,
$A^{(1)}_+=2\operatorname{area}+n$ and the complementarity of
$\operatorname{ides}(w(P))$ with the inverse descent set of
$\operatorname{read}(P)$ of \cref{prop:typeA-sectors}(2), on all $(n+1)^{n-1}$
type-$A$ parking functions, $16{,}807$ of them at $n=6$, and the two assertions of
\cref{prop:chung-feller}, that $\varsigma$ fixes the reading word and $D$ and that
every orbit is free and meets each sector exactly once, on all $(n+1)^n$ functions,
$117{,}649$ of them at $n=6$.

The sector identity of
\cref{prop:typeA-sectors}(3) and \eqref{eq:typeA-quasisym} were verified
coefficient by coefficient for $n\le5$, the latter in the fundamental basis.
\item \Cref{conj:labelled} was verified symbolically at $n=2$ for $m\le3$, directly
from the wreath basis of \cref{ex:wreath-n2}.  The involution $\inv$ is needed there.
Omitting it leaves the $s_2$-coefficient, which is $P_{n,m}$ and hence insensitive to
$\inv$, but gives $[s_{11}]=(q^2+qt+t^2)(q^4+qt+t^4)$ at $m=1$ in place of
$q^2+qt+t^2$.

For $n\le6$ and \emph{every} $m\ge1$ the conjecture follows from two
fixed-rank certificates, namely the operator form \eqref{eq:operator-form}, which is
proved for all $m\ge1$ and $n\le6$ by an exact residue certificate over $\Q(q,t)$ for
the face expansion of the path side, and the cancellation \eqref{eq:cancellation},
verified symbolically for $n\le6$, which by \cref{rem:operator-form} makes the two
forms equivalent in a fixed rank.  The cancellation was also checked at rational
points for $n=7,8$.  Identity \eqref{eq:operator-form} was in addition reproduced
symbolically, by an independent implementation, for
$(n,m)\in\{(2,m)_{m\le3},(3,m)_{m\le2},(4,m)_{m\le2}\}$.
\item \Cref{eq:cherednik-form} at $m=1$ and \cref{conj:rooted-basis} were verified
for $n\le4$, in every bidegree $(a,b)$ with $a\le b$.  The computation realizes
$L^{(1)}=\C[x]/J$ with $J$ the ideal generated by
$J_{2n+1}=\{f\in\C[x]_{2n+1}:D_if=0\ \forall i\}$, which is the radical of
$M(\triv)$ in degree $2n+1$ because the radical vanishes in degrees $\le2n$.

Since
$J$ is contained in the radical, $J$ coincides with the radical in the degrees
$\le n^2$ used by the verification if and only if
$\dim L^{(1)}_d=[q^d]\,[2n+1]_q^n$ for $d\le n^2$, the graded form of
$\dim L^{(1)}=(2n+1)^n$ \cite{GordonDiag}.  The quotient $\C[x]/J$ was checked to
have exactly these dimensions in every degree $\le n^2$, and for $n\le2$ the graded
form itself was confirmed in all degrees by the ranks of the contravariant form.  The
verification therefore assumes the graded form for $n=3,4$.

For
$n\le3$ the linear algebra is exact over $\Q(q,t)$, and for $n=4$ it is exact modulo
the prime $2^{31}-1$ with the $S_4$-characters lifted to integers, which suffices
because the Schur coefficients being compared are polynomials with small integer
coefficients.

The bidegrees with $a>b$ follow from the Fourier automorphism
$x_i\mapsto y_i$, $y_i\mapsto-x_i$ of $H_c$, which exists because
$\mathfrak h\cong\mathfrak h^*$ as $W(B_n)$-modules and the parameters are equal.
It carries $L(\triv)$ to a finite-dimensional simple module whose lowest weight
space is the top degree $2n^2$ of $L(\triv)$, one-dimensional and trivial, hence to
$L(\triv)$ itself.

It fixes $\bar\Delta$ up to a scalar and exchanges the two PBW
degrees, and since reordering $y^\beta x^\alpha$ into PBW form only produces terms of
lower filtration degree it acts on $\gr L^{(1)}$ by
$[x^\alpha D^\beta\bar\Delta]\mapsto\pm c\,[x^\beta D^\alpha\bar\Delta]$.  Finally
the rooted family is stable under $x\leftrightarrow y$ composed with
$c\mapsto n+1-c$ (\cref{rem:CO}).

At $n=4$ the $625$ rooted
classes are independent in $DR(B_4)^\chi$, and the single class they miss is
$q^3t^3s_{1111}$, which is the kernel of \eqref{eq:stump-chi}.
\item \Cref{lem:pure-symmetry} was verified independently of its proof by
computing the wreath Macdonald basis from the triangularities of
\eqref{eq:wreath-def}, symbolically over $\Q(q,t)$ for $n\le3$, and at the rational
points $(q,t)=(5,7)$ and $(3/2,7/5)$ for $n=4$.  In each case
\eqref{eq:iota-sym} was checked on every one of the $\dim\Lambda^{(2)}_n$
coefficients of every $\wtH_\lambda$, not only on the pure parts.

The computation
reproduces \cref{ex:wreath-n2} coefficient by coefficient, which is what fixes the
two conventions that matter, namely which component of $\operatorname{quot}_2$
carries the alphabet $X_0$ and the direction of the colour rotation.  The one
ingredient of the proof taken from the literature was checked against the same
computation.  For all $17$ diagrams $\lambda\in\Bal(2n)$ with $n\le3$, both pairings
$\langle\wtH_\lambda,e_n[X_i]\rangle$ equal the product of the box weights $q^at^b$
over the cells of $\lambda$ of residue $i$, which is \cite[Thm.~4.14]{OS} at $r=2$ in
these conventions.
\item \Cref{conj:subbasis} holds for $n\le3$ by direct computation of
$DR(B_n)^\chi$, and for $n=4$ by \textup{(8)}.
\end{enumerate}
\end{theorem}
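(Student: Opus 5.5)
The proof is a record of finite computations rather than a deduction, so my plan is to organize it by the order of dependence among the items and to state for each exactly what is implemented and compared. Items (1) and (2) need no computation for the "all $n,m$" claims, since they are \cref{lem:labelled-basic}, \cref{prop:sn-coefficient} and \cref{prop:labelled-props}, already proved. For the finite ranges I would implement \cref{def:labelled} verbatim: enumerate all $(mn+1)^n$ pairs $(e,\sigma)$, compute $\dinv_m$, $\operatorname{def}_m$, $A^{(m)}_\pm$ and the reading word, and assemble $\mathcal H^+_{n,m}$ in the fundamental basis. I would then convert to Schur functions by inverting the unitriangular $F$-to-monomial matrix and test symmetry, positivity, $q\leftrightarrow t$ symmetry and the two specializations. \cref{lem:complementation} is checked path by path with the same code.

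For (3)--(4) I would expand $L_e$ and $G_{\boldsymbol\nu(e)}$ independently into monomials in $n$ variables. For $G$ this means enumerating semistandard fillings of the row tuple of \cref{def:nu-of-e} and counting attacking inversions by adjusted content. Running the mirror conventions as controls shows that the comparison is sensitive. The ribbon check builds the abacus of $\mu$ from the charges after the rotations of \cref{lem:rotation}, and then enumerates $k$-ribbon tableaux of $\mu/\nu$ directly on shapes.

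For (5)--(6) the insertion algorithm of \cref{thm:rooted-insertion} is run for every $(\tau,\mathbf k)$, with assertions at each insertion: the slot count equals \eqref{eq:schedule}, the local rule holds, \eqref{eq:anchor-step} and \eqref{eq:anchor-step-neg} hold, and deletion inverts insertion. Bijectivity onto $\mathsf{PF}^+_{n,1}$ is then a set comparison. The type-$A$ items reuse Haglund's definitions coded separately.

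Item (9) comes before (7), because (7) uses the wreath basis. I would solve the triangularity conditions \eqref{eq:wreath-def} as linear systems over $\Q(q,t)$, or at rational points for $n=4$, and check \eqref{eq:iota-sym} coefficientwise. Item (7) then compares both sides of \eqref{eq:conj-labelled} at $n=2$. For $n\le6$ and all $m$ I reduce (7) via \cref{rem:operator-form} to two fixed-rank identities. The first is the cancellation \eqref{eq:cancellation}, a finite symbolic check because it is grouped by the finitely many values of $\ell_\lambda$. The second is the operator form \eqref{eq:operator-form}, where I sum the path side as a rational generating function in $z=\sum_m z^m$ and certify equality of residues.

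The main obstacle is item (8), the Cherednik-side computation. My approach there is to present $L^{(1)}=\C[x]/J$ with $J$ generated by the Dunkl-singular vectors in degree $2n+1$, and to certify that $J$ equals the radical in degrees $\le n^2$ by matching graded dimensions with $[2n+1]_q^n$. This relies on the graded form of \cite{GordonDiag} for $n=3,4$, which I would state as an assumption rather than verify. I then compute the filtered classes $[x^\alpha D^\beta\bar\Delta]$ bidegree by bidegree, exactly for $n\le3$ and modulo a large prime for $n=4$. The bidegrees with $a>b$ are obtained through the Fourier automorphism. Item (10) then follows from (8) at $n=4$ and from a direct Gröbner computation of $DR(B_n)^\chi$ for $n\le3$.
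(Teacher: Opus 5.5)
Your plan follows the paper's own record essentially item by item, since the statement is itself a log of exact computations: you reduce item (7) for $n\le6$ and all $m$ to the operator form plus the cancellation \eqref{eq:cancellation}, you certify that $J$ equals the radical in degrees $\le n^2$ by assuming Gordon's graded dimensions for $n=3,4$, and you transfer the bidegrees $a>b$ by the Fourier automorphism. The one step you leave implicit is the justification of that transfer for the basis statement. It needs the Fourier map to carry $L(\triv)$ to itself and to fix $\bar\Delta$ up to a scalar, and it needs the rooted family to be stable under $x\leftrightarrow y$ composed with $c\mapsto n+1-c$; the paper records all three facts.
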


\begin{remark}[Geometric evidence]\label{rem:geometric-evidence}
The type-$A$ proof of \cite[Thm.~B]{CO} computes the associated graded pieces of the
descent filtration as cohomology of Hessenberg strata of the affine Springer fibre of
$\mathfrak{gl}_n$ at slope $\frac{n+1}n$.  The corresponding type-$C$ object is the
affine Springer fibre $\Fl_\gamma$ of $Sp_{2n}$ at slope $m+\frac1{2n}$, for
$\gamma$ homogeneous and elliptic, whose $\mathbb G_m(\nu)$-fixed points are the
alcoves of a dilated alcove.  Namely, the affine roots of weight $2mn+1$ are exactly
$\alpha_i+m\delta$, $0\le i\le n$, hence the fixed-point set is the set of alcoves of
$\{x:\alpha_i(x)+m>0\}$, a translate of $(2mn+1)A_0$, of cardinality $(2mn+1)^n$.

Granting the affine paving of an equivalued element, in which the attracting cell at
a fixed point is an affine space of the dimension of its Zariski tangent space, the
following were computed.  The number of cells of dimension $D-i$ equals
$\dim DR(B_n)_{\deg_x=i}$.  For $n=2$ this is $(8,8,5,3,1)$ against the cell profile
$1,3,5,8,8$, and for $n=3$ the four computable slices are $(48,72,66,56)$ against
$\dots,56,66,72,48$.  The number of cells of top dimension $D$ is
$\dim DR(B_n)_{\deg_x=0}=|W(B_n)|=2^nn!$, and there is a unique cell of dimension
zero.

Moreover the terms of \eqref{eq:Hplus} with
$\operatorname{ides}(w(P))=\varnothing$, which by \cref{lem:labelled-basic}(i) are the
terms of $\widehat P_{n,m}$, have cell-dimension profile equal to that of the
affine Springer fibre $\Gr_\gamma$ in the affine Grassmannian, at
$(n,m)=(2,1),(3,1),(2,2),(3,2)$.  These are the type-$C$ shadows of Hikita's theorem
\cite{Hikita} in type $A$, and they support the geometric route to
\cref{conj:rooted-basis}.

They also delimit it.  For $(n,m)=(2,1)$ and $(3,1)$ there is no closed union of
cells of $\Fl_\gamma$ consisting of $(mn+1)^n$ cells with the dimensions
$D-A^{(m)}_--\dinv^{\mathrm{lab}}_m$ read off from \eqref{eq:Hplus}.  Those
dimensions require $n!$ cells of top dimension $D$, whereas the closure of a single
top-dimensional cell already contains $2^nn!$ fixed points and the union of the
closures of two contains more than $(mn+1)^n$, namely at least $12>9$ and $88>64$.

Thus the $G_n$-invariant sector is not the homology of a subvariety of $\Fl_\gamma$,
and a geometric proof must use $\Fl_\gamma$ together with its Springer
$W(B_n)$-action, or a parahoric quotient $\Fl^J_\gamma$.  The latter can have
$(mn+1)^n$ fixed points and $n!$ top cells only if $W_J$ is generated by $n$
commuting reflections, hence only if the affine Dynkin diagram is a star, hence only
for $n\le3$.  For $n=3$ the unique candidate, $J=\{0,1,3\}$ for $SO_7$, has the
predicted profile at $m=1$ and fails it at $m=2$ by one cell.
\end{remark}

\appendix

\section{Census of fixed wall fibers}\label{app:census}

In this appendix we record the census of the fixed wall fibers which is used in
\cref{rem:census} and in the proof of \cref{thm:n2-descent}.
Exact enumeration of hearts gives the following profiles of
$\rho^{-1}(z_J)$, $z_J\in Q_n^T$ (verified through $n=16$, shown to $n=12$).

\begin{center}\small
\begin{tabular}{c|r|r|l}
\toprule
$n$ & $|Q_n^T|$ & pos.-dim.\ fibers & profile\\
\midrule
1 & 2 & 0 & $2\,\Gr(1,1)$\\
2 & 3 & 1 & $2\,\Gr(1,1)+\Gr(2,3)$\\
3 & 6 & 2 & $4\,\Gr(1,1)+2\,\Gr(2,3)$\\
4 & 10 & 5 & $5\,\Gr(1,1)+5\,\Gr(2,3)$\\
5 & 16 & 10 & $6\,\Gr(1,1)+10\,\Gr(2,3)$\\
6 & 26 & 16 & $10\,\Gr(1,1)+15\,\Gr(2,3)+\Gr(3,5)$\\
7 & 40 & 28 & $12\,\Gr(1,1)+26\,\Gr(2,3)+2\,\Gr(3,5)$\\
8 & 60 & 45 & $15\,\Gr(1,1)+40\,\Gr(2,3)+5\,\Gr(3,5)$\\
9 & 90 & 70 & $20\,\Gr(1,1)+60\,\Gr(2,3)+10\,\Gr(3,5)$\\
10 & 131 & 105 & $26\,\Gr(1,1)+85\,\Gr(2,3)+20\,\Gr(3,5)$\\
11 & 188 & 156 & $32\,\Gr(1,1)+120\,\Gr(2,3)+36\,\Gr(3,5)$\\
12 & 269 & 229 & $40\,\Gr(1,1)+170\,\Gr(2,3)+58\,\Gr(3,5)+\Gr(4,7)$\\
\bottomrule
\end{tabular}
\end{center}

$\Gr(m,2m-1)$ appears first at $n=m(m-1)$, where its dimension is $n$
(\cref{rem:census}).  For each positive-dimensional fixed fiber with $n\le4$ (all of
type $\PP^2$), the fixed-point classes $[\cM_n^+]_\lambda$ of the excess sheaf have
been computed exactly on charts, and the fiberwise localized cancellation
$\sum_{\lambda\in F_J^T}\ell_J^{\,d}[\cM_n^+]_\lambda/D_\lambda=0$ holds for all twists
$d$ and all wall fixed points with $n\le4$, which is the $K$-theoretic shadow of
\cref{thm:vanishing}.  The normalized classes
$[\cM_n^+]_\lambda D^F_\lambda/(D^Y_\lambda e_W)$, with $e_W$ the fixed-point character of
$\cQ\otimes\cO_{\mathrm{Pl}}(1-N)$, are constant along each fiber, as \cref{thm:BWB}
predicts.

\section{Two lemmas on regular sections and the ADHM dictionary}\label{app:ADHM}

In this appendix we collect two elementary facts which are used in
\cref{sec:hecke}.  The first is the standard fact that a section with a smooth zero
scheme of the expected codimension is regular, and the second is the rank-one ADHM
dictionary over an arbitrary base, which identifies cyclic framed representations
with commutative cluster algebras.

\subsection{Regular sections}\label{app:regular-section}
Locally the ambient projective bundles of \cref{sec:hecke-scheme} are regular, hence
Cohen--Macaulay.  If a section of a rank-$r$ vector bundle has a smooth zero scheme
of codimension $r$, its ideal is generated by $r$ functions and has height $r$.  In a
Cohen--Macaulay local ring such generators form a regular sequence
\cite[Thm.~17.4]{Matsumura}, and the Koszul complex is a resolution of the ordinary
structure sheaf of the zero scheme \cite[Cor.~17.5]{Eisenbud}.  This is used twice in
\cref{sec:hecke-scheme}, and no derived enhancement is needed.

\subsection{The rank-one framed ADHM dictionary}
The cyclic chamber is important because it makes the universal object a
commutative cluster algebra, not merely a representation with the correct ranks.

\begin{lemma}\label{lem:ADHM}
Let $A$ be a complex algebra, $V$ a finite locally free $A$-module, and
$X,Y\in\End_A(V)$, $i:A\to V$, $j:V\to A$ with $[X,Y]+ij=0$.  Suppose the images of all
words in $X,Y$ applied to $i(1)$ generate $V$.  Then $j=0$ and $[X,Y]=0$.
\end{lemma}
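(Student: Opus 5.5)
The plan is to adapt Nakajima's classical argument for the rank-one ADHM data (the stable locus of the commuting variety), working over the base ring $A$ rather than a field. We let $C\subseteq\End_A(V)$ be the $A$-subalgebra generated by $X$ and $Y$. By the cyclicity hypothesis, $V=C\cdot i(1)$ as an $A$-module. First we show $j=0$, and then $[X,Y]=-ij=0$ follows at once.

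The first step is to define $W\subseteq V$ as the $A$-submodule spanned by the elements $w\cdot i(1)$, with $w$ running over words in $X,Y$, and to study $j$ on the filtration by word length. Following the standard proof, the key claim is that $j(w\,i(1))=0$ for every word $w$. To prove it, we take the trace identity
\[
 \Tr\bigl(w\,[X,Y]\bigr)+\Tr\bigl(w\,ij\bigr)=0 .
\]
This is legitimate because $V$ is finite locally free, so traces are defined. Since $\Tr(w\,ij)=j(w\,i(1))$, the claim becomes $\Tr(w[X,Y])=0$.

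In the field case one proves $\Tr(w[X,Y])=0$ using a nilpotency or upper-triangularity argument: the quotient relation shows $[X,Y]$ is nilpotent on a flag adapted to $C$. Here we intend to reduce to that case. The plan is to check the identity $j(w\,i(1))=0$ on every fiber $V\otimes k(\mathfrak p)$ for primes $\mathfrak p$ of $A$, where cyclicity persists after base change. Over a field, Nakajima's lemma gives $j=0$ and $[X,Y]=0$. Concretely, the subspace $\{v: j(C v)=0\}$ is $X,Y$-stable and contains $\im[X,Y]$ by induction on word length, and the standard trace argument shows it contains $i(1)$.

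This gives only that the elements $j(w\,i(1))\in A$ lie in every prime ideal, so they are nilpotent. We expect this to be the main obstacle, since $A$ need not be reduced. To handle it, we would avoid fibers and argue directly over $A$. We define $U=\{v\in V: j(Cv)=0\}$. Using the commutator formula, $U$ is $C$-stable, and $[X,Y]V=ij(V)\subseteq i(A)$. We then show by induction on word length that $Ci(1)\subseteq U$, via the identity $\Tr(w\,ij)=-\Tr(w[X,Y])$. The remaining point is that $\Tr(w[X,Y])$ vanishes when $w\,[X,Y]$ is a sum of commutators modulo terms $w'ij$ of shorter length; this is Nakajima's computation, which is formal and holds over any commutative ring. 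Once $j$ vanishes on $C\,i(1)=V$, we conclude $j=0$ and then $[X,Y]=0$.
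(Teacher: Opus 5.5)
You have the right shape: prove $j(w\,i(1))=0$ by induction on the length of $w$ using traces, then use cyclicity to get $j=0$. You are also right that the fibrewise reduction only gives nilpotence when $A$ is not reduced, so it has to be abandoned.

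The gap is that the inductive step, which is the entire content of the lemma, is never carried out. Restating the goal as $\Tr(w[X,Y])=0$ is a tautology, since $\Tr(w[X,Y])=-\Tr(w\,ij)=-j(w\,i(1))$. The mechanism you propose for it is also not correct. For $w=X^aY^b$ with $a,b\ge1$, the element $w[X,Y]$ is not a sum of commutators modulo terms $w'ij$ with $w'$ shorter. Commuting $X$ leftwards through $Y^b$ using $[Y,X]=ij$ gives
\[
 \Tr\bigl(X^aY^b[X,Y]\bigr)=\sum_{r=0}^{b-1}j\bigl(Y^{b-r}X^aY^r\,i(1)\bigr).
\]
The words on the right have the same length as $w$, so the inductive hypothesis does not kill them.

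What is missing are the two steps of the paper's argument.
\begin{enumerate}
\item The inductive hypothesis for lengths $\le d-2$ gives $u\,YX\,v\,i(1)=u\,XY\,v\,i(1)$ whenever $|v|\le d-2$, because the error term is $u\,i\bigl(j(v\,i(1))\bigr)$. Hence every word of length $d$ applied to $i(1)$ equals its sorted form $X^aY^b\,i(1)$. This is what identifies each same-length term in the display with $jX^aY^bi$.
\item Comparing the display with $\Tr(X^aY^b[X,Y])=-jX^aY^bi$ yields $(b+1)\,jX^aY^bi=0$. Equivalently, take the trace of the genuine commutator $[X,X^aY^{b+1}]=-\sum_{r=0}^{b}X^aY^r\,ij\,Y^{b-r}$. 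One then divides by $b+1$.
\end{enumerate}

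So the computation is not ``formal over any commutative ring''. The trace identities only produce integer multiples of $jX^aY^bi$. Invertibility of $b+1$ in $A$ is exactly where the hypothesis that $A$ is a $\C$-algebra is used, and your proposal never uses it.
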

\begin{proof}
Trace of the relation gives $ji=0$.  We prove by induction on word length that
$jwi=0$ for every word $w$.  Suppose this holds for words of length less than $d$.
Swapping an adjacent $YX$ in a length-$d$ word acting on $i(1)$ changes the result by
a term $uijvi$ with $v$ of length at most $d-2$, which is zero by induction.  Thus
every word of length $d$ has the same value on $i(1)$ as its ordered form
$X^aY^bi(1)$, $a+b=d$.  Trace the commutator $[X,X^aY^{b+1}]$ and expand with
$[X,Y]=-ij$.  Cyclicity of trace for a rank-one product gives
$0=-\sum_{r=0}^bjY^{b-r}X^aY^ri$, and each summand reorders on $i(1)$, using only the
lower-length identities, to $jX^aY^bi$.  Hence $(b+1)jX^aY^bi=0$, and $b+1$ is
invertible in $A$.  Since the vectors $wi(1)$ generate $V$, $j=0$, and the
moment-map relation gives $[X,Y]=0$.
\end{proof}

For the McKay quiver, $X$ and $Y$ exchange the even and odd summands and the framing
lies in the even summand, and the two vertex moment-map equations combine to the
displayed ADHM equation.  The lemma therefore identifies the cyclic framed
representation with a quotient of $A[x,y]$ generated by the unit, with its
$\Gamma$-grading, over arbitrary complex base algebras, and this is the family-level
dictionary used in \cref{sec:hecke-scheme}.


\end{document}